\newcommandx{\ebltodo}[2][1=]{\todo[linecolor=red,backgroundcolor=red!25,bordercolor=red,#1]{#2}}
{
  \color{olive}%
}%
{}
\DeclareMathAlphabet{\mathpzc}{OT1}{pzc}{m}{it}
\numberwithin{equation}{section}
\newtheorem{thm}[subsection]{Theorem}
\newtheorem{coro}[subsection]{Corollary}
\newtheorem*{cor*}{Corollary}
\newtheorem{lemma}[subsection]{Lemma}
\newtheorem*{sublemma}{Sublemma}
\newtheorem{propos}[subsection]{Proposition}
\newtheorem*{thm*}{Theorem}
\newtheorem*{thma*}{Theorem A}
\newtheorem*{thmb*}{Theorem B}
\newtheorem*{thmc*}{Theorem C}
\theoremstyle{definition}
\newtheorem{defin}[subsection]{Definition}
\newcounter{consta}
\renewcommand{\theconsta}{{A_{\arabic{consta}}}}
\newcommand{\consta}{\refstepcounter{consta}\theconsta}
\newcounter{constk}
\renewcommand{\theconstk}{{\kappa_{\arabic{constk}}}}
\newcommand{\constk}{\refstepcounter{constk}\theconstk}
\newcounter{constc}
\newcounter{constE}
\renewcommand{\theconstE}{{{C}_{\arabic{constE}}}}
\newcommand{\constE}{\refstepcounter{constE}\theconstE}
\newcounter{constd}
\newcommand*\bigcdot{\mathpalette\bigcdot@{.5}}
\newcommand*\bigcdot@[2]{\mathbin{\vcenter{\hbox{\scalebox{#2}{$\m@th#1\bullet$}}}}}
\def\XXint#1#2#3{{\setbox0=\hbox{$#1{#2#3}{\int}$ }
\vcenter{\hbox{$#2#3$ }}\kern-.6\wd0}}
\DeclareMathOperator{\Lip}{Lip}
\DeclareMathOperator{\supp}{supp}
\DeclareMathOperator{\Leb}{Leb}
\DeclareMathOperator{\diff}{d}
\DeclareMathOperator\Mat{Mat}
\DeclareMathOperator\Ad{Ad}
\newcommand\vol{{\rm{vol}}}
\newcommand\SL{{\rm{SL}}}
\newcommand\PSL{{\rm{PSL}}}
\newcommand\PGL{{\rm{PGL}}}
\newcommand\SO{{\rm{SO}}}
\newcommand\Lie{{\rm Lie}}
\newcommand\height{{\rm ht}}
\def\sl{{\mathfrak{sl}}}
\def\bbz{\mathbb{Z}}
\def\bbq{\mathbb{Q}}
\def\bbr{\mathbb{R}}
\def\bbc{\mathbb{C}}
\def\bbn{\mathbb{N}}
\def\Gbf{\mathbb{G}}
\def\Q{\bbq}
\def\Z{\bbz}
\def\R{\bbr}
\def\C{\bbc}
\def\Scal{\mathcal{S}}
\def\hfrak{\mathfrak{h}}
\def\rfrak{\mathfrak{r}}
\def\ufrak{\mathfrak{u}}
\def\gfrak{\mathfrak{g}}
\def\Gbf{\mathbf{G}}
\def\Hbf{\mathbf{H}}
\def\Lbf{\mathbf{L}}
\def\H{\Hbf}
\def\G{\Gbf}
\def\places{{S}}
\def\la{\lambda}
\def\vare{\varepsilon}
\def\zg0{Z_{G_\omega}(s)}
\def\zg{Z_G(s)}
\def\be{\begin{equation}}
\def\ee{\end{equation}}
\def\dist{{\rm dist}}
\def\rank{{\rm rk}}
\def\Sob{{\mathcal S}}
\def\dist{d}
\def\mixexp{\kappa_0}
\def\boxH{\mathsf B^H}
\def\boxHs{\mathsf B^{s,H}}
\def\boxG{\mathsf B^{G}}
\def\boxU{\mathsf B}
\def\rwm{\nu}
\def\rws{t}
\def\conv{\ast}
\def\injr{\eta}
\def\rel{r}
\def\rot{k}
\def\ave{\int_{0}^1}
\def\uvk{u_\rel}
\def\uvkd{\diff\!\rel}
\def\mfm{f}
\def\margI{I}
\DeclareMathOperator{\inj}{inj}
\def\nuni{e}
\def\coneH{\mathsf E}
\def\cone{\mathcal E}
\def\umt{\mathsf Q}
\newcommand{\rhsc}{b}
\newcommand{\mfsc}{b}
\newcommand{\sfh}{\mathsf h}
\newcommand{\sfs}{\mathsf s}
\newcommand{\trct}{\delta}
\newcommand\mfbd{\Upsilon}
\newcommand\eng{\mathcal G}
\newcommand\egbd{\Upsilon}
\newcommand\pvare{\mathsf{c}}
\newcommand\gdh{L}
\newcommand{\icone}{\cone'}
\newcommand{\iconeH}{\coneH'}
\newcommand{\ccone}{\hat{\cone}}
\newcommand{\cconeH}{\hat{\coneH}}
\newcommand{\cmu}{\hat{\mu}}
\newcommand{\density}{\rho}
\newcommand{\ddensity}{\varrho}
\newcommand\adm{\Lambda}
\newcommand{\adl}{\lambda}
\newcommand{\exceptional}{{\rm Exc}}
\newcommand {\absolute}[1] {\left| {#1} \right|}
\newcommand {\norm}[1] {\left\| {#1} \right\|}
\newcommand{\hide}[1]{}
\newcommand\sqf{Q_0}
\newcommand{\cox}{{\mathsf x}}
\newcommand{\coy}{{\mathsf y}}
\newcommand{\coz}{{\mathsf z}}
\newcommand\dn{\mathsf k}
\newcommand\dm{\mathsf m}
\newcommand\dN{\mathsf N}
\newcommand\hdm{\hat{\dm}}
\newcommand{\sfd}{s}
\newcommand{\imp}{\varpi}
\newcommand{\tG}{\tilde G}
\title[Effective equidistribution in rank 2]{Effective equidistribution in rank 2 homogeneous spaces and values of quadratic forms}
\author{E.~Lindenstrauss}
\address[E.L.]{Institute for Advanced Study, 1 Einstein Drive, Princeton, NJ 08540, USA\newline
\emph{and}\newline
The Einstein Institute of Mathematics, Edmund J. Safra Campus, Givat Ram,
The Hebrew University of Jerusalem, Jerusalem 91904, Israel
}
\email{elonl@ias.edu}
\thanks{E.L.\ acknowledges support by ERC 2020 grant HomDyn (grant no.~833423).}
\author{A.~Mohammadi}
\address{A.M.: Department of Mathematics, University of California, San Diego, CA 92093}
\email{ammohammadi@ucsd.edu}
\thanks{A.M.\ acknowledges support by the NSF grants DMS-2055122 and 2350028.}
\author{Z.~Wang}
\address{Z.W.: Department of Mathematics, Johns Hopkins University,
Baltimore, MD 21218}
\email{zhirenw@jhu.edu}
\thanks{Z.W.\ acknowledges support by the NSF grant  DMS-1753042.}
\author{L.~Yang}
\address{L.Y.: Department of Mathematics, National University of Singapore, 119076, Singapore}
\email{lei.yang@nus.edu.sg}
\thanks{L.Y.\ acknowledges support by the Shiing-Shen Chern membership at the Institute for Advanced Study and a startup research funding from National University of Singapore.}
\date{}
\begin{document}

 \begin{abstract}
  We establish effective equidistribution theorems, with a polynomial error rate, for orbits of unipotent subgroups in quotients of quasi-split, almost simple Linear algebraic groups of absolute rank 2.

As an application, inspired by the results of Eskin, Margulis and Mozes, we establish quantitative results regarding the distribution of values of an indefinite ternary quadratic form at integer points, giving in particular an effective and quantitative proof of the Oppenheim Conjecture.
\end{abstract}

\maketitle

\setcounter{tocdepth}{1}
\tableofcontents

\setcounter{part}{-1}
\part{Introduction}

A landmark result of Margulis is his proof of the Oppenheim Conjecture by showing that every $\SO(2,1)$ orbit in $\SL_3(\R)/\SL_3(\Z)$ is either periodic or unbounded (or both --- in our terminology an orbit $L.x$ is periodic if it supports a finite $L$ invariant measure; this implies $L.x$ is closed but does not preclude it being noncompact). Subsequently, Dani and Margulis showed that any $\SO(2,1)$-orbit in $\SL_3(\R)/\SL_3(\Z)$ is either periodic or dense, and also classified possible orbit closures for a one-parameter unipotent subgroup of $\SO(2,1)$.

More precise information regarding the behavior of orbits of one-parameter unipotent subgroups in quotients of real Lie groups was provided by Ratner in \cite{Ratner-Acta, Ratner-measure, Ratner-topological}. These remarkable theorems have been highly influential and have had a plethora of applications, many of them quite unexpected.

This paper is a step in a program to make Ratner's equidistribution theorem effective. Previously, the first three authors proved effective equidistribution results for unipotent flows in $G=\SL_2(\C)$ or $G=\SL_2(\R)\times\SL_2(\R)$ and the last named author for $G=\SL_3(\R)$ and $u_t$ a singular one-parameter unipotent group; here we consider the generic one-parameter group in any quasi-split group of absolute rank 2.

As an application, we give here an effective and quantitative equidistribution result for the values of an indefinite ternary quadratic form at integer points in large balls. This gives in particular an effective and quantitative proof of the Oppenheim Conjecture. An effective proof of the Oppenheim Conjecture was given by Margulis and the first named author in \cite{LM-Oppenheim}, but the rates we give here (say of the smallest nonzero value of $|Q(v)|$ with $v$ an integer vector of norm at most $T$) are polynomial, which is the right kind of dependency, vs.\ a polylogarithmic rate in \cite{LM-Oppenheim}.

Moreover, the quantitative equidistribution result for the values of an indefinite ternary quadratic form is new --- no effective or explicit rate was previously known. Here we follow Eskin, Margulis and Mozes who gave a beautiful argument proving a qualitative\footnote{Eskin, Margulis and Mozes called their result a quantitative version of the Oppenheim conjecture. It is quantitative in the sense that it counts the number of lattice points in a large ball for which $Q(v)$ is in a given interval, but not quantitative in the sense it does not say how large the ball has to be before this asymptotical behaviour begins to hold, and is not effective.}
result of this type for all indefinite quadratic forms of signature $(p,q)$ with $p\geq 3$ and subsequently also for forms of signature $(2,2)$. The techiniques of Eskin, Margulis and Mozes were recently extended to forms of signature $(2,1)$ by Wooyeon Kim in \cite{Kim-Oppenheim}.

\section{Effective equidistribution}\label{sec: equidistribution thm}
In this section we state the main equidistribution theorems of this paper, Theorem~\ref{thm: main} and Theorem~\ref{thm:main unipotent}. These theorems will be proved in Part~1 of the paper.

In \S\ref{sec: Oppenheim statement}, we will discuss applications of these equidistribution theorems to values of quadratic forms, viz.\ Oppenheim conjecture. Part~2 of the paper, is devoted to the proof of these results using the equidistribution result Theorem~\ref{thm: main} as well as an analysis of the cusp excursions for certain orbits which is closely related to the works of Eskin Margulis and Mozes in \cite{EMM-Upp,EMM-22} and Wooyen Kim \cite{Kim-Oppenheim}.

Throughout Part~1 of the paper, $G$ denotes the connected component of identity (as a Lie group) of the real points of an $\R$-algebraic group isogenous to one of the following
\[
\SL_2(\C),\quad \SL_2(\R)\times\SL_2(\R),\quad \SL_3(\R),\quad {\rm SU}(2,1), \quad {\rm Sp}_4(\R), \quad {\bf G}_2(\R).
\]
Put differently, $G$ is the finite index subgroup $\G(\R)^+\subset\G(\R)$ where $\G$ is a semisimple, connected, algebraic $\R$-group which is $\R$-quasi split and has absolute rank $2$, and $\G(\R)^+$ is the subgroup generated by unipotent one-parameter subgroups~\cite[Ch.~I]{Margulis-Book}. 
For $G=\SL_2(\C)$ and $\SL_2(\R)\times\SL_2(\R)$, Theorem~\ref{thm: main} and Theorem~\ref{thm:main unipotent} where established in~\cite{LMW22}; the proof we give here in particular gives a somewhat more streamlined proof of effective equidistribution also in these cases, but up to minor cosmetic improvements in Theorem~\ref{thm:main unipotent} we do not provide any new results in those cases. 

Let $H$ denote the image of a principal $\SL_2(\R)$ in $G$. In particular, $H$ is a maximal connected subgroup which is locally isomorphic to $\SL_2(\R)$, and $\Lie(G)=\Lie(H)\oplus\rfrak$ decomposes as sum of two {\em irreducible} representations of $H$, see \S\ref{sec: principal SL2}. 

For all $t,r\in\R$, let $a_t$ and $u_r$ denote the images of 
\[
\begin{pmatrix}
    e^{t/2} & 0 \\
    0 & e^{-t/2}
\end{pmatrix}
\quad\text{and}\quad \begin{pmatrix}
    1 & r \\
    0 & 1 
\end{pmatrix}.
\]
in $H$, respectively. Then $a_t$ and $u_r$ are {\em regular} one parameter diagonalizable and unipotent subgroups of $G$, respectively. 
With this normalization, if $v\in\rfrak$ is a highest weight vector for $a_t$, then 
\[
\Ad(a_t)v=e^{\dm t}v\qquad\text{where $\dim\rfrak=2\dm+1$}. 
\]

Let $\Gamma\subset G$ be an arithmetic lattice. 
By Margulis' arithmeticity theorem, any lattice $\Gamma\subset G$ is arithmetic, except possibly when $G$ is isogenous to ${\rm SU}(2,1)$ and $\SL_2(\C)$, or $G$ is isogenous to $\SL_2(\R)\times\SL_2(\R)$ and $\Gamma$ is reducible, where non-arithmetic lattices exist. 

Let $X=G/\Gamma$. Let $\dist$ be the right invariant metric on $G$ which is defined using the Killing form and the Cartan involution. This metric induces a metric $\dist_X$ on $X$, \label{d definition page}
and natural volume forms on $X$ and its submanifolds. Let $m_X$ denote the probability Haar measure on $X$

\begin{thm}\label{thm: main}
For every $x_0\in X$ and large enough $R$ (depending explicitly on $x_0$), for any $T \geq R^{\ref{a:main-3-1}}$, at least one of the following holds.
\begin{enumerate}
\item For every $\varphi\in C_c^\infty(X)$, we have 
\[
\biggl|\int_0^1 \varphi(a_{\log T}u_rx_0)\diff\!r-\int \varphi\diff\!m_X\biggr|\leq \Sob(\varphi)R^{-\ref{k:main-3-1}}
\]
where $\Sob(\varphi)$ is a certain Sobolev norm. 
\item There exists $x\in X$ such that $Hx$ is periodic with $\vol(Hx)\leq R$, and 
\[
\dist_X(x,x_0)\leq R^{\ref{a:main-3-1}}(\log T)^{\ref{a:main-3-1}}T^{-\dm}
\] 
where $\dim\rfrak=2\dm+1$. 
\end{enumerate} 
The constants $\consta\label{a:main-3-1}$ and $\constk\label{k:main-3-1}$ are positive, and depend on $X$ but not on $x_0$.
\end{thm}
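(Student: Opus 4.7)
The plan is to argue by contradiction, following the effective-unipotent-flow strategy developed in \cite{LMW22} and adapted to the rank-two quasi-split setting. Assume conclusion~(1) fails: for some $\varphi\in C_c^\infty(X)$ with $\Sob(\varphi)=1$, the $u_r$-average of $\varphi(a_{\log T}u_r x_0)$ differs from $\int\varphi\,dm_X$ by at least $R^{-\ref{k:main-3-1}}$. The first step uses effective exponential mixing for the $a_t$-action on $X$, available from the spectral gap on $L^2(X)$: combined with a bootstrap/shadowing argument, this produces two times $r_1\neq r_2$ in $[0,1]$ such that $a_{\log T}u_{r_1}x_0$ and $a_{\log T}u_{r_2}x_0$ are related by a small element $g=\exp(w+v)$ with $w\in\Lie(H)$ and $v\in\rfrak$. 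The transversal component $v\in\rfrak$ is the one to exploit, since displacement purely inside $\Lie(H)$ is consistent with equidistribution along an $H$-orbit and carries no information about the embedding of that orbit in $X$.

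The heart of the argument is to track how the transversal displacement $v\in\rfrak$ evolves under the dynamics. Because $H$ is the principal $\SL_2$, the adjoint action of $H$ on $\rfrak$ is the unique irreducible $H$-representation of dimension $2\dm+1$; consequently $\Ad(u_r)$ acts by a single Jordan block and $\Ad(a_t)$ has weights $-\dm,-\dm+1,\dots,\dm$, with $a_t$ expanding the highest-weight line by $e^{\dm t}=T^\dm$. Averaging $\Ad(u_r)v$ over $r\in[0,1]$ effectively projects $v$ onto the highest-weight line, yielding a single vector whose size we must control. Combining this structural fact with a quantitative Dani--Margulis linearization in the representation $\rfrak$ and a quantitative nondivergence estimate of Margulis-function type, one concludes that the concentration we extracted can only be explained by $x_0$ being polynomially close to a periodic $H$-orbit $Hx\subset X$. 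The scale $T^{-\dm}$ (up to logarithmic losses) in conclusion~(2) is precisely the threshold below which $T^\dm$ times the residual transversal displacement stays controlled.

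The volume bound $\vol(Hx)\leq R$ follows from a bootstrap argument: periodic $H$-orbits of volume $\geq R$ satisfy their own effective equidistribution relative to $m_X$, which combined with the closeness of $x_0$ to $Hx$ would then force the original orbit to nearly equidistribute in $X$, contradicting the failure of~(1). The main obstacle I expect is the second step: the quantitative linearization of unipotent dynamics in $\rfrak$ must be carried out uniformly across the cases of $G$ listed in the preamble, where $\dm$ ranges over $\{1,2,3,5\}$. Controlling how intermediate weight subspaces of $\rfrak$ contribute to drift during the averaging, and matching these contributions with the arithmetic constraint on periodic $H$-orbits coming from $\Q$-subgroups of $\G$, will require either careful case analysis or a uniform treatment exploiting the explicit structure of the principal $\SL_2$ triple. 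This is where the main new work lies beyond \cite{LMW22}.
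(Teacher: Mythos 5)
Your high-level framework is in the right ballpark---contradiction argument, transversal displacement in $\rfrak$, the role of the principal $\SL_2$ and the top weight $\dm$ controlling the scale $T^{-\dm}$, and linearization plus nondivergence as inputs. But there is a genuine gap in the middle where the real work happens, and it is exactly the step you flag as ``the main obstacle.''

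The missing ingredient is the iterative dimension-improvement scheme built around a modified Margulis function and a modified $\alpha$-dimensional energy on finite subsets of $\rfrak$, driven by the projection theorems of Gan--Guo--Wang for the adjoint action of $a_\ell u_r$ on $\rfrak$. Your description of the core mechanism---``averaging $\Ad(u_r)v$ over $r\in[0,1]$ effectively projects $v$ onto the highest-weight line''---is not how the argument runs, and taken literally it does not yield the quantitative gain needed. What actually happens is that a closing lemma (the paper's Proposition~\ref{prop: closing lemma}) produces, from a nearby piece of orbit, a whole finite collection $F\subset B_\rfrak(0,\beta)$ of transversal displacements with a weak \emph{a priori} bound on the associated Margulis-type function. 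One then iteratively pushes these sets forward by $a_\ell u_r$ for well-chosen $r$, and at each step Theorem~\ref{thm: proj thm} (built on~\cite{GGW}) gives a quantitative improvement of the finitary dimension $\alpha$, eventually taking it up to $2\dm+1-O(\kappa)$ at a fixed polynomial scale. Only once this high-dimension regime is reached can the exponential mixing argument (via Lemma~\ref{lem: proj and thickening} and Proposition~\ref{prop: 1-epsilon N}) close the loop and produce equidistribution; mixing alone is not strong enough to apply at the start, and it is this iterated projection-theoretic bootstrap that replaces the ``careful case analysis'' you conjecture is needed across $\dm\in\{1,2,3,5\}$. Your outline also misattributes the volume bound in conclusion~(2): it does not come from an equidistribution statement about periodic orbits of volume $\geq R$, but directly from the closing lemma together with the Diophantine-inheritance step (Proposition~\ref{prop: linearization translates}), which upgrades the raw closeness-to-periodic-orbit statement to one where the orbit's volume is controlled and the non-Diophantine alternative is ruled out.
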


The strategy for the proof of Theorem~\ref{thm: main} is similar to the general strategy developed in~\cite{LM-PolyDensity, LMW22}. 
A significant simplification is achieved here thanks to the use of higher dimensional energy. 
This in turn is made possible by using stronger projection theorems proved by Gan, Guo, Wang \cite{GGW}. 

Quantitative and effective versions of the aforementioned rigidity theorems in homogeneous dynamics have been sought after for some time, we refer to~\cite{LM-PolyDensity, LMW22, Yang-SL3} for a more detailed discussion of this problem and some recent progress.

As it was done in~\cite{LMW22}, combining Theorem~\ref{thm: main} and the Dani--Margulis linearization method \cite{DM-Linearization} (cf.\ also Shah \cite{Shah-MathAnn}),
that allows to control the amount of time a unipotent trajectory spends near
invariant subvarieties of a homogeneous space, 
we also obtain an effective equidistribution theorem for long pieces of unipotent orbits (more precisely, we use a sharp form of the linearization method taken from~\cite{LMMS}).

\begin{thm}\label{thm:main unipotent}
For every $x_0\in X$ and large enough $R$ (depending explicitly on $X$), for any $T\geq R^{\ref{a:unipotent-1}}$, at least one of the following holds.
\begin{enumerate}
\item For every $\varphi\in C_c^\infty(X)$, we have 
\[
\biggl|\frac{1}{T}\int_0^{T} \varphi(u_rx_0)\diff\!r-\int \varphi\diff\!m_X\biggr|\leq \Sob(\varphi)R^{-\ref{k:main uni}}
\]
where $\Sob(\varphi)$ is a certain Sobolev norm. 
\item There exists $x \in G/\Gamma$ with $\vol(H.x)\leq R^{\ref{a:unipotent-1}}$, 
and for every $r\in [0,T]$ there exists $g(r)\in G$ with $\|g(r)\|\leq R^{\ref{a:unipotent-1}}$ so that  
\[
\dist_X(u_{s}x_0, g(r)H.x)\leq R^{\ref{a:unipotent-1}}\left(\frac{|s-r|}{T}\right)^{1/\ref{a:unipotent-2}}\quad\text{for all $s\in[0,T]$.}
\] 
\item There is a parabolic subgroup $P\subset G$ and some $x \in G/\Gamma$ satisfying that
$\vol(R_u(P).x)\leq R^{\ref{a:unipotent-1}}$, and for every $r\in [0,T]$ there exists $g(r)\in G$ with $\|g(r)\|\leq R^{\ref{a:unipotent-1}}$ so that  
\[
\dist_X(u_{s}x_0, g(r)R_u(P).x)\leq R^{\ref{a:unipotent-1}}\left(\frac{|s-r|}{T}\right)^{1/\ref{a:unipotent-2}}\quad\text{for all $s\in[0,T]$.}
\] 
In particular, $X$ is not compact. 
\end{enumerate} 
The constants $\consta\label{a:unipotent-1}$, $\consta\label{a:unipotent-2}$, and $\constk\label{k:main uni}$ are positive and depend on $X$ but not on $x_0$. 
\end{thm}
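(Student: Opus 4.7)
The plan is to deduce Theorem~\ref{thm:main unipotent} from Theorem~\ref{thm: main} by means of the Dani--Margulis linearization method in the sharp form of \cite{LMMS}, following the template of \cite{LMW22}. The bridge between the two statements is the conjugation identity $a_{\log T} u_s a_{-\log T} = u_{Ts}$, which yields the exact equality
\[
\frac{1}{T}\int_0^T \varphi(u_r x_0)\,\diff\!r \;=\; \int_0^1 \varphi\bigl(a_{\log T} u_s y\bigr)\,\diff\!s,\qquad y := a_{-\log T} x_0.
\]
Applying Theorem~\ref{thm: main} at the translated base point $y$ immediately gives the dichotomy: either the left-hand side equidistributes, which is alternative~(1), or $y$ lies within distance $R^{A}(\log T)^{A} T^{-\dm}$ of some periodic $H$-orbit $H.x_H$ of volume at most $R$. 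We assume the latter henceforth.

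The second step promotes this single-time closeness of $y$ into the trajectory-wise closeness appearing in alternatives (2) and (3). For each conjugacy class of closed subgroups $L\subsetneq G$ that can stabilize a periodic orbit relevant to our dichotomy---namely conjugates of $H$ itself and, when $X$ is non-compact, the unipotent radicals $R_u(P)$ of parabolic subgroups $P\subset G$---one selects a representation $\rho_L : G \to \GL(V_L)$ together with an $L$-invariant vector $v_L \in V_L$ such that closed $L$-orbits of volume at most $R^{A}$ correspond to $\Gamma$-orbits of $\rho_L(g)v_L$ that avoid $0$ at the scale controlled by $R^{A}$. The $u_r$-flow acts polynomially on $V_L$, so the sharpened linearization of \cite{LMMS} shows that if $\rho_L(u_r g)v_L$ is small at one time $r$---equivalently, $u_r x_0$ is close to an $L$-orbit---then this polynomial cannot leave a slightly larger neighborhood of $0$ too fast. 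Translating back through the pull-back chain, this yields precisely the Hölder-type bound $R^{A}(|s-r|/T)^{1/A}$ appearing in the theorem, and the choice between alternatives (2) and (3) is dictated by whether the trapping subgroup $L$ is conjugate to $H$ or to some $R_u(P)$, the latter forcing $X$ to be non-compact.

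The main obstacle is this second step: one must carry out the linearization uniformly over the (finite but $R$-dependent) family of candidate invariant subvarieties, and handle the cuspidal parabolic case on the same footing as the periodic $H$-orbit case. The absolute-rank-$2$ hypothesis is decisive here, since the only connected proper subgroups of $G$ that normalize a conjugate of the $u_r$-flow are, up to conjugacy, the principal $\SL_2$ subgroup $H$ and the unipotent radicals of parabolic subgroups---a short, explicit list which reduces the linearization to a controllable finite family of representations. With those representations in hand, the $(C,\alpha)$-good property of polynomial $u_r$-trajectories supplies both the time-wise Hölder bound and the trichotomy claimed by the theorem.
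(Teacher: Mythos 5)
The pivot step of your proposal---rewriting the time-$T$ unipotent average as $\int_0^1\varphi(a_{\log T}u_s\,y)\,\diff s$ with $y=a_{-\log T}x_0$ and then applying Theorem~\ref{thm: main} at $y$ with flow-scale $\log T$---has a gap that cannot be repaired within that framing. The hypotheses of Theorem~\ref{thm: main} require $R$ to be ``large enough depending explicitly on $x_0$'' (concretely, $R\gtrsim\inj(x_0)^{-\dm_0}$, see~\eqref{eq: how big is R xi}) together with $T\geq R^{\ref{a:main-3-1}}$. But $\inj(y)=\inj(a_{-\log T}x_0)$ is completely uncontrolled: $a_{-\log T}$ can push $x_0$ arbitrarily deep into the cusp, and in general $\inj(y)$ can be as small as a negative power of $T$. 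The requirement $R\gtrsim\inj(y)^{-\dm_0}$ then forces $R\gtrsim T^{c}$, which is incompatible with $T\geq R^{\ref{a:main-3-1}}$, so the theorem gives nothing. Moreover, even granting a hypothetical control on $\inj(y)$, the conclusion in case~(2) of Theorem~\ref{thm: main} places $y$ within distance $\sim T^{-\dm}$ of a periodic orbit, and you need to transfer this back to the trajectory of $x_0=a_{\log T}y$. Since $\|\Ad(a_{\log T})\|\asymp T^{\dm}$, the one-time closeness at scale $T^{-\dm}$ is blown up to an $O(1)$ statement after conjugation, which is not a usable input for linearization.

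The paper's proof (Section~\ref{sec: proof long unipotent}) avoids both problems by conjugating only by $a_{-d}$ with the moderate $d=\ref{a: linearization 1}\log R$, so that
\[
\frac1T\int_0^T\varphi(u_rx_0)\,\diff r=\frac1{T_1}\int_0^{T_1}\!\!\int_0^1\varphi(a_d\,u_r\,u_{r_1}a_{-d}x_0)\,\diff r\,\diff r_1+O(\|\varphi\|_\infty T_1^{-1}),\qquad T_1=e^{-d}T,
\]
then applies Theorem~\ref{thm: main} with the fixed flow-scale $e^d=R^{\ref{a: linearization 1}}$ at each basepoint $x(r_1)=u_{r_1}a_{-d}x_0$. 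The exceptional sets $\exceptional_1$ (cusp excursions, see~\eqref{eq: main unip exceptional cusp}) and $\exceptional_2$ (nearby periodic orbits, see~\eqref{eq: main unip exceptional periodic}) are then analyzed: if $\exceptional_1$ is large, Theorem~\ref{thm: non-div parabolic} produces a parabolic and one lands in alternative~(3); if $\exceptional_2$ is large while $\exceptional_1$ is small, Lemma~\ref{lem: unipotent linearization}---which crucially needs a positive-proportion \emph{set} of times $r_1$ with nearby closed orbits, not a single time---produces alternative~(2) or~(3). Your proposal collapses the decomposition over $T_1$ basepoints to a single basepoint and thereby loses both the injectivity-radius control and the quantitative input that drives the linearization step.
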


Remarks: In options (2) and (3), one can be more explicit about the properties of the element $g(r)$. For instance in option~(2), since $u_{s}x_0$ needs to stay close to $g(r)H.x$ for all $s$ in long intervals around $r$ it follows from (2) that $g(r)$ is very close to $C_G(U)$  (there is of course some play between what ``stay close'' and ``long interval'' means --- if one uses a stricter interpretation of what close means, then one must be more lenient on what long means and vice versa). An analogous statement for $G=\SL_2(\R)\times\SL_2(\R)$ turned out to be useful in the work of Forni, Kanigowski, Radziwi\l{}\l{} studying equidistribution of orbits at nearly prime times \cite{FKR-primes}

\section{Applications to the Oppenheim conjecture}\label{sec: Oppenheim statement}

We now discuss applications of Theorem~\ref{thm: main} to values of quadratic forms in the context of the Oppenheim conjecture. 

\begin{thm}\label{thm: Oppenheim}
 Let $Q$ be an indefinite ternary quadratic form with $\det Q=1$. For all $R$ large enough, depending on $\norm{Q}$, and all $T\geq R^{\ref{a:Oppenheim}}$ at least one of the following holds.
\begin{enumerate}
    \item For every $s\in [-R^{\ref{k:Oppenheim}}, R^{\ref{k:Oppenheim}}]$, there exists a primitive vector $v\in \Z^3$ with $0<\norm{v}\leq T$ so that 
    \[
    \absolute{Q(v)-s}\leq R^{-\ref{k:Oppenheim}}
    \]
    \item There exists some $Q'\in\Mat_3(\R)$ with $\norm{Q'}\leq R$ so that 
    \[
    \norm{Q-\lambda Q'}\leq  R^{\ref{a:Oppenheim}}(\log T)^{\ref{a:Oppenheim}} T^{-2}\qquad\text{where $\lambda=(\det Q')^{-1/3}$.}
    \]
\end{enumerate}
The constants $\consta\label{a:Oppenheim}$ and $\constk\label{k:Oppenheim}$ 
are absolute.
\end{thm}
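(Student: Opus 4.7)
The plan is to translate Theorem~\ref{thm: Oppenheim} into a statement about the orbit of a single point $x_0 \in X = \SL_3(\R)/\SL_3(\Z)$ under a principal $\SL_2(\R) \subset \SL_3(\R)$, and apply Theorem~\ref{thm: main}. Fix a reference form $Q_0$ on $\R^3$, e.g.\ $Q_0(x,y,z) = 2xz - y^2$, and choose $g_Q \in \SL_3(\R)$ with $Q(v) = Q_0(g_Q^{-1} v)$ for all $v \in \R^3$; since $\det Q = 1$, such a $g_Q$ exists and can be taken with $\|g_Q\| \leq \|Q\|^{O(1)}$. Set $x_0 = g_Q^{-1}\Gamma$. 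Under the principal embedding the group $H$ of Theorem~\ref{thm: main} coincides with $\SO(Q_0)$, the element $a_t$ acts on $\R^3$ as $\mathrm{diag}(e^t,1,e^{-t})$ in suitable coordinates, and one computes $\dim \rfrak = 5$ so $\dm = 2$, matching the $T^{-2}$ exponent in option (2) of Theorem~\ref{thm: Oppenheim}. Crucially, $a_t$ and $u_r$ preserve $Q_0$.

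Applying Theorem~\ref{thm: main} to $x_0$ produces two cases. If option (2) holds, there is $y = g\Gamma \in X$ with $Hy$ periodic of volume $\leq R$ and $\dist_X(y,x_0) \leq R^{A}(\log T)^{A} T^{-2}$. By standard arithmeticity for periodic $H$-orbits in $X$, $gHg^{-1}$ is defined over $\Q$; bounding the complexity of the resulting arithmetic lattice in terms of its covolume yields $\|g\| \leq R^{O(1)}$ and that $Q' := Q_0 \circ g^{-1}$ is proportional to a rational form of denominator $\leq R^{O(1)}$. The distance bound on $\dist_X(y,x_0)$ then transfers to the asserted approximation $\|Q - \lambda Q'\| \leq R^{A}(\log T)^{A} T^{-2}$, which is option (2) of Theorem~\ref{thm: Oppenheim}.

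Now suppose option (1) of Theorem~\ref{thm: main} holds. For each target $s \in [-R^{\kappa}, R^{\kappa}]$ with $\kappa$ a small constant chosen below, put $M_s = \max(1, \sqrt{|s|})$ and apply Theorem~\ref{thm: main} with $T$ replaced by $T_s := T/M_s$ (still $\geq R^{A}$ provided the constant in Theorem~\ref{thm: Oppenheim} is taken sufficiently large). Introduce the smooth test function
\[
\varphi_s(g\Gamma) \;=\; \sum_{v \in \Z^3 \setminus \{0\}} \chi_s(g^{-1}v)\,\chi_{M_s}(g^{-1}v),
\]
where $\chi_s$ is a smooth approximate indicator of $\{w : |Q_0(w) - s| \leq \tfrac{1}{2}R^{-\kappa}\}$ and $\chi_{M_s}$ is a smooth cutoff of $\{\|w\| \leq M_s\}$. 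Because $a_{\log T_s} u_r \in \SO(Q_0)$ preserves $Q_0$, the condition $\varphi_s(a_{\log T_s} u_r x_0) > 0$ is equivalent to the existence of $v \in \Z^3$ with $|Q(v) - s| \leq R^{-\kappa}$ and $\|v\| \leq O(T_s M_s) = O(T)$; a M\"obius inversion on $\varphi_s$ upgrades $v$ to a primitive vector. A Siegel mean-value computation yields $\int_X \varphi_s\,dm_X \gtrsim R^{-\kappa}$ uniformly for $s \in [-R^\kappa, R^\kappa]$. Feeding this into Theorem~\ref{thm: main}, one chooses $\kappa$ small enough that $\Sob(\varphi_s) \cdot R^{-c} \ll R^{-\kappa}$, where $c>0$ is the error exponent supplied by Theorem~\ref{thm: main}, and concludes $\int_0^1 \varphi_s(a_{\log T_s} u_r x_0)\,dr > 0$, forcing some $r$ to witness the required $v$.

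The principal technical obstacle is cusp control. The sum defining $\varphi_s$ is not compactly supported: in the cusp of $X$ the lattice $g\Z^3$ becomes very thin, many $v$'s contribute, and $\Sob(\varphi_s)$ blows up. This is amplified when $|s|\approx R^\kappa$, because then $M_s$ itself grows polynomially in $R$ so the support of $\chi_{M_s}$ reaches further into the cusp. Two steps are required: (i) truncate $\varphi_s$ with a Margulis--Minkowski-type height function on $X$ so that the truncated function has Sobolev norm polynomial in $R$ and $\log T$; and (ii) bound the time the orbit $\{a_{\log T_s} u_r x_0 : r\in[0,1]\}$ spends high in the cusp, showing the discarded contribution is at most a small fraction of $\int \varphi_s\, dm_X$. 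For signature $(2,1)$ ternary forms this cusp analysis is the genuinely delicate step --- the qualitative version was only recently completed by Kim~\cite{Kim-Oppenheim} building on Eskin--Margulis--Mozes~\cite{EMM-Upp,EMM-22} --- and making those cusp estimates polynomially effective in $R$ is the core of Part~2 of the paper.
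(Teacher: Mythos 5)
Your overall plan is sound and in the same spirit as the paper's: translate to a dynamical statement about $g_Q\Gamma$ in $\SL_3(\R)/\SL_3(\Z)$ under the principal $\SL_2(\R)$, feed a Siegel-transform test function into an effective equidistribution theorem, and in the alternative case convert the nearby periodic $H$-orbit into a rational approximation of $Q$. Two remarks. First, a cosmetic difference: the paper replaces the $u_r$ average by the circle ($\SO(2)$) average via Theorem~\ref{thm: equid circles}, because $K$ acts transitively on level sets $\{Q_0 = c, \|\cdot\| = \rho\}$ and this makes the calculus linking circle averages of Siegel transforms to the counting function (Lemma~\ref{lem: EMM calculus}, Lemma~\ref{lem: siegel quadratic form}) considerably cleaner; one could work with the horospherical parametrization directly as you suggest, but the change of variables is messier.

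The more substantive issue is that you have conflated Theorem~\ref{thm: Oppenheim} with the much harder Theorem~\ref{thm: quantitative Oppenheim}, and as a result you have overestimated what is needed. You write that bounding the time $\{a_{\log T_s} u_r x_0\}$ spends high in the cusp is ``the core of Part~2'' and a required step of the proof of Theorem~\ref{thm: Oppenheim}. It is not. Theorem~\ref{thm: Oppenheim} only asserts the \emph{existence} of a primitive integer vector with $|Q(v) - s|$ small, which requires only a \emph{lower} bound on the circle average of $\hat f$. The Siegel transform $\hat f = \sum_{v\in\Z^3} f(gv)$ is nonnegative, so for the smooth truncation $\hat f_S = \varphi_S \hat f$ with $1_{X_{1/S}} \le \varphi_S \le 1_{X_{1/(S+1)}}$ one has the pointwise inequality $\hat f \ge \hat f_S$. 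Hence
\[
\int_K \hat f(a_{\log T} k g_Q\Gamma)\,\xi(k)\,\diff\!k \;\ge\; \int_K \hat f_S(a_{\log T} k g_Q\Gamma)\,\xi(k)\,\diff\!k,
\]
and one applies effective equidistribution only to $\hat f_S$, whose Sobolev norm is polynomial in $S$. One then compares $\int \hat f_S\,\diff m_X$ to $\int \hat f\,\diff m_X$ --- a purely static estimate, $O(S^{-\star})$, coming from $\hat f \ll \max(\alpha_1,\alpha_2)\in L^2$; no control of cusp excursions of the orbit is needed. This is precisely what Proposition~\ref{prop: lower bd of ave f hat} does. The delicate cusp analysis (Proposition~\ref{prop: L1+epsilon upp bd} and the EMM-style $\alpha_i$-function machinery of \S\ref{sec: Oppenheim}) is required only for the \emph{upper} bound on the count, which is needed only for Theorem~\ref{thm: quantitative Oppenheim}; it plays no role in the proof of Theorem~\ref{thm: Oppenheim}. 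So your step~(ii) as stated is a non-step, and your step~(i) reduces to the one-line observation that $\hat f \geq \hat f_S$. Realizing this is exactly what allows the paper to give Theorem~\ref{thm: Oppenheim} a short, self-contained proof (Theorem~\ref{thm: Oppenheim-lower bd} and \S\ref{proof of Oppenheim-lower bd}) before developing the full apparatus of Part~2.
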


Since algebraic numbers cannot be well approximated by rationals, one concludes the following corollary from Theorem~\ref{thm: Oppenheim}. 

\begin{coro}\label{cor: Oppenheim}
Let $Q$ be a reduced, indefinite, ternary quadratic form which is not proportional to an integral form but has algebraic coefficients. Then for all $T$ large enough, depending on the degrees and heights of the coefficients of $Q$,  
we have the following: for any 
\[
s\in [- T^{\ref{k:Oppenheim 2}},  T^{\ref{k:Oppenheim 2}}]
\]
there exists a primitive vector $v\in \Z^3$ with $0<\norm{v}\leq T$ so that 
    \[
    \absolute{Q(v)-s}\leq T^{-\ref{k:Oppenheim 2}}.
    \]
The constant $\constk\label{k:Oppenheim 2}$ depends on the degrees of the coefficients of $Q$.  
\end{coro}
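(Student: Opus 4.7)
The plan is to apply Theorem~\ref{thm: Oppenheim} with $R := T^{\varepsilon}$ for a small $\varepsilon>0$ depending only on the degrees of the coefficients of $Q$, and to rule out option~(2) of that theorem by Liouville's inequality. After a harmless rescaling absorbed into the dependence on $\|Q\|$, we may assume $\det Q = \pm 1$. The hypothesis $T \geq R^{\ref{a:Oppenheim}}$ holds provided $\varepsilon < 1/\ref{a:Oppenheim}$; if option~(1) of Theorem~\ref{thm: Oppenheim} then holds, it yields precisely the conclusion of the corollary with $\ref{k:Oppenheim 2} := \varepsilon \cdot \ref{k:Oppenheim}$. So it suffices to rule out option~(2) for $T$ large in terms of the degrees and heights of the coefficients of $Q$.

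Suppose therefore that option~(2) holds with $\|Q - \lambda Q'\| \leq R^{\ref{a:Oppenheim}}(\log T)^{\ref{a:Oppenheim}} T^{-2}$, $\|Q'\| \leq R$, and $\lambda = (\det Q')^{-1/3}$. The form $Q'$ arises from a periodic $\SO(2,1)$-orbit of volume $\leq R$ on $\SL_3(\R)/\SL_3(\Z)$; by arithmeticity such orbits correspond to forms proportional to rational ones, and the standard volume-height dictionary allows one to take $Q' \in \Mat_3(\Q)$ with numerators and common denominator of size $O(R^{O(1)})$. Fix an index $(i_0,j_0)$ with $q_{i_0j_0} \neq 0$. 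A direct calculation, using $q_{ij} = \lambda q'_{ij} + O(R^{\ref{a:Oppenheim}}(\log T)^{\ref{a:Oppenheim}} T^{-2})$ together with $|q_{i_0j_0}|$ being bounded below in terms of $Q$ alone, shows that each ratio $\alpha_{ij} := q_{ij}/q_{i_0j_0}$ is approximated by the rational $r_{ij} := q'_{ij}/q'_{i_0j_0}$, of height $O(R^{O(1)})$, to within $C_Q R^{\ref{a:Oppenheim}}(\log T)^{\ref{a:Oppenheim}} T^{-2}$, where $C_Q$ depends only on $Q$.

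Since $Q$ is not proportional to any rational form, some $\alpha_{ij}$ is irrational; since $Q$ has algebraic coefficients, this $\alpha_{ij}$ is a real algebraic irrational of degree $\leq d$ and height $\leq H$, where $d$ and $H$ depend only on the coefficients of $Q$. Liouville's inequality then gives $|\alpha_{ij} - r_{ij}| \geq c(d,H) \cdot R^{-O(d)}$, which combined with the upper bound forces
\[
T^2 \leq c(d,H)^{-1} C_Q R^{O(d)+\ref{a:Oppenheim}}(\log T)^{\ref{a:Oppenheim}}.
\]
Substituting $R = T^{\varepsilon}$ and choosing $\varepsilon := 1/(2\ref{a:Oppenheim} + O(d))$, which depends only on $d$, produces a contradiction for $T$ larger than some constant depending on $d$, $H$, and $\|Q\|$.

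The main obstacle, and the only step beyond routine bookkeeping, is verifying that the auxiliary form $Q'$ in option~(2) of Theorem~\ref{thm: Oppenheim} may indeed be taken rational with polynomial height bound in $R$. This is natural via the correspondence between periodic $H$-orbits on $\SL_3(\R)/\SL_3(\Z)$ and rational quadratic forms, with the height of the latter controlled by the volume of the former, but must be extracted from the proof of Theorem~\ref{thm: Oppenheim} rather than read off from its statement. Granted this, the only Diophantine input is the classical Liouville inequality, and the proof reduces to choosing $\varepsilon$ small enough to absorb all polynomial losses, with the final exponent $\ref{k:Oppenheim 2}$ depending only on the degrees of the coefficients of $Q$ as claimed.
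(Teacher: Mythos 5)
Your proof is correct and takes essentially the same approach as the paper, which simply cites \cite[\S12.3]{LM-Oppenheim}: apply Theorem~\ref{thm: Oppenheim} with $R=T^{\varepsilon}$ for a small $\varepsilon$ depending on the degree, and rule out option~(2) via Liouville's inequality. Your concern about the rationality of $Q'$ is well founded and correctly resolved: as printed, option~(2) of Theorem~\ref{thm: Oppenheim} says $Q'\in\Mat_3(\R)$, which would make it vacuously true (take $Q'=Q$, $\lambda=1$); the intended reading --- consistent with Theorem~\ref{thm: quantitative Oppenheim}, with~\eqref{eq: Diophantine cond proof 2} in the proof of Theorem~\ref{thm: Oppenheim-lower bd}, and with Lemma~\ref{lem: periodic orbit rational form} --- is $Q'\in\Mat_3(\Z)$ with $\|Q'\|\ll R^{\star}$, which is exactly what your volume-height dictionary argument recovers.
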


As was mentioned, Theorem~\ref{thm: Oppenheim} and Corollary~\ref{cor: Oppenheim} are an effective version of a celebrated theorem of Margulis~\cite{Margulis-Oppenheim}, see also~\cite{DM-Elemntary}. An effective version, with a polylogarithmic rate, was proved by the first named author and Margulis~\cite{LM-Oppenheim}. The proofs in~\cite{Margulis-Oppenheim, DM-Elemntary} and~\cite{LM-Oppenheim} rely on establishing a special case of Raghunathan's conjecture for unipotent flows --- albeit an effective version in the case of~\cite{LM-Oppenheim}. 

Similarly, our proof of Theorem~\ref{thm: Oppenheim} is based on Theorem~\ref{thm: main}. Indeed, using Theorem~\ref{thm: main}, for $G=\SL_3(\R)$, $H=\SO(Q)^\circ$, and adapting the arguments developed by Dani and Margulis~\cite{DM-Linearization} and by Eskin, Margulis, and Mozes~\cite{EMM-Upp, EMM-22}, and recent advances by W.~Kim \cite{Kim-Oppenheim}, we obtain the following theorem. 

\begin{thm}\label{thm: quantitative Oppenheim}
Let $Q$ be an indefinite ternary quadratic form with $\det Q=1$, and put 
\[
\mathsf C_Q=\int_{L}\frac{\diff\!\sigma}{\norm{\nabla Q}}
\]
where $L=\{v\in\R^3: \|v\|\leq 1, Q(v)=0\}$ and $\diff\!\sigma$ is the area element on $L$.

Let $a<b$ and $A\geq 10^3$. There are constants $T_0$ and $C$ depending on $A$ and $\norm{Q}$,  absolute constants $N\geq 1$ and $0<\delta_0<1$, and for every $0<\delta\leq \delta_0$ some $\kappa=\kappa(\delta, A)$ so that the following holds. 

Assume that for $T\geq T_0$ and all integral forms $Q'$ with $\|Q'\|\leq T^\delta$
    \[
    \norm{Q-\lambda Q'}\geq  \|Q'\|^{-A}\qquad\text{where $\lambda=(\det Q')^{-1/3}$,}
    \]
then the following is satisfied: If 
\begin{multline*}
    \biggl|\# \Bigl\{v\in\Z^3: \norm{v}\leq T, a\leq Q(v)\leq b\Bigr\}-\mathsf C_Q(b-a)T \biggr|\geq\\ 
    + C (1+\absolute{a}+\absolute{b})^NT^{1-\kappa},
    \end{multline*}
    there are at most 4 lines $L_1,\ldots,L_4$ and at most 4 planes $P_1,\ldots, P_4$ so that 
    \begin{multline*}
    \# \Bigl\{v\in\Z^3: \norm{v}\leq T, a\leq Q(v)\leq b\Bigr\}=\\
    \mathsf C_Q(b-a)T +\mathcal R_T
    + O\Bigl((1+\absolute{a}+\absolute{b})^NT^{1-\kappa}\Bigr)
    \end{multline*}
    where $\mathcal R_T=\#\{v\in(\cup_i L_i)\textstyle\bigcup(\cup_i P_i): \|v\|\leq T, a\leq Q(v)\leq b\}$. 
    
More precisely, these exceptional lines and planes satisfy the following: 
$L_i\cap\Z^3={\rm Span}\{v_i\}$ satisfying that    
    \be\label{eq: size of v and Qv}
    \|v_{i}\|\leq T^{\delta}\quad\text{and}\quad |Q(v_{i})|\leq T^{-2+\delta}
    \ee
for every $1\leq i\leq 4$.

Also $P_i\cap \Z^3={\rm Span}\{w_{i,1}, w_{i,2}\}$ satisfying  
 \be\label{eq: size of w and Qw}
    \|w_{i,1}\|, \|w_{i,2}\|\leq T^{\delta}\quad\text{and}\quad |Q^*(w_{i,1}\wedge w_{i,2})|\leq T^{-2+\delta}
    \ee
for every $1\leq i\leq 4$. 
     \end{thm}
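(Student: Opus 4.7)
The plan is to derive Theorem~\ref{thm: quantitative Oppenheim} from the effective equidistribution statement Theorem~\ref{thm: main} applied to the $H$-orbit of $x_Q = g_Q\Gamma \in X = \SL_3(\R)/\SL_3(\Z)$, where $H=\SO(Q)^\circ$ and $g_Q\in\SL_3(\R)$ conjugates a standard split form into $Q$. After this conjugation, $H$ is the image of a principal $\SL_2(\R)$ in $\SL_3(\R)$ and the relevant weight is $\dm=2$, so Theorem~\ref{thm: main} applies. The overall scheme follows Eskin--Margulis--Mozes~\cite{EMM-Upp,EMM-22} and Kim~\cite{Kim-Oppenheim}, with qualitative equidistribution replaced by the polynomial bound of Theorem~\ref{thm: main} and qualitative linearization by the sharp effective version of~\cite{LMMS}. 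The starting point is a standard unfolding that expresses
\[
\#\bigl\{v\in\Z^3: \norm{v}\leq T,\ a\leq Q(v)\leq b\bigr\}
= T \int_a^b \int_0^1 \widetilde\chi_s\bigl(a_{\log T}\,u_r\,x_Q\bigr)\,\diff\!r\,\diff\!s + O(1),
\]
where $\widetilde\chi_s$ is a Siegel-type transform attached to a smooth bump supported near $\{v\in\R^3 : \|v\|\asymp 1,\ Q(v)\approx s\}$, and the Siegel mean-value theorem identifies $\int\widetilde\chi_s\,\diff\!m_X$ with $\mathsf C_Q$ (independently of $s$). Replacing each inner integral by its Haar average therefore recovers the target $\mathsf C_Q(b-a)T$.

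I would then apply Theorem~\ref{thm: main} at $x_Q$ with parameter $R=T^{\delta'}$ for some small $\delta'\ll\delta$, and use the Diophantine assumption to eliminate option~(2). A periodic $H$-orbit of volume $\leq R$ near $x_Q$ corresponds through the standard dictionary to a rational indefinite form $Q'$ with $\|Q'\|\leq R^{O(1)}$, while the distance bound $R^{O(1)}(\log T)^{O(1)}T^{-\dm}$ translates (via the implicit function theorem on the space of forms) into $\|Q-\lambda Q'\|\leq R^{O(1)}T^{-2}$. For $\delta'$ chosen small enough compared to $\delta$, this contradicts $\|Q-\lambda Q'\|\geq\|Q'\|^{-A}$, so option~(1) of Theorem~\ref{thm: main} must apply, yielding a polynomial equidistribution rate for each inner integral against any compactly supported smooth test function with controlled Sobolev norm.

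The main obstacle is that $\widetilde\chi_s$ is \emph{not} bounded: it grows polynomially in the Eskin--Margulis height function $\alpha$ as the orbit enters the cusp, so its Sobolev norm is uncontrolled. To handle this I would truncate $\widetilde\chi_s$ at height $M$ (a small power of $R$), apply Theorem~\ref{thm: main} to the truncation, and estimate the complementary cusp contribution by an effective Dani--Margulis linearization. By the $(C,\alpha)$-good property, a long excursion with $\alpha\bigl(a_{\log T}u_rx_Q\bigr)>M$ forces the orbit to remain near an $H$-invariant subvariety of $X$, which in $\SL_3(\R)/\SL_3(\Z)$ corresponds either to the stabilizer of a primitive $v\in\Z^3$ (via the action on $\R^3$) or to the stabilizer of a primitive $2$-plane $\Span(w_1,w_2)$ (via the action on $\wedge^2\R^3$). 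The sharp proximity-time bound from~\cite{LMMS} together with Kim's combinatorial analysis~\cite{Kim-Oppenheim} of exceptional configurations then produces the exceptional term $\mathcal R_T$; the refined size bounds~\eqref{eq: size of v and Qv} and~\eqref{eq: size of w and Qw} record the precise sense in which each exceptional line or plane is close to an isotropic direction, and the count of $4$ of each reflects the number of pairwise-disjoint dyadic regimes in which a single subvariety can dominate the cusp excursion.

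The hardest part will be making the linearization polynomial throughout while simultaneously verifying that the Diophantine hypothesis continues to rule out option~(2) of Theorem~\ref{thm: main} at every relevant scale of $M$. Closeness to the null cone $\{Q=0\}$ is precisely the regime in which the higher-signature moment estimates of~\cite{EMM-Upp} break down, and the effective implementation of Kim's replacement argument~\cite{Kim-Oppenheim} --- tracking all constants polynomially in $R$ and combining them with the Sobolev-norm bound of Theorem~\ref{thm: main} in a tail-truncation scheme --- is where the bulk of the work lies.
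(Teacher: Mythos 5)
Your proposal captures the broad architecture — Siegel transform plus polynomial equidistribution plus cusp truncation — but it misdescribes the two ingredients that actually carry the proof, and one of your explanations for the structure of the result is wrong.

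First, the mechanism for controlling the cusp contribution. You propose to ``estimate the complementary cusp contribution by an effective Dani--Margulis linearization,'' invoking the sharp proximity-time bound of~\cite{LMMS}. That linearization framework is used in the paper to prove Theorem~\ref{thm:main unipotent}, not Theorem~\ref{thm: quantitative Oppenheim}. The Oppenheim result instead relies on Proposition~\ref{prop: L1+epsilon upp bd}, an $L^{1+\epsilon}$-type upper bound for the \emph{modified} Siegel transform $\tilde f_{\rho,A,t}$ in which one sums only over integer vectors not lying on any $(\rho,A,t)$-exceptional line or plane. That modification is the key object you are missing: without it the Siegel transform fails to be in $L^{1+\epsilon}$ for signature $(2,1)$, and no amount of truncation or linearization repairs this. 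The proof of Proposition~\ref{prop: L1+epsilon upp bd} in turn rests on Proposition~\ref{prop: main uppbd prop}, a combinatorial dichotomy: either the near-null vectors in a dyadic shell are concentrated on at most $e^{(1-2\rho)\ell}$ rational planes on which the $\alpha_1$-contribution is dominated by $\alpha_2$, or $Q$ is rationally approximable at the scale violating your Diophantine hypothesis. Combined with the Eskin--Margulis--Mozes moment bounds for $\alpha_i$ (Theorem~\ref{thm: EMM-alpha bound}), this gives the polynomial decay you need; it is not a linearization argument at all. If you try to run the~\cite{LMMS} machinery here you will not produce the exceptional term $\mathcal R_T$ in the form needed, and in particular you will not get the precise control on $\|v_i\|$ and $|Q(v_i)|$ in~\eqref{eq: size of v and Qv} without effectively reproving Proposition~\ref{prop: main uppbd prop}.

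Second, your explanation for the number $4$ is wrong. You say the count ``reflects the number of pairwise-disjoint dyadic regimes in which a single subvariety can dominate the cusp excursion.'' In fact the bound of four exceptional lines comes from Lemma~\ref{lem: five vectors}: any five integer vectors with small $|Q|$, no three coplanar and of bounded height, already pin down an integral form $P$ with $\|Q-\lambda P\|$ small, violating the Diophantine hypothesis. So there can be at most four such lines (and dually at most four exceptional planes via $Q^{-1}$ acting on $\wedge^2\Z^3$). The cutoff is algebraic linear-algebraic, not a dyadic-scale count.

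Finally, two smaller mismatches. You set the argument up with $u_r$-averages over $[0,1]$; the paper uses $K$-averages $\int_K\hat f(a_tkg_Q\Gamma)\xi(k)\,\diff k$ and the circular equidistribution Theorem~\ref{thm: equid circles} (itself a corollary of Theorem~\ref{thm: main}), because the calculus lemma relating the Siegel transform to the lattice count (Lemma~\ref{lem: EMM calculus}) is naturally stated in the $K$-parametrization of directions on the light cone. This is inessential in principle, but if you insist on $u_r$-averages you would need to redo that calculus. And your displayed unfolding identity — expressing the count as $T\int_a^b\int_0^1 \widetilde\chi_s(a_{\log T}u_rx_Q)\,\diff r\,\diff s + O(1)$ — is not literally correct with an $O(1)$ error; the relation between the count and the averaged Siegel transform is proved only up to polynomially small errors, through Lemmas~\ref{lem: calculus} and~\ref{lem: siegel quadratic form}, and it also requires summing over dyadic shells $e^{\ell}\approx T(1-j\vare)$ and using the elementary bound~\eqref{eq: contribution of vectors < et} for $\ell\le t/4$.
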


Note that the main term in part~(1) captures the asymptotic behavior of the volume of the solid given by $Q(v)=a$, $Q(v)=b$, and 
$\norm{v}\leq T$. We also remark that there is a dense family of irrational quadratic forms 
which are very well approximated by rational forms so that  
\[
\# \Bigl\{v\in\Z^3: \norm{v}\leq T, a\leq Q(v)\leq b\Bigr\}\gg T(\log T)^{1-\vare},
\]
see~\cite[\S 3.7]{EMM-Upp}.

\subsection*{Acknowledgment}
We are grateful to the Institute for Advanced Study for its hospitality on multiple occasions during the completion of this project. We thank Hong~Wang for many valuable discussions on projection theorems. We also thank Zuo~Lin for going over early drafts of the elements of \S\ref{sec: Marg func} and providing helpful feedback.

The landmark work of G.A.\ Margulis on the Oppenheim Conjecture and more generally in homogeneous dynamics and its applications to number theory has been a continued source of inspiration for us. In particular, E.L.\ and A.M.\ have had joint works closely related to this paper with Margulis, and are very grateful for all we have learned from him. His insights feature in many places in this paper, sometimes implicitly.


\part{Polynomially effective equidistribution theorems}

In this chapter we prove Theorem~\ref{thm: main} and Theorem~\ref{thm:main unipotent}. 
As noted earlier, the overall strategy for the proofs aligns with the approach developed in~\cite{LMW22}. A key simplification in the proof of Theorem~\ref{thm: main} is accomplished through the use of higher-dimensional energy, made possible by stronger projection theorems established by Gan, Guo, and Wang \cite{GGW}.

\section{Notation and preliminary results}\label{sec: notation}

Let $\G$ be a semisimple, connected, $\R$-group with absolute rank $2$ which is $\R$-quasi split. Let $G=\G(\R)^+$, where $\G(\R)^+$ is the subgroup generated by unipotent one-parameter subgroups. In other words $G$ is the connected component of the identity in the Lie group $\G(\R)$, see~\cite[Ch.~I]{Margulis-Book}. 

More explicitly, $\G$ is isogenous to one the following groups 
\[
\SO(3,1),\quad\SL_2\times \SL_2,\quad\SL_3,\quad {\rm SU}(2,1), \quad {\rm Sp}_4, \quad {\bf G}_2.
\]
Indeed, $\G$ is $\R$-split except when it is isogenous to $\SO(3,1)$ or ${\rm SU}(2,1)$, in which case $\G$ is $\R$-quasi split but not split. 
We assume $G\subset\SL_\dN(\R)$ for some $\dN$ which is fixed throughout the paper.

\subsection{The principal $\SL_2(\R)$ in $G$}\label{sec: principal SL2}
There is a homomorphism $\rho:\SL_2(\R)\to G$ with finite central kernel so that 
\[
\Lie(G)=\Lie(H)\oplus \rfrak
\]
where $H=\rho(\SL_2(\R))$ and $\rfrak$ is an irreducible representation of $H$
with dimension $2\dm+1$ where 
\begin{quote}
$\bullet \;\; \dm=1$ if $\G$ is isogeneous to $\SO(3,1)$ or $\SL_2\times\SL_2$\\
     $\bullet \;\; \dm=2$ if $\G$ is isogeneous to $\SL_3$ or ${\rm SU}(2,1)$\\
     $\bullet \;\; \dm=3$ if $\G$ is isogeneous to ${\rm Sp}_4$\\
     $\bullet \;\; \dm=5$ if $\G$ is isogeneous to ${\G}_2$
\end{quote}  
see~\cite{And-Regular} and the references therein, in particular, see~\cite[Prop.~6]{And-Regular}. 

For all $t,r\in\R$, let $a_t$, $u_r$, and $u^-_s$ denote the images of 
\[
\begin{pmatrix}
    e^{t/2} & 0 \\
    0 & e^{-t/2}
\end{pmatrix},
\qquad\begin{pmatrix}
    1 & r \\
    0 & 1 
\end{pmatrix},
\quad\text{and}\quad \begin{pmatrix}
    1 & 0 \\
    s & 1 
\end{pmatrix}
\]
in $H$, respectively. Then $a_t$ and $u_r$ are {\em regular} one parameter diagonalizable and unipotent subgroups of $G$, respectively.
Similarly, $u_s^-$ is a regular unipotent subgroups of $G$. We let $A=\{a_t\}$, $U=\{u_r\}$, and $U^-=\{u_s^-\}$.  

All regular unipotent one-parameter groups, and hence the corresponding groups $H$, are conjugated to each other under ${\G}^{\rm ad}(\R)$, the adjoint form of $\G$, see~\cite[Thm.~1]{And-Regular}. 

Note also that with the above normalization, if $v\in\rfrak$ is a highest weight vector for $a_t$, then $\Ad(a_t)v=e^{\dm t}v$, recall that 
$\dim\rfrak=2\dm+1$. 

For any $A$-invariant subspace $V\subset\mathfrak{sl}_\dN(\R)$, let $V^0$ denote 
the space of $a_t$-fixed vectors and 
\[
V^\pm=\{z\in V: \textstyle\lim_{t\to\mp\infty}\Ad(a_t)z=0\}.
\]
We assume the embedding $G\subset\SL_\dN(\R)$ is fixed so that 
$\gfrak^0\oplus\gfrak^+=\gfrak\cap{\mathfrak b}_\dN$, the subalgebra of upper triangular matrices in $\mathfrak{gl}_\dN$.

\subsection*{Lie algebras and norms}
Recall that $G\subset \SL_\dN(\R)$.
Let $|\;|$ denote the usual absolute value on $\R$.  
Let $\|\;\|$ denote the maximum norm $\Mat_\dN(\R)$, with respect to the standard basis. 
 
We fix a norm on $\hfrak$ by taking the maximum norm where the coordinates are given by fixing unit basis vectors for the lines $\Lie(U)$, $\Lie(U^-)$, and $\Lie(A)$. 
Since $\rfrak$ is $\Ad(H)$-irreducible, each weight space is one dimensional. Fix a norm on $\rfrak$ by taking the max norm with respect to a basis consisting of unit pure weight (with respect to $a_t$) vectors.  
By taking maximum of these two norms we get a norm on $\gfrak$, which will also be denoted by $\|\;\|$.

Let $\constE\label{E:2ball}\label{E:dist-sheet}\geq 1$ be so that 
\be\label{eq:2ball}
\|hw\|\leq \ref{E:2ball}\|w\|\quad\text{for all $\|h-I\|\leq 2$ and all $w\in \gfrak$.} 
\ee

For all $0<\delta<1$, we define
\be\label{eq:def-BoxH}
\boxH_{\delta}:=\{u_s^-:|s|\leq {\delta}\}\cdot\{a_t: |t|\leq \delta\}\cdot\{u_\rel:|\rel|\leq {\delta}\}
\ee 
Define $\mathsf B_\delta^L$ for $\delta>0$ and $L=U, U^-$ and $A$ similarly, and put 
\[
\boxHs_{\delta}=\{u_s^-:|s|\leq {\delta}\}\cdot\{a_t: |t|\leq \delta\}=\mathsf B_\delta^{U^-}\cdot\mathsf B_\delta^A.
\]
Note that for all $h_i\in(\boxH_{\delta})^{\pm1}$, $i=1,\ldots,5$, we have  
\be\label{eq:B-beta-almost-group}
 h_1\cdots h_5\in \boxH_{100\delta}.
\ee
We also define $\boxG_{\delta}:=\boxH_\delta\cdot\exp(B_\rfrak(0,\delta))$ where $B_\rfrak(0,{\delta})$ denotes the ball of radius $\delta$ in $\rfrak$ with respect to $\|\;\|$. 

Given an open subset $\mathsf B\subset L$, with $L$ any of the above, and $\delta>0$, put $\partial_\delta\mathsf B=\{\sfh\in\mathsf B: \mathsf B_\delta^L.\sfh\not\subset\mathsf B\}$.

We fix Haar measures $m_G$ on $G$ and $m_H$ on $H$. 
Let $\Gamma\subset G$ be an arithmetic lattice, and let $m_X$ denote the probability Haar measure on $X=G/\Gamma$.
 
For all $x\in X$, define $\inj(x)$ as follows 
\be\label{eq:def-inj}
\inj(x)=\min\Bigl\{0.01, \sup\Bigl\{\delta: \text{$g\mapsto gx$ is injective on $\boxG_{10N^2\delta}$}\Bigr\}\Bigr\}.
\ee
For every $\injr>0$, let  $X_\injr=\Bigl\{x\in X: \inj(x)\geq \injr\Bigr\}$.

\subsection*{Commutation relations} Let us record the following two lemmas.

\begin{lemma}[\cite{LM-PolyDensity}, Lemma 2.1]
\label{lem: BCH}
There exist $\delta_0$ and $\constE\label{E:BCH}$ depending on $\dm$ so that the following holds.  
Let $0<\delta\leq \delta_0$, and let $w_1,w_2\in B_\rfrak(0,\delta)$. There are $h\in H$ and $w\in\rfrak$ which satisfy 
\[
\text{$\tfrac23\|w_1-w_2\|\leq \|w\|\leq \tfrac32\|w_1-w_2\|\quad$ and $\quad\|h-I\|\leq \ref{E:BCH}\delta\|w\|$}
\] 
so that $\exp(w_1)\exp(-w_2)=h\exp(w)$. More precisely,
\[
\|w-(w_1-w_2)\|\leq \ref{E:BCH}\delta\|w_1-w_2\|
\]
\end{lemma}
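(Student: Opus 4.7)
The plan is to apply the Baker--Campbell--Hausdorff (BCH) formula to $\exp(w_1)\exp(-w_2)$, project the result onto the decomposition $\gfrak = \hfrak \oplus \rfrak$, and invert the local diffeomorphism $(u, w') \mapsto \exp(u)\exp(w')$ to extract the pair $(h, w)$. The key preliminary estimate is that, writing $\exp(w_1)\exp(-w_2) = \exp(Y)$ via BCH (valid once $\delta$ is smaller than a constant depending only on $\gfrak$), one has
\[
Y = (w_1 - w_2) + \sum_{k \geq 2} Y_k, \qquad \|Y - (w_1 - w_2)\| \leq C_1 \delta \|w_1 - w_2\|,
\]
where each $Y_k$ is a $\Q$-linear combination of $k$-fold nested brackets in $w_1$ and $-w_2$. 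To obtain a factor of $\|w_1 - w_2\|$ (rather than the naive $\delta^2$) inside every $Y_k$, set $\epsilon := w_1 - w_2$ and substitute $w_1 = \epsilon + w_2$ throughout: each nested bracket then expands into nested brackets in $\epsilon$ and $w_2$; any summand with no occurrence of $\epsilon$ is a bracket of copies of $w_2$, hence vanishes. Every surviving summand carries at least one factor of $\|\epsilon\|$ and at most $k-1$ factors of $\delta$, and the standard BCH coefficient bounds give the claim.

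Next, decompose $Y = Y_\hfrak + Y_\rfrak$; since $w_1 - w_2 \in \rfrak$, one gets $\|Y_\hfrak\| \leq C_1 \delta \|w_1 - w_2\|$ and $\|Y_\rfrak - (w_1 - w_2)\| \leq C_1 \delta \|w_1 - w_2\|$. The analytic map $\Phi \colon \hfrak \times \rfrak \to G$, $\Phi(u, w') = \exp(u)\exp(w')$, has differential $\id_\gfrak$ at the origin, so for $\delta_0$ small enough it is a diffeomorphism of a neighborhood of $0$ onto a neighborhood of $I$ containing $\exp(Y)$. Applying BCH on the left of $\exp(u)\exp(w') = \exp(Y)$ and projecting onto $\hfrak$ and $\rfrak$ yields a pair of equations whose nonlinearities are quadratic in $(u, w')$; solving by contraction (or equivalently by the inverse function theorem applied to $\Phi$) produces $(u, w')$ with $\|u - Y_\hfrak\| + \|w' - Y_\rfrak\| \leq C_2 (\|u\| + \|w'\|)^2$, hence $\|u\| \leq C_3 \delta \|w_1 - w_2\|$ and $\|w' - (w_1 - w_2)\| \leq C_3 \delta \|w_1 - w_2\|$.

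Setting $h := \exp(u) \in H$ and $w := w'$ and using $\|h - I\| \leq C_4 \|u\|$ for small $u$ produces $\|h - I\| \leq C_5 \delta \|w_1 - w_2\|$. Shrinking $\delta_0$ so that $C_3 \delta_0 < \tfrac{1}{3}$ converts $\|w - (w_1 - w_2)\| \leq C_3 \delta \|w_1 - w_2\|$ into the two-sided comparison $\tfrac{2}{3}\|w_1 - w_2\| \leq \|w\| \leq \tfrac{3}{2}\|w_1 - w_2\|$, which upgrades the previous $\|h-I\|$ estimate to $\|h - I\| \leq \tfrac{3}{2} C_5 \delta \|w\|$, providing the constant $\ref{E:BCH}$. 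The \emph{main obstacle} is genuinely the refined BCH bound of the first paragraph: without the substitution trick $w_1 = (w_1 - w_2) + w_2$, one would only obtain $\|w - (w_1 - w_2)\| \leq C \delta^2$, which fails to be useful when $\|w_1 - w_2\|$ is itself of order $\delta^2$ or smaller. Once that refined estimate is in place, the remaining work is off-the-shelf inverse function theorem applied to the transversal decomposition $\gfrak = \hfrak \oplus \rfrak$.
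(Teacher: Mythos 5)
Your overall strategy is sound, and you have correctly identified the key point: rewriting $w_1 = (w_1 - w_2) + w_2$ inside each BCH bracket so that every non-vanishing term carries at least one factor of $\|w_1 - w_2\|$ (brackets purely in $w_2$ vanish), which is exactly what upgrades the naive $O(\delta^2)$ remainder to $O(\delta \|w_1 - w_2\|)$. The transversal decomposition $\gfrak = \hfrak \oplus \rfrak$ and inversion of $\Phi(u,w') = \exp(u)\exp(w')$ is also the right framework. Note that the paper does not give its own proof of this lemma — it cites \cite{LM-PolyDensity}, Lemma 2.1 — so the comparison here is only against the statement.

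There is, however, a small but genuine gap in the last step. You assert a contraction estimate of the form $\|u - Y_\hfrak\| + \|w' - Y_\rfrak\| \leq C_2(\|u\| + \|w'\|)^2$, and then say ``hence $\|u\| \leq C_3 \delta \|w_1 - w_2\|$.'' But the right-hand side of that estimate is of order $\delta^2$ with no factor of $\|w_1 - w_2\|$, so when $\|w_1 - w_2\| \ll \delta$ the conclusion does not follow. The fix is to keep the remainder as a genuine product rather than a square: the leading correction in BCH applied to $\exp(u)\exp(w')$ is $\tfrac12[u,w']$, so the correct estimate is
\[
\|u - Y_\hfrak\| + \|w' - Y_\rfrak\| \leq C_2 \, \|u\| \, \|w'\|.
\]
Since $\|w'\| \leq C\delta$, this gives $\|u - Y_\hfrak\| \leq C_2' \delta \|u\|$, which you can bootstrap against $\|Y_\hfrak\| \leq C_1 \delta \|w_1 - w_2\|$: once $C_2' \delta_0 < \tfrac12$, one gets $\|u\| \leq 2 C_1 \delta \|w_1 - w_2\|$, and then $\|w' - Y_\rfrak\| \leq C_2' \delta \|u\| \leq C \delta^2 \|w_1 - w_2\|$, so $\|w' - (w_1 - w_2)\| \leq C \delta \|w_1 - w_2\|$ as required. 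With this refinement — which is precisely the same ``linear in $\|u\|$'' structure you already exploited in the first paragraph — the rest of your argument goes through.
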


\begin{lemma}[\cite{LM-PolyDensity}, Lemma 2.2]
\label{lem:dist-sheet} 
There exists $\delta_0$ so that the following holds for all $0<\delta\leq \delta_0$.
Let $x\in X_{\delta}$ and $w\in B_\rfrak(0,\delta)$. If there are $h,h'\in \boxH_{2\delta}$
so that $\exp(w')hx=h'\exp(w)x$, then 
\[
\text{$h'=h\quad$ and $\quad w'=\Ad(h)w$}.
\]
Moreover, we have $\|w'\|\leq 2\|w\|$.
\end{lemma}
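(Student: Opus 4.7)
The plan is to encode the hypothesis as the statement that a controlled element $g_0\in G$ stabilizes $x$, use the injectivity of $g\mapsto gx$ on $\boxG_{10\dN^2\delta}$ (available because $x\in X_\delta$) to force $g_0=e$, and finally unwind this equation to read off $h=h'$ and $w'=\Ad(h)w$.

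First I would rewrite $\exp(w')hx=h'\exp(w)x$ as $g_0\cdot x=x$ for
\[
g_0=\exp(-w)(h')^{-1}\exp(w')h=\exp(-w)\exp(w'')(h')^{-1}h,
\]
where $w'':=\Ad((h')^{-1})w'$ and we have used the conjugation identity $(h')^{-1}\exp(w')=\exp(w'')(h')^{-1}$. Applying Lemma~\ref{lem: BCH} to $\exp(-w)\exp(w'')$ produces $h_0\in H$ and $w_0\in\rfrak$ with $\exp(-w)\exp(w'')=h_0\exp(w_0)$, $\|w_0-(w''-w)\|\leq\ref{E:BCH}\delta\|w''-w\|$, $\|h_0-I\|\leq\ref{E:BCH}\delta\|w_0\|$, and $\tfrac{2}{3}\|w''-w\|\leq\|w_0\|$. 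Conjugating $\exp(w_0)$ through $(h')^{-1}h$ yields the decomposition
\[
g_0=\bigl(h_0(h')^{-1}h\bigr)\cdot\exp\bigl(\Ad(h^{-1}h')w_0\bigr),
\]
of the form (element of $H$)$\cdot\exp(\rfrak)$. Granting $w'\in B_\rfrak(0,\delta)$ (the analogous size assumption to the one on $w$), \eqref{eq:2ball} bounds $\|w''\|\leq\ref{E:2ball}\delta$, so Lemma~\ref{lem: BCH} gives $\|w_0\|=O(\delta)$ and $\|h_0-I\|=O(\delta^2)$. Combining with $h,(h')^{-1}\in\boxH_{2\delta}$ and the pseudo-group property \eqref{eq:B-beta-almost-group}, the $H$-factor lies in some $\boxH_{C\delta}$ and $\|\Ad(h^{-1}h')w_0\|=O(\delta)$. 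Shrinking $\delta_0$ so these bounds fit inside $10\dN^2\delta$ places $g_0\in\boxG_{10\dN^2\delta}$, and the injectivity hypothesis at $x$ then forces $g_0=e$.

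Finally, uniqueness of the local diffeomorphism $H\times\rfrak\to G$, $(h,v)\mapsto h\exp(v)$, near the identity forces both factors of $g_0=e$ to be trivial: $h_0(h')^{-1}h=I$ in $H$ and $\Ad(h^{-1}h')w_0=0$ in $\rfrak$, whence $w_0=0$. Feeding $w_0=0$ back into Lemma~\ref{lem: BCH} then forces $w''=w$ (so $w'=\Ad(h')w$) and $h_0=I$ (so $h=h'$, hence $w'=\Ad(h)w$). The bound $\|w'\|\leq 2\|w\|$ is immediate from \eqref{eq:2ball} upon shrinking $\delta_0$ so that $\|\Ad(h)v\|\leq 2\|v\|$ for all $h\in\boxH_{2\delta_0}$ and $v\in\rfrak$. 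The main obstacle is purely constant bookkeeping --- ensuring that the powers of $\ref{E:BCH}$, $\ref{E:2ball}$, and the $100$ in \eqref{eq:B-beta-almost-group} all fit within $10\dN^2$ after shrinking $\delta_0$ --- and no subtler difficulty arises.
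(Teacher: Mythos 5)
The paper does not prove this lemma; it cites it directly from~\cite{LM-PolyDensity}, so there is no in-paper argument to compare against. Your proposal gives the standard argument and is, in substance, correct: rewrite the hypothesis as $g_0x=x$ with $g_0=\exp(-w)(h')^{-1}\exp(w')h$, put $g_0$ into $H\cdot\exp(\rfrak)$ form via Lemma~\ref{lem: BCH} and the conjugation $(h')^{-1}\exp(w')=\exp(\Ad((h')^{-1})w')(h')^{-1}$, show $g_0$ lands in a neighborhood on which $g\mapsto gx$ is injective (this is exactly what $x\in X_\delta$ buys), deduce $g_0=e$, and read off $h=h'$, $w'=\Ad(h)w$ from the uniqueness of the local product decomposition $H\times\rfrak\to G$ near the identity.

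Three small remarks. First, you correctly flag that the statement as printed never bounds $w'$; without a hypothesis such as $w'\in B_\rfrak(0,\delta)$ (or $\|w'\|<\inj(hx)$, as in the parent function-counting setup) the claim is false, and your added assumption is the right patch and matches the intent of Lemma~2.2 in~\cite{LM-PolyDensity}. Second, shrinking $\delta_0$ handles the quadratic error terms (e.g.\ $\|h_0-I\|=O(\delta^2)$), but the linear-in-$\delta$ multiplicative constants in front of $\delta$ do not shrink when you shrink $\delta_0$; what you really need is that the product $h_0(h')^{-1}h$ and the $\rfrak$-component $\Ad(h^{-1}h')w_0$ both have coordinates of size $C\delta$ with $C\le 10\dN^2$, which does hold (a direct computation gives $C$ moderate, much smaller than the crude $100$ of~\eqref{eq:B-beta-almost-group}), but the justification is not literally "shrink $\delta_0$". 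Third, the final bound $\|w'\|\le 2\|w\|$ does not follow from~\eqref{eq:2ball} (which only gives the factor $\ref{E:2ball}$); it follows from continuity of $\Ad$ at the identity: for $h\in\boxH_{2\delta}$ one has $\|\Ad(h)\|\le 1+O(\delta)\le 2$ once $\delta_0$ is small. These are cosmetic; the argument is the intended one.
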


\subsection*{The set $\coneH_{\eta, t,\beta}$}
For all $\eta,t, \beta>0$, set 
\be\label{eq:def-Ct}
\coneH_{\eta, t,\beta}:=\boxHs_{\beta}\cdot a_t\cdot \big\{u_r: r\in[0,\eta]\big\} \subset H.
\ee
Then $m_H(\coneH_{\eta,t,\beta})\asymp \eta{\beta}^2\nuni^{t}$ where $m_H$ denotes our fixed Haar measure on $H$.

Throughout the paper, the notation $\coneH_{\eta, t,\beta}$ will be used only for $\eta,t, \beta>0$ 
which satisfy $\nuni^{-0.01t}<\beta\leq\eta^2$, even if this is not explicitly stated.

For all $\eta,\beta, \tau>0$, put 
\be\label{eq: def B ell beta}
\umt^H_{\eta,\beta,\tau}=\Bigl\{u^-_s: |s|\leq \beta \nuni^{-\tau}\Bigr\}\cdot\{a_d: |d|\leq \beta\}\cdot\Bigl\{u_r: |r|\leq \eta\Bigr\}.
\ee
Roughly speaking, $\umt_{\eta, \beta,\tau}^H$ is a {\em small thickening} of the $(\beta,\eta)$-neighborhood of the identity in $AU$. 
We write $\umt^H_{\beta,{\tau}}$ for $\umt^H_{\beta,\beta,\tau}$.

The following lemma will also be used in the sequel.

\begin{lemma}[\cite{LM-PolyDensity}, Lemma 2.3]
\label{lem:commutation-rel}
\begin{enumerate}
\item Let $\tau\geq 1$, and let $0<\eta,\beta<0.1$. Then 
\[
\bigg(\Bigl(\umt_{\eta/10\dN^2,\beta/10\dN^2,\tau}^H\Bigr)^{\pm1}\bigg)^3\subset \umt_{\eta,\beta, \tau}^H.
\]
\item For all $0\leq \beta,\eta\leq 1$, $t,\tau>0$, and all $|r|\leq 2$, we have
\be\label{eq:well-rd-tau-1}
\Bigl(\umt_{\eta, \beta^2, \tau}^H\Bigr)^{\pm1}\cdot a_\tau u_r \coneH_{\eta', t,\beta'}\subset a_\tau u_r\coneH_{\eta,t, \beta},
\ee
where $\eta'= \eta(1-10\dN^2\nuni^{-t})$ and $\beta'= \beta(1-10\dN^2\beta)$.
\end{enumerate}
\end{lemma}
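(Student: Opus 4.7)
\bigskip

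\noindent\emph{Proof proposal.} Both parts reduce to calculations inside $H\cong\mathrm{PSL}_2(\R)$ using the Bruhat decomposition $H=U^-\cdot A\cdot U$ and the following standard commutation relations: $a_d u_r = u_{e^d r} a_d$ and $a_d u^-_s = u^-_{e^{-d} s} a_d$, together with the ``Iwasawa exchange''
\[
u_r u^-_s \;=\; u^-_{s/(1+rs)}\, a_{2\log(1+rs)}\, u_{r/(1+rs)},
\]
valid whenever $1+rs>0$. The plan is to write every factor in the Iwasawa form $u^-_s a_d u_r$ and track the three coordinates through each multiplication, showing that the cumulative corrections fit inside the buffers built into the target parameters.

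For part~(1), write each of the three factors of a product $g_1 g_2 g_3$ with $g_i\in(\umt^H_{\eta/10\dN^2,\beta/10\dN^2,\tau})^{\pm1}$ as $u^-_{s_i}a_{d_i}u_{r_i}$, where $|s_i|\leq\beta\nuni^{-\tau}/(10\dN^2)$, $|d_i|\leq\beta/(10\dN^2)$, and $|r_i|\leq\eta/(10\dN^2)$. Each Iwasawa exchange $u_r u^-_s\mapsto u^-_{s'}a_{d'}u_{r'}$ introduces corrections of sizes $|s'-s|, |r'-r|=O(|rs|\max(|r|,|s|))$ and $|d'|=O(|rs|)$. The crucial point is that for any such pair arising in the product, $|rs|\leq(\eta/10\dN^2)(\beta\nuni^{-\tau}/10\dN^2)\leq\beta\nuni^{-\tau}/(10\dN^2)^2$ thanks to $\tau\geq 1$; summing the finitely many error contributions is then comfortably within the factor of $10\dN^2$ slack that separates $\umt^H_{\eta/10\dN^2,\beta/10\dN^2,\tau}$ from $\umt^H_{\eta,\beta,\tau}$. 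A careful bookkeeping, or equivalently an induction where at step $k$ the Iwasawa coordinates of $g_1\cdots g_k$ have size at most $k\cdot(\text{original bound})\cdot(1+O(\dN^{-2}))$, gives the claim.

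For part~(2), the key observation is the identification
\[
\umt^H_{\eta,\beta^2,\tau} \;=\; \bigl(a_\tau\,\boxHs_{\beta^2}\,a_{-\tau}\bigr)\cdot\{u_{r_g}:|r_g|\leq\eta\},
\]
since conjugation by $a_\tau$ rescales the $u^-$-coordinate by $\nuni^{-\tau}$ and preserves the $a$-coordinate. Given $g = a_\tau h_0 a_{-\tau}\,u_{r_g}$ with $h_0\in\boxHs_{\beta^2}$, the abelian identity $u_{e^{-\tau}r_g}u_r=u_r u_{e^{-\tau}r_g}$ yields
\[
g\cdot a_\tau u_r \;=\; a_\tau\,h_0\,u_{r+e^{-\tau}r_g} \;=\; a_\tau u_r\cdot\bigl(u_{-r}h_0 u_r\bigr)\cdot u_{e^{-\tau}r_g}.
\]
The conjugate $\tilde h_0:=u_{-r}h_0 u_r$ is itself in Iwasawa form with all three coordinates of size $O(\beta^2)$ (the same type of computation as in part~(1); the factor $|r|\leq 2$ only multiplies the constants). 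Thus for any $c=u^-_{s_0}a_{d_0}a_t u_{r_0}\in\coneH_{\eta',t,\beta'}$ it suffices to show that $\tilde h_0\cdot u_{e^{-\tau}r_g}\cdot c$ lies in $\coneH_{\eta,t,\beta}$. Writing $\tilde h_0 = u^-_{S_1}a_{D_1}u_{R_1}$, successively applying the Iwasawa exchange to bring the intermediate $u$-factors past $u^-_{s_0}$ (introducing an $a$-correction of size $\leq 2|e^{-\tau}r_g s_0|\leq 2\eta\nuni^{-\tau}\beta'$), then using $a_D u^-_s = u^-_{e^{-D}s}a_D$ to gather the $u^-$ and $a$ pieces, produces $u^-_{S}a_{D}\cdot a_t u_{r_0+\Delta}$ with
\[
|S|,|D| \;\leq\; \beta' + O(\beta^2) + O(\beta'\nuni^{-\tau}),\qquad |\Delta|=O(\nuni^{-t}(\eta\nuni^{-\tau}+\beta^2)).
\]
The first pair of estimates is $\leq\beta$ precisely because $\beta-\beta'=10\dN^2\beta^2$ absorbs the $O(\beta^2)$ remainder; the $r_0$ shift $\Delta$ is absorbed by the buffer $\eta-\eta'=10\dN^2\eta\nuni^{-t}$.

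\textbf{Main obstacle.} Nothing conceptually deep arises; the only difficulty is combinatorial bookkeeping. The bounds must be kept \emph{tight}: a naive estimate immediately loses constants like $4$ or $8$ from the factor $2\log(1+\epsilon)$, which is fatal because the buffers $10\dN^2\beta^2$ and $10\dN^2\eta\nuni^{-t}$ are only quadratic in the parameters. The careful point is to track leading-order cancellations (e.g.\ using $|\log(1+x)-x|\leq x^2$ rather than $|\log(1+x)|\leq 2|x|$) and to exploit that the $u^-$-coordinate of $g$ carries the decisive $\nuni^{-\tau}$ factor, so that the dangerous Iwasawa correction terms of the form $r\cdot s$ inherit it.
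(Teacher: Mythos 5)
The paper cites this lemma from \cite{LM-PolyDensity} without reproducing the proof, so there is no internal argument to compare against; I am assessing your proposal on its own terms. Your strategy---Iwasawa bookkeeping via $u_ru^-_s=u^-_{s/(1+rs)}a_{2\log(1+rs)}u_{r/(1+rs)}$---is the right one and does handle part~(1); you should still say a word about factors taken from $(\umt^H)^{-1}$, which have the form $u_{-r}a_{-d}u^-_{-s}$ and need one extra exchange to be put in Iwasawa form, but that inflates the coordinates only by $1+O(\eta\beta\nuni^{-\tau}/(10\dN^2)^2)$ and is harmless.

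Part~(2) has two gaps. First, the $a$-coordinate: in your bound $|S|,|D|\leq\beta'+O(\beta^2)+O(\beta'\nuni^{-\tau})$ you absorb only the $O(\beta^2)$ piece by $\beta-\beta'=10\dN^2\beta^2$ and say nothing about the term $O(\beta'\nuni^{-\tau})$ arising from the exchange $u_{e^{-\tau}r_g}u^-_{s_1}$, whose size is about $2\eta\nuni^{-\tau}\beta'$. That term fits under $10\dN^2\beta^2$ only when $\eta\nuni^{-\tau}\lesssim\dN^2\beta$, which is \emph{not} among the stated hypotheses ``$0\leq\beta,\eta\leq1$, $t,\tau>0$''; a concrete instance such as $\dN=4$, $\tau=1$, $\eta=10^{-3}$, $\beta=10^{-6}$, $r_g=\eta$, $s_1=d_1=\beta'$ makes the final $a$-coordinate exceed $\beta$. (The constraint does hold in the paper's applications, where $\tau\asymp\vare t$ and $\beta=e^{-\vare't}$ with $\vare'\ll\vare$; a correct proof must either impose it or flag that the statement here is slightly too strong.) Second, the $u$-coordinate: $\coneH_{\eta,t,\beta}$ uses the one-sided interval $r\in[0,\eta]$ and $\eta'=\eta(1-10\dN^2\nuni^{-t})$ creates a buffer only at the \emph{upper} end of $[0,\eta']$, but your shift $\Delta$ has indeterminate sign (it depends on the signs of $r_g$ and of the $u$-coordinate of $u_{-r}h_0u_r$), so for $r_1$ near $0$ the image $r_1+\Delta$ can be negative and fall outside $[0,\eta]$. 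The phrase ``absorbed by the buffer $\eta-\eta'$'' tacitly treats the interval as two-sided, which it is not; this needs an explicit fix.
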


\subsection*{Linear algebra lemma}
Recall that $\dim\rfrak=2\dm+1$, and that $\rfrak$ is $\Ad(H)$-irreducible. 
We have the following  

\begin{lemma}[cf.~Lemma A.1,~\cite{LMW22}]
\label{lem: linear algebra 1}
Let $0<\alpha\leq 1/(2\dm +1)$. For all $d>0$ and all $0\neq w\in\rfrak$, we have 
\[
\int_0^1\|a_d u_rw\|^{-\alpha}\diff\!r\leq C e^{-\alpha\dm d}\|w\|^{-\alpha }
\]
where $C$ is an absolute constant.  
\end{lemma}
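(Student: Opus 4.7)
The approach is to isolate the top-weight component of $a_du_rw$ and reduce the estimate to a Remez-type inequality for polynomials on $[0,1]$.

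First, fix a basis $e_{-\dm},e_{-\dm+1},\ldots,e_{\dm}$ of $\rfrak$ consisting of unit pure-weight vectors with $\Ad(a_t)e_i=e^{it}e_i$, as provided by the irreducibility of $\rfrak$ together with the normalization in~\S\ref{sec: principal SL2}. By construction, the norm on $\rfrak$ is the $\ell^\infty$ norm in this basis, so $\|w\|=\max_i|w_i|$ for $w=\sum_iw_ie_i$. Since $u_r=\exp(rE)$ where $E$ generates $\Lie(U)$ and raises weights by one, with $E^{k}e_i$ a nonzero scalar multiple of $e_{i+k}$ whenever $-\dm\leq i+k\leq\dm$, we obtain $u_rw=\sum_jP_j(r)e_j$ where each $P_j$ is a polynomial in $r$. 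The top-weight coordinate is
\[
P_\dm(r)=\sum_{i=-\dm}^{\dm}c_i\,w_i\,r^{\dm-i}, \qquad c_i\neq 0,
\]
a polynomial of degree at most $2\dm$ whose coefficient vector is obtained from $(w_i)$ by multiplication by a diagonal matrix with nonzero entries depending only on $\dm$. In particular $\max_{r\in[0,1]}|P_\dm(r)|$ is comparable to $\|w\|$ up to constants depending only on $\dm$, since on the finite-dimensional space of polynomials of degree at most $2\dm$ the maximum-coefficient norm and the $L^\infty([0,1])$ norm are equivalent.

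Second, since the norm on $\rfrak$ is the maximum over weight coordinates,
\[
\|a_du_rw\|=\max_{-\dm\leq j\leq\dm}e^{jd}|P_j(r)|\geq e^{\dm d}|P_\dm(r)|,
\]
and consequently
\[
\int_0^1\|a_du_rw\|^{-\alpha}\,\diff\!r\leq e^{-\alpha\dm d}\int_0^1|P_\dm(r)|^{-\alpha}\,\diff\!r.
\]
It therefore suffices to establish that for any polynomial $P$ of degree at most $2\dm$ and $0<\alpha\leq 1/(2\dm+1)$,
\[
\int_0^1|P(r)|^{-\alpha}\,\diff\!r\leq C\,\|P\|_{L^\infty([0,1])}^{-\alpha},
\]
with $C$ depending only on $\dm$. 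Combined with the comparison $\|P_\dm\|_{L^\infty([0,1])}\asymp\|w\|$, this produces the claimed bound.

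Third, the polynomial inequality is a direct consequence of the classical Remez inequality: for every polynomial of degree at most $D$,
\[
\bigl|\{r\in[0,1]:|P(r)|<\varepsilon\,\|P\|_{L^\infty([0,1])}\}\bigr|\leq C_D\,\varepsilon^{1/D}.
\]
A layer-cake integration using this distributional bound yields $\int_0^1|P|^{-\alpha}\,\diff\!r\leq C'_{D,\alpha}\,\|P\|_{L^\infty([0,1])}^{-\alpha}$ whenever $\alpha<1/D$, and the hypothesis $\alpha\leq 1/(2\dm+1)<1/(2\dm)$, applied with $D=2\dm$, gives exactly the required inequality. The only nontrivial ingredient is the Remez inequality, which is classical; everything else is elementary representation theory of the principal $\sl_2$ together with equivalence of norms in finite dimension, so I do not anticipate a serious obstacle beyond bookkeeping the $\dm$-dependent constants.
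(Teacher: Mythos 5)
Your argument is correct and essentially the paper's own proof: both isolate the top-weight coordinate of $u_rw$, which is a polynomial in $r$ of degree $2\dm$ with a coefficient of size $\asymp\|w\|$, then apply a Remez-type estimate (the paper cites the equivalent $(C,\alpha)$-good bound from \cite{KM-Nondiv}) and a dyadic/layer-cake decomposition, using $\alpha\leq 1/(2\dm+1)<1/(2\dm)$ to ensure the geometric series converges with a constant uniform in $\alpha$. The only cosmetic difference is that you write the final constant as $C'_{D,\alpha}$; since the condition $\alpha\leq 1/(2\dm+1)$ keeps the exponent $\alpha-1/(2\dm)$ bounded away from $0$, the constant is in fact uniform in $\alpha$ and depends only on $\dm$, which is what the lemma asserts.
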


\section{Avoidance principles in homogeneous spaces}\label{sec: avoidance}

In this section we will collect statements concerning avoidance principles for unipotent flows and random walks on homogeneous spaces; the reader will find this section similar to \cite[\S4]{LMW22} and \cite[\S3]{LM-PolyDensity}. 
The proofs are included in Appendix~\ref{sec: app avoidance} for the convenience of the reader.     

\subsection{Nondivergence results}\label{sec: non-div}\label{sec:SiegelSet}
The results of this subsection are only interesting when $\Gamma$ is a nonuniform lattice, i.e., when $X$ is not compact. 

\begin{propos}
\label{prop:Non-div-main}\label{prop:one-return}\label{lem:one-return}\label{prop: Non-div main}
There exist $\dm_0$ depending only on $\dm$ and $\constE\label{E:non-div-main}\geq 1$ depending on $X$ with the following property. 
Let $0<\delta, \vare<1$, and let $I\subset [-10, 10]$ be an interval with $|I|\geq \delta$. For all $x\in X$, we have 
\[
\Bigl|\Bigl\{r\in I:a_s\uvk x\not\in X_\vare\Bigr\}\Bigr|<\ref{E:non-div-main}\vare^{1/\dm_0} |I|,
\]
so long as $s\geq \dm_0|\log(\delta\inj(x))|+\ref{E:non-div-main}$. 
\end{propos}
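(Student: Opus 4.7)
The plan is to derive this from the quantitative non-divergence theorem of Dani--Margulis and Kleinbock--Margulis, applied to the polynomial trajectory $r \mapsto a_s u_r x$ on the interval $I$. Using reduction theory for the arithmetic lattice $\Gamma \subset G \subset \SL_\dN(\R)$, the condition $y \in X \setminus X_\vare$ is equivalent, up to absolute constants, to the existence of a nonzero vector $v$ in one of finitely many auxiliary $\Gamma$-rational representations $\rho_j\colon G \to \GL(V_j)$ (indexed by the $\Gamma$-conjugacy classes of maximal $\R$-parabolic subgroups $P_j \subset G$), taken from a discrete $\Gamma$-orbit $\Gamma \cdot v_j \subset V_j$, satisfying $\|\rho_j(g)v\| \leq c_0 \vare^{\kappa_1}$ where $y = g\Gamma$ and $c_0,\kappa_1$ depend only on $\Gamma$ and the chosen embedding.

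For each such pair $(j,v)$ the function $r \mapsto \|\rho_j(a_s u_r g)v\|^2$ is a polynomial in $r$ of degree at most $D = D(\dm)$ (since $u_r$ is a one-parameter unipotent and the dimensions of the $V_j$ are bounded in terms of $\dm$), and hence is $(C,\alpha)$-good on every subinterval of $\R$ with absolute $(C,\alpha)$. The Kleinbock--Margulis scheme then reduces the proposition to verifying the supremum lower bound
\[
\sup_{r \in I}\,\|\rho_j(a_s u_r g)v\| \;\geq\; 1 \qquad \text{for every $(j,v)$ with $\|\rho_j(g)v\| \leq c_0$,}
\]
after which one sums the resulting $(C,\alpha)$-good measure bound $\lesssim \vare^{\alpha}|I|$ over the finitely many contributing vectors --- their number being controlled by discreteness of $\Gamma \cdot v_j$ together with $I \subset [-10,10]$.

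The heart of the argument is this supremum lower bound, and it is here that the threshold $s \geq \dm_0|\log(\delta \inj(x))| + \ref{E:non-div-main}$ is used. Since $\boxG_{\inj(x)}$ injects at $x$, a Mahler-type argument yields $\|\rho_j(g)v\| \gtrsim \inj(x)^{\kappa_2}$ for every $v$ in the discrete orbit. Decomposing $\rho_j(g)v$ along the $a_t$-weight spaces in $V_j$, a Markov-type inequality for polynomials of degree $\leq D$ forces the map $r \mapsto \rho_j(u_r g)v$, as $r$ varies in a sub-interval of length $\geq \delta$, to develop a component of size $\gtrsim \delta^{D}\inj(x)^{\kappa_2}$ along the highest $a_t$-weight subspace that actually occurs; the subsequent application of $a_s$ then expands this component by $e^{\lambda s}$ for a positive rational $\lambda$ depending only on $\dm$. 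Choosing $\dm_0$ proportional to $1/\lambda$ times the exponents above, and absorbing the multiplicative constants into $\ref{E:non-div-main}$, yields the required $\sup_{r \in I} \geq 1$.

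The main obstacle is the degenerate case when $\rho_j(g)v$ lies entirely in the $U$-fixed subspace $V_j^U$: there $u_r$ acts trivially and $a_s$ only contracts or preserves the vector, so the above argument breaks down. This is handled by the standard Kleinbock--Margulis induction on a flag of $U$-invariant subspaces of $V_j$ --- one restricts to $V_j^U$ and iterates. A short $U$-fixed vector in $\rho_j(g)V_j$ forces $g\Gamma$ to be very close to a cusp associated to some parabolic $P' \supset U$, so at the next stage of the induction one may replace $(j,v)$ by a representation/vector attached to $P'$ in which a positive $a_t$-weight component is present. The flag length, and hence the number of induction steps, is bounded by $D(\dm)$; this yields uniform constants $\dm_0,\ref{E:non-div-main}$ and completes the proof.
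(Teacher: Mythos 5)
Your proposal takes a genuinely different route from the paper's proof, which simply invokes the Eskin--Margulis type height function $\omega$ of Proposition~\ref{prop: average of inj}, with its drift inequality $\int_I\omega(a_s u_r x)\,\diff r\le e^{-\star s}\omega(x)+B_0$ together with $\inj(x)\ge\omega(x)^{-\dm'}$, and finishes by Chebyshev. You propose instead the Dani--Margulis / Kleinbock--Margulis quantitative non-divergence machinery via $(C,\alpha)$-good polynomial trajectories in the $\Gamma$-rational representations attached to parabolics. Both routes are standard for statements of this kind; the Margulis-function approach packages the cusp combinatorics into a single drift inequality, whereas yours would reassemble the same combinatorics by hand.

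There is, however, a concrete error in the crucial degenerate-case analysis, which leaves a genuine gap. You characterize the degenerate case as $\rho_j(g)v\in V_j^U$, claiming that there ``$a_s$ only contracts or preserves the vector.'' That is wrong: under the principal $H\simeq\SL_2(\R)$, $V_j^U$ is the sum of the \emph{highest}-weight lines of the irreducible $H$-submodules of $V_j$, and $a_s$ acts on such a line with weight equal to the (nonnegative) highest weight of that submodule, hence \emph{expands} whenever the submodule is nontrivial. The true non-expanding direction is the $H$-fixed subspace $V_j^H\subsetneq V_j^U$, on which both $u_r$ and $a_s$ act trivially. Moreover, the difficulty is not a binary dichotomy: if $\rho_j(g)v$ has a small projection $w_1$ to $(V_j^H)^\perp$, your Markov-inequality step only yields a highest-weight component of size $\gtrsim\delta^D\|w_1\|$, not $\gtrsim\delta^D\inj(x)^{\kappa_2}$, so the expansion time needed to push the supremum above $1$ is $\gtrsim|\log(\delta\|w_1\|)|$. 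Since your Mahler-type lower bound controls only the full norm $\|\rho_j(g)v\|$ and not $\|w_1\|$ directly, the threshold $s\gtrsim|\log(\delta\inj(x))|$ does not follow from the argument as written, and the flag-induction device you cite is invoked without verifying that it recovers this dependence. This is precisely where the substantive work of quantitative non-divergence lies, and the sketch does not supply it.
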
 

\begin{proof}
There exist $\dm'$, depending on $\dm$, and a function $\omega:X\to [2,\infty)$ so that for all $x\in X$ 
\[
\inj(x)\geq \omega(x)^{-\dm'},
\]
moreover, for all $x\in X$ and all $s\geq \dm'|\log\delta|+B'_0$ 
\[
\int_I\omega(a_s\uvk x)\diff\!r\leq e^{-\star s}\omega(x)+B_0,
\]
where $B'_0$ and $B_0$ depends on $X$. See Proposition~\ref{prop: average of inj} and references there. 
        
        The claim in the proposition follows from these statements and Chebyshev's inequality.  
\end{proof}


\subsection{Inheritance of the Diophantine property}\label{sec: Dioph inheritance}
The following proposition improves the somewhat weak Diophantine property that failure of part~(2) in Theorem~\ref{thm: main} provides to a Diophantine property in terms of volume of periodic orbits involved. This proposition can be compared with the main theorem in~\cite{LMMS}, and will be applied as the first step in the proof of Theorem~\ref{thm: main}.  

\begin{propos}\label{prop: linearization translates}
There exist $D_0$, depending on $\dm$, and $\constE\label{c: linear trans}, s_0$, depending on $X$, so that the following holds. 
Let $R, S\geq 1$. Suppose $x_0\in X$ is so that 
\[
\dist_X(x_0,x)\geq (\log S)^{D_0}S^{-\dm}
\] 
for all $x$ with $\vol(Hx)\leq R$. Then for all 
\[
s\geq \max\Bigl\{\log S, \dm_0|\log(\inj(x_0))|\Bigr\}+s_0
\] 
and all $0<\eta\leq 1$, we have 
\[
\biggl|\biggl\{r\in [0,1]\!:\!\! \begin{array}{c}\inj(a_su_rx_0)\leq \eta \text{ or there is $ x$ with }\\ 
\vol(Hx)\leq R \text{ s.t. }\dist_X(a_{s}u_rx_0,x)\leq \frac{1}{\ref{c: linear trans}R^{D_0}}\end{array}\!\!\biggr\}\biggr|\!\!\leq \ref{c: linear trans}(\eta^{1/\dm_0}+R^{-1}).
\]
\end{propos}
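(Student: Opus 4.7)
Let $E_1=\{r\in[0,1]:\inj(a_su_rx_0)\le\eta\}$ and let $E_2$ denote the set of $r\in[0,1]$ for which $\dist_X(a_su_rx_0,x)\le(\ref{c: linear trans}R^{D_0})^{-1}$ for some $x$ with $\vol(Hx)\le R$. My plan is to bound $|E_1|\ll\eta^{1/\dm_0}$ by nondivergence and $|E_2|\ll R^{-1}$ by an effective linearization argument, and then add the two estimates.

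The bound on $|E_1|$ will follow directly from Proposition~\ref{prop:Non-div-main}, applied to $x_0$ with $I=[0,1]$, $\delta=1$ and $\vare=\eta$: the hypothesis $s\ge\dm_0|\log(\delta\inj(x_0))|+\ref{E:non-div-main}$ is implied by the one in the statement as soon as $s_0\ge\ref{E:non-div-main}$, giving $|E_1|\le\ref{E:non-div-main}\,\eta^{1/\dm_0}$.

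For $|E_2|$ I plan to invoke the sharp form of the Dani--Margulis linearization from \cite{LMMS}. To each periodic $H$-orbit $Hy\subset X$ with $\vol(Hy)\le R$ one attaches a vector $v_{Hy}$ in a fixed finite-dimensional $G$-representation $V$ with the following key features: (i) $v_{Hy}$ is fixed by $H$; (ii) the vectors $\{v_{Hy}\}$ over all periodic orbits of volume $\le R$ lie in a single $\Gamma$-orbit in $V$, have height polynomially bounded in $R$, and their number is polynomially bounded in $R$; (iii) for a lift $g_0\in G$ of $x_0$, the event $\dist_X(a_su_rx_0,Hy)\le\epsilon$ forces $\|a_su_rg_0v_{Hy}\|\le C\epsilon\|v_{Hy}\|$. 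The Diophantine hypothesis on $x_0$ will translate into a uniform lower bound $\|g_0 v_{Hy}\|\ge c(\log S)^{D_0}S^{-\dm}\|v_{Hy}\|$. Since $r\mapsto a_su_rg_0v_{Hy}$ is polynomial of degree bounded in terms of $\dm$ and $\dN$, a Remez-type inequality will bound the measure of $r\in[0,1]$ on which $\|a_su_rg_0v_{Hy}\|\le C'R^{-D_0}\|v_{Hy}\|$ by a power of $R^{-D_0}/(\log S)^{D_0}$; here the factor $(\log S)^{D_0}S^{-\dm}$ coming from the Diophantine bound is matched by an $S^{\dm}$ amplification of the positive-weight components under $a_s$ (using $s\ge\log S$). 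Summing over the polynomially many orbits and taking $D_0$ sufficiently large in terms of $\dm$ and $\dN$ will yield $|E_2|\le \ref{c: linear trans}R^{-1}$.

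The hardest step will be assembling the linearization at the required level of quantitative precision, uniformly across the six quasi-split rank-2 groups: producing the $H$-fixed vectors $v_{Hy}$ with polynomial height and count bounds on the associated $\Gamma$-orbit, and converting the geometric Diophantine bound into a Remez-type estimate robust under the union bound over periodic orbits. Both points follow the template of \cite{LMMS}; the representation $V$, the degree estimates, and the precise choice of $D_0$ are dictated by the principal $\SL_2$-embedding of Section~\ref{sec: principal SL2}.
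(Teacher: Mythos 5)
Your decomposition into $E_1$ (small injectivity radius) and $E_2$ (proximity to a low-volume periodic orbit) matches the paper, and the nondivergence bound for $E_1$ is exactly what the paper does. But for $E_2$ the paper takes a genuinely different route: it builds a modified Margulis function $f_Y$ for each periodic orbit $Y=Hy$ with $\vol(Y)\le R$ (see~\eqref{eq: define f_Y app A}), proves drift estimates for $f_Y$ under the random walk generated by $\sigma_d$ (Lemmas~\ref{lem: Margulis func periodic 1}--\ref{lem: Margulis func periodic}), passes from averages over random words in $H$ to an average over $[0,1]$ via the Folner trick (Lemma~\ref{lem: average over [0,1]}), and then applies Chebyshev's inequality; the Diophantine hypothesis enters only in choosing the threshold time $T_2$ in~\eqref{eq: choose T_2 periodic}, and the union bound over periodic orbits uses the polynomial count from~\cite{SanchezSeong}. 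Your proposal replaces this with Dani--Margulis linearization in the style of~\cite{LMMS}: attach vectors to periodic orbits, use a Remez inequality on the polynomial $r\mapsto a_su_r g_0 v_{Hy}$, and union bound. This is close to what the paper does elsewhere (the proof of Theorem~\ref{thm:main unipotent} in \S\ref{sec: proof long unipotent} invokes~\cite[Cor.~7.2]{LMMS} in precisely this spirit), so it is a legitimate alternative, but it is not the paper's argument for this proposition. What the Margulis function route buys is that proximity to $Y$, proximity to the cusp, and the summation over near-by lattice returns are all handled simultaneously by the inequality $\int_0^1 f_Y(a_{d}u_r\cdot)\,\diff r\le B_2 R$, whereas the linearization route must keep explicit track of the lattice element realizing each near miss and cope with the possibility that it changes as $r$ varies.

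There is one genuine gap in your point~(iii): ``$\dist_X(a_su_rx_0,Hy)\le\epsilon$ forces $\|a_su_rg_0v_{Hy}\|\le C\epsilon\|v_{Hy}\|$'' is not correct. Proximity of $a_su_rx_0$ to $Hy$ does not make the norm $\|a_su_rg_0v_{Hy}\|$ small; the relevant polynomial quantity is a directional one, such as $\|\hat z\wedge(a_su_rg_0\gamma\,\mathbf v_{\tilde\H^r})\|$ as in~\eqref{eq: unip lin exceptional gamma r 3} (a wedge with $z=\Lie(U)$ detecting that the translated vector is being dragged toward the $H$-fixed direction), together with the companion bound~\eqref{eq: unip lin exceptional gamma r 2} that keeps the norm of the vector itself under control. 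This distinction matters, because the lattice element $\gamma$ achieving the near miss is not fixed and your Remez step needs a polynomial in $r$ whose coefficients you can actually bound from below using the Diophantine hypothesis at $x_0$; this is precisely the technical work that~\cite[Cor.~7.2]{LMMS} packages and that your proposal currently leaves implicit. Similarly, in~(i), $v_{Hy}$ is fixed by $g'^{-1}N_G(H)g'$ (equivalently, $g'v_{Hy}$ is proportional to $v_H$), not by $H$ itself. With the correct formulation the strategy should go through, and would need $D_0$ to exceed a fixed multiple of the polynomial degree times the exponent in the count of periodic orbits of volume $\le R$, as you anticipate.
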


The proof of Proposition~\ref{prop: linearization translates} is postponed to \S\ref{sec: proof linearization}.

\subsection{Closing Lemma}\label{sec: closing lemma}
Let $0<\injr\ll_X1$ and $\beta=\eta^2$. 
For every $\tau\geq0$, put
\be\label{eq: def coneH beta tau}
\coneH_{\tau}=\coneH_{1,\tau,\beta}=\boxHs_{\beta}\cdot a_\tau\cdot \{u_r: r\in [0,1]\} \subset H.
\ee
where $\boxHs_{\beta}=\{u_s^-:|s|\leq {\beta}\}\cdot\{a_d: |d|\leq \beta\}$, see~\eqref{eq:def-Ct}.

If $y\in X$ is so that the map $\sfh\mapsto \sfh y$ is injective over $\coneH_{\tau}$, then $\mu_{\coneH_\tau.y}$ 
denotes the pushforward of the normalized Haar measure on $\coneH_\tau$ to $\coneH_\tau.y\subset X$.

Let $\tau\geq 0$ and $y\in X$. For every $z\in\coneH_\tau.y$, put
\[
\margI(z):=\Bigl\{w\in \rfrak: \|w\|<\inj(z) \text{ and } \exp(w) z\in \coneH_\tau.y\Bigr\};
\]
this is a finite subset of $\rfrak$ since $\coneH_\tau$ is bounded ---  
we will define $\margI_\cone(z)$ for more general sets $\cone$ in the bootstrap phase below.

Let $0<\alpha\leq 1$. Define the function $f:\coneH_\tau.y\to [1,\infty)$ as follows
\[
f(z)=\begin{cases} \sum_{0\neq w\in \margI(z)}\|w\|^{-\alpha} & \text{if $\margI(z)\neq\{0\}$}\\
\inj(z)^{-\alpha}&\text{otherwise}
\end{cases}.
\]

\begin{propos}\label{prop: closing lemma}
There exist $\dm_1$ depending on $\dm$ and $D_1$ depending on $X$ which satisfy the following. 
Let $D\geq D_1$ and $x_1\in X$. Then for all large enough $t$ (depending on $\inj(x_1)$) at least one of the following holds.
 
\begin{enumerate}
\item  There is a subset $J(x_1)\subset [0,1]$ with $|[0,1]\setminus J(x_1)|\ll_X \eta^{1/(2\dm_0)}$ 
such that for all $r\in J(x_1)$ we have the following  
\begin{enumerate}
\item $a_{\dm_1t}u_rx_1\in X_{\eta}$.
\item $\sfh\mapsto \sfh.a_{\dm_1t}u_rx_1$ is injective over $\coneH_{\rws}$.
\item For all $z\in\coneH_\rws.a_{\dm_1t}u_rx_1$, we have 
\[
f(z)\leq  \nuni^{D\rws}.
\]
\end{enumerate}

\item There is $x\in X$ such that $Hx$ is periodic with
\[
\vol(Hx)\leq \nuni^{D_1\rws}\quad\text{and}\quad\dist_X(x,x_1)\leq \nuni^{(-D+D_1)\rws}.
\] 
\end{enumerate} 
\end{propos}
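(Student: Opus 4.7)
The plan is to obtain parts~(1)(a) and (1)(b) from the nondivergence estimate Proposition~\ref{prop:Non-div-main} together with the Diophantine inheritance Proposition~\ref{prop: linearization translates}, and to derive the pointwise bound (1)(c) on $f$ from an averaging estimate along the orbit based on Lemma~\ref{lem: linear algebra 1}, where short returns are ruled out via a quantitative Dani--Margulis linearization. Fix constants $\dm_1$ (depending on $\dm$ and $D_0$) and $D_1$ (depending on $X$), to be specified, and set $R := \nuni^{D_1\rws}$. If alternative (2) fails, then for every periodic $Hx$ with $\vol(Hx)\leq R$ we have $\dist_X(x,x_1)>\nuni^{(-D+D_1)\rws}$; for $D_1$ large, this verifies the hypothesis of Proposition~\ref{prop: linearization translates} at scale $S=\nuni^{\rws}$, with $R$ as above. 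Applying that proposition at $s=\dm_1 t$ (admissible once $t$ is large relative to $\inj(x_1)$ and $\rws$) produces a set $J_0\subset[0,1]$ with $|[0,1]\setminus J_0|\ll \eta^{1/\dm_0}+R^{-1}$ on which $z_r:=a_{\dm_1 t}u_r x_1$ has $\inj(z_r)\geq\eta$ (giving (a)) and is at distance $\geq(\ref{c: linear trans} R^{D_0})^{-1}$ from every periodic $H$-orbit of volume $\leq R$.

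For (b), the cone $\coneH_{\rws}$ has $G$-diameter of order $\nuni^{c\rws}$ for an explicit $c=c(\dm)$, so injectivity of the map $\sfh\mapsto\sfh z_r$ over $\coneH_{\rws}$ is implied by $\inj(z_r)\gg\nuni^{-c\rws}$. Enlarging $\dm_1$ and invoking Proposition~\ref{prop:Non-div-main} once more with $\vare=\nuni^{-c\rws}$ ensures this on a set of $r$'s differing from $J_0$ by measure $\ll\nuni^{-c\rws/\dm_0}$, which is absorbed into the claimed $\eta^{1/(2\dm_0)}$ loss in the definition of $J(x_1)$.

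The technical heart is (c). For $r$ in the good set and any nonzero $w\in\margI(\sfh z_r)$ with $\sfh\in\coneH_{\rws}$, Lemma~\ref{lem:dist-sheet} produces a transverse self-return of the $H$-orbit through $z_r$ with displacement $w_0:=\Ad(\sfh)^{-1}w$ of comparable size. The Dani--Margulis linearization method, used in the quantitative form underlying Proposition~\ref{prop: linearization translates}, turns such a return into the existence of a periodic $H$-orbit of volume polynomial in $\|w_0\|^{-1}$ within a matching distance of $z_r$; the Diophantine bound on $z_r$ then forces $\|w_0\|\geq\nuni^{-D'\rws}$ for some $D'=D'(X,D_1)$. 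To pass from this lower bound to a uniform estimate on $\coneH_{\rws}$, integrate
\[
\sum_{w_0\neq 0}\int_{\coneH_{\rws}}\|\Ad(\sfh)w_0\|^{-\alpha}\,\mathrm{d}m_H(\sfh)
\]
and bound each summand by Lemma~\ref{lem: linear algebra 1}, yielding $C\nuni^{(1-\alpha\dm)\rws}\sum_{w_0}\|w_0\|^{-\alpha}$; stratifying by dyadic scales together with $\|w_0\|\geq\nuni^{-D'\rws}$ controls the sum by $\nuni^{O(D'\rws)}$. A covering argument that passes from this $L^1$ bound on a slightly enlarged cone $\coneH_{\rws+O(1)}$ to an $L^\infty$ bound on $\coneH_{\rws}$ (absorbing the loss into $D_1$) then yields $f(z)\leq\nuni^{D\rws}$ pointwise on the cone, as required.

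The main obstacle I expect is the \emph{quantitative} linearization at the key step above: one needs a sharp polynomial relation between a short transverse self-return at $z_r$ and the presence of a low-volume periodic $H$-orbit nearby, with fully effective constants. This is the same machinery that powers Proposition~\ref{prop: linearization translates} (cf.\ \cite{DM-Linearization, LMMS}), but here the exponents must be tracked carefully to ensure that the volume output is strictly less than $D_1$ times the expansion rate of $a_t$ on $\rfrak$, so that the Diophantine control of the first paragraph is strong enough to rule the return out. Once this is secured, the rest reduces to the averaging via Lemma~\ref{lem: linear algebra 1} and straightforward bookkeeping of the constants $\dm_1$, $D_1$, and $D$.
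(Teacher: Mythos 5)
Your proposal differs fundamentally from the paper's proof, and the central step has a genuine gap.

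The key issue is the claim at the heart of part~(c): that a single short transverse self-return at $z_r$ with displacement $w_0$, combined with Dani--Margulis linearization, produces a periodic $H$-orbit of volume polynomial in $\|w_0\|^{-1}$ near $z_r$. This is not true, and it is precisely what the paper's proof is designed to circumvent. A short transverse return yields one lattice element $\gamma_r\in\Gamma$ with $g_2\gamma_r g_2^{-1}$ nearly fixing $v_H$ (equivalently, nearly normalizing $H$), but a single such $\gamma$ gives no periodic $H$-orbit at all --- it could, for instance, be unipotent and project into $N_G(U)$, corresponding to a cusp excursion rather than a periodic $H$-orbit. The Diophantine condition you extract from Proposition~\ref{prop: linearization translates} would then have nothing to rule out. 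Consequently the lower bound $\|w_0\|\geq \nuni^{-D'\rws}$, and with it the entire $L^1$-to-$L^\infty$ bootstrap via Lemma~\ref{lem: linear algebra 1}, does not follow.

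The paper's actual argument is structurally different: assuming (2) fails, it argues by contradiction and produces a set $I_{\rm bad}$ of measure $\gtrsim\eta^{1/(2\dm_0)}$ of radii $r$ for which either injectivity fails or a short return occurs. From each such $r$ it extracts a lattice element $\gamma_r$, then shows there are $\gg e^{(\dn-2)t/2}$ \emph{distinct} $\gamma_r$ by exploiting the polynomial nature of $\tau\mapsto u_\tau\mathsf s_r u_{-\tau}$. Only at this point does the closing mechanism kick in: one considers the Zariski closure $\bf L$ of the group generated by the $\gamma_r$, applies Lemma~\ref{lem:almost-inv} to replace the approximate normalizer relation by an exact one for a nearby $g_3$, and then runs a dichotomy. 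If $\bf L$ is noncommutative, Lemma~\ref{lem:non-elementary} produces the periodic orbit of controlled volume, which is alternative~(2). If $\bf L$ is commutative --- necessarily unipotent, by a counting argument --- the paper shows this case is geometrically impossible, using that the two subintervals $J, J'\subset I_{\rm bad}$ are well-separated and that a unipotent $\bf L$ forces incompatible constraints on the entries of the corresponding $\SL_2$-matrices $\sfs_r$. None of this structure appears in your proposal, and the commutative/unipotent case is exactly the configuration your argument has no mechanism to exclude. The obstacle you flag in your last paragraph ("sharp polynomial relation between a short return and a low-volume periodic orbit") is real, but the problem is more basic than bookkeeping: the implication you want simply isn't available from a single return.
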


The proof of Proposition~\ref{prop: closing lemma} is postponed to~\S\ref{sec: proof of closing}.

\section{Mixing property and equidistribution}\label{sec: horospheric}

Let us recall the following quantitative decay of correlations for the ambient space $X$: 
There exists $0<\mixexp\leq1$ so that  
\be\label{eq: exp mixing}
\biggl|\int \varphi(gx)\psi(x)\diff\!{m_X}-\int\varphi\diff\!{m_X}\int\psi\diff\!{m_X}\biggr|\ll \Sob(\varphi)\Sob(\psi) \nuni^{-\mixexp d(e,g)}
\ee
for all $\varphi,\psi\in C^\infty_c(X)+\bbc\cdot 1$, where $m_X$ is the $G$-invariant probability measure on $X$ and $d$ is the right $G$-invariant metric on $G$ defined on p.~\pageref{d definition page}. See, e.g., \cite[\S2.4]{KMnonquasi} and references there for~\eqref{eq: exp mixing}. 

Here $\Scal(\cdot)$ is a certain Sobolev norm on $C_c^\infty(X)+\bbc\cdot1$ 
which is assumed to dominate $\|\cdot\|_\infty$ and the 
Lipschitz norm $\|\cdot\|_{\rm Lip}$. Moreover, $\Scal(g.f)\ll\|g\|^\star\Scal(f)$ where the implied constants depend only on $\dm$.


\begin{propos}\label{prop: 1-epsilon N}
There exists $\constk\label{k:mixing}\gg\mixexp$ so that the following holds. Let $\Lambda\geq 1$, 
and let $\nu$ be a probability measure on $B_1^G$ with 
\be\label{eq: RD nu assump}
\frac{\diff\!\nu}{\diff\!m_{G}}(g)\leq \Lambda\qquad\text{for all $g\in\supp\nu$.}
\ee
Let $\ell_1, \ell_2>0$ and $0<\eta<1$ satisfy the following 
\[
\ref{k:mixing}\ell_2\geq \max\{\ell_1,|\log\eta|\}.
\]
Then for all $x\in X_\eta$ and all $\varphi\in C_c^\infty(X)$, we have 
\[
\int_0^1\!\!\int_{G} \varphi(a_{\ell_1}u_{r}a_{\ell_2}gx)\diff\!\nu(g)\diff\!r=\!\int\varphi\diff\!m_X+O\Bigl(\Sob(\varphi)(\eta+\Lambda^{1/2} e^{-\ref{k:mixing}\ell_1})\Bigr).
\]
\end{propos}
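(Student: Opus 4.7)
My plan is to reduce the statement to an $L^2$ bound via Cauchy--Schwarz against the density of $\nu$. First, using the commutation relation $a_{\ell_1}u_ra_{\ell_2}=u_{re^{\ell_1}}a_{\ell_1+\ell_2}$ and substituting $s=re^{\ell_1}$, the left-hand side rewrites as
\[
I=\int_G\Phi(a_{\ell_1+\ell_2}gx)\,d\nu(g),\qquad \Phi(y):=e^{-\ell_1}\int_0^{e^{\ell_1}}\varphi(u_sy)\,ds,
\]
so that $\Phi$ is the unipotent ergodic average of $\varphi$ of length $e^{\ell_1}$. Setting $\varphi_0:=\varphi-\int\varphi\,dm_X$ and $\Psi:=\Phi-\int\varphi\,dm_X$, the quantity to control becomes $\int_G\Psi(a_{\ell_1+\ell_2}gx)\,d\nu(g)$.

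Next, I would establish the $L^2$-bound $\|\Psi\|_{L^2(m_X)}\ll\Sob(\varphi)e^{-\kappa_0\ell_1}$ for some $\kappa_0>0$. Expanding the square and using $u$-invariance of $m_X$ yields
\[
\|\Psi\|_{L^2}^2\le e^{-\ell_1}\int_{-e^{\ell_1}}^{e^{\ell_1}}|\langle\varphi_0,\pi(u_t)\varphi_0\rangle|\,dt.
\]
The required polynomial decay of the matrix coefficients of $\pi(u_t)$ on $L_0^2(X)$ follows from the exponential $a_t$-mixing \eqref{eq: exp mixing} via the conjugation identity $u_t=a_{-\log|t|}u_{\pm 1}a_{\log|t|}$, at the cost of a fixed Sobolev loss; this is standard in the rank-two quasi-split setting.

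Third, the Cauchy--Schwarz step: since $d\nu/dm_G\le\Lambda$ and $\nu$ is a probability,
\[
\bigl|I-\textstyle\int\varphi\,dm_X\bigr|^2\le\Lambda\int_{B_1^G}|\Psi(a_{\ell_1+\ell_2}gx)|^2\,dm_G(g)=\Lambda\int_X|\Psi(y)|^2N(y)\,dm_X(y),
\]
after the measure-preserving substitution $h=a_{\ell_1+\ell_2}g$; here $N(y)$ counts $h\in a_{\ell_1+\ell_2}B_1^G$ with $hx=y$. Whenever $N=O(1)$, combining with the $L^2$-bound on $\Psi$ produces the main term $\Lambda^{1/2}\Sob(\varphi)e^{-\kappa_0\ell_1}$, of exactly the shape required.

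The main obstacle is the cusp contribution, where $x\in X_\eta$ may admit short elements of $\Stab(x)$ producing a large $N(y)$. I plan to address this by splitting the $(r,g)$-integration according to whether $a_{\ell_1}u_ra_{\ell_2}gx$ enters a deep cusp: Proposition~\ref{prop:Non-div-main}, applied to the unipotent flow $r\mapsto a_{\ell_1}u_r(a_{\ell_2}gx)$ together with the hypothesis $\ref{k:mixing}\ell_2\ge|\log\eta|$ (which supplies enough ``burn-in'' so that the initial point $a_{\ell_2}gx$ already lies in a bounded part of $X$), bounds the measure of the excursion set by $\ll\eta$; its contribution is trivially $\le\eta\Sob(\varphi)$. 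On the complement, $N=O(1)$ and the Cauchy--Schwarz bound applies. Choosing $\ref{k:mixing}$ sufficiently small compared to $\kappa_0$ and $\mixexp$ balances the resulting exponents; this delicate exponent bookkeeping is the most technical part of the argument.
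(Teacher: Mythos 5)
Your first three steps are sound: the commutation and change of variables $s=re^{\ell_1}$ giving $I=\int_G\Phi(a_{\ell_1+\ell_2}gx)\,d\nu$, the $L^2$ bound $\|\Psi\|_{L^2}\ll\Sob(\varphi)e^{-\kappa_0\ell_1}$ via matrix coefficient decay for $u_t$, and the Cauchy--Schwarz against the density $d\nu/dm_G\leq\Lambda$. These are equivalent to expanding the square and the mixing-for-$a_{\ell_1}$ step in the paper, which applies exponential mixing to the correlations $\langle\varphi,\pi(a_{\ell_1}(u_{r_1}-u_{r_2}))\varphi\rangle$ when $|r_1-r_2|\geq e^{-\ell_1/2}$.

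The gap is in your fourth step, the unfolding $\int_{B_1^G}|\Psi(a_{\ell_1+\ell_2}gx)|^2dm_G=\int_X|\Psi|^2N\,dm_X$. The multiplicity $N(y)=\#\{h\in a_{\ell_1+\ell_2}B_1^G:hx=y\}$ is governed entirely by $\inj(x)$: if $h_1x=h_2x$ with $h_i\in a_{\ell_1+\ell_2}B_1^G$, then $h_2^{-1}h_1\in\Stab(x)\cap B_2^G$, and the size of that intersection is controlled by $\inj(x)\geq\eta$, giving $N\ll\eta^{-k}$ for some $k$, \emph{uniformly on the image}. In particular, $N$ is large whenever $x$ is deep in the cusp, regardless of where the endpoint $a_{\ell_1}u_ra_{\ell_2}gx$ lands. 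Your proposed split by trajectory excursion (via Proposition~\ref{prop:Non-div-main}) therefore does not reduce $N$: it controls where the orbit goes, not how many preimages $x$ contributes. The resulting bound $\Lambda^{1/2}\eta^{-k/2}\Sob(\varphi)e^{-\kappa_0\ell_1}$ does not reduce to $\eta+\Lambda^{1/2}e^{-\ref{k:mixing}\ell_1}$, because the hypothesis only gives $\ref{k:mixing}\ell_2\geq\max\{\ell_1,|\log\eta|\}$ and says nothing about $\ell_1$ versus $|\log\eta|$.

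What is missing is precisely the ingredient that makes $\ell_2$ useful: exponential mixing of $a_{\ell_2}$ applied to the ball $B_1^G.x$. In the paper, the quantity $\tfrac{1}{m(\mathsf B)}\int_{\mathsf B}\Phi_{r_1,r_2}(a_{\ell_2}gx)\,dm_G(g)$ is compared to $\int\Phi_{r_1,r_2}\,dm_X$ not by unfolding (which would hit the same $N$-issue), but by smearing $\delta_x$ over a ball via a partition of unity $\{\psi_i\}$ with $\Sob(\psi_i)\leq\eta^{-M}$ and then invoking \eqref{eq: exp mixing} for the $a_{\ell_2}$-translate. This is exactly why the hypothesis couples $\ell_2$ to both $\ell_1$ and $|\log\eta|$: the mixing rate $e^{-\mixexp\ell_2}$ must overcome both the Sobolev growth $e^{M\ell_1}$ of $\Phi_{r_1,r_2}$ and the cost $\eta^{-M}$ of smoothing at scale $\inj(x)$. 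Your argument never invokes this and hence cannot control the cusp contribution to the $L^2$ bound; the excursion estimate you cite bounds a different quantity.
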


\begin{proof}
Put $\mathsf B=B_1^{G}$ and assume, as we may, that $\int\varphi\diff\!m_X=0$.

Applying Fubini's theorem and Cauchy-Schwarz inequality, we have  
\[
\biggl|\int_0^1\!\!\int_{\mathsf B} \varphi(a_{\ell_1}u_{r}a_{\ell_2}gx)\diff\!\nu(g)\diff\!r\biggr|^2\leq
\int_{\mathsf B}\biggl(\int_0^1\varphi(a_{\ell_1}u_{r}a_{\ell_2}gx)\diff\!r\biggr)^2\diff\!\nu(g)
\]
Expanding the inner integral in the right side of the above and using~\eqref{eq: RD nu assump}, we conclude that   
\begin{multline}\label{eq: CS and Fubini}
\biggl|\int_0^1\!\!\int_{\mathsf B} \varphi(a_{\ell_1}u_{r}a_{\ell_2}gx)\diff\!\nu(g)\diff\!r\biggr|^2\leq\\
\Lambda\int_{\mathsf B}\!\int_0^1\!\!\int_0^1\!\!\varphi(a_{\ell_1}u_{r_1}a_{\ell_2}gx)\varphi(a_{\ell_1}u_{r_2}a_{\ell_2}gx)\diff\!r_1\diff\!r_2\diff\!m_{G}(g).
\end{multline}
For all $r_1, r_2\in [0,1]$, let 
\[
\Phi_{r_1,r_2}(z)=\varphi(a_{\ell_1}u_{r_1}z)\varphi(a_{\ell_1}u_{r_2}z).
\]
Then $\Sob(\Phi_{r_1,r_2})\ll e^{M\ell_1}\Sob(\varphi)^2$ for some $M$ depending only on the dimension. 
Moreover, if $|r_1-r_2|\geq e^{-\ell_1/2}$, then~\eqref{eq: exp mixing} implies that 
\[
\biggl|\int\Phi_{r_1,r_2}\diff\!m_X\biggr|=\biggl|\varphi(a_{\ell_1}u_{r_1}z)\varphi(a_{\ell_1}u_{r_2}z)\diff\!m_X\biggr|\ll \Sob(\varphi)^2e^{-\kappa\ell_1} 
\]
where $\kappa=\star\mixexp$. We will assume $\kappa\leq 1/2$. 

We now estimate the second integral in~\eqref{eq: CS and Fubini}. Applying Fubini's theorem,  
\begin{multline*}
\Lambda\int_{\mathsf B}\!\int_0^1\!\!\int_0^1\!\!\varphi(a_{\ell_1}u_{r_1}a_{\ell_2}gx)\varphi(a_{\ell_1}u_{r_2}a_{\ell_2}gx)\diff\!r_1\diff\!r_2\diff\!m_{G}(g)=\\
\Lambda\int_0^1\!\!\int_0^1\!\!\int_{\mathsf B}\!\varphi(a_{\ell_1}u_{r_1}a_{\ell_2}gx)\varphi(a_{\ell_1}u_{r_2}a_{\ell_2}gx)\diff\!m_{G}(g)\diff\!r_1\diff\!r_2
\end{multline*}
Let $\Xi=\{(r_1,r_2)\in[0,1]^2: |r_1-r_2|\geq e^{-\ell_1/2}\}$ and $\Xi'=[0,1]^2\setminus\Xi$. Let now $(r_1,r_2)\in\Xi$, and recall that $x\in X_\eta$. 
Increasing $M$ if necessary and using a partition of unity, 
there exist a collection $\{\psi_i\}\subset C_c^\infty(X)$ satisfying that  
$\Sob(\psi_i)\leq \eta^{-M}$, $\sum \int\psi_i\diff\!m_X=1$, and 
\begin{multline*}
\frac{1}{m(\mathsf B)}\int_{\mathsf B}\!\!\Phi_{r_1,r_2}(a_{\ell_2}gy)\diff\!m_{G}(g)=\\ \sum_i\int\Phi_{r_1,r_2}(a_{\ell_2}z)\psi_i(z)\diff\!m_{X}(z)+O(\eta\|\Phi_{r_1,r_2}\|_\infty)
\end{multline*}
In view of this and~\eqref{eq: exp mixing}, thus
\begin{multline}\label{eq: apply mixng to G+}
\frac{1}{m(\mathsf B)}\int_{\mathsf B}\!\!\Phi_{r_1,r_2}(a_{\ell_2}gy)\diff\!m_{G}(g)=\\
\int\Phi_{r_1,r_2}\diff\!m_X+O\Bigl(\|\varphi\|_\infty^2\eta+\Sob(\Phi_{r_1,r_2})\eta^{-M}e^{-\mixexp\ell_2})\Bigr)
\end{multline}
Using the above observations regarding the Sobolev norm and the integral of 
$\Phi_{r_1,r_2}$,~\eqref{eq: apply mixng to G+} implies that if $(r_1,r_2)\in\Xi$, then 
\[
\frac{1}{m(\mathsf B)}\int_{\mathsf B}\!\Phi_{r_1,r_2}(a_{\ell_2}gy)\diff\!m_{G}(g)=O\Bigl(\Sob(\varphi)^2(\eta+e^{-\kappa\ell_1}+\eta^{-M}e^{M\ell_1}e^{-\mixexp\ell_2})\Bigr)
\]
This, $|\Xi'|\ll e^{-\ell_1/2}$, and~\eqref{eq: CS and Fubini} imply that if $\ell_2>M\max\{\ell_1/\mixexp,|\log\eta|\}$, then  
\[
\biggl|\int_0^1\!\!\int_{\mathsf B} \varphi(a_{\ell_1}u_{r}a_{\ell_2}gy)\diff\!\nu(g)\diff\!r\biggr|=O\Bigl(\Sob(\varphi)(\eta+\Lambda^{1/2} e^{-\kappa\ell_1})\Bigr).
\]
The proposition thus holds with $\ref{k:mixing}=\min\{\kappa,\frac{\mixexp}{2M}\}$. 
\end{proof}

In applying Proposition~\ref{prop: 1-epsilon N}, we consider measures supported on $\rfrak$ with a finitary dimension close to $2\dm+1$. The following lemma, based on standard arguments, establishes the connection to Proposition~\ref{prop: 1-epsilon N}.

\begin{lemma}\label{lem: proj and thickening}
Let $0<\delta_0<1$. 
Let $\ell_1,\ell_2>0$ with $\ref{k:mixing}\ell_2\geq \max\{\ell_1,|\log\eta|\}$ 
and $4\dm \ell_2\leq |\log\delta_0|$, and let $\varrho\leq \varrho_0$, 
where $0<\varrho_0\leq 1$ depends only on the dimension.  
Let $\mu$ be a probability measure on $B_\rfrak(0,\varrho)$ satisfying   
\be\label{eq: mu has dim alpha thick}
\mu(B(w,\delta))\leq \egbd\delta^{2\dm+1}\quad\text{for all $w\in\rfrak$ and all $\delta\geq \delta_0$}.
\ee
Then for all $\varphi\in C_c^\infty(X)$ and all $x\in X_\eta$
\begin{multline*}
\int_0^1\!\!\int_0^1\!\!\int \varphi(a_{\ell_1}u_{r_1}a_{\ell_2}u_{r_2}\exp(w)x)\diff\!\mu(w)\diff\!r_2\diff\!r_1=\\
\int\varphi\diff\!m_X+O\Bigl(\Sob(\varphi)(\varrho^\star+\eta+\egbd^{1/2}\varrho^{-3/2} e^{-\ref{k:mixing}\ell_1})\Bigr) 
\end{multline*}
\end{lemma}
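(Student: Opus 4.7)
The plan is to apply Proposition~\ref{prop: 1-epsilon N} to a probability measure $\nu$ on $G$ obtained by smoothing $\mu$ and then lifting $\mu \otimes dr_2$ to $G$ via a thickening in the two transverse directions $U^-$ and $A$; the comparison with the target integral will be handled by combining the commutation relations for $H$ with a trick exploiting the $A$-invariance of $m_X$.

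First, I would set $\alpha := \varrho^{3/2}$ and convolve $\mu$ with a smooth bump $\psi_{\delta_0}$ of total mass one on $B_\rfrak(0,\delta_0)$ to obtain $\tilde\mu$; the Frostman-type hypothesis on $\mu$ directly yields $\tfrac{d\tilde\mu}{dw}\leq C\egbd$. Next, with $\chi$ a smooth bump of total mass one on $[-\alpha,\alpha]$, I would define $\nu_0$ as the pushforward of $\chi(s)\,ds\otimes dr_2\otimes d\tilde\mu(w)$ under the local diffeomorphism $(s,r_2,w)\mapsto u_s^-u_{r_2}\exp(w)$; this $\nu_0$ is supported on a codimension-one submanifold of $B_1^G$. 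Finally, set
\[
\nu := \alpha^{-1}\int_0^\alpha (a_d)_\ast\nu_0\,dd,
\]
which is a probability measure on $B_1^G$, absolutely continuous with respect to $m_G$ with density bounded by $\Lambda := C\egbd\alpha^{-2} = C\egbd\varrho^{-3}$. Proposition~\ref{prop: 1-epsilon N} applied to $\nu$ then gives
\[
\int_0^1\int_G \varphi(a_{\ell_1}u_{r_1}a_{\ell_2}\tilde g x)\,d\nu(\tilde g)\,dr_1 = \int \varphi\,dm_X + O\bigl(\Sob(\varphi)(\eta + \egbd^{1/2}\varrho^{-3/2}e^{-\ref{k:mixing}\ell_1})\bigr).
\]

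It remains to compare the left-hand side with the target integral, in three stages. \emph{(i) $A$-averaging.} Using $a_{\ell_1}u_{r_1}a_{\ell_2+d} = a_d\cdot a_{\ell_1}u_{r_1 e^{-d}}a_{\ell_2}$ and changing variable $r_1' = r_1 e^{-d}$, the $d$-average of the inner integral becomes $\int_0^1 \tilde\varphi(a_{\ell_1}u_{r_1'}a_{\ell_2}gx)\,dr_1'$ up to $O(\alpha\Sob(\varphi))$ from the Jacobian and endpoint mismatch, where $\tilde\varphi(y) := \alpha^{-1}\int_0^\alpha\varphi(a_t y)\,dt$ satisfies $\int\tilde\varphi\,dm_X = \int\varphi\,dm_X$ by $A$-invariance of $m_X$ and $\|\tilde\varphi - \varphi\|_\infty\leq\alpha\Sob(\varphi)$. \emph{(ii) $U^-$-thickening.} For $g = u_s^-u_{r_2}\exp(w)\in\supp\nu_0$, using $a_{\ell_2}u_s^-a_{-\ell_2} = u_{se^{-\ell_2}}^-$ together with $u_{r_1}u_\sigma^-u_{-r_1} = \phi$, $\|\phi - I\|\ll|\sigma|$, one obtains $a_{\ell_1}u_{r_1}a_{\ell_2}gx = \phi'\cdot a_{\ell_1}u_{r_1}a_{\ell_2}u_{r_2}\exp(w)x$ with $\phi' := a_{\ell_1}\phi a_{-\ell_1}$ of norm $\ll\alpha e^{\ell_1-\ell_2}\leq\alpha$. \emph{(iii) $\rfrak$-smoothing.} Replacing $\tilde\mu$ by $\mu$ in the target, Lemma~\ref{lem: BCH} shows that an $\rfrak$-perturbation of size $\delta_0$ is amplified by at most $e^{\dm(\ell_1+\ell_2)}$ under conjugation by $a_{\ell_1}u_{r_1}a_{\ell_2}u_{r_2}$, producing an error $\ll\delta_0 e^{2\dm\ell_2}\Sob(\varphi)\leq e^{-2\dm\ell_2}\Sob(\varphi)$ by the hypotheses $\delta_0\leq e^{-4\dm\ell_2}$ and $\ell_1\leq\ell_2$. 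Summing, the total comparison error is $O(\alpha\Sob(\varphi)) = O(\varrho^{3/2}\Sob(\varphi))$, absorbed in the $\varrho^\star$ term.

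The hardest point will be the $A$-direction thickening in stage (i). A naive insertion of an $A$-element $a_d$ into the lifted measure $\nu$ would, after conjugating through $a_{\ell_1}u_{r_1}a_{\ell_2}u_{r_2}$, produce a $U$-component $u_{-r_1 d e^{\ell_1}}$ of norm $\alpha e^{\ell_1}$, which would spoil the bound when $\ell_1$ is large. The key trick is to realize the $A$-thickening \emph{outside} the measure $\nu$ as a shift $\ell_2\mapsto\ell_2+d$ of the flow parameter, absorbing it via the $r_1$-integration and the $A$-invariance of $m_X$ at a cost of only $O(\alpha\Sob(\varphi))$.
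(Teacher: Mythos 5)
Your proposal is correct and follows essentially the same mechanism as the paper's proof: smooth $\mu$ at scale $\delta_0$, thicken it to a probability measure on $G$ that is absolutely continuous with density $\Lambda\asymp\egbd\varrho^{-3}$, feed this into Proposition~\ref{prop: 1-epsilon N}, and recover the target integral via the commutation relations. The bookkeeping differs only cosmetically: you thicken in $U^-$ and $A$ at scale $\alpha=\varrho^{3/2}$ and absorb the $r_2$-integration into the lifted measure $\nu$, whereas the paper thickens by the full $3$-dimensional $H$-ball $B^H_\varrho$ inserted between $u_{r_2}$ and $\exp(w)$ and applies the proposition once per $r_2$; both routes give the same $\Lambda$ and the same $O(\varrho^\star)$ comparison cost. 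Two small remarks: first, the $A$-invariance of $m_X$ you invoke in stage (i) is a red herring, since the Lipschitz bound $\|\tilde\varphi-\varphi\|_\infty\le\alpha\Sob(\varphi)$ already absorbs the $a_d$-prefix, exactly as the paper absorbs its prefix $g'a_q$; second, your observation that naively conjugating $a_d$ all the way past $a_{\ell_1}u_{r_1}$ would produce a $U$-component of size $\alpha e^{\ell_1}$ is accurate and correctly identifies why the $A$-direction must be commuted only to the immediate left of $a_{\ell_1}$ (shifting $r_1\mapsto r_1e^{-d}$), which is likewise the content of the paper's display $a_{\ell_1}u_{r_1}a_{\ell_2}u_{r_2}\sfh = g'a_q a_{\ell_1}u_{e^{-q}r_1}a_{\ell_2}u_r$.
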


\begin{proof}
We provide the details of the straightforward proof for the convenience of the reader.

As it was mentioned before, we will use Proposition~\ref{prop: 1-epsilon N} to prove the lemma. 
To that end we begin by convolving $\mu$ with a smooth kernel. Let 
\be\label{eq: def mu hat}
\hat\mu=\hat\Phi*\mu,
\ee
where $\hat\Phi(w)=\delta_0^{-2\dm-1}\Phi(\delta_0^{-1} w)$ for a radially symmetric nonnegative smooth function 
$\Phi$ on $\rfrak$ --- recall that $\dim\rfrak=2\dm+1$. 

Then, standard computations imply that 
\be\label{eq: hat mu has dimension 2m+1}
\hat\mu(B_\rfrak(w,\delta))\ll \egbd\delta^{2\dm+1}\qquad\text{for all $w\in\rfrak$ and all $0<\delta\leq1$.}
\ee 
Moreover, since $e^{\dm (\ell_1+\ell_2)}\delta_0\leq e^{-\ell_1}$, we have 
\begin{multline}\label{eq: smooth mu}
\int_0^1\!\!\int_0^1\!\!\int \varphi\Bigl(a_{\ell_1}u_{r_1}a_{\ell_2}u_{r_2}\exp(w)x)\diff\!\mu(w)\diff\!r_2\diff\!r_1= \\
\int_0^1\!\!\int_0^1\!\!\int \varphi(a_{\ell_1}u_{r_1}a_{\ell_2}u_{r_2}\exp(w)x)\diff\!\hat\mu(w)\diff\!r_2\diff\!r_1+O(\Sob(\varphi)e^{-\ell_1})
\end{multline}

In consequence, we now investigate the integral in the second line of~\eqref{eq: smooth mu}. 
The following observation guarantees that we may replace the integral in the second line of~\eqref{eq: smooth mu} by a $\varrho$-thickening of it along $H$. Let $\mathsf B^H=B_{\varrho}^{H}$. Then 
\begin{multline}\label{eq: thicken exp F}
\int_0^1\!\!\int_0^1\!\!\int \varphi\Bigl(a_{\ell_1}u_{r_1}a_{\ell_2}u_{r_2}\exp(w)x)\diff\!\hat\mu(w)\diff\!r_2\diff\!r_1=O\Bigl(\Sob(\varphi)\varrho^{\star}\Bigr)\;\;+\\
\tfrac{1}{m_H(\mathsf B^H)}\!\!\int_0^1\!\!\int_0^1\!\!\int_{\mathsf B^H}\!\!\int\varphi\Bigl(a_{\ell_1}u_{r_1}a_{\ell_2}u_{r_2}\sfh \exp(w)x)\diff\!\hat\mu(w)\diff\!\sfh\diff\!r_2\diff\!r_1
\end{multline}
To see~\eqref{eq: thicken exp F}, recall that for all $\sfh\in \mathsf B^H$, $u_{r_2}\sfh=u_{s}^-a_qu_r$ 
where $|s|, |q|\ll \varrho$ and $\diff\!r=(1+O(\varrho))\diff\!r_2$.
Furthermore, 
$a_{\ell_2}u_{s}^-a_q=u^-_{e^{-\ell_2}s}a_qa_{\ell_2}$. Thus 
\[
a_{\ell_1}u_{r_1}a_{\ell_2}u_{r_2}\sfh=a_{\ell_1}u_{r_1}g''a_qa_{\ell_2}u_r=g'a_qa_{\ell_1}u_{e^{-q}r_1}a_{\ell_2}u_r,
\]
where $\|g'-I\|, \|g''-I\|\ll e^{-\ell_1-\ell_2}$.
This, together with $|e^{-q}-1|\ll \varrho$ and the Folner properties of $\diff\!r_1$ and $\diff\!r_2$, implies 
the claim in~\eqref{eq: thicken exp F}.

In view of~\eqref{eq: thicken exp F}, we will work with the integral on the second line of~\eqref{eq: thicken exp F}. 
Recall that $\gfrak=\hfrak\oplus\rfrak$. 
Assuming $\varrho_0$ is small enough, every $g\in B_{10\varrho_0}^G$ can be uniquely written as 
$g=\sfh\exp(w)\in H\exp(\rfrak)$, moreover, the map $g\mapsto (h,w)$ is a diffeomorphism onto its image. 
Recall that $\varrho\leq \varrho_0$ and put $\diff\!\nu=\frac{1}{m_H(\mathsf B^H)}\diff\!\sfh\diff(\exp\hat\mu)$.
Then~\eqref{eq: hat mu has dimension 2m+1} and $m_H(\mathsf B^H)\asymp \varrho^3$ imply that   
\be\label{eq: hat mu has dimension 2m+1'}
\nu(B^G_\delta(g))\ll \egbd\varrho^{-3}\delta^{\dim G}\quad\text{for all $g\in G$ and all $0<\delta\leq1$.}
\ee
Therefore, $\frac{\diff(u_{r}\nu u_{-r})}{\diff\!m_G}\ll  \egbd$ for all $r\in[0,1]$.
Applying Proposition~\ref{prop: 1-epsilon N}, with $u_{r_2}\nu u_{-r_2}$ and $u_{r_2}x$ for all $r_0\in[0,1]$, thus, 
\begin{multline*}
\int_0^1\!\!\int_0^1\!\!\int_{G}\varphi\Bigl(a_{\ell_1}u_{r_1}a_{\ell_2}u_{r_2}gx)\diff\!\nu(g)\diff\!r_2\diff\!r_1=\\
\!\int\varphi\diff\!m_X+O_c\Bigl(\Sob(\varphi)(\eta+\egbd^{1/2}\varrho^{-3/2} e^{-\ref{k:mixing}\ell_1})\Bigr). 
\end{multline*}
This,~\eqref{eq: thicken exp F}, and~\eqref{eq: smooth mu} complete the proof. 
\end{proof}


\section{Modified energy and projection theorems}\label{sec: proj and energy}

We begin by defining a modified (and localized) $\alpha$-dimensional energy for finite subsets of $\R^d$. Fix a norm $\|\;\|$ on $\bbr^d$ (below we will apply this with $d=5, 7, 11$). Let $\Theta\subset B_{\R^d}(0,1)$ be a finite set. 

For $0\leq \trct<1$ and $0<\alpha<d$, define 
$\eng_{\Theta, \trct}^{(\alpha)}: \Theta\to (0,\infty)$ as follows: 
\[
\eng_{\Theta,\trct}^{(\alpha)}(w)=\sum_{w'\in\Theta\setminus \{w\}}\max(\|w-w'\|,\trct)^{-\alpha}.
\] 
When $\delta=0$, we often write $\eng_{\Theta}^{(\alpha)}(w)$ for $\eng_{\Theta,0}^{(\alpha)}(w)$.

This notation will also be used for finite subsets of $\rfrak$, which is an $2\dm+1$-dimensional $\Ad(H)$-irreducible representation, see~\S\ref{sec: notation}.

The following projection theorem plays a crucial role in our argument.

\begin{thm}\label{thm: proj thm}
Let $0<\alpha< 2\dm+1$, there exists $\imp>0$ so that the following holds. Specifically, the theorem holds with ${\imp}$ as in~\eqref{eq: def alpha'}, see also~\eqref{eq: alpha' is positive}.  

Let $0<\pvare<10^{-4}\alpha$, and $\ell>0$. 
Let $\Theta\subset B_\rfrak(0,1)$, and assume that $\#\Theta$ is large (depending on $\pvare$). Assume further that  
\be\label{eq: energy bd proj thm main}
\eng_{\Theta,\trct}^{(\alpha)}(w)\leq \egbd\quad\text{for every $w\in\Theta$,}
\ee
for some $0<\delta<1$. 

There exists a subset $J\subset [0,1]$ with 
\[
|[0,1]\setminus J|\leq  \mathsf L_\pvare |\log\trct| e^{-\star\pvare^2\ell},
\] 
so that the following holds. 
Let $r\in J$, then there exists $\Theta_{r}\subset \Theta$ with 
\[
\#(\Theta\setminus \Theta_{r})\leq \mathsf L_\pvare |\log\trct| e^{-\star\pvare^2\ell}\cdot (\#\Theta)
\]
such that for all $w\in \Theta_r$, the following is satisfied
\[
\eng_{\Theta(w),\trct'}^{(\alpha)}\bigl(\Ad(a_\ell u_r)w\bigr)\leq \mathsf L_\pvare e^{-{\imp}\ell}\trct^{-\pvare}\egbd
\] 
where $\delta'=e^{\dm\ell}\max(\delta, (\#\Theta)^{-1/\alpha})$, $\mathsf L_\pvare$ is a constant depending on $\pvare$, and 
\[
\Theta(w)=\{\Ad(a_\ell u_r)w': w'\in \Theta_r, \|w-w'\|\leq e^{-\dm\ell}\}.
\] 
\end{thm}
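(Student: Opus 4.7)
\emph{Plan.} My approach is to bound the averaged-in-$r$ energy $\int_0^1\eng^{(\alpha)}_{\Theta(w),\delta'}\!\bigl(\Ad(a_\ell u_r)w\bigr)\diff\!r$ via the Gan--Guo--Wang projection theorem applied to the one-parameter family $r\mapsto\Ad(a_\ell u_r)$ acting on the irreducible $(2\dm+1)$-dimensional $\SL_2(\R)$-representation $\rfrak$, and then to extract the exceptional sets $J\subset[0,1]$ and $\Theta\setminus\Theta_r$ by two successive applications of Chebyshev's inequality.

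\emph{Reduction and dyadic splitting.} Since $\Ad(a_\ell)$ expands any vector by a factor of at most $e^{\dm\ell}$, only pairs with $\|w-w'\|\le e^{-\dm\ell}$ can contribute to $\Theta(w)$, so it suffices to bound the restricted sum over such pairs. I partition the relevant pairs dyadically by separation $\rho=\|w-w'\|\approx 2^{-k}$, with $k$ ranging roughly from $\dm\ell/\log 2$ up to $|\log\delta|/\log 2$. The hypothesis~\eqref{eq: energy bd proj thm main} then gives
\[
\#\bigl\{w'\in\Theta:\|w-w'\|\in(2^{-k-1},2^{-k}]\bigr\}\;\lesssim\;\egbd\cdot 2^{-\alpha k}\qquad(2^{-k}\ge\delta),
\]
while the truncated regime $\rho<\delta$ is handled using the cap built into the definition of $\eng^{(\alpha)}_{\Theta,\delta}$.

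\emph{Projection theorem per scale.} For each dyadic shell, rescaling by $2^k$ places the pair differences in $B_\rfrak(0,1)$ carrying an empirical measure with an $\alpha$-dimensional Frostman-type decay. The map $r\mapsto\Ad(a_\ell u_r)|_\rfrak$ is a polynomial curve of linear maps on the irreducible $(2\dm+1)$-dimensional representation, and the non-degeneracy coming from the $\mathfrak{sl}_2$-triple structure places it in the regime where \cite{GGW} applies. The Gan--Guo--Wang projection theorem then provides, for $r$ outside a set of measure $\le C_\pvare\, e^{-c\pvare^2\ell}$, an estimate which, after returning to the original scale, takes the form
\[
\sum_{w'\in\Theta_k(w)}\max\bigl(\|\Ad(a_\ell u_r)(w-w')\|,\delta'\bigr)^{-\alpha}\;\le\;\mathsf L_\pvare\,e^{-\imp\ell}\,2^{\pvare k}\,\egbd,
\]
with the exponent $\imp$ as in \eqref{eq: def alpha'}; this improves over the naive average supplied by Lemma~\ref{lem: linear algebra 1}. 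Summing over the $O(|\log\delta|)$ dyadic scales converts the factor $2^{\pvare k}$ into $\delta^{-\pvare}$ and yields the target bound on the $r$-averaged quantity.

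\emph{Assembly and principal obstacle.} Two successive applications of Chebyshev's inequality, first in $r\in[0,1]$ applied to the $w$-summed energy and then in $w\in\Theta$ for each remaining $r\in J$, furnish $J$ and $\Theta_r$ with the stated size bounds. The main technical obstacle is the correct invocation of Gan--Guo--Wang for the specific algebraic family $\{\Ad(a_\ell u_r)\}_{r\in[0,1]}$ acting on the various irreducible $\SL_2$-representations of dimensions $3, 5, 7, 11$ arising from the six groups of absolute rank $2$: one must verify the non-degeneracy/curve-type condition uniformly and extract the explicit exponent $\imp$ in \eqref{eq: def alpha'}. A secondary point is justifying the form of the output truncation $\delta'=e^{\dm\ell}\max(\delta,(\#\Theta)^{-1/\alpha})$, which reflects both the maximal $\Ad(a_\ell)$-expansion factor and the empirical resolution below which the $\alpha$-dimensional structure of $\Theta$ necessarily breaks down.
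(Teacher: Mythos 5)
Your dyadic-splitting strategy, the per-scale invocation of Gan--Guo--Wang, and the summation converting $2^{\pvare k}$ into $\delta^{-\pvare}$ are all correct and in line with the paper's plan, but the assembly step has a genuine gap. Gan--Guo--Wang, in the form needed here (Theorem~\ref{thm: proj app 1}/Lemma~\ref{cor: proj app 2}), does \emph{not} give a pointwise mass bound for all $w$ once $r$ is good; for each good $r$ it produces a further exceptional subset $\Theta_{\rhsc,r}\subset\Theta$ and bounds the projected mass only for $w$ outside that set. You therefore cannot recover $J$ and $\Theta_r$ by Chebyshev: Chebyshev would require an $L^1$-average of the energy over all $(r,w)$ pairs, and the trivial a priori bound on the contribution of exceptional pairs, namely $\eng^{(\alpha)}_{\Theta(w),\delta'}\!\bigl(\Ad(a_\ell u_r)w\bigr)\le e^{\alpha\dm\ell}\egbd$ (since $\Ad(a_\ell u_r)$ can contract by at most $e^{\dm\ell}$), is too large --- even after multiplying by the exceptional measure $e^{-\star\pvare^2\ell}$ it exceeds the target $e^{-\imp\ell}\delta^{-\pvare}\egbd$, because $\pvare<10^{-4}\alpha$ forces $\star\pvare^2\ll\alpha\dm+\imp$. (For $\alpha\le\frac{1}{2\dm+1}$ one \emph{can} average directly via Lemma~\ref{lem: linear algebra 1}, but that is the range where the projection theorem is unnecessary; this is exactly why the paper proves a separate Lemma~\ref{lem: LA implies initial dim} for small $\alpha$.) The paper avoids the $L^1$-average entirely: it sets $J=\bigcap_{k}J_{\ell,e^{-k}}$ and $\Theta_r=\bigcap_k\Theta_{\ell,e^{-k},r}$, intersecting the exceptional sets from Lemma~\ref{cor: proj app 2} over the $O(|\log\delta|)$ dyadic scales, which is precisely the source of the $|\log\delta|$ factor in the stated measure bounds.

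A second, acknowledged gap is the derivation of $\imp$ in~\eqref{eq: def alpha'}. This is not a one-line application of GGW but the main content of Lemma~\ref{cor: proj app 2}: for $\alpha\le 2\dm-1$ one projects to the top-weight subspace $\rfrak_k$ and covers the image of the anisotropic box $\rhsc'\times e^\ell\rhsc'\times\cdots\times e^{(k-1)\ell}\rhsc'$ by isotropic $\rhsc'$-boxes, while for $\alpha>2\dm-1$ one must additionally decompose $\Theta$ into cubes of side $e^{\dm\ell}\rhsc$, pass to rescaled measures $\tilde\rho_q$ on each cube, run a Fubini argument over cubes, and use the box-covering identity~\eqref{eq: boxes with el moved}. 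The three cases in~\eqref{eq: def alpha'} fall out of that analysis; flagging this as a ``principal obstacle'' is accurate, and without resolving it the proposal does not establish the stated exponent.
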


Before proceeding with the proof of this theorem,  
we explicate the value of ${\imp}$ for which we will prove the theorem. Let $0<\alpha< 2\dm+1$, and define 
\be\label{eq: def alpha'}
{\imp}=\begin{cases}
\alpha\dm-\sum_{i=1}^{\lceil\alpha\rceil-1} i & 0< \alpha\leq 2\dm-1\\
\max\Bigl(\dm(1-2\dm+\alpha), \dm(1+2\dm-\alpha)-1\Bigr) & 2\dm-1< \alpha\leq 2\dm\\
\dm(1+2\dm-\alpha) & 2\dm\leq \alpha\leq2\dm+1
\end{cases}
\ee
Note that for any $0<\hat\kappa<\frac{1}{10\dm}$, and any $0<\alpha\leq 2\dm+1-\hat\kappa$, we have 
\be\label{eq: alpha' is positive}
{\imp}\geq \min\{\alpha\dm,\hat\kappa\dm\}. 
\ee
Indeed, if $0<\alpha\leq 1$, then ${\imp}=\alpha\dm$. 
Let us now assume $d-1<\alpha\leq d$ for some $2\leq d\leq 2\dm-1$, then 
\[
{\imp}=\alpha\dm-\tfrac{d(d-1)}{2}\geq \tfrac{(d-1)(2\dm-d)}{2}\geq \tfrac12.
\] 
Now consider $2\dm-1<\alpha\leq 2\dm$. Indeed  
\[
\begin{aligned}
{\imp}&\geq \dm(1+2\dm-\alpha)-1\geq \tfrac{5}{3}\dm-1&&\text{if $2\dm-1\leq \alpha\leq 2\dm-\tfrac23$}\\
{\imp}&\geq \dm(1-2\dm+\alpha)\geq \tfrac\dm3 &&\text{if $2\dm-\tfrac23\leq \alpha\leq 2\dm$}
\end{aligned}
\]
The claim in~\eqref{eq: alpha' is positive} follows.

\medskip

The proof of Theorem~\ref{thm: proj thm} relies primarily on a result by Gan, Guo, and Wang \cite[Thm.~2.1]{GGW}. More specifically, the following lemma forms the crux of the proof of Theorem~\ref{thm: proj thm}. The proof of this lemma, in turn, depends on Theorem~\ref{thm: proj app 1} which is \cite[Thm.~2.1]{GGW} tailored to our application.

\begin{lemma}\label{cor: proj app 2}
Let $0<\alpha< 2\dm+1$, $0<\rhsc_1\leq1$ and $\ell>0$. 
Let $\rho$ be the uniform measure on a finite set $\Theta\subset B_\rfrak(0,1)$ satisfying the following
\be\label{eq: dimension cond app}
\rho(B_\rfrak(w, \rhsc))\leq \egbd\rhsc^\alpha\qquad\text{for all $w$ and all $\rhsc\geq \rhsc_1$}
\ee
Let $0<\pvare<10^{-4}$. For every $1\geq \rhsc\geq e^{\dm\ell}\rhsc_1$, there exists a subset $J_{\ell,\rhsc}\subset [0,1]$ with $|[0,1]\setminus J_{\ell,\rhsc}|\leq C_\pvare' e^{-\star\pvare^2\ell}$ 
so that the following holds. 
Let $r\in J_{\ell,\rhsc}$, then there exists a subset $\Theta_{\ell,\rhsc,r}\subset \Theta$ with 
\[
\rho(\Theta\setminus \Theta_{\ell,\rhsc,r})\leq C_\pvare' e^{-\star\pvare^2\ell}
\]
such that for all $w\in\Theta_{\ell,\rhsc,r}$ we have 
\[
\rho\Bigl(\{w'\in \Theta_{\ell,\rhsc,r}: \|\Ad(a_\ell u_r)w-\Ad(a_\ell u_r)w'\|\leq \rhsc\}\Bigr) \leq C_\pvare' \egbd e^{-{\imp}\ell} \rhsc^{\alpha-\pvare}
\]
where ${\imp}$ is as in~\eqref{eq: def alpha'}. 
\end{lemma}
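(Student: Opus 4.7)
The plan is to deduce Lemma~\ref{cor: proj app 2} from Theorem~\ref{thm: proj app 1}, the specialization of \cite[Thm.~2.1]{GGW} stated earlier in the section, combined with two applications of Chebyshev's inequality to pass from an averaged energy bound to the desired pointwise statements on a large subset of parameters.

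First, I would unpack the linear algebra of the family $r\mapsto\Ad(a_\ell u_r)$ acting on $\rfrak$. Since $\rfrak$ is an $\Ad(H)$-irreducible representation of dimension $2\dm+1$, in a basis of pure weight vectors the operator $\Ad(a_\ell)$ is diagonal with eigenvalues $e^{-\dm\ell},e^{(-\dm+1)\ell},\ldots,e^{\dm\ell}$, and $\Ad(u_r)=\exp(r\,\mathrm{ad}\,\xi)$ for a regular nilpotent $\xi\in\Lie(U)$ acting on $\rfrak$ with a single Jordan block. Thus $r\mapsto \Ad(a_\ell u_r)$ is a polynomial one-parameter family of linear maps of precisely the type to which Theorem~\ref{thm: proj app 1} applies. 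The assumption $\rhsc\geq e^{\dm\ell}\rhsc_1$ guarantees that the preimage of a $\rhsc$-ball, measured along the direction that is expanded most by $\Ad(a_\ell)$, sits at a scale at which the non-concentration assumption~\eqref{eq: dimension cond app} is available.

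Second, I would apply Theorem~\ref{thm: proj app 1} to $\rho$ at scale $\rhsc$ to obtain an averaged bound schematically of the form
\[
\int_0^1\!\!\int \rho\bigl(\{w'\in\Theta : \|\Ad(a_\ell u_r)(w-w')\|\leq \rhsc\}\bigr)\,d\rho(w)\,dr \;\leq\; C_\pvare''\,\egbd\,e^{-\imp\ell}\rhsc^{\alpha-\pvare},
\]
with $\imp$ as in~\eqref{eq: def alpha'}. The three cases in~\eqref{eq: def alpha'} correspond to the regime of $\alpha$ relative to the ambient dimension $2\dm+1$: for small $\alpha$ one is in a Marstrand-type situation where the top weight controls matters (yielding $\alpha\dm$ with a correction $\sum_{i<\lceil\alpha\rceil}i$ from the intermediate Jordan steps); for $\alpha$ close to $2\dm+1$ a Kaufman-type estimate coming from the lowest weight direction is sharper; in the intermediate slice $2\dm-1<\alpha\leq 2\dm$ either regime can prevail, which accounts for the $\max$.

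Third, I would apply Chebyshev twice. First, in the $r$-variable, to extract $J_{\ell,\rhsc}\subset[0,1]$ of directions for which the inner $\rho$-integral is at most $e^{\star\pvare^2\ell}$ times its average; the complement then has measure at most $C_\pvare' e^{-\star\pvare^2\ell}$. Next, for each fixed $r\in J_{\ell,\rhsc}$, in the $w$-variable with respect to $\rho$, to extract $\Theta_{\ell,\rhsc,r}\subset\Theta$ on which the inner quantity itself satisfies the pointwise bound $C_\pvare'\,\egbd\,e^{-\imp\ell}\rhsc^{\alpha-\pvare}$; the discarded $\rho$-mass is again at most $C_\pvare' e^{-\star\pvare^2\ell}$. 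Since restricting the set being measured to $\Theta_{\ell,\rhsc,r}$ only improves the inequality, the conclusion of the lemma follows. The main obstacle will be the second step, namely verifying that Theorem~\ref{thm: proj app 1} produces exactly the exponent $\imp$ of~\eqref{eq: def alpha'} for the irreducible $(2\dm+1)$-dimensional $H$-module with weight decomposition $-\dm,\ldots,\dm$, which requires careful bookkeeping of which weight directions control the projection in each range of $\alpha$ and matching them with the case split in~\eqref{eq: def alpha'}. Once this matching is in place the Chebyshev step is routine.
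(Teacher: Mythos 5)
There is a genuine gap. You correctly identify Theorem~\ref{thm: proj app 1} as the engine, but you have misread what it delivers and, more importantly, you have not supplied the mechanism that converts its conclusion into the one stated in the lemma. Theorem~\ref{thm: proj app 1} is not an averaged estimate: it already produces, for each scale $\rhsc$, an exceptional set $J_{\rhsc}\subset[0,1]$ and for each good $r$ a trimmed set $\Theta_{\rhsc,r}$, together with a pointwise bound on the $\rho$-mass of $\rhsc$-neighborhoods of the \emph{orthogonal projections} $\pi_d(\Ad(u_r)w)$. Your two proposed Chebyshev steps are therefore redundant. More seriously, the bound you write down, on $\rho\bigl(\{w':\|\Ad(a_\ell u_r)(w-w')\|\leq\rhsc\}\bigr)$, is simply not what Theorem~\ref{thm: proj app 1} says, since there is no $a_\ell$ anywhere in its conclusion. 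The entire content of the lemma is the passage from the projection bounds on $\pi_d(\Ad(u_r)\cdot)$ to bounds on $\Ad(a_\ell u_r)\cdot$, and your proposal leaves this as ``careful bookkeeping of weight directions'' without providing an argument.

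The actual mechanism, for $\alpha\leq 2\dm-1$, is to apply Theorem~\ref{thm: proj app 1} at the finer scale $\rhsc'=e^{-\dm\ell}\rhsc$ with $k=\lceil\alpha\rceil$, and then to observe (using \eqref{eq: boxes with el moved}) that the set $\{w':\|\Ad(a_\ell u_r)(w-w')\|\leq\rhsc\}$ maps under $\pi_k\circ\Ad(u_r)$ into an anisotropic box of dimensions $\rhsc'\times e^\ell\rhsc'\times\cdots\times e^{(k-1)\ell}\rhsc'$; covering this box with $\ll e^{\ell\sum_{i=1}^{k-1}i}$ cubes of side $\rhsc'$ and summing the projection bounds yields the exponent $\imp'=\alpha\dm-\sum_{i=1}^{k-1}i$. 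For $2\dm-1<\alpha\leq 2\dm+1$ this covering alone is too lossy for one of the two terms in the $\max$ defining $\imp$, and a genuinely different argument is required: one covers $\Theta$ by $\rhsc'$-cubes $B_q$, rescales the restricted and normalized measures $\rho_q$ to unit scale, checks that each $\tilde\rho_q$ inherits an $\alpha$-dimensional non-concentration estimate from \eqref{eq: dimension cond app} (with the factor $\egbd e^{\alpha\dm\ell}\rhsc^\alpha/\rho(B_q)$), applies Theorem~\ref{thm: proj app 1} with $k=2\dm-1$ or $k=2\dm$ to each $\tilde\rho_q$, and then runs a Fubini argument over the index set $\mathcal Q$ weighted by $\rho(B_q)$ to reassemble the sets $J$ and $\Theta_{\ell,\rhsc,r}$. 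The $\max$ in the definition of $\imp$ on $(2\dm-1,2\dm]$ is thus not an artifact of two regimes inside one argument; it records that two structurally different proofs are available and one takes the better exponent. Without the box-covering step and the rescaling step, your outline does not produce the exponent $\imp$ and does not handle the range $\alpha>2\dm-1$ at all.
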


\begin{proof}
We first establish the claim for $0<\alpha\leq 2\dm-1$, and also obtain one of the bounds in the definition $\imp=\max\Bigl(\dm(1-2\dm+\alpha), \dm(1+2\dm-\alpha)-1\Bigr)$
for $2\dm-1<\alpha\leq 2\dm$, namely $\dm(1-2\dm+\alpha)$.  

To that end, let $k=\lceil \alpha\rceil$ and write
\[
\imp'=\alpha\dm-\textstyle\sum_{i=1}^{k-1} i.
\]

For every $1\leq d\leq 2\dm+1$, let $\rfrak_d$ denote the space spanned by vectors with weight $\dm,\ldots, \dm-d+1$. 
Let $\pi_d:\rfrak\to\rfrak_d$ denote the orthogonal projection. 
Apply Theorem~\ref{thm: proj app 1} with $\rhsc'=e^{-\dm\ell} \rhsc\geq \mfsc_1$ and $\pvare/2$. Then for every $r\in J_{\rhsc'}$ and all $w\in \Theta_{\rhsc',r}$, we have  
\be\label{eq: use thm proj 1}
\rho\Bigl(\{w'\in \Theta: \|\pi_k(\Ad(u_r)w))-\pi_k(\Ad(u_r)w')\|\leq \rhsc'\}\Bigr)\leq C_\pvare\egbd (\rhsc')^{\alpha-\frac\pvare2}.
\ee

Let $P\subset \rfrak_k$ denote the box $\rhsc'\times e^\ell\rhsc'\cdots\times e^{(k-1)\ell}\rhsc'$ centered at the origin, where the directions correspond to the weight spaces for $a_t$ in decreasing order.  
Then $P+\pi_k(\Ad(a_\ell u_r)w)$ can be covered with $\ll e^{\sum_{i=1}^{k-1}i\ell}$ many boxes of size $\rhsc'$. 
Thus~\eqref{eq: use thm proj 1}, applied with $2\rhsc'$, implies that 
\be\label{eq: cover P with boxes}
\begin{aligned}
\pi_k(\rho|_{\Theta_{\ell,\rhsc,r}})(P+\pi_k(\Ad(a_\ell u_r)w))&\leq (C_\pvare\egbd (2\rhsc')^{\alpha-\frac\pvare2})\cdot e^{\sum_{i=1}^{k-1}i\ell}\\
&\leq 2^{\alpha} C_\pvare\egbd e^{-\imp'\ell}\rhsc^{\alpha-\pvare}.
\end{aligned}
\ee 
Note also that 
\begin{multline*}
\{w'\in \Theta_{\rhsc',r}: \|\Ad(a_\ell u_r)w)-\Ad(a_\ell u_r)w'\|\leq \rhsc\}\subset\\
\{w'\in \Theta_{\rhsc',r}: \|\pi_k(\Ad(a_\ell u_r)w))-\pi_k(\Ad(a_\ell u_r)w')\|\leq \rhsc\}\subset P_{\ell, r, w}.
\end{multline*}
This and~\eqref{eq: cover P with boxes} show that  
\begin{multline}\label{eq: estimate for alpha'1}
\rho|_{\Theta_{\ell,\rhsc,r}}\Bigl(\{w'\in\rfrak: \|\Ad(a_\ell u_r)w)-\Ad(a_\ell u_r)w'\|\leq \rhsc\}\Bigr) \leq \\ 2^\alpha C_\pvare \egbd e^{-\imp'\ell} \rhsc^{\alpha-\pvare}
\end{multline}
which establishes the claim when $0< \alpha\leq 2\dm-1$, as well as when $2\dm-1<\alpha\leq 2\dm$ for $\imp'=\dm(1-2\dm+\alpha)$ (when this is positive).   

\smallskip

We now turn to the case when $2\dm-1< \alpha\leq 2\dm+1$.
For a vector $v\in\rfrak$ and $1\leq i\leq 2\dm+1$, let $v_i$ denote the component of $v$ in the weight space $\dm-i+1$. 
Then for every $w\in\Theta$ and all $\rhsc$, we have 
\begin{multline}\label{eq: boxes with el moved}
\{w'\in \Theta: \|\Ad(a_\ell u_r)w)-\Ad(a_\ell u_r)w'\|\leq \rhsc\}= \\
\{w'\in \Theta: \forall i, |(\Ad(u_r)w)_i-(\Ad(u_r)w')_i|\leq e^{(-\dm+i-1)\ell}\rhsc\}. 
\end{multline}

Put $\rhsc'=e^{\dm \ell}\rhsc$, and let $P\subset \rfrak$ be the box
\[
e^{-2\dm \ell}\rhsc'\times e^{(-2\dm+1) \ell}\times \cdots\times e^{-\ell}\rhsc'\times \rhsc'
\]
centered at the origin, i.e., $i$-th weight space has size $e^{-\dm-i}\rhsc'$. 
In view of~\eqref{eq: boxes with el moved}, we will estimate the measure of sets of the form $P+w$ for $w\in\Theta$.
 
To that end, cover $\Theta$ with half-open disjoint boxes $\{B_q: q\in\mathcal Q\}$ of size $\rhsc'$. For all $B_q$, let $\rho_q=\frac{1}{\rho(B_q)}\rho|_{B_q}$, and let $\tilde\rho_q$ denote the image of $\rho_q$ under $z\mapsto \frac{1}{\rhsc'}(z-z_q)$, where $z_q$ is the center of $B_q$.
Then for all $\delta\geq \rhsc_1/\rhsc'$,  
\be\label{eq: dimension of rhoj}
\begin{aligned}
\tilde\rho_q(B_\rfrak(v, \delta))&=\tfrac{1}{\rho(B_q)}\rho (B_q\cap B_\rfrak(v+z_q, \rhsc'\delta))\\
&\leq \tfrac{\egbd}{\rho(B_q)}(\rhsc'\delta)^\alpha=\tfrac{\egbd e^{\alpha\dm\ell}\rhsc^\alpha}{\rho(B_q)}\delta^\alpha
\end{aligned}
\ee
In particular, if $\rhsc\geq e^{\dm\ell}\rhsc_1$, then $e^{-2\dm \ell}\geq \frac{\rhsc_1}{\rhsc'}$, thus~\eqref{eq: dimension of rhoj} holds for $\delta\geq e^{-2\dm \ell}$. 

Let us first assume $2\dm-1< \alpha\leq2\dm$. Put $P'=\frac{1}{\rhsc'}P$. 
Then using Theorem~\ref{thm: proj app 1}, with $k=2\dm-1$ and $\tilde\rho_q$ for all $q\in\mathcal Q$, there exists a subset $J_q\subset [0,1]$ with $|[0,1]\setminus J_q|\ll e^{-\star\pvare^2\ell}$ and for every $r\in J_q$ a subset $\Theta_{q,r}\subset \Theta\cap B_q$ with  
with 
$\rho_q(\Theta\setminus\Theta_{q,r})\ll e^{-\star\pvare^2\ell}$ so that for all $v\in\rfrak$, 
\[
\begin{aligned}
\tilde\rho_q|_{\Theta_{q,r}}(\Ad(u_{-r})P'+v)&\leq C_\pvare \tfrac{\egbd e^{\alpha\dm\ell}\rhsc^\alpha}{\rho(B_q)} e^{(-\sum_{i=2}^{2\dm} i+\pvare)\ell}\\
&\leq C_\pvare\tfrac{\egbd}{\rho(B_q)} e^{(\alpha\dm-\sum_{i=2}^{2\dm} i)\ell}\rhsc^{\alpha-\pvare}\\
&=C_\pvare\tfrac{\egbd}{\rho(B_q)} e^{-(\dm(2\dm+1-\alpha)-1)\ell}\rhsc^{\alpha-\pvare}.
\end{aligned}
\]
Equip $\mathcal Q\times [0,1]$ with $\sigma \times\Leb$ where $\sigma(q)=\rho(B_q)$. By Fubini's theorem, there is $J\subset [0,1]$ with $|[0,1]\setminus J|\ll e^{-\star\pvare^2\ell}$, 
and for all $r\in J$ a subset $\mathcal Q_r\subset\mathcal Q$ satisfying that $\sum_{q\not\in\mathcal Q_r} \rho(B_q)\ll e^{-\star\pvare^2\ell}$ so that $r\in J_q$ for all $q\in\mathcal Q_r$. 

For every $r\in J$, let $\Theta_{\ell, \rhsc,r}=\cup_{q\in\mathcal Q_r} \Theta_{q,r}$. 
Since $P+\Ad(u_r)w$ is contained in $O(1)$ many $B_q$'s, the above and the definition of $\tilde\rho_q$ imply that 
\[
\rho|_{\Theta_{\ell, \rhsc,r}}(\Ad(u_{-r})P+w)\ll C_\pvare\egbd e^{-(\dm(2\dm+1-\alpha)-1)\ell}\rhsc^{\alpha-\pvare}.
\]
In view of~\eqref{eq: boxes with el moved}, this and~\eqref{eq: estimate for alpha'1} establish the claim when $2\dm-1< \alpha\leq2\dm$.

The proof in the case $2\dm\leq \alpha\leq2\dm+1$ is similar.  
Indeed, applying Theorem~\ref{thm: proj app 1}, with $k=2\dm$ and $\{\tilde\rho_q:q\in\mathcal Q\}$, we obtain $J\subset [0,1]$, for all $r\in J$ the subset $\mathcal Q_r\subset \mathcal Q$ and for each $q\in\mathcal Q_r$, the set $\Theta_{q,r}$ as above so that  
\[
\begin{aligned}
\tilde\rho_q|_{\Theta_{q,r}}(P'+\Ad(u_r)v)&\leq C_\pvare \tfrac{\egbd e^{\alpha\dm\ell}\rhsc^\alpha}{\rho(B_q)} e^{(-\sum_{i=1}^{2\dm} i+\pvare)\ell}\\
&\leq C_\pvare\tfrac{\egbd}{\rho(B_q)} e^{(\alpha\dm-\sum_{i=1}^{2\dm} i)\ell}\rhsc^{\alpha-\pvare}\\
&=C_\pvare \tfrac{\egbd}{\rho(B_q)} e^{-\dm(2\dm+1-\alpha)\ell}\rhsc^{\alpha-\pvare},
\end{aligned}
\]
for all $v\in\rfrak$. Put $\Theta_{\ell,\rhsc, r}=\cup_{q\in\mathcal Q_r}\Theta_{q,r}$. Then 
\[
\rho|_{\Theta_{\ell,\rhsc, r}}(P+\Ad(u_r)w)\ll C_\pvare\egbd e^{-\dm(2\dm+1-\alpha)\ell}\rhsc^{\alpha-\pvare}
\]
which, thanks to~\eqref{eq: boxes with el moved}, gives the claim when $2\dm\leq \alpha<2\dm+1$. 
\end{proof}

\subsection{Proof of Theorem~\ref{thm: proj thm}}
Recall the definition of ${\imp}$ from ~\eqref{eq: def alpha'}. 
 
Let us write $\bar\egbd=\frac{\egbd}{\#\Theta}$. 
To simplify the notation, $\Ad(a_\ell u_r)(w)$ will be denoted by $\xi_{\ell, r}(w)$ in the proof. 

Let $\rho$ denote the uniform measure on $\Theta$. By~\eqref{eq: energy bd proj thm main},
\be\label{eq: dimension from energy}
\rho \Bigl(B(w,\rhsc)\cap \Theta\Bigr)\leq 2\bar\egbd\rhsc^\alpha \quad\text{for all $\rhsc\geq \max(\trct, (\#\Theta)^{-\frac1\alpha})=:\mfsc_1$}
\ee
Thus, Theorem~\ref{thm: proj app 1} and Lemma~\ref{cor: proj app 2} are applicable with $\rho$ and $\rhsc\geq \mfsc_1$.

Let $k_1=-\lceil \log\mfsc_1\!\rceil$ and $k_2=\dm\ell$. 
Apply Lemma~\ref{cor: proj app 2} with $\pvare/2$ and $\rhsc=e^{-k}$ for all $k_2\leq k\leq k_1$. 
Let $J=\bigcap_{k=k_2}^{k_1} J_{\ell,e^{-k}}$ where $J_{\ell,\mfsc}$ is as in Lemma~\ref{cor: proj app 2}. 
Then that lemma implies that 
\[
|[0,1]\setminus J|\ll_\pvare \sum_{k_2}^{k_1} e^{-\star\pvare^2\ell} \ll_\pvare |\log\trct| e^{-\star\pvare^2\ell}.
\]

For every $r\in [0,1]$, let 
\be\label{eq: def of Theta r}
\Theta_r=\bigcap_{k=k_2}^{k_1-\dm\ell}\Theta_{\ell, e^{-k},r}
\ee
where $\Theta_{\ell, e^{-k},r}$ is given by Lemma~\ref{cor: proj app 2}. Applying that lemma, thus, we have 
\[
\rho(\Theta\setminus\Theta_r)\ll_\pvare |\log\trct|e^{-\star\pvare^2\ell}.
\]

Define $J$ and for every $r\in J$, $\Theta_r$ as above for this case.   
For every $w\in \Theta_r$ and every $k< k_1-\dm\ell$, let 
\[
\Theta(w,k)=\{w'\in \Theta_r: e^{-k-1}<\|\xi_{\ell, r}(w)-\xi_{\ell,r}(w')\|\leq e^{-k}\};
\]
define $\Theta(w,k_1-\dm\ell)$ similarly but without imposing a lower bound. 
Then by Lemma~\ref{cor: proj app 2}, for all $k\leq k_1-\dm\ell$ we have   
\be\label{eq: rho Theta w k}
\rho(\Theta(w,k))\leq C_\pvare' \bar\egbd \trct^{-\pvare/2}  e^{-{\imp}\ell}e^{-\alpha k}.
\ee

Put $k_1'=k_1-\dm\ell$. From the above we conclude   
\begin{align*}
\eng_{\Theta(w), \trct'}^{(\alpha)}(w)&=\sum_{w'\in \Theta(w)}\max(\|\xi_{\ell,r}(w)-\xi_{\ell,r}(w')\|,\trct')^{-\alpha}\\
&\leq \rho(\Theta(w,k_1'))\cdot e^{k_1'\alpha}+ \sum_{k=k_2}^{k_1'-1}\sum_{\Theta(w,k)}\|\xi_{\ell,r}(w)-\xi_{\ell,r}(w')\|^{-\alpha}\\
&\ll_{\pvare} \sum_{k_2}^{k_1'} \bar\egbd \trct^{-\frac\pvare2} e^{-{\imp}\ell}e^{-\alpha k}e^{\alpha k}\cdot (\#\Theta)\\
&\ll_{\pvare}  |\log\trct| e^{-{\imp}\ell}\trct^{-\frac\pvare2}  \egbd\ll_\pvare e^{-{\imp}\ell} \trct^{-\pvare}\egbd ,
\end{align*}
where we used~\eqref{eq: rho Theta w k} in the third line, $\bar\egbd=\frac{\egbd}{\#\Theta}$ and $k_1\leq |\log\trct|$ in the second to last inequality, and $|\log\trct|\trct^{-\pvare/2}\leq \trct^{-\pvare}$ in the last inequality.   

The proof is complete. 
\qed

\subsection{Linear algebra lemma and the energy}\label{sec: linear algebra proj}
In this section, we detail how Lemma~\ref{lem: linear algebra 1} can be employed to improve the initial estimate provided by Proposition~\ref{prop: closing lemma}. While the formulation of Lemma~\ref{lem: LA implies initial dim} below bears similarities to 
Theorem~\ref{thm: proj thm} albeit with small $\alpha$, a key observation is that, unlike that theorem, this step avoids any loss of scales.

In the proof of Proposition~\ref{propos: imp dim main}, we will use Lemma \ref{lem: LA implies initial dim} to refine the initial estimate provided by Proposition~\ref{prop: closing lemma}, achieving a positive initial dimension. Subsequently, Theorem~\ref{thm: proj thm} will be applied to further improve this dimension, bringing it close to full dimension.

\begin{lemma}\label{lem: LA implies initial dim}
Let $0<\alpha\leq \frac1{2\dm+1}$ and $\ell>0$. 
Let $\Theta\subset B_\rfrak(0,1)$ be a finite set satisfying that 
\be\label{eq: energy bd proj LA}
\eng_{\Theta}^{(\alpha)}(w)\leq \egbd\quad\text{for every $w\in\Theta$,}
\ee
for some $\egbd\geq 1$.

There exists a subset $J\subset [0,1]$ with $|[0,1]\setminus J|\leq  \mathsf L_\alpha e^{-\star\alpha\ell}$, so that the following holds. 
Let $r\in J$, then there exists a subset $\Theta_{r}\subset \Theta$ with 
\[
\#(\Theta\setminus \Theta_{r})\leq \mathsf L_\alpha e^{-\star\alpha\ell}\cdot (\#\Theta)
\]
such that for all $w\in \Theta_r$ we have 
\[
\eng^{{(\alpha)}}_{\Ad(a_\ell u_r)\Theta} \bigl(\Ad(a_\ell u_r)w\bigr)\leq \mathsf L_\alpha e^{-\frac{3\alpha}{4}\ell}\egbd
\] 
where $\mathsf L_\alpha$ is a constant depending on $\alpha$.
\end{lemma}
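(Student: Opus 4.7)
My plan is a direct averaging argument based on Lemma~\ref{lem: linear algebra 1}: integrate the pairwise orbital estimate over $r\in[0,1]$, swap summations via Fubini, and then extract the good parameter set $J$ and the good subset $\Theta_r$ by two successive applications of Markov's inequality.

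First, I fix $w\in\Theta$ and apply Lemma~\ref{lem: linear algebra 1} to each nonzero vector $w-w'\in\rfrak$ with $w'\in\Theta\setminus\{w\}$, which gives
\[
\int_0^1 \|\Ad(a_\ell u_r)(w-w')\|^{-\alpha}\diff\!r\leq C e^{-\alpha\dm\ell}\|w-w'\|^{-\alpha}.
\]
Summing over $w'\neq w$ and invoking the hypothesis~\eqref{eq: energy bd proj LA} yields
\[
\int_0^1 \eng^{(\alpha)}_{\Ad(a_\ell u_r)\Theta}\bigl(\Ad(a_\ell u_r)w\bigr)\diff\!r \leq C e^{-\alpha\dm\ell}\egbd.
\]
Summing now over $w\in\Theta$ and interchanging the sum with the integral produces
\[
\int_0^1 \Phi(r)\diff\!r \leq C(\#\Theta)e^{-\alpha\dm\ell}\egbd,\qquad \Phi(r):=\sum_{w\in\Theta}\eng^{(\alpha)}_{\Ad(a_\ell u_r)\Theta}\bigl(\Ad(a_\ell u_r)w\bigr).
\]

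Next, I pick an intermediate exponent $\beta$ with $\tfrac{3\alpha}{4}<\beta<\alpha\dm$, for instance $\beta=\tfrac{7\alpha}{8}$, which is available because $\dm\geq 1$. By Markov's inequality applied in the $r$ variable, the complement of
\[
J:=\bigl\{r\in[0,1]:\Phi(r)\leq C e^{-\beta\ell}(\#\Theta)\egbd\bigr\}
\]
has Lebesgue measure at most $e^{-(\alpha\dm-\beta)\ell}\leq e^{-\star\alpha\ell}$. For each fixed $r\in J$, a second application of Markov in the $w$ variable at threshold $\mathsf L_\alpha e^{-\frac{3\alpha}{4}\ell}\egbd$, with $\mathsf L_\alpha$ chosen large enough to absorb the constant $C$, produces
\[
\Theta_r:=\bigl\{w\in\Theta:\eng^{(\alpha)}_{\Ad(a_\ell u_r)\Theta}\bigl(\Ad(a_\ell u_r)w\bigr)\leq \mathsf L_\alpha e^{-\frac{3\alpha}{4}\ell}\egbd\bigr\}
\]
satisfying $\#(\Theta\setminus\Theta_r)\leq \mathsf L_\alpha e^{-(\beta-\frac{3\alpha}{4})\ell}(\#\Theta)\leq \mathsf L_\alpha e^{-\star\alpha\ell}(\#\Theta)$, which is the asserted conclusion.

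I do not anticipate any serious obstacle. The restriction $\alpha\leq \tfrac{1}{2\dm+1}$ is imposed precisely so that Lemma~\ref{lem: linear algebra 1} applies, and the margin $\alpha\dm-\tfrac{3\alpha}{4}\geq\tfrac{\alpha}{4}$ leaves room to split between the measure of the exceptional $r$'s and the size of $\Theta\setminus\Theta_r$. The point emphasized in the paragraph preceding the lemma is that, unlike Theorem~\ref{thm: proj thm}, no loss of scale occurs here: since Lemma~\ref{lem: linear algebra 1} produces a clean orbital average on each individual pairwise term $\|w-w'\|^{-\alpha}$ with no truncation, the conclusion avoids any factor of the form $\trct^{-\pvare}$ and uses the full set $\Ad(a_\ell u_r)\Theta$ rather than the localized set $\Theta(w)$ that appears in Theorem~\ref{thm: proj thm}.
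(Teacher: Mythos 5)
Your proof is correct and follows essentially the same averaging-plus-pigeonhole strategy as the paper: both invoke Lemma~\ref{lem: linear algebra 1} to get the orbital decay $e^{-\alpha\dm\ell}$ on each pairwise term, and both then split the excess decay between the measure of the bad $r$'s and the size of the bad subset of $\Theta$. The only organizational difference is that you sum over $w\in\Theta$ first and then apply Markov twice (once in $r$, once in $w$), whereas the paper applies Chebyshev in $r$ separately for each fixed $w$ to define a per-$w$ bad set $J(w)$, passes to the product set $\Xi\subset\Theta\times[0,1]$, and extracts $J$ and $\Theta_r$ by Fubini; these are interchangeable bookkeeping choices and yield the same conclusion with the same quality of exponents.
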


\begin{proof}
Recall from Lemma~\ref{lem: linear algebra 1} that for all $0\neq v\in \rfrak$, we have 
\[
\int_0^1 \|a_\ell u_rv\|^{-\alpha}\diff\!r\leq C_\alpha e^{-\alpha \ell}\|v\|^{-\alpha}
\]
where $C_\alpha$ depends on $\alpha$. This and the definition of $\eng_\Theta^{(\alpha)}$ imply that 
\begin{align*}
\int_0^1 \eng_{\Ad(a_\ell u_r)\Theta}^{(\alpha)}(\Ad(a_\ell u_r)w)\diff\!r&=\int_0^1\sum_{w\neq w'}\|\Ad(a_\ell u_r)(w-w')\|^{-\alpha}\diff\!r\\
&\leq C_\alpha e^{-\alpha \ell} \sum_{w\neq w'}\|w-w'\|^{-\alpha}\\
&=C_\alpha e^{-\alpha \ell}\eng_{\Theta}^{(\alpha)}(w) \leq C_\alpha e^{-\alpha \ell}\egbd,
\end{align*}
for all $w\in\Theta$. 

Applying Chebyshev's inequality, we conclude that if for $w\in\Theta$ we put 
\[
J(w)=\left\{r\in[0,1]: \eng_{\Ad(a_\ell u_r)\Theta}^{(\alpha)}(\Ad(a_\ell u_r)w)> C_\alpha e^{-3\alpha \ell/4}\egbd\right\},
\] 
then $|J(w)|\leq e^{-\alpha\ell/4}$. Let 
\begin{align*}
   \Xi&=\left\{(w,r)\in\Theta\times [0,1]: \eng_{\Ad(a_\ell u_r)\Theta}^{(\alpha)}(\Ad(a_\ell u_r)w)\leq C_\alpha e^{-3\alpha \ell/4}\egbd\right\}\\
   &=\{(w,r):w \in \Theta, r \in [0,1]\setminus J(w)\}.
\end{align*}
The above discussion implies that $\rho\!\times\! \Leb\,(\Xi)>1-O(e^{-\alpha\ell/4})$, where 
$\rho$ denotes the uniform measure on $\Theta$. This and Fubini's theorem imply that 
there exists $J\subset[0,1]$ with $|[0,1]\setminus J|\ll e^{-\star\alpha \ell}$, so that for every $r\in J$
\[
\rho(\{w: (w,r)\in\Xi\})> 1-O(e^{-\star\alpha\ell}).
\] 
For every $r\in J$, let 
\be\label{eq: Theta r LA}
\Theta_r=\{w: (w,r)\in\Xi\}
\ee 
The claim in the lemma thus follows with $J$ and $\Theta_r$ as above. 
\end{proof}


\section{Margulis functions and projection theorems}\label{sec: Marg func}

Let us put
\[
\coneH=\{u_s^-:|s|\leq {\beta}\}\cdot\{a_d: |d|\leq \beta\}\cdot\{u_r: |r|\leq \eta\},
\]
see~\eqref{eq:def-Ct}. Let $F\subset B_\rfrak(0,\beta)$ be a finite set. Let $y\in X_{\eta}$ be so that 
$(h,w)\mapsto h\exp(w)y$ is injective over $\coneH\times F$ and  
\be\label{eq: def cone}
\cone=\coneH.\{\exp(w)y: w\in F\}\subset X_\eta
\ee

For every $(h,z)\in H\times \cone$, define 
\be\label{eq: def margI h,z}
\margI_{\cone}(h,z):=\Bigl\{w\in \rfrak: \|w\|<\inj(hz),\, \exp(w) h z\in h\cone\Bigr\}.
\ee
Note that $\margI_{\cone}(h,z)$ contains $0$ for all $z\in\cone$,
moreover, since $\mathsf E$ is bounded, $\margI_{\cone}(h,z)$ is a finite set for all $(h,z)\in H\times\cone$. 

Fix some $0< \alpha\leq 2\dm+1$.  
For every $0\leq \trct< 1$, define a modified Margulis function 
$
\mfm_{\cone,\trct}^{(\alpha)}: H\times \cone\to [1,\infty)
$
as follows. 
\[
\mfm_{\cone,\trct}^{(\alpha)}(h,z)=\sum_{0\neq w\in I_\cone(h,z)}\max(\|w\|,\delta)^{-\alpha}
\]

In this paper, we will primarily use the above objects with $h=e$. 
Hence, and in order to simplify the notation, we denote 
$\margI_{\cone}(e, z)$ and $\mfm_{\cone,\trct}^{(\alpha)}(e, z)$ by $\margI_{\cone}(z)$ and $\mfm_{\cone,\trct}^{(\alpha)}(z)$, respectively.

The modified Margulis function and the modified energy discussed in~\S\ref{sec: proj and energy} are closely related. Specifically, the following lemma~\cite[Lemma 9.2]{LMW22} establishes, in a general sense, that bounding one of these quantities implies a bound on the other, with the exception of potential {\em edge effects}. When $\rfrak$ is a Lie subalgebra, this connection becomes more straightforward. However, in the general case, understanding the relationship between $\margI(z)$ and $\margI(\exp(w)z)$ demands further elaboration; see~\cite[Lemma 9.2]{LMW22} for details.

\begin{lemma}\label{lem: margIz energy est}
Let the notation be as above, and assume that 
\[
\mfm_{\cone,\trct}^{(\alpha)}(z)\leq \egbd\qquad\text{for all $z\in\cone$.}
\]
Then for every $z\in \Bigl(\overline{\coneH\setminus \partial_{5\beta^2}\coneH}\Bigr).\{\exp(w)y: w\in F\}$ and all $w\in\margI_{\cone}(z)$, 
\[
\eng^{(\alpha)}_{\margI_{\cone}(z), \trct}(w)\ll\mfbd,
\]
where $\eng$ is defined as in \S\ref{sec: proj and energy}.
\end{lemma}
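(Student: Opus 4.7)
The approach is to apply the Margulis function bound at the translated point $\exp(w)z\in\cone$: since $\mfm_{\cone,\trct}^{(\alpha)}(\exp(w)z)\leq\mfbd$, it suffices to build an almost-injective map $\iota\colon\margI_{\cone}(z)\setminus\{w\}\to\margI_{\cone}(\exp(w)z)\setminus\{0\}$ satisfying $\|\iota(w')\|\asymp\|w'-w\|$; the conclusion
\[
\eng^{(\alpha)}_{\margI_{\cone}(z),\trct}(w)=\sum_{w'\neq w}\max(\|w-w'\|,\trct)^{-\alpha}\ll\mfbd
\]
then follows by comparing the two sums. As a preliminary, I would show $\margI_{\cone}(z)\subset B_\rfrak(0,C\beta)$ by writing $z=h_z\exp(v_z)y$ with $h_z\in\coneH$ and $v_z\in F$, using $\Ad(H)$-invariance of $\rfrak$ to rewrite $\exp(w)z=h_z\exp(\Ad(h_z^{-1})w)\exp(v_z)y$, and applying Lemma~\ref{lem: BCH} to put the product $\exp(\Ad(h_z^{-1})w)\exp(v_z)$ into $H\exp(\rfrak)$-form. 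The requirement that the $\rfrak$-component lie in $F\subset B_\rfrak(0,\beta)$ forces $\|\Ad(h_z^{-1})w\|\ll\beta$, and~\eqref{eq:2ball} then yields $\|w\|\leq C\beta$.

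For each $w'\in\margI_{\cone}(z)\setminus\{w\}$, applying Lemma~\ref{lem: BCH} with $w_1=w'$ and $w_2=w$ (both of norm $\leq C\beta$) yields $\exp(w')\exp(-w)=h(w')\exp(v(w'))$ with $\tfrac{2}{3}\|w'-w\|\leq\|v(w')\|\leq\tfrac{3}{2}\|w'-w\|$ and $\|h(w')-I\|\leq\ref{E:BCH}\cdot C\beta\cdot\|w'-w\|\leq C'\beta^2$. Writing $\exp(w')z=h_{w'}\exp(v_{w'})y$ with $h_{w'}\in\coneH$ and $v_{w'}\in F$, one obtains
\[
\exp(v(w'))\exp(w)z=h(w')^{-1}\exp(w')z=\bigl(h(w')^{-1}h_{w'}\bigr)\exp(v_{w'})y,
\]
and I would set $\iota(w')=v(w')$. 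The crucial step is to verify $h(w')^{-1}h_{w'}\in\coneH$, which (together with $\|v(w')\|<\inj(\exp(w)z)$, following from $\|v(w')\|<2\inj(z)$ and the standard comparison $\inj(\exp(w)z)\asymp\inj(z)$) gives $\iota(w')\in\margI_{\cone}(\exp(w)z)$. Writing $h_{w'}=h_zh_1(w')$ with $\|h_1(w')-I\|\ll\beta^2$ from the preliminary step, and conjugating $h(w')^{-1}$ through $h_z$ via~\eqref{eq:2ball}, the net left-translation of $h_z$ has norm $\ll\beta^2$, which is absorbed by the $5\beta^2$ interior margin of $h_z$ in $\coneH$.

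Finally, $\iota$ has bounded multiplicity: if $v(w_1')=v(w_2')$, then $\exp(w_1')z$ and $\exp(w_2')z$ differ by a very small element of $H$, and Lemma~\ref{lem:dist-sheet} (applied at the injectivity scale) bounds the number of preimages by a dimension-dependent constant. Combining this with $\max(\|v(w')\|,\trct)\leq\tfrac{3}{2}\max(\|w-w'\|,\trct)$, one obtains
\[
\eng^{(\alpha)}_{\margI_{\cone}(z),\trct}(w)\ll\sum_{v\in\margI_{\cone}(\exp(w)z)\setminus\{0\}}\max(\|v\|,\trct)^{-\alpha}=\mfm_{\cone,\trct}^{(\alpha)}(\exp(w)z)\leq\mfbd,
\]
which is the desired estimate. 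The main obstacle is the bookkeeping in the crucial step above: one must match the constants from Lemma~\ref{lem: BCH} and~\eqref{eq:2ball} so that the $O(\beta\cdot\beta)$ perturbation, conjugated through $h_z$ and composed with $h_1(w')$, genuinely fits inside the $5\beta^2$ margin. This is the precise quantitative role played by the interior hypothesis $z\in\overline{\coneH\setminus\partial_{5\beta^2}\coneH}\cdot\{\exp(v)y:v\in F\}$.
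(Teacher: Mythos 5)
Your proposal is correct. The paper does not give a proof of this lemma directly; it cites \cite[Lemma~9.2]{LMW22}, where the argument is precisely the recentering one you describe: one constructs a comparison map $\iota\colon\margI_{\cone}(z)\setminus\{w\}\to\margI_{\cone}(\exp(w)z)\setminus\{0\}$ with $\|\iota(w')\|\asymp\|w'-w\|$, so that the energy at $w$ is dominated by the Margulis function evaluated at the recentered point $\exp(w)z$, and then invokes the hypothesis $\mfm_{\cone,\trct}^{(\alpha)}(\exp(w)z)\leq\mfbd$. Your preliminary reduction ($\margI_{\cone}(z)\subset B_{\rfrak}(0,C\beta)$), your use of Lemma~\ref{lem: BCH} to produce $\iota(w')=v(w')$, and your verification that $h(w')^{-1}h_{w'}$ lands back in $\coneH$ using the $5\beta^2$ interior margin of $h_z$ all match the structure of the cited argument.

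Two small remarks, neither a gap. First, the injectivity-radius condition $\|v(w')\|<\inj(\exp(w)z)$ is automatic and needs no comparison estimate: since $\cone\subset X_\eta$ one has $\inj(\exp(w)z)\geq\eta$, while $\|v(w')\|\ll\beta=\eta^2\ll\eta$. Second, your map $\iota$ is in fact genuinely injective, not merely of bounded multiplicity: if $v(w_1')=v(w_2')$ then $\exp(w_1')\exp(-w_2')\in H$, and applying Lemma~\ref{lem: BCH} to that product and invoking uniqueness of the $H\exp(\rfrak)$ decomposition near the identity forces $w_1'=w_2'$. Your appeal to Lemma~\ref{lem:dist-sheet} for bounded multiplicity is a weaker fallback that also suffices, so the conclusion is unaffected.
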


\subsection*{An iterative dimension improvement lemma}\label{sec: main lemma}
The following lemma outlines a general iterative process for improving the dimension under suitable projection theorems. Readers may compare it to~\cite[Lemmas 9.1 and 10.7]{LMW22} and~\cite[Lemma 7.10]{LM-PolyDensity}. In the next section, we apply Lemma~\ref{lem: main ind lemma} to establish Proposition~\ref{propos: imp dim main}, a central component of our argument.

\smallskip

Let $0<\vare<10^{-10}$ be a small parameter, and $t>0$ a large parameter. 
We also fix some $0<\vare'<10^{-6}\vare$, and put 
\[
\ell=\tfrac{\vare t}{10\dm},\quad \beta=e^{-\vare' t},\quad\text{and}\quad\eta^2=\beta.
\] 

Throughout this section, we fix a large parameter $\mathsf d_0$. The results of this section will be applied with 
$\mathsf d_0=3t+3d_{\rm fn}\ell$, where $d_{\rm fn}$ is the number of steps we need to apply Lemma~\ref{lem: main ind lemma} in the proof of Proposition~\ref{propos: imp dim main}, and is explicated in the next section. 

Recall that $F\subset B_\rfrak(0,\beta)$, and 
\[
\cone=\coneH.\{\exp(w)y: w\in F\}\subset X_\eta.
\]
We assume $\cone$ is equipped with an admissible measure $\mu_\cone$, see \S\ref{sec: cone and mu cone}.

\begin{lemma}\label{lem: main ind lemma}
Let $\frac{1}{2\dm+1}\leq \alpha< 2\dm+1$, and define ${\imp}$ as in~\eqref{eq: def alpha'} ($\imp$ is some explicit function of $\alpha$ that goes to zero as $\alpha \to 2\dm+1$). 
Let $\egbd\leq e^{\mathsf Dt}$ for some $\mathsf D>0$, and assume that  
\[
    \mfm_{\cone,\trct}^{(\alpha)}(z)\leq \egbd\qquad\text{for all $z\in\cone$}
    \]
    where $\delta=0$ if $\alpha=\frac{1}{2\dm+1}$ and $e^{-\mathsf D t}\leq \delta<1$ otherwise. 
        
    The following holds so long as $t$ is large enough, depending on $X$, $\mathsf D$, and $2\dm+1-\alpha$, see~\eqref{eq: choose pvare Dt}. 
    There exists a collection $\{\cone_i: 1\leq i\leq N\}$ of sets 
\begin{align*}
    \cone_{i}=\coneH.\{\exp(w)y_{i}: w\in F_{i}\}\subset X_\eta
\end{align*} 
where for every $1\leq i\leq N$, $F_{i}\subset B_\rfrak(0,\beta)$ with 
\[
\beta^{4\dm+6}\cdot (\#F)\leq \#F_i\leq \beta^{4\dm+4}\#F,
\]
and for every $i$, an admissible measure $\mu_{\cone_{i}}$, 
so that both of the following hold 
\begin{enumerate}
\item For all $1\leq i\leq N$ and all $z\in\coneH.\{\exp(w)y_{i}: w\in F_{i}\}$,
\be\label{eq:improve dim energy est >alpha}
\mfm_{\cone_{i}, {\trct'}}^{(\alpha)}(z)\leq e^{-\frac{{\imp}}{2}\ell}\cdot \egbd+e^{\vare t}\cdot (\#F_i)
\ee
where $\trct'=0$ if $\alpha=\frac{1}{2\dm+1}$ and $\trct'=e^{\dm\ell}\max(\delta,(\#F)^{-\frac1\alpha})$ otherwise.  
\item For all $0<\mathsf d\leq \mathsf d_0-\ell$, all $|s|\leq 2$, and every $\varphi\in C_c^{\infty}(X)$, we have 
\[
\int_0^1\!\!\int \varphi(a_{\mathsf d} u_sa_\ell u_rz)\diff\!\mu_\cone(z)\!\diff\!r=\sum_{i}c_{i}\!\int \varphi(a_{\mathsf d} u_sz)\diff\!\mu_{\cone_{i}}(z)+ O(\Lip(\varphi)\beta^{\ref{k: bootstrap beta exp'}})
\]
where $0\leq c_{i}\leq 1$ and $\sum_{i}c_{i}=1-O(\beta^{\ref{k: bootstrap beta exp'}})$, 
$\Lip(\varphi)$ is the Lipschitz norm of $\varphi$, and $\constk\label{k: bootstrap beta exp'}$ and the implied constants depend on $X$.
\end{enumerate}
\end{lemma}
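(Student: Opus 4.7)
The plan is to combine the pointwise Margulis-function hypothesis with Theorem~\ref{thm: proj thm} and produce a decomposition of (the useful part of) the averaged pushforward of $\mu_\cone$ by $a_\ell u_r$ into admissible sub-pieces $\mu_{\cone_i}$, each satisfying an improved energy bound. For any $z \in \cone$ bounded away from $\partial \coneH$, Lemma~\ref{lem: margIz energy est} converts the hypothesis $\mfm_{\cone,\trct}^{(\alpha)}(z) \leq \egbd$ into the pointwise energy bound $\eng^{(\alpha)}_{\margI_\cone(z),\trct}(w) \ll \egbd$ for every $w \in \margI_\cone(z)$. I would apply Theorem~\ref{thm: proj thm} to $\Theta = \margI_\cone(z)$ (or Lemma~\ref{lem: LA implies initial dim} in the edge case $\alpha=\tfrac{1}{2\dm+1}$), with the auxiliary parameter $\pvare$ chosen small in terms of $\mathsf{D}$ and $2\dm+1-\alpha$ so that the loss factor $\mathsf L_\pvare \trct^{-\pvare}$ is at most $e^{\imp\ell/2}$; the standing assumptions $\trct \geq e^{-\mathsf{D} t}$ and $\ell = \vare t/(10\dm)$ make this possible once $t$ is sufficiently large. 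The output is, for each such $z$, an exceptional set $J(z) \subset [0,1]$ of small measure, and for $r \notin J(z)$ a dense subset of $\margI_\cone(z)$ whose $\Ad(a_\ell u_r)$-image has modified $\alpha$-energy at most $e^{-\imp\ell/2} \egbd$.

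Next I would use Fubini against $\mu_\cone \otimes dr$, combined with the non-divergence Proposition~\ref{prop:Non-div-main} to guarantee $a_\ell u_r z \in X_\eta$ for most $(r,z)$, to extract a single set $J \subset [0,1]$ of near-full measure for which the projection-theorem conclusion holds at $\mu_\cone$-most $z$. For each $r \in J$, I would then reorganize the restricted pushforward $(a_\ell u_r)_*\mu_\cone$ into admissible pieces. Writing $a_\ell u_r \cdot \coneH \cdot \exp(w)\, y = (a_\ell u_r \coneH)\cdot \exp(\Ad(a_\ell u_r)w) \cdot a_\ell u_r y$, and noting that $a_\ell u_r$ expands $U$ by $e^{\ell/2}$ while contracting $U^-$, I tile $a_\ell u_r \coneH$ by $\asymp e^{\ell/2}/\eta$ disjoint copies of $\coneH$ along $U$, and simultaneously partition the expanded $\rfrak$-scale $e^{\dm\ell}\beta$ into translates of $B_\rfrak(0,\beta)$ so that each resulting finite piece $F_i$ sits inside $B_\rfrak(0,\beta)$ after subtracting an anchor $w_0^{(i)}$. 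The new base points $y_i$ absorb the anchors and tile positions, and $F_i = \Ad(a_\ell u_r)(F \cap \text{tile}) - w_0^{(i)}$. The count of pieces, together with the $e^{\ell/2}$ and $e^{\dm\ell}$ expansion factors and $\eta^2 = \beta = e^{-\vare' t}$, explains the combinatorial bounds $\beta^{4\dm+6}\#F \leq \#F_i \leq \beta^{4\dm+4}\#F$. Each $\mu_{\cone_i}$ is defined as the normalized restriction of the pushforward, and admissibility is preserved because the reorganization uses only $U$-translations and a uniform $\rfrak$-partition.

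Property~(1) is then an essentially immediate transfer of the conclusion of Theorem~\ref{thm: proj thm}, where the extra $e^{\vare t}\#F_i$ term absorbs both the $\trct^{-\pvare}$ loss and the edge effect from pairs $w,w'$ that lie in different $\rfrak$-tiles (these contribute to the global energy but not to the local energy inside a single $\cone_i$). Property~(2) follows by disintegrating $\mu_\cone$ as $\sum_i \mathbf{1}_{\text{piece}_i}\mu_\cone + \text{error}$, applying $a_\ell u_r$ and averaging over $r$: admissibility of each $\mu_{\cone_i}$ makes the resulting integrals equal the desired $\int \varphi \circ a_\mathsf{d} u_s \, d\mu_{\cone_i}$ up to a Lipschitz-in-$\varphi$ error of order $\beta^{\ref{k: bootstrap beta exp'}}$ coming from boundary terms of the $U$-tiling, the exceptional set outside $J$, and the measure-zero-in-the-limit sets where the non-divergence step fails. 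The main obstacle I expect is the interaction between the $\rfrak$-tiling in Step~3 and the energy bound: Theorem~\ref{thm: proj thm} only controls the full $\Ad(a_\ell u_r)$-image globally, so one must quantify how much of the energy redistributes between different $B_\rfrak(0,\beta)$-tiles, and precisely this forces the tight relationship between $\pvare$, $\mathsf{D}$, and $2\dm+1-\alpha$ that governs the lower bound on $t$.
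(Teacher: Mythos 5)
Your outline follows the right broad arc — convert the Margulis-function bound to an energy bound via Lemma~\ref{lem: margIz energy est}, apply Theorem~\ref{thm: proj thm} (or Lemma~\ref{lem: LA implies initial dim} when $\alpha=\tfrac{1}{2\dm+1}$) to each $\margI_\cone(\bar z)$, Fubini out a good set of $r$'s, and then decompose $(a_\ell u_r)_*\mu_\cone$ into admissible pieces. Two things, however, differ from the paper and one of them is a genuine gap.

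On the structural side: you propose a direct by-hand tiling of $a_\ell u_r\coneH$ along $U$ and of the dilated $\rfrak$-scale into $\beta$-cubes. The paper instead invokes Lemma~\ref{lem: convex comb} (a black-box convex-combination lemma from \cite{LMW22}, built on the covering Lemma~\ref{lem: E good h0}) to produce the pieces $\cone'_{i,r}$, and then runs a regular-tree refinement (Lemma~\ref{lem: regular tree decomposition}) on each $F'_{i,r}$ to enforce the separation condition~\eqref{eq: distance C1 and C'1}. Your tiling idea is plausible in spirit, but it will not by itself reproduce the precise combinatorial bounds $\beta^{4\dm+6}\#F \leq \#F_i \leq \beta^{4\dm+4}\#F$: in the paper the exponent $4\dm+5$ comes from Lemma~\ref{lem: convex comb} and the extra $+1$ comes from the tree refinement and the trimming, so one would need to carefully rebuild both.

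The real gap is that you treat the transfer of the energy bound as ``essentially immediate,'' with the error term $e^{\vare t}\#F_i$ absorbing the bad set. That is not correct. Theorem~\ref{thm: proj thm} only controls the energy of the projected image on the subset $\Theta_r\subset\margI_\cone(\bar z)$, which is of large \emph{proportional} measure but is not all of $\Theta$. The conclusion you must prove is a \emph{pointwise} bound for \emph{every} $z\in\cone_i$. If a point $z$ in the new cone corresponds, after pulling back along $a_\ell u_r$, to an uncontrolled point of $\margI_\cone(\bar z)\setminus\Theta_r$, then nothing bounds $\mfm^{(\alpha)}_{\cone_i,\trct'}(z)$: the bad set is small in cardinality, but each of its elements can contribute up to $\trct'^{-\alpha}$ to the energy, and the total can swamp $e^{\vare t}\#F_i$. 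The paper deals with this by a trimming step (the sets $Z_r$, $Z_r(w_{\mathsf C})$, $\cone^{\mathsf C,j}(r)$, $\cone(r)$, and the trimmed $F_{i,r}$, culminating in Lemma~\ref{lem: mfm estimate for cone i,r}): one fixes, for each $\rfrak$-cube, a single reference point $\bar z_{\mathsf C,j}\in Z_r$ and then retains in $F_{i,r}$ \emph{only} those $w$ traceable back to $\margI_{\bar z_{\mathsf C,j},r}\cap B_\rfrak(0,e^{(-2\dm-1)\ell})$. Only after this pruning, combined with the separation property from the tree decomposition and the BCH estimates~\eqref{eq: recentring I}--\eqref{eq: vw - wbar and w}, does one get that every $w\in\margI^{\rm int}_{\cone_{\rm nw}}(z)$ of small norm corresponds to a controlled pair, giving~\eqref{eq: mfm at z1}. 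Without some version of this trimming the argument does not close, and your proposal does not include it.
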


The proof of Lemma~\ref{lem: main ind lemma} relies on Lemma~\ref{lem: LA implies initial dim} (for $\alpha= 1/(2\dm+1)$) and Theorem~\ref{thm: proj thm} (for $\alpha>1/(2\dm+1)$) 
and will be completed in several steps. The basic idea is straightforward: By applying these results to the sets 
$\margI_{\cone}(z)$ for all $z\in \cone$ and a few applications of Fubini's theorem, we obtain a {\em large} 
subset $\gdh\subset [0,1]$ and for each $r\in \gdh$ a {\em large} subset $\cone(r)\subset \cone$ where the upper bound on the 
modified Margulis function improves under the application of $a_\ell u_r$. 
We then trim and smear $a_\ell u_r\cone(r)$ using the results in~\cite[\S 6--8]{LMW22}, see also \S\ref{sec: cone and mu cone}, to establish the lemma. The details follow.

\subsection*{Proof of Lemma~\ref{lem: main ind lemma}}
Let us write 
\[
\iconeH=\overline{\coneH\setminus \partial_{5\beta^2}\coneH}\quad\text{and}\quad \icone=\iconeH.\{\exp(w)y: w\in F\}.
\] 
Recall from Lemma~\ref{lem: margIz energy est} that our condition
\[
 \mfm_{\cone,\trct}^{(\alpha)}(\bar z)\leq \egbd
\]
implies that for every $\bar z\in\icone$ and all $v\in\margI_{\cone}(\bar z)$, we have $\eng_{\margI_{\cone}(\bar z), \trct}^{(\alpha)}(v)\ll\egbd$.

Let $\bar z\in\icone$, we will estimate 
\[
\eng_{\Ad(a_\ell u_r)\margI, {\trct'}}^{(\alpha)}(\Ad(a_\ell u_r)v)
\] 
for a certain subset $\margI\subset \margI_{\cone}(\bar z)$ which will be explicated below. 

First let us assume $\alpha= \frac1{2\dm+1}$, then ${\imp}=\alpha$ and ${\trct'}=0$. 
In this case, Lemma~\ref{lem: LA implies initial dim} applied with $\Theta=\margI_{\cone}(\bar z)$ implies that 
there is $\theta>0$ (depending on $\dm$) so that for all large enough $\ell$ the following holds. 
Denote by $J_{\bar z}$ the set $J\subset [0,1]$ in Lemma~\ref{lem: LA implies initial dim}, 
and for all $r\in J_{\bar z}$, let $\margI_{\bar z,r}$ be the set $\Theta_r$ in Lemma~\ref{lem: LA implies initial dim} applied with $\Theta=\margI_{\cone}(\bar z)$, see~\eqref{eq: Theta r LA}. Then 
\be\label{eq: number of pts in I z,r LA}
\#(\margI_{\cone}(\bar z)\setminus\margI_{\bar z,r})\leq  e^{-\theta\ell} \cdot (\#\margI_{\cone}(\bar z)),
\ee
and for every $v\in\margI_{\bar z,r}$, we have 
\be\label{eq: use proj thm ind LA}
\begin{aligned}
   \eng_{\Ad(a_\ell u_r)\margI_{\bar z,r}^{\rm int}(v), {\trct'}}^{(\alpha)}(\Ad(a_\ell u_r)v)&\leq \eng_{\Ad(a_\ell u_r)\margI_{\cone}(\bar z)}^{(\alpha)}(\Ad(a_\ell u_r)v)\\
   &\ll e^{-3\alpha\ell/4}\egbd
   \end{aligned}
\ee
where $\margI_{\bar z,r}^{\rm int}(v)=\{v'\in\margI_\cone(\bar z): \|v-v'\|\leq e^{-\dm\ell}\}$ and we used 
${\imp}=\alpha$ and ${\trct'}=0$ in the case at hand.

We now turn to the case $\alpha>\frac{1}{2\dm+1}$. 
The goal is to establish a similar estimate, albeit using Theorem~\ref{thm: proj thm} in this case. 
To that end, first let $\pvare$ be small enough so that $e^{\pvare\mathsf Dt}<e^{{\imp}\ell/4}$. 
\be\label{eq: choose pvare Dt}
e^{-{\imp}\ell} \trct^{-\pvare} \egbd\leq e^{-3{\imp}\ell/4}\egbd, 
\ee
where we used $\delta\geq e^{-\mathsf Dt}$.
Now apply Theorem~\ref{thm: proj thm} with $\Theta=\margI_{\cone}(\bar z)$ and this $\pvare$. 
There is $\theta>0$ so that for all large enough $\ell$ the following holds. 
Denote by $J_{\bar z}$ the set $J\subset [0,1]$ in Theorem~\ref{thm: proj thm}, and for all $r\in J_{\bar z}$, let $\margI_{\bar z,r} \subset \margI_{\cone}(\bar z)$ be the set $\Theta_r$ in Theorem~\ref{thm: proj thm} applied with $\Theta=\margI_{\cone}(\bar z)$, see~\eqref{eq: def of Theta r}. Then 
\be\label{eq: number of pts in I z,r}
\#(\margI_{\cone}(\bar z)\setminus\margI_{\bar z,r})\leq  e^{-\theta\pvare^2\ell} \cdot (\#\margI_{\cone}(\bar z)),
\ee
and for every $v\in\margI_{\bar z,r}$, we have 
\be\label{eq: use proj thm ind}
   \eng_{\Ad(a_\ell u_r)\margI_{\bar z,r}^{\rm int}(v), {\trct'}}(\Ad(a_\ell u_r)v)\ll e^{-3{\imp}\ell/4}\egbd
\ee
where ${\trct'}=e^{\dm\ell}\max(\delta, (\#F)^{-1/\alpha})$, 
\[
\margI_{\bar z,r}^{\rm int}(v)=\{v'\in\margI_{\bar z,r}: \|v-v'\|\leq e^{-\dm\ell}\}, 
\]
and we used~\eqref{eq: choose pvare Dt}.

\subsection*{The set $\gdh_{\mu_\cone}$}
Equip $\cone\times[0,1]$ with $\sigma:=\mu_{\cone}\times\Leb$, where 
$\Leb$ denotes the normalized Lebesgue measure on $[0,1]$. 
Extend the definition of $\margI_{\bar z,r}$ to all $r\in[0,1]$ 
by letting $\margI_{\bar z,r}=\emptyset$ if $r\not\in J_{\bar z}$, and put  
\be\label{eq: define Z}
Z=\Bigl\{(\bar z,r)\in \hat\cone\times[0,1]: \#(\margI_{\cone}(\bar z)\setminus\margI_{\bar z,r})\leq e^{-\theta\pvare^2\ell} \cdot (\#\margI_{\cone}(\bar z))\Bigr\},
\ee
where $\hat\cone=\hat\coneH.\{\exp(w)y: w\in F\}$ and $\hat\coneH=\overline{\coneH\setminus \partial_{200\beta^2}\coneH}$. 

Then, using~\eqref{eq: number of pts in I z,r LA} if $\alpha=\frac1{2\dm+1}$ or~\eqref{eq: number of pts in I z,r} when $\alpha>\frac1{2\dm+1}$, we have that 
\[
\{(\bar z,r): r\in J_{\bar z}\}\subset Z \qquad \text{for all $\bar z\in\hat\cone$.} 
\]
Recall moreover that $\mu_\cone(\cone\setminus\hat\cone)\ll \beta$. 
We conclude that 
\[
\sigma(\cone\times[0,1]\setminus Z)\ll \beta+e^{-\star\pvare^2\ell} \ll e^{-\star\pvare^2\ell},
\]
where we assumed $\pvare\leq \kappa$. This and Fubini's theorem imply that there is a subset 
$\gdh=\gdh_{\mu_\cone}\subset [0,1]$ with 
$|[0,1]\setminus\gdh|\ll e^{-\star\pvare^2\ell}$ so that  
\be\label{eq: measure of Cone-r}
\mu_\cone\Bigl(\cone\setminus Z_r\Bigr)\ll e^{-\star\pvare^2\ell} \qquad\text{for all $r\in\gdh$},
\ee
where $Z_r=\{\bar z\in\hat\cone: (\bar z,r)\in Z\}$.

\subsection*{Decomposing $a_\ell u_r\mu_\cone$ as a convex combination}  
Fix a maximal $e^{-3\mathsf d_0}$ separated subset $\mathcal L$ of $\gdh$, and $\mathsf d$ and $s$ as in the statement of Lemma \ref{lem: main ind lemma}. 
Applying Lemma~\ref{lem: convex comb}, see also~\cite[Lemma 8.9]{LMW22}, for every $r\in\mathcal L$ we can write
\begin{multline}\label{eq: rw ell 1 mathcal L}
\int\varphi(a_{\mathsf d}u_sa_\ell u_{r}z)\diff\!\mu_{\cone}(z)=\\\sum_i c_{i, r}'\int\varphi(a_{\mathsf d} u_sz)\diff\!\mu_{\cone_{i,r}'}(z)+  O(\beta^\star\Lip(\varphi)),
\end{multline}
where the $\cone'_{i,r}$s are sets of the form
\[
\cone'_{i,r}=\coneH.\{\exp(w)y_{i,r}: w\in F'_{i,r}\}\subset X_\eta
\]
with $F'_{i,r}\subset B_\rfrak(0,\beta)$ that satisfies \[\beta^{4\dm+5}\cdot (\#F)\leq F_{i,r}'\leq \beta^{4\dm+4}\cdot(\#F).\] 
We will now further divide the sets $\cone_{i,r}'$. 

Let $\mathcal B$ denote the cubes of size $2^{-n}$ with $2^{-n}\geq e^{(-2\dm-1)\ell}>2^{-n-1}$ which are obtained by 
scaling some fixed partition of $\rfrak$ into unit size cubes by~$2^{-n}$.
Applying Lemma~\ref{lem: regular tree decomposition}, with the collection of cubes $\mathcal B$, we can write    
$F'_{i,r}=F''_{i,r}\sqcup\Bigl(\sqcup_{\varsigma} \hat F'_{i,r,\varsigma}\Bigr)$ where the union is disjoint, 
\[
\#F''_{i,r}\ll \beta^{1/2}(\#F'_{i,r}),
\]
and both of the following hold for all $\varsigma$ 
\begin{enumerate}[label=\alph*.]
\item $\#\hat F'_{i,r,\varsigma}\geq \beta^{0.6}\cdot(\#F'_{i,r})$, and 
\item\label{item: varsigma} There exists $w_{i,\varsigma}\in\rfrak$ so that if $\mathsf C_1\neq \mathsf C_1'\in{\mathcal B}$ intersect $\hat F'_{i,r,\varsigma}+w_{i,\varsigma}$ non-trivially, the distance between $\mathsf C_1\cap(\hat F'_{i,r,\varsigma}+w_{i,\varsigma})$ and $\mathsf C'_1\cap(\hat F'_{i,r,\varsigma}+w_{i,\varsigma})$ is $\gg e^{(-2\dm-1)\ell}\beta$.
\end{enumerate} 

\smallskip

Replacing $\mu_{\cone'_{i,r}}$ by $\sum c_{i,r,\varsigma}\mu_{\cone'_{i,r,\varsigma}}$, where 
\[
\cone'_{i,r,\varsigma}=\coneH.\{\exp(w)y_{i,r}: w\in F'_{i,r,\varsigma}\}\qquad\text{for all $\varsigma$},
\] 
we have that
\[
\int\varphi(a_{\mathsf d}u_sa_\ell u_{r}z)\diff\!\mu_{\cone}(z)=\sum_{i,\varsigma} c_{i, r,\varsigma}'\int\varphi(a_{\mathsf d} u_sz)\diff\!\mu_{\cone_{i,r,\varsigma}'}(z)+  O(\beta^\star\Lip(\varphi)).
\]

By reindexing, may assume that \ref{item: varsigma} above holds already for $F'_{i,r}$. 
In other words, we may assume that there exists a disjoint collection $\mathcal B_i'$ of cubes of size $2^{-n}\geq e^{(-2\dm-1)\ell}\beta\geq 2^{-n-1}$ 
which cover $B_\rfrak(0,\beta)$ so that if $\mathsf C_1\neq \mathsf C_1'\in\mathcal B_i'$ intersect 
$\hat F'_{i,r}$ non-trivially, then  
\be\label{eq: distance C1 and C'1}
\text{the distance between $\mathsf C_1\cap\hat F'_{i,r}$ and $\mathsf C'_1\cap \hat F'_{i,r}$ is $\gg e^{(-2\dm-1)\ell}\beta$.}
\ee 

After this additional refinement, we have \[\beta^{4\dm+6}\cdot (\#F)\leq F_{i,r}'\leq \beta^{4\dm+4}\cdot (\#F).\]

\subsection*{Incremental improvements using $Z_r$}
Let $r\in L$, and let $\bar z\in Z_r$, see~\eqref{eq: measure of Cone-r}.  
Fix~$\bar w\in\margI_{\bar z, r}$; by definition of $\margI_{\bar z, r}$ we have that $z=\exp(\bar w)\bar z\in\hat\cone$.
We will estimate 
\[
\sum_{v\in\margI^{\rm int}_z} \max(\|a_\ell u_r v\|,{\trct'})^{-\alpha},
\]
where $\margI^{\rm int}_z$ is explicated in~\eqref{eq: def of I<}. 

For any small $w \in \rfrak$, we have 
\be\label{eq: recentring I}
\exp(w)z=\exp(w)\exp(\bar w)\bar z=\sfh \exp(\mathsf v_w)\bar z
\ee
with $\sfh \in H$, \  $\mathsf v_w \in \rfrak$, \ $\|\sfh-I\|\ll \beta\|w\|$ and 
\be\label{eq: v_w w w bar}
\|\mathsf v_w-(w+\bar w)\|\ll \|\bar w\|\|w\|,\ee see Lemma~\ref{lem: BCH}. 
On the other hand, if $z\in \hat \cone$ and $w\in I_{\cone}(z)$, we have 
\[
\exp(w)z=\exp(w)h\exp(w_z)y=h\sfh_z\exp(w_{z,w})y
\]
for some $w_z, w_{z,w}\in F$, \  $h\in\hat\coneH$, and $\sfh_z \in H$ with $\|\sfh_z-I\|\ll \beta\|w\|$. Thus~\eqref{eq: recentring I} implies that 
\[
\exp(\mathsf v_w)\bar z=\sfh^{-1}\exp(w)z=\sfh^{-1}h\sfh_z\exp(w_{z,w})y\in \cone.
\]
That is, $\mathsf v_w\in\margI_\cone(\bar z)$. Hence, $w\mapsto \mathsf v_w$ is one-to-one from $\margI_{\cone}(z)$ into $\margI_{\cone}(\bar z)$. 

Moreover, since $e^{-\dm\ell}\|v\| \ll \|a_\ell u_rv\|\ll e^{\dm\ell}\|v\|$ for all $v\in\rfrak$, 
we conclude from~\eqref{eq: recentring I} and \eqref{eq: v_w w w bar} that if $\|\bar w\|\leq e^{(-2\dm-1)\ell}$, then 
\be\label{eq: vw - wbar and w}
\tfrac12\|a_\ell u_r(\mathsf v_w-\bar w)\|\leq \|a_\ell u_r w\|\leq 2\|a_\ell u_r(\mathsf v_w-\bar w)\|.
\ee

Recall that $\bar z\in Z_r$, thus using~\eqref{eq: use proj thm ind LA} if $\alpha=\frac1{2\dm+1}$ and~\eqref{eq: use proj thm ind} if $\alpha>\frac1{2\dm+1}$, we conclude that for every $v\in I_{\bar z, r}$,
\be\label{eq: energy off Psi bar}
\sum_{v\neq v'\in I_{\bar z, r}^{\rm int}(v)}\max(\|a_\ell u_r(v-v')\|,{\trct'})^{-\alpha}\ll e^{-3{\imp}\ell/4}\egbd
\ee
where $\margI_{\bar z,r}^{\rm int}(v)=\{v'\in \margI_{\bar z,r}: \|v-v'\|\leq e^{-\dm\ell}\}$.

Let us now put
\be\label{eq: def of I<}
\margI^{\rm int}_z=\Bigl\{w\in \margI_{\cone}(z): \|w\|\leq e^{(-\dm-1)\ell}, \mathsf v_w\in \margI_{\bar z, r}\Bigr\}.
\ee
Note that for any $w\in \margI^{\rm int}_z$, we have $\|\bar w-\mathsf v_w\|\ll \beta\|w\|$, hence $\mathsf v_w\in I_{\bar z, r}^{\rm int}(\bar w)$.
Thus,~\eqref{eq: vw - wbar and w} and~\eqref{eq: energy off Psi bar} (applied with $v=\bar w$) imply that 
\begin{multline}\label{eq: non-bdry terms in I}
\sum_{0\neq w\in\margI^{\rm int}_z}\max(\|a_\ell u_r w\|,{\trct'})^{-\alpha}\\
\leq 2^{2\dm+1}\!\!\!\sum_{\bar w\neq v'\in I_{\bar z, r}^{\rm int}(\bar w)}\!\!\max(\|a_\ell u_r(\bar w-v')\|,{\trct'})^{-\alpha}
\ll e^{-3{\imp}\ell/4}\egbd. 
\end{multline}

\subsection*{The set $\cone(r)$}
Recall that $\mathcal B$ denotes the cubes of size $2^{-n}$ with $2^{-n}\geq e^{(-2\dm-1)\ell}>2^{-n-1}$ which are obtained by 
scaling some fixed partition of $\rfrak$ into unit size cubes by $2^{-n}$. 
Let $\mathcal C\subset \mathcal B$ denote the collection of those cubes $\mathsf C\in\mathcal B$ with the following property: 
there exists $w_{\mathsf C}\in F\cap\mathsf C$ satisfying that 
\begin{multline*}
\mu_{w_{\mathsf C}}\Bigl\{\bar z\in \hat\coneH\exp(w_{\mathsf C})y\cap Z_r: \tfrac{\#\margI_{\bar z, r}\cap \mathsf B}{\#\margI_{\cone}(\bar z)\cap \mathsf B}\ge (1-O(e^{-\star\pvare^2\ell})\Bigr\}\geq \\ \Bigl(1-O(e^{-\star\pvare^2\ell})\Bigr) \mu_{w_{\mathsf C}}(\hat\coneH\exp(w_{\mathsf C})y)
\end{multline*}
where $\mathsf B=B_\rfrak(0, e^{(-2\dm-1)\ell})$ and $\mu_{w_{\mathsf C}}$ is the restriction of the measure $\mu_{\hat\cone}$ to $\hat\coneH\exp(w_{\mathsf C})y$, see~\ref{sec: cone and mu cone}.  
For every $\mathsf C\in \mathcal C$, fix one such $w_{\mathsf C}$, and let 
\[
Z_{r}(w_{\mathsf C})=\Bigl\{\bar z\in \hat\coneH\exp(w_{\mathsf C})y\cap Z_r: \tfrac{\#\margI_{\bar z, r}\cap \mathsf B}{\#\margI_{\cone}(\bar z)\cap \mathsf B}\ge (1-O(e^{-\star\pvare^2\ell})\Bigr\}.
\]

Fix a covering $\{\mathsf Ph_j'\}_j$ of $\hat\coneH$ with multiplicity bounded by an absolute constant, where 
\[
\mathsf P=\Bigl\{u^-_s: |s|\leq 10\beta^2 \Bigr\}\cdot\{a_\tau: |\tau|\leq 10\beta^2\}\cdot \{u_r: |r|\leq 10e^{-\ell}\eta\};
\]
cf.\ e.g.\ \cite[Lemma 7.9]{LM-PolyDensity}. Note that by our choice of constants, $\beta^2$ is much larger than $e^{-\ell}\eta$.

For every $\mathsf C\in\mathcal C$, let $\mathcal J_{\mathsf C}$ denote the collection of $j$ so that $\mathsf Ph'_j\exp(w_{\mathsf C})\cap Z_r(w_{\mathsf C})\neq \emptyset$. For any $j\in\mathcal J_{\mathsf C}$, fix some $\sfh'_j\in\mathsf P$ so that 
\[
\bar z_{\mathsf C, j}:=\sfh_j'h_j'\exp(w_{\mathsf C})\in Z_{r}(w_{\mathsf C}).
\]

With this notation, set $\margI_{\mathsf C,j}=\margI_{\bar z_{\mathsf C, j}, r}\cap B_\rfrak(0, e^{(-2\dm-1)\ell})$, and let 
\[
\cone^{\mathsf C,j}(r)=\{\sfh \exp(\bar w) \bar z_{\mathsf C,j}: \sfh\in\mathsf P, \bar w\in\margI_{\mathsf C,j}\}\cap \hat\cone.
\]
Define $\cone(r):=\bigcup_{\mathsf C\in \mathcal C}\bigcup_{\mathcal J_{\mathsf C}} \cone^{\mathsf C,j}(r)$; this covering has multiplicity $\leq\mathsf K$, where $\mathsf K$ is absolute. It follows from~\eqref{eq: define Z},~\eqref{eq: measure of Cone-r}, and the above definitions that  
\be\label{eq: measure of Cone-r'}
\mu_\cone(\cone\setminus\cone(r))\ll e^{-\star\pvare^2\ell}.
\ee

\subsection*{Trimming $\cone'_{i,r}$ using $\cone(r)$}
Recall from part~(2) in Lemma~\ref{lem: convex comb} the following property of $F'_{i,r}$
\[
\text{$\umt^H_\ell.\exp(w)y_{i,r}\subset a_\ell u_r\cone\quad$ for all $w\in F'_{i,r}$},
\]
where $\umt_{\ell}^H=\Bigl\{u^-_s: |s|\leq \beta^2 \nuni^{-\ell}\Bigr\}\cdot\{a_\tau: |\tau|\leq \beta^2\}\cdot \{u_r: |r|\leq \eta\}$. 

For all $i$, all $\mathsf C_1\in \mathcal B_i'$, $\mathsf C_2\in\mathcal C$, and all $j\in\mathcal J_{\mathsf C_2}$, let 
\[
F'_{i,\mathsf C_1}(\bar z_{\mathsf C_2,j})=\Bigl\{w\in\mathsf C_1: \exists \bar w\in\margI_{\mathsf C_2,j},\, a_\ell u_r\exp(\bar w)\bar z_{\mathsf C_2,j}\in \umt_\ell^H.\exp(w)y_{i,r}\Bigr\},
\]
roughly speaking, this set captured points in $F'_{i,r}\cap \mathsf C_1$ which may be {\em traced back} to 
$\margI_{\mathsf C_2,j}=\margI_{\bar z_{\mathsf C_2, j}, r}\cap B_\rfrak(0, e^{(-2\dm-1)\ell})$.  

For every $i$, let 
\[
\mathcal B_i:=\Bigl\{\mathsf C_1\in\mathcal B_i': \max_{\mathsf C_2, j} \#F'_{i,\mathsf C_1}(\bar z_{\mathsf C_2,j})\geq (1-10^{-6})\cdot (\#F_{i,r}'\cap\mathsf C_1)\Bigr\},
\] 
and put $\mathcal I_r=\Bigl\{i: \textstyle\bigcup_{\mathsf C_1\in\mathcal B_i}(\mathsf C_1\cap F'_{i,r})\geq \beta^{1/2}\cdot \#(F'_{i,r})\Bigr\}$.

For every $i\in\mathcal I_r$ and all $\mathsf C_1\in\mathcal B_i$, choose $\bar z_{i, \mathsf C_1}\in \hat\coneH\exp(w_{\mathsf C_2, j})y)\cap Z_r$ so that $\#F'_{i,\mathsf C_1}(\bar z_{i,\mathsf C_1})$ realizes the above maximum. Put 
\be\label{eq: def F-ir and E-ir}
F_{i,r}=\textstyle\bigcup_{\mathsf C_1\in \mathcal B_i} F'_{i,\mathsf C_1}(\bar z_{i,\mathsf C_1})\quad\text{and}\quad\cone_{i,r}=\coneH.\{\exp(w)y_{i,r}: w\in F_{i,r}\}. 
\ee
For every $i\in\mathcal I_r$, let $\mu_{i,r}$ denote the restriction of $\mu_{\cone'_{i,r}}$ to $\cone_{i,r}$ normalized to be a probability measure.

\begin{lemma}\label{lem: mfm estimate for cone i,r}
With the above notation, we have 
\[
\mfm_{\cone_{i,r},{\trct'}}^{(\alpha)}(z)\leq e^{-{\imp}\ell/2}\egbd+e^{\vare t }\cdot (\#F_{i,r})
\]
for all $i\in \mathcal I_r$ and all $z\in \cone_{i,r}$.

Moreover, we have 
\begin{multline}\label{eq: rw ell 1 mathcal L mathcal I}
\int\varphi(a_{\mathsf d}u_sz)\diff(a_\ell u_{r}\mu_{\cone})(z)=\\\sum_{\mathcal I_r} c_{i, r}\int\varphi(a_{\mathsf d} u_sz)\diff\!\mu_{\cone_{i,r}}(z)+  O(\beta^\star\Lip(\varphi)).
\end{multline}
\end{lemma}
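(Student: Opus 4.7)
The plan is to first prove the pointwise Margulis function bound, then derive the convex combination identity by restricting~\eqref{eq: rw ell 1 mathcal L} to the trimmed sets.

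\textbf{Pointwise bound, setup.} Fix $z\in\cone_{i,r}$ and write $z=h_z\exp(w_0)y_{i,r}$ with $h_z\in\coneH$, $w_0\in F_{i,r}$. Let $\mathsf C_0\in\mathcal B_i$ be the cube containing $w_0$, and let $\bar z_0:=\bar z_{i,\mathsf C_0}\in Z_r$ be its source point with associated set $\margI_0=\margI_{\mathsf C_2,j}\cap B_\rfrak(0,e^{(-2\dm-1)\ell})$; the defining property of $F'_{i,\mathsf C_0}(\bar z_0)$ provides $\bar w_0\in\margI_0$ satisfying $a_\ell u_r\exp(\bar w_0)\bar z_0\in\umt^H_\ell\exp(w_0)y_{i,r}$. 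Each nonzero $w\in I_{\cone_{i,r}}(z)$ corresponds (via $\exp(w)z=h'\exp(w_0')y_{i,r}$ and Lemma~\ref{lem: BCH}) to some $w_0'\in F_{i,r}$ with $\|w_0'-(w_0+w)\|\ll\beta\|w\|$. Split
\[
I_{\cone_{i,r}}(z)\setminus\{0\}=I^{\rm same}_z\sqcup I^{\rm diff}_z
\]
according to whether $w_0'\in\mathsf C_0$ or not.

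\textbf{Same-cube bound.} For $w\in I^{\rm same}_z$, choose $\bar w'\in\margI_0$ (via the defining property of $F'_{i,\mathsf C_0}(\bar z_0)$) with $a_\ell u_r\exp(\bar w')\bar z_0\in\umt^H_\ell\exp(w_0')y_{i,r}$. Running the compositions as in the derivation of~\eqref{eq: vw - wbar and w}---Lemma~\ref{lem: BCH} applied to $\exp(\bar w')\exp(-\bar w_0)$, the identity $a_\ell u_r\exp(v)(a_\ell u_r)^{-1}=\exp(\Ad(a_\ell u_r)v)$, and absorbing the $\umt^H_\ell$-factors into the $H$-part---yields the two-sided comparison $\tfrac12\|a_\ell u_r(\bar w'-\bar w_0)\|\leq\|w\|\leq 2\|a_\ell u_r(\bar w'-\bar w_0)\|$, and the assignment $w_0'\mapsto\bar w'$ injects $I^{\rm same}_z$ into $\margI_0\cap I^{\rm int}_{z'}(\bar w_0)$ for $z':=\exp(\bar w_0)\bar z_0\in\cone$. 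Since $\bar z_0\in Z_r$, inequality~\eqref{eq: non-bdry terms in I} then gives
\[
\sum_{w\in I^{\rm same}_z}\max(\|w\|,\delta')^{-\alpha}\ll e^{-3\imp\ell/4}\egbd\leq\tfrac12 e^{-\imp\ell/2}\egbd
\]
for $\ell$ sufficiently large.

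\textbf{Different-cube bound.} For $w\in I^{\rm diff}_z$, the separation property~\eqref{eq: distance C1 and C'1} in $\mathcal B_i'\supset\mathcal B_i$ forces $\|w_0-w_0'\|\gg e^{-(2\dm+1)\ell}\beta$, hence $\|w\|\gg e^{-(2\dm+1)\ell}\beta$, which comfortably exceeds $\delta'$ under the parameter regime of the lemma so that $\max(\|w\|,\delta')^{-\alpha}=\|w\|^{-\alpha}$. Summing either by the density estimate $\#(F\cap B(w_0,r))\ll\egbd r^\alpha$ (inherited from $\mfm_{\cone,\delta}^{(\alpha)}(\cdot)\leq\egbd$) in dyadic shells, or directly via the crude bound $(\#F_{i,r})\cdot(c\,e^{-(2\dm+1)\ell}\beta)^{-\alpha}$, and exploiting $\#F_{i,r}\geq\beta^{4\dm+6}(\#F)$ together with $\ell=\vare t/(10\dm)$ and $\beta=e^{-\vare't}$ with $\vare'\ll\vare$, yields a total $\leq e^{\vare t}\cdot(\#F_{i,r})$. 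Combining with the same-cube bound gives the asserted pointwise estimate.

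\textbf{Convex combination and main obstacle.} Starting from~\eqref{eq: rw ell 1 mathcal L} applied for $r\in\mathcal L$, restrict each $\mu_{\cone'_{i,r}}$ to its subset $\mu_{\cone_{i,r}}$. The total mass lost is controlled by four sources: the $O(e^{-\star\pvare^2\ell})$ failure of $Z_r$ from~\eqref{eq: measure of Cone-r'}, the $O(\beta^{1/2})$-fraction of cubes lying in $\mathcal B_i'\setminus\mathcal B_i$, the $10^{-6}$-fraction discarded per surviving cube in passing from $F'_{i,r}\cap\mathsf C_1$ to $F'_{i,\mathsf C_1}(\bar z_{i,\mathsf C_1})$, and the $O(\beta^{1/2})$-fraction of indices $i\notin\mathcal I_r$; after renormalization these contribute an aggregate error $O(\beta^\star\Lip(\varphi))$, yielding the convex combination identity. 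The main obstacle is the bi-Lipschitz norm comparison in the same-cube step, which requires careful tracking of how the $\umt^H_\ell$-factors interact with the BCH error terms coming from two applications of Lemma~\ref{lem: BCH}; the different-cube bound is a secondary difficulty whose tightness relies on the parameter balance and on the density inherited from the initial energy estimate.
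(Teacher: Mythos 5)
Your proof follows essentially the same route as the paper's: both split $\margI_{\cone_{i,r}}(z)$ according to whether the corresponding element of $F_{i,r}$ lies in the same $\mathcal B_i'$-cube, trace the same-cube elements back to $\margI_{\bar z,r}$ via the defining property of $F'_{i,\mathsf C_1}(\bar z_{i,\mathsf C_1})$ and invoke~\eqref{eq: non-bdry terms in I}, bound the cross-cube contribution crudely using the separation~\eqref{eq: distance C1 and C'1}, and derive the convex-combination identity by controlling the mass dropped when passing from $F'_{i,r}$ to $F_{i,r}$ via~\eqref{eq: measure of Cone-r'}. The only organizational difference is that the paper first bounds $\mfm$ at the special traced-back points $z_1 = a_\ell u_r\exp(\bar w_1)\bar z$ and then propagates to arbitrary $z\in\cone_{i,r}$ on the same $H$-sheet by a factor of two (Lemma~\ref{lem:dist-sheet}), whereas you argue at an arbitrary $z$ directly; you should make this propagation step explicit, since $\margI_{\cone_{i,r}}(\cdot)$ changes between points on a sheet.
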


We postpone the proof of Lemma~\ref{lem: mfm estimate for cone i,r} to after the completion of the proof of Lemma~\ref{lem: main ind lemma}; a task which we now undertake.

\subsection*{Conclusion of the proof of Lemma~\ref{lem: main ind lemma}} 
First recall that  
\begin{align*}
\int\!\!\int_0^1\varphi(a_{\mathsf d} u_sa_\ell u_rz)\diff\!r\diff\!\mu_{\cone}(z)=\iint\varphi(a_{\mathsf d+\ell} u_{r+se^{-\ell}}z)\diff\!r\diff\!\mu_{\cone}(z).
\end{align*}
 
Since $|[0,1]\setminus L_{\mu_\cone}|\ll e^{-\star\pvare^2\ell}=\beta^\star$ and $\mathcal L\subset L_{\mu_\cone}$ 
is a maximal $e^{-3\mathsf d_0}$ separated subset, we have 
\begin{multline*}
\iint\varphi(a_{\mathsf d+\ell} u_{r+se^{-\ell}}z)\diff\!r\diff\!\mu_{\cone}(z)=\\
\sum_{r\in\mathcal L}\int \varphi(a_{\mathsf d+\ell} u_{r+se^{-\ell}}z)\diff\!\mu_{\cone}(z)+O\Bigl(\beta^\star\Lip(\varphi)\Bigr)=\\
\sum_{r\in\mathcal L}\int\varphi(a_{\mathsf d}u_s a_\ell u_{r}z)\diff\!\mu_{\cone}(z)+O\Bigl(\beta^\star\Lip(\varphi)\Bigr) .
\end{multline*}
Combining these and the second assertion in Lemma~\ref{lem: mfm estimate for cone i,r}, we conclude that 
\[
\iint\varphi(a_{\mathsf d} u_sa_\ell u_rz)\diff\!r\diff\!\mu_{\cone}=\sum_{\mathcal I} c_{i,r} \int\varphi(a_{\mathsf d} u_sz)\diff\!\mu_{\cone_{i,r}}(z) + O(\beta^\star\Lip(\varphi));
\]
moreover, by the first claim in Lemma~\ref{lem: mfm estimate for cone i,r}, we have 
\[
\mfm_{\cone_{i,r},{\trct'}}(z)\leq e^{-{\imp}\ell/2}\egbd+e^{\vare t }\cdot (\#F_{i,r}). 
\]
Finally, since $\beta^{4\dm+5}\cdot (\#F)\leq \#F'_{i,r}\leq \#F$ and $\#F_{i,r}\geq \beta^{1/2}\cdot(\#F'_{i,r})$, 
\[
\beta^{4\dm+6}\cdot (\#F)\leq \#F_{i,r}\leq \beta^{4\dm+4}\cdot(\#F).
\]
The proof of Lemma~\ref{lem: main ind lemma} is complete.
\qed


\begin{proof}[Proof of Lemma~\ref{lem: mfm estimate for cone i,r}]
We begin by establishing the first claim in the lemma. Fix some $i$, and denote $F_{i,r}$, $y_{i,r}$, and $\cone_{i,r}$ by $F_{\rm nw}$, $y_{\rm nw}$, and $\cone_{\rm nw}$, respectively.

First note that 
\be\label{eq: mfm cone nw estimate Theta}
\mfm_{\cone_{\rm nw},{\trct'}}^{(\alpha)}(z)\leq \sum_{0\neq w\in\margI_{\cone_{\rm nw}}^{\rm int}(z)}\max(\|w\|,{\trct'})^{-\alpha}+ e^{\vare t}\cdot (\#F_{\rm nw})
\ee
where $\margI_{\cone_{\rm nw}}^{\rm int}(z)=\{w\in\margI_{\cone_{\rm nw}}(z): \|w\|\leq e^{(-2\dm-2)\ell}\}$. 

We also recall the definition of $F_{\rm nw}$ and $\cone_{\rm nw}$ from~\eqref{eq: def F-ir and E-ir}. 
In particular, for every $\mathsf C_1\in\mathcal B_i$, there exists $\mathsf C_2\in\mathcal C$ and $\bar z=\bar z_{\mathsf C_2, j}$ so that for all $w\in F_{\rm nw}\cap \mathsf C_1$, there is some $\bar w\in\margI_{\mathsf C_2, j}=\margI_{\bar z, r}\cap B_\rfrak(0,e^{(-2\dm-1)\ell})$ with
\[
a_\ell u_r\exp(\bar w)\bar z\in \umt_\ell^H.\exp(w)y_{\rm nw}.
\]

For $k=1,2$, let $z_k=a_\ell u_r\exp(\bar w_k)\bar z$, and write
\[
z_k=\sfh_k\exp(w_k)y_{\rm nw},
\]
where $w_k\in F_{\rm nw}$ and $\sfh_k\in\umt_\ell^H$. Then we have 
\be\label{eq: zi hi wi} 
\begin{aligned}
z_2&=\sfh_2\exp(w_2) y_{\rm nw}=\sfh_2\exp(w_2)\exp(-w_1)\sfh_1^{-1}z_1\\
&=\sfh_2\sfh_1^{-1}\exp(\Ad(\sfh_1)w_2)\exp(-\Ad(\sfh_1)w_1)z_1\\
&=\sfh_2\sfh_1^{-1}\hat\sfh\exp(\hat w)z_1
\end{aligned}
\ee
where $\hat\sfh\in H$ and $\hat w\in\rfrak$, moreover, by Lemma~\ref{lem: BCH}, we have
\begin{subequations}
\begin{align}
\label{eq:hi-hat-wi-1}&\|\hat\sfh-I\|\leq \ref{E:BCH}\beta\|\hat w\|\qquad\text{and}\\
\label{eq:hi-hat-wi-2}&\tfrac12\|\Ad(\sfh_1)(w_2-w_1)\|\leq \|\hat w\|\leq2\|\Ad(\sfh_1)(w_2-w_1)\|.
\end{align}
\end{subequations} 

Now let $v\in I_{\cone_{\rm nw}}(z_1)$. 
Then there exist $w_v\in F_{\rm nw}$ and $h'_v\in\coneH$ so that 
\[
\exp(v)z_1=h'_v\exp(w_v)y_{\rm nw}.
\]
Moreover, if $\|v\|\leq e^{(-2\dm-2)\ell}$, then by~\eqref{eq: distance C1 and C'1} $w_v\in\mathsf C_1$, thus 
there exist $\bar w_v\in \margI_{\bar z, r}$ and $\sfh_v\in\umt_\ell^H$ so that 
\[
a_\ell u_r\exp(\bar w_v)\bar z=\sfh_v\exp(w_v) y_{\rm nw}.
\]
Altogether, if $\|v\|\leq e^{(-2\dm-2)\ell}$, then    
\be\label{eq: z-v and z0 relation 0}
a_\ell u_r\exp(\bar w_v)\bar z=h_v\exp(v)z_1
\ee
where $h_v=\sfh_vh_v'^{-1}\in\boxH_{1.1\eta}$.

Applying~\eqref{eq: zi hi wi} with $w_2=w_v$, we get that  
\be\label{eq: z-v and z0 relation}
a_\ell u_r\exp(\bar w_v)\bar z=\sfh_{v}\sfh_1^{-1}\hat\sfh_v\exp(\hat w_v)z_1
\ee
where $\hat \sfh_v$ and $\hat w_v$ satisfy~\eqref{eq:hi-hat-wi-1} and~\eqref{eq:hi-hat-wi-2}, and $\sfh_1,\sfh_v\in\umt_\ell^H$. 

Recall now that $(\hat h,\hat w)\mapsto\hat h\exp(\hat w)z_1$ is injective over $\boxH_{10\eta}\times B_\rfrak(0,10\eta)$. Thus, we conclude from~\eqref{eq: z-v and z0 relation} and~\eqref{eq: z-v and z0 relation 0} that  
\be\label{eq:wij-sublemma}
\hat w_v=v.
\ee

Since $\|v\|\leq e^{(-2\dm-2)\ell}$,~\eqref{eq:hi-hat-wi-1} and~\eqref{eq:wij-sublemma} imply that
\[
\|\hat\sfh_{v}-I\|\leq \ref{E:BCH}\beta\|v\| \leq e^{(-2\dm-2)\ell}\beta\leq \beta^2\nuni^{-\ell}; 
\]
recall that $\beta\geq e^{-\ell}$ and $\dm\geq 2$.
Moreover, $\sfh_1,\sfh_v\in\umt^H_\ell$. Therefore, 
\[
\hat\sfh_v^{-1}\sfh_1\sfh_v^{-1}a_\ell u_r\exp(\bar w_v)\bar z\in a_\ell u_r\cone,
\]
see~\eqref{eq:well-rd-tau-1}. This,~\eqref{eq: z-v and z0 relation}, and~\eqref{eq:wij-sublemma} yield 
\begin{align*}
\exp(v)a_\ell u_r\exp(\bar w_1)\bar z&=\exp(v)z_1\\
&=\hat\sfh^{-1}\sfh_1\sfh_v^{-1}a_\ell u_r\exp(\bar w_v)\bar z\in a_\ell u_r\cone.
\end{align*}
Recall also that $\bar w_v\in \margI_{\bar z, r}$, and note that that since $\|v\|\leq e^{(-2\dm-2)}$, we have $\|\Ad((a_\ell u_r)^{-1})v\|\leq e^{(-\dm-1)\ell}$. Altogether, we conclude that  
\be\label{eq: ad a-ell ur v}
\Ad((a_\ell u_r)^{-1})v\in \margI^{\rm int}_{\exp(\bar w_1)\bar z}, 
\ee
see~\eqref{eq: def of I<}. Let 
\[
\margI_{\cone_{\rm nw}}^{\rm int}(z_1)=\{v\in\margI_{\cone_{\rm nw}}(z_1):\|v\|\leq e^{(-2\dm-2)\ell}\}
\]
Since $\|\bar w_1\|\leq e^{(-2\dm-1)\ell}$, we conclude from~\eqref{eq: ad a-ell ur v} and~\eqref{eq: non-bdry terms in I}, applied with $\exp(\bar w_1)\bar z=:\hat z$, that 
\be\label{eq: mfm at z1}
\begin{aligned}
\mfm_{\cone_{\rm nw}, {\trct'}}^{(\alpha)}(z_1)&\leq \sum_{0\neq v\in\margI_{\cone_{\rm nw}}^{\rm int}(z_1)}\max(\|v\|,{\trct'})^{-\alpha}+e^{\vare t}\cdot (\#F_{\rm nw})\\
&\leq \sum_{0\neq w\in\margI^{\rm int}_{\hat z}}\max(\|a_\ell u_r w\|,{\trct'})^{-\alpha}+ e^{\vare t}\cdot (\#F_{\rm nw})\\
&\leq C e^{-3{\imp}\ell/4}\egbd+ e^{\vare t}\cdot (\#F_{\rm nw}),
\end{aligned}
\ee
where $C$ depends on $X$, see also~\eqref{eq: mfm cone nw estimate Theta} for the first inequality. 

Let now $z=h\exp(w)y_{\rm nw}\in\cone_{\rm nw}$ be arbitrary. Then there exists some $z_1=a_\ell u_r\exp(\bar w_1)\bar z$ so that $z_1=\sfh \exp(w)y_{\rm nw}$, and we have 
\[
\mfm_{\cone_{\rm nw},{\trct'}}^{(\alpha)}(z)\leq 2\mfm_{\cone_{\rm nw},{\trct'}}^{(\alpha)}(z_1).
\]
This and~\eqref{eq: mfm at z1} complete the proof of the first claim, assuming $\ell$ is large enough so that $ 2Ce^{-3{\imp}\ell/4}\leq e^{-{\imp}\ell/2}$.  

The second claim in the lemma follows from~\eqref{eq: measure of Cone-r'} and~\eqref{eq: rw ell 1 mathcal L} as we now explicate. 
Fix some $i$, and suppose that $\mathsf C_1\not\in\mathcal B_i$. Then for all $\mathsf C_2\in\mathcal C$ 
and all $j\in\mathcal J_{\mathsf C_2}$, we have 
\begin{multline*}
\#\bigl\{w\in \mathsf C_1\cap F'_{i,r}: \exp(w)y_{i,r}\not\in\bigl(\umt_\ell^H\bigr)^{-1}.a_{\ell}u_r\exp(\margI_{\mathsf C_2,j})\bar z_{\mathsf C_2, j}\Bigr\}\\ \geq 10^{-6}\cdot (\#(\mathsf C_1\cap F'_{i,r})).
\end{multline*}
Note now that $(a_\ell u_r)^{-1}\bigl(\umt_\ell^H\bigr)^{-1}.a_{\ell}u_r\subset \mathsf P$ and 
\[
m_H((a_\ell u_r)^{-1}\bigl(\umt_\ell^H\bigr)^{-1}.a_{\ell}u_r\asymp m_H(\mathsf P).
\]
Moreover, note that the multiplicity of the covering $\{\cone_r^{j,\mathsf C}\}$ is $\leq \mathsf K$. 
We thus conclude that the contribution of each $\mathsf C_1\not\in\mathcal B_i$ to $\mu_\cone(\cone\setminus\cone_r)$ is 
\[
\asymp (\#(\mathsf C_1\cap F'_{i,r}))m_H(\mathsf P).
\]
Therefore, the total number of $w\in \bigcup_i F'_{i,r}$ that we have omitted is $\ll \beta^\star (\#\bigcup_i F'_{i,r})$, which implies the claim.  
\end{proof}


\section{The main proposition}\label{sec: proj improve dim}
We begin by fixing several parameters that will remain constant throughout this section. Roughly speaking, $\kappa$ is a small parameter representing the incremental improvements in dimension. These improvements are achieved by inductively applying Lemma~\ref{lem: main ind lemma} with various choices of $\alpha$. The constants $d_\cdot$ indicate the number of times the lemma is applied for a fixed $\alpha$, while $p_{\rm fn}$ represents the number of times $\alpha$ is adjusted in increments of $\kappa$. See the discussion proceeding the statement of the theorem for more details.

Let $0<\kappa<1$ be a small parameter --- in our application, we will let 
$\kappa=(\frac{\ref{k:mixing}}{40\dm})^2$, see Proposition~\ref{prop: 1-epsilon N} and Lemma~\ref{lem: proj and thickening}.
Set 
\[
p_{\rm fn}:=\lceil\tfrac{4\dm^2+4\dm}{\kappa(2\dm+1)}\rceil-10,
\] 
and let $\kappa_{\rm fn}=\tfrac{99}{100} \times (\tfrac{3}{4})^{p_{\rm fn}}$ and $\vare=\kappa\cdot \kappa_{\rm fn}$.

Let $t>0$ be a large parameter, and let $D\geq D_1$, see Proposition~\ref{prop: closing lemma}. 
Let 
\[
\begin{aligned}
d_0&=\lceil \tfrac{(10D-9)(4\dm^2+2\dm)}{\vare}\rceil,\\
d_1&=\lceil\tfrac{495}{200\vare}\rceil\quad \text{and}\quad d_{p+1}=\lceil\tfrac34d_p\rceil\;\; \text{ for $0< p< p_{\rm fn}$}
\end{aligned}
\] 
Set $d_{\rm fn}=\sum_{p=0}^{p_{\rm fn}} d_p$, and let 
\[
\vare'=10^{-6}\dm^{-1}d_{\rm fn}^{-2},\quad \beta=e^{-\vare' t},\quad\text{and}\quad\eta^2=\beta.
\]

Put $\ell=\frac{\vare t}{10\dm}$. Let $\rwm_d$ denote the probability measure on $H$ defined by 
\[
\int\varphi\diff\!\rwm_d=\ave\varphi(a_{d}\uvk)\uvkd,
\] 
for any $\varphi\in C_c(H)$, and put 
\[
\mu_{t,\ell, n}=\rwm_{\ell}\conv\cdots\conv\nu_{\ell}\conv\rwm_{t}
\]
where $\rwm_\ell$ appears $n\geq0$ times in the above expression.

The following proposition, whose proof is based on Lemma~\ref{lem: main ind lemma}, is a crucial tool in our argument.

\begin{propos}\label{propos: imp dim main}
Let $x_1\in X$, and assume that Proposition~\ref{prop: closing lemma}(2) does not hold for $x_1$ with parameters $D\geq D_1$ and $t>0$. Let $r_1\in I(x_1)$ and put $x_2=a_{\dm_1 t}u_{r_1}x_1$, see Proposition~\ref{prop: closing lemma}(1). 

There exists a collection $\Xi=\{\cone_i: 1\leq i\leq N\}$ of sets 
\begin{align*}
    \cone_{i}=\coneH.\{\exp(w)y_{i}: w\in F_{i}\}\subset X_\eta
\end{align*} 
satisfying that for all $1\leq i\leq N$, we have $F_{i}\subset B_\rfrak(0,\beta)$ and 
\[
e^{0.98t}\leq \#F_i\leq e^{t}
\]
so that both of the following hold  
\begin{enumerate}
\item For all $1\leq i\leq N$ and all $z\in (\overline{\coneH\setminus\partial_{10\beta}\coneH}).\{\exp(w)y_{i}: w\in F_{i}\}$, 
\be\label{eq:improve dim energy est}
\mfm_{\cone_{i}, \trct}^{(2\dm+1-20\kappa)}(z)\leq e^{2\vare t}\cdot (\#F_i)\qquad\text{where $\trct=e^{-\kappa_{\rm fn}t}$.}
\ee
\item For all $\tau\leq \ell d_{\rm fn}$, all $|s|\leq 2$, and for every $\varphi\in C_c^{\infty}(X)$, 
\[
\int \varphi(a_\tau u_shx_2)\diff\!\mu_{t,\ell, d_{\rm fn}}(h)=\sum_{i}c_{i}\int \varphi(a_\tau u_sz)\diff\!\mu_{\cone_{i}}(z)+ O(\Lip(\varphi)\beta^{\ref{k: bootstrap beta exp}})
\]
\end{enumerate}
where 
\begin{itemize}
\item $0\leq c_{i}\leq 1$ and $\sum_{i}c_{i}=1-O(\beta^{\ref{k: bootstrap beta exp}})$, 
\item $\mu_{\cone_{i}}$ is an admissible measure on $\cone_i$ 
with parameter depending only on $D$ and $X$, see~\S\ref{sec: cone and mu cone},
\item $\Lip(\varphi)$ is the Lipschitz norm of $\varphi$, and 
\item $\constk\label{k: bootstrap beta exp}$ and the implied constants depend on $X$.
\end{itemize}
\end{propos}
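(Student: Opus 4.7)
The plan is to iterate Lemma~\ref{lem: main ind lemma} many times, starting from the coarse estimate handed to us by Proposition~\ref{prop: closing lemma}(1c) at $\alpha_0=1/(2\dm+1)$, and climbing in two phases: first a long bootstrap at this small $\alpha_0$ that drives the energy from $e^{Dt}$ down to a bound of the form $e^{\vare t}\cdot(\#F)$, and then a loop of $p_{\rm fn}$ stages in which we increment $\alpha$ by $\kappa$ at each stage and re-stabilize. The measure decomposition in conclusion (2) is built up in parallel by applying Lemma~\ref{lem: main ind lemma}(2) at every step; the initial measure $\rwm_t\ast\delta_{x_2}$ restricted to $I(x_1)$ is adapted to $\coneH_{1,t,\beta}.x_2$, which is the base cone corresponding to $F^{(0)}$ of cardinality $1$.

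\textbf{Phase 1 (bootstrap at $\alpha_0$).} Since Proposition~\ref{prop: closing lemma}(2) fails by hypothesis, assertion (1c) gives $\mfm^{(\alpha_0)}_{\cone^{(0)},0}(z)\leq f(z)\leq e^{Dt}$ with $\alpha_0=1/(2\dm+1)$; note that this is the distinguished case where Lemma~\ref{lem: main ind lemma} keeps $\delta'=0$. I would apply Lemma~\ref{lem: main ind lemma} a total of $d_0$ times, each time multiplying the energy by $e^{-\imp_0\ell/2}=e^{-\alpha_0\dm\ell/2}$ up to the additive term $e^{\vare t}\cdot(\#F)$. Geometric summation shows that after
$d_0 \geq 2Dt/(\imp_0\ell) \cdot (1+o(1)) \asymp D(4\dm^2+2\dm)/\vare$
applications the multiplicative part falls below $e^{\vare t}\cdot(\#F)$ and we reach $\mfm^{(\alpha_0)}_{\cone^{(d_0)},0}(z)\leq e^{\vare t}(\#F^{(d_0)})$.

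\textbf{Phase 2 (raising $\alpha$ by $\kappa$, $p_{\rm fn}$ stages).} Inductively, suppose at stage $p-1$ we have $\alpha_{p-1}=\alpha_0+(p-1)\kappa$, scale $\delta_{p-1}$, and $\mfm^{(\alpha_{p-1})}_{\cone,\delta_{p-1}}\leq e^{\vare t}(\#F)$. Set $\alpha_p=\alpha_{p-1}+\kappa$. The elementary inequality $\|w\|^{-\alpha_p}\leq\delta_{p-1}^{-\kappa}\|w\|^{-\alpha_{p-1}}$ (valid for $\|w\|\geq\delta_{p-1}$, and trivially so when the maximum caps at $\delta_{p-1}$) yields a \emph{free} bound
\[
\mfm^{(\alpha_p)}_{\cone,\delta_{p-1}}\leq \delta_{p-1}^{-\kappa}\cdot e^{\vare t}(\#F).
\]
I would then apply Lemma~\ref{lem: main ind lemma} with this new $\alpha_p$ a total of $d_p$ times; the value $d_p=\lceil 3d_{p-1}/4\rceil$ is calibrated so that $d_p\cdot\imp(\alpha_p)\cdot\ell/2$ exceeds the logarithmic debt $\kappa\cdot|\log\delta_{p-1}|=\kappa\kappa_{p-1}t$, using the lower bound on $\imp$ from \eqref{eq: alpha' is positive}. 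This restores the bound $\mfm^{(\alpha_p)}_{\cone^{(p)},\delta_p}\leq e^{\vare t}(\#F)$ for the new scale $\delta_p=e^{d_p\dm\ell}\max(\delta_{p-1},(\#F)^{-1/\alpha_p})$. After $p_{\rm fn}$ stages, $\alpha_{p_{\rm fn}}=\tfrac{1}{2\dm+1}+p_{\rm fn}\kappa=(2\dm+1)-\tfrac{1}{2\dm+1}-10\kappa\geq 2\dm+1-20\kappa$, and the desired bound for $\alpha=2\dm+1-20\kappa$ follows \emph{a fortiori} from monotonicity of $\mfm^{(\alpha)}$ in $\alpha$ over $\|w\|\leq 1$. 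The final scale obeys $\delta_{p_{\rm fn}}\leq e^{d_{\rm fn}\dm\ell}\leq e^{-\kappa_{\rm fn}t}$ by the book-keeping $\vare=\kappa\kappa_{\rm fn}$, $\ell=\vare t/(10\dm)$, and the fact that $\sum d_p$ is geometric with ratio $3/4$ so $d_{\rm fn}\leq 4d_0$.

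\textbf{Assembling conclusion (2) and tracking cardinalities.} Lemma~\ref{lem: main ind lemma}(2) at each step writes the pushed-forward admissible measure as a convex combination of admissible measures on the new cones, modulo $\Lip(\varphi)\beta^{\ref{k: bootstrap beta exp'}}$. Composing $d_{\rm fn}$ such decompositions accumulates an error $d_{\rm fn}\beta^{\ref{k: bootstrap beta exp'}}$, which is $\leq\beta^{\ref{k: bootstrap beta exp}}$ provided $\ref{k: bootstrap beta exp}<\ref{k: bootstrap beta exp'}$ and $\vare'$ is small enough; this is the purpose of the choice $\vare'=10^{-6}\dm^{-1}d_{\rm fn}^{-2}$. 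For the cardinality, each Lemma~\ref{lem: main ind lemma} step multiplies $\#F$ by a factor in $[\beta^{4\dm+6},\beta^{4\dm+4}]$; starting from the natural discretization of $\rwm_t\ast\delta_{x_2}$ into $\asymp e^t$ points of scale $\asymp e^{-t}$, the final cardinality lies in $[e^{0.98t},e^t]$ because the total cardinality loss is $\beta^{(4\dm+6)d_{\rm fn}}=e^{-O(\vare'\dm d_{\rm fn})t}\geq e^{-0.02t}$ by our choice of $\vare'$.

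\textbf{Main obstacle.} The delicate part is the compatible choice of the parameter chain $(\kappa,\vare,\vare',\kappa_{\rm fn},p_{\rm fn},d_0,\ldots,d_{p_{\rm fn}})$. The competing constraints are: (a) the number of iterations $d_p$ at stage $p$ must beat both the initial $e^{Dt}$ debt (when $p=0$) and the accumulated scale-debt $\delta_{p-1}^{-\kappa}$ (when $p\geq 1$), while the available decay rate $\imp(\alpha_p)$ deteriorates as $\alpha_p\to 2\dm+1$; (b) the total number of steps $d_{\rm fn}$ must stay small enough that the cardinality and scale budgets $\#F\geq e^{0.98t}$ and $\delta_{p_{\rm fn}}\leq e^{-\kappa_{\rm fn}t}$ are met; (c) the Lipschitz error budget demands $d_{\rm fn}$ be at most a fixed power of $1/\vare'$. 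Once the arithmetic of the schedule $d_{p+1}=\lceil 3d_p/4\rceil$ together with \eqref{eq: alpha' is positive} is checked to close all three constraints simultaneously, the rest of the proof is a direct concatenation of the conclusions of Lemma~\ref{lem: main ind lemma}.
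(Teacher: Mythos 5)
Your plan follows the paper's proof essentially step for step: an initial decomposition of $\int_0^1\varphi(a_tu_rx_2)\,\diff r$ into cone-averages via the smearing/covering machinery of~\cite{LMW22}, a bootstrap phase at $\alpha_0=\tfrac1{2\dm+1}$ driving $e^{Dt}$ down to $e^{\vare t}(\#F)$, and then $p_{\rm fn}$ incremental stages each paying a $\delta_p^{-\kappa}$ debt for bumping $\alpha$ by $\kappa$ and repaying it with $d_p$ applications of Lemma~\ref{lem: main ind lemma}. This is exactly what the paper formalizes through Lemmas~\ref{lem: main lemma for alpha=m} and~\ref{lem: main lemma for alpha=m+1}.

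Two slips are worth correcting. First, the displayed scale-budget chain $\delta_{p_{\rm fn}}\leq e^{d_{\rm fn}\dm\ell}\leq e^{-\kappa_{\rm fn}t}$ cannot hold as written (the middle quantity exceeds $1$, the last is $<1$); the correct accounting starts from $\delta_1=e^{-0.99t}$ after the $\alpha_0$-phase (where $\delta=0$ throughout) and only the $\sum_{p\ge1}d_p$ steps multiply the scale by $e^{\dm\ell}$, giving $\delta_{p_{\rm fn}}=e^{(\sum_{p\ge1}d_p)\dm\ell-0.99t}=e^{-0.99(3/4)^{p_{\rm fn}}t}=e^{-\kappa_{\rm fn}t}$. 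Second, the claim that the base cone has $F^{(0)}$ of cardinality $1$ sits uneasily next to the later ``$\asymp e^t$ points of scale $\asymp e^{-t}$''; in fact the initial decomposition of $\nu_t.x_2$ already produces cones with $\beta^{4\dm+7}e^t\le\#F_i\le e^t$ (cf.~\eqref{eq: num of Fi initial dim}), and the $e^{Dt}$ bound on $f$ from Proposition~\ref{prop: closing lemma}(1c) is what transfers to $\mfm^{(\alpha)}_{\cone_i,0}\le e^{Dt}$ for those $F_i$. Neither slip affects the overall correctness of the plan.
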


The reader may compare this proposition to~\cite[Prop.\ 10.1]{LMW22}. Indeed in~\cite[Prop.\ 10.1]{LMW22}, we established similar bound concerning modified Margulis functions, but for $\alpha<1$ (in the setting considered in~\cite{LMW22}, 
$\dim\rfrak^+=1$) and localized to balls of size $e^{-\sqrt\vare t}$. 
Requiring this localized estimate, albeit for smaller $\alpha$, was responsible for {\em different} stopping times 
--- in~\cite[Prop.\ 10.1]{LMW22} we could not specify the stopping time, 
only guarantee that there will be at least one good stopping time $n$ in an interval of length $O(1/\sqrt\vare)$. 
This resulted in a less transparent endgame analysis. 
The estimate established in~\eqref{eq:improve dim energy est} concerns $\alpha$ dimensional energy for $\alpha$ very close to $2\dm+1=\dim\rfrak$. This is made possible thanks to stronger projection theorems, see Theorem~\ref{thm: proj thm}.

\subsection*{Proof of Proposition~\ref{propos: imp dim main}}
The proof of Proposition \ref{propos: imp dim main} relies on Lemma \ref{lem: main ind lemma}. Indeed with Lemma \ref{lem: main ind lemma} in place, the general strategy is straightforward: 
Let us put $\rho=\tau+t+d_{\rm fn}\ell$. 
Using results in~\cite[\S 6--8]{LMW22} and the fact that $x_2$ satisfies conditions in part~(1) 
of Proposition~\ref{prop: closing lemma}, we can write  
\[
\int \varphi(a_{\rho-t} u_shx_2)\diff\!\mu_{t,\ell, 0}(h)=\sum_{i}c_{i}\!\int \varphi(a_\tau u_sz)\diff\!\mu_{\cone_{i}}(z)+ O(\Lip(\varphi)\beta^{\star})
\]
where the decomposition is similar to the one claimed in the proposition, 
but with $\beta^{4\dm+5}e^{t}\leq \#F_i\leq e^t$, and where we only have the initial estimate 
\[
\mfm_{\cone_i,0}^{(\alpha)}(z)\leq e^{Dt}\qquad\text{for all $0<\alpha\leq 1$}.
\]

Then we apply Lemma~\ref{lem: main ind lemma} with $\alpha_0=\frac{1}{2\dm+1}$ (hence ${\imp}=\alpha_0$ and $\trct'=0$) for $d_0=\lceil \tfrac{(10D-9)(4\dm^2+2\dm)}{\vare}\rceil$ many steps. For every $|s|\leq 2$, thus 
\[
\int \varphi(a_{\rho-t-d_0\ell} u_shx_2)\diff\!\mu_{t,\ell, d_1}(h)=\sum_{i}c_{i}\!\int \varphi(a_{\tau'} u_sz)\diff\!\mu_{\cone_{i}'}(z)+ O(\Lip(\varphi)\beta^{\star})
\]
where the decomposition is as in the proposition, 
but now we have $e^{0.99t}\leq \#F_i'\leq e^{t}$ for all $i$, and the improved estimate  
\[
\mfm_{\cone_i',0}^{(\alpha_0)}(z)\leq 2e^{\vare t}\cdot (\#F_i').
\]
In the next phase, we improve the above estimate for $\mfm^{(\alpha_0)}$ inductively to obtain similar estimate for 
$\alpha_p=\alpha_0+p\kappa$ for all $0\leq p<p_{\rm fn}$: Assume 
\[
\mfm_{\cone_i,\delta_{p}}^{(\alpha_{p})}(z)\leq 2e^{\vare t}\cdot (\#F_i),
\]
where $\trct_1=e^{-0.99 t}$ and $\trct_{p+1}=\trct_p^{3/4}$ for all $p\geq 1$. We conclude that   
\[
\mfm_{\cone_i,\trct_p}^{(\alpha_{p+1})}(z)\leq 2e^{\vare t}\cdot \trct_{p}^{-\kappa}\cdot (\#F_i)
\]
We now apply Lemma~\ref{lem: main ind lemma} with $\alpha=\alpha_{p+1}$ for $d_{p+1}$ many steps. Since $\alpha_0\leq \alpha_{p+1}\leq 2\dm+1-10\kappa\dm$ we have $\imp\geq 9\kappa\dm$, and each application of Lemma~\ref{lem: main ind lemma} improves the bound on $\mfm$ by $e^{-4\kappa\dm\ell}$ while the scale $\trct$ is replaced by $\trct'=e^{\dm\ell}\trct$. 

Applying this, $p_{\rm fn}-1$ many times, we conclude that 
\[
\int \varphi(a_\tau u_shx_2)\diff\!\mu_{t,\ell, d_{\rm fn}}(h)=\sum_{i}c_{i}\int \varphi(a_\tau u_sz)\diff\!\mu_{\cone_{i}}(z)+ O(\Lip(\varphi)\beta^{\star})
\]
where the decomposition is as in the proposition, and  
\[
\mfm_{\cone_i,\trct_{p_{\rm fn}}}^{(\alpha_{p_{\rm fn}})}(z)\leq 2e^{\vare t}\cdot (\#F_i), 
\]
and the proposition follows. 

Let us now turn to the more detailed argument.

\subsection*{Smearing and Folner property}
The following equality which is a simple consequence of commutation relations, will be used throughout the proof 
\be\label{eq: commutation nth time}
a_{t_1} u_{r_1}a_{t_2}u_{r_2}=a_{t_1+t_2}u_{r_2+e^{-t_2}r_1}
\ee

Let $\lambda$ denote the uniform measure on $\boxHs_{\beta+100\beta^2}$, where as before for all $\delta>0$, we put 
\[
\boxHs_{\delta}=\{u_s^-:|s|\leq {\delta}\}\cdot\{a_\tau: |\tau|\leq \delta\}
\]
In view of~\cite[Lemma 7.4]{LMW22}, for all $x\in X$ and all $\varphi\in C_c^\infty(X)$,  
\be\label{eq: Thickening is free}
\int\varphi(hx)\diff(\rwm_{t_1+t_2})(h)=\int\varphi(hx)\diff(\lambda*\rwm_{t_2}*\lambda*\rwm_{t_1})(h)+O(\beta\Lip(\varphi))
\ee
so long as $e^{-t_i}\leq \beta^2$.

\subsection*{Closing lemma and initial separation} 
Recall that $x_2=a_{\dm_1 t}u_{r_1}x_1$ where $r_1\in I(x_1)$. In particular, the map $h\mapsto hx_2$ is injective over 
$\boxHs_\beta\cdot a_t\cdot U_1$, see part~(1) in Proposition~\ref{prop: closing lemma}. 
Put 
\[
\rho=\tau+d_{\rm fn}\ell+t=\tau+\textstyle\sum_{0}^{p_{\rm fn}} d_p\ell+t.
\] 
Recall also that $\mu_{t,\ell,d_{\rm fn}}=\nu_\ell^{(d_{\rm fn})}*\nu_t$. 
An inductive application of~\eqref{eq: commutation nth time} implies that for any 
$h_0\in\supp(\nu_\ell^{d_{\rm fn}})$ and any $|s|\leq 2$, there exists $|s'|\leq 2$ so that $a_\tau u_s h_0=a_{\rho-t}u_{s'}$. Thus 
\be\label{eq: h0 s' and nut}
\begin{aligned}
\int\!\!\varphi(a_{\tau} u_shx_2)\diff\!\mu_{t,\ell,d_{\rm fn}}(h)&=\int\!\!\int_0^1\varphi(a_{\tau} u_sh_0a_tu_rx_2)\diff\!\nu_\ell^{(d_{\rm fn})}(h_0)\diff\!r\\
&=\int\!\!\int_0^1\varphi(a_{\rho-t}u_{s'}a_tu_rx_2)\diff\!\nu_\ell^{(d_{\rm fn})}(h_0)\diff\!r
\end{aligned}
\ee
Now applying~\cite[Lemma 8.4]{LMW22}, see also Lemma~\ref{lem: convex comb}, 
with $x_2$ we get the following: for every $\varphi\in C_c^\infty(X)$, and all $|s'|\leq 2$, 
\be\label{eq: initial dim convex comb'}
\biggl|\int_0^1\!\!\varphi(a_{\rho-t} u_{s'}a_tu_rx_2)\diff\!r-\!\sum_i\! c_i\!\int\!\varphi(a_{\rho-t} u_{s'}z)\diff\!\mu_{\cone_i}(z)\biggr|\ll \Lip(\varphi)\beta^\star
\ee
where $\sum c_i=1-O(\beta^\star)$ and the implied constants depend only on $X$.  

Combining~\eqref{eq: h0 s' and nut} and~\eqref{eq: initial dim convex comb'}, and using $a_\tau u_s h_0=a_{\rho-t}u_{s'}$, we have 
\begin{multline}\label{eq: initial dim convex comb}
\int\!\!\varphi(a_{\tau} u_shx_2)\diff\!\mu_{t,\ell,d_{\rm fn}}(h)=\\
\sum_i c_i\!\int\!\varphi(a_{\tau} u_s h z)\diff\!\mu_{\cone_i}(z)\diff\!\nu_\ell^{(d_{\rm fn})}(h)+ O(\Lip(\varphi)\beta^\star)
\end{multline}

Moreover, $\cone_i=\coneH.\{\exp(w)y_i: w\in F_i\}$ where $y_i\in X_{\eta}$ and $\cone_i\subset X_\eta$. 
Since $\dim\rfrak=2\dm+1$,~\cite[Lemma 8.1 and 8.2]{LMW22}, see also Lemma~\ref{lem: E good h0} and recall that $\dm=1$ in~\cite{LMW22}, we have 
\be\label{eq: num of Fi initial dim}
\beta^{4\dm+7}e^t\leq \#F_i \leq e^{t}
\ee
--- in the references above, the upper bound $\beta^{-3}e^t$ is asserted, 
further subdividing $F_i$, we may replace that by $e^t$ as it is claimed here.  

Furthermore, in view of the definition of $\cone_i$, see~\cite[Lemma 8.4]{LMW22}, and the fact that $x_2$ satisfies properties in part~(1) of Proposition~\ref{prop: closing lemma}, 
\be\label{eq: initial dim'}
\mfm_{\cone_{i},0}^{(\alpha)}(z)\leq e^{Dt}\qquad\text{for all $0<\alpha\leq 1$}
\ee 
for all $i$ and all $z\in \cone_i$.

\subsection*{Applying Lemma~\ref{lem: main ind lemma} with $\alpha=\frac{1}{2\dm+1}$} 
The following lemma will be used to carry out the second phase in the above outline. Before stating the lemma, let us recall that 
$\ell=\frac{\vare t}{10\dm}$, and 
\[
d_0=\lceil\tfrac{(10D-9) (4\dm^2+2\dm)}{\vare}\rceil.
\] 
Also recall that we set $\vare'=10^{-6}\dm^{-1}d_{\rm fn}^{-2}$, $\beta=e^{-\vare' t}$, and $\eta^2=\beta$.

\begin{lemma}\label{lem: main lemma for alpha=m}
Fix some $\cone_i$ as in~\eqref{eq: initial dim convex comb} and some $0\leq k\leq d_0$. 
For every $\varphi\in C_c(X)$ and all $|s|\leq 2$ we have  
\begin{multline*}\label{eq: one step with ell alpha=m}
\int\!\varphi(a_{\tau} u_s h z)\diff\!\mu_{\cone_i}(z)\diff\!\nu_\ell^{(d_{\rm fn})}(h)=\\
\sum_{j}c_{ij}\!\int\!\!\int \varphi(a_{\tau} u_s h z)\diff\!\mu_{\cone_{ij}}(z)\diff\!\nu_\ell^{(d_{\rm fn}-k)}(h)+ O(\Lip(\varphi)\beta^{\star})
\end{multline*} 
and all the following hold
\begin{enumerate}
\item $0\leq c_{ij}\leq 1$ and $\sum c_{ij}=1+O(\beta^\star)$. 
\item $\cone_{ij}=\coneH.\{\exp(w)y_{ij}: w\in F_{ij}\}\subset X_\eta$ and 
\[
\beta^{(4k+4)\dm+6k+7}e^t\leq \#F_{ij} \leq e^{t}\qquad\text{for all $j$}.
\]
\item Let $\alpha_0=\frac{1}{2\dm+1}$. For all $j$ and all $z\in \cone_{ij}$ 
\[
\mfm_{\cone_{ij},0}^{(\alpha_0)}(z)\leq e^{Dt-\frac{k\alpha_0}2 \ell}+e^{\vare t}\cdot (\# F_{ij}).
\] 
\end{enumerate}
\end{lemma}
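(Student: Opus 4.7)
The plan is to prove the lemma by finite induction on $k \in \{0,1,\ldots,d_0\}$, with each inductive step invoking Lemma~\ref{lem: main ind lemma} applied to the distinguished value $\alpha = \alpha_0 = \frac{1}{2\dm+1}$. For this threshold value, Lemma~\ref{lem: main ind lemma} gives $\trct' = 0$ and an improvement of at least $e^{-\frac{\alpha_0}{2}\ell}$ per application in the $\alpha_0$-dimensional modified Margulis function. The base case $k=0$ is read off directly: take the singleton decomposition $\cone_{ij} = \cone_i$ with $c_{ij} = 1$; property (2) is exactly~\eqref{eq: num of Fi initial dim} since $(4\cdot 0 + 4)\dm + 6\cdot 0 + 7 = 4\dm + 7$, and property (3) follows from~\eqref{eq: initial dim'} after noting that the RHS at $k=0$ already has the additive slack $e^{\vare t}(\#F_{ij})$.

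For the inductive step, assume the decomposition $\{(\cone_{ij}, c_{ij})\}$ has been constructed at step $k$. Using the associativity of convolution we write $\nu_\ell^{(d_{\rm fn}-k)} = \nu_\ell^{(d_{\rm fn}-k-1)}\conv\nu_\ell$ and peel off the innermost $\nu_\ell$, expressing the inductive integrand as an iterated integral whose innermost piece is $\int_0^1\!\int\varphi(a_\tau u_s h_1 a_\ell u_r z)\,d\mu_{\cone_{ij}}(z)\,dr$. For each $h_1 \in \supp(\nu_\ell^{(d_{\rm fn}-k-1)})$, iterating~\eqref{eq: commutation nth time} yields $h_1 = a_{(d_{\rm fn}-k-1)\ell}u_R$ with $R$ bounded, so that $a_\tau u_s h_1 = a_{\tilde\tau} u_{\tilde s}$ with $\tilde\tau = \tau + (d_{\rm fn}-k-1)\ell \leq \mathsf d_0 - \ell$ and $|\tilde s| \leq 2$ once $\ell$ is large (the geometric series $\sum e^{-i\ell}$ is then essentially $1$). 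This places us in the hypotheses of Lemma~\ref{lem: main ind lemma}(2), which we apply with $\mathsf d = \tilde\tau$, $s = \tilde s$, $\alpha = \alpha_0$, $\cone = \cone_{ij}$, and $\egbd$ equal to the inductive bound from property~(3). Reintegrating over $h_1$ and reversing the commutation produces the step-$(k{+}1)$ decomposition with refined cones $\cone_{i,(j,j')}$ and coefficients $c_{i,(j,j')}$.

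Properties (1) and (2) propagate cleanly: the coefficients maintain $\sum = 1 + O(\beta^\star)$ since the $d_0$ iterations contribute only linearly to the error, and each step multiplies $\#F$ by a factor in $[\beta^{4\dm+6},\beta^{4\dm+4}]$, giving $\beta^{(4k+4)\dm + 6k + 7}e^t \leq \#F_{ij} \leq e^t$ after $k$ iterations on top of the base bound. For property (3), the one-step inequality is
\[
E_{k+1} \leq e^{-\frac{\alpha_0}{2}\ell} E_k + e^{\vare t}(\#F_{i,(j,j')}),
\]
which iterates to $E_k \leq e^{Dt - \frac{k\alpha_0}{2}\ell} + e^{\vare t}\sum_{m=0}^{k-1} e^{-m\frac{\alpha_0}{2}\ell}(\#F_{\text{step }m})$. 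Bounding successive ratios of $\#F$ by $\beta^{-(4\dm+6)}$, the sum is a geometric series with ratio $e^{-\frac{\alpha_0}{2}\ell + (4\dm+6)\vare' t}$; since $\vare' = 10^{-6}\dm^{-1}d_{\rm fn}^{-2} \ll \frac{\alpha_0 \vare}{20\dm(4\dm+6)}$, this ratio is $<1$ and the sum is $O(1)$. We absorb this $O(1)$ factor by applying Lemma~\ref{lem: main ind lemma} with $\vare/2$ in place of $\vare$ and using $2 e^{\vare t/2} \leq e^{\vare t}$ for $t$ large.

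The main obstacle is the last point: keeping the coefficient on $\#F_{ij}$ exactly at $e^{\vare t}$ rather than a constant multiple. This necessitates a careful choice of the small parameter passed into Lemma~\ref{lem: main ind lemma} and a verification that the hierarchy $\vare' \ll \vare / (\dm\cdot(4\dm+6))$ (which is built into the setup of the section) indeed forces the geometric ratio above to be bounded away from $1$. Aside from this bookkeeping and the one-line commutation check that $|\tilde s| \leq 2$, the argument is a transparent finite iteration of the single-step lemma.
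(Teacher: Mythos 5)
Your approach is the same as the paper's: finite induction on $k$, with each step smearing via~\eqref{eq: Thickening is free} to peel off one $\nu_\ell$, rewriting $a_\tau u_s h$ as $a_{\tilde\tau}u_{\tilde s}$ to land in the hypotheses $\mathsf d\le\mathsf d_0-\ell$, $|\tilde s|\le 2$ of Lemma~\ref{lem: main ind lemma}, and invoking that lemma at $\alpha=\alpha_0$ (so $\imp=\alpha_0$, $\trct'=0$). Base case and the tracking of $\#F_{ij}$ are as in the paper.

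The one place I'd flag is how you close part (3). Your last-step patch of ``applying Lemma~\ref{lem: main ind lemma} with $\vare/2$ in place of $\vare$'' is not actually available: $\vare$ is tied to $\ell=\vare t/(10\dm)$, $\beta=e^{-\vare' t}$ and $\eta$ through the parameter choices fixed at the start of \S\ref{sec: proj improve dim}, so one cannot halve $\vare$ inside a single application of Lemma~\ref{lem: main ind lemma} without changing the geometry of all the cones $\cone_{ij}$ already in play. The paper avoids the geometric-sum bookkeeping entirely by closing the inductive step directly: since $e^{-\alpha_0\ell/2}\beta^{-(4\dm+6)}\le 1$ (indeed $\le\beta$, by the hierarchy $\vare'\ll\vare/d_{\rm fn}^2$), the contracted old term $e^{-\alpha_0\ell/2}\,e^{\vare t}(\#F_{ij})$ is dominated by $e^{\vare t}(\#F'_\varsigma)$, so the recursion for $E_k$ closes in one line. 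In either phrasing there is a residual multiplicative $(1+O(\beta))$ accumulating over $d_0$ steps, which neither argument eliminates; it is harmless because the downstream consumers~\ref{(D-3)} and~\eqref{eq: proof final energy est} already carry a factor $2$, resp.\ $e^{\vare t}$, of slack. So replace the $\vare/2$ patch with the step-by-step absorption, or widen part (3) by a benign constant; then your proof matches the paper's.
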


\begin{proof}
We prove the lemma using induction on $k$. The base case, $k=0$, 
follows from~\eqref{eq: initial dim'} applied with $\alpha=\alpha_0$. That is 
\be\label{eq: initial dim''}
\mfm_{\cone_{i},0}^{(\alpha_0)}(z)\leq e^{Dt}\qquad\text{for all $z\in \cone_i$.}
\ee 
Assume now the statement for some $0\leq k<d_0$:
\begin{multline*}
\int\!\varphi(a_{\tau} u_s h z)\diff\!\mu_{\cone_i}(z)\diff\!\nu_\ell^{(d_{\rm fn})}(h)=\\
\sum_{j}c_{ij}\!\int\!\!\int \varphi(a_{\tau} u_s h z)\diff\!\mu_{\cone_{ij}}(s)\diff\!\nu_\ell^{(d_{\rm fn}-k)}(h)+ O(\Lip(\varphi)\beta^{\star})
\end{multline*} 
and properties (1), (2), and (3) above hold. 

To obtain desired assertions for $k+1$, first note that~\eqref{eq: Thickening is free} implies 
\begin{multline}\label{eq: applying thickening is free 1}
\int\!\!\int \varphi(a_{\tau} u_s h z)\diff\!\mu_{\cone_{ij}}(s)\diff\!\nu_\ell^{(d_{\rm fn}-k)}(h)=\\
\int\!\!\int_0^1\!\!\int \varphi(a_{\tau}u_{s}h a_\ell u_{r}z)\diff\!\mu_{\cone_{ij}}\!\diff\!r\diff\!\nu_\ell^{(d_{\rm fn}-k-1)}(h)+O(\beta^\star\Lip(\varphi))
\end{multline}

Note again that for every $h\in\supp(\nu_\ell^{(d_{\rm fn}-k-1)})$ we have 
\[
a_{\tau}u_{s}h=a_{\tau+(d_{\rm fn}-k-1)\ell}u_{s'},\quad\text{ for some $|s'|\leq 2$}.
\] 
Thus, for all $|s|\leq 2$ and all $h\in\supp(\nu_\ell^{(d_{\rm fn}-k-1)})$, we apply 
Lemma~\ref{lem: main ind lemma} with $\cone=\cone_{ij}$ 
for $\alpha=\alpha_0$ and $\mathsf d=(d_{\rm fn}-k-1)\ell$ and $s'$ as above, and conclude 
\begin{multline}\label{eq: applying main lemma 1}
\int_0^1\!\!\int \varphi(a_{\tau}u_{s}h a_\ell u_{r}z)\diff\!\mu_{\cone_{ij}}(z)\diff\!r=\\ \sum_{\varsigma}c'_{\varsigma}\!\int\!\!\varphi(a_{\tau} u_sh z)\diff\!\mu_{\cone_{\varsigma}'}(z)+ O(\Lip(\varphi)\beta^{\star}),
\end{multline}
where $0\leq c'_{\varsigma}\leq 1$, $\sum c'_{\varsigma}=1+O(\beta^\star)$, and both of the following hold

\begin{itemize}
\item For all $\varsigma$, $F'_{\varsigma}\subset B_\rfrak(0,\beta)$ and $\cone'_\varsigma=\coneH.\{\exp(w)y'_{\varsigma}: w\in F'_{\varsigma}\}\subset X_\eta$, moreover 
\be\label{eq: improve dim energy est >alpha use 1}
\beta^{4\dm+6}\cdot (\#F_{ij})\leq \#F'_\varsigma\leq \#F_{ij}.
\ee
\item For all $\varsigma$ and all $z\in\coneH.\{\exp(w)y_{i}: w\in F'_{\varsigma}\}$, we have 
\begin{align*}
\mfm_{\cone'_{\varsigma}, 0}^{(\alpha_0)}(z)\leq e^{-\alpha_0\ell/2}\cdot \Bigl(e^{(Dt-\frac{k\alpha_0}{2}\ell)}+e^{\vare t}\cdot (\# F_{ij})\Bigr)+e^{\vare t}\cdot (\#F'_{\varsigma})
\end{align*}
where we used $\trct_{0}= 0$ and ${\imp}=\alpha_0$ when $\alpha=\alpha_0$.  
\end{itemize}
Using~(2) in the inductive hypothesis~\eqref{eq: improve dim energy est >alpha use 1}, we have 
\begin{align*}
\beta^{(4(k+1)+4)\dm+6(k+1)+7}e^t&=\beta^{4\dm+6}\beta^{(4k+4)\dm+6k+7}e^t\\
&\leq  \beta^{4\dm+6}\cdot (\#F_{ij})\leq \#F'_\varsigma\leq \#F_{ij}\leq e^t.
\end{align*}
Moreover, recall that $\vare'=10^{-6}\dm^{-1}d_{\rm fin}^{-2}$ and $\beta=e^{-\vare' t}$. 
Thus $e^{-\alpha_0\ell/2} \beta^{-4\dm-7}\leq 1$, and using~\eqref{eq: improve dim energy est >alpha use 2} and $\#F_{ij}\leq e^t$, we conclude  
\[
e^{-\alpha_0\ell/2}(e^{\vare t}\cdot (\#F_{ij}))\leq e^{-\alpha_0\ell/2} \beta^{-4\dm-6} e^{\vare t}\cdot (\#F'_{\varsigma}) \leq e^{\vare t}\cdot (\#F'_{\varsigma}),
\]
Thus one obtains  
\begin{align*}
e^{-\alpha_0\ell/2}\cdot\Bigl(e^{(Dt-\frac{k\alpha_0}{2} \ell)}+e^{\vare t}\!\cdot\! (\# F_{ij})\Bigr)&\leq
 e^{-\alpha_0\ell/2}e^{Dt-\frac{k\alpha_0}{2} \ell}+ e^{\vare t}\!\cdot\! (\#F'_{\varsigma})\\
&\leq e^{Dt-\frac{(k+1)\alpha_0}{2} \ell}+e^{\vare t}\!\cdot\! (\#F'_{\varsigma}).
\end{align*} 

Therefore, integrating~\eqref{eq: applying main lemma 1} over $h\in\supp(\nu_\ell^{(d_{\rm fn}-k-1)})$, 
the proof of the inductive step and of the lemma is complete. 
\end{proof}

\subsection*{Lemma~\ref{lem: main lemma for alpha=m} and initial dimension}
Recall now that $\vare$ is small, $\ell=\frac{\vare t}{10\dm}$, $d_0=\lceil \tfrac{(10D-9) (4\dm^2+2\dm)}{\vare}\rceil$, 
$\vare'=10^{-6}\dm^{-1}d_{\rm fn}^{-2}$, and $\beta=e^{-\vare' t}$. Thus 
\begin{align*}
&e^{0.99t}\leq \beta^{(4d_1+4)\dm+ 6d_1+7}e^t\leq \#F_{ij}\leq e^t,\quad\text{and}\\
&e^{Dt-\frac{d_0\alpha_0}2 \ell}\leq e^{0.9t},\quad\text{where $\alpha_0=\tfrac1{2\dm+1}$.}
\end{align*}

Applying Lemma~\ref{lem: main lemma for alpha=m} with $k=d_0$, hence, we conclude 
\begin{multline}\label{eq: apply main lemma for alpha=m}
\int\!\varphi(a_{\tau} u_s h z)\diff\!\mu_{\cone_i}(z)\diff\!\nu_\ell^{(d_{\rm fn})}(h)=\\
\sum_{j}c_{ij}\!\int\!\!\int \varphi(a_{\tau} u_s h z)\diff\!\mu_{\cone_{ij}}(z)\diff\!\nu_\ell^{(d_1)}(h)+ O(\Lip(\varphi)\beta^{\star})
\end{multline}
where $\cone_i$ is any set appearing in~\eqref{eq: initial dim convex comb}, and all the following are satisfied 
\begin{enumerate}[label={(D-\arabic*)}]
\item\label{(D-1)} $0\leq c_{ij}\leq 1$ and $\sum c_{ij}=1+O(\beta^\star)$. 
\item\label{(D-2)} $\cone_{ij}=\coneH.\{\exp(w)y_{ij}: w\in F_{ij}\}\subset X_\eta$ and 
\[
e^{0.99t}\leq \#F_{ij} \leq e^{t}\qquad\text{for all $j$}.
\]
\item\label{(D-3)} For all $j$ and all $z\in \cone_{ij}$ 
\be\label{eq: mfm final estimate for alpha=m}
\mfm_{\cone_{ij},\trct}^{(\alpha_0)}(z)\leq 2e^{\vare t}\cdot (\# F_{ij}).
\ee
\end{enumerate}

\subsection*{Lemma~\ref{lem: main ind lemma} and incremental dimension improvement}
Recall that $p_{\rm fn}=\lceil\tfrac{4\dm^2+4\dm}{\kappa(2\dm+1)}\rceil-10$ and $\ell=\frac{\vare t}{10\dm}$. 
We let $\trct_1=e^{-0.99 t}$ and let $d_1=\frac{495}{200\vare}$. For all $0<p\leq p_{\rm fn}$, let 
\[
\trct_{p+1}=\trct_p^{3/4}\quad\text{and}\quad d_{p+1}=\tfrac34d_{p}.
\]
Also, for all $0<p\leq p_{\rm fn}$, and all $0<k\leq d_p$, let 
\[
\trct_{p,0}=\trct_p\quad\text{and}\quad\trct_{p,k}=e^{\dm\ell}\trct_{p,k-1}; 
\] 
note that $\trct_{p,d_{p}}=\trct_{p+1}$, and 
\be\label{eq: dp delta_p}
\trct_p^{-\kappa} e^{-{4\dm\kappa}d_p\ell}\leq 1
\ee

Finally, we let $\alpha_p=\alpha_0+p\kappa$, for every $0\leq p\leq p_{\rm fn}$, and let ${\imp}_p$  be ${\imp}$ defined as in~\eqref{eq: def alpha'} with $\alpha=\alpha_p$. In particular,  
\be\label{eq: alpha'-p}
{\imp}_p\geq 9\dm\kappa\quad\text{for all $0<p\leq p_{\rm fn}$.}
\ee

The following lemma, which is an analogue of Lemma~\ref{lem: main lemma for alpha=m}, 
will be used to inductively improve the dimension from $\frac{1}{2\dm+1}$ to $2\dm+1-10\dm\kappa$.

\begin{lemma}\label{lem: main lemma for alpha=m+1}
Fix some $\cone_i$ as in~\eqref{eq: initial dim convex comb}. 
Let $0< p< p_{\rm fn}$ and let $0\leq k\leq d_{p+1}$. 
For every $\varphi\in C_c(X)$ and all $|s|\leq 2$, we have  
\begin{multline*}
\int\!\varphi(a_{\tau} u_s h z)\diff\!\mu_{\cone_i}(z)\diff\!\nu_\ell^{(d_{\rm fn})}(h)=\\
\sum_{j}c_{ij}\!\int\!\!\int \varphi(a_{\tau} u_s h z)\diff\!\mu_{\cone_{ij}}(z)\diff\!\nu_\ell^{(d'(p)-k)}(h)+ O(\Lip(\varphi)\beta^{\star})
\end{multline*} 
where $d'(p)=d_{\rm fn}-d_0-d(p)$, $d(p)=\sum_{q=1}^{p-1} d_{q}$ for $p>1$, and $d(1)=0$. 

Moreover, all the following hold
\begin{enumerate}
\item $0\leq c_{ij}\leq 1$ and $\sum c_{ij}=1+O(\beta^\star)$. 
\item $\cone_{ij}=\coneH.\{\exp(w)y_{ij}: w\in F_{ij}\}\subset X_\eta$ and 
\[
\beta^{(4\dm+6)(d(p)+k)}e^{0.99t}\leq \#F_{ij} \leq e^{t}\qquad\text{for all $j$}.
\]
\item For all $j$ and all $z\in \cone_{ij}$, we have  
\[
\mfm_{\cone_{ij},\trct_{p,k}}^{(\alpha_{p})}(z)\leq 2e^{\vare t}\cdot (\trct_p^{-\kappa} e^{-4\dm\kappa k\ell}+1)\cdot (\# F_{ij})
\] 
\end{enumerate}
\end{lemma}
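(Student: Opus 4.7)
\subsection*{Proof plan for Lemma~\ref{lem: main lemma for alpha=m+1}}

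The argument is a straightforward adaptation of the proof of Lemma~\ref{lem: main lemma for alpha=m}: a double induction, the outer one on $p$ and the inner one on $k$, driven at each inner step by an application of Lemma~\ref{lem: main ind lemma}, supplemented at each outer step by the elementary observation that raising the exponent from $\alpha_p$ to $\alpha_{p+1}=\alpha_p+\kappa$ costs at most a factor $\trct_p^{-\kappa}$.

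\textbf{Outer base case ($p=1$).} Starting from \eqref{eq: mfm final estimate for alpha=m} in \ref{(D-3)} above, we have at scale $\trct_1=e^{-0.99t}$ the bound $\mfm^{(\alpha_0)}_{\cone_{ij},\trct_1}(z)\le 2e^{\vare t}\cdot(\#F_{ij})$. Since $\max(\|w\|,\trct_1)\ge \trct_1$, the trivial inequality
\[
\max(\|w\|,\trct_1)^{-\alpha_1}\le \trct_1^{-\kappa}\max(\|w\|,\trct_1)^{-\alpha_0}
\]
promotes this to $\mfm^{(\alpha_1)}_{\cone_{ij},\trct_1}(z)\le 2e^{\vare t}\trct_1^{-\kappa}(\#F_{ij})$, which matches the $k=0$ instance of (3) with $\trct_{1,0}=\trct_1$.

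\textbf{Inner induction $k\mapsto k+1$.} Assume (1)--(3) at step $k<d_{p+1}$ for some outer index $p$. First apply the smearing identity \eqref{eq: Thickening is free} (exactly as in the passage to \eqref{eq: applying thickening is free 1}) to free an integral in $r$ from a fresh factor of $\nu_\ell$. For each $h\in\supp(\nu_\ell^{(d'(p)-k-1)})$, the commutation relation \eqref{eq: commutation nth time} allows us to apply Lemma~\ref{lem: main ind lemma} with $\alpha=\alpha_{p+1}$, $\cone=\cone_{ij}$, $\mathsf{d}=(d'(p)-k-1)\ell$, and energy bound $\egbd=2e^{\vare t}(\trct_p^{-\kappa}e^{-4\dm\kappa k\ell}+1)(\#F_{ij})$ at scale $\trct_{p,k}$. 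The output is a new decomposition $\{\cone_\varsigma'\}$ with $\#F_\varsigma'\ge \beta^{4\dm+6}(\#F_{ij})$ and
\[
\mfm^{(\alpha_{p+1})}_{\cone_\varsigma',\trct_{p,k+1}}(z)\le e^{-{\imp}_{p+1}\ell/2}\egbd + e^{\vare t}(\#F_\varsigma'),
\]
since the new scale produced by Lemma~\ref{lem: main ind lemma} is $e^{\dm\ell}\trct_{p,k}=\trct_{p,k+1}$.

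\textbf{Closing the inner step.} Using $\#F_{ij}\le \beta^{-4\dm-6}(\#F_\varsigma')$ together with ${\imp}_{p+1}\ge 9\dm\kappa$ from \eqref{eq: alpha'-p} and $\beta=e^{-\vare' t}$ with $\vare'=10^{-6}\dm^{-1}d_{\rm fn}^{-2}$ (so $\beta^{-4\dm-6}$ is absorbed by a small fraction of the gain from $e^{-{\imp}_{p+1}\ell/2}$), one obtains $e^{-{\imp}_{p+1}\ell/2}\beta^{-4\dm-6}\le e^{-4\dm\kappa\ell}$, and hence
\[
e^{-{\imp}_{p+1}\ell/2}\egbd\le 2e^{\vare t}(\trct_p^{-\kappa}e^{-4\dm\kappa(k+1)\ell}+e^{-4\dm\kappa\ell})(\#F_\varsigma'),
\]
which is bounded by $2e^{\vare t}(\trct_p^{-\kappa}e^{-4\dm\kappa(k+1)\ell}+1)(\#F_\varsigma')$ after absorbing the residual edge term $e^{\vare t}(\#F_\varsigma')$. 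The cardinality bound in (2) propagates because $\#F_\varsigma'\ge \beta^{4\dm+6}(\#F_{ij})\ge \beta^{(4\dm+6)(d(p)+k+1)}e^{0.99t}$.

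\textbf{Outer step $p\mapsto p+1$.} At the end of the inner induction ($k=d_{p+1}$) one has $\trct_{p,d_{p+1}}=e^{d_{p+1}\dm\ell}\trct_p=\trct_{p+1}$ and, by \eqref{eq: dp delta_p}, $\trct_p^{-\kappa}e^{-4\dm\kappa d_{p+1}\ell}\le 1$, so the bound in (3) collapses to $\mfm^{(\alpha_{p+1})}_{\cone_{ij},\trct_{p+1}}(z)\le 4e^{\vare t}(\#F_{ij})$. Re-promoting the exponent via the trivial estimate $\max(\|w\|,\trct_{p+1})^{-\alpha_{p+2}}\le \trct_{p+1}^{-\kappa}\max(\|w\|,\trct_{p+1})^{-\alpha_{p+1}}$ gives the $k=0$ case at level $p+1$, closing the outer induction.

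\textbf{Main obstacle.} The argument is essentially routine once Lemma~\ref{lem: main ind lemma} is in hand; the only delicate point is the bookkeeping needed to ensure that the multiplicative losses $\beta^{-(4\dm+6)}$ per step and the additive edge terms $e^{\vare t}(\#F)$ never overwhelm the exponential gain $e^{-{\imp}_{p+1}\ell/2}$. This is precisely why the parameters have been fixed so that $\vare'$ is chosen super-polynomially smaller than $\vare$, and why \eqref{eq: alpha'-p} (which itself requires the projection-theorem input of Theorem~\ref{thm: proj thm}) is available for all intermediate $\alpha_p\le 2\dm+1-10\kappa\dm$.
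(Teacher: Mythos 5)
Your overall strategy is exactly the paper's: double induction (outer on $p$, inner on $k$) driven by Lemma~\ref{lem: main ind lemma}, with an $O(\trct^{-\kappa})$ promotion at each outer step to raise the exponent by $\kappa$, and with the quantitative bookkeeping $\vare'\ll\vare$ absorbing the per-step losses. So structurally this is correct.

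However, there is a systematic off-by-one error in your $\alpha$-indexing that is not merely cosmetic. At the inner step you write ``apply Lemma~\ref{lem: main ind lemma} with $\alpha=\alpha_{p+1}$,'' with gain $e^{-\imp_{p+1}\ell/2}$, and you deduce a bound on $\mfm^{(\alpha_{p+1})}_{\cone'_\varsigma,\trct_{p,k+1}}$. But your induction hypothesis at step $(p,k)$ is a bound on $\mfm^{(\alpha_p)}_{\cone_{ij},\trct_{p,k}}$, not on $\mfm^{(\alpha_{p+1})}$. Lemma~\ref{lem: main ind lemma}, applied at exponent $\alpha_{p+1}$, requires as input an $\alpha_{p+1}$-energy bound; converting the available $\alpha_p$-bound to an $\alpha_{p+1}$-bound at scale $\trct_{p,k}$ costs a factor $\trct_{p,k}^{-\kappa}=e^{k\dm\kappa\ell}\trct_p^{-\kappa}$, which grows with $k$ and would destroy the contraction you need. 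The paper instead applies Lemma~\ref{lem: main ind lemma} with $\alpha=\alpha_p$ throughout the inner loop (hence with gain $e^{-\imp_p\ell/2}$, not $e^{-\imp_{p+1}\ell/2}$), so that the output is again in terms of $\mfm^{(\alpha_p)}$ and matches (3) at step $(p,k+1)$ directly. The one-time promotion $\alpha_p\to\alpha_{p+1}$ is performed only after $d_p$ inner steps, once the $e^{-4\dm\kappa d_p\ell}$ contraction has absorbed the initial $\trct_p^{-\kappa}$, at which point the only cost is a single fresh factor $\trct_{p+1}^{-\kappa}$ (this is exactly \eqref{eq: dp delta_p}). Relatedly, you close the inner loop at $k=d_{p+1}$ and assert $\trct_{p,d_{p+1}}=\trct_{p+1}$; from the definitions $\trct_{p,k}=e^{k\dm\ell}\trct_p$ and $\trct_{p,d_p}=\trct_{p+1}$, so the loop should run for $d_p$ steps (the ``$\leq d_{p+1}$'' in the displayed statement appears to be a slip in the source, but the proof in the paper unambiguously runs $d_p$ inner steps). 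Replace $\alpha_{p+1}\mapsto\alpha_p$, $\imp_{p+1}\mapsto\imp_p$, and $d_{p+1}\mapsto d_p$ throughout your inner and outer steps and the argument lines up with the paper.
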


\begin{proof}
The proof is similar to the proof of Lemma~\ref{lem: main lemma for alpha=m}, and is completed by induction on $p$ and $k$. 
Indeed the case $p=1$ and $k=0$ follows from~\eqref{eq: apply main lemma for alpha=m} and properties \ref{(D-1)}, \ref{(D-2)}, \ref{(D-3)}.
Indeed, the assertions in (1) and (2) in this lemma \ref{(D-1)} and \ref{(D-2)} are the same. To see (3) in the lemma follows    
from~\ref{(D-3)}, note that~\eqref{eq: mfm final estimate for alpha=m} implies 
\[
\mfm_{\cone_{ij},0}^{(\alpha_0)}(z)\leq 2e^{\vare t}\cdot (\# F_{ij}) 
\]
Recall that $\alpha_1=\alpha_0+\kappa$ and $\delta_1=\delta_{1,0}=e^{-0.99 t}$. Thus
\be\label{eq: base case alpha=m+1}
\begin{aligned}
\mfm_{\cone_{ij},\trct_{1,0}}^{(\alpha_1)}(z)&\leq \delta_1^{-\kappa}\sum \max(\|w\|, \delta_1)^{-\alpha_0}\\
&\leq 2e^{\vare t}\cdot \delta_1^{-\kappa} \cdot (\# F_{ij})
\end{aligned}
\ee
as it is claimed in part~(3) for $p=1$ and $k=0$. 

Fix some $p$ and assume now the statement for some $0\leq k<d_{p+1}$: 
\begin{multline*}
\int\!\varphi(a_{\tau} u_s h z)\diff\!\mu_{\cone_i}(z)\diff\!\nu_\ell^{(d_{\rm fn})}(h)=\\
\sum_{j}c_{ij}\!\int\!\!\int \varphi(a_{\tau} u_s h z)\diff\!\mu_{\cone_{ij}}(z)\diff\!\nu_\ell^{(d'(p)-k)}(h)+ O(\Lip(\varphi)\beta^{\star})
\end{multline*} 
and properties (1), (2), and (3) above hold. 

To obtain desired assertions for $k+1$, first note that~\eqref{eq: Thickening is free} implies 
\begin{multline}\label{eq: applying thickening is free 2}
\int\!\!\int \varphi(a_{\tau} u_s h z)\diff\!\mu_{\cone_{ij}}(s)\diff\!\nu_\ell^{(d'(p)-k)}(h)=\\
\int\!\!\int_0^1\!\!\int \varphi(a_{\tau}u_{s}h a_\ell u_{r}z)\diff\!\mu_{\cone_{ij}}\!\diff\!r\diff\!\nu_\ell^{(d'(p)-k-1)}(h)+O(\beta^\star\Lip(\varphi))
\end{multline}

For every $h\in\supp\Bigl(\nu_\ell^{(d'(p)-k)}\Bigr)$ we have 
\[
a_{\tau}u_{s}h=a_{\tau+(d'(p)-k-1)\ell}u_{s'},\quad\text{ for some $|s'|\leq 2$}.
\] 
Thus, for all $|s|\leq 2$ and $h\in\supp\Bigl(\nu_\ell^{(d'(p)-k-1)}\Bigr)$, 
Lemma~\ref{lem: main ind lemma} applied with $\cone=\cone_{ij}$ 
for $\alpha=\alpha_p$ and $\mathsf d=(d'(p)-k-1)\ell$ and $s'$ as above, gives 
\begin{multline}\label{eq: applying main lemma 2}
\int_0^1\!\!\int \varphi(a_{\tau}u_{s}h a_\ell u_{r}z)\diff\!\mu_{\cone_{ij}}(z)\diff\!r=\\ \sum_{\varsigma}c'_{\varsigma}\!\int\!\!\varphi(a_{\tau} u_sh z)\diff\!\mu_{\cone_{\varsigma}'}(z)+ O(\Lip(\varphi)\beta^{\star}),
\end{multline}
where $0\leq c'_{\varsigma}\leq 1$, $\sum c'_{\varsigma}=1+O(\beta^\star)$, and both of the following hold

\begin{itemize}
\item For all $\varsigma$, $F'_{\varsigma}\subset B_\rfrak(0,\beta)$ and $\cone'_\varsigma=\coneH.\{\exp(w)y'_{\varsigma}: w\in F'_{\varsigma}\}\subset X_\eta$, moreover 
\be\label{eq: improve dim energy est >alpha use 2}
\beta^{4\dm+6}\cdot (\#F_{ij})\leq \#F'_\varsigma\leq \#F_{ij}.
\ee
\item For all $\varsigma$ and all $z\in\coneH.\{\exp(w)y_{i}: w\in F'_{\varsigma}\}$, we have 
\[
\mfm_{\cone'_{\varsigma}, \trct_{p,k+1}}^{(\alpha_p)}(z)\leq e^{-{\imp}_p\ell/2}\cdot \Bigl(2e^{\vare t}\cdot (\trct_p^{-\kappa} e^{-4\dm\kappa k\ell}+1)\cdot (\# F_{ij})\Bigr)+e^{\vare t}\cdot (\#F'_{\varsigma})
\]
where, $\trct_{p,k+1}= e^{\dm \ell}\delta_{p,k}$.
\end{itemize}
Using~(2) in the inductive hypothesis~\eqref{eq: improve dim energy est >alpha use 2}, we have 
\begin{align*}
\beta^{(4\dm+6)(d(p)+k+1)}e^{0.99t}&=\beta^{4\dm+6}\beta^{(4\dm+6)(d(p)+k)}e^{0.99t}\\
&\leq  \beta^{4\dm+6}\cdot (\#F_{ij})\leq \#F'_\varsigma\leq \#F_{ij}\leq e^t.
\end{align*}
Moreover, recall from~\eqref{eq: alpha'-p} that $\alpha_p'>9\dm\kappa$, also recall that $\beta=e^{-\vare' t}$ where 
$\vare'=10^{-6}\dm^{-1}d_{\rm fin}^{-2}$. Using~\eqref{eq: improve dim energy est >alpha use 2}, thus 
\begin{multline*}
e^{-{\imp}_p\ell/2}\!\cdot\! \Bigl(2e^{\vare t}\cdot (\trct_p^{-\kappa} e^{-{4\dm\kappa}k \ell}+1)\cdot (\# F_{ij})\Bigr)\\
\leq e^{-{\imp}_p\ell/2}\Bigl(2e^{\vare t}\cdot (\trct_p^{-\kappa} e^{-{4\dm\kappa}k\ell}+1)\cdot \beta^{-4\dm-6}\cdot (\#F'_{\varsigma})\Bigr)\\
\leq \trct_p^{-\kappa} e^{-{4\dm\kappa}(k+1)\ell}\cdot (\#F'_{\varsigma})
\end{multline*}
This and the above estimates imply that 
\[
\mfm_{\cone'_{\varsigma}, \trct_{p,k+1}}^{(\alpha_p)}(z)\leq (\trct_p^{-\kappa} e^{-{4\dm\kappa}(k+1)\ell}+e^{\vare t})\cdot (\#F'_{\varsigma})
\]
where, $\trct_{p,k+1}= e^{\dm \ell}\delta_{p,k}$.

Therefore, integrating~\eqref{eq: applying main lemma 2} over $h\in\supp(\nu_\ell^{(d'(p)-k-1)})$, 
the proof of the inductive step.

After $d_p$ steps, thus, one obtains  
\[
\mfm_{\cone'_{\varsigma}, \trct_{p,d_p}}^{(\alpha_p)}(z)\leq (\trct_p^{-\kappa} e^{-{4\dm\kappa}d_p\ell}+e^{\vare t})\cdot (\#F'_{\varsigma}).
\]
Since $\trct_{p,d_p}=\trct_{p+1}$ and $\trct_p^{-\kappa} e^{-{4\dm\kappa}d_p\ell}\leq 1$, see~\eqref{eq: dp delta_p}, we conclude that 
\be\label{eq: lemma with alphap dp}
\mfm_{\cone'_{\varsigma}, \trct_{p+1}}^{(\alpha_p)}(z)\leq 2e^{\vare t}\cdot (\#F'_{\varsigma}).
\ee
Using $\alpha_{p+1}=\alpha_p+\kappa$, the above yields,
\[
\mfm_{\cone'_{\varsigma}, \trct_{p+1}}^{(\alpha_{p+1})}(z)\leq 2e^{\vare t}\cdot\trct_{p+1}^{-\kappa}\cdot (\#F'_{\varsigma}).
\]
Therefore, we may repeat the above for $0\leq k<d_{p+1}$. 

This completes the proof. 
\end{proof}

\subsection*{Conclusion of the proof of Proposition~\ref{propos: imp dim main}}
Recall that 
\[
\kappa_{\rm fn}=\tfrac{99}{100} \times (\tfrac{3}{4})^{p_{\rm fn}}\quad\text{and}\quad\vare=\kappa\cdot \kappa_{\rm fn}.
\] 
Also note that it follows from $\trct_{p,d_p}=\trct_p^{3/4}=\trct_{p+1}$ and $\trct_1=e^{-0.99t}$ that 
\[
\delta_{p_{\rm fn}, d_{p_{\rm fn}}}=\trct_{p_{\rm fn}}^{3/4}=e^{-\kappa_{\rm fn}t}
\]
Applying Lemma~\ref{lem: main lemma for alpha=m} with $\alpha_{p_{\rm fn}}$ and $k=d_{p_{\rm fn}}$, hence, we conclude 
\begin{multline}\label{eq: apply main lemma for alpha=m+1}
\int\!\varphi(a_{\tau} u_s h z)\diff\!\mu_{\cone_i}(z)\diff\!\nu_\ell^{(d_{\rm fn})}(h)=\\
\sum_{j}c_{ij}\!\int\!\!\int \varphi(a_{\tau} u_s h z)\diff\!\mu_{\cone_{ij}}(z)+ O(\Lip(\varphi)\beta^{\star})
\end{multline}
where $\cone_i$ is any set appearing in~\eqref{eq: initial dim convex comb}, and all the following are satisfied 
\begin{enumerate}
\item $0\leq c_{ij}\leq 1$ and $\sum c_{ij}=1+O(\beta^\star)$. 
\item $\cone_{ij}=\coneH.\{\exp(w)y_{ij}: w\in F_{ij}\}\subset X_\eta$ and 
\[
e^{0.98t}\leq \#F_{ij} \leq e^{t}\qquad\text{for all $j$},
\]
where we used $e^{0.98t}\leq \beta^{(4\dm+6)d(p_{\rm fn})}e^{0.99t}\leq \#F_{ij}\leq e^t$.
\item For all $j$ and all $z\in \cone_{ij}$ 
\[
\mfm_{\cone_{ij},\trct}^{(2\dm+1-20\kappa)}(z)\leq 2e^{\vare t}\cdot (\# F_{ij}) \qquad\text{where $\trct=e^{-\kappa_{\rm fn}t}$},
\]
see~\eqref{eq: lemma with alphap dp} and note that $\alpha_{p_{\rm fn}}\geq 2\dm+1-20\kappa$ 
\end{enumerate}
In view of (1), (2), and (3), Proposition~\ref{propos: imp dim main} follows from~\eqref{eq: apply main lemma for alpha=m+1} and~\eqref{eq: initial dim convex comb}. 
\qed 


\section{From large dimension to equidistribution}\label{sec: large to equi}

The main result of this section is Proposition~\ref{prop: high dim to equid} which will be used in the final step of our proof of Theorem~\ref{thm: main}.  

Let $0<\ref{k:mixing}\leq 1$ be the constant given by Proposition~\ref{prop: 1-epsilon N} ---
this constant is closely related to the spectral gap (or mixing rate) in $G/\Gamma$, c.f.~\eqref{eq: exp mixing}. 
Throughout this section, let $\kappa=(\frac{\ref{k:mixing}}{40\dm})^2$ and $p_{\rm fn}=\lceil\frac{4\dm^2+4\dm}{\kappa(2\dm+1)}\rceil-10$. Let 
\[
\kappa_{\rm fn}=\tfrac{99}{100}\times (\tfrac{3}{4})^{p_{\rm fn}}\quad\text{and}\quad \vare=\kappa\cdot \kappa_{\rm fn}
\] 
We also recall that $\beta=e^{-\vare' t}$ and $\eta^2=\beta$ where $0<\vare'<10^{-6}\vare^{4}$. Let 
\[
\alpha=2\dm+1-20\kappa.
\]

\begin{propos}\label{prop: high dim to equid}
The following holds for all large enough $t$. 
Let $F\subset B_\rfrak(0,\beta)$ be a finite set with $\#F\geq \nuni^{0.9t}$. 
Let 
\[
\cone=\coneH.\{\exp(w)y: w\in F\}\subset X_\eta
\]
be equipped with an admissible measure $\mu_\cone$, see~\S\ref{sec: cone and mu cone}. 
Assume further that the following is satisfied: For all $z=h\exp(w)y$ with $h\in\overline{\coneH\setminus\partial_{10\beta}\coneH}$, 
\be\label{eq: dim m+1 equi prop} 
\mfm_{\cone, \trct_0}^{(\alpha)}(e,z)\leq 2\nuni^{\vare t}\cdot (\#F)\qquad\text{where $\trct_0=e^{-\kappa_{\rm fn}t}$.}
\ee 
Let $\tau$ be a parameter in the range $\frac{\kappa_{\rm fn}t}{8\dm}\leq \tau\leq \frac{\kappa_{\rm fn}t}{4\dm}$. Then 
\[
\bigg|\int_0^1\!\!\int\varphi(a_\tau u_rz)\diff\!\mu_{\cone}(z)\diff\!r-\int \varphi\diff\! m_X\biggr|\ll \Sob(\varphi)\beta^{\star}
\]
for all $\varphi\in C_c^\infty(X)$.
\end{propos}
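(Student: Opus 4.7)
The plan is to extract from the hypothesized Margulis-function bound a dimensional estimate on the uniform measure $\mu_F=\tfrac{1}{\#F}\sum_{w\in F}\delta_w$, recast the target integral in the two-scale form appearing in Lemma~\ref{lem: proj and thickening}, and then apply that lemma directly.

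First, I would translate~\eqref{eq: dim m+1 equi prop} into a dimension bound on $\mu_F$. At a typical non-boundary point $z=\exp(w)y$ with $w\in F$, Lemma~\ref{lem: margIz energy est} together with the injectivity of $(h,w')\mapsto h\exp(w')y$ on $\coneH\times F$ shows that $I_\cone(e,z)$ is (via BCH) bi-Lipschitz to $\{w'-w:w'\in F\setminus\{w\}\}$, so the hypothesis gives $\sum_{w'\neq w}\max(\|w'-w\|,\trct_0)^{-\alpha}\ll e^{\vare t}\cdot\#F$. A standard layer-cake argument then produces $\mu_F(B_\rfrak(w,\delta))\ll e^{\vare t}\delta^\alpha$ for every $\delta\geq\trct_0$, and since $\alpha=2\dm+1-20\kappa$ and $\vare=\kappa\kappa_{\rm fn}$,
\[
\mu_F(B_\rfrak(w,\delta))\leq Ce^{\vare t}\cdot\trct_0^{-20\kappa}\delta^{2\dm+1}=Ce^{21\vare t}\delta^{2\dm+1}\qquad(\delta\geq\trct_0).
\]
Thus $\mu_F$ meets the hypothesis of Lemma~\ref{lem: proj and thickening} with $\egbd=Ce^{21\vare t}$, $\varrho=\beta$, and $\delta_0=\trct_0$.

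Next, I would reformulate the left-hand side. Set $\ell_1=\tfrac{22\vare}{\ref{k:mixing}}t$ and $\ell_2=\tau-\ell_1$, and decompose a generic $z\in\cone$ as $z=u^-_sa_du_{r_2}\exp(w)y$ with $|s|,|d|\leq\beta$ and $r_2\in[0,\eta]$. Using $a_\tau u_r=a_{\ell_1}u_{re^{\ell_2}}a_{\ell_2}$ and $a_{\ell_2}u^-_s=u^-_{se^{-\ell_2}}a_{\ell_2}$, the $u^-$-component contracts to size $\leq\beta e^{-\ell_2}$, which together with the $O(\beta)$ perturbation of the $a$-parameter can be absorbed into $O(\beta^\star\Sob(\varphi))$. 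Partitioning $r\in[0,1]$ into $\lceil e^{\ell_2}\rceil$ subintervals of length $e^{-\ell_2}$, writing $re^{\ell_2}=k+r_1$ with $r_1\in[0,1]$, and applying $u_ka_{\ell_2}=a_{\ell_2}u_{ke^{-\ell_2}}$ followed by $u_{ke^{-\ell_2}}u_{r_2}=u_{ke^{-\ell_2}+r_2}$, the integrand becomes $\varphi(a_{\ell_1}u_{r_1}a_{\ell_2}u_{ke^{-\ell_2}+r_2}\exp(w)y)$. Because $e^{-\ell_2}\ll\eta$ (since $\ell_2\sim\tau\gg|\log\eta|$), the intervals $[ke^{-\ell_2},ke^{-\ell_2}+\eta]$ heavily overlap and together cover $[0,1]$ with essentially constant multiplicity $\eta e^{\ell_2}$; this multiplicity cancels the $e^{-\ell_2}$ weight from the chopping and the $\eta^{-1}$ from $m_H(\coneH)^{-1}\asymp(\beta^2\eta)^{-1}$, leaving
\[
\int_0^1\!\!\int\varphi(a_\tau u_rz)\diff\mu_\cone(z)\diff r=\int_0^1\!\!\int_0^1\!\!\int\varphi\bigl(a_{\ell_1}u_{r_1}a_{\ell_2}u_{r_2'}\exp(w)y\bigr)\diff\mu_F(w)\diff r_2'\diff r_1+O(\beta^\star\Sob(\varphi)).
\]

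Finally, I would invoke Lemma~\ref{lem: proj and thickening} with these parameters. The hypotheses $\ref{k:mixing}\ell_2\geq\max(\ell_1,|\log\eta|)$ and $4\dm\ell_2\leq|\log\trct_0|=\kappa_{\rm fn}t$ are verified once one notes that $\kappa=(\ref{k:mixing}/40\dm)^2$ makes $\ell_1\ll\ell_2\approx\tau\in[\kappa_{\rm fn}t/(8\dm),\kappa_{\rm fn}t/(4\dm)]$ while $\vare'<10^{-6}\vare^4$ renders $|\log\eta|=\vare't/2$ negligible. The lemma's error term $\Sob(\varphi)(\varrho^\star+\eta+\egbd^{1/2}\varrho^{-3/2}e^{-\ref{k:mixing}\ell_1})$ reduces to $\ll\Sob(\varphi)\beta^\star$, since $\egbd^{1/2}e^{-\ref{k:mixing}\ell_1}\leq e^{10.5\vare t-22\vare t}=e^{-11.5\vare t}$ easily dominates $\beta^{-3/2}=e^{3\vare't/2}$. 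The main obstacle is in the middle step: meticulously tracking the commutation distortions, the chopping of the $u_r$-integral into length-$e^{-\ell_2}$ pieces, and the overlap of the resulting short $r_2'$-intervals, so that all boundary and discretization errors aggregate into $O(\beta^\star\Sob(\varphi))$; the other two steps are standard.
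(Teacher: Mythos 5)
Your proposal is essentially correct, but it replaces the paper's central organizational trick by a different (more hands-on) one; the comparison is worth spelling out.

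Where the two arguments agree: both start by converting the Margulis-function hypothesis into a discretized dimension bound of the form $\mu(B(w,\delta))\ll e^{21\vare t}\delta^{2\dm+1}$ for $\delta\geq\trct_0$, both split $\tau=\ell_1+\ell_2$, and both finish by plugging into Lemma~\ref{lem: proj and thickening}. Your parameter choice $\ell_1=\tfrac{22\vare}{\ref{k:mixing}}t$ differs from the paper's $\ell_1=\ref{k:mixing}\ell_2=\ref{k:mixing}\tau/(1+\ref{k:mixing})$, but both satisfy the hypotheses of Lemma~\ref{lem: proj and thickening} and both make the error term $\egbd^{1/2}\beta^{-3/2}e^{-\ref{k:mixing}\ell_1}$ negligible, so this is a benign difference.

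Where the routes diverge is the middle step. The paper first performs a clean two-line F{\o}lner identity $\int_0^1\varphi(a_\tau u_rz)\,\diff r=\int_0^1\int_0^1\varphi(a_{\ell_1}u_{r_1}a_{\ell_2}u_{r_2}z)\,\diff r_2\diff r_1+O(e^{-\ell_2})$, and then---crucially---re-centres $\cone$ at a point $z=\exp(v)y$ and uses Fubini to \emph{reverse the fibration}: it writes $\hat\mu=\int\hat\mu^\sfh\,\diff\pi_*\hat\mu(\sfh z)$, so the $\coneH$-coordinate $\sfh$ becomes part of the basepoint $\sfh z\in X_\eta$ and the $\rfrak$-coordinate $w\in\margI_\cone(\sfh z)$ is integrated as a fibre measure $\hat\mu^\sfh$. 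This is precisely what lets the paper invoke Lemma~\ref{lem: proj and thickening} without ever commuting any $H$-part of $\cone$ past the expanding factors $a_{\ell_1}u_{r_1}a_{\ell_2}u_{r_2}$. You instead keep the basepoint $y$, chop the outer $r$-integral into $\lceil e^{\ell_2}\rceil$ pieces and count the overlap with the $u_{r_2}$-extent of $\coneH$ to synthesize the $r_2'\in[0,1]$ integral. This works, but it does force you to commute the $u^-_s a_d$ part of $\coneH$ out of the way, which is exactly the bookkeeping the paper's fibration trick avoids.

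One concrete warning about the step you flag as the main obstacle: the phrase ``the $u^-$-component contracts to size $\leq\beta e^{-\ell_2}$, which together with the $O(\beta)$ perturbation of the $a$-parameter can be absorbed'' is not literally a Lipschitz estimate in place. If you try to compare $a_{\ell_1}u_{re^{\ell_2}}u^-_{se^{-\ell_2}}a_{\ell_2+d}u_{r_2}$ to $a_{\ell_1}u_{re^{\ell_2}}a_{\ell_2}u_{r_2}$ by computing the ratio, the commutator $u_{re^{\ell_2}}a_du_{-re^{\ell_2}}$ has a $u$-component of size $\asymp\beta e^{\ell_2}$, and after conjugating by $a_{\ell_1}$ it is still of size $\asymp\beta e^{\ell_2-\ell_1}\gg1$ with your choice of $\ell_1$. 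The correct way to carry out the absorption is to commute the small factors $u^-,a$ all the way to the \emph{front} of the word, i.e.\ write $a_{\ell_1}u_{re^{\ell_2}}u^-_{se^{-\ell_2}}a_{\ell_2+d}u_{r_2}=u^-_{s''}a_{d''}\,a_{\ell_1}u_{\hat re^{\ell_2}}a_{\ell_2}u_{r_2}$ with $|s''|,|d''|\ll\beta$ and $\hat r=re^{-d}/(1+rs)$; by right-invariance of the metric this gives a genuine $O(\beta)$ Lipschitz error, but it also induces a $(s,d)$-dependent change of variable $r\mapsto\hat r$ with Jacobian $1+O(\beta)$, which must be tracked before the chopping. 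Once that is done carefully, your argument closes; but it is this kind of bookkeeping that the paper's re-fibration $\hat\mu=\int\hat\mu^\sfh\,\diff\pi_*\hat\mu$ is designed to sidestep.

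Finally, one small imprecision: the admissible measure $\mu_\cone$ restricted to the transversal $F$-directions is not the uniform measure $\mu_F$ but only comparable to it up to the factor $\adm$ from \S\ref{sec: cone and mu cone}; this multiplies $\egbd$ by an $X$-dependent constant and is harmless, but worth stating. With these points attended to, your route gives the same conclusion with the same error $O(\beta^\star\Sob(\varphi))$.
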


\begin{proof}
The proof of Proposition~\ref{prop: high dim to equid}, which will be completed in a few steps, is based on Lemma~\ref{lem: proj and thickening}, and in turn on Proposition~\ref{prop: 1-epsilon N}.

\subsection*{Folner property}
Let $\tau$ be as in the statement, and write $\tau=\ell_1+\ell_2$ where  
\be\label{eq: tau and ell-i}
\ell_2=\tau/(1+\ref{k:mixing})\quad\text{and}\quad \ell_1=\ref{k:mixing}\ell_2;
\ee
In particular, $4\dm\ell_2\leq 4\dm\tau\leq \kappa_{\rm fn}t=|\log\delta_0|$ and $\ell_2\geq \tau/2\geq |\log\eta|/\ref{k:mixing}$, 
where the parameter $\eta$ is fixed above, see Lemma~\ref{lem: proj and thickening} for these choices.   

For any $\varphi\in C_c^\infty(X)$, we have 
\begin{multline}\label{eq: int decomp 1}
\int_0^1\!\!\int\varphi(a_\tau u_rz)\diff\!\mu_{\cone}(z)\diff\!r=\\
\int_{0}^1\!\!\int_{0}^1\!\!\int \varphi(a_{\ell_1}u_{r_1}a_{\ell_2}u_{r_2}z)\diff\!\mu_{\cone}(z)\diff\!r_2\diff\!r_1
+O(e^{-\ell_2}\Lip(\varphi))
\end{multline}
where the implied constant depends on $X$. 

In view of~\eqref{eq: int decomp 1}, the proof of Proposition~\ref{prop: high dim to equid} is reduced to investigating the following 
\[
\int_{0}^1\!\!\int_{0}^1\!\!\int \varphi(a_{\ell_1}u_{r_1}a_{\ell_2}u_{r_2}z)\diff\!\mu_{\cone}(z)\diff\!r_2\diff\!r_1.
\]

\subsection{Conditional measures of $\mu_\cone$}\label{sec: conditional mu cone}
Recall that $\cone=\coneH.\{\exp(w)y:w\in F\}$. Fix some $v\in F$ and let $z=\exp(v)y$. Then  
\be\label{eq: chage y to z}
\begin{aligned}
\sfh\exp(w)y&=\sfh\exp(w)\exp(-v)\exp(v)y\\
&=\sfh\sfh_w\exp(v_w)z
\end{aligned}
\ee
where $\|\sfh_w-I\|\ll\beta^2$ and $\frac12\|w-v\|\leq \|v_w\|\leq 2\|w-v\|$, see Lemma~\ref{lem: BCH}.

For our application here, it will be more convenient 
to {\em recenter} $\cone$ from $y$ to $z$. To that end, 
note that $w\mapsto v_w$, see~\eqref{eq: chage y to z}, 
is a one-to-one map. Let $F_v=\{v_w: w\in F\}$, and let $\cconeH=\overline{\coneH\setminus\partial_{20\beta}\coneH}$. Set 
\[
\ccone:=\cconeH.\{\exp(w)z: w\in F_v\}.
\]
Then by~\eqref{eq: chage y to z} and since $\|\sfh_w-I\|\ll\beta^2$, we have $\ccone\subset\cone$; moreover, 
\[
\bar\mu_{\cone}(\cone\setminus\ccone)\ll \beta.
\] 
Thus it suffices to show the claim in the lemma with $\bar\mu_{\cone}$ replaced by 
\[
\hat\mu:=\frac{1}{\bar\mu_{\cone}(\ccone)}\bar\mu_{\cone}|_{\ccone}.
\] 

For later reference, let us also record that $\|\sfh_w-I\|\ll\beta^2$ and~\eqref{eq: chage y to z} imply that indeed
\be\label{eq: ccone in cone'}
\ccone\subset \cone':=\coneH'.\{\exp(w)y: w\in F\}
\ee
where $\coneH'=\overline{\coneH\setminus\partial_{10\beta}\coneH}$. In particular,~\eqref{eq: dim m+1 equi prop} holds for all $z\in\ccone$.

Recall that $\hat\mu$ is the probability measure proportional to 
$\sum_w\hat\mu_{w}$ where $\diff\!\hat\mu_{w}=\hat{\density}_{w}\diff\! m_H$ and 
$\hat{\density}_{w}\asymp 1$. As it was mentioned earlier the proof of Proposition~\ref{prop: high dim to equid} relies on Proposition~\ref{prop: 1-epsilon N}. To set the stage for the latter to be applicable, we will use Fubini's theorem to change the order of disintegration of $\hat\mu$ as follows. 
Let $z\in\ccone$, then  
\[
z=\sfh\exp(v)z=\exp(\Ad(\sfh)v)\sfh z\in\ccone.
\] 
Moreover, $\Ad(\sfh)v\in B_\rfrak(0,8\bar\eta\mfsc)$. Since $\bar\eta/2\leq \inj(z')\leq 2\bar\eta$ for every $z'\in\cone$, we conclude that 
\[
\Ad(\sfh)v\in\margI_{\cone}(\sfh z).
\] 
Let $\pi:\ccone\to\coneH.z$ denote the projection $z'=\sfh\exp(w)z\mapsto \sfh z$.
Using Fubini's theorem, we have  
\[
\hat\mu=\int\hat\mu^\sfh\diff\!\pi_*\hat\mu(\sfh.z),
\]
where $\hat\mu^\sfh$ denotes the conditional measure of $\hat\mu$ for the factor map $\pi$. 

Note that $\hat\mu^\sfh$ is supported on $\margI_{\cone}(\sfh z)$.
In view of the above discussion, $\diff\!\pi_*\hat\mu$ is proportional to $\hat{\density}\diff\! m_H$ restricted to the support of 
$\pi_*\hat\mu$ where $1\ll\hat{\density}\ll 1$, moreover, 
for every $w\in\supp(\hat\mu^\sfh)$,  
\be\label{eq: weight of atoms of cmu}
\hat\mu^\sfh(w)\asymp (\#F)^{-1}
\ee  
where the implied constant depends on $X$. 

Now, using Fubini's theorem, we have 
\begin{multline*}
\int_{0}^1\!\!\int_{0}^1\!\!\int \varphi(a_{\ell_1}u_{r_1}a_{\ell_2}u_{r_2}z)\diff\!\hat\mu(z)\diff\!r_2\diff\!r_1=\\
\int_{\cconeH.z}\!\!\int_{0}^1\!\!\int_{0}^1\!\!\int \varphi(a_{\ell_1}u_{r_1}a_{\ell_2}u_{r_2}\exp(w)\sfh z)\diff\!\hat\mu^\sfh(w)\diff\!r_2\diff\!r_1\diff\!\pi_*\cmu(\sfh.z).
\end{multline*}

Fix some $\sfh\in\cconeH=\overline{\coneH\setminus\partial_{20\mfsc}\coneH}$. 
The proof of the proposition is thus reduced to investigating the following 
\be\label{eq: fix h reduces to this}
\int_0^1\!\!\int_0^1\!\!\int\varphi(a_{\ell_1}u_{r_1}a_{\ell_2}u_{r_2}\exp(w)\sfh z)\diff\!\hat\mu^\sfh(w)\diff\!r_2\diff\!r_1.
\ee

\subsection*{Discretized dimension of $\cmu^\sfh$ and Lemma~\ref{lem: proj and thickening}}
Set
\[
F^\sfh:=\supp(\cmu^\sfh)=\{\Ad(\sfh)w: w\in F\}.
\] 
Note that~\eqref{eq: ccone in cone'} implies 
\[
\exp(\Ad(\sfh)w)\sfh z=\sfh\exp(w)z\in\ccone_i\subset\cone'
\]
Moreover,~\eqref{eq: dim m+1 equi prop} and Lemma~\ref{lem: margIz energy est} imply that for every $w\in F^\sfh$, 
\[
\eng_{F^\sfh, \trct}^{(\alpha)}(w)\ll 2e^{\vare t}\cdot (\#F),
\]
where $\eng$ is defined as in \S\ref{sec: proj and energy}. 
This and~\eqref{eq: weight of atoms of cmu} yield
\be\label{eq: uniform meas on F-h is regular'}
\cmu^\sfh(B(w,\delta))\ll 2e^{\vare t}\delta^\alpha\qquad\text{for all $\delta\geq \delta_0$}
\ee
where the implied constant depends only on $X$. 

Therefore, Lemma~\ref{lem: proj and thickening} applies with $\mu=\cmu^\sfh$, $\varrho=\beta$, $\delta_0=e^{-\kappa_{\rm fn}t}$, 
\[
\egbd=2\delta_0^{-20\kappa}e^{\vare t}=2e^{20\kappa\kappa_{\rm fn} t}e^{\vare t},
\]
and $\ell_1$ and $\ell_2$ as above, see~\eqref{eq: tau and ell-i}. 
By the conclusion of that lemma, thus, 
\begin{multline}\label{eq: apply equidistribution}
\int_0^1\!\!\int_0^1\!\!\int \varphi(a_{\ell_1}u_{r_1}a_{\ell_2}u_{r_2}\exp(w)\sfh z)\diff\!\cmu^\sfh(w)\diff\!r_2\diff\!r_1=\\
\int\varphi\diff\!m_X+O\Bigl(\Sob(\varphi)(\beta^\star+\eta+\egbd^{1/2}\beta^{-3/2} e^{-\ref{k:mixing}\ell_1})\Bigr)
\end{multline}
Recall now that $\tau\geq \frac{\kappa_{\rm fn}t}{8\dm}$ and $\ell_1=\ref{k:mixing}\ell_2=\frac{\ref{k:mixing}\tau}{1+\ref{k:mixing}}$.
Therefore, $\ell_1\geq \frac{\kappa_{\rm fn}\ref{k:mixing}t}{10\dm}$, and 
\[
\egbd^{1/2}\beta^{-3/2} e^{-\ref{k:mixing}\ell_1}\leq e^{10\kappa_{\rm fn}\kappa t}e^{\vare t}e^{-\frac{\kappa_{\rm fn}\ref{k:mixing}^2}{10\dm}t}\leq e^{-\frac{\kappa_{\rm fn}\ref{k:mixing}^2}{20\dm}t},
\]
where in the second to last inequality we used $2e^{\vare t/2}\beta^{-3/2}\leq e^{\vare t}$, and in last inequality, we used 
$\kappa=(\frac{\ref{k:mixing}}{40\dm})^2$ and $\vare=\kappa\cdot\kappa_{\rm fn}$. 

In view of~\eqref{eq: apply equidistribution}, thus, the proof of Proposition~\ref{prop: high dim to equid} is complete.  
\end{proof}


\section{Proof of Theorem~\ref{thm: main}}\label{sec: proof main}

The proof will be completed in some steps 
and is based on various propositions which were discussed so far.

\subsection*{Fixing the parameters}
Let $0<\ref{k:mixing}\leq 1$ be the constant given by Proposition~\ref{prop: 1-epsilon N}. 
Let $\kappa=(\frac{\ref{k:mixing}}{40\dm})^2$ and $p_{\rm fn}=\lceil\frac{4\dm^2+4\dm}{\kappa(2\dm+1)}\rceil-10$. Put 
\be\label{eq: choose theta equi sec final}
\kappa_{\rm fn}=\tfrac{99}{100}\times (\tfrac{3}{4})^{p_{\rm fn}}\quad\text{and}\quad \vare=\kappa\cdot \kappa_{\rm fn}
\ee
Let $D=D_0D_1+2D_1$ where $D_0$ is as in Proposition~\ref{prop: linearization translates} and $D_1$ is as in Proposition~\ref{prop: closing lemma}; we will always assume $D_1, D_0\geq 10\dm$. We will show the claim holds with
\be\label{eq: define a:main-3-1}
\ref{a:main-3-1}=2D_0(2\dm+1)+\dm_0+\dm_1+4,
\ee
where $\dm_0$ is as in Proposition~\ref{prop:Non-div-main} and $\dm_1$ is as in Proposition~\ref{prop: closing lemma}.

Let us also assume (as we may) that  
\be\label{eq: how big is R xi}
R\geq \max\{(10\ref{E:non-div-main})^3\inj(x_0)^{-\dm_0}, e^{\ref{E:non-div-main}}, e^{s_0}, \ref{c: linear trans}\},
\ee
see Proposition~\ref{prop: Non-div main} and Proposition~\ref{prop: linearization translates}. 

Let $T\geq R^{\ref{a:main-3-1}}$, and
suppose that Theorem~\ref{thm: main}(2) does not hold. 
That is, for every $x\in X$ with $Hx$ periodic and $\vol(Hx)\leq R$, 
\be\label{eq: main thm 2 fails}
\dist_X(x,x_0)> R^{\ref{a:main-3-1}}(\log T)^{\ref{a:main-3-1}}T^{-\dm}\geq (\log S)^{D_0}S^{-\dm}
\ee
where $S:=R^{-\ref{a:main-3-1}}T$. 

\subsection*{Folner property and random walks}
We put $d_{\rm fn}=\sum_{p=0}^{p_{\rm fn}} d_p$, where 
\[
\begin{aligned}
d_0&=\lceil \tfrac{(10D-9)(4\dm^2+2\dm)}{\vare}\rceil,\\
d_1&=\lceil\tfrac{495}{200\vare}\rceil\quad \text{and}\quad d_{p+1}=\lceil\tfrac34d_p\rceil\;\; \text{ for $0< p< p_{\rm fn}$}
\end{aligned}
\] 
Let $t=\frac{1}{D_1}\log R$, and let $\ell = \frac{\vare t}{10\dm}$. Then 
\be\label{eq: range of d1ell}
d_{\rm fn}\ell\leq (\tfrac{(10D-9)(2\dm+1)}{5}+\tfrac{99}{100\dm})t +\vare t 
\ee

We now write $\log T=t_3+t_2+t_1+t_0$ where 
\be\label{eq: def t0 t1 t2}
t_1=\dm_1t, \quad t_2=t+d_{\rm fn}\ell, \quad\text{and}\quad t_3=\tfrac{\kappa_{\rm fn}}{8\dm}t
\ee
The choice of $t_2$ is motivated by Proposition~\ref{propos: imp dim main}, the choices of $t_1$ and $t_3$ are motivated by Proposition~\ref{prop: closing lemma} and Proposition~\ref{prop: high dim to equid}, respectively. Thus
\[
\begin{aligned}
t_0&=\log T-(t_1+t_2+t_3)\\
&\geq \log T- (\dm_1+1+(\tfrac{(10D-9)(2\dm+1)}{5}+\tfrac{99}{100\dm})+\vare + \tfrac{\kappa_{\rm fn}}{8\dm})t\\
&\geq \log T -\tfrac{\dm_1+2D(2\dm+1)+2}{D_1}\log R
\end{aligned}
\]
we used~\eqref{eq: def t0 t1 t2} and~\eqref{eq: range of d1ell} in the second line, and used $t=\frac{1}{D_1}\log R$ in the third line. 
Using this and~\eqref{eq: define a:main-3-1}, we conclude that 
\be\label{eq: computation for t0}
\begin{aligned}
t_0&\geq \log T-\ref{a:main-3-1}\log R+(\dm_0+2)\log R\\
&\geq \log S+\dm_0|\log\inj(x_0)|+2\log R.
\end{aligned}
\ee
we used $R\geq \inj(x_0)^{-\dm_0}$ and $\log S=\log T-\ref{a:main-3-1}\log R$ in the last inequality.

Since $a_{\rho_1}u_ra_{\rho_2}=a_{\rho_1+\rho_2}u_{e^{-\rho_2}r}$, for any $\varphi\in C_c^\infty(X)$, we have
\begin{multline}\label{eq: proof integral 1}
\int_0^1\varphi(a_{\log T} u_rx_0)\diff\!r=O(\|\varphi\|_\infty e^{-t})\; +\\
\int_0^1\int_0^1\int_0^1\int_0^1\varphi(a_{t_3} u_{r_3}a_{t_2}u_{r_2}a_{t_1}u_{r_1}a_{t_0}u_{r_0}x_0)\diff\!r_3\diff\!r_2\diff\!r_1\diff\!r_0 
\end{multline}
where the implied constant is absolute and we used $t_0, t_1, t_2\geq t$.

\medskip

Finally, recall that $0<\vare'<10^{-6}\vare^{4}$ and we put $\beta=e^{-\vare' t}$ and $\eta^2=\beta$.

\subsection*{Improving the Diophantine condition}\label{sec:improving diophantine}
Apply Proposition~\ref{prop: linearization translates} with $S=R^{-\ref{a:main-3-1}}T$, then for all
\[
s\geq \max\{\log S,\dm_0|\log\inj(x_0)|\}+s_0,
\] 
we have the following  
\be\label{eq: apply linearization proof}
\biggl|\biggl\{r\in [0,1]: \begin{array}{c}\text{$a_{s}u_rx_0\not\in X_\eta$ or $\exists\, x$ with $\vol(Hx)\leq R$} \\ 
\text{so that }\dist_X(x, a_{s}u_rx_0)\leq R^{-D_0-1}\end{array}\biggr\}\biggr|\ll\eta^{1/\dm_0},
\ee
where we also used $\eta^{1/\dm_0}\geq R^{-1}$ and $R\geq \ref{c: linear trans}$. 

Let $J_0\subset[0,1]$ be the set of those $r_0\in[0,1]$ so that $a_{t_0}u_{r_0}x_0\in X_{\eta}$ and 
\[
\dist_X(x, a_{t_0}u_{r_0}x_0)> R^{-D_0-1}=e^{-D_1(D_0+1)t}
\] 
for all $x$ with $\vol(Hx)\leq R=e^{D_1t}$. 
Then since by~\eqref{eq: computation for t0} and~\eqref{eq: how big is R xi} we have  
\[
t_0\geq \log S+\dm_0|\log\inj(x_0)|+2\log R\geq \max(\log S,\dm_0|\log\inj(x_0)|)+s_0,
\] 
the assertion in~\eqref{eq: apply linearization proof} implies that $|[0,1]\setminus J_0|\ll\eta^{1/\dm_0}$. In consequence, 
\begin{multline}\label{eq: proof integral 2}
\int_{0}^1\varphi(a_{\log T} u_rx_0)\diff\!r=O(\|\varphi\|_\infty \eta^{1/\dm_0})\; +\\
\int_{J_0}\int_0^1\int_0^1\int_0^1\varphi(a_{t_3} u_{r_3}a_{t_2}u_{r_2}a_{t_1}u_{r_1}x(r_0))\diff\!r_3\diff\!r_2\diff\!r_1\diff\!r_0 
\end{multline}
where $x(r_0)=a_{t_0}u_{r_0}x_0$ and the implied constant depends on $X$.

\subsection*{Applying the closing lemma}
For every $r_0\in J_0$, we now apply Proposition~\ref{prop: closing lemma} 
with $x(r_0)$, $D=D_0D_1+2D_1$ and the parameter $t$. 
For any such $r_0$, we have
\[
\dist_X(x, x(r_0))> e^{-D_1(D_0+1)t} =e^{(-D+D_1)t}
\] 
for all $x$ with $\vol(Hx)\leq e^{D_1t}$. Thus Proposition~\ref{prop: closing lemma}(1) holds.
Let 
\[
J_1(r_0)=I(x(r_0))=I(a_{t_0}u_{r_0}x_0)
\] 
Then 
\begin{multline}\label{eq: proof integral 3}
\int_0^1\varphi(a_{\log T} u_rx_0)\diff\!r=O(\|\varphi\|_\infty \eta^{\frac1{2\dm_0}})\; +\\
\int_{J_0}\int_{J_1(r_0)}\int_0^1\int_0^1\varphi(a_{t_3} u_{r_3}a_{t_2}u_{r_2}x(r_0,r_1))\diff\!r_3\diff\!r_2\diff\!r_1\diff\!r_0 
\end{multline}
where $x(r_0, r_1)=a_{t_1}u_{r_1}a_{t_0}u_{r_0}x_0$ and the implied constant is absolute.

\subsection*{Improving the dimension phase}
Fix some $r_0\in J_0$, and let $r_1\in J(r_0)$. Put $x_1=x(r_0, r_1)$. 
Recall that 
\[
\mu_{t,\ell, d_{\rm fn}}=\rwm_{\ell}\conv\cdots\conv\nu_{\ell}\conv\rwm_{t}
\]
where $\rwm_\ell$ appears $d_{\rm fn}$ times in the above expression. Then 
\begin{multline}\label{eq: proof integral 4}
\int_0^1\!\!\int_0^1\varphi(a_{t_3} u_{r_3}a_{t_2}u_{r_2}x_1)\diff\!r_3\diff\!r_2=\\
\int\!\!\int_0^1\!\! \varphi(a_{t_3} u_{r_3}hx_1)\diff\!r_3 \diff\!\mu_{t,\ell, d_{\rm fn}}(h)+ O(\Lip(\varphi)e^{-\ell}),
\end{multline}
where we used $t_2=d_{\rm fn}\ell$ and~\eqref{eq: commutation nth time}, see also~\cite[Lemma 7.4]{LMW22}.  

We now apply Proposition~\ref{propos: imp dim main} with $x_1$ and $\tau=t_3$ and $s=r_{3}$. Then 
\begin{multline}\label{eq: proof integral 5}
\int \varphi(a_{t_3} u_{r_3}hx_1)\diff\!\mu_{t,\ell, d_{\rm fn}}(h)=\\\sum_{i}c_{i}\!\!\int \varphi(a_{t_3} u_{r_3}z)\diff\!\mu_{\cone_{i}}(z)+ O(\Lip(\varphi)\beta^{\star})
\end{multline}
where $0\leq c_{i}\leq 1$ and $\sum_{i}c_{i}=1-O(\beta^{\star})$ and the implied constants depend on $X$. 
Moreover, for all $i$ we have 
\[
    \cone_{i}=\coneH.\{\exp(w)y_{i}: w\in F_{i}\}\subset X_\eta
\]
with $F_{i}\subset B_\rfrak(0,\beta)$, $e^{0.98t}\leq \#F_i\leq e^{t}$, and  
\be\label{eq: proof final energy est}
\mfm_{\cone_{i}, \trct}^{(\alpha)}(z)\leq e^{2\vare t}\cdot (\#F_i)\quad\text{where $\trct=e^{-\kappa_{\rm fn}t}$ and $\alpha=2\dm+1-20\kappa$}
\ee
for all $z\in (\overline{\coneH\setminus\partial_{10\beta}\coneH}).\{\exp(w)y_{i}: w\in F_{i}\}$.   

\subsection*{From large dimension to equidistribution}
We now apply Proposition~\ref{prop: high dim to equid} with $\cone_{i}$ (see~\eqref{eq: proof final energy est} and recall that $t_3=\frac{\kappa_{\rm fn}t}{8\dm}$). Hence, 
\be\label{eq: proof integral 7}
\bigg|\int_0^1\!\!\int\varphi(a_{t_3} u_{r_3}z)\diff\!\mu_{\cone_{i}}(z)\diff\!r_3-\int \varphi\diff\! m_X\biggr|\ll \Sob(\varphi)\beta^\star
\ee
where the implied constant depends on $X$. 

Recall now that $\beta=R^{-\star}$. Thus,~\eqref{eq: proof integral 7},~\eqref{eq: proof integral 5},~\eqref{eq: proof integral 4},~\eqref{eq: proof integral 3}, \eqref{eq: proof integral 2}, and~\eqref{eq: proof integral 1}, imply
\[
\bigg|\int_0^1\varphi(a_{\log T} u_rx_0)\diff\!r-\int \varphi\diff\! m_X\biggr|\ll \Sob(\varphi)R^{-\star},
\]
where the implied constant depends on $X$. The proof is complete.
\qed

\section{Proof of Theorem \ref{thm:main unipotent}}\label{sec: proof long unipotent}
In this section, we use an argument analogous to \cite[\S16]{LMW22} to establish Theorem \ref{thm:main unipotent}. As in \cite[\S16]{LMW22} and previously noted, the proof relies on Theorem~\ref{thm: main} and linearization techniques of Dani and Margulis, albeit in their quantitative form, which were developed in~\cite{LMMS}.

\begin{lemma}[cf.\ \cite{LMW22}, Lemma 16.1]
\label{lem: unipotent linearization}
There exist $\consta\label{a: linearization 1}$, $\consta\label{a: linearization 2}$, and $\constE\label{E: unip lin}$ (depending on $X$) 
so that the following holds. Let $S,M>0$, and $0<\eta<1/2$ satisfy
\[
S\geq M^{\ref{a: linearization 1}}\quad\text{and}\quad M\geq \ref{E: unip lin}\eta^{-\ref{a: linearization 1}}.
\]
Let $x_1\in X_\eta$, and suppose there exists $\exceptional\subset \{r\in [-S,S]: u_rx_1\in X_\eta\}$
with 
\[
|\exceptional|>\ref{E: unip lin}\eta^{1/\ref{a: linearization 2}}S
\]
so that for every $r\in\exceptional$, 
there exists $y_r\in X$ with 
\[
\text{$\vol(H.y_r)\leq M\quad$ and $\quad\dist(u_{r}x_1, y_r)\leq M^{-\ref{a: linearization 1}}$}.
\]
Then one of the following holds  
\begin{enumerate}
    \item There exists $x \in G/\Gamma$ with $\vol(H.x)\leq M^{\ref{a: linearization 1}}$, 
and for every $r\in [-S,S]$ there exists $g\in G$ with $\|g\|\leq M^{\ref{a: linearization 1}}$ so that  
\[
\dist_X(u_{s}x_1, gH.x)\leq M^{\ref{a: linearization 1}}\left(\frac{|s-r|}{S}\right)^{1/\ref{a: linearization 2}}\quad\text{for all $s\in[-S,S]$.}
\] 
\item There is a parabolic subgroup $P\subset G$ and some $x \in G/\Gamma$ satisfying that  
$\vol(R_u(P).x)\leq M^{\ref{a: linearization 1}}$, and for every $r\in [-S,S]$ there exists $g\in G$ with $\|g\|\leq M^{\ref{a: linearization 1}}$ so that  
\[
\dist_X(u_{s}x_1, gR_u(P).x)\leq M^{\ref{a: linearization 1}}\left(\frac{|s-r|}{S}\right)^{1/\ref{a: linearization 2}}\quad\text{for all $s\in[-S,S]$.}
\] 
In particular, $X$ is not compact. 
\end{enumerate}
\end{lemma}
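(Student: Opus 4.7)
The plan is to invoke the sharp quantitative linearization theorem of \cite{LMMS}, which is the tool cited for this step. I would first set up the linearization framework: let $V=\bigwedge^{\dim H}\mathfrak{g}$ and $v_H\in V$ a unit vector along the line $\bigwedge^{\dim H}\Lie(H)$. Periodic $H$-orbits of volume $\leq M$ correspond bijectively to $\Gamma$-orbits of vectors in $G\cdot v_H\subset V$ admitting a representative of norm $\ll M$. Writing $x_1=g_1\Gamma$, the hypothesis that, for each $r\in\exceptional$, the point $u_rx_1$ lies within $M^{-\ref{a: linearization 1}}$ of a periodic $H$-orbit of volume $\leq M$ translates into the existence of $\gamma_r\in\Gamma$ such that the polynomial vector $\phi_{\gamma_r}(r):= g_1 u_r\gamma_r\cdot v_H$ has norm $\ll M$ and, modulo a small error, is pointwise fixed by $\Ad(H)$.

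The key observation is that, for fixed $\gamma\in\Gamma$, the map $r\mapsto\phi_\gamma(r)$ is polynomial of degree bounded in terms of $\dim V$. I would exploit the $(C,\alpha)$-good property of such polynomials, together with the combinatorial/geometric control of tubular neighborhoods of $H$-invariant subvarieties of $V$ developed in \cite{LMMS}, to promote a lower bound on the measure of the ``bad'' set into a pointwise polynomial bound on $[-S,S]$. Concretely, the inequality
\[
\Bigl|\{r\in[-S,S]:\exists\,\gamma\in\Gamma,\ \|\phi_\gamma(r)\|\leq M,\ \dist(\phi_\gamma(r),V^H)\leq M^{-\ref{a: linearization 1}}\}\Bigr|\geq \ref{E: unip lin}\eta^{1/\ref{a: linearization 2}}S
\]
allows one to extract a single $\gamma_0\in\Gamma$ such that $\|\phi_{\gamma_0}(r)\|\leq M^{\ref{a: linearization 1}}$ throughout $[-S,S]$, with a Remez-type interpolation producing the H\"older rate $(|s-r|/S)^{1/\ref{a: linearization 2}}$.

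Two outcomes then arise from the position of $\phi_{\gamma_0}(r)$ in $V$. In the first, $\phi_{\gamma_0}(r)$ stays projectively close to the closed $H$-fixed variety $V^H$; its stabilizer is a $G$-conjugate of $H$, which yields a periodic orbit $H.x$ with $\vol(H.x)\leq M^{\ref{a: linearization 1}}$ and the translation element $g(r)$ read off from the position of $\phi_{\gamma_0}(r)$ relative to $v_{H.x}$. This is case~(1). In the second, $\phi_{\gamma_0}(r)$ need only satisfy the weaker $U$-invariance, which under the principal $\SL_2$ setup of \S\ref{sec: principal SL2} forces its $A$-weight decomposition to concentrate on a highest-weight line; the stabilizer of such a highest-weight vector contains the unipotent radical $R_u(P)$ of a proper parabolic $P\subset G$, and the bounded polynomial $\phi_{\gamma_0}$ then encodes a bounded-volume $R_u(P)$-orbit, yielding case~(2). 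Since highest-weight vectors in $V\cdot\Gamma$ of bounded norm give rise to cusps, such vectors exist only when $X$ is non-compact, explaining the final parenthetical remark.

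The main obstacle is exactly the dichotomy between these two cases: showing that a polynomial trajectory which spends a long time inside the tubular neighborhood of the bounded locus in $V/\Gamma$ must in fact stay inside the tubular neighborhood of \emph{either} $V^H\cdot\Gamma$ \emph{or} the highest-weight locus $\Gamma$-orbit, with no room for an intermediate hybrid scenario. This is the sharp form of linearization supplied by \cite{LMMS}, which controls $\ref{a: linearization 1}$ and $\ref{a: linearization 2}$ effectively and delivers the H\"older continuity in $|s-r|/S$.
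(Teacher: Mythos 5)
Your Case~1 treatment follows the paper reasonably closely in spirit (linearize via exterior powers, invoke the effective linearization of~\cite{LMMS}, read off a bounded-volume $H$-orbit from the resulting vector), though you gloss over the arithmetic bookkeeping that the paper takes some care with: the passage to the simply connected $\Q$-group $\tilde\G$, the compact factor $G'$ and the Galois twists $\sigma$, and the comparison between $\vol(Hg\Gamma)$ and the height $\mathrm{ht}(\tilde\H_g)=\|\mathbf v_{\tilde H_g}\|$ (Lemma~\ref{lem: volume and height}). Also, what \cite[Cor.~7.2]{LMMS} actually hands you is not a single $\gamma_0\in\Gamma$ with $\phi_{\gamma_0}$ small on $[-S,S]$, but a $\Q$-subgroup $\tilde\H'\subset\tilde\gamma^{-1}\tilde\H^r\tilde\gamma\cap\tilde\H^r$ generated by unipotents whose primitive integral vector $\mathbf v_{\tilde H'}$ stays small; the dichotomy you need comes from the fact that such an $\tilde\H'$ sitting inside the intersection of two conjugates of (a form of) $\SL_2$ must be either the whole group or a one-dimensional unipotent. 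This is a more rigid source for the dichotomy than your description suggests.

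The genuine gap is in Case~2. You argue that when the surviving vector is only $U$-invariant, the principal-$\SL_2$ weight structure forces concentration on a highest-weight line whose stabilizer contains $R_u(P)$ for a parabolic $P$. That reasoning at best produces an $\mathbb R$-parabolic subgroup, and for the conclusion $\vol(R_u(P).x)\leq M^{\ref{a: linearization 1}}$ to even make sense, $R_u(P).x$ must be a closed finite-volume orbit, which requires $P$ to be a $\Q$-parabolic (equivalently, $R_u(P)\cap\Gamma$ to be a lattice in $R_u(P)$). Passing from ``a nilpotent integral vector $\mathbf v_{\tilde H'}$ stays uniformly small along the orbit'' to ``there is a $\Q$-parabolic $\mathbf P$ with $\mathbf v_{\tilde H'}\in\Lie(R_u(\mathbf P))$ and $\|u_rg_1\mathbf v_{R_u(P)}\|$ controlled throughout $[-S,S]$'' is exactly the content of Theorem~\ref{thm: non-div parabolic}, proved in Appendix~\ref{sec: parabolic}, and it is not a formality: it rests on an effective form of $\Q$-reduction theory and a ``generating a parabolic'' lemma (Theorem~\ref{thm: generating parabolic}). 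Your proposal does not identify this step, and without it the polynomial bound on $\phi_{\gamma_0}$ does not land you in option~(2) of the lemma. You should also note that this case automatically forces $\Gamma$ non-uniform (and hence $\tilde\G\cong\G$ over $\R$), which is used tacitly both for the identification $\mathbf v_{\tilde H'}\in\Lie(G)$ and for the final parenthetical remark.
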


\subsection*{Arithmetic groups}
Recall that $G=\G(\R)$ where $\G$ is an absolutely almost simple $\R$-group, and $H=\H(\R)^\circ$ where $\H\simeq\SL_2$ or $\PGL_2$ is an $\R$-subgroup of $\G$ and the connected component is considered as a Lie group.

Since $\Gamma$ is an arithmetic lattice, there exists a semisimple simply connected $\Q$-almost simple $\Q$-group 
$\tilde\G\subset\SL_{\mathsf D}$, for some $\mathsf D$, and an epimorphism 
\[
\rho:\tilde\G(\R)\to \G(\R)=G
\]
of $\R$-groups with compact kernel so that $\Gamma$ is commensurable with $\rho(\tilde\G(\Z))$. 
Moreover, since $\tilde\G$ is simply connected, we can identify $\tilde\G(\bbr)$ with $G\times G'$ where $G'=\ker(\rho)$ is compact.

We are allowed to choose the parameter $M$ in the lemma to be large depending on $\Gamma$, therefore, by passing to a finite index subgroup, we will assume that $\Gamma\subset \tilde\Gamma:=\rho(\tilde\G(\Z))$, where $\tilde\G(\Z)=\tilde\G(\R)\cap\SL_{\mathsf D}(\Z)$. 

Thus, every $\gamma\in \Gamma$ lifts uniquely to $(\gamma,\sigma(\gamma))\in\tilde\Gamma$, where $\sigma$ is (a collection of) Galois automorphisms.  For every $g\in G$, put 
\[
\hat g=(g,1)\in G\times G'.
\]

If $g\in G$ is so that $Hg\Gamma$ is periodic, let $\Delta_g=\Gamma\cap g^{-1}Hg$ and let $\tilde\Delta_g=\rho^{-1}(\Delta_g)\cap\tilde \Gamma$. Let $\tilde\H_g$ be the Zariski closure of $\tilde\Delta_g$. 
Then $\tilde\H_g$ is a semisimple $\bbq$-subgroup, 
and the restriction of $\rho$ to $\tilde\H_g$ surjects onto $g^{-1}\H g$. 

Let $\tilde H_g=\tilde\H_g(\R)^\circ$, as a Lie group, then  
\[
\overline{\hat g^{-1}\hat H\hat g\tilde\Gamma}=\tilde H_g\tilde\Gamma
\]  

\subsection*{Lie algebras and the adjoint representation}
Recall that $\Lie(G)=\gfrak$ and $\Lie(H)=\hfrak$, which are considered as $\R$-vector spaces. 
Let $v_H$ be a unit vector on the line $\wedge^3\hfrak$. Then 
\[
N_G(H)=\{g\in G: gv_H=v_H\}
\] 
which contains $H$ as a subgroup of finite index. 

Let $\tilde\gfrak=\Lie(\tilde\G(\R))$ and $\tilde\gfrak_\Z:=\tilde\gfrak\cap\sl_N(\Z)$. Then $\tilde\gfrak$ has a natural $\Q$-structure, 
and $\tilde\gfrak_\Z$ is a $\tilde\G(\Z)$-stable lattice in $\tilde\gfrak$.  

Assuming $H$ has periodic orbits in $X=G/\Gamma$, we fix 
${\mathsf g}_1,\ldots,{\mathsf g}_k$ so that $\vol(H{\mathsf g}_i\Gamma)\ll 1$ 
(the implied constant and $k$ depend on $\Gamma$) 
and that every $\tilde H_g$ is conjugate to some $\tilde H_i=\tilde H_{{\mathsf g}_i}$ in $\tilde G$. Let 
\[
{\bf v}_{i}\in \wedge^{\dim \tilde H_i}(\Lie(\tilde H_i))\subset\wedge^{\dim \tilde H_i}\tilde\gfrak
\]
be a primitive integral vector. Then $N_{\tilde G}(\tilde H_i)=\{g\in \tilde G: gv_{i}=v_i\}$,
and $\tilde H_i\subset N_{\tilde G}(\tilde H_i)$ has finite index. 
For all $i$, 
\[
{\bf v}_i=c_i \cdot \Bigl((g_i^{-1}v_H)\wedge v'_i\Bigr) \quad\text{where $v'_i\in\wedge\Lie(G')$ and $|c_i|\asymp 1$}.
\] 

More generally, if ${\bf L}\subset \tilde\G$ is a $\Q$-subgroup, 
we let ${\bf v}_L$ be a primitive integral vector on the line 
$\wedge^{\dim L}\Lie(L)\subset\wedge^{\dim L}\tilde\gfrak$ where $L=\Lbf(\R)^\circ$.  
Recall from~\cite{LMMS} the definition of the height of ${\bf L}$ 
\be\label{eq: def height v-L}
{\rm ht}(\Lbf)=\|{\bf v}_L\|.
\ee

Fix a right invariant metric on $\tilde G$ defined using 
the killing form and the maximal compact subgroup $\tilde K=K\times G'$; 
this metric induces the right invariant metric on $G$ which we fixed on p.~\pageref{d definition page}.

\begin{lemma}\label{lem: volume and height}
Let $Hg\Gamma$ be a periodic orbit, and let $\tilde\H_g$ be as above.  
Both of the following properties hold:
\[
{\rm ht}(\tilde\H_g)^\star\ll\vol(\tilde H_g\tilde\Gamma/\tilde\Gamma)\ll {\rm ht}(\tilde\H_g)^\star
\]
\[
\|g\|^{-\star}\vol(Hg\Gamma)\ll\vol(\tilde H_g\tilde\Gamma/\tilde\Gamma)\ll\|g\|^\star\vol(Hg\Gamma)
\]
\end{lemma}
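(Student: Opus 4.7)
The proof falls into two independent parts, one for each pair of inequalities. The first comparison (height versus volume in $\tilde G/\tilde\Gamma$) is the arithmetic content; the second is a geometric transfer via the compact-kernel isogeny $\rho$.

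For the first, I use that each $\tilde\H_g$ is $\tilde G$-conjugate to one of the finitely many reference subgroups $\tilde\H_i$; fix $\tilde h\in\tilde G$ with $\tilde h\tilde\H_i\tilde h^{-1}=\tilde\H_g$, so that primitivity of ${\bf v}_{\tilde\H_g}$ forces
\[
{\bf v}_{\tilde\H_g}=c\cdot\bigl(\wedge^{\dim \tilde\H_i}\Ad(\tilde h)\bigr){\bf v}_{\tilde\H_i}\qquad\text{with $|c|\asymp 1$.}
\]
For the direction $\vol(\tilde H_g\tilde\Gamma/\tilde\Gamma)\ll{\rm ht}(\tilde\H_g)^\star$, I apply reduction theory to the $\tilde\G(\Z)$-orbit of ${\bf v}_{\tilde\H_i}$ inside $\wedge^{\dim\tilde\H_i}\tilde\gfrak$ to choose $\tilde h$, modulo $N_{\tilde G}(\tilde\H_i)(\Z)$, with $\|\tilde h\|\ll{\rm ht}(\tilde\H_g)^\star$. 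Conjugation by $\tilde h$ then transports the reference covolume $\vol(\tilde H_i\tilde\Gamma/\tilde\Gamma)\asymp 1$ to $\vol(\tilde H_g\tilde\Gamma/\tilde\Gamma)$ with a polynomial Jacobian in $\|\tilde h\|$, giving the bound.

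For the reverse inequality ${\rm ht}(\tilde\H_g)^\star\ll\vol(\tilde H_g\tilde\Gamma/\tilde\Gamma)$, the strategy is to run reduction theory \emph{inside} $\tilde H_g\tilde\Gamma/\tilde\Gamma$: Minkowski-type arguments in the semisimple group $\tilde H_g$ produce a generating set of $\tilde\Delta_g:=\tilde\Gamma\cap\tilde H_g$ of operator norm $\ll\vol(\tilde H_g\tilde\Gamma/\tilde\Gamma)^\star$. By Borel density these elements are Zariski-dense in $\tilde\H_g$, so iterated commutators and polynomial manipulations of their $\Ad$-actions recover a $\Q$-basis of $\Lie(\tilde\H_g)\cap\tilde\gfrak_\Q$ with controlled denominators and numerators; wedging then yields $\|{\bf v}_{\tilde\H_g}\|\ll\vol^\star$. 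This is the main technical step — making the Borel-density argument quantitative so that no unquantifiable dependence on the particular $\tilde\H_g$ enters.

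For the second pair of inequalities, the epimorphism $\rho\colon\tilde\G(\R)\to G$ has compact kernel $G'$, and under the identification $\tilde\G(\R)\simeq G\times G'$ it restricts to a surjection $\tilde H_g\to g^{-1}Hg$ onto a finite-index subgroup of $g^{-1}Hg$. It therefore induces a bundle map $\tilde H_g\tilde\Gamma/\tilde\Gamma\to Hg\Gamma/\Gamma$ whose fibers are compact subsets of $G'$, so the two volumes agree up to the Jacobian relating $m_H$ to the pushforward of the Haar measure on $\tilde H_g$. Since the Haar measure on $\tilde H_g$ corresponds, via $\Ad(\hat g^{-1})$, to a metric on $\Lie(H)$ that differs from the fixed Killing-form metric by the factor $|\det\Ad(g)|_{\Lie(H)}|$, this Jacobian is polynomial in $\|g\|$ in both directions, which yields the claimed bounds.
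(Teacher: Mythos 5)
The paper's own ``proof'' of this lemma is a single sentence referring to \cite[Lemma 16.2]{LMW22}, so there is no in-text argument to compare against. Judged on its own terms, your outline follows the standard path and is correct in structure; the bulk of the substance is, as you say, in making the Borel-density step quantitative, and that argument (short generators of $\tilde\Delta_g$ recovering a controlled basis of $\Lie(\tilde\H_g)\cap\tilde\gfrak_\Q$) is exactly what appears in the sources behind the cited lemma.

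Two points worth tightening. First, the opening claim that primitivity of ${\bf v}_{\tilde\H_g}$ forces ${\bf v}_{\tilde\H_g}=c\,(\wedge\Ad(\tilde h)){\bf v}_{\tilde\H_i}$ with $|c|\asymp 1$ is false for a general conjugating element $\tilde h\in\tilde G$: the constant $c$ is the ratio of the norm of the primitive integral vector to $\|\wedge\Ad(\tilde h){\bf v}_{\tilde\H_i}\|$ and can be arbitrarily large or small. What is true, and what you end up using, is that once $\tilde h$ is chosen in $\tilde\G(\Z)$ via reduction theory, $\wedge\Ad(\tilde h)$ preserves $\wedge\tilde\gfrak_\Z$ and carries a primitive vector to a primitive vector, so $c=\pm1$. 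Relatedly, the reduction-theory step needs properness of the orbit map $\tilde G/N_{\tilde G}(\tilde\H_i)\to\wedge^{\dim\tilde\H_i}\tilde\gfrak$ (Borel--Harish-Chandra/Mostow closedness of $\tilde G\cdot{\bf v}_{\tilde\H_i}$); this is standard for semisimple $\tilde\H_i$, but should be stated rather than folded silently into ``reduction theory.'' Second, in the last paragraph the formula $|\det\Ad(g)|_{\Lie(H)}|$ is not well-posed since $\Ad(g)$ does not preserve $\Lie(H)$; the Jacobian comparing $\phi_* m_H$ (for $\phi(h)=g^{-1}hg$) with the Riemannian volume on $g^{-1}Hg$ is $|\det(\Ad(g^{-1})\colon\Lie(H)\to\Lie(g^{-1}Hg))|$ with respect to the fixed inner products, which is indeed $\|g\|^{\pm\star}$, so the conclusion stands but the notation should be repaired.
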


\begin{proof}
We refer the reader to~\cite[Lemma 16.2]{LMW22} and references there for the proof of this lemma. 
\end{proof}

We now proceed to prove Lemma~\ref{lem: unipotent linearization}. 
This proof follows an adaptation of the argument presented in~\cite[\S16]{LMW22}, which we recount here for the reader's convenience.

\begin{proof}[Proof of Lemma~\ref{lem: unipotent linearization}]
Given our assumption in the lemma, periodic $H$ orbits exist. Let $\tilde H_1,\ldots, \tilde H_{k}$ be defined as above.  
We introduce the constants $\ref{a: linearization 1}$ and $\ref{a: linearization 2}$ which will be chosen as sufficiently large values to be explicated later. Specifically, we will require that $\ref{a: linearization 1}>\max(A, D_2)$, $\ref{a: linearization 2}>D$ and $\ref{E: unip lin}> \max\{kE_1, \ref{c: red th}\}$ 
where $A$, $D$, and $E_1$ are as in \cite[Thm.~1.4]{LMMS} applied with 
$\{\hat u_r\}\subset\tilde G$, and $D_2$ and $\ref{c: red th}$ are as described in Lemma~\ref{lem: reduction theory}. 

Write $x_1=g_1\Gamma$, where 
$\|g_1\|\leq\ref{c: red th} \eta^{-D_2}\leq M$, see Lemma~\ref{lem: reduction theory} and our assumption in this lemma. 
For every $r\in\exceptional$, write $y_r=g(r)\Gamma$ where 
$\|g(r)\|\leq M$, indeed, for every such $r$ there exists $\gamma_r\in\Gamma$ so that 
\be\label{eq: unip lin exceptial gamma r}
 u_rg_1\gamma_r=\epsilon(r)g(r)\quad\text{and}\quad \|u_rg_1\gamma_r\|\leq M+1, 
\ee
where $\|\epsilon(r)\|\ll M^{-\ref{a: linearization 1}}$. 

For every $1\leq i\leq k$, let 
\[
\exceptional_i=\{r\in\exceptional: \text{$\tilde H^r:=\tilde H_{g(r)}$ is a conjugate of $\tilde H_i$}\}.
\]
There is some $i$ so that $|\exceptional_i|\geq |\exceptional|/k$. 
Replacing $\exceptional$ by $\exceptional_i$, we assume that $\tilde H^r$ is a conjugate of $\tilde H_i$ 
for all $r\in\exceptional$. Put $\tilde H^r=\tilde g(r)^{-1}\tilde H_i\tilde g(r)$. Then 
\[
\tilde g(r)=({\mathsf g}_i^{-1}g(r), \tilde g'(r))\in G\times G',
\]
and ${\bf v}^r:=\frac{\|{\bf v}_{\tilde H^r}\|}{\|\tilde g(r)^{-1}{\bf v}_i\|}\tilde g(r)^{-1}{\bf v}_i=\pm {\bf v}_{\tilde H^r}$. 
Moreover, we have  
\be\label{eq: vr and the norm of gr}
{\bf v}^r=c_r\cdot\Bigl((g(r)^{-1}v_H)\wedge (\tilde g'(r)^{-1}v'_i)\Bigr)\quad\text{where $|c_r|\ll {\rm ht}(\tilde\H_g)M\ll M^\star$}
\ee
where we used Lemma~\ref{lem: volume and height} to conclude ${\rm ht}(\tilde\H_g)M\ll M^\star$. 

In view of~\eqref{eq: unip lin exceptial gamma r}, we have 
\be\label{eq: unip lin exceptional gamma r 1}
\hat u_r\hat g_1(\gamma_r,\sigma(\gamma_r)).{\bf v}^r=
c_r\cdot \biggl((\epsilon(r) v_H)\wedge \Bigl((\sigma(\gamma_r)\tilde g'(r)^{-1})v'_i\Bigr)\biggr),
\ee
where $\hat g=(g,1)$ for all $g\in G$. Since $G'$ is compact, ~\eqref{eq: unip lin exceptional gamma r 1} implies  
\be\label{eq: unip lin exceptional gamma r 2}
\|\hat u_r\hat g_1(\gamma_r,\sigma(\gamma_r)).{\bf v}^r\|\leq M^{\hat A},\quad\text{for some $\hat A$.}
\ee
Let $z\in\gfrak$ be a vector so that $u_r=\exp(rz)$. 
Using~\eqref{eq: unip lin exceptional gamma r 1} and associativity of the exterior algebra, we have 
\begin{align}
\notag\|\hat z\wedge \Bigl(\hat u_r\hat g_1(\gamma_r,\sigma(\gamma_r)).{\bf v}^r\Bigr)\|&=|c_r|\Bigl\|\Bigl(z\wedge \epsilon(r) v_H\Bigr)\wedge \Bigl((\sigma(\gamma_r)\tilde g'(r)^{-1})v'_i\Bigr)\Bigr\|\\
\label{eq: unip lin exceptional gamma r 3}&\ll M^\star M^{-{\ref{a: linearization 1}}}<\eta^{A}M^{-A\hat{A}}/E_1. 
\end{align}
where we used $\|\epsilon(r)\|\ll M^{-{\ref{a: linearization 1}}}$ in the second to last inequality, $A$ and $E_1$ are as in \cite[Thm.~1.4]{LMMS}, and we choose ${\ref{a: linearization 1}}$ large enough so that the last estimate holds.

In view of~\eqref{eq: unip lin exceptional gamma r 2} and~\eqref{eq: unip lin exceptional gamma r 3}, 
conditions in \cite[Cor.~7.2]{LMMS} are satisfied. 
Hence, there exist $r\in\exceptional$, $\tilde\gamma=(\gamma, \sigma(\gamma))\in \tilde\Gamma$, and a subgroup 
\[
\tilde\H'\subset \tilde\gamma^{-1}\tilde\H^r\tilde\gamma\cap\tilde\H^r
\]
satisfying that $\tilde\H'(\bbc)$ is generated by unipotent subgroups (see~\cite[p.~3]{LMMS}) 
so that for all $r\in[-S,S]$ both of the following hold 
\begin{subequations}
\begin{align}
\label{eq: tube throughout 1}&\|\hat u_r\hat g_1{\bf v}_{\tilde H'}\|\ll M^\star\\
\label{eq: tube throughout 2}&\|\hat z\wedge (\hat u_r\hat g_1{\bf v}_{\tilde H'})\|\ll S^{-1/D}M^\star.
\end{align}
\end{subequations}
Let $\tilde H'=\tilde\H'(\bbr)^\circ$. Since $\|g_1\|\leq M$, applying~\eqref{eq: tube throughout 1} with $r=0$, we get 
\be\label{eq: height of tilde H'}
\|{\bf v}_{\tilde H'}\|\ll M^\star.
\ee

There are two possibilities for $\tilde H'$:

\subsection*{Case 1} $\rho(\tilde H')$ is a conjugate of $H$.  

This in particular implies that
\[
\rho(\tilde H')=g(r_0)^{-1}Hg(r_0)\quad \text{where $r_0\in\exceptional$ is as above.}
\] 
Put $g'=g(r_0)$. Then $\|g'\|\leq M$ and we have 
\be\label{eq: volume of Hg'Gamma}
\begin{aligned}
\vol(Hg'\Gamma/\Gamma)&\ll \|g'\|^\star\vol(g'^{-1}Hg'\Gamma/\Gamma)\\
&\ll M^\star{\rm ht}(\tilde\H')\ll M^\star
\end{aligned}
\ee
where we used Lemma~\ref{lem: volume and height} in the second 
and~\eqref{eq: height of tilde H'} in the last inequality.

Recall that if we choose a maximal compact $K'\subset G$ associated to a Cartan involution which stabilizes $H$, then $G=K\exp(\rfrak')H$, where $\rfrak'=\rfrak\cap(\Lie(K'))^\perp$, see e.g.~\cite[Thm.~A.1]{EMS-Counting}. This and~\eqref{eq: tube throughout 1} imply that for every $r\in[-S,S]$, we may write  
\[
u_rg_1g'v_H=g_r'g'v_H,\quad\text{where $\|g'_r\|\ll M^\star$.}
\]
Since the map $s\mapsto u_sg_1g'v_H$ is a polynomial with coefficients $\ll M^\star$, 
\[
u_sg_1g'v_H=\epsilon'(s,r)g'_rg'v_H\quad\text{where $\|\epsilon'(s,r)\|\ll M^\star\Bigl({|s-r|}/{S}\Bigr)^{\star}$}.
\]   
Using the fact that $d$ is right invariant, the above implies  
\[
\dist(u_sg_1, g'_rg'^{-1}Hg')\ll M^\star\Bigl({|s-r|}/{S}\Bigr)^{\star};
\]
hence part~(1) in the lemma holds if for every $r\in[-S,S]$ we let $g=g'_rg'^{-1}$.  

\subsection*{Case 2}
$\rho(\tilde H')=g'^{-1}Ug'$ where $U=\{u_r\}$. 

First note that if this holds, then $\Gamma$ is a non-uniform lattice and we may identify $\tilde\G$ and $\G$ as $\R$-groups. 
Thus ${\bf v}_{\tilde H'}\in\Lie(G)$, and we have 
\[
\exp({\bf v}_{\tilde H'})\in \tilde H'\cap\Gamma.
\]
Recall from~\S\ref{sec: notation} that $\gfrak=\hfrak\oplus\rfrak$ where $\hfrak=\Lie(H)$ and $\rfrak$ are $3$ and $2\dm+1$ irreducible representation $\Ad(H)$, respectively. Let us write 
\[
g_1{\bf v}_{\tilde H'}= w+w', \quad\text{where $w_1\in\hfrak$ and $w_2\in\rfrak$}. 
\]
Now~\eqref{eq: tube throughout 1} implies that for every $r\in[-S,S]$ we have
\[
\|\Ad(u_r)w\|, \|\Ad(u_r)w'\|\ll M^\star
\] 
Using standard representation theory of $\SL_2(\R)$ (recall that $H\simeq \SL_2(\R)$ or $H\simeq\PSL_2(\R)$), we may write 
$\Ad(u_r)\bullet$ in the basis consisting of weight spaces for $a_t$ and conclude that for all $t\in[\log M,\log S]$, 
\begin{align*}
&\|(\Ad(a_{-t}u_r)w)_i\|\ll M^\star e^{-t} \quad\text{for $i=-1, 0, 1$},\\
& \|(\Ad(a_{-t}u_r)w')_j\|\ll M^\star e^{-\dm t}\quad\text{for $j=-\dm,\ldots, \dm$}
\end{align*}
We apply this with $t=\star\log M$ large enough so that the above implies 
\be\label{eq: a-t ur v tilde H}
\|u_ra_{-t}g_1{\bf v}_{\tilde H'}\|\leq M^{-1}\qquad\text{for all $r\in[-e^{-t}S, e^tS]$}
\ee
Now~\eqref{eq: a-t ur v tilde H} implies that 
\[
u_ra_{-t}g_1\Gamma\not\in X_{1/M} \qquad\text{for all $r\in[-e^{-t}S, e^tS]$}. 
\]
Thus by Theorem~\ref{thm: non-div parabolic}, there exists a $\Q$-parabolic subgroup ${\bf P}\subset\G$ 
satisfying that ${\bf v}_{\tilde H'}\in \Lie(R_u({\bf P}))$ so that 
\[
\|u_ra_{-t}g_1{\bf v}_{R_u(P)}\|\ll M^{-\star}\qquad\text{for all $r\in[-e^{-t}S, e^tS]$}. 
\]
Conjugating back, and using $t=\star\log M$, we get 
\be\label{eq: parabolic at ur}
\|u_rg_1{\bf v}_{R_u(P)}\|\ll M^\star \qquad\text{for all $r\in[-S, S]$}. 
\ee
Arguing as in the previous case, recall also that $G=KP=KLR_u(P)$ where $L$ is a Levi subgroup of $P$, we get that part~(2) in the lemma holds.  
\end{proof}

\subsection{Proof of Theorem~\ref{thm:main unipotent}} 
Let $\ref{a:main-3-1}$ be as Theorem~\ref{thm: main}, and let $\ref{a: linearization 1}$, $\ref{a: linearization 2}$ and $\ref{E: unip lin}$ 
be as in Lemma~\ref{lem: unipotent linearization}. 
By increasing $\ref{a: linearization 1}$ and $\ref{a: linearization 2}$ if necessary, we may assume $\ref{a: linearization 1}, \ref{a: linearization 2}\geq 10\ref{a:main-3-1}$. Let $E'\geq \ref{a: parabolic linear 2}$ and $F'\geq \ref{a: parabolic linear}$, see Theorem~\ref{thm: non-div parabolic}. In creasing $F'$ if necessary, we will assume $F'\geq \dm_0$ in Proposition~\ref{prop: Non-div main}.  
We will now demonstrate that the theorem holds with
\[
\ref{a:unipotent-1}=\star \ref{a: linearization 1}\quad\text{and}\quad \ref{a:unipotent-2}=\ref{a: linearization 2}
\]

Let $C=\max\{E', (10\ref{E:non-div-main})^3, e^{\ref{E:non-div-main}}, e^{s_0}, \ref{c: linear trans}, \ref{E: unip lin}\}$, 
see~\eqref{eq: how big is R xi}. Let $R\geq C^{2}$, and
\[
d=\ref{a: linearization 1}\log R \quad\text{and}\quad \eta=(C/R)^{\frac1{F'\ref{a: linearization 1}}}.
\] 
Let $T>R^{\ref{a:unipotent-1}}$, and put $T_1=e^{-d}T\geq R^{\ref{a: linearization 1}}$. Then 
\begin{multline}\label{eq: proof main unip 1}
\frac{1}{T}\int_0^T\varphi(u_rx_0)\diff\!r=\frac{1}{T_1}\int_0^{T_1}\varphi(a_d u_{r_1}a_{-d}x_0)\diff\!r_1\\
=\frac{1}{T_1}\int_0^{T_1}\!\!\int_0^1\varphi(a_d u_{r}u_{r_1}a_{-d}x_0)\diff\!r\diff\!r_1+O(\|\varphi\|_\infty T_1^{-1})
\end{multline}
where the implied constant is absolute. 

Put $x_1=a_{-d}x_0$, and define 
\begin{subequations}
\begin{align}
\label{eq: main unip exceptional cusp}\exceptional_1&=\{r_1\in[0,T_1]: u_{r_1}x_1\not\in X_{\eta}\}\\
\label{eq: main unip exceptional periodic}\exceptional_2&=\biggl\{r_1\in[0,T_1]: \begin{array}{c}\text{there exists $x$ with $\vol(Hx)\leq R$}\\\text{and }\dist(u_{r_1}x_1,x)\leq R^{\ref{a:main-3-1}}d^{\ref{a:main-3-1}}e^{-d}\end{array}\biggr\}.
\end{align}
\end{subequations}

Let us first assume that 
\be\label{eq: exceptional sets are small}
|\exceptional_1|\leq C\eta^{1/F'}T_1 \quad\text{and}\quad |\exceptional_2|\leq 2C^2R^{-\kappa}T_1,
\ee
where $\kappa=\min\{1/(F'\ref{a: linearization 1}\ref{a: linearization 2})\}$.

For every 
\[
r_1\in[0,T_1]\setminus\Bigl(\exceptional_1\cup\exceptional_2\Bigr),
\]
put $x(r_1)=u_{r_1}x_1$. Then  
\[
R=C\eta^{-F'\ref{a: linearization 1}}\geq C\inj(x(r_1))^{-\dm_0},
\]
see~\eqref{eq: how big is R xi}; moreover, $e^d=R^{\ref{a: linearization 1}}>R^{\ref{a:main-3-1}}$. 
Thus, conditions of Theorem~\ref{thm: main} are satisfied with the parameters $e^d$, $R$, and $x(r_1)$. 
However, by the definition of $\exceptional_2$, part~(2) in Theorem~\ref{thm: main} does not hold with these choices. 
Consequently, we conclude that for every $r_1$ as described above, 
\[
\biggl|\int_0^1\varphi(a_d u_{r}x(r_1))\diff\!r-\int\varphi\diff\!m_X\biggr|\leq \Sob(\varphi)R^{-\ref{k:main-3-1}}.
\]  
Together with~\eqref{eq: exceptional sets are small} and~\eqref{eq: proof main unip 1}, this implies that 
\[
\biggl|\frac{1}{T}\int_0^T\varphi(u_rx_0)\diff\!r-\int\varphi\diff\!m_X\biggr|\leq (R^{-\ref{k:main-3-1}}+3C^2R^{-\kappa}+2T_1^{-1})\Sob(\varphi),
\]
where we used $C\eta^{1/F'}\leq C^2R^{-\kappa}$. 

Hence, part~(1) in Theorem~\ref{thm:main unipotent} holds with 
$\ref{k:main uni}=\min(\ref{k:main-3-1},\kappa)/2$ if we assume $R$ is large enough.

\medskip

We now assume to the contrary that~\eqref{eq: exceptional sets are small} fails:

\subsection*{Assume that $|\exceptional_1|> C\eta^{1/F'}T_1$.}  
We will show that part~(3) in the theorem holds under this condition.

First note that under this condition $\Gamma$ is non-uniform, thus, $\tilde\G$ may be identified with $\G$ as $\R$ groups. 
Let us write $x_0=g_0\Gamma$. Since $|\exceptional_1|> C\eta^{1/F'}T_1$, then in view of our choices of $F'$ and $C$, 
we conclude from Theorem~\ref{thm: non-div parabolic} that there exists a $\Q$-parabolic subgroup 
${\bf P}\subset\G$ so that 
\[
\|u_ra_{-d}g_0{\bf v}_{R_u(P)}\|\leq \ref{a: parabolic linear 2} \eta^{1/\ref{a: parabolic linear}}\qquad\text{for all $r\in[-T_1, T_1]$}. 
\]
Conjugating back with $a_d$ and using $T=e^{d}T_1$ and $e^d=R^{\ref{a: linearization 1}}$, 
\[
\|u_rg_0{\bf v}_{R_u(P)}\|\leq \ref{a: parabolic linear 2} R^{\star \ref{a: linearization 1}} \qquad\text{for all $r\in[-T, T]$}. 
\]
Arguing as in Case 1 (or Case 2) of the proof of Lemma~\ref{lem: unipotent linearization}, 
we get that part~(3) holds with $\ref{a:unipotent-1}=\star \ref{a: linearization 1}$ and $\ref{a:unipotent-2}=\ref{a: linearization 2}$.

\subsection*{Assume that $|\exceptional_2|> 2C^2R^{-\kappa}T_1$}
If $|\exceptional_1|> C\eta^{1/F'}T_1$, then part~(3) in the theorem holds as we just discussed. 
Thus, we may assume that 
\[
|\exceptional_2|> 2C^2R^{-\kappa}T_1\quad\text{and}\quad |\exceptional_1|\leq  C\eta^{1/F'}T_1.
\] 
Put $\exceptional':=\exceptional_2\setminus \exceptional_1$. Then 
\[
\exceptional'=\biggl\{r_1\in[0,T_1]: \begin{array}{c}\text{$u_{r_1}x_1\in X_\eta$ and there exists $x$ with}\\ \text{$\vol(Hx)\leq R$ 
and $\dist(u_{r_1}x_1,x)\leq R^{\ref{a:main-3-1}}d^{\ref{a:main-3-1}}e^{-d}$}\end{array}\biggr\},
\]
and $|\exceptional'|\geq C^2R^{-\kappa}T_1\geq \ref{E: unip lin}\eta^{1/\ref{a: linearization 2}}T_1$.
Moreover, for $R$ large enough, 
\[
R^{\ref{a:main-3-1}}d^{\ref{a:main-3-1}}e^{-d}=R^{\ref{a:main-3-1}}(\ref{a: linearization 1}\log R)^{\ref{a:main-3-1}}R^{-2\ref{a: linearization 1}}\leq R^{-\ref{a: linearization 1}}.
\]

Fix some $r_1\in\exceptional'$ for the rest of the argument. Put 
\[
\text{$x_2=u_{r_1}x_1=u_{r_0}a_{-d}x_0\;\;$ and $\;\;\exceptional=\exceptional'-r_1 \subset [-T_1,T_1]$}.
\]
Then the conditions in Lemma~\ref{lem: unipotent linearization} are satisfied with $x_2$,
$\exceptional$, $\eta$, $M=R$, and $S=T_1=R^{-\ref{a: linearization 1}}T$. 

First, assume that part~(1) of Lemma~\ref{lem: unipotent linearization} holds.
Then there exists $x \in G/\Gamma$ with $\vol(H.x)\leq R^{\ref{a: linearization 1}}$. 
Moreover, for every $r\in [-T_1,T_1]$ there exists $g\in G$ with $\|g\|\leq R^{\ref{a: linearization 1}}$ so that  
\[
\dist_X(u_{s}x_2, gHx)\leq R^{\ref{a: linearization 1}}\left(\frac{|s-r|}{T_1}\right)^{1/\ref{a: linearization 2}}\quad\text{for all $s\in[-T_1,T_1]$.}
\] 
Since $s-r_1, r-r_1\in[-T_1,T_1]$ for all $s,r\in[0,T_1]$, the above implies 
\begin{align*}
\dist_X(u_{e^ds}x_0, a_dgHx)&=\dist_X(a_du_sa_{-d}x_0, a_dgHx)\\
&=\dist_X(a_du_{s-r_1}u_{r_1}a_{-d}x_0, a_dgHx)\\
&=\dist_X(a_du_{s-r_1}x_2, a_dgHx)\\
&\ll e^{\star d}\dist_X(u_{s-r_1}x_2, gHx)\leq R^{\star \ref{a: linearization 1}}\biggl(\frac{|e^ds-e^dr|}{T}\biggr)^{1/\ref{a: linearization 2}}.
\end{align*}
That is part~(2) in the theorem holds with $\ref{a:unipotent-1}=\star \ref{a: linearization 1}$ and $\ref{a:unipotent-2}=\ref{a: linearization 2}$ for all large enough $R$.  

Assume now that part~(2) in Lemma~\ref{lem: unipotent linearization} holds. Then arguing as above, we conclude that 
part~(3) of the theorem with $\ref{a:unipotent-1}=\star \ref{a: linearization 1}$ and $\ref{a:unipotent-2}=\ref{a: linearization 2}$. 
\qed


\section{Equidistribution of expanding circles}\label{sec: exp circles}
In this section, we record the following theorem concerning equidistribution of large circle; this theorem is a corollary of Theorem~\ref{thm: main} as it was shown in~\cite[\S 5]{LMW23}. 

We keep the notation from Theorem~\ref{thm: main}. In particular, $G$ is any of the following groups 
\[
\SL_3(\R),\quad {\rm SU}(2,1), \quad {\rm Sp}_4(\R), \quad {\bf G}_2(\R),
\]
and $H\subset G$ is the image of the principal $\SL_2(\R)$ in $G$. 
For all $t,r\in\R$ and all $\theta\in[0,2\pi]$, let $a_t$, $u_r$ and $\rot_\theta$ denote the images of 
\[
\begin{pmatrix}
    e^{t/2} & 0 \\
    0 & e^{-t/2}
\end{pmatrix},
\quad \begin{pmatrix}
    1 & r \\
    0 & 1 
\end{pmatrix}, 
\quad\text{and}\quad
\begin{pmatrix}
    \cos\theta & -\sin\theta \\
    \sin\theta & \cos\theta 
\end{pmatrix}
\]
in $H$, respectively. 

We let $\Gamma\subset G$ be an arithmetic lattice, and let $m_X$ denote the probability Haar measure on $X=G/\Gamma$. 

\begin{thm}\label{thm: equid circles}
For every $x_0\in X$, and large enough $R$ (depending explicitly on $x_0$), for any $T\geq R^{\ref{a:circle}}$, at least one of the following holds.
\begin{enumerate}
\item For every $\varphi\in C_c^\infty(X)$ and $2\pi$-periodic smooth function $\xi$ on $\R$, we have 
\[
\biggl|\ave \varphi(a_{\log T}\rot_\theta x_0)\xi(\theta)\diff\!\theta-\int_0^{2\pi}\xi(\theta)\diff\!\theta \int \varphi\diff\!m_X\biggr|\leq \Sob(\varphi)\Sob(\xi)R^{-\ref{k:circle}}
\]
where $\Sob(\cdot)$ denotes appropriate Sobolev norms on $X$ and $\R$, respectively. 
\item There exists $x\in X$ such that $Hx$ is periodic with $\vol(Hx)\leq R$, and 
\[
\dist_X(x,x_0)\leq R^{\ref{a:circle}}(\log T)^{\ref{a:circle}}T^{-\dm}.
\] 
\end{enumerate} 
The constants $\consta\label{a:circle}$ and $\constk\label{k:circle}$ are positive, and depend on $X$ but not on $x_0$.
\end{thm}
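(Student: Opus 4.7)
The plan is to reduce the circle average to a sum of unit-length horocycle averages and then apply Theorem~\ref{thm: main}. The central identity is the Bruhat decomposition inside $H$, valid for $|\eta|<\pi/2$:
\[
\rot_\eta = u_{-\tan\eta}\,a_{-2\log\cos\eta}\,u^-_{\tan\eta},
\]
from which commuting the diagonal past the unipotent yields
\[
a_{\log T}\rot_\eta = u_{-T\tan\eta}\,a_{\log(T/\cos^2\eta)}\,u^-_{\tan\eta}.
\]
I would begin by fixing a smooth partition of unity on $[0,2\pi]$ subordinate to a cover by six arcs $[\theta_j-c,\theta_j+c]$ with $c<\pi/3$, and on the arc centred at $\theta_j$ substitute $\theta=\theta_j+\eta$. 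Using the commutativity $\rot_\theta=\rot_\eta\rot_{\theta_j}$ and setting $y_j:=\rot_{\theta_j}x_0\in Hx_0$, the contribution of that arc becomes $\int_{-c}^{c}\varphi(a_{\log T}\rot_\eta y_j)\,\xi_j(\eta)\,d\eta$. The base point $y_j$ lies at $O(1)$ distance from $x_0$ via the element $\rot_{\theta_j}\in H$, so the periodic-orbit alternative transfers cleanly between the two.

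For a fixed arc, substitute $r=-T\tan\eta$, so that $d\eta=-\frac{T}{T^2+r^2}\,dr$ and $r\in[-T\tan c,\,T\tan c]$. Setting $S(r):=(T^2+r^2)/T\geq T$ and $\tau(r):=-r/T$, the integrand becomes $\varphi(u_r\,a_{\log S(r)}\,u^-_{\tau(r)}y_j)\,\tilde\xi_j(r)$ with $\tilde\xi_j(r)=\xi_j(\eta(r))\cdot T/(T^2+r^2)$. I would then chop the $r$-range into integer unit intervals $[n,n+1]$. On each such interval $S(r),\tau(r),\tilde\xi_j(r)$ vary by $O(1/T)$, so replacing them by their values at the left endpoint introduces per-interval Lipschitz errors of size $O(\Sob(\varphi)\Sob(\xi)/T^2)$. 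After the standard identities $u_r=u_{r-n}u_n$ and $u_n a_{\log S(n)}=a_{\log S(n)}u_{n/S(n)}$, each unit piece assumes the form
\[
\int_0^1\varphi\bigl(u_{r'}\,a_{\log S(n)}\,y_{j,n}\bigr)\,dr',\qquad y_{j,n}:=u_{n/S(n)}\,u^-_{\tau(n)}\,y_j,
\]
where $y_{j,n}=g_{j,n}x_0$ for a bounded element $g_{j,n}\in H$ with $\|g_{j,n}^{\pm1}\|=O(1)$.

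I would then apply Theorem~\ref{thm: main} to each $y_{j,n}$ with parameter $R$ (the same as in the statement) and expansion parameter $S(n)\geq T\geq R^{\ref{a:circle}}\geq R^{\ref{a:main-3-1}}$. If conclusion~(2) of Theorem~\ref{thm: main} holds for some $(j,n)$, translating the resulting periodic orbit back by $g_{j,n}^{-1}\in H$ (which preserves $H$-periodicity and volume) produces a periodic $H$-orbit of volume $\leq R$ within distance $\ll R^{\ref{a:main-3-1}}(\log 2T)^{\ref{a:main-3-1}}T^{-\dm}$ of $x_0$; choosing $\ref{a:circle}$ sufficiently large relative to $\ref{a:main-3-1}$ this is precisely conclusion~(2) of the present theorem. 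Otherwise conclusion~(1) of Theorem~\ref{thm: main} holds for every $(j,n)$, giving $\int_0^1\varphi(u_{r'}a_{\log S(n)}y_{j,n})\,dr'=\int\varphi\,dm_X+O(\Sob(\varphi)R^{-\ref{k:main-3-1}})$.

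Summing over $n$ and $j$ one assembles the main term via the Riemann sum $\sum_n\tilde\xi_j(n)=\int\tilde\xi_j(r)\,dr+O(\Sob(\xi)/T)=\int_{-c}^{c}\xi_j(\eta)\,d\eta+O(\Sob(\xi)/T)$, while the errors aggregate to $O(\Sob(\varphi)\Sob(\xi)R^{-\ref{k:main-3-1}})$ since each of $\sim T$ unit intervals carries weight $\tilde\xi_j(n)\sim 1/T$. This yields conclusion~(1) with $\ref{k:circle}$ a positive fraction of $\ref{k:main-3-1}$. The only delicate book-keeping is tracking the constants when translating a periodic orbit near $y_{j,n}$ to one near $x_0$; no analytic tool beyond Theorem~\ref{thm: main} is used.
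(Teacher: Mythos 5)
Your proposal contains a genuine gap, and it occurs in the step that is doing all the work. The substitution $r=-T\tan\eta$ does not give an arclength parametrization of the expanding circle: the speed of $r\mapsto a_{\log T}\rot_{\eta(r)}y_j$ in the right-invariant metric is $\cos^2\eta(r)$, not $1$. Consequently, freezing $S(r),\tau(r)$ at $r=n$ over a unit interval $[n,n+1]$ is not an $O(1/T^2)$ perturbation. Writing $g(r)=u_ra_{\log S(r)}u^-_{\tau(r)}$ and $\tilde g(r)=u_ra_{\log S(n)}u^-_{\tau(n)}$, a direct computation (using $a_su^-_\delta=u^-_{\delta e^{-s}}a_s$ and $a_\sigma u_{-r}=u_{-re^\sigma}a_\sigma$) gives
\[
g(r)\tilde g(r)^{-1}=I+\begin{pmatrix}O(1/T) & -(r-n)\sin^2\eta(n)+O(1/T) \\ O(1/T^2) & O(1/T)\end{pmatrix},
\]
so $d\bigl(z(r),\tilde z(r)\bigr)\asymp (r-n)\sin^2\eta(n)$, not $O(1/T^2)$. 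For $n$ a fixed positive fraction of $T\tan c$ this is $\Theta(\sin^2 c)$, a fixed positive constant (your arcs have fixed half-length $c<\pi/3$). Since each of $\sim T$ intervals carries weight $\tilde\xi_j(n)\sim 1/T$, the aggregate error from this replacement is $\Theta(\sin^2 c)\cdot\Sob(\varphi)\Sob(\xi)$, which does not improve with $R$ or $T$. You can check this numerically: with $T=100$, $n=50$, $r=51$, the $(1,2)$ entry of $g(r)\tilde g(r)^{-1}$ is about $-0.2$, not $10^{-4}$.

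A second, independent problem is that the object you ultimately feed into Theorem~\ref{thm: main} has the wrong form. Theorem~\ref{thm: main} controls $\int_0^1\varphi(a_{\log T'}u_rx)\,dr=\frac{1}{T'}\int_0^{T'}\varphi(u_sa_{\log T'}x)\,ds$, i.e.\ a horocycle average of length $T'$ at $a_{\log T'}x$. Your decomposition produces $\int_0^1\varphi(u_{r'}a_{\log S(n)}y_{j,n})\,dr'$, and since $u_{r'}a_{\log S}=a_{\log S}u_{r'/S}$, this equals $S(n)\int_0^{1/S(n)}\varphi(a_{\log S(n)}u_sy_{j,n})\,ds$ --- a horocycle average over an interval of length $1/S(n)\to0$, not $1$. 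Equidistribution of a \emph{unit}-length arc of the unstable horocycle at a single point on the circle is a much stronger (and in general false) statement than equidistribution of the whole expanding circle, and cannot be extracted by a pointwise application of Theorem~\ref{thm: main}. Any fix must account for the fact that the circle average must be compared with $\Theta(T)$-length horocycle averages, not with $\Theta(1)$-length pieces; the paper's own proof (deferred to \cite[\S5]{LMW23}) is not the route you have taken.
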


\begin{proof}
The proof of~\cite[Thm.1.4]{LMW23} remains valid without modification when \cite[Thm.1.1]{LMW22} is replaced by Theorem~\ref{thm: main} from this paper.
\end{proof}


\part{Application to quadratic forms}

The second part of this article concerns applications to quantitative version of Oppenheim conjecture. In particular, the proof of Theorem~\ref{thm: quantitative Oppenheim} and Theorem~\ref{thm: Oppenheim} will be completed in this part. We employ the strategy pioneered by Eskin, Margulis and Mozes in \cite{EMM-Upp}. This result covered all indefinite quadratic forms in $d\geq 3$ variables except forms of signature $(2,2)$, or $(2,1)$ where a Diophantine condition is needed for the requisite nondivergence results to be true.

The case of $(2,2)$ forms was treated by Eskin, Margulis and Mozes \cite{EMM-22}, and required significant additional ideas. A more precise version, suitable for a quantitative counting result, for the special case of $\Gamma\subset \SL_2(\Z)\times \SL_2(\Z)$ was given by the first three authors of this paper in \cite{LMW23}.

More recently Wooyeon Kim succeeded in giving a treatment of the case of forms of signature $(2,1)$ in \cite{Kim-Oppenheim}. Since the actual results we need are only implicit in \cite{Kim-Oppenheim}, we give a full treatment here; but it is possible to isolate what is needed already from~\cite{Kim-Oppenheim}. 

\section{Upper bound estimates}\label{sec: Oppenheim}

In this section we will state Proposition~\ref{prop: L1+epsilon upp bd} which yields the upper bound estimates needed for the proof of Theorem~\ref{thm: quantitative Oppenheim}.

Let
\[
\sqf(\cox,\coy,\coz)=2\cox\coz-\coy^2.
\]
If $Q$ is an indefinite ternary quadratic form of determinant 1, and
Then there exists some $g_Q\in\SL_3(\R)$ so that $Q(v)=\sqf(g_Qv)$ for all $v\in\R^3$.

Let $H=\SO(\sqf)^\circ\subset \SL_3(\R)=G$; 
note that $H\simeq\PSL_2(\R)$. 
In the case at hand, the groups $a_t$ and $u_r$ featuring in Theorem~\ref{thm: main} can be more explicitly described as follows  
\[
a_t=\begin{pmatrix}
    e^t & 0 & 0\\
    0 & 1 & 0\\
    0 & 0 & e^{-t}
\end{pmatrix}
\quad\text{and}\quad u_r=\begin{pmatrix}
    1 & r & \frac{r^2}{2}\\
    0 & 1 & r\\
    0 & 0 & 1
\end{pmatrix}.
\]
Also let $K=H\cap\SO(3)\simeq\SO(2)$. Indeed, $K=\{\rot_\theta: \theta\in[0,2\pi]\}$ where

\be\label{eq: r-theta Oppenheim}
\rot_\theta=\begin{pmatrix}
    \tfrac{1+\cos\theta}2 & -\tfrac{\sin\theta}{\sqrt 2} & \tfrac{1-\cos\theta}{2}\\
    \tfrac{\sin\theta}{\sqrt 2} & \cos\theta & -\tfrac{\sin\theta}{\sqrt 2}\\
    \tfrac{1-\cos\theta}2 & \tfrac{\sin\theta}{\sqrt 2} & \tfrac{1+\cos\theta}2
    \end{pmatrix}.
\ee

\begin{defin}\label{def:exceptional line}
    A rational line $L\subset\R^3$ will be said to be $(\rho, A, t)$-\emph{exceptional} for $Q$ if the vector $v$ defined up to sign by $L\cap\Z^3={\rm Span}\{v\}$ satisfies  
\[
\|v\|\leq e^{\rho t}\quad\text{and}\quad |Q(v)|\leq e^{-A\rho t}. 
\]
\end{defin}
Existence of 5 exceptional lines yields an approximation of $Q$ with a rational form with low complexity, see Lemma~\ref{lem: five vectors} below. 

We identify $\wedge^2\R^3$ with the dual of $\R^3$, which will be identified with $\R^3$ using the standard Euclidean inner product. 
Then for every $g\in G$
\be\label{eq: action on wedge 2'}
gv_1\wedge gv_2= g^* (v_1\wedge v_2)
\ee
where $g^*=(g^t)^{-1}$. We put $Q^*(v):=\sqf(g^* v)$ for any $v\in\R^3$.

Similar to Definition~\ref{def:exceptional line}, we have the following:
\begin{defin}
    Let $P\subset \R^3$ be a rational plane and suppose $P\cap\Z^3={\rm Span}\{w_1, w_2\}$. Then $P$ will be said to be $(\rho, A, t)$-\emph{exceptional} if the line spanned by $w_1\wedge w_2$ is $(\rho, A, t)$-exceptional for $Q^*$.  
\end{defin}

\noindent
This definition can be equivalently stated in terms of the height of the plane $P$ and the determinant of the rational form induced by $Q$ on $P$.

\medskip

Let $f\in C_c(\R^3)$. For every $h\in H$, define 
\be\label{eq: def tilde f}
\tilde f_{\rho, A, t}(h;g_Q)=\sum_{v\in\mathcal N_{Q,t}}f(hv)
\ee
where $\mathcal N_{Q,t}$ denotes the set of vectors in $\Z^3$ which are not contained in any $(\rho, A, t)$-exceptional line or plane.

\begin{propos}\label{prop: L1+epsilon upp bd}
   For all large enough $A$, for all small enough $\rho$ (depending on $A$), and all large enough $t$ (depending on $A$ and $\rho$), at least one of the following holds:
\begin{enumerate} 
\item Let $\tilde{\mathcal C}_t=\{k\in K: \tilde f_{\rho, A, t}(a_tk; g_Q\Gamma)\geq Ae^{A\rho t}\}$. Then 
\[
\int_{\tilde{\mathcal C}_t}\tilde f_{\rho, A, t}(a_tk;g_Q\Gamma)\diff\!k\ll e^{-\rho^2 t/2}.
\]

\item There exists $P\in\Mat_3(\Z)$ with $\norm{P}\leq e^{D_0\rho t}$ so that
   \[
   \norm{Q-\lambda P}\leq \norm{P}^{-A/2}\quad\text{where $\lambda=(\det P)^{-1/3}$}
   \] 
\end{enumerate}
The implied constant depends on $\norm{g}$ and $D_0$ is absolute.   
\end{propos}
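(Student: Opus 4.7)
The strategy follows the Margulis (alpha) function framework of Eskin--Margulis--Mozes~\cite{EMM-Upp}, adapted to the signature $(2,1)$ setting by Kim~\cite{Kim-Oppenheim}. The plan is to construct a Margulis function $\alpha_{\rho,A,t}\colon X\to[1,\infty)$ which dominates $\tilde f_{\rho,A,t}(h;\cdot)$ for $h$ in a bounded neighborhood of the identity (large enough to accommodate $\supp f$), and whose $L^{1+\epsilon}$ norm along the circle $k\mapsto a_tk\cdot g_Q\Gamma$ is uniformly bounded provided the Diophantine alternative (2) fails. A Chebyshev-type argument at the threshold $Ae^{A\rho t}$ then delivers the tail bound.

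The function $\alpha_{\rho,A,t}$ takes the shape $\sum_v \|v\|^{-s}$ for a small exponent $0<s<1$, the sum ranging over primitive vectors $v$ of the $x$-lattice (and of its dual, to handle planes) that do \emph{not} span a $(\rho,A,t)$-exceptional line or plane in the sense of Definition~\ref{def:exceptional line}. The substantive step, and the main obstacle, is an averaged contraction estimate
\[
\int_K \alpha_{\rho,A,t}(a_tkx)\,dk \leq \theta\,\alpha_{\rho,A,t}(x) + B,\qquad 0<\theta<1,
\]
proved via a case analysis on where $g_Qv$ lies in the weight-space decomposition of $\R^3$ under $\{a_t\}$. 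The excluded exceptional lines are precisely those for which $g_Qv$ is essentially a highest-weight vector, in which case $\|a_tkg_Qv\|^{-s}$ has an unbounded $K$-average, and simultaneously $v$ satisfies $\|v\|\ll e^{\rho t}$ together with $|Q(v)|\ll e^{-A\rho t}$; the analogous argument on $\wedge^2\R^3\simeq\R^3$ with $Q^{-1}$ handles the planes. Matching the exclusion in the definition of $\tilde f_{\rho,A,t}$ with the failure of $K$-integrability of $\alpha$ is the crux of the argument, and this is the step where the signature $(2,1)$ (rather than $(3,0)$) plays the essential role.

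To convert the drift inequality into a uniform circle-average bound we invoke Theorem~\ref{thm: equid circles}. The failure of alternative~(2), via the standard dictionary between periodic $H$-orbits and rational ternary quadratic forms (cf.\ Lemma~\ref{lem: volume and height}), means that no periodic $H$-orbit of volume at most $R\asymp e^{D_0\rho t}$ lies close to $g_Q\Gamma$. Hence Theorem~\ref{thm: equid circles}(1) applies with this $R$ and yields polynomially small error for the circle average of any fixed smooth test function. Combining this equidistribution with iteration of the drift inequality yields a uniform $L^{1+\epsilon}$ bound for $\alpha_{\rho,A,t}$ along the circle, and Chebyshev's inequality at level $Ae^{A\rho t}$ gives
\[
\int_{\tilde{\mathcal C}_t}\tilde f_{\rho,A,t}(a_tk;\,g_Q\Gamma)\,dk \ll (Ae^{A\rho t})^{-\epsilon},
\]
which is $\ll e^{-\rho^2 t/2}$ for $\epsilon$ chosen suitably small depending on $A$ and $\rho$.
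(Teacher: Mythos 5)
Your proposal correctly identifies the broad framework (Margulis-function / Eskin--Margulis--Mozes type argument) but misses the essential Diophantine mechanism, and the key contraction you posit is not provable as stated. Removing the $(\rho,A,t)$-exceptional lines and planes from the sum removes only vectors $v$ with \emph{both} $\|v\|\leq e^{\rho t}$ \emph{and} $|Q(v)|\leq e^{-A\rho t}$. There can still be arbitrarily many non-exceptional vectors with $\|v\|>e^{\rho t}$ and $|Q(v)|$ extremely small; for such $v$ the $K$-average $\int_K\|a_tkv\|^{-s}\,dk$ has no contraction factor (compare Lemma~\ref{lem: linear algebra}, which requires $|\sqf(v)|\geq e^{-2\sigma t}$, with the weaker Lemma~\ref{lem: linear algebra 2}), so the inequality $\int_K\alpha_{\rho,A,t}(a_tkx)\,dk\leq\theta\,\alpha_{\rho,A,t}(x)+B$ fails. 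In particular, there is no reason alternative~(2) should emerge from an attempted proof of that contraction: the rational approximation of $Q$ in~(2) is an honest output of the argument, not a way of dismissing a degenerate case.

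The paper's actual proof replaces the posited one-step contraction with a genuinely Diophantine dichotomy (Proposition~\ref{prop: main uppbd prop}): for each dyadic scale $\ell$ at which too many primitive $v\in\mathcal P_\ell$ have $|\sqf(g_Qv)|\leq e^{-A\rho\ell}$, either these vectors cluster into $\leq e^{(1-2\rho)\ell}$ planes accompanied by intervals $\hat J_i\subset K$ on which $\alpha_1\ll e^{-\rho\ell}\alpha_2$ holds at time $(1+2\rho)\ell$ (established via Lemmas~\ref{lem: separation}--\ref{lem: W ell j and periodic H orbits}, culminating in Lemma~\ref{lem: five vectors} which extracts a rational form from five non-coplanar almost-null vectors), or $Q$ admits the rational approximation of alternative~(2). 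The contribution outside the planes is handled by a direct cardinality count together with Lemma~\ref{lem: linear algebra 2}; the contribution inside the planes requires an iterated drift argument (the Sublemma in \S\ref{sec: proof of prop L1+epsilon}, using $\alpha_1(a_{s_1}u_{-r}\cdot)\ll\alpha_1(u_{-r}\cdot)+e^{2s_1}\alpha_2^{1/2}(u_{-r}\cdot)$ from \cite[Lemma~5.8]{EMM-Upp} and Corollary~\ref{cor: Lp functions}) to push the estimate from time $(1+2\rho)\ell$ to time $t$. Theorem~\ref{thm: equid circles}, which you invoke, plays no role in this proof (it enters only later, in Proposition~\ref{prop: equi of f hat}), and equidistribution of circle averages against \emph{smooth compactly supported} test functions does not by itself control the unbounded functions $\alpha_1,\alpha_2$ in $L^{1+\epsilon}$. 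So the main step of your proposal --- establishing the drift inequality and converting it into an $L^{1+\epsilon}$ bound --- is precisely where the argument would break down.
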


The reader may compare Proposition~\ref{prop: L1+epsilon upp bd} and its proof with~\cite[Prop.~6.1]{LMW23}. 
It is also worth noting that Proposition~\ref{prop: L1+epsilon upp bd} is equivalent to showing that $\tilde f$ belongs to $L^{1+\epsilon}$. 
The proof of this proposition will occupy the rest of this section. 

Let $A$ and $t$ be large parameters ($A$ is absolute and $t>t_0(Q)$), and let $\rho>0$ a small parameter; these will be explicated later. 
In what follows $\eta$ represents a small number. 

\subsection*{Linear algebra lemmas}
We begin with the following lemmas from linear algebra.

\begin{lemma}\label{lem: five vectors}
Let $A, D\geq 1$ be two parameters. Let $v_1,\ldots, v_5\in\Z^3$ be five vectors so that no three of them are co-planar and $|Q(v_i)|\leq \eta^A$ for all $i$. Then the following hold:
\begin{enumerate}[label=\textup{(L-\arabic*)}]
    \item \label{item: P-mat-1}
    Let $\gamma=(v_1, v_2, v_3)$, 
       and assume that 
       \[
       |\det\gamma|\leq \eta^{-D}\quad\text{and}\quad \|\gamma^{-1}v_i\|\leq \eta^{-D}\;\text{ for $i=4,5$}. 
       \]
Then there is a matrix 
$ P\in \Mat_3(\Z)$ with $\det(P)\neq 0$ and $ \|P\|\ll \eta^{-\star D}$
so that 
\[
\norm{\gamma^tQ\gamma - P} \ll \eta^{A-\star D}. 
\]
\item\label{item: P-mat-2} If $\norm{v_i}\leq \eta^{-D}$ for all $i$, then there is a matrix $P$ as in \ref{item: P-mat-1} so that
\[
\norm{Q-\lambda P}\ll \eta^{A-\star D}\quad\text{where $\lambda=(\det P)^{-1/3}$.}
\]
\end{enumerate}
   \end{lemma}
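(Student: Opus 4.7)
The proof rests on the classical algebraic-geometric fact that five points in general position in $\mathbb{P}^2$ lie on a unique conic, with the ``no three coplanar'' hypothesis providing the quantitative general position.

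For (L-1), I would first pass to the basis $\{v_1,v_2,v_3\}$: setting $\tilde Q=\gamma^tQ\gamma$, $\alpha=\gamma^{-1}v_4$ and $\beta=\gamma^{-1}v_5$, the five conditions $|Q(v_i)|\leq\eta^A$ translate into $\tilde Q_{ii}=O(\eta^A)$ for $i=1,2,3$ together with $\alpha^t\tilde Q\alpha,\beta^t\tilde Q\beta=O(\eta^A)$. Using the bounds $\|\alpha\|,\|\beta\|\leq\eta^{-D}$ and the smallness of the diagonal, the last two conditions reduce to a $2\times 3$ linear system on $(\tilde Q_{12},\tilde Q_{13},\tilde Q_{23})$ with right-hand side of order $\eta^{A-2D}$.

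The key quantitative step is to show that this system has kernel of dimension exactly one, spanned by a vector whose entries are controlled. Its $2\times 2$ minors are products of factors $\alpha_i\beta_j$ with differences $\alpha_k\beta_\ell-\alpha_\ell\beta_k$; the ``no three coplanar'' hypothesis ensures each such minor is nonzero. Since $\alpha,\beta\in(\det\gamma)^{-1}\mathbb{Z}^3$, these nonzero rational minors have denominator dividing $(\det\gamma)^2\leq\eta^{-2D}$, hence are bounded below in absolute value by $\eta^{2D}$. Consequently $(\tilde Q_{12},\tilde Q_{13},\tilde Q_{23})=t\vec c+O(\eta^{A-\star D})$ for some $t\in\mathbb{R}$, where $\vec c$ is an explicit vector (a cross product in the $\alpha_i,\beta_j$) of norm at most $\eta^{-\star D}$. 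Scaling by $(\det\gamma)^4$ to clear denominators produces an integer symmetric matrix $P_0$ with zero diagonal, nonzero determinant (all three components of $\vec c$ are nonzero, again by the no-three-coplanar hypothesis), and $\|P_0\|\leq\eta^{-\star D}$; the desired $P$ is then an appropriate integer multiple of $P_0$, with diagonal adjusted to absorb the $O(\eta^A)$ diagonal of $\tilde Q$.

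Part (L-2) proceeds by the same strategy applied directly to $Q$ in the standard basis rather than to $\tilde Q$: when $\|v_i\|\leq\eta^{-D}$ for all $i$, the five functionals $Q\mapsto Q(v_i)$ on the six-dimensional space of ternary symmetric forms have rank five with smallest singular value bounded below by $\eta^{\star D}$, forcing $Q$ to lie within $\eta^{A-\star D}$ of the one-dimensional real line spanned by the integer conic through the five integer points. The factor $\lambda=(\det P)^{-1/3}$ then normalizes the integer form to match the condition $\det Q=1$. The main technical obstacle in both parts will be converting the real-line approximation $\tilde Q\approx tP_0$ into an \emph{integer} approximation at the claimed rate; here one uses the determinant constraint $\det\tilde Q=(\det\gamma)^2$, which pins down the scalar $t$ up to a cube root, together with the integrality and bounded height of $P_0$ to select the correct integer multiple.
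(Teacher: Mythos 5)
Your approach to (L-1) mirrors the paper's: pass to the basis $\{v_1,v_2,v_3\}$ so the diagonal of $\tilde Q=\gamma^tQ\gamma$ is $O(\eta^A)$, interpret the conditions $|Q(v_4)|,|Q(v_5)|\leq\eta^A$ as a $2\times 3$ linear system on the off-diagonal entries, and deduce rank $2$ from the no-three-coplanar hypothesis (the paper uses non-colinearity of $(a_2a_3,a_1a_3,a_1a_2)$ and its primed counterpart; your lower bound on the $2\times 2$ minors via rationality is the same point made quantitative). For (L-2) you argue directly from the $5\times 6$ coefficient matrix rather than deducing it from (L-1) as the paper does; that direct route is sound, and $\lambda=(\det P)^{-1/3}$ arises exactly as you say from normalizing $\det Q=1$.

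The gap is in your resolution of what you call the main technical obstacle for (L-1). The determinant constraint $\det\tilde Q=(\det\gamma)^2$ pins $t^3$ to within $O(\eta^{A-\star D})$ of a bounded-height rational, but it does not force $t$ itself to be close to a rational, let alone an integer. For a concrete instance, if the kernel vector is $(2,-3,1)$ so $\det P_0=-12$, and $(\det\gamma)^2=4$, then $t^3\approx -1/3$, $t$ is irrational, and $tP_0$ stays bounded away from every integer matrix of height $\ll\eta^{-\star D}$. There is no ``correct integer multiple of $P_0$'' to select, so the conclusion $\|\gamma^tQ\gamma-P\|\ll\eta^{A-\star D}$ with $P\in\Mat_3(\Z)$ cannot follow from the $2\times 3$ system plus the determinant alone; what the argument actually yields is a real scalar $\lambda$ and an integer $P$ with $\|\gamma^tQ\gamma-\lambda P\|\ll\eta^{A-\star D}$, which is the shape of (L-2). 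The paper's own one-line justification of (L-1) is no more explicit on this point, so your instinct to worry here is well placed, but the cube-root step does not close the gap.
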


\begin{proof}
Writing $Q$ in the basis $\{v_1,v_2, v_3\}$ implies that 
\[
\gamma^tQ\gamma= B=(b_{ij})
\]
where $|b_{ii}|\ll \eta^A$. 
Write
\begin{align*}
\gamma^{-1}v_4&=(a_1,a_2,a_3)^t\\
\gamma^{-1}v_5&=(a'_1,a'_2,a'_3)^t.
\end{align*}
Then $a_i, a_i'\in\Q$ have height $\leq \eta^{-D}$.

Note that since no three of the $v_i$'s are co-planar, $a_i, a'_i$ are all nonzero.
Suppose $(a_2a_3,a_1a_3,a_1a_2)$ and $(a_2'a_3',a_1'a_3',a_1'a_2')$ are colinear.
Then $\frac{a_2a_3}{a_2'a_3'}=\frac{a_1a_2}{a_1'a_2'}$ so $\frac{a_3}{a_3'}=\frac{a_1}{a_1'}$ and similarly for the other pairs. But this means $v_4$ and $v_5$ are colinear, in contradiction.

Since for $i=4,5$, \ $|Q(v_i)|\leq \eta^A$, we have 
\[
\sum_{0<i<j\leq 3}b_{ij}a_ia_j \ll \eta^{A-2D}\quad\text{and}\quad\sum_{0<i<j\leq 3}b_{ij}a_i'a_j' \ll \eta^{A-2D}
\]
Since $a_i,a'_i\in\Q$ have height $\leq \eta^{- D}$, the above imply \ref{item: P-mat-1}.

The second claim~\ref{item: P-mat-2} follows from \ref{item: P-mat-1} and $\|\gamma\|\ll \eta^{-D}$.
\end{proof}

\begin{lemma}\label{lem: linear algebra}
Let $0<\sigma<1$, and let $0< \delta<1-\sigma$. For every $v\in\R^3$ which satisfies $|\sqf(v)|\geq e^{-2\sigma t}$, we have 
\[
\int_K\norm{a_tk v}^{-1-\delta}\diff\!k\ll \delta^{-1} e^{(-1+\delta+\sigma) t}\norm{v}^{-1-\delta},
\]
where the implied constant is absolute. 
\end{lemma}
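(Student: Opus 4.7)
The plan is to reduce the integral to a one-parameter integral using the explicit $K$-parametrization and then exploit the fact that $\sqf$ is an $H$-invariant, which gives a surprisingly clean identity for $\norm{a_t\rot_\theta v}^2$. Specifically, writing $\rot_\theta v = (\alpha(\theta), Y(\theta), \beta(\theta))^t$ and using $\sqf(a_t\rot_\theta v)=\sqf(v)$ (so $Y(\theta)^2 = 2\alpha(\theta)\beta(\theta) - \sqf(v)$), one gets
\[
\norm{a_t\rot_\theta v}^2 = e^{2t}\alpha^2 + Y^2 + e^{-2t}\beta^2 = (e^t\alpha(\theta)+e^{-t}\beta(\theta))^2 - \sqf(v) =: F(\theta)^2-\sqf(v).
\]
A direct expansion using \eqref{eq: r-theta Oppenheim} shows $F(\theta)=A+R\cos(\theta-\theta_0)$ for some phase $\theta_0$ and constants satisfying $A^2 = \cosh^2(t)(\norm{v}^2+\sqf(v))$ and $R^2 = \sinh^2(t)(\norm{v}^2-\sqf(v))$. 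In particular $R^2-A^2 = |\sqf(v)|\cosh(2t)-\norm{v}^2$ when $\sqf(v)<0$.

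The integral becomes $I=\int_0^{2\pi}(F(\theta)^2-\sqf(v))^{-(1+\delta)/2}d\theta$. The main case is $\sqf(v)<0$, where $\norm{a_t\rot_\theta v}^2 = F(\theta)^2+|\sqf(v)|$; under the hypothesis $|\sqf(v)|\geq e^{-2\sigma t}$ and $\sigma<1$, for large $t$ we are in the regime $|\sqf(v)|\cosh(2t)>\norm{v}^2$ (assuming $\norm v$ is bounded; the other subregime is handled separately), so $F$ has two simple zeros $\theta_*$ with $|F'(\theta_*)|^2=R^2-A^2\asymp|\sqf(v)|e^{2t}$. Localizing near a zero and substituting $y=F'(\theta_*)(\theta-\theta_*)$ reduces the local contribution to
\[
\frac{1}{|F'(\theta_*)|}\int_{\R}(y^2+|\sqf(v)|)^{-(1+\delta)/2}dy \;\asymp\; \frac{|\sqf(v)|^{-\delta/2}}{|F'(\theta_*)|}\cdot\frac{1}{\delta} \;\ll\; \frac{1}{\delta}|\sqf(v)|^{-(1+\delta)/2}e^{-t},
\]
where the $\delta^{-1}$ comes from $B(\tfrac12,\tfrac\delta2)\sim 2/\delta$. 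Using $|\sqf(v)|\geq e^{-2\sigma t}$ yields $\ll \delta^{-1}e^{(\sigma-1+\sigma\delta)t}\leq \delta^{-1}e^{(-1+\delta+\sigma)t}$, which is the claimed bound; the region bounded away from zeros contributes $\ll |\sqf(v)|^{-(1+\delta)/2}e^{-(1+\delta)t}$, a strictly smaller term.

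In the complementary case $\sqf(v)>0$ (and similarly when $\sqf(v)<0$ but $F$ has no zeros), $F^2-\sqf(v)$ is bounded below by a pointwise estimate: the minimum of $F^2$ is $(|A|-R)^2=(A^2-R^2)^2/(|A|+R)^2\asymp \sqf(v)^2 e^{2t}$, so $\norm{a_t\rot_\theta v}\gtrsim \sqf(v)e^t$. Splitting into the near-minimum strip of width $\sqrt{\sqf(v)}$ and its complement and integrating gives $I\ll \sqf(v)^{-1/2-\delta}e^{-(1+\delta)t}\ll e^{(\sigma-1+(2\sigma-1)\delta)t}$, which is bounded by the target $\delta^{-1}e^{(\sigma-1+\delta)t}$ since $\sigma<1$. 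Finally, restoring the $\norm{v}^{-1-\delta}$ factor follows either by a rescaling argument or by tracking $\norm{v}$ in the computation of $A,R$ above. The main technical obstacle is the boundary case where $\norm v$ is large enough that $F$ loses its zeros even though $\sqf(v)<0$: this transition regime is handled by noting that $|A|-R$ then already exceeds $\sqrt{|\sqf(v)|}$ by a margin controlled by the hypothesis, so the pointwise bound there is comparable to and absorbed into the bound from the zero-having case.
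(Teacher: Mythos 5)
Your approach is correct in its main thrust and genuinely cleaner than the paper's, which simply normalizes $v=(0,\vare,1)$ with $\vare=|\sqf(v)|^{1/2}$ via $K$-transitivity, bounds $\norm{a_t\rot_\theta v}$ from below coordinate by coordinate ($\gtrsim e^{-t}$ for $|\theta|\le e^{(-2+\sigma)t}$ from the third entry, $\gtrsim e^t\vare\theta$ beyond from the first), and integrates $\theta^{-1-\delta}$ with the $\delta^{-1}$ coming from the boundary term of $\int\theta^{-1-\delta}\,\diff\!\theta$. Your identity $\norm{a_t\rot_\theta v}^2=F(\theta)^2-\sqf(v)$ with $F=A+R\cos(\theta-\theta_0)$, $A^2=\cosh^2 t\,(\norm{v}^2+\sqf(v))$, $R^2=\sinh^2 t\,(\norm{v}^2-\sqf(v))$, is not in the paper and repackages the whole computation into zero-finding for $F$ plus the Beta-function fact $\int_\R(u^2+|\sqf(v)|)^{-(1+\delta)/2}\diff\!u\asymp\delta^{-1}|\sqf(v)|^{-\delta/2}$, which produces the same $\delta^{-1}$ by a more recognizable mechanism. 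Two caveats. First, the paper's $\norm{\cdot}$ is a max norm, while your identity is exact only for the Euclidean norm; this is harmless since the conclusion is up to an absolute constant in $\ll$, but should be stated. Second, your transition regime (for $\sqf(v)<0$, when $\cosh(2t)|\sqf(v)|\lesssim\norm{v}^2$ so $F$ has a near-double root or no root) does arise under the lemma's hypothesis --- it requires $\norm{v}\gtrsim e^{(1-\sigma)t}$, which is not excluded --- and the closing claim that $|A|-R$ then exceeds $\sqrt{|\sqf(v)|}$ ``by a margin'' needs to be carried out explicitly, e.g.\ via $|A|-R=(A^2-R^2)/(|A|+R)$ together with $A^2-R^2=\norm{v}^2+\cosh(2t)\sqf(v)$. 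The paper is in fact no more careful here: its normalization $v=(0,\vare,1)$ tacitly fixes $\norm{v}\asymp1$, and the scaling step that would restore a general $\norm v$ interacts with the non-scale-invariant hypothesis $|\sqf(v)|\ge e^{-2\sigma t}$ and is not spelled out, so your write-up at least names the issue.
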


\begin{proof}
Recall that $K$ acts transitively on the level sets 
\[
\{v\in\R^3: \norm{v}=c_1, \sqf(v)=c_2\}.
\]
Thus we may assume $v=(0,\vare, 1)$ where $\vare=|\sqf(v)|^{1/2}\geq e^{-\sigma t}$, and have  
    \[
    a_t\rot_\theta v=\Bigl(e^t(\tfrac{1-\cos\theta-\sqrt2\vare \sin\theta}{2}),\tfrac{2\vare\cos\theta-\sqrt2\sin\theta}{2}, e^{-t}(\tfrac{1+\cos\theta+\sqrt2\vare\sin\theta}{2})\Bigr).
    \]
    Since for $0<\theta<1/100$, we have 
    \begin{align*}
    &1-\cos\theta-\sqrt2\vare \sin\theta=-\sqrt2\vare\theta+\tfrac{\theta^2}2+ O(\theta^3)\quad\text{and}\\
    &1+\cos\theta+\sqrt2\vare\sin\theta=2+O(\vare).
    \end{align*}
    Let $\vare'=\min\{\vare, 1/100\}$, the above implies 
    \begin{align*}
    \int_K\|a_tk v\|^{-1-\delta}\diff\!k&\ll \int_{0}^{e^{(-2+\sigma)t}} e^{(1+\delta)t}\diff\!\theta+ \int_{e^{(-2+\sigma)t}}^{\vare'} e^{(-1-\delta)t}\vare^{-1}\theta^{-1-\delta}\diff\!\theta\\
    &\ll e^{(-1+\sigma+\delta)t}+ \delta^{-1}e^{(-1-\delta)t}e^{\sigma t}\theta^{-\delta}\biggr]_{e^{(-2+\sigma)t}}^{\vare'}\\
    &\ll \delta^{-1} e^{(-1+\delta+\sigma)t}.
    \end{align*}
    As we claimed. 
\end{proof}

In the bounds in Lemma~\ref{lem: linear algebra}, the multiplicative coefficient in front of $\norm v ^{-1-\delta}$ tends to zero as $t\to\infty$ (which is what we want), at the expense of giving a restriction on $v$. 
Without the restrictions imposed in Lemma~\ref{lem: linear algebra}, we have the following general (weaker) bound:

\begin{lemma}\label{lem: linear algebra 2}
Let $v\in\R^3$, then
\[
\int_K\|a_tk v\|^{-1-\delta}\diff\!k\ll e^{\delta t}\|v\|^{-1-\delta}
\]
where the implied constant is absolute. 
\end{lemma}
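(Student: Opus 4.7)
The plan is to reduce to the case $\|v\|=1$ by homogeneity (since $\|a_t r_\theta(\lambda v)\|=\lambda\|a_t r_\theta v\|$), and then exploit a sinusoidal lower bound for the second coordinate of $r_\theta v$. A direct computation using~\eqref{eq: r-theta Oppenheim} shows that for $v=(x_0,y_0,z_0)$,
\[
w_2(\theta) := (r_\theta v)_2 = \tfrac{(x_0-z_0)\sin\theta}{\sqrt 2}+y_0\cos\theta,
\]
a sinusoid whose amplitude $R$ satisfies $R^2=(1-\sqf(v))/2$ by a short algebraic manipulation using $\|v\|^2=1$. Since $\|a_t r_\theta v\|^2 \geq w_2(\theta)^2$, and also $\|a_t r_\theta v\| \geq e^{-t}\|v\|=e^{-t}$ by the smallest singular value of $a_t$, we obtain two lower bounds whose utility depends on how close $\sqf(v)$ is to its maximum value $1$ on the unit sphere.

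In the first case $\sqf(v) \leq 1/2$, the amplitude satisfies $R \geq 1/2$, so $\bigl|\{\theta\in[0,2\pi]: |w_2(\theta)| \leq \rho\}\bigr| \ll \rho$ uniformly for $\rho \leq 1/2$. The layer cake formula
\[
\int_K \|a_t r_\theta v\|^{-1-\delta}\diff\!\theta = (1+\delta)\int_0^\infty \rho^{-2-\delta}\bigl|\{\theta: \|a_t r_\theta v\| < \rho\}\bigr|\diff\!\rho,
\]
combined with the fact that the set in question is empty for $\rho<e^{-t}$, has measure $\ll\rho$ for $\rho\in[e^{-t},1]$, and is contained in $K$ for larger $\rho$, yields the desired bound $\ll e^{\delta t}$ after splitting the integral at $\rho\sim 1$ (with an absolute implicit constant once $\delta$ is kept bounded below).

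The main obstacle is the complementary case $\sqf(v) > 1/2$, in which $R$ becomes small and the $w_2$-based argument degenerates; this is exactly the regime approaching the $K$-fixed direction $(1,0,1)/\sqrt 2$, at which $\sqf=1$ and $w_2$ vanishes identically. This is resolved by exploiting the $\sqf$-invariance: since $a_t, r_\theta \in \SO(\sqf)$, we have $\sqf(a_t r_\theta v)=\sqf(v)$, and the elementary identity $\|w\|^2-\sqf(w)=(x-z)^2+2y^2\geq 0$ gives $\|a_t r_\theta v\|^2 \geq \sqf(v) > 1/2$ uniformly in $\theta$. Thus in this case the integrand is bounded by an absolute constant, so the integral is itself $O(1)\leq O(e^{\delta t})$. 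Combining both cases and undoing the initial scaling completes the proof.
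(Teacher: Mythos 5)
Your case $\sqf(v)>1/2$ (with $\|v\|=1$) is correct and clean, using $\sqf(a_t r_\theta v)=\sqf(v)$ together with the pointwise inequality $\|w\|^2\geq\sqf(w)$; the normalization by homogeneity and the layer--cake set-up are also sound. The gap is in the case $\sqf(v)\leq 1/2$: using only the lower bound $\|a_t r_\theta v\|\geq|w_2(\theta)|$ gives the measure estimate $\bigl|\{\theta:\|a_t r_\theta v\|<\rho\}\bigr|\ll\rho$, and the layer--cake integral then produces
\[
(1+\delta)\int_{e^{-t}}^{c}\rho^{-2-\delta}\cdot O(\rho)\,\diff\!\rho\asymp\frac{e^{\delta t}-c^{-\delta}}{\delta},
\]
which carries a $\delta^{-1}$ factor. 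The lemma claims an implied constant that is absolute, in particular independent of $\delta$ (contrast with Lemma~\ref{lem: linear algebra}, where the $\delta^{-1}$ is written out explicitly in the bound). Your estimate degrades to $\asymp t$ when $\delta=1/t$, whereas the claim is $O(1)$; you flag this caveat yourself (``once $\delta$ is kept bounded below''), but as written the argument does not establish the stated inequality.

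The ingredient you are missing is the top-weight coordinate. Since $a_t=\diag(e^t,1,e^{-t})$, one also has $\|a_t r_\theta v\|\geq e^t\bigl|(r_\theta v)_1\bigr|$; the paper's proof reduces by $K$-transitivity to $v=(0,\vare,1)$ and reads off from the explicit formula for $a_t r_\theta v$ that this first coordinate contributes $\gg e^t\theta^2$ on $[e^{-t},\pi/4]$, hence $\|a_t r_\theta v\|^{-1-\delta}\ll e^{-(1+\delta)t}\theta^{-2-2\delta}$ there. Because the exponent of $\theta$ is now strictly below $-1$, one gets $\int_{e^{-t}}^{\pi/4}\theta^{-2-2\delta}\,\diff\!\theta\leq(1+2\delta)^{-1}e^{(1+2\delta)t}$ with an absolute constant, and the $\delta^{-1}$ loss vanishes. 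In your layer--cake language, the quadratic vanishing of the first coordinate improves the measure bound to $\ll\sqrt{e^{-t}\rho}$, and $\int_{e^{-t}}^{1}\rho^{-2-\delta}\sqrt{e^{-t}\rho}\,\diff\!\rho\ll e^{\delta t}$ holds with an absolute constant; your argument, using only $|w_2|$, leaves this gain on the table.
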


\begin{proof}
    The proof is similar to Lemma~\ref{lem: linear algebra}. Indeed, we have
    \begin{align*}
    \int_K\|a_tk v\|^{-1-\delta}\diff\!k&\ll \int_{0}^{e^{-t}} e^{(1+\delta)t}\diff\!\theta+ \int_{e^{-t}}^{\pi/4} e^{(-1-\delta)t}\theta^{-2-2\delta}\diff\!\theta\\
    &\ll e^{\delta t}\norm{v}^{-1-\delta},
    \end{align*}
    as we claimed. 
\end{proof}

\subsection*{Cusp functions of Margulis}
Recall the functions
\begin{align*}
&\alpha_1(g)=\{1/\norm{gv}: 0\neq v\in\Z^3\}\\
&\alpha_2(g)=\Bigl\{1/\norm{g^*(w_1\wedge w_2)}: 0\neq w_1\wedge w_2\in\wedge^2\Z^3\Bigr\}
\end{align*}
from~\cite{EMM-Upp} and~\cite{EMM-22}.

For all $0\neq v\in \R^3$, all $0<\sigma<1$, and all $\sfd>1$ let 
\[
I_v^\sfd (\sigma)=\{k\in K: \|a_\sfd kg_Qv\|\leq \sigma\}.
\]
We will be interested in $I_v^\sfd(\sigma)$ for vectors where $\absolute{\sqf(g_Qv)}\leq \sigma^{10}$ and $1\ll \|g_Qv\|\leq \sigma e^{\sfd}$. In this range, we have 
\be\label{eq: size of Iv}
|I_v^\sfd(\sigma)|\asymp(e^{-\sfd}\sigma/\|g_Qv\|)^{1/2},
\ee
see e.g.~\cite[Lemma A.6]{EMM-22}. 

Let $\mathcal P$ denote the set of primitive vectors in $\Z^3$, and let 
\[
\mathcal P_\ell=\{v\in\mathcal P: e^{\ell-1}\leq \|v\|<e^{\ell}\},
\]
for every integer $\ell\geq 1$. 

We make the following observation, which is a special case of~\cite[Lemma 5.6]{EMM-Upp} tailored to our applications here. 

\begin{lemma}\label{lem: plane and vectors}
Let $0<\sigma<1$ and $\sfd>1$. Let $v_1, v_2\in\mathcal P_\ell$ be so that $\absolute{\sqf(g_Qv_i)}\leq \sigma^{10}$, $\|g_Qv_i\|\leq \sigma e^{\sfd}$, and $2I_{v_2}^\sfd(\sigma)\cap 2I_{v_1}^\sfd(\sigma)\neq \emptyset $. Then 
\[
\alpha_1(a_\sfd kg_Q\Gamma)\ll_c \sigma\alpha_2(a_\sfd kg_Q)\quad\text{for all $k\in cI_{v_1}^\sfd(\sigma)$},
\]
for every constant $c$.
\end{lemma}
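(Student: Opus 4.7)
The idea is that the hypotheses force both $a_\sfd k g_Q v_1$ and $a_\sfd k g_Q v_2$ to be short for every $k\in cI_{v_1}^\sfd(\sigma)$, and then (since $v_1,v_2$ are linearly independent) the wedge $v_1\wedge v_2\in\wedge^2\Z^3$ supplies a short $2$-vector for the lattice $a_\sfd k g_Q\Z^3$, which via successive minima yields the claimed bound on $\alpha_1/\alpha_2$.

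First I would establish a ``fattening'' estimate for the arc $I_v^\sfd(\sigma)$: on its $c$-dilate, $\|a_\sfd k g_Q v\|\ll c^2\sigma$. This follows by revisiting the computation in the proof of Lemma~\ref{lem: linear algebra}, where $\|a_\sfd\rot_\theta g_Q v\|$ is seen to be governed by a quadratic polynomial in $\theta$ around the minimizing angle (with leading term $e^\sfd\theta^2/2$). Combined with the size formula \eqref{eq: size of Iv} and the fact that $v_1,v_2\in\mathcal P_\ell$ have comparable norms (hence comparable arcs $|I_{v_i}^\sfd(\sigma)|$), the overlap hypothesis $2I_{v_1}^\sfd(\sigma)\cap 2I_{v_2}^\sfd(\sigma)\neq\emptyset$ implies the two arcs are within a bounded number of their own widths of each other. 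So for every $k\in cI_{v_1}^\sfd(\sigma)$ one has $k\in c'I_{v_2}^\sfd(\sigma)$ with $c'=c'(c)$, giving
\[
\|a_\sfd k g_Q v_i\|\leq c''\sigma\qquad(i=1,2)
\]
with $c''$ depending only on $c$.

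Next, provided $v_1\neq\pm v_2$ (the opposite case being vacuous, since then no new information on $\alpha_2$ is expected), $v_1\wedge v_2$ is a nonzero element of $\wedge^2\Z^3$. Using \eqref{eq: action on wedge 2'} and the elementary inequality $\|w_1\wedge w_2\|\leq \|w_1\|\|w_2\|$,
\[
\|(a_\sfd k g_Q)^*(v_1\wedge v_2)\|=\|a_\sfd k g_Q v_1\wedge a_\sfd k g_Q v_2\|\leq \|a_\sfd k g_Q v_1\|\,\|a_\sfd k g_Q v_2\|\ll_c\sigma^2.
\]
Hence $\alpha_2(a_\sfd k g_Q\Gamma)\gg_c\sigma^{-2}$.

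Finally I would translate the above into a bound on $\alpha_1/\alpha_2$ via Minkowski's theorem. Writing $\lambda_1\leq\lambda_2$ for the first two successive minima of the unimodular lattice $a_\sfd k g_Q\Z^3$, one has $\alpha_1\asymp\lambda_1^{-1}$ and $\alpha_2\asymp(\lambda_1\lambda_2)^{-1}$, so $\alpha_1/\alpha_2\asymp\lambda_2$. Since $a_\sfd k g_Q v_1,a_\sfd k g_Q v_2$ are linearly independent lattice vectors of norm $\ll_c\sigma$, Mahler's criterion gives $\lambda_2\ll_c\sigma$, whence $\alpha_1(a_\sfd k g_Q\Gamma)\ll_c\sigma\,\alpha_2(a_\sfd k g_Q\Gamma)$. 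The only nontrivial step is the first one (quantifying growth of $\|a_\sfd k g_Q v\|$ on the enlarged arcs and chaining this through the overlap hypothesis); everything else is elementary geometry of numbers.
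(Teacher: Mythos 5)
Your proof is correct, and the first half (using \eqref{eq: size of Iv} and the quadratic growth of $\theta\mapsto\|a_\sfd \rot_\theta g_Q v\|$ to upgrade the overlap $2I_{v_1}^\sfd(\sigma)\cap 2I_{v_2}^\sfd(\sigma)\neq\emptyset$ into $\|a_\sfd k g_Q v_i\|\ll_c\sigma$ for both $i=1,2$ and all $k\in cI_{v_1}^\sfd(\sigma)$) is exactly what the paper does, only more explicitly spelled out. The difference is in the second half. The paper does not wedge $v_1$ with $v_2$; it fixes the lattice vector $v_k$ achieving $\alpha_1(a_\sfd kg_Q\Gamma)$, forms $P=\mathrm{Span}\{v_k,v\}$ with $v\in\{v_1,v_2\}$ chosen to be independent from $v_k$, and bounds
\[
\alpha_2(a_\sfd kg_Q\Gamma)^{-1}\leq\|(a_\sfd kg_Q)^*(v_k\wedge v)\|\leq\|a_\sfd kg_Qv_k\|\,\|a_\sfd kg_Qv\|\ll\sigma\,\alpha_1(a_\sfd kg_Q\Gamma)^{-1},
\]
which gives $\alpha_1\ll\sigma\alpha_2$ on the nose. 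Your route instead wedges $v_1\wedge v_2$ directly and passes through the successive-minima identification $\alpha_1/\alpha_2\asymp\lambda_2$ together with $\lambda_2\ll_c\sigma$. That is also valid (and the Mahler-compound comparison $\alpha_2\asymp(\lambda_1\lambda_2)^{-1}$ is elementary), but it is an extra layer the paper avoids; moreover the paper's choice of $P$ pairs the short vector $v_k$ with a $v_i$ precisely so the argument does not depend on $v_1$ and $v_2$ being independent of each other, whereas $v_1\wedge v_2$ would degenerate if they were parallel. Also note that your intermediate bound $\alpha_2\gg_c\sigma^{-2}$ is never actually used: once you have $\lambda_2\ll_c\sigma$, the conclusion follows from $\alpha_1/\alpha_2\asymp\lambda_2$ alone, so that paragraph can be dropped.
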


\begin{proof}
Let $v_1$ and $v_2$ be as in the statement. Then, in view of~\eqref{eq: size of Iv}, we have 
\[
\norm{a_\sfd kg_Qv_2}\ll \sigma\quad \text{for all $k\in cI^s_{v_1}(\sigma)$}.  
\]
For every $k\in cI_{v_1}^s(\sigma)$, fix a vector $v_k$ so that 
\[
\alpha_1(a_\sfd kg_Q\Gamma)=\norm{a_\sfd kg_Qv_k}^{-1}.
\]  
Let $P={\rm Span}\{v_k, v\}$ where $v=v_1$ if $v_k=v_2$ and $v=v_2$ if $v_k\neq v_2$.
Let $u$ be the corresponding covector (which is unique up to sign). 
Then 
\begin{align*}
\norm{(a_\sfd kg_Q)^*u}&\leq \|(a_\sfd kg_Q)^*(v_k\wedge v)\|\\
&\leq\norm{a_\sfd kg_Qv_k}\norm{a_\sfd kg_Qv}\ll \sigma\alpha_1(a_\sfd kg_Q)^{-1},
\end{align*}
for all $k\in cI_{v_1}^s(\sigma)$. Taking reciprocal, we have 
\[
\alpha_1(a_\sfd kg_Q\Gamma)\ll \sigma\norm{(a_\sfd kg_Q)^*u}^{-1}\leq \sigma\alpha_2(a_\sfd kg_Q\Gamma),
\]
as it was claimed.
\end{proof}

We will also using the following theorem which is due to Eskin, Margulis and Mozes~\cite{EMM-Upp}.

\begin{thm}[\cite{EMM-Upp}, Thm.~3.2 and Thm.~3.3]
\label{thm: EMM-alpha bound}
Both of the following hold for $i=1,2$ and every $g\in G$.
\begin{enumerate}
\item For every $0<p<1$, we have 
\[
\sup_{s\geq 0}\int \alpha_i^p(a_s kg\Gamma)\diff\!k\ll1 
\]
\item For every $s\geq 0$, we have  
\[
\int \alpha_i(a_s kg\Gamma)\diff\!k\ll s. 
\]
\end{enumerate}
The implied constant is $\ll(1-p)^{-1}\norm{g}^\star$ in part~(1) and $\ll \norm{g}^\star$ in~(2). 
\end{thm}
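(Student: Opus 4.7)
The plan is to follow the strategy of Eskin--Margulis--Mozes, combining the linear-algebra lemmas just recorded with a Minkowski-type reduction and a lattice-point count.

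The first step is to replace the supremum defining $\alpha_i$ by a sum. Minkowski's theorem guarantees that for a unimodular lattice $\Lambda\subset\R^3$ or $\wedge^2\R^3$, only an $O(1)$ number of primitive vectors in $\Lambda$ can be shorter than a small absolute constant $\kappa$. Applied to $a_s kg\Z^3$ and $a_s kg(\wedge^2\Z^3)$, this gives
\[
\alpha_i^p(a_s kg\Gamma)\ll 1 + \sum_\xi \mathbf 1_{\{\|a_s kg\xi\|\leq\kappa\}}\|a_s kg\xi\|^{-p},
\]
where $\xi$ ranges over primitive vectors in the relevant $\Z$-module. For $\alpha_2$, the action of $H$ on $\wedge^2\R^3$ through $g\mapsto g^*$ is equivalent to the standard representation, so Lemma~\ref{lem: linear algebra 2} applies verbatim with $g^*\xi$ in place of $gv$.

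The second step is to integrate term by term. Using the sharper measure estimate underlying Lemma~\ref{lem: linear algebra 2},
\[
\bigl|\{k\in K:\|a_s kg\xi\|\leq\sigma\}\bigr|\ll (\sigma/(e^s\|g\xi\|))^{1/2}
\]
whenever the right-hand side is $\leq 1$ (cf.~\eqref{eq: size of Iv}), and decomposing dyadically in the size of $\|a_s kg\xi\|$, one obtains for each fixed $\xi$ an integral bound that, when summed over primitive $\xi$ using the volume count $\#\{\xi:\|g\xi\|\leq R\}\ll\|g\|^\star R^3$, produces part~(1). The condition $p<1$ enters at this stage to ensure convergence of a dyadic geometric series; the $s$-dependence from the per-vector bound cancels against the $s$-dependence in the count of allowable $\xi$ after integration.

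Part~(2) is the borderline $p=1$: the one-vector integral loses a logarithmic factor (this is the phenomenon captured by Lemma~\ref{lem: linear algebra 2} as $\delta\to 0$), and propagating this through the dyadic sum yields the linear-in-$s$ bound. The main technical obstacle is the bookkeeping needed to separate the contribution of near-isotropic vectors for $\sqf$ (where the sharper Lemma~\ref{lem: linear algebra} is needed) from generic vectors and to maintain uniformity in both $s$ and $\|g\|$. A more conceptual route, used implicitly in the cited EMM papers, constructs ``Margulis functions'' $\tilde\alpha_i\asymp\alpha_i$ satisfying a one-step contraction inequality under circle averaging along $K$, from which both statements follow simultaneously.
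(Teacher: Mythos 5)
The paper itself offers no argument here: it simply cites EMM's Theorems 3.2 and 3.3 verbatim and remarks that the dependence on $p$ and $\|g\|$ can be read off from their proofs (specifically \cite[Lemma 5.1]{EMM-Upp}). So your reconstruction is considerably more ambitious than what the paper does.

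Unfortunately, the core step of your sketch --- that the $s$-dependence from the per-vector integral ``cancels against'' the $s$-dependence in the count of allowable $\xi$ --- does not close. Two points. First, the measure estimate~\eqref{eq: size of Iv}, which you invoke for each fixed $\xi$, is only stated (and only true in that clean form) under the constraint $|\sqf(g_Qv)|\leq\sigma^{10}$; you apply it without this restriction, and for vectors with $|\sqf(g_Qv)|$ merely bounded by a constant the sublevel sets behave quite differently. Second, and more fundamentally, when you sum the per-vector bound $\asymp(e^s\|g\xi\|)^{-1/2}$ over primitive $\xi$ with $|Q(\xi)|\ll1$ and $\|g\xi\|\lesssim e^s$, the relevant lattice-point count is $\#\{\xi:\|g\xi\|\leq R,\,|Q(\xi)|\ll1\}\ll R^{1+\eta}$ (Lemma~\ref{lem: gen upp bd}), and for suitable forms $Q$ this count really is $\gg R\log R$ (the paper itself notes the existence of $Q$ with $\gg T(\log T)^{1-\vare}$ lattice points after Theorem~\ref{thm: quantitative Oppenheim}). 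Carrying this through the dyadic sum over scales produces a bound of order $e^{\eta s}$ or $s$, not $O(1)$ uniformly in $s$, so part~(1) is not recovered. The uniform bound is precisely why EMM work with the recursion/contraction inequality for Margulis-type functions built from both $\alpha_1$ and $\alpha_2$ jointly (their Lemmas 5.1 and 5.6), exploiting the fact that two independent short vectors force a short 2-plane and hence large $\alpha_2$. That interplay is what the naive union bound over single lattice vectors is missing. Your concluding parenthetical about the ``more conceptual route'' via Margulis functions therefore has the emphasis reversed: it is not an optional alternative to your main sketch, it is the actual argument, and the direct sum-and-integrate approach does not work without it.
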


\begin{proof}
The first statement is \cite[Thm 3.2]{EMM-Upp} and the second is \cite[Thm 3.3]{EMM-Upp}. 
It follows from the proofs of these statements that the implied constant depends polynomially on $\alpha_i(g)\ll\norm{g}^\star$ and $is \ll 1/(1-p)$ in the first case; see~\cite[Lemma 5.1]{EMM-Upp} for the dependence on $p$. 
\end{proof}

We will also use the following elementary consequence of Theorem~\ref{thm: EMM-alpha bound}.    

\begin{coro}\label{cor: Lp functions}
Let $\Theta\subset K$ be a set with $|\Theta|\leq \sigma$. Then 
\[
\int_\Theta\alpha_i^{1/2}(a_dkg\Gamma)\diff\!k\ll \sigma^{1/4}
\] 
\end{coro}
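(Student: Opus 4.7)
\medskip

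\noindent\textbf{Proof proposal.} The plan is to deduce the bound from Theorem~\ref{thm: EMM-alpha bound}(1) by a direct application of H\"older's inequality, choosing the exponent so that the $L^p$ bound for $\alpha_i$ stays on the safe side of the $p<1$ threshold while still matching the desired power of $\sigma$.

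First, I would apply H\"older's inequality with conjugate exponents $p=4/3$ and $q=4$ to the product $\mathbf{1}_\Theta \cdot \alpha_i^{1/2}(a_d k g\Gamma)$, getting
\[
\int_\Theta \alpha_i^{1/2}(a_d k g\Gamma)\diff\!k \leq |\Theta|^{1/4}\left(\int_K \alpha_i^{2/3}(a_d k g\Gamma)\diff\!k\right)^{3/4}.
\]
The point of this choice is twofold: on the one hand, $2/3<1$, so that Theorem~\ref{thm: EMM-alpha bound}(1) (applied with the parameter $p=2/3$ in its statement) yields
\[
\sup_{d\geq 0}\int_K \alpha_i^{2/3}(a_d k g\Gamma)\diff\!k \ll 1,
\]
with an implied constant depending only on $\|g\|^\star$ (since $1-2/3=1/3$ is an absolute positive number). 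On the other hand, the complementary exponent $|\Theta|^{1/4}\leq \sigma^{1/4}$ is exactly the power of $\sigma$ claimed in the corollary.

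Combining these two inputs immediately yields
\[
\int_\Theta \alpha_i^{1/2}(a_d k g\Gamma)\diff\!k \ll \sigma^{1/4},
\]
as desired. There is no real obstacle here: the entire argument is a one-line application of H\"older's inequality plus the $L^{2/3}$ bound already supplied by Theorem~\ref{thm: EMM-alpha bound}(1). The only minor choice to make is which exponent to use, and $(p,q)=(4/3,4)$ is forced by matching the target exponent $\sigma^{1/4}$ while keeping $\alpha_i^{p/2}$ in an $L^1$-bounded range.
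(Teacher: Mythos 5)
Your proof is correct. Both your argument and the paper's hit the same target $\sigma^{1/4}$ from the same underlying input, namely that $\alpha_i^p$ has bounded circular averages for every $p<1$ by Theorem~\ref{thm: EMM-alpha bound}(1), but you get there by a slightly different route. You apply H\"older's inequality once with exponents $(4/3,4)$, absorbing the small measure of $\Theta$ into the $L^4$ norm of its indicator and invoking the $L^{2/3}$ bound for $\alpha_i$. The paper instead does a level-set split: let $\mathcal B=\{k: \alpha_i(a_dkg\Gamma)\leq\sigma^{-1}\}$, bound the contribution from $\Theta\cap\mathcal B$ crudely by $|\Theta|\cdot\sigma^{-1/2}\leq\sigma^{1/2}$, and on the complement write $\alpha_i^{1/2}=\alpha_i^{3/4}\alpha_i^{-1/4}\leq\sigma^{1/4}\alpha_i^{3/4}$ and invoke the $L^{3/4}$ bound. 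These are essentially the same interpolation argument in two standard guises; yours is a touch cleaner, while the paper's level-set version makes more visible where the two terms $\sigma^{1/2}$ and $\sigma^{1/4}$ come from and is the form more commonly recycled in this literature (it is the shape of the argument used repeatedly in the proof of Proposition~\ref{prop: L1+epsilon upp bd}). Either is acceptable.
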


\begin{proof}
 Let $\mathcal B=\{k\in K: \alpha_2(a_dkg_Q\Gamma)\leq \sigma^{-1}\}$. Then  
\begin{align*}
\int_{\Theta}\alpha_i^{1/2}(a_{d}kg\Gamma)\diff\!k&\ll \int_{\Theta\cap\mathcal B}\alpha_i^{1/2}(a_dkg\Gamma)\diff\!k+ \sigma^{1/4} \int_{K\setminus \mathcal B} \alpha_i(a_tkg\Gamma)^{3/4}\diff\!k\\
&\ll \sigma^{1/2}+\sigma^{1/4}\ll \sigma^{1/4}
\end{align*}
as we claimed. 
\end{proof}


\subsection*{Approximation by rational forms}

The following proposition is one of the main ingredients in the proof of Proposition~\ref{prop: L1+epsilon upp bd}.

\begin{propos}\label{prop: main uppbd prop}
    There exists some $D_1, A$ (absolute) so that if $\rho D_1<\tfrac1{100}$, and $\ell>0$ is large enough and the following is satisfied  
\be\label{eq: many almost null}
\#\{v\in\mathcal P_\ell: |\sqf(g_Qv)|\leq e^{-A\rho \ell} \}\geq e^{(1-\rho)\ell},
\ee
    then at least one of the following holds:
\begin{enumerate}
    \item There exist planes $\{P_i: 1\leq i\leq N\}$, for some $N\leq e^{(1-2\rho)\ell}$, so that 
    \[
   \#\{v\in\mathcal P_\ell\setminus(\cup P_i) : |\sqf(g_Qv)|\leq e^{-A\rho \ell} \}\leq e^{(1-2\rho)\ell}.
    \]
    Moreover, there are intervals $\{J_i: 1\leq i\leq N\}$, with multiplicity at most two, so that 
    $I_{v}^{(1+2\rho)\ell}(e^{-\rho \ell})\cap J_i\neq\emptyset$ for every $v\in P_i$, and 
    \[
    \int_{cJ_i}\alpha_1(a_{(1+2\rho)\ell}kg_Q\Gamma)\diff\!k\ll_c e^{-\rho \ell}\int_{ cJ_i}\alpha_2(a_{(1+2\rho)\ell}kg_Q\Gamma)\diff\!k
    \]
    for every constant $c$.
    \item There exists some $P\in\Mat_3(\Z)$ with $\|P\|\leq e^{D_1\rho \ell}$ so that  
   \[
   \|Q-\lambda P\|\leq e^{(-1+D_1\rho)\ell}\quad\text{where $\lambda=(\det P)^{-1/3}$}.
   \] 
\end{enumerate}
\end{propos}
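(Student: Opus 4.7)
The plan is to follow the Eskin--Margulis--Mozes strategy, suitably adapted to the $(2,1)$ signature as in~\cite{Kim-Oppenheim}. Write $V_\ell:=\{v\in\mathcal P_\ell : |\sqf(g_Qv)|\leq e^{-A\rho\ell}\}$, so that $|V_\ell|\geq e^{(1-\rho)\ell}$ by hypothesis; for each such $v$, formula~\eqref{eq: size of Iv} gives $|I_v^{(1+2\rho)\ell}(e^{-\rho\ell})|\asymp e^{-(1+3\rho/2)\ell}$, provided $A\geq 10$ so that $|\sqf(g_Qv)|\leq\sigma^{10}$ with $\sigma=e^{-\rho\ell}$. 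The argument then splits according to the dichotomy: either five vectors of $V_\ell$ can be found in general position (no three coplanar), or they cannot.

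In the first case I would apply Lemma~\ref{lem: five vectors}\ref{item: P-mat-2} with $\eta=e^{-\rho\ell}$ and $D=1/\rho$ (chosen so that $\eta^{-D}\geq e^\ell$) to obtain an integer matrix $P$ satisfying $\|Q-\lambda P\|\ll \eta^{A-\star D}=e^{(\star-A\rho)\ell}$; taking $A$ large and $D_1$ accordingly, and using $\rho D_1<1/100$, this yields conclusion~(2). Otherwise, call an integral $2$-plane $P$ \emph{heavy} if $|V_\ell\cap P|\geq e^{\rho\ell}$, and let $\{P_i\}_{i=1}^N$ enumerate these. A greedy selection shows $|V_\ell\setminus\bigcup_i P_i|\leq 6\,e^{\rho\ell}\leq e^{(1-2\rho)\ell}$, since otherwise one could iteratively pick vectors avoiding each of the $\binom{k}{2}$ non-heavy planes spanned by the previously chosen ones (each containing fewer than $e^{\rho\ell}$ elements of $V_\ell$) to produce five in general position, contradicting the present case.

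For each heavy plane $P_i$, the existence of $e^{\rho\ell}$ near-null vectors forces $\sqf\circ g_Q|_{P_i}$ to be almost degenerate with a well-defined near-null direction $\omega_i\in P_i$; all $v\in V_\ell\cap P_i$ therefore point nearly along $\pm\omega_i$, so the arcs $I_v$ cluster in a common short arc, which we take as $J_i$. Any two $v,v'\in V_\ell\cap P_i$ then satisfy $2I_v\cap 2I_{v'}\neq\emptyset$, and Lemma~\ref{lem: plane and vectors} yields the pointwise inequality $\alpha_1(a_{(1+2\rho)\ell}kg_Q\Gamma)\ll e^{-\rho\ell}\alpha_2(a_{(1+2\rho)\ell}kg_Q\Gamma)$ on $cI_v$, and hence on $cJ_i$ after integration. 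The multiplicity-at-most-two property of the $J_i$'s reflects that at each $k\in K$ the expanding direction picks out a single line in $\R^3$, which lies in at most two of the near-null planes $P_i$ to the required accuracy.

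The main obstacle is the quantitative matching between the Diophantine approximation produced by Lemma~\ref{lem: five vectors} and the bounds $\|P\|\leq e^{D_1\rho\ell}$, $\|Q-\lambda P\|\leq e^{(-1+D_1\rho)\ell}$ demanded in conclusion~(2): the naive application has $\|P\|\ll e^{\star\ell}$, so one must reduce $\|P\|$ either by iterating the approximation or by applying the lemma at a smaller scale to a carefully chosen subsystem of $V_\ell$. A closely related difficulty is the quantitative bound $N\leq e^{(1-2\rho)\ell}$ on heavy planes: it amounts to a Diophantine count of integer $2$-planes whose Plücker coordinates lie near the null-plane hypersurface of $\sqf$, which one handles using the already-established integrability of $\alpha_2$ from Theorem~\ref{thm: EMM-alpha bound} together with the concentration along the null direction $\omega_i$.
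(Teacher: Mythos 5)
Your dichotomy --- ``five vectors in general position exist, hence conclusion~(2); else heavy planes give conclusion~(1)'' --- does not work as stated, and the obstacle you flag at the end is fatal rather than technical. A single five-tuple in general position gives, via Lemma~\ref{lem: five vectors}\ref{item: P-mat-2} with $D\asymp1/\rho$, a $P$ with $\|P\|\ll\eta^{-\star D}=e^{\star\ell}$. This is exponentially larger than the demanded $\|P\|\leq e^{D_1\rho\ell}$ with $D_1\rho<1/100$, and ``iterating'' or ``choosing a smaller scale'' cannot repair it: the five vectors have $\|v_i\|\asymp e^\ell$, so the $\eta^{-D}\geq e^\ell$ requirement in \ref{item: P-mat-2} forces $D\geq 1/\rho$.

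The paper resolves this with two ideas that do not appear in your proposal. First, it uses Lemma~\ref{lem: five vectors}\ref{item: P-mat-1}, not \ref{item: P-mat-2}. That version requires bounds only on $|\det\gamma|$ and $\|\gamma^{-1}v_i\|$, and the key point (Lemma~\ref{lem: the matrices for 5 vectors}) is that if the five vectors are chosen from the same short arc of the circle --- i.e.\ nearly parallel in direction and all almost null --- then $|\det\gamma|\leq\eta^{-D}$ with $D$ \emph{absolute}, because $\gamma$ is almost singular. This gives, for each such five-tuple, a rational form $\|P\|\ll\eta^{-\star}$ satisfying $\gamma^tQ\gamma\approx P$, from which one only concludes that $a_\ell k_j g_Q\Gamma$ lies $\eta^{A-\star}$-close to a periodic $H$-orbit of volume $\ll\eta^{-\star}$ for some $k_j\in K$. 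This is much weaker than conclusion~(2). Second, and crucially, the paper then shows (Lemmas~\ref{lem: choose many 5 tuples} and~\ref{lem: W ell j and periodic H orbits}) that such five-tuples can be found in $\gg\eta^{\star}e^\ell$ many intervals $I_j$, yielding $e^{-\ell}$-separated $k_j\in K$, so that $|\{k\in K: a_\ell k g_Q\Gamma\text{ is near a bounded-volume periodic orbit}\}|\gg\eta^{\star}$. Invoking an effective avoidance principle (cf.\ \cite[Prop.~4.6]{LMW22}) then forces $g_Q\Gamma$ \emph{itself} to lie within $\eta^{-\star}e^{-\ell}$ of a periodic orbit of volume $\ll\eta^{-\star}$, and it is only at this stage that conclusion~(2) drops out with the correct exponents. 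This ``many near-periodic returns implies near-periodicity of the base point'' step is the missing idea.

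On the other branch, your greedy ``heavy plane'' argument is in the right spirit, but the intervals $J_i$ in the paper's Lemma~\ref{lem: separation} are not built from ``near-null directions $\omega_i\in P_i$'' as you suggest. Two near-null vectors in a plane nearly tangent to the light cone can point in quite different directions, so their arcs $I_v$ need not overlap. The paper instead runs a Vitali-type covering argument with arcs $\{2I_v\}$ (using $|I_v|\asymp e^{-\ell}\eta^{3/2}$ from~\eqref{eq: size of Iv}) to extract the $J_i$'s, and then uses Lemma~\ref{lem: plane and vectors} precisely when two arcs overlap, which automatically forces coplanarity; the plane $P_i$ is a consequence of the overlap, not its source. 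The bound $N\leq e^{(1-2\rho)\ell}$ on the number of planes comes straight from the size of $\mathcal W'$ in this Vitali selection, not from integrability of $\alpha_2$.
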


Until the conclusion of the proof of Proposition~\ref{prop: main uppbd prop}, let $\eta=e^{-\rho \ell}$ and $\sfd=(1+2\rho)\ell$, and write $I_v$ for $I_{v}^\sfd(\eta)$. We will also put 
\[
\mathcal W_\ell:=\{v\in\mathcal P_\ell : |\sqf(g_Qv)|\leq e^{-A\rho \ell} \}
\]

\begin{lemma}\label{lem: separation}
    Let the notation be as in Proposition~\ref{prop: main uppbd prop}. 
    Then at least one of the following holds.
    \begin{enumerate}
    \item Part~(1) in Proposition~\ref{prop: main uppbd prop} holds. 
        \item There exist $\mathcal W\subset \mathcal W_\ell$ with $\#\mathcal W\geq  \eta^2 e^\ell$ so that 
\[
I_{v}\cap I_{v'}=\emptyset\quad \text{for all $v\neq v'\in\mathcal W$.}
\]
    \end{enumerate}
    \end{lemma}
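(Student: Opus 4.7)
The plan is a Vitali-type greedy selection. Let $\mathcal{W}^{(0)} = \{v_1, \ldots, v_N\} \subset \mathcal{W}_\ell$ be a maximal subfamily with the intervals $\{I_{v_i}\}$ pairwise disjoint. If $N \geq \eta^2 e^\ell$, take $\mathcal{W} := \mathcal{W}^{(0)}$ and case~(2) of the lemma holds. So assume $N < \eta^2 e^\ell = e^{(1-2\rho)\ell}$; by maximality, every $v \in \mathcal{W}_\ell$ satisfies $I_v \cap I_{v_i} \neq \emptyset$ for some $i$. Setting $J_i := 2 I_{v_i}$, a standard ordering step on the intervals $\{J_i\}$ along the circle $K$ reduces multiplicity to at most two.

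The $\alpha_1/\alpha_2$ estimate appearing in Proposition~\ref{prop: main uppbd prop}(1) then follows immediately: since $A \geq 10$, the hypothesis $|\sqf(g_Q v_i)| \leq \eta^{10}$ of Lemma~\ref{lem: plane and vectors} is satisfied, so applying that lemma with $v_1 = v_i$ and any $v_2 = v \in V_i := \{v \in \mathcal{W}_\ell : I_v \cap I_{v_i} \neq \emptyset\}$ gives
\[
\alpha_1(a_\sfd k g_Q \Gamma) \ll_c \eta\, \alpha_2(a_\sfd k g_Q \Gamma) \qquad \text{for all } k \in c J_i,
\]
and integration over $cJ_i$ produces the required integral inequality.

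The principal remaining task is to produce planes $P_i$ containing the clusters $V_i$ up to a total exceptional set of size at most $e^{(1-2\rho)\ell}$. Geometrically: for $v \in V_i$ and $k \in I_v \cap I_{v_i}$, the condition $\|a_\sfd k g_Q v\| \leq \eta$ forces $|(k g_Q v)_1| \leq e^{-\sfd}\eta$, so $g_Q v$ lies in an angular strip of width $\ll e^{-(2+3\rho)\ell}$ about $k^{-1}\!\operatorname{span}(e_2,e_3)$. Since $k$ varies across $J_i$ (of length $\asymp e^{-(1+3\rho/2)\ell}$) this plane barely tilts, and one verifies that all $v \in V_i$ lie within distance $O(e^{-3\rho\ell/2})$ of the single plane $P_i := g_Q^{-1}\bigl(k_i^{-1}\!\operatorname{span}(e_2, e_3)\bigr)$ at the midpoint $k_i$ of $J_i$.

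The main obstacle is converting this geometric proximity to the irrational plane $P_i$ into actual membership on a \emph{rational} plane approximating $P_i$. I would argue by contradiction: if too many vectors of $V_i$ in general linear position lie in the thin tube around $P_i$ but not on a common rational plane, then five such vectors, fed into Lemma~\ref{lem: five vectors}\ref{item: P-mat-2}, produce an integer matrix $P$ of controlled height with $\|Q - \lambda P\|$ small, i.e.\ case~(2) of Proposition~\ref{prop: main uppbd prop}, after which the lemma's conclusion is moot. Otherwise one takes $P_i$ to be the resulting rational plane and absorbs the off-plane vectors into the $e^{(1-2\rho)\ell}$ budget, completing case~(1). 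I expect the counting of integer points in the tube around $P_i$, and in particular the bookkeeping across the $\leq \eta^2 e^\ell$ clusters to stay within the $e^{(1-2\rho)\ell}$ budget, to be the most delicate point.
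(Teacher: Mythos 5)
Your Vitali-style maximal selection, reduction to multiplicity two, and use of Lemma~\ref{lem: plane and vectors} for the $\alpha_1/\alpha_2$ integral estimate all match the paper's proof. However, there is a genuine gap in your construction of the planes $P_i$, and you flag it yourself (``the most delicate point''): your route through the irrational plane near $g_Q^{-1}(k_i^{-1}\operatorname{span}(e_2,e_3))$ and a five-vector contradiction via Lemma~\ref{lem: five vectors}\ref{item: P-mat-2} does not close. For that lemma you would need to produce five vectors in general position satisfying the height bounds $|\det\gamma|\le\eta^{-D}$, $\|\gamma^{-1}v_i\|\le\eta^{-D}$, which you have not established; and the structure of your contradiction (either the vectors are on a rational plane, or Proposition~\ref{prop: main uppbd prop}(2) holds) is not clean because you would be invoking (2) inside the proof of a lemma that feeds into (1).

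The paper's argument at this step is both shorter and exact. Suppose $I_v$, $I_{v'}$, $I_{v''}$ all meet $J_i$. Since these intervals have comparable length ($\asymp e^{-\ell}\eta^{3/2}$) and are all within $O(|J_i|)$ of one another, there is a single $k$ in a fixed enlargement of $J_i$ at which $\|a_\sfd k g_Q v\|$, $\|a_\sfd k g_Q v'\|$, $\|a_\sfd k g_Q v''\|$ are all $\ll\eta$. Because $\det(a_\sfd k g_Q)=1$ and $v,v',v''\in\Z^3$,
\[
|\det(v,v',v'')| = |\det(a_\sfd k g_Q v,\, a_\sfd k g_Q v',\, a_\sfd k g_Q v'')| \ll \eta^3 < 1
\]
for $\ell$ large, so $\det(v,v',v'')=0$ and the three vectors are literally coplanar. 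Thus one takes $P_i=\operatorname{Span}\{v,v'\}$ for any two such vectors and all other $v''$ with $I_{v''}\cap J_i\neq\emptyset$ automatically lie in $P_i$; there is no tube, no rationalization step, and no separate bookkeeping. The singleton intervals (those $J_i$ meeting only one $I_v$) contribute at most $\#\mathcal W'<e^{(1-2\rho)\ell}$ exceptional vectors, which is the entire budget. You should replace your tube-and-five-vector argument with this integrality-of-the-determinant observation.
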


\begin{proof}
Suppose the claim in part~(2) fails. Recall from~\eqref{eq: size of Iv} that 
\be\label{eq: eq: size of Iv'}
|I_v|\asymp e^{(-\sfd-\ell)/2}\eta^{1/2}\asymp e^{-\ell}\eta^{3/2},
\ee
where we used $\sfd=(1+2\rho)\ell$, $\eta=e^{-\rho \ell}$, and $\|g_Qv\|\ll \|g_Q\|\|v\|\leq \eta e^{s}$.

In consequence, we may choose $\mathcal W'\subset\mathcal W_\ell$ with $\#\mathcal W'<\eta^{2}e^\ell$ so that $\{J_i\}=\{2I_v: v\in\mathcal W'\}$ covers $\cup_{\mathcal W_\ell} I_v$ with multiplicity at most two. Discard the intervals $J_i$ which intersect only one $I_v$. 
For every remaining interval, $J_i$, we have
\be\label{eq: finding the planes Pi}
\|a_{\sfd}k g_Qv\|\leq \eta\quad\text{for all $v$ with $I_v\cap J_i\neq \emptyset$ and all $k\in I_v$}.
\ee
We draw both conclusions of part~(1) in the lemma from~\eqref{eq: finding the planes Pi}. 
First note that~\eqref{eq: finding the planes Pi} implies that if $v,v'$ so that $I_{v}\cap J_i\neq \emptyset$ and $I_{v'}\cap J_i\neq \emptyset$, and we put $P_i={\rm Span}\{v, v'\}$, then for every $v''$ so that $I_{v''}\cap J_i\neq \emptyset$, $v''\in P_i$. Otherwise 
\[
1\leq |\det(a_{\sfd}k g_Qv, a_{\sfd}k g_Qv', a_{\sfd}k g_Qv''|\ll \eta^3
\]
which is a contradiction assuming $s$ is large enough. Moreover, 
\[
\#\{v: v\not\in\cup P_i\}\leq \eta^{2}e^\ell.
\]

To see the second claim in part~(1), note that if $v,v'$ are so that $I_{v}\cap J_i\neq \emptyset$ and $I_{v'}\cap J_i\neq \emptyset$, the conditions in Lemma~\ref{eq: finding the planes Pi} are satisfied. Hence
\[
 \int_{cJ_i}\alpha_1(a_{\sfd}kg_Q\Gamma)\diff\!k\ll_c e^{-\rho s} \int_{cJ_i}\alpha_2(a_{\sfd}kg_Q\Gamma)\diff\!k,
\]
as we claimed.
\end{proof}

If part~(1) in Lemma~\ref{lem: separation} holds, the proof of Proposition~\ref{prop: main uppbd prop} is complete. Thus We will assume part~(2) in Lemma~\ref{lem: separation} holds, for some $C$ which will be optimized later, for the rest of the proof.

Recall that $K\simeq\SO(2)$. 
Using this isomorphism, we identify $K$ with $[0,2\pi]$, 
and cover $K$ with half open intervals $\{I_j\}$ where $|I_j|=2\pi/N$ for $N=\lceil \eta^{5}e^{\ell}\rceil$.

\begin{lemma}\label{lem: choose many 5 tuples}
   Let $\ell$ and $\mathcal W$ be as in part~(2) in Lemma~\ref{lem: separation}.   
   For every $j$, let 
   \[
   \mathcal W_{\ell, j}=\{v\in\mathcal W_\ell: |I_v\cap I_j|\geq |I_v|/2\},
   \]
   and $\mathcal J_\ell=\{j: \#\mathcal W_{\ell, j}\geq \eta^{-2}\}$. Then 
   \[
   \#\mathcal J_\ell\geq \eta^{9}e^{\ell}
   \]
\end{lemma}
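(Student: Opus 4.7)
My plan is to prove Lemma~\ref{lem: choose many 5 tuples} by double counting, where the key inputs are the disjointness of the intervals $\{I_v : v \in \mathcal W\}$ provided by Lemma~\ref{lem: separation}(2), together with the favorable size comparison $|I_v| \ll |I_j|$ arising from the choice $N = \lceil \eta^5 e^\ell\rceil$.

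First I would collect the relevant scales. By~\eqref{eq: eq: size of Iv'} every $v \in \mathcal W_\ell$ has $|I_v| \asymp e^{-\ell} \eta^{3/2}$, while each partition interval has length $|I_j| = 2\pi/N \asymp \eta^{-5} e^{-\ell}$, so $|I_j|/|I_v| \asymp \eta^{-13/2}$. Setting $\hat{\mathcal W}_{\ell,j} := \mathcal W_{\ell,j} \cap \mathcal W$, the sets $\{I_v \cap I_j : v \in \hat{\mathcal W}_{\ell,j}\}$ are disjoint subsets of $I_j$, each of measure $\geq |I_v|/2$, which yields the uniform upper bound
\[
\#\hat{\mathcal W}_{\ell,j} \ll \eta^{-13/2}.
\]

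Next I would bound $\sum_j \#\hat{\mathcal W}_{\ell,j}$ from below using $\mathcal W$, and bound the contribution of the ``bad'' indices $j \notin \mathcal J_\ell$ from above using the defining inequality $\#\mathcal W_{\ell,j} < \eta^{-2}$. Since each $I_v$ is split by the partition $\{I_j\}$ into at most two arcs, at least one of length $\geq |I_v|/2$, every $v \in \mathcal W$ lies in some $\hat{\mathcal W}_{\ell,j}$; thus $\sum_j \#\hat{\mathcal W}_{\ell,j} \geq \#\mathcal W \geq \eta^2 e^\ell$. On the other hand, $\sum_{j \notin \mathcal J_\ell} \#\hat{\mathcal W}_{\ell,j} \leq N \cdot \eta^{-2} \ll \eta^3 e^\ell$, which is smaller than $\tfrac{1}{2}\eta^2 e^\ell$ once $\ell$ is large enough that $\eta = e^{-\rho\ell}$ is small.

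Combining these estimates gives $\sum_{j \in \mathcal J_\ell} \#\hat{\mathcal W}_{\ell,j} \geq \tfrac{1}{2}\eta^2 e^\ell$, and dividing by the uniform upper bound $O(\eta^{-13/2})$ on each $\#\hat{\mathcal W}_{\ell,j}$ yields
\[
\#\mathcal J_\ell \gg \eta^{2+13/2}\, e^\ell \;=\; \eta^{17/2}\, e^\ell \;\geq\; \eta^{9}\, e^\ell,
\]
the last inequality holding with room to spare since $\eta^{17/2}/\eta^9 = \eta^{-1/2} \to \infty$ as $\ell \to \infty$, which absorbs the multiplicative constants. I do not anticipate a significant obstacle: the argument is purely combinatorial, and the only point requiring care is to use the disjointness coming from $\mathcal W$ rather than from the a priori larger set $\mathcal W_\ell$ when deriving the per-$j$ upper bound on $\#\hat{\mathcal W}_{\ell,j}$.
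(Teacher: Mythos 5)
Your proof is correct and follows essentially the same double-counting argument as the paper: bound the contribution of bad indices $j$ using $N\cdot\eta^{-2}\ll\eta^3 e^\ell$, bound the per-$j$ count using disjointness of the $I_v$, and divide. You are actually a bit more careful than the paper, which invokes the disjointness for $\mathcal W_{\ell,j}$ directly even though Lemma~\ref{lem: separation}(2) only guarantees it for the subset $\mathcal W$; your introduction of $\hat{\mathcal W}_{\ell,j}=\mathcal W_{\ell,j}\cap\mathcal W$ cleanly repairs this, and your sharper exponent $\eta^{17/2}$ comfortably implies the stated $\eta^{9}$.
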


\begin{proof}
Let $\mathcal J'_\ell=\{j: \#\mathcal W_{\ell, j}\leq \eta^{-2}\}$. Then  
\[
\#\Bigl(\textstyle\bigcup_{\mathcal J'}\mathcal W_{\ell, j}\Bigr) \ll \eta^{3}e^\ell.
\]
Moreover, note that since $I_v\cap I_{v'}=\emptyset$ for all $v\neq v'\in\mathcal W$ and $|I_v|\asymp e^{-\ell}\eta^{3/2}$,  
\[
\#\mathcal W_{\ell, j}\ll |I_j|/\eta^{3/2} e^{-\ell}\ll \eta^{-7}. 
\]
These and $\#\mathcal W\geq \eta^2e^{\ell}$ imply that 
\[
\#\mathcal J_\ell =\#\{j: \#\mathcal W_{\ell, j}\geq \eta^{-1}\}\geq \eta^{9}e^\ell,
\]
as we claimed in the lemma.
\end{proof}

\begin{lemma}\label{lem: the matrices for 5 vectors}
    With the notation be as in Lemma~\ref{lem: choose many 5 tuples}, let $j\in\mathcal J_\ell$ and let  $\{v_1,\ldots, v_5\}\subset \mathcal W_{\ell, j}$ be so that $\Bigl\|\frac{v_\alpha}{\|v_\alpha\|}-\frac{v_\beta}{\|v_\beta\|}\Bigr\|\gg \eta^{-1/2}e^{-\ell}$. Then 
    \begin{enumerate}
        \item No three of $\{v_1,\ldots, v_5\}$ are co-planar.
        \item Let $\gamma=(v_1, v_2, v_3)$, then 
        \[
       1\leq |\det\gamma|\leq \eta^{-D}\quad\text{and}\quad \|\gamma^{-1}v_i\|\leq \eta^{-D}\;\text{ for $i=4,5$}. 
       \]
    \end{enumerate}
\end{lemma}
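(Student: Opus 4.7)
The plan is to exploit that the five vectors $v_i$ lie simultaneously close to the $Q$-null cone (since $|\sqf(g_Qv_i)|\leq\eta^A$) and that their $g_Q$-images point along a short arc of directions: the existence of $k_i\in I_{v_i}\cap I_j$ with $\|a_\sfd k_ig_Qv_i\|\leq\eta$ forces $g_Qv_i$ to be within $O(\eta)$ of the direction $k_i^{-1}e_3$, and the map $k\mapsto k^{-1}e_3$ traces an arc of the null cone of $\sqf$ of angular size $|I_j|\asymp\eta^{-5}e^{-\ell}$ as $k_i$ varies in $I_j$.

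For part (1), I would argue by contradiction. Suppose $v_{i_1},v_{i_2},v_{i_3}$ lie in a common $2$-plane $P$, so that $g_Qv_{i_\alpha}\in g_QP$. Since $\sqf$ has signature $(2,1)$, the restriction $\sqf|_{g_QP}$ is either (a) definite, (b) rank-$1$ degenerate, or (c) hyperbolic. In case (a), $|\sqf(g_Qv_{i_\alpha})|\gg e^{2\ell}$, contradicting the bound $\eta^A$. In cases (b) and (c), each of the three vectors (of norm $\sim e^\ell$ with $|\sqf|\leq\eta^A$) is forced within directional distance $O(\eta^{A/2}/e^\ell)$ of the unique null line of $\sqf|_{g_QP}$ (case b), or $O(\eta^A/e^{2\ell})$ of one of the two null lines (case c). In either case this distance is $\ll\eta^{-1/2}e^{-\ell}$ as soon as $A\geq 1$, so by the separation hypothesis at most one (case b) or two (case c) of the three vectors can lie in $g_QP$, contradicting the assumption of three coplanar vectors.

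For part (2), the lower bound $1\leq|\det\gamma|$ follows from (1) and the integrality of $v_1,v_2,v_3$. For the upper bound, I write $w_i=k_ig_Qv_i$, whose components satisfy $|w_{1,i}|\leq\eta e^{-\sfd}$, $|w_{2,i}|\leq\eta$, and $|w_{3,i}|\leq Ce^\ell$; in particular each $w_i$ is nearly parallel to $e_3$. Using $g_Q,k_1\in\SL_3$,
\[
\det(v_1,v_2,v_3)=\det(w_1,R_{\tau_2}w_2,R_{\tau_3}w_3),
\]
where $R_{\tau_i}=k_1k_i^{-1}$ is a rotation by $|\tau_i|\leq|I_j|\ll\eta^{-5}e^{-\ell}$. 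Expanding $R_{\tau_i}=I+\tau_iX+\tfrac{\tau_i^2}{2}X^2+\cdots$ and unfolding by trilinearity, the dominant contribution is a Vandermonde-type term
\[
\tfrac{1}{4\sqrt 2}\,w_{3,1}w_{3,2}w_{3,3}(\tau_1-\tau_2)(\tau_2-\tau_3)(\tau_3-\tau_1)
\]
of size $O(e^{3\ell}|I_j|^3)=O(\eta^{-15})$, while every term containing a factor of $w_{1,i}$ or $w_{2,i}$ yields $O(\eta^{-D'})$ for some $D'<15$ thanks to the stronger bounds on those components. This gives $|\det\gamma|\leq\eta^{-D}$ for an absolute $D$. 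For $\|\gamma^{-1}v_i\|$ with $i=4,5$, Cramer's rule expresses each coordinate as $\det(v_j,v_k,v_i)/\det\gamma$; the numerator satisfies the same $O(\eta^{-D})$ bound applied to triples from $\{v_1,\ldots,v_5\}$, and the denominator is $\geq 1$ by (1).

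The main obstacle is the bookkeeping behind $|\det\gamma|\leq\eta^{-D}$. The naive estimate $\|v_1\|\|v_2\|\|v_3\|\leq e^{3\ell}$ is vastly larger than the target, and the Vandermonde structure becomes visible only after several cancellations: the degree-zero term $\det(w_1,w_2,w_3)$ is of size $O(\eta^4)$ because $w_i\approx w_{3,i}e_3$ are nearly parallel; the linear and quadratic terms in the $\tau$-expansion vanish at leading order via triple-product identities such as $\det(n_1,Xn_1,n_1)=0$; and only the cubic order in the $\tau$-expansion reproduces the Vandermonde determinant. Each of the terms arising from the trilinear expansion must be separately verified to be $\leq\eta^{-15}$, and this bookkeeping — rather than any single estimate — is the technical bulk of the proof.
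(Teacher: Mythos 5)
Your part (1) has a gap that the paper's argument is specifically designed to avoid. You classify $\sfq|_{g_QP}$ into definite, rank-one degenerate, and hyperbolic, and then estimate the directional distance of $g_Qv_{i_\alpha}$ from the null locus of the restricted form. But all the implied constants in those estimates depend on how nondegenerate the restricted form is — precisely the quantity that degenerates as $g_QP$ approaches tangency with the null cone of $\sfq$, which is exactly the geometric regime one is in here (all three $g_Qv_{i_\alpha}$ have $|\sfq|\leq\eta^A\ll 1$ while having norm $\asymp e^\ell$). Concretely, if the Gram matrix of $\sfq$ on $g_QP$ in a basis $\{g_Qv_{i_1},g_Qv_{i_2}\}$ is $\left(\begin{smallmatrix}a&b\\b&d\end{smallmatrix}\right)$, you know $|a|,|d|\leq\eta^A$, but nothing a priori about $b=\sfq(g_Qv_{i_1},g_Qv_{i_2})$; your case (a) estimate "$|\sfq(g_Qv_{i_\alpha})|\gg e^{2\ell}$" fails when the smallest eigenvalue of the restricted form is tiny, and your case (c) estimate implicitly divides by (essentially) $|b|$. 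The paper avoids this by first proving the lower bound $|\sfq(g_Qv_\alpha, g_Qv_\beta)|\gg\eta^{-1/2}$ using the separation hypothesis combined with $\sfq(v'_\alpha)=\sfq(v'_\beta)=0$ for the null-cone approximants, and only then derives the contradiction algebraically from three putative coplanar vectors. This bilinear lower bound is what makes any "restricted form is nondegenerate with a uniform constant" statement true; your argument uses it implicitly without establishing it, so you should either prove that bound first or recast part (1) the way the paper does.

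For part (2) your route is genuinely different from the paper's and is feasible in outline, but it is heavier. The paper's argument is geometric: after replacing $g_Qv_i$ by the nearest null vectors $v_i'$ (with error $O(e^{-\ell}\eta^A)$), one observes that $v_1',v_2',v_3'$ lie on a single short arc (length $\asymp\eta^{-5}$) of the null-cone sphere of radius $\asymp e^\ell$, and the volume of a tetrahedron with vertices near such an arc is visibly $\ll\eta^{-\star}$; the Cramer part then reduces to solving $W'x'_i=v'_i$ which is well-conditioned for the same reason. Your Taylor-expansion of $\det(w_1, R_{\tau_2}w_2, R_{\tau_3}w_3)$ in the small rotation angles $\tau_i\ll\eta^{-5}e^{-\ell}$ does see the same cancellations (the degree-zero term is $O(\eta^4)$ because $w_i$ is nearly along $\mathsf e_3$, the dominant pieces of the linear and quadratic terms kill themselves by repeated columns, and the cubic term produces a Vandermonde factor), and the exponents check out, but you are explicit that you have not carried out the bookkeeping. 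The trade-off is that your expansion, once carried out, would give the cleaner explicit constant in $|\det\gamma|$, whereas the paper's geometric argument gets the bound with much less algebra and at the cost of a nonexplicit $\eta^{-\star}$. Given the lemma only needs an absolute exponent $D$, the paper's route is the more economical. If you want to keep the expansion approach, the main verifications you still owe are: (i) each term of the trilinear expansion of $\det$ in the $\tau$-expansion is separately $\ll\eta^{-15}$, checked against the three component scales $\eta e^{-\sfd}$, $\eta$, $Ce^\ell$, and (ii) that the triple-product identities you invoke for the linear and quadratic orders are correct term-by-term, since $\det(w_1,Xw_2,w_3)$ does not vanish — only its $\mathsf e_3$-dominant part does.
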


\begin{proof}
In the course of the proof, we will write $g$ for $g_Q$ to simplify the notation.
Let us write $gv_i=v'_i+u_i$ where $\|u_i\|\ll e^{-\ell}\eta^A$ and $\sqf(v'_i)=0$. Now for all $v_\alpha, v_\beta\in\{v_i\}$, we have 
\be\label{eq: sqf v and v'}
\sqf(gv_\alpha, gv_\beta)= \sqf(v'_\alpha,v'_\beta )+O(\eta^A). 
\ee
Moreover, since $\Bigl\|\frac{v_\alpha}{\|v_\alpha\|}-\frac{v_\beta}{\|v_\beta\|}\Bigr\|\gg \eta^{-1/2}e^{-\ell}$ and $v_\alpha, v_\beta\in \mathcal W_{\ell, j}$ we can write
$v'_\beta=\lambda_{\alpha\beta} v'_\alpha+w'_{\alpha\beta}$ where $|\lambda_{\alpha\beta}|=O(1)$ and  $\eta^{-1/2}\ll\|w_{\alpha\beta}\|\ll \eta^{-5}$ and $w'_{\alpha\beta}\perp v'_\alpha$ (with respect to the usual inner product). Hence,    
\[
gv_\beta=\lambda_{\alpha\beta} gv_\alpha+w_{\alpha\beta}\quad\text{where $w_{\alpha\beta}=w'_{\alpha\beta}+O(\eta^A)$.}
\]
More explicit, applying an element of $K$, we will assume $v'_\alpha=(\cox,0,0)$. Then $w'_{\alpha\beta}=(0, \coy,\coz)$ and $\eta^{-1/2}\ll\|w'_{\alpha\beta}\|\ll \eta^{-5}$. Moreover, $2\lambda_{\alpha\beta}\coz\cox-\coy^2=0$ since $\sqf(v'_\beta)=0$, this implies $|\coy|\gg \eta^{-1/4}$, thus $|\coz|\gg \eta^{-1/2}/\|v'_\alpha\|$. This implies $|\sqf(v'_\alpha, v'_\beta)|\gg \eta^{-1/2}$ which in view of~\eqref{eq: sqf v and v'} gives $|\sqf(gv_\alpha, gv_\beta)|\gg \eta^{-1/2}$. 

Now assume contrary to the claim in part~(1), that 
$gv_3=agv_1+bgv_2$ (the argument in other cases is similar). Then 
\[
a\lambda_{31}+b\lambda_{32}=1\quad\text{and}\quad aw_{31}+bw_{32}=0
\]
Solving for $a$ in the first equation, and replacing in the second we conlcude, 
\[
w_{31}=b(\lambda_{32}w_{31}-\lambda_{31}w_{32}).
\]
Since $|\lambda_{\alpha\beta}|=O(1)$ and $\eta^{-1/2}\ll\|w_{\alpha\beta}\|\ll \eta^{-5}$, we conclude that $\eta^{10}\ll |b|\ll \eta^{-10}$. This and $a\lambda_{31}+b\lambda_{32}=1$ imply $\eta^{10}\ll |a|\ll \eta^{-10}$. Recall that 
\[
|\sqf(gv_3)|=|a^2\sqf(gv_1)+b^2\sqf(gv_2)+2ab\sqf(gv_1,gv_2)|\leq \eta^A.
\] 
Since $|\sqf(gv_1, gv_2)|\gg \eta^{-1/2}$ and $\eta^{10}\ll |a|,|b|\ll \eta^{-10}$, we get a contradiction so long as $A\geq 40$ and $\eta$ is small enough. The proof of~(1) is complete.

We now turn to part~(2). Let $I\subset \{w\in\R^3: \|w\|=e^\ell, \sqf(w)=0\}$ be an interval with $|I|\asymp\eta^{-5}$. 
Let $\{w_1, w_2, w_3\}\in I$ satisfy $\|w_i-w_j\|\gg \eta^{-1/2}$. Let $W=(w_1, w_2, w_3)$ which is non-singular by part~(1). A direct computation shows that if 
$v'\in\{\lambda w: \lambda\in[e^{-2}, e^2], w\in I\}$, then $Wx'=v'$ has a solution $\|x'\|\ll \eta^{-\star}$. 

As we did in part~(1), let us write $gv_i=v'_i+u_i$ where $\sqf(v'_i)=0$ and $\|u_i\|\leq e^{-\ell}\eta^A$. Since $gv_1, gv_2, gv_3$ lie in the $e^{-\ell}\eta^A$ neighborhood of an interval $I\subset \{w\in\R^3: \|w\|=e^\ell, \sqf(w)=0\}$ with $|I|\asymp \eta^{-5}$. Thus 
the volume of the tetrahedron spanned by $gv_1, gv_2, gv_3$ is $\ll \eta^{-\star}$. This and part~(1) give 
\be\label{eq: det gamma}
1\leq |\det\gamma|\ll \eta^{-\star}
\ee
Now applying the above discussion with $W'=(v_1', v_2', v_3')$ and $v'=v'_4, v'_5$, there are $x_4',x_5'\in\R^3$ satisfying $\|x'_i\|\ll \eta^{-\star}$ so that $W'x_i'=v'_i$. 

Since $g\gamma=W'+E$ where $\|E\|\ll e^{-\ell}\eta^A$. For $i=4,5$, let 
$x_i=x'_i+\bar u$ where $\bar u=(g\gamma)^{-1}(u_i-Ex'_i)$, then 
\begin{align*}
g\gamma x_i&=g\gamma x'_i+g\gamma\bar u\\
&=W'x'_i+Ex'_i+u_i-Ex'_i=v'_i+u_i=gv_i
\end{align*}
moreover, $\|Ex'_i\|\ll e^{-\ell}\eta^{A-\star}$, $\|u_i\|\leq e^{-\ell}\eta^A$, and 
$\|(g\gamma)^{-1}\|\ll e^{\ell}\eta^{-\star}$. Thus $\|\bar u\|\ll \eta^{A-\star}$. 
Altogether, we conclude that $\|x_i\|=\|\gamma^{-1}v_i\|\ll\eta^{-\star}$. This and~\eqref{eq: det gamma}, complete the proof of part~(2). 
\end{proof}

\begin{lemma}\label{lem: W ell j and periodic H orbits}
    Let the notation be as in Lemma~\ref{lem: choose many 5 tuples}. 
    Then for every $j\in\mathcal J_\ell$, there exists some $k_j\in K$ so that 
    \[
    a_\ell k_j g_Q\Gamma= \epsilon_j g_j
    \]
    where $\|\epsilon_j\|\ll \eta^{A-\star}$ and $\vol(Hg_j\Gamma)\ll \eta^{-\star}$. 
    
    Moreover, we can choose the collection $\{k_j\}$ as above so that the following holds. There exists $\mathcal J'_\ell\subset\mathcal J_\ell$ with $\#\mathcal J'_\ell\gg \eta^{\star}e^\ell$ so that if $j, j'\in\mathcal J'_\ell$ are distinct, then $\|k_j-k_{j'}\|\gg e^{-\ell}$. 
\end{lemma}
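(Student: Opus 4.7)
The plan is to extract from the abundance of almost-null integer vectors in $\mathcal{W}_{\ell,j}$ an approximate rational structure, and then realize this as a periodic $H$-orbit of small volume that lies close to $a_\ell k_j g_Q \Gamma$ for a suitable $k_j \in I_j$.

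First I would select, for each $j \in \mathcal J_\ell$, five vectors $v_1,\dots,v_5 \in \mathcal W_{\ell,j}$ satisfying the angular separation hypothesis $\|v_\alpha/\|v_\alpha\| - v_\beta/\|v_\beta\|\| \gg \eta^{-1/2} e^{-\ell}$ required by Lemma~\ref{lem: the matrices for 5 vectors}. Vectors in $\mathcal W_{\ell,j}$ normalize to lie in a thin neighbourhood of an arc on the null cone of $\sqf$ of angular length $\asymp |I_j| \asymp \eta^{-5} e^{-\ell}$, namely the image of $I_j$ under $k \mapsto k^{-1} e_3$ (which is the $a_\sfd$-contracted direction). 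Since the primitive integer vectors of norm $\asymp e^\ell$ have mutual angular distance $\gg e^{-\ell}$, a greedy/cell-decomposition argument using $\#\mathcal W_{\ell,j} \geq \eta^{-2}$ produces the five required vectors. Lemma~\ref{lem: the matrices for 5 vectors} then guarantees no three of them are coplanar and that $\gamma_0 := (v_1,v_2,v_3) \in \Mat_3(\Z)$ satisfies $1 \leq |\det \gamma_0| \leq \eta^{-D}$ together with $\|\gamma_0^{-1} v_i\| \leq \eta^{-D}$ for $i=4,5$.

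Next I would apply Lemma~\ref{lem: five vectors}\ref{item: P-mat-1} to obtain $P \in \Mat_3(\Z)$ with $\|P\| \ll \eta^{-\star}$ and $\|\gamma_0^t Q \gamma_0 - P\| \ll \eta^{A-\star}$. Rewriting this as $\|(g_Q\gamma_0)^t \sqf (g_Q\gamma_0) - P\| \ll \eta^{A-\star}$ and noting that the map $g \mapsto g^t \sqf g$ is a submersion onto forms of fixed signature, an implicit-function/linearization argument (solving $X^t Q_0 + Q_0 X = \lambda P - Q_0$ modulo $\Lie(\SO(Q_0))$ with $Q_0 = \gamma_0^t Q \gamma_0$ and $\lambda = (\det P)^{-1/3}$) yields $\tilde g = g_Q\gamma_0(I+X) \in \SL_3(\R)$ with $\tilde g^t \sqf \tilde g = \lambda P$ exactly. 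Setting $g_j := \tilde g \gamma_0^{-1}$ one has $g_j = g_Q(I+Y)$ with $Y = \gamma_0 X \gamma_0^{-1}$ of norm $\ll \eta^{A-\star}$. The $H$-orbit $Hg_j\Gamma$ is periodic because $g_j^{-1} H g_j = \gamma_0 \SO(P)^\circ \gamma_0^{-1}$ is a $\Q$-subgroup (being a conjugate, by the rational matrix $\gamma_0 \in \GL_3(\Q)$, of the $\Q$-defined group $\SO(P)^\circ$), so $g_j^{-1} H g_j \cap \Gamma$ is arithmetic and a lattice in $g_j^{-1} H g_j$. Lemma~\ref{lem: volume and height} together with $\|P\|, \|\gamma_0\| \leq \eta^{-\star}$ then yields $\vol(Hg_j\Gamma) \ll \eta^{-\star}$.

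To conclude, pick any $k_j \in I_j$. Since $a_\ell k_j \in H$, the point $a_\ell k_j g_j \Gamma$ lies on the periodic orbit $Hg_j\Gamma$, and the identity
\[
a_\ell k_j g_Q = \bigl[\,\Ad(a_\ell k_j g_Q)\,\bigl((I+Y)^{-1}-I\bigr)\cdot I + I\,\bigr]\cdot (a_\ell k_j g_j)
\]
expresses $a_\ell k_j g_Q = \epsilon_j \cdot (a_\ell k_j g_j)$ with $\|\epsilon_j - I\| \ll \|\Ad(a_\ell k_j g_Q)\|\cdot\|Y\| \ll \eta^{A-\star}$, establishing the first assertion. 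The ``moreover'' part is essentially free: the $\{I_j\}$ partition $K$ into arcs of length $\asymp \eta^{-5} e^{-\ell} \gg e^{-\ell}$, so any two choices $k_j \in I_j$, $k_{j'} \in I_{j'}$ with $j\ne j'$ are automatically separated by $\gg e^{-\ell}$; one may therefore take $\mathcal J'_\ell := \mathcal J_\ell$, which has size $\gg \eta^9 e^\ell$ by Lemma~\ref{lem: choose many 5 tuples}.

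The main technical obstacle is propagating the initial approximation $\|\gamma_0^t Q \gamma_0 - P\| \ll \eta^{A-\star}$ through all the changes of basis: the solution $X$ is bounded only in terms of $\|\gamma_0^{-1}\|^2$, the passage to $Y$ costs another factor of $\|\gamma_0\|\cdot\|\gamma_0^{-1}\|$, and the final conjugation by $a_\ell$ contributes $\|\Ad(a_\ell)\| \ll e^{2\ell}$. All of these losses are polynomial in $\eta^{-1}$ (since $e^\ell = \eta^{-1/\rho}$), so they are absorbed into the implicit $\star$ in the exponent provided $A$ is sufficiently large relative to $1/\rho$ --- precisely the regime in which Proposition~\ref{prop: main uppbd prop} is stated.
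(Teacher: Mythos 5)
Your proof follows the same broad skeleton as the paper — choose five vectors via Lemma~\ref{lem: the matrices for 5 vectors}, produce a rational $P$ via Lemma~\ref{lem: five vectors}, identify a periodic $H$-orbit of volume $\ll\eta^{-\star}$ — but the crucial step of producing $k_j$ and bounding $\|\epsilon_j-I\|$ is where your argument has a genuine gap.

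You take $k_j$ to be an arbitrary point of $I_j$, write $g_j=g_Q(I+Y)$ with $\|Y\|\ll\eta^{A-\star}$, and deduce $a_\ell k_j g_Q=\epsilon_j(a_\ell k_jg_j)$ with $\epsilon_j=(a_\ell k_jg_j)(I+Y)^{-1}(a_\ell k_jg_j)^{-1}$. This conjugation costs a factor $\|\Ad(a_\ell k_jg_j)\|\asymp e^{2\ell}$ (the top weight of $\Ad(a_\ell)$ on $\mathfrak{sl}_3$ is $e^{2\ell}$). With $\eta=e^{-\rho\ell}$ this is $\eta^{-2/\rho}$, so your bound is really $\|\epsilon_j-I\|\ll\eta^{A-2/\rho-\star}$. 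You acknowledge this and say the $2/\rho$ can be absorbed provided $A$ is large relative to $1/\rho$, but that is precisely the opposite of the regime in which the lemma and Proposition~\ref{prop: main uppbd prop} are used: there $A$ (and the eventual $D_1'$) are fixed absolute constants, and $\rho$ is allowed to be arbitrarily small — in the application to Theorem~\ref{thm: quantitative Oppenheim}, $\rho=\delta$ is a free parameter that tends to $0$. So the bound you obtain becomes vacuous exactly when it is needed.

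The paper avoids this by never conjugating the error through $a_\ell$. It first produces $\hat g_j$ with $\hat g_j^t\sqf\hat g_j=P$ and $\|\hat g_j\|\ll\eta^{-\star}$ (bounded norm, not close to $g_Q$), then uses the identity $(g_Q\gamma_1)^t\sqf(g_Q\gamma_1)=P+E$ with $\|E\|\ll\eta^{A-\star}$ to produce $h_j\in H=\SO(\sqf)$ and $\hat\epsilon_j$ with $\|\hat\epsilon_j-I\|\ll\eta^{A-O(1)}$ so that $h_jg_Q\gamma_1=\hat\epsilon_j\hat g_j$. Here $\hat\epsilon_j$ is already small — no $e^\ell$ factor, since only $\|\hat g_j^{-1}\|^2\ll\eta^{-\star}$ enters. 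The size mismatch between $\|g_Q\gamma_1\|\asymp e^\ell$ and $\|\hat g_j\|\ll\eta^{-\star}$ is absorbed into $h_j$, which is then decomposed via Cartan as $h_j=k_j'\hat ak_j$ with $\hat a=a_\ell a_\tau$, $|\tau|=\star|\log\eta|$. Multiplying on the left by $(k_j'a_\tau)^{-1}$ gives $a_\ell k_jg_Q\gamma_1=\epsilon_jg_j$ where $\epsilon_j=(k_j'a_\tau)^{-1}\hat\epsilon_j(k_j'a_\tau)$ is conjugated only by $k_j'a_\tau$, which has norm $\ll\eta^{-\star}$, not $e^{\ell}$. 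This is the structural point your argument misses: $k_j$ is not arbitrary in $I_j$; it is forced by the $KAK$ decomposition, and the decomposition is designed so that the large part $a_\ell$ is extracted to the left of $k_j$ rather than conjugating the error.

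A secondary issue: your claim that distinct $j\neq j'$ automatically give $\|k_j-k_{j'}\|\gg e^{-\ell}$ because they lie in different intervals $I_j$ is not quite right — adjacent half-open intervals can contain points arbitrarily close to each other. The paper instead argues that if $\|k_j-k_{j'}\|\ll e^{-\ell}$ then (conjugating through $a_\ell$ restricted to $H$, where the top weight is $e^\ell$, not $e^{2\ell}$) the two triples $\gamma_1(j),\gamma_1(j')$ are close in norm, hence equal, forcing $j=j'$; passing to $\mathcal J_\ell'$ then gives the separation. Your version would need to fix, e.g., $k_j$ at the center of $I_j$ to be correct, but even then the argument only makes sense once the $k_j$ is pinned down correctly, which as noted it is not.
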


\begin{proof}
Fix some $j\in\mathcal J_\ell$, and let $\{v_1,\ldots, v_5\}\subset\mathcal W_{\ell,j}$ be as in Lemma~\ref{lem: the matrices for 5 vectors}.  Then conditions of Lemma~\ref{lem: five vectors} are satisfied with $\{v_1,\ldots, v_5\}$. Let 
\[
\gamma_1=\gamma_1(j)=(v_1, v_2, v_3)
\]
By the conclusion of Lemma~\ref{lem: five vectors} thus
$\gamma_1^tQ\gamma_1=P+E$ where 
\[
P\in \Mat_3(\Z), \quad \|P\|\ll \eta^{-\star},\quad\text{and}\quad \|E\|\ll \eta^{A-\star}.
\]

We conclude that there exists $\hat g_j\in G$, with $\|\hat g_j\|\ll \eta^{-\star}$ so that $(\hat g_j)^{t}\sqf \hat g_j=P$. This implies: $H\hat g_j\Gamma$ is a periodic orbit with $\vol(H\hat g_j\Gamma)\ll \eta^{-\star}$. 

Moreover, we conclude from the above that there exists some $h_j=k_j'\hat ak_j\in\SO(\sqf)$ and $\hat\epsilon_j\in G$ with $\|\hat\epsilon_j-I\|\ll \eta^{A-O(1)}$ so that 
\be\label{eq: k a k and gamma1}
k'_j\hat ak_jg_Q\gamma_1=\hat\epsilon_j \hat g_j.
\ee
Since $\|\hat g_j\|\ll \eta^{-\star}$, we conclude that 
\[
e^{\ell}\eta^{\star}\ll\|\gamma_1\|\eta^{\star}\ll\|\hat a\|\ll \|\gamma_1\|\eta^{-\star}\ll e^{\ell}\eta^{-\star}.
\]
Let us write $\hat a=a_\ell a_{\tau}$ where $|\tau|=\star|\log \eta|$. Multiplying~\eqref{eq: k a k and gamma1} by $(k_j'a_\tau)^{-1}$, we get 
\[
a_\ell k_jg_Q\gamma_1=\epsilon_j g_j\quad\text{where $\|\epsilon_j-I\|\ll \eta^{A-\star}$ and $g_j=(k_j'a_\tau)^{-1}\hat g_j$}. 
\]
This establishes the first claim. 

To see the second claim, let $j,j'\in\mathcal J_\ell$. Repeat the above argument with $j$ and $j'$, and let $k_j, k_{j'}$ and $\gamma_1(j)$ and $\gamma_1(j')$ be the elements as above. If $\|k_j-k_{j'}\|\ll e^{-\ell}$, then 
\[
a_\ell k_{j'}g_Q=g_{jj'} a_\ell k_{j}g_Q\quad\text{ where} \quad \|g_{jj'}\|\ll 1. 
\]
This implies that 
\[
\|(\gamma_1(j))^{-1}\gamma_1(j')\|\ll \eta^{-\star}
\]
Since $\#\mathcal J_\ell\gg \eta^{9} e^{s}$, the collection of $\gamma_1(j)$ for $j\in\mathcal J_\ell$ is distinct, the final claim in the lemma follows. 
\end{proof}

\begin{proof}[Proof of Proposition~\ref{prop: main uppbd prop}]
Recall that $\eta=e^{-\rho \ell}$. Let $\mathcal J'_\ell$ and $\{k_j:j\in\mathcal J'_\ell\}$ be as in Lemma~\ref{lem: W ell j and periodic H orbits}. Then $\#\mathcal J'_\ell\gg \eta^{C}e^\ell$, and for all $j\in \mathcal J'_\ell$, 
\[
a_\ell k_jg_Q\Gamma=\epsilon_j x_j
\]
where $\vol(Hx_j)\leq \eta^{-D_1'}$ and $\|\epsilon_j-I\|\leq \eta^{A-D_1'}$, where $D_1'$ is absolute.  Moreover, $|k_j-k_{j'}|\gg e^{-\ell}$ for all $j\neq j'$. 

Let $\mathcal N=\{\hat g x: \vol(Hx)\leq \eta^{-D_1'}, \|\hat g\|\leq \eta^{A-D_1'}\}$.
We conclude from the above that  
\[
|\{k\in K: a_\ell kg_Q\Gamma\in\mathcal N\}|\gg \eta^{D_1'}.
\]
If $A$ is large enough, we conclude that 
\[
g_Q=\epsilon g
\]
where $\vol(Hg\Gamma)\ll \eta^{-D_1'}$, and $\|\epsilon-I\|\ll \eta^{-\star}e^{-\ell}$, see e.g.~\cite[Prop.\ 4.6]{LMW22}.  

In particular, $g^t\sqf g$ is rational form with height $\leq\eta^{-D_1}=e^{-D_1\rho \ell}$, and
\[
\|Q-g^t\sqf g\|\ll \|g\|^\star\|\epsilon-1\|\leq e^{(-1+D_1\rho)\ell},
\]
as we claimed. 
\end{proof}

\section{Proof of Proposition~\ref{prop: L1+epsilon upp bd}}\label{sec: proof of prop L1+epsilon}
We will complete the proof of Proposition~\ref{prop: L1+epsilon upp bd} in this section; the proof relies on Proposition~\ref{prop: main uppbd prop} and Theorem~\ref{thm: EMM-alpha bound}. 
If part~(2) in Proposition~\ref{prop: main uppbd prop} holds, with some $\ell\geq \rho t-1$, then part~(2) in Proposition~\ref{prop: L1+epsilon upp bd} holds and the proof is complete. Thus we assume part~(2) in Proposition~\ref{prop: main uppbd prop} does not hold for any $\ell\geq \rho t-1$. 

\subsection*{Notation for the proof and Schmidt's Lemma}
Recall that we identify $\wedge^2\R^3$ with the dual of $\R^3$, which will be identified with $\R^3$ using the standard inner product. Then for every $g\in G$
\be\label{eq: action on wedge 2}
gv_1\wedge gv_2= g^* (v_1\wedge v_2)
\ee
where $g^*=(g^t)^{-1}$; note that $H$ is invariant under this involution. Also note that $Q^*(v)=\sqf(g^* v)$ for any $v\in\R^3$.

For every $u\in\wedge^2\R^3$, we will write 
\[
I_u^{*,t}(\sigma)=\{k\in K: \|(a_tkg_Q)^* u\|\leq \sigma\},
\]
and put $I_u^*:=I_u^{*,t}(e^{-\rho t})$. We will also write $I_v$ for $I_v^t(e^{-\rho t})$. 

We emphasize that $I_v$ here is shorthand for $I_v^{t}(e^{-\rho t})$, and is does not represent a shorthand notation for $I_v^{(1+2\rho \ell)}(e^{-\rho\ell})$, which was used in the proof of Proposition~\ref{prop: main uppbd prop}.  

By a variant of Schmidt's Lemma, see also~\cite[Lemma 3.1]{EMM-Upp}, and the definition of $\tilde f$, we have 
\be\label{eq: Schmidt's Lemma}
\tilde f(a_tk; g_Q)\ll \tilde\alpha(a_tk; g_Q)
\ee
where $\tilde\alpha(a_tk; g_Q)=\max\{\tilde\alpha_1(a_tk;g_Q), \tilde\alpha_2(a_tk, g_Q)\}$ and 
\begin{align*}
&\tilde\alpha_1(a_tk;g_Q)=\Bigl\{\norm{a_tkg_Qv}^{-1}: 0\neq v\in \mathcal N_{Q,t}\Bigr\}\\ 
&\tilde\alpha_2(a_tk;g_Q)=\Bigl\{\norm{(a_tkg_Q)^*u}^{-1}: 0\neq u\in \mathcal N_{Q^*,t}\Bigr\}.
\end{align*}

Recall the set $\tilde{\mathcal C}_t=\{k\in K: \tilde f(a_tk; g_Q)\geq Ae^{A\rho t}\}$.
Choosing $A$ large enough to account for the the implied multiplicative constant in~\eqref{eq: Schmidt's Lemma}, 
\[
\tilde{\mathcal C}_t\subset {\mathcal C}_{t,1}\cup \mathcal C_{t,2}.
\]
where $\mathcal C_{t,i}=\{k\in K: \tilde\alpha_i(a_tk; g_Q)\geq e^{A\rho t}\}$ for $i=1,2$. 
In view of~\ref{eq: Schmidt's Lemma} and the above, thus, it suffices to control 
\[
\int_{\mathcal C_{t,1}}\tilde\alpha_1(a_tk; g_Q)\diff\!k\quad\text{and}\quad \int_{\mathcal C_{t,2}}\tilde\alpha_2(a_tk; g_Q)\diff\!k
\]

Let us set   
\begin{align*}
    \mathcal W_{t,1}&:=\{v\in\mathcal N_{Q,t}\cap\mathcal P: \norm{v}\leq e^t, \exists k\in I_v, \|a_tkg_Qv\|\leq e^{-A\rho t}\},\text{ and}\\
    \mathcal W_{t,2}&:=\{u\in\mathcal N_{Q^*,t}\cap\mathcal P: \norm{v}\leq e^t, \exists k\in I_u^*, \|(a_tkg_Q)^*u\|\leq e^{-A\rho t}\}.
\end{align*} 
Note that if $v\in \mathcal W_{t,1}$, then $I_v\cap\mathcal C_t\neq \emptyset$, thus $\|g_Qv\|\leq e^{(1-A\rho)t}$, similarly for covectors in $\mathcal W_{t,2}$. This implies $\|v\|\ll e^{(1-A\rho)t}$. For $i=1,2$ and every integer $\ell\leq t-(A-1)\rho t:=t'$, let 
\[
\mathcal W_{t,i}(\ell)=\mathcal W_{t,i}\cap\mathcal P_\ell.
\]

\subsection*{The sets $\mathcal W_{t,i}'(\ell)$}
We will work with $\mathcal W_{t,1}$, the argument for $\mathcal W_{t,2}$ is similar.

Suppose now $v\in\mathcal W_{t,1}(\ell)$ for some $\ell\leq 1+(1-A\rho)t$. Recall from the definition of $\mathcal W_{t,1}$ that  
\[
\norm{a_tkg_Qv}\leq e^{-A\rho t}, \quad\text{for some $k\in I_v$};
\]
thus, $\absolute{\sqf(a_tkg_Qv)}\ll e^{-2A\rho t}\leq e^{-A\rho t}$.
Since $a_tk$ preserves the form $\sqf$,  
\be\label{eq: Wti ell almost null}
\absolute{\sqf(g_Qv)}\leq e^{-A\rho t}.
\ee

Moreover, since $v$ does not belong to any $(\rho, A,t)$-exceptional line we have $\norm{v}\geq e^{\rho t}$. Since $v\in\mathcal W_{t,1}(\ell)$, we conclude that $\ell\geq \rho t-1$.

For $i=1,2$, let 
\[
\mathcal L_i=\{\rho t-1\leq \ell\leq t': \#\mathcal W_{t,i}(\ell)\geq  e^{(1-\rho)\ell}\},
\]
and let $\mathcal L_i'=\{\rho t-1\leq \ell\leq t': \ell\not\in\mathcal L_i\}$.  

In view of~\eqref{eq: Wti ell almost null}, we have 
\[
\mathcal W_{t,i}(\ell)\subset\{v\in\mathcal P_\ell: \absolute{\sqf(g_Qv)}\leq e^{-A\rho\ell}\}.
\]
This and the definition of $\mathcal L_1$, imply that for every $\ell\in\mathcal L_1$, we have 
\[
\#\{v\in\mathcal P_\ell: \absolute{\sqf(g_Qv)}\leq e^{-A\rho\ell}\}\geq e^{(1-\rho)\ell}. 
\]
Thus, the assumptions of Proposition~\ref{prop: main uppbd prop} holds for any $\ell\in\mathcal L_1$. If part~(2) in Proposition~\ref{prop: main uppbd prop} held, the proof of Proposition~\ref{prop: L1+epsilon upp bd} would be complete. Therefore, we may assume that part~(1) in Proposition~\ref{prop: main uppbd prop} holds.
That is: There exist planes $\{P_i: 1\leq i\leq N\}$, for some $N\leq e^{(1-2\rho)\ell}$, so that 
    \[
   \#\{v\in\mathcal P_\ell\setminus(\cup P_i) : |\sqf(g_Qv)|\leq e^{-A\rho \ell} \}\leq e^{(1-2\rho)\ell}.
    \]
     Moreover, there are intervals $\{J_i: 1\leq i\leq N\}$, with multiplicity at most two, so that $I_{v}^{(1+2\rho)\ell}(e^{-\rho \ell})\cap J_i\neq\emptyset$ for such $v$; thus $I_v\subset cJ_i=:\hat J_i$ for an absolute $c\geq 1$. Furthermore, recall that   
    \[
     \int_{\hat J_i}\alpha_1(a_{(1+2\rho)\ell}kg_Q\Gamma)\diff\!k\ll e^{-\rho \ell}\int_{ \hat J_i}\alpha_2(a_{(1+2\rho)\ell}kg_Q\Gamma)\diff\!k, 
     \]
     and note that $\{\hat J_i\}$ has bounded multiplicity depending on $c$. 

For every $\ell\in\mathcal L_i'$, let $\mathcal W'_{t,i}(\ell)=\mathcal W_{t,i}(\ell)$, and for
every $\ell\in\mathcal L_i$, let $\mathcal W'_{t,i}(\ell)=\mathcal W_{t,i}\setminus (\cup P_i)$. In either case 
\be\label{eq: number of points in W' ell}
\#\mathcal W'_{t,i}(\ell)\leq e^{(1-\rho)\ell}. 
\ee
We put $\mathcal W'_{t,1}=\bigcup_\ell\mathcal W'_{t,i}(\ell)$.

The task is now two folds: we first show that for all $\rho t-1\leq \ell\leq t'$  
\[
\sum_{\mathcal W_{t,1}'}\int_{I_v} \norm{a_tkg_Qv}^{-1-\delta}\diff\!k\ll e^{-\star \rho^2t};
\]
this is an easy consequence of~\eqref{eq: number of points in W' ell} and Lemma~\ref{lem: linear algebra 2}.
  
Then we show that for such $\ell$, 
\[
\int_{\cup \hat J_i}\alpha_1(a_tkg_Q\Gamma)\ll e^{-\star\rho^2t};
\] 
this step is more involved. It relies on Proposition~\ref{prop: main uppbd prop},
Theorem~\ref{thm: EMM-alpha bound}, and ideas similar to~\cite{EMM-Upp}.

Summing these estimates over all $\rho t-1\leq \ell\leq t'$, 
we get $\int_{\mathcal C_{t,1}}\alpha_1\ll e^{-\star \rho^2t}$ as we wanted to show. 
The details follow.

\subsection*{Contribution of $\mathcal W'_{t,i}$}
Applying~\cite[Lemma 5.5]{EMM-Upp} we have  
\[
\sum_{\mathcal W_{t,1}'}\int_{I_v} \norm{a_tkg_Qv}^{-1}\diff\!k\ll \sum_{\lfloor\rho t-1\rfloor}^{\lceil t'\rceil} \sum_{\mathcal W_{t,1}'(\ell)} \norm{v}^{-1}
\]
Using~\eqref{eq: number of points in W' ell}, one obtains 
\[
\begin{aligned}
\sum_{\lfloor\rho t-1\rfloor}^{\lceil t'\rceil} \sum_{\mathcal W_{t,1}'(\ell)} e^{\delta t} \norm{v}^{-1}\ll \sum_{\ell\geq \rho t-1} e^{-\ell} e^{(1-\rho)\ell}\ll e^{-\rho\ell/2}
\end{aligned}
\]
Altogether, we conclude 
\be\label{eq: control W t'}
\sum_{\mathcal W_{t,1}'}\int_{I_v} \norm{a_tkg_Qv}^{-1-\delta}\diff\!k\leq e^{-\rho^2t/2}
\ee
Similarly, $\sum_{\mathcal W_{t,2}'}\int_{I_u^*} \norm{(a_tkg_Q)^*u}^{-1-\delta}\diff\!k\leq e^{-\rho^2t/2}$.

\subsection*{Contribution of $\mathcal W_{t,i}(\ell)\setminus\mathcal W'_{t,i}(\ell)$}
Again we will work with $i=1$, the argument for $i=2$ is similar. 
Let $\ell\in\mathcal L_1$ otherwise $\mathcal W_{t,1}(\ell)=\mathcal W_{t,1}'(\ell)$.  
As it was mentioned above, in view of part~(1) in Proposition~\ref{prop: main uppbd prop}, 
there are intervals $\{J_i: 1\leq i\leq N\}$, with multiplicity at most two, so that 
     $I_{v}^{(1+2\rho)\ell}(e^{-\rho \ell})\cap J_i\neq\emptyset$ for every $v\in \mathcal W_{t,1}(\ell)$. This implies $I_v\subset cJ_i=:\hat J_i$ for every such $v$, where $c\geq 1$ is absolute. Furthermore, recall that 
    \be\label{eq: main prop 1 use}
    \int_{\hat J_i}\alpha_1(a_{(1+2\rho)\ell}kg_Q\Gamma)\diff\!k\ll e^{-\rho \ell}\int_{ \hat J_i}\alpha_2(a_{(1+2\rho)\ell}kg_Q\Gamma)\diff\!k,
    \ee 
     and note that $\{\hat J_i\}$ has bounded multiplicity depending on $c$. 
\begin{sublemma}
We have 
\be\label{eq: alpha 1 at t and s}
\sum_{\hat J_i}\int_{\hat J_i}\alpha_1(a_tkg_Q\Gamma)\diff\!k\ll e^{-\star\rho \ell}\biggl(1+\sum_{\hat J_i}\int_{\hat J_i}\alpha_2(a_{(1+2\rho)\ell}kg_Q\Gamma)\diff\!k\biggr)
\ee
\end{sublemma}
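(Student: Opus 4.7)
Set $s := t-(1+2\rho)\ell$. Since $\ell \le t' = t - (A-1)\rho t$, we have $s \ge (A-3)\rho t - O(\rho^2 t) \gg \rho\ell$ once $A$ is large enough, so there is a substantial amount of flow between the two times that we can exploit. The overall strategy is to bridge $\alpha_1(a_t kg_Q\Gamma)$ for $k\in\hat J_i$ back to $\alpha_1$ at time $(1+2\rho)\ell$ up to an error of order $e^{-\star\rho\ell}$ from $|\hat J_i|$, and then invoke~\eqref{eq: main prop 1 use} on the resulting expression to convert to $\alpha_2$ at time $(1+2\rho)\ell$, absorbing the extra $e^{-\rho\ell}$ gain.

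For the bridging step, I would decompose $\alpha_1(a_t kg_Q\Gamma)=\|a_t kg_Q v(k)\|^{-1}$ according to whether the maximizing vector $v(k)\in \Z^3$ was already short at time $(1+2\rho)\ell$ (\emph{persistent}) or only becomes short at time $t$ (\emph{emergent}). Persistent vectors satisfy $\|a_{(1+2\rho)\ell}kg_Q v(k)\|\le e^{s}\|a_t kg_Q v(k)\|$, so their contribution to $\int_{\hat J_i}\alpha_1(a_t k g_Q\Gamma)\,dk$ is bounded (after unfolding with a Schmidt-type sum over $v$ and using $\|a_sw\|\ge e^{-s}\|w\|$) by $e^{s}\int_{\hat J_i}\alpha_1(a_{(1+2\rho)\ell}kg_Q\Gamma)\,dk$ in the worst case, but the gain comes because emergent vectors are forced to be nearly aligned with the $a_s$-contracting direction and in particular must satisfy $|Q(v)|\le e^{-\star s}$. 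Such almost-null $v$ are controlled in two ways: those contained in the exceptional planes $P_i$ produced by Proposition~\ref{prop: main uppbd prop}(1) contribute to $\alpha_2(a_{(1+2\rho)\ell}kg_Q\Gamma)$ via the dual covector $u_{P_i}$ satisfying $\|(a_{(1+2\rho)\ell}kg_Q)^*u_{P_i}\|\le e^{-\rho\ell}$, and those outside all the $P_i$ are sparse ($\le e^{(1-2\rho)\ell}$ of them in $\mathcal P_\ell$) so can be summed directly.

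For each emergent $v$ one estimates $\int_{\hat J_i}\|a_t kg_Q v\|^{-1-\delta}dk$ using Lemma~\ref{lem: linear algebra}: writing $a_t=a_sa_{(1+2\rho)\ell}$ and conjugating by $K$, the condition $|Q(v)|\le e^{-\star s}$ gets inserted into the inner $a_s$-integral, yielding $e^{(-1+\delta+\sigma)s}$ decay with $\sigma\ll \rho$. Summing this over the sparse collection of emergent $v$ in $\mathcal P_\ell$ and comparing the range of $\|v\|$ with the size of $\hat J_i$ yields a total contribution of order $e^{-\star s}|\hat J_i|\le e^{-\star\rho\ell}|\hat J_i|$, which can be absorbed into the additive constant $1$ on the right-hand side once summed over all intervals. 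Persistent vectors then feed directly into $\int_{\hat J_i}\alpha_1(a_{(1+2\rho)\ell}kg_Q\Gamma)dk$, to which~\eqref{eq: main prop 1 use} applies, producing the target bound with $e^{-\rho\ell}$ decay.

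\textbf{Main obstacle.} The crux is the change-of-variables step converting $\int_{\hat J_i}\|a_t kg_Q v\|^{-1-\delta}dk$ into a form to which Lemma~\ref{lem: linear algebra} applies cleanly, since $a_s$ does not commute with $K$: one must use the Iwasawa-type decomposition of $a_s k$ and track how the averaging variable transforms. Equally delicate is verifying that the emergent-vector contribution is genuinely $e^{-\star\rho\ell}$-small rather than merely $e^{-\star s}$-small --- here one needs both the Margulis-style non-divergence bound from Theorem~\ref{thm: EMM-alpha bound} to handle a small measure set of exceptional $k\in\hat J_i$ (possibly via Corollary~\ref{cor: Lp functions}) and the planar structure of Proposition~\ref{prop: main uppbd prop}(1) to route those emergent vectors lying in $P_i$ into the $\alpha_2$ estimate at time $(1+2\rho)\ell$ rather than losing control over them.
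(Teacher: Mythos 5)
Your proposed dichotomy is vacuous as stated. Since $a_{-s}$ (with $s=t-(1+2\rho)\ell$) has eigenvalues $e^{-s},1,e^{s}$ on $\R^3$, the inequality $\|a_{(1+2\rho)\ell}kg_Qv\|=\|a_{-s}(a_tkg_Qv)\|\le e^{s}\|a_tkg_Qv\|$ holds for \emph{every} $v\in\Z^3$ and every $k$, so every vector is ``persistent'' in your sense and the ``emergent'' category is empty. Adjusting the threshold does not rescue the scheme: any one-shot transfer of $\alpha_1$ from time $t$ back to time $(1+2\rho)\ell$ incurs a multiplicative loss coming from the amount of $a_\bullet$-flow in between, and the single $e^{-\rho\ell}$ gain provided by~\eqref{eq: main prop 1 use} is nowhere near enough to absorb a factor $e^{s}$, or even $e^{\rho\ell}$. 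So the persistent branch cannot close, independently of how cleverly you treat the emergent branch via Lemma~\ref{lem: linear algebra} and Proposition~\ref{prop: main uppbd prop}.

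The paper avoids this precisely by \emph{not} bridging the gap $t-(1+2\rho)\ell$ in one step. It sets $n_1=\lceil 100\rho^{-3}\rceil$, $s_1=(t-s)/n_1$ (so $s_1\asymp\rho^3 t$), and proves the sublemma by induction on $n\le n_1$: at each step it applies the EMM chain estimate (\cite[Lemma~5.8]{EMM-Upp}), after the change of variables $r=e^{ns_1+s}\tfrac{\sin\theta}{1+\cos\theta}$ on each subinterval of $\hat J_i$, to get
\[
\int_{\hat J_i}\alpha_1(a_{(n+1)s_1+s}kx)\diff\!k\ll\int_{\hat J_i}\alpha_1(a_{ns_1+s}kx)\diff\!k+e^{2s_1}\int_{\hat J_i}\alpha_2^{1/2}(a_{ns_1+s}kx)\diff\!k,
\]
and then controls the $\alpha_2^{1/2}$ term uniformly in $n$ using Corollary~\ref{cor: Lp functions} (the $L^2$-integrability of $\alpha_2^{1/2}$) together with the measure bound $|\bigcup\hat J_i|\le e^{-3\rho\ell}$, yielding $\sum_i\int_{\hat J_i}\alpha_2^{1/2}(a_d kx)\diff\!k\ll e^{-3\rho\ell/4}$ for all $d\ge 0$; see~\eqref{eq: estimate square root alpha 2}. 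Since $s_1\asymp\rho^3 t$ and $\ell\ge\rho t-1$, one has $e^{2s_1}e^{-3\rho\ell/4}\le e^{-\rho\ell/2}$, so the error accumulated over $n_1$ iterations is at most $(C+1)^{n_1}e^{-\rho\ell/2}$, with $(C+1)^{n_1}$ a constant depending only on $\rho$. The final step applies~\eqref{eq: main prop 1 use} at time $s=(1+2\rho)\ell$ to the leading term. Your emphasis on almost-null vectors and the planar structure from Proposition~\ref{prop: main uppbd prop}(1) identifies the \emph{input} behind~\eqref{eq: main prop 1 use}, but that input is applied once at the base time $(1+2\rho)\ell$; the content of the sublemma is the iterative transport to time $t$, for which you need the small-step scheme and the $L^2$ control of $\alpha_2^{1/2}$.
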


Let us first assume~\eqref{eq: alpha 1 at t and s} and complete the proof of the proposition.
Since $\sum 1_{\hat J_i}\ll 1_{\cup \hat J_i}$,~\eqref{eq: alpha 1 at t and s} implies that  
\begin{align*}
\sum_i\int_{\hat J_i}\alpha_1(a_tkg_Q\Gamma)\diff\!k&\ll e^{-\star\rho \ell}\biggl(1+\int_{\cup \hat J_i}\alpha_2(a_{(1+2\rho)\ell}kg_Q\Gamma)\diff\!k\biggr)\\
&\ll e^{-\star\rho \ell}(1+2\rho)\ell\leq  e^{-\star\rho \ell},  
\end{align*}
where we used part~(2) in Theorem~\ref{thm: EMM-alpha bound} in the second inequality. 

Altogether, for every $\ell\in\mathcal L_1$, 
\be\label{eq: control W t}
\int_{\cup \hat J_i}\alpha_1(a_tk;g_Q)\diff\!k\ll e^{-\star\rho \ell}\leq e^{-\star \rho^2t}. 
\ee
where we used $\ell\geq \rho t-1$ in the last inequality. 

Now summing~\eqref{eq: control W t'} and~\eqref{eq: control W t} over all $\rho t-1\leq \ell\leq t'$, we get 
\[
\int_{\mathcal C_{t,1}}\tilde\alpha_1(a_tk; g_Q\Gamma)\ll te^{-\star \rho^2t}\ll e^{-\star \rho^2t} 
\]
Similarly, $\int_{\mathcal C_{t,2}}\tilde\alpha_2(a_tk; g_Q\Gamma)\ll e^{-\star \rho^2t}$. 

The proposition follows from these in view of~\eqref{eq: Schmidt's Lemma}.  
\qed

\begin{proof}[Proof of the Sublemma] 
It remains to prove~\eqref{eq: alpha 1 at t and s}. 
The argument is similar to the arguments pioneered in~\cite{EMM-Upp}, which are by now well known. 
To simplify the notation, let us write $s=(1+2\rho)\ell$ and put $x=g_Q\Gamma$.

Let $n_1=\lceil 100\rho^{-3}\rceil$ and let $s_1=(t-s)/n_1$.   
We claim that 
\be\label{eq: induction for sublemma}
\sum_i\int_{\hat J_i}\alpha_1(a_{ns_1+s}kx)\diff\!k\leq C^n\sum_i \int_{\hat J_i} \alpha_1(a_skx)\diff\!k+(C+1)^ne^{-\rho\ell/2},
\ee
for all integers $0\leq n\leq n_1$, where $C\geq 1$ is absolute.

Note that the sublemma follows from~\eqref{eq: induction for sublemma} and~\eqref{eq: main prop 1 use}.
Therefore, the task is to establish~\eqref{eq: induction for sublemma}. 

We first make the following observation 
\be\label{eq: estimate square root alpha 2}
\sum_i\int_{\hat J_i}\alpha_2^{1/2}(a_{d}kx)\diff\!k\ll e^{-3\rho\ell/4}\qquad\text{for all $d\geq 0$.}
\ee
To see~\eqref{eq: estimate square root alpha 2}, 
first recall from Proposition~\ref{prop: main uppbd prop} that $\sum 1_{\hat J_i}\ll 1_{\cup\hat J_i}$, hence  
\[
\sum_i\int_{\hat J_i}\alpha_2^{1/2}(a_{d}kx)\diff\!k\ll \int_{\bigcup \hat J_i}\alpha_2^{1/2}(a_{d}kx)\diff\!k.
\]
Moreover, the number of such intervals $\hat J_i$ is $\ll e^{(1-2\rho)\ell}$ and
$|\hat J_i|\asymp e^{-\ell}e^{-3\rho \ell/2}$, see~\eqref{eq: eq: size of Iv'} and the paragraph following that equation. 
Thus  
\[
\Bigl|\bigcup \hat J_i\Bigr|\leq \sum |\hat J_i|\ll (e^{(1-2\rho)\ell}  \cdot (e^{(-1-\frac32\rho)\ell}\leq e^{-3\rho\ell}.
\] 
Hence~\eqref{eq: estimate square root alpha 2} follows from Corollary~\ref{cor: Lp functions}, applied with $\Theta=\bigcup \hat J_i$.

Let us now return to the proof of~\eqref{eq: induction for sublemma}, which will be completed using induction on $n$. 
The base case $n=0$ is trivial. 

Fix some $n\geq 0$ and some $\hat J_i$. Let $M=\lceil e^{ns_1+s}|\hat J_i|\rceil$ and $b={|\hat J_i|}/{M}$. 
For every $k\in L$, let $h_k=a_{ns_1+s}k a_{-ns_1-s}$. Then 
\be\label{eq: decomposing intergral}
\int _{\hat J_i}\alpha_1(a_{(n+1)s_1+s}kx)\diff\!k=\sum_j \int_{0}^{b}\alpha_1(a_{s_1} h_k a_{ns_1+s}k_jx)\diff\!k,
\ee
where $k_j=jb$ for $0\leq j\leq M-1$. 

If $k=\rot_\theta$, see~\eqref{eq: r-theta Oppenheim}, then $h_k=h'_ku_{-e^{ns_1+s}\sin\theta/(1+\cos\theta)}$ where $\|h'_k-I\|\ll b$ is lower triangular. 
Using the change of variable $r=e^{ns_1+s}\frac{\sin\theta}{(1+\cos\theta)}$, we get 
\be\label{eq: change of var from k to r}
\int_{0}^{b}\alpha_1(a_{s_1} h_k a_{ns_1+s}k_jx)\diff\!k\asymp e^{-ns_1-s}\int_0^{b'} \alpha_1(a_{s_1} u_{-r} a_{ns_1+s}k_jx)\diff\!r.
\ee
where $b'=e^{ns_1+s}\frac{\sin b_0}{(1+\cos b_0)}\asymp 1$.

In view of~\cite[Lemma 5.8]{EMM-Upp}, see also~\cite[Lemma 5.5]{EMM-Upp}, we have  
\begin{multline*}
\int_0^{b'} \alpha_1(a_{s_1} u_{-r} a_{ns_1+s}k_jx)\diff\!r\ll \\
\int_0^{b'}\alpha_1(u_{-r} a_{ns_1+s}k_jx)\diff\!r+e^{2s_1}\int_0^{b'}\alpha_2^{1/2}(u_{-r}a_{ns_1+s}kx)\diff\!r
\end{multline*}
Multiplying the above by $e^{-ns_1-s}$ and changing the variables from $u_r$ back to $h_k$ in the second line above, we conclude that 
\begin{multline*}
e^{-ns_1-s}\int_0^{b'} \alpha_1(a_{s_1} u_{-r} a_{ns_1+s}k_jx)\diff\!r\ll\\
\int_0^{b}\alpha_1(h_ka_{ns_1+s}k_jx)\diff\!k+e^{2s_1}\int_0^{b}\alpha_2^{1/2}(h_ka_{ns_1+s}k_jx)\diff\!k=\\
\int_0^{b}\alpha_1(a_{ns_1+s}kk_jx)\diff\!k+e^{2s_1}\int_0^{b}\alpha_2^{1/2}(a_{ns_1+s}kk_jx)\diff\!k.
\end{multline*}
In view of~\eqref{eq: change of var from k to r} and~\eqref{eq: decomposing intergral}, the above implies
\begin{multline*}
\int_{\hat J_i}\alpha_1(a_{(n+1)s_1+s}kx)\diff\!k \ll\sum_j\int_0^{b}\alpha_1(a_{ns_1+s}kk_jx)\diff\!k\quad +\\
e^{2s_1}\sum_j\int_0^{b}\alpha_2^{1/2}(a_{ns_1+s}kk_jx)\diff\!k.
\end{multline*}
Altogether, we have shown 
\[
\int_{\hat J_i}\!\! \alpha_1(a_{(n+1)s_1+s}kx)\diff\!k\ll\!\!\int_{\hat J_i}\!\! \alpha_1(a_{ns_1+s}kx)\diff\!k +e^{2s_1}\!\!\int_{\hat J_i}\!\alpha_2^{1/2}(a_{ns_1+s}kx)\diff\!k.
\]
Summing these over all $\hat J_i$, we get 
\begin{multline}\label{eq: sum Ji for ind}
\sum_{\hat J_i}\int_{\hat J_i}\alpha_1(a_{(n+1)s_1+s}kx)\diff\!k\leq C\sum_{\hat J_i}\int_{\hat J_i}\alpha_1(a_{ns_1+s}kx)\diff\!k\quad +\\
Ce^{2s_1}\sum_{\hat J_i}\int_{\hat J_i}\alpha_2^{1/2}(a_{ns_1+s}kx)\diff\!k
\end{multline}
Recall now that $\ell\geq \rho t-1$ and $s_1\asymp \rho^3t/100$. This and~\eqref{eq: estimate square root alpha 2} imply that
\[
e^{2s_1}\sum_{\hat J_i}\int_{\hat J_i}\alpha_2^{1/2}(a_{ns_1+s}kx)\diff\!k\ll e^{\rho\ell/10} e^{-3\rho/4}\leq e^{-\rho\ell/2}.
\]
This,~\eqref{eq: sum Ji for ind} and the inductive hypothesis finish the proof of~\eqref{eq: induction for sublemma}.  
\end{proof}

\section{Circular averages of Siegel tranforms}\label{sec: equid unbdd}
Let $f\in C_c(\R^3)$. For every $g\in G$, define 
\be\label{eq: def hat f}
\hat f(g)=\sum_{v\in \Z^3}f(gv)
\ee

For any indefinite ternary quadratic form $Q$ with $\det Q=1$, we let $g_Q\in G$ be so that $Q(v)=Q_0(g_Qv)$, where 
$\sqf(\cox,\coy,\coz)=2\cox\coz-\coy^2$.

In this section, we will use Proposition~\ref{prop: L1+epsilon upp bd} and Theorem~\ref{thm: equid circles} to prove the following proposition. The proof of Theorem~\ref{thm: quantitative Oppenheim} will then be completed using this proposition.   

\begin{propos}\label{prop: equi of f hat}
There is an absolute constant $M$, and for every large enough $E$ and $0<\rho\leq 10^{-4}$, there are $\varrho_1, \varrho_2$ 
(depending on $\rho$ and $E$) so that if $Q$ is as above and $t$ is large enough, depending linearly on $\log(\norm{Q})$, the following holds.

Assume that for every $Q'\in\Mat_3(\Z)$ with $\norm{Q'}\leq e^{\rho t}$ and all $\lambda\in\R$,  
\be\label{eq: Diophantine main thm Q0}
\norm{Q-{\lambda}Q'}>\norm{Q'}^{-E/M}.
\ee
There exists some $C$ depending on $E$ and polynomially on $\norm{Q}$ so that the following holds. For any smooth function $\xi$ on $K$, if
\begin{equation*}
\biggl|\int_K\hat f (a_tk g_Q\Gamma)\xi(k)\diff\!k-\int_K\xi\diff\!k\int_X\hat f\diff\!m_X\biggr|
> C\Sob(f)\Sob(\xi)e^{-\varrho_2 t}
\end{equation*}
then there is at least one $(\frac{\varrho_1}{4E}, E, t)$-exceptional line or plane, 
and at most four $(\frac{\varrho_1}{4E}, E, t)$-exceptional lines $\{L_i\}$ and at most four $(\frac{\varrho_1}{4E}, E, t)$-exceptional planes $\{P_i\}$. Moreover  
\[
\int_K\hat f(a_tk g_Q\Gamma)\xi(k)\diff\!k=\int_K\xi\diff\!k\int_X\hat f\diff\!m_X \\
+ \mathcal M + O(\Sob(f)\Sob(\xi)e^{-\varrho_2 t})
\]
where 
\[
\mathcal M=\int_{\mathcal C}\hat f_{\rm sp}(a_tk g_Q\Gamma)\xi(k)\diff\!k
\]
with 
\begin{align*}
\hat f_{\rm sp}(a_tk g_Q\Gamma)&=\!\!\!\sum_{v\in \Z^3 \cap ((\cup_i L_i)\cup (\cup_i P_i))} f(a_tk g_Qv)\\
\mathcal C&=\left\{k\in K: \hat f_{\rm sp}(a_tk g_Q\Gamma)\geq e^{\varrho_1 t}\right\}
\end{align*}
\end{propos}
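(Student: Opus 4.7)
The plan is to decompose $\hat f = \tilde f + \hat f_{\rm sp}$, where $\tilde f$ is the truncated Siegel transform from Proposition~\ref{prop: L1+epsilon upp bd} (summing only over vectors not in any $(\rho, A, t)$-exceptional line or plane, with $A$ the absolute constant from that proposition and $\rho$ a small parameter to be chosen) and $\hat f_{\rm sp}$ collects the remaining exceptional terms. The piece $\tilde f$ is amenable to circle equidistribution (Theorem~\ref{thm: equid circles}) after smooth truncation, with the tail controlled by Proposition~\ref{prop: L1+epsilon upp bd}; the piece $\hat f_{\rm sp}$ is a sum over finitely many rational lines and planes, whose cusp excursions give the main term $\mathcal{M}$.

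First I would show, using Lemma~\ref{lem: five vectors} together with the dual statement for $Q^{-1}$, that the Diophantine hypothesis \eqref{eq: Diophantine main thm Q0} caps the number of $(\frac{\varrho_1}{4E}, E, t)$-exceptional lines and planes at $4$ each, provided $E$ is large compared to the implicit exponent in that lemma. Indeed, five linearly independent exceptional lines (no three coplanar) would produce via Lemma~\ref{lem: five vectors}(L-2) a rational form $P$ with $\|P\| \leq e^{O(\varrho_1 t)}$ and $\|Q - \lambda P\| \leq e^{-(E/\star)\varrho_1 t}$, contradicting~\eqref{eq: Diophantine main thm Q0} once $E \geq \star M$. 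The same argument applied to exceptional planes (via $Q^{-1}$ and \eqref{eq: action on wedge 2}) gives the bound on planes.

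Next I would apply Proposition~\ref{prop: L1+epsilon upp bd} to control $\tilde f$: the Diophantine assumption rules out alternative (2) there (a rational approximation $P$ with $\|P\| \leq e^{D_0\rho t}$), so alternative (1) holds and the cusp excursion set $\tilde{\mathcal C}_t$ contributes at most $\|\xi\|_\infty e^{-\rho^2 t/2}$. Truncating $\tilde f$ smoothly below $Ae^{A\rho t}$ produces a function $\tilde f^{\rm sm} \in C_c^\infty(X)$ with $\Sob(\tilde f^{\rm sm}) \ll \Sob(f) e^{\star \rho t}$. I then invoke Theorem~\ref{thm: equid circles} at $x_0 = g_Q\Gamma$, $T = e^t$: the Diophantine condition together with the volume/height dictionary (Lemma~\ref{lem: volume and height} type; a periodic $H$-orbit of volume $\leq R$ near $g_Q\Gamma$ yields a rational approximation of $Q$ of height $R^\star$) rules out option~(2) of that theorem for $R \leq e^{\varrho' t}$ with $\varrho'$ depending on $E$, so option~(1) gives an error $\Sob(\tilde f^{\rm sm})\Sob(\xi) e^{-\varrho t}$ for the $\varrho = \ref{k:circle}$ of Theorem~\ref{thm: equid circles}. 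Choosing $\rho$ sufficiently small relative to $\varrho$ makes the product $e^{\star \rho t} e^{-\varrho t}$ decay as $e^{-\varrho_2 t}$ for an appropriate $\varrho_2$.

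Finally, for $\hat f_{\rm sp}$, since it is a sum over at most $4+4$ rational subspaces and each exceptional vector $v_i$ has $\|g_Q v_i\|$ bounded, the function $\hat f_{\rm sp}(a_t k g_Q\Gamma)$ is supported on a union of $O(1)$ short arcs of $K$ and behaves like the local contribution of $\alpha_1$ or $\alpha_2$ to $v_i$; one then splits the integral over $\mathcal{C}$ versus $K \setminus \mathcal{C}$. Over $\mathcal{C}$ the integral is exactly $\mathcal{M}$; over $K \setminus \mathcal{C}$ the bound $\hat f_{\rm sp} < e^{\varrho_1 t}$ combined with the small measure of the supporting arcs — which is $\ll e^{-t}$ times polynomial factors for each exceptional vector — gives an acceptable error once $\varrho_1 \ll \varrho_2$. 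One must also account for the difference $\int_X \hat f \, dm_X - \int_X \tilde f^{\rm sm}\, dm_X$, which is controlled by the $L^1$ bound on the cusp part via Proposition~\ref{prop: L1+epsilon upp bd} (and unfolding the integral $\int_X \hat f \, dm_X$ in the standard way).

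The main obstacle I expect is the quantitative calibration between three competing exponents: the Sobolev norm growth $e^{\star \rho t}$ coming from the smooth truncation at level $Ae^{A\rho t}$, the equidistribution rate $e^{-\ref{k:circle} t}$ from Theorem~\ref{thm: equid circles}, and the Diophantine exponent $E/M$ needed so that neither Proposition~\ref{prop: L1+epsilon upp bd}(2) nor Theorem~\ref{thm: equid circles}(2) can hold. Keeping these parameters consistent — in particular ensuring that $\rho$, $\varrho_1$ and $\varrho_2$ can be chosen so that $\varrho_2 < \rho^2/2$, $\varrho_2 < \ref{k:circle} - \star\rho$, and $\varrho_1$ is small enough that the off-$\mathcal{C}$ tail of $\hat f_{\rm sp}$ is absorbed — is the delicate point, and will determine the admissible exponent $E/M$ in the Diophantine hypothesis.
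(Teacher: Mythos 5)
Your high-level strategy tracks the paper's proof closely --- bound the number of exceptional lines/planes via Lemma~\ref{lem: five vectors}, feed the bulk into Theorem~\ref{thm: equid circles}, control the non-exceptional cusp excursion with Proposition~\ref{prop: L1+epsilon upp bd}, and leave the exceptional contribution to form $\mathcal{M}$ --- and your parameter bookkeeping (calibrating $\rho$, $\varrho_1$, $\varrho_2$ against the Sobolev blowup and the rates $\ref{k:circle}$, $\rho^2/2$) is exactly the balancing act the paper performs.

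There is, however, a genuine defect in the order of operations. You decompose $\hat f = \tilde f + \hat f_{\rm sp}$ \emph{before} truncating, and then propose to apply Theorem~\ref{thm: equid circles} to a smooth truncation $\tilde f^{\rm sm}$ of $\tilde f$. But $\tilde f$ and $\hat f_{\rm sp}$ are not functions on $X=G/\Gamma$: the set of exceptional lines and planes depends on $Q$ and is not $\Gamma$-invariant, so the split of the lattice sum into exceptional and non-exceptional vectors only makes sense as functions on $G$ (or, via $k\mapsto a_tk g_Q$, on $K$), not downstairs on $X$. Consequently $\tilde f^{\rm sm}\in C_c^\infty(X)$ is not a well-defined object, Theorem~\ref{thm: equid circles} cannot be applied to it, and the quantity $\int_X \tilde f^{\rm sm}\,dm_X$ you later invoke when matching the main term $\int_X \hat f\,dm_X$ does not make sense. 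The paper avoids this by truncating \emph{first}: one cuts the full Siegel transform $\hat f$ (which genuinely descends to $X$) by a smooth cusp-depth cutoff $\varphi_S$ supported on $X_{1/S}$, giving $\hat f_S=\varphi_S\hat f\in C_c^\infty(X)$ with $\Sob(\hat f_S)\ll S^L\Sob(f)$, and only splits the residual cusp part $\hat f-\hat f_S$ into an exceptional piece $\hat f_{\rm sp}$ and a non-exceptional piece $\hat f_{\rm cusp}$. Equidistribution is applied to $\hat f_S$ alone (yielding $\int_X\hat f_S\,dm_X=\int_X\hat f\,dm_X+O(S^{-\star})$), Proposition~\ref{prop: L1+epsilon upp bd} controls $\hat f_{\rm cusp}$ over the excursion set $\tilde{\mathcal C}_t$, and $\hat f_{\rm sp}$ over $\mathcal C$ is $\mathcal{M}$. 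Your plan becomes correct if you reorganize along these lines; the remaining analytic estimates you sketch are then essentially those of the paper.
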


Before starting the proof, we recall the following well-known fact 

\begin{lemma}\label{lem: periodic orbit rational form}
Suppose there exists $g'\Gamma\in X$ with $\vol(Hg'\Gamma)\leq R$, so that 
\[
d(g_Q\Gamma, g'\Gamma)\leq \beta
\]
There exists an integral form $Q'$ with $\|Q'\|\ll R^\star$ and some $\lambda\in\R$ so that 
\[
\|Q-\lambda Q'\|\ll R^\star \beta
\]
\end{lemma}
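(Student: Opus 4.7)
The plan is to exploit that if $Hg'\Gamma$ is periodic, then the conjugated group $g'^{-1}Hg' = \SO(Q'')$, where $Q'' := g'^{t} Q_0 g'$, is defined over $\Q$: the stabilizer $g'^{-1}Hg' \cap \Gamma$ is a Zariski dense lattice in $\SO(Q'')$ sitting inside $\SL_3(\Z)$, so Borel--Harish-Chandra (or a direct computation) forces $\SO(Q'')$ to be a $\Q$-subgroup, hence $Q''$ itself to be a real scalar times a rational form $Q'$. Once this is in hand, I would transport this rational approximation of $Q''$ along the small distance between $g_Q\Gamma$ and $g'\Gamma$ to an approximation of $Q$ itself.

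Concretely, first I would use the right-invariance of $d$ together with the hypothesis $\dist_X(g_Q\Gamma, g'\Gamma) \leq \beta$ to pick some $\gamma \in \Gamma$ and $\epsilon \in G$ with $\|\epsilon - I\| \ll \beta$ so that $g_Q = \epsilon \hat g$, where $\hat g := g'\gamma$; clearly $\vol(H\hat g\Gamma) = \vol(Hg'\Gamma) \leq R$. By reduction theory applied to $\hat g\Gamma \in X$, I can further assume, after right multiplication by a suitable $\gamma_0 \in \Gamma$, that $\|\hat g\| \ll R^{\star}$ while maintaining $\|\epsilon - I\| \ll \beta$. Then $\hat g^{-1} H \hat g = \SO(Q'')$ with $Q'' = \hat g^{t} Q_0 \hat g$; the argument from the previous paragraph gives $Q'' = \lambda_0 Q'$ with $Q'$ a primitive integral symmetric matrix, and Lemma~\ref{lem: volume and height} (the volume/height dictionary) yields $\|Q'\| \asymp \|{\bf v}_{\tilde H_{\hat g}}\| \ll \vol(\tilde H_{\hat g}\tilde\Gamma/\tilde\Gamma)^{\star} \ll R^{\star}$.

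Putting these together,
\[
Q = g_Q^{t} Q_0 g_Q = \hat g^{t}(\epsilon^{t} Q_0 \epsilon)\hat g = \hat g^{t} Q_0 \hat g + O(\beta \|\hat g\|^{2}) = \lambda_0 Q' + O(\beta R^{\star}),
\]
which gives the desired $\|Q - \lambda Q'\| \ll R^{\star}\beta$ upon setting $\lambda := \lambda_0$. The only delicate point in this plan is getting \emph{polynomial} (in $R$) control on $\|\hat g\|$ and $\|Q'\|$ rather than merely qualitative finiteness; both rely on Lemma~\ref{lem: volume and height} combined with standard reduction theory, and on the comparison between the norm of the rational form and the height of the associated $\Q$-group, so I do not expect a genuine obstacle.
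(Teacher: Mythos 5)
Your proof is essentially the same as the paper's: write $g_Q = \epsilon \hat g$ with $\hat g$ in the coset $g'\Gamma$ and $\|\epsilon - I\| \ll \beta$, observe that the periodicity of $H\hat g\Gamma$ forces $\hat g^t Q_0 \hat g$ to be a real multiple of an integral form, bound the height of that form via the volume--height dictionary (Lemma~\ref{lem: volume and height}), and transport the approximation through $\epsilon$. Both arguments are correct, and your write-up is even somewhat more explicit about the polynomial control.

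One small technical misstatement is worth flagging. You say that by reduction theory you can ``further assume, after right multiplication by a suitable $\gamma_0\in\Gamma$, that $\|\hat g\|\ll R^\star$ while maintaining $\|\epsilon-I\|\ll\beta$.'' If $g_Q$ is held fixed and $\hat g$ is replaced by $\hat g\gamma_0$, the relation $g_Q=\epsilon\hat g$ becomes $g_Q=\bigl(\epsilon\,\hat g\gamma_0^{-1}\hat g^{-1}\bigr)\bigl(\hat g\gamma_0\bigr)$, and the new ``$\epsilon$'' is no longer close to the identity unless $\gamma_0=e$. Fortunately the step is unnecessary: from $\hat g=\epsilon^{-1}g_Q$ and $\|\epsilon-I\|\ll\beta<1$ you get $\|\hat g\|\ll\|g_Q\|$ for free, and $\|g_Q\|$ is bounded by a constant depending only on $\norm Q$ (which the implied constants in the lemma are allowed to absorb). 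So the needed bound on $\|\hat g\|$ is automatic, and with that correction the argument is sound and matches the paper's.
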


\begin{proof} 
Replacing $g'$ by $hg'\gamma$ for some $h\in H$ and $\gamma\in\Gamma$ we may assume 
$\|g'\|\ll 1$. Similarly, we may assume $\|g_Q\|\ll 1$. 

Since $\vol(Hg'\Gamma)\leq R$, $\sqf\circ g'$ is equivalent to an integral form $Q''$ with
$\|Q''\|\ll R^\star$. 

Since $d(g_Q\Gamma, g'\Gamma)\leq \beta$, $g_Q=\epsilon g'\gamma'$ where $\|\epsilon-I\|\ll \beta$ and $\|\gamma'\|\ll1$. Thus  
\[
\|Q-\lambda Q'\|\ll R^\star \beta
\]
for some integral form $Q'$ with $\|Q'\|\ll R^\star$ and some $\lambda\in\R$. 
\end{proof}

\begin{proof}[Proof of Proposition~\ref{prop: equi of f hat}]
Let $E$ and $\rho$ be as in the statement, and let $t>0$ be a large parameter. 
We will use Lemma~\ref{lem: periodic orbit rational form} in the following form:

Let $Q$ satisfy~\eqref{eq: Diophantine main thm Q0}.
There exists $E_1\geq \max(4\ref{a:circle},E)$, where $\ref{a:circle}$ is as in Theorem~\ref{thm: equid circles} so that the following holds. 
For all $t$ so that $t>4\ref{a:circle}\log t$ and for every $x\in X$ with $\vol(Hx)\leq e^{\rho t/E_1}$,  
\[
d(g\Gamma, x)> e^{-t}.
\]
In view of this, part~(1) in Theorem~\ref{thm: equid circles} (applied with $G=\SL_3(\R)$, hence, $\dm=2$) holds with $R=e^{\rho t/E_1}$ and $t$. Indeed, since $\frac{\ref{a:circle}\rho}{E_1}\leq \frac14$ and $t^{\ref{a:circle}}\leq e^{t/4}$,
\[
R^{\ref{a:circle}}t^{\ref{a:circle}} e^{-2t}= e^{\ref{a:circle}\rho t/E_1} t^{\ref{a:circle}} e^{-2t}\leq e^{-t};
\]
hence, part~(2) in Theorem~\ref{thm: equid circles} cannot hold. 

For every $S$, let $1_{X_{1/S}}\leq \varphi_S\leq 1_{X_{1/{(S+1)}}}$ be smooth with $\Sob(\varphi_S)\ll S^\star$.
Put $\hat f_S=\varphi_S\hat f$; we let $L$ be so that $\Sob(\hat f_S)\ll S^L\Sob(f)$. 
Recall that $\alpha_1, \alpha_2\gg S^\star$ in $X_{1/S}$, they belong to $L^2(X,m_X)$, and $\hat f\ll \max(\alpha_1,\alpha_2)$. Thus, 
\be\label{eq: f hat and fS hat}
\int \hat f\diff\!m_X = \int \hat f_S\diff\!m_X+ O(S^{-\star}).
\ee

Set $\vare=\ref{k:circle}\rho/(2LE_1)$, where $\ref{k:circle}$ is as in Theorem~\ref{thm: equid circles}. 
We will show the claim in the theorem holds with 
\[
\varrho_1=\vare\quad\text{and}\quad \varrho_2=\tfrac{\vare^2}{32E^2}.
\]

Apply Lemma~\ref{lem: five vectors} with $\eta^{-D}=e^{\frac\vare{E} t}$, $\eta^A=e^{-\vare t}$, and $Q$. 
If there are at least five $(\frac{\vare}{4E},E, t)$-exceptional lines, then Lemma~\ref{lem: five vectors} implies 
\[
\|Q-\lambda Q'\|\leq \eta^{A-M' D}=e^{(-\vare+ \frac{M'\vare}{E}) t}
\] 
for some $Q'\in \Mat_3(\Z)$ with $\|Q'\|\leq e^{\frac{M'\vare}{E} t}$, where $M'$ is absolute. Assuming $E$ is large compared to this $M'$, however, this contradicts~\eqref{eq: Diophantine main thm Q0} if $M\geq2M'$. Thus, there are at most four $(\frac{\vare}{4E},E, t)$-special lines. Similarly, there are at most four $(\frac{\vare}{4E},E, t)$-exceptional planes.

Denote these lines and the planes (if they exist) by $\{L_i\}$ and $\{P_i\}$, respectively, and put ${\rm Exc}=(\cup_iL_i)\cup (\cup_iP_i)$. 
For every $k\in K$, write 
\[
\hat f(a_tk g_Q\Gamma)=\hat f_S(a_tkg_Q\Gamma)+\hat f_{\rm cusp}(a_tkg_Q\Gamma)+ \hat f_{\rm sp}(a_tkg_Q\Gamma)
\]
where $\hat f_S=\varphi_S\hat f$, $\hat f_{\rm cusp}$ is the contribution of $\Z^3\setminus {\rm Exc}$ 
to $\hat f-\hat f_S$, and $\hat f_{\rm sp}$ is the contribution of $\Z^3\cap{\rm Exc}$ to $\hat f-\hat f_S$. 

By Theorem~\ref{thm: equid circles}, applied with $R=e^{\rho t/E_1}$, 
for any $\xi\in C^\infty(K)$ we have 
\begin{multline}\label{eq: proof main term}
\biggl|\int_K \hat f_S(a_tk g_Q\Gamma)\xi(k)\diff\!k- \int\xi\diff\!k\int_X\hat f_S\diff\!m_X\biggr|\ll\\ 
\Sob(\hat f_S)\Sob(\xi)e^{-\ref{k:circle} \rho t/E'}\ll S^L\Sob(f) \Sob(\xi)e^{-\ref{k:circle} \rho t/E_1}.
\end{multline}
If we choose $S=e^{\vare t}=e^{\ref{k:circle} \rho t/(2LE_1)}$, 
the above is $\ll \Sob(f)\Sob(\xi)e^{-\vare t/2}$. 

Moreover, by Theorem~\ref{thm: EMM-alpha bound} applied with $p=1/2$ and the Chebyshev's inequality, we have 
\be\label{eq: lem L2 bound SL2 used}
\int_{\{k: a_tk g_Q\Gamma\notin X_{1/S}\}} S^{1/4} \diff\!k\ll S^{-1/2} S^{1/4}=S^{-1/4}.
\ee
This and~\eqref{eq: proof main term}, reduce the problem to investigating the integral of $\hat f-\hat f_S=\hat f_{\rm cusp}+\hat f_{\rm sp}$ over 
$\hat{\mathcal C}:=\{k\in K: \hat f-\hat f_S\geq ES^{1/4}\}$.

Let $\tilde f$ be as in~\eqref{eq: def tilde f} with $\vare/4E$, $E$, and $t$. That is:
\[
\tilde f(h;g_Q)=\sum_{v\in \mathcal N_{Q,t}} f(hv)
\]
where $\mathcal N_{Q,t}$ denotes the set of vectors in $\Z^3$ which are not contained in any $(\vare/4E, E, t)$-exceptional line or plane.

Let $\tilde{\mathcal C}_t=\{k\in K: \tilde f(a_tk; g_Q)\geq Ee^{\vare t/4}=ES^{1/4}\}$. 
By the definitions, 
\[
\int_{\hat{\mathcal C}}\hat f_{\rm cusp}(a_tkg_Q\Gamma)\xi(k)\diff\!k\leq \norm{\xi}_{\infty}\int_{\tilde{\mathcal C}_t}\tilde f(a_tk;g_Q)\diff\!k.
\]
Thus Proposition~\ref{prop: L1+epsilon upp bd}, applied with $E$ and $\vare/4E$, implies 
\[
\int_{\tilde{\mathcal C}_t}\tilde f(a_tk ;g_Q)\diff\!k\ll e^{-\frac{\vare^2 t}{32E^2}}.
\]
From these two, we conclude that 
\be\label{eq: estimate for f hat cusp}
\int_{\hat{\mathcal C}}\hat f_{\rm cusp}(a_tk g_Q\Gamma)\diff\!k\ll \norm{\xi}_\infty e^{-\frac{\vare^2 t}{32E^2}}.
\ee

In view of~\eqref{eq: f hat and fS hat},~\eqref{eq: proof main term},~\eqref{eq: lem L2 bound SL2 used} and~\eqref{eq: estimate for f hat cusp}, we have 
\begin{multline*}
\biggl|\int_K\hat f(a_tk g_Q\Gamma)\xi(k)\diff\!k-\int_K\xi\diff\!k\int_X\hat f\diff\!m_X\biggr|\\
= \int_{\mathcal C} \hat f_{\rm sp}(a_tk g_Q\Gamma)\xi(k)\diff\!k+ O(\Sob(f)\Sob(\xi)e^{-\vare^2 t/32E^2})
\end{multline*}
where $\mathcal C=\{k\in K: \hat f_{\rm sp}(a_tk g_Q\Gamma)> Ee^{\vare t/4}\}$.

This completes the proof if we let $\varrho_1=\vare$ and $\varrho_2=\vare^2/32E^2$. 
\end{proof}

The Diophantine condition on $Q$, as stated in~\eqref{eq: Diophantine main thm Q0}, is primarily employed to derive the desired upper bound estimates for circular averages of the Siegel transform, namely the application of Proposition~\ref{prop: L1+epsilon upp bd} in the proof. However, if one assumes the {\em weaker} condition on $Q$ specified in Theorem~\ref{thm: Oppenheim}, it is still possible to establish a lower bound estimate for these averages. This lower bound suffices to prove (a stronger version of) Theorem~\ref{thm: Oppenheim}.

A simplified version of Proposition~\ref{prop: equi of f hat}, which encapsulates this result, is presented below.

\begin{propos}\label{prop: lower bd of ave f hat}
Let $R$ be large, depending on $\norm{Q}$, and let $T\geq R^{A}$.
Assume that for every $Q'\in\Mat_3(\Z)$ with $\norm{Q'}\leq R$ and all $\lambda\in\R$,  
\be\label{eq: Diophantine main R T}
\norm{Q-{\lambda}Q'}>R^A(\log T)^A T^{-2}.
\ee
Then for all $f\in C_c^\infty(\R^3)$ and $\xi\in C_c^\infty(K)$ we have  
\[
\int_K\hat f(a_{\log T}k g_Q\Gamma)\xi(k)\diff\!k\geq \int_K\xi\diff\!k\int_X\hat f\diff\!m_X+O(\Sob(f)\Sob(\xi)T^{-\kappa})
\]
where $A$ and $\kappa$ are absolute. 
\end{propos}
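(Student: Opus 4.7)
The strategy is to truncate $\hat f$ to the thick part of $X$, apply the effective equidistribution of expanding circles (Theorem~\ref{thm: equid circles}), and exploit pointwise positivity to convert the resulting two-sided estimate for the truncation into a one-sided lower bound for the original integral. Throughout we assume, as is natural for a lower bound statement of this type, that $f\ge 0$ and $\xi\ge 0$.

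Fix $S=R^{\kappa_0}$ for a small absolute $\kappa_0>0$ to be chosen, and let $\varphi_S\colon X\to[0,1]$ be a smooth cutoff with $\mathbf{1}_{X_{2/S}}\le\varphi_S\le\mathbf{1}_{X_{1/S}}$ and $\Sob(\varphi_S)\ll S^{M_0}$ for an absolute $M_0$. Set $\hat f_S:=\varphi_S\hat f$, so that $\Sob(\hat f_S)\ll S^{M_1}\Sob(f)$ for an absolute $M_1$. Since $\hat f\ge\hat f_S\ge 0$ pointwise and $\xi\ge 0$,
$$
\int_K\hat f(a_{\log T}kg_Q\Gamma)\xi(k)\,\diff\!k \;\ge\; \int_K\hat f_S(a_{\log T}kg_Q\Gamma)\xi(k)\,\diff\!k.
$$
Apply Theorem~\ref{thm: equid circles} to $\hat f_S$ with $x_0=g_Q\Gamma$ (recall $\dm=2$ for $G=\SL_3(\R)$). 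If alternative~(2) held, Lemma~\ref{lem: periodic orbit rational form} would furnish $Q'\in\Mat_3(\Z)$ with $\|Q'\|\ll R^B$ and $\lambda\in\R$ satisfying $\|Q-\lambda Q'\|\ll R^{B+\ref{a:circle}}(\log T)^{\ref{a:circle}}T^{-2}$, in contradiction with~\eqref{eq: Diophantine main R T} once $A$ is chosen larger than $B+\ref{a:circle}+1$ (after replacing the parameter $R$ in Theorem~\ref{thm: equid circles} by $R^{1/B}$ if necessary to ensure the resulting $\|Q'\|$ is $\le R$). Hence alternative~(1) applies:
$$
\biggl|\int_K\hat f_S(a_{\log T}kg_Q\Gamma)\xi(k)\,\diff\!k-\int_K\xi\,\diff\!k\int_X\hat f_S\,\diff\!m_X\biggr|\ll S^{M_1}\Sob(f)\Sob(\xi)R^{-\kappa_3},
$$
for an absolute $\kappa_3>0$. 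The difference $\int_X(\hat f-\hat f_S)\,\diff\!m_X\ge 0$ is bounded via Schmidt's inequality $\hat f\ll\Sob(f)\max(\alpha_1,\alpha_2)$, the $L^p$-integrability of $\alpha_1,\alpha_2$ on $\SL_3(\R)/\SL_3(\Z)$ for some $p>1$, and the reduction-theoretic volume bound $m_X(X\setminus X_{2/S})\ll S^{-r}$; H\"older's inequality then gives
$$
\int_X\hat f_S\,\diff\!m_X \;=\; \int_X\hat f\,\diff\!m_X-O(\Sob(f)S^{-\kappa_1})
$$
for an absolute $\kappa_1>0$.

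Assembling the three displayed estimates yields
$$
\int_K\hat f(a_{\log T}kg_Q\Gamma)\xi(k)\,\diff\!k \;\ge\; \int_K\xi\,\diff\!k\int_X\hat f\,\diff\!m_X -O\bigl(\Sob(f)\Sob(\xi)(S^{M_1}R^{-\kappa_3}+S^{-\kappa_1})\bigr).
$$
Choosing $\kappa_0=\kappa_3/(2M_1+2\kappa_1)$ balances the two error terms to $R^{-\kappa_2}$ for an absolute $\kappa_2>0$, which is at most $T^{-\kappa_2/A}$ by the hypothesis $T\ge R^A$. The main obstacle is the calibration in the Diophantine step: the threshold $R^A(\log T)^AT^{-2}$ in~\eqref{eq: Diophantine main R T} must be sharp enough to absorb both the polynomial-in-$R$ loss from Lemma~\ref{lem: periodic orbit rational form} and the factor $R^{\ref{a:circle}}(\log T)^{\ref{a:circle}}$ built into alternative~(2) of Theorem~\ref{thm: equid circles}, which pins down the absolute value of $A$. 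The rest of the argument is a straightforward parameter balancing.
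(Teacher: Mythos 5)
Your proof is correct and follows essentially the same route as the paper: truncate $\hat f$ to the thick part via a smooth cutoff $\varphi_S$, apply Theorem~\ref{thm: equid circles} after ruling out alternative~(2) via Lemma~\ref{lem: periodic orbit rational form} and the Diophantine hypothesis, compare $\int\hat f_S\diff m_X$ with $\int\hat f\diff m_X$ using the integrability of the Margulis functions $\alpha_i$, and exploit pointwise positivity $\hat f\ge\hat f_S$ to turn the two-sided estimate into a one-sided lower bound. The parameter balancing (choosing $S$ a suitable small power of $R$) is the same.
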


\begin{proof}
The proof of this proposition is contained in the proof of Proposition~\ref{prop: equi of f hat}, we explicate the proof for the convenience of the reader. 

For every $S$, let $1_{X_{1/S}}\leq \varphi_S\leq 1_{X_{1/(S+1)}}$ be a smooth function with $\Sob(\varphi_S)\ll S^\star$.
Put $\hat f_S=\varphi_S\hat f$; we let $L$ be so that $\Sob(\hat f_S)\ll S^L\Sob(f)$. 

In view of Lemma~\ref{lem: periodic orbit rational form}, if for some $R'$ and some $g'$ with $\vol(Hg'\Gamma)\leq R'$, 
\[
d(g_Q\Gamma, g'\Gamma)\leq R'^{\ref{a:circle}}(\log T)^{\ref{a:circle}}T^{-2},
\] 
then there exists $Q'\in\Mat_3(\Z)$ with $\|Q'\|\ll R'^{A'}$ and $\lambda\in\R$ so that  
\[
\|Q-\lambda Q'\|\leq R'^{\ref{a:circle}+A'}(\log T)^{\ref{a:circle}}T^{-2}.
\]
Assuming $R'\asymp R^{1/A'}$ is chosen so that $\|Q'\|\ll R'^{A'}$ implies $\|Q'\|\leq R$, the above 
contradicts~\eqref{eq: Diophantine main R T} so long as $A$ is large enough.  

Therefore, by Theorem~\ref{thm: equid circles}, applied with $G=\SL_2(\R)$ (hence $\dm=2$) and $R'\asymp R^{1/A'}$, 
for any $\xi\in C^\infty(K)$, 
\begin{multline}\label{eq: proof main term'}
\biggl|\int_K \hat f_S(a_{\log T}k g_Q\Gamma)\xi(k)\diff\!k- \int\xi\diff\!k\int_X\hat f_S\diff\!m_X\biggr|\ll\\ 
\Sob(\hat f_S)\Sob(\xi)R'^{-\ref{k:circle}}\ll S^L\Sob(f) \Sob(\xi)R^{-\ref{k:circle}/A'}.
\end{multline}
If we choose $S=R^\star$, the above is $\ll \Sob(f)\Sob(\xi)R^{-\star}$. 

Moreover, recall from~\eqref{eq: f hat and fS hat} that 
\[
\int \hat f\diff\!m_X = \int \hat f_S\diff\!m_X+ O(S^{-\star}) 
\]
Altogether, thus, we conclude 
\begin{align*}
\int_K \hat f(a_{\log T}k g_Q\Gamma)\xi(k)\diff\!k&\geq \int_K \hat f_S(a_tk g_Q\Gamma)\xi(k)\diff\!k\\
&=\int\xi\diff\!k\int_X\hat f\diff\!m_X+\Sob(f) \Sob(\xi)R^{-\star},
\end{align*}
as it was claimed.
\end{proof}

\section{Linear algebra and quadratic forms}\label{sec: LA QF proof}
In this section, we will use transitivity of the action of 
$H$ on level sets $\{v: \sqf(v)=c\}$ to relate the counting problem in Theorem~\ref{thm: quantitative Oppenheim} to averages considered in Proposition~\ref{prop: equi of f hat}. As mentioned before, the argument is similar to~\cite{DM-Linearization, EMM-Upp}. 

Let us begin with the following 

\begin{lemma}[cf.\ \cite{EMM-Upp}, Lemma 3.4]
\label{lem: EMM calculus}
Let $f\geq 0$ be a smooth function supported on $B_{\R^3}(0, R)$ (with $R\geq 1$) and let $\xi$ be a smooth function on $\mathbb S^2$.
Let $v\in\R^3$ and suppose $t\geq2\log R$ and $e^t/2\leq \|v\|\leq e^t$. Then 
\begin{multline*}
\int f(a_tkv)\xi(k^{-1}\mathsf e_3)\diff\!k= \\\tfrac{\sqrt 2}{2\pi(1+O(e^{-t}))\|v\|}J_{f}(\sqf(v), e^{-t}\|v\|)\xi(\tfrac{v}{\|v\|})+O\Bigl(\Sob(f)\Sob(\xi) e^{-2t}\Bigr) 
\end{multline*}
where $\{\mathsf e_1, \mathsf e_2, \mathsf e_3\}$ is the standard basis for $\R^3$, 
\[
J_{f}(c,d):=\frac{1}{d}\int f\Bigl(\tfrac{c+\coy^2}{2d},-\coy, d\Bigr) \diff\!\coy\qquad\text{that is: $\sqf(\tfrac{c+\coy^2}{2d}, \coy, d)=c$,}
\] 
and the implied constants depend polynomially on $R$. 

Moreover, there is some $C>0$ depending on $R$ so that if either $\|v\|>Ce^t$ or $|\sqf(v)|>C$, then both sides of the above equality equal zero without the error term $O\Bigl(\Sob(f) e^{-4t}\Bigr)$.  
\end{lemma}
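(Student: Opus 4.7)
The plan is to parametrize $K$ via $k = r_\theta$ with $\theta \in [0, 2\pi)$, localize the integrand to neighborhoods of the critical angles where $f(a_t r_\theta v) \neq 0$, and then change variables from $\theta$ to the middle coordinate of $u(\theta) := a_t r_\theta v$. The key geometric input is that $a_t k$ preserves $Q_0$, so $u(\theta)$ lies on the level set $\{u : Q_0(u) = Q_0(v)\}$; the substitution $\theta \mapsto y = u_2(\theta)$ will recast the integral as the one defining $J_f$.

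First I would handle the localization and the vanishing assertion. The constraint $|u_1(\theta)| \leq R$ forces $|(r_\theta v)_1| \leq Re^{-t}$, restricting $\theta$ to a neighborhood of any root $\theta_\pm$ of $(r_\theta v)_1 = 0$; explicitly, $r_{\theta_\pm}v = (0, y_\pm, z_\pm)$ with $y_\pm^2 = -Q_0(v)$ and $z_\pm^2 = \|v\|^2 + Q_0(v)$, so such roots exist iff $Q_0(v) \leq 0$ (for $Q_0(v) > 0$ the integrand is nonzero only if $Q_0(v) = O(Re^{-t}\|v\|)$). Combining $(r_\theta v)_2^2 + (r_\theta v)_3^2 \geq \|v\|^2 - R^2 e^{-2t}$ with $u_2^2 + e^{2t}u_3^2 \leq 2R^2 e^{2t}$ on $\supp(f)$ forces $\|v\| \leq \sqrt 2\, Re^t$, yielding vanishing for $\|v\| > Ce^t$; likewise the level-set bound $|Q_0(u)| \leq 3R^2$ on $\supp(f)$ gives vanishing for $|Q_0(v)| > C$.

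Next, near $\theta_+$ I would write $\tau = \theta - \theta_+$ and use $\tfrac{d}{d\theta}(r_\theta v) = X r_\theta v$ with $X \in \Lie(K)$ acting by $X(v_1,v_2,v_3) = (-v_2/\sqrt 2,\,(v_1-v_3)/\sqrt 2,\,v_2/\sqrt 2)$ to obtain
\[
\frac{du}{d\theta}\bigg|_{\theta_+} = \Bigl(-\tfrac{e^t y_+}{\sqrt 2},\; -\tfrac{z_+}{\sqrt 2},\; \tfrac{e^{-t}y_+}{\sqrt 2}\Bigr).
\]
The $\theta$-support of $f \circ u$ has width $O(e^{-t})$, on which second-order Taylor errors contribute $O(e^{-2t})$ to each coordinate of $u$. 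The substitution $\tau \mapsto y = u_2$ is a diffeomorphism with Jacobian $\sqrt 2/|z_+| \cdot (1+O(e^{-2t}))$; on the level set $Q_0(u) = c$ with $z = d + O(e^{-2t})$, $d := e^{-t}z_+$, the first coordinate is forced to be $x = (c + y^2)/(2d) + O(e^{-2t})$, and after $y \mapsto -y$ the integrand will match the definition of $J_f(c, d)$. Summing the (at most two) critical-angle contributions and using $z_\pm = \pm\|v\|(1 + O(e^{-2t}))$ to replace $d_\pm$ by $\pm e^{-t}\|v\|$ then produces the stated main term.

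For the $\xi$ factor, $r_{\theta_+}v/\|v\| = \mathsf{e}_3 + O(e^{-t})$ (since $y_+^2 = -Q_0(v) = O(1)$ while $\|v\| \asymp e^t$), and since $r_{-\theta_+}$ is Euclidean-orthogonal, $r_{-\theta_+}\mathsf{e}_3 = v/\|v\| + O(e^{-t})$; over the integration window of $\theta$-width $O(e^{-t})$, replacing $\xi(r_{-\theta}\mathsf{e}_3)$ by the constant $\xi(v/\|v\|)$ will cost an error $O(\Sob(f)\Sob(\xi)\, e^{-2t})$. The hard part will be careful bookkeeping of the three sources of error — the Taylor expansion of $u$, the deviation of $z$ from $d$ on the level-set slice, and the variation of $\xi$ along the $\theta$-window — each of which must combine to $O(e^{-2t})$ rather than $O(e^{-t})$. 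A secondary subtlety appears in the degenerate regime $Q_0(v) \to 0$, where $y_+ \to 0$ and the two critical angles coalesce: the $\theta$-support of $f \circ u$ widens to $O(e^{-t/2})$, but the Jacobian $|z_+|/\sqrt 2 \asymp \|v\|/\sqrt 2$ stays bounded away from zero, so the $J_f$ identity continues to hold without modification.
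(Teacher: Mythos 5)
Your approach is essentially the paper's --- both change variables from $\theta$ to the middle coordinate $y=(a_tr_\theta v)_2$ and use $\sqf$-invariance to pin down the first coordinate in terms of $y$ --- but you introduce more apparatus by Taylor-expanding around the two roots $\theta_\pm$ of $(r_\theta v)_1=0$, whereas the paper restricts once to the arc $\mathcal K_R(v)=\{k:\|k^{-1}\mathsf e_3-v/\|v\|\|\leq C'e^{-t}\}$ and changes variables globally there. The real problem is a sign error that would wreck your endgame: $z_+$ and $z_-$ have the \emph{same} sign, not opposite signs. Every $r_\theta$ preserves the linear form $v\mapsto v_1+v_3$ (check the explicit matrix), so $(r_\theta v)_1+(r_\theta v)_3\equiv v_1+v_3$ along the orbit; at the critical angles $(r_{\theta_\pm}v)_1=0$, hence $z_+=z_-=v_1+v_3$. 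When the integral is nonzero, $|(kv)_1|\leq Re^{-t}$ and $|(kv)_2|\leq R$ force $(v_1+v_3)^2=\|v\|^2+\sqf(v)\geq\|v\|^2-2R^2>0$, so $v_1+v_3$ has a definite sign with $|v_1+v_3|=\|v\|(1+O(e^{-2t}))$. With your claim $z_-=-\|v\|(1+O(e^{-2t}))$ you would get $d_-<0$, and $J_f(c,d_-)$ is simply not $J_f(c,e^{-t}\|v\|)$, so your ``sum of critical-angle contributions'' would not reproduce the single stated main term. Correctly, both neighborhoods of $\theta_\pm$ carry Jacobian $\sqrt 2/(\|v\|(1+O(e^{-2t})))$ and their images in $y$ together cover the $y$-support of $f$ exactly once, giving precisely one copy of $J_f$.

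Your estimate in the degenerate regime is also off, and if it were correct it would break the error count you are trying to close: you assert that when $\sqf(v)\to 0$ the $\theta$-support of $f\circ u$ widens to $O(e^{-t/2})$, but the pointwise cost of replacing $\xi(k^{-1}\mathsf e_3)$ by $\xi(v/\|v\|)$ is $O(\Sob(\xi)e^{-t})$, so a window of length $e^{-t/2}$ only yields an error $O(e^{-3t/2})$, not $O(e^{-2t})$. In fact the constraint $|(r_\theta v)_1|\leq Re^{-t}$ near a double root reads $\frac{\|v\|}{4}\theta^2\lesssim Re^{-t}$ with $\|v\|\asymp e^t$, so $|\theta|\lesssim\sqrt R\,e^{-t}$; the support stays $O(e^{-t})$ with polynomial dependence on $R$, which is exactly what the lemma allows. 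Your conclusion that ``the $J_f$ identity continues to hold'' is true, but because the window does \emph{not} widen, not because the Jacobian stays bounded.
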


\begin{proof}
Put $\mathcal K_f(v)=\{k\in K: f(a_tkv)\neq 0\}$. Let $k\in\mathcal K_f(v)$, then  
\be\label{eq: at k v e1}
\begin{aligned}
-R\leq \cox:=&\langle a_tkv, \mathsf e_1\rangle=e^t\langle kv, \mathsf e_1\rangle\leq R\quad\text{and}\\ 
-R\leq \coy:=&\langle a_tkv, \mathsf e_2\rangle=\langle kv, \mathsf e_2\rangle\leq R
\end{aligned}
\ee
The estimates in~\eqref{eq: at k v e1} and $e^{t}/2\leq \|v\|\leq e^t$ thus imply that 
there exists some $C'$ which depends polynomially on $R$ so that if we let 
\[
\mathcal K_R(v)=\{k\in K: \|k^{-1}\mathsf e_3-\tfrac{v}{\|v\|}\|\leq C'e^{-t}\},
\]
then $\mathcal K_f(v)\subset\mathcal K_R(v)$.  
Note that if $\mathcal K_R(v)=\emptyset$ or $\mathcal K_f(v)=\emptyset$, we still have $\mathcal K_f(v)\subset \mathcal K_R(v)$. Altogether, the range of integration is restricted to $\mathcal K_R(v)$. 

We first note that for all $k\in \mathcal K_R(v)$, we have 
\be\label{eq: xi is smooth}
|\xi(k^{-1}\mathsf e_3)-\xi(\tfrac{v}{\|v\|})|\ll \Sob(\xi)e^{-t}\qquad\text{}
\ee
Similarly, for all $k\in \mathcal K_R(v)$,   
\[
\coz:=\langle a_tkv, \mathsf e_3\rangle=e^{-t}\langle kv, \mathsf e_3\rangle= e^{-t}\|v\|+O(e^{-t}).
\]
Let us now put 
\[
\cox'=\tfrac{\sqf(v)+\coy^2}{2e^{-t}\|v\|}=\tfrac{(2\cox\coz-\coy^2)+\coy^2}{2e^{-t}\|v\|}=\tfrac{\cox\coz}{e^{-t}\|v\|}.
\] 
Then $\sqf(\cox',\coy, e^{-t}\|v\|)=\sqf(v)$, $|\cox-\cox'|\ll e^{-t}$, and 
\[
|f(a_tkv)-f(\cox',\coy, e^{-t}\|v\|)|\ll \Sob(f_{c,\delta}) e^{-t}.
\]

In view of~\eqref{eq: xi is smooth}, thus, to complete the proof, we need to compute $\diff\!\coy$. 
First note that 
\[
\int f(a_tkv)\diff\!k=\int f(a_tk'k_vv)\diff\!k'.
\]
Write $k_vv=(v_1, v_2, v_3)$, then $v_3=(1+O(e^{-t}))\|v\|$ and in the notation of~\eqref{eq: r-theta Oppenheim}, 
\[
\coy=\langle k'k_vv, \mathsf e_2\rangle=v_1\tfrac{\sin\theta}{\sqrt2}+v_2\cos\theta-v_3\tfrac{\sin\theta}{\sqrt2}.
\]
Thus $\diff\!k=-\tfrac{\sqrt 2}{2\pi((1+O(e^{-t}))\|v\|)}\diff\!\coy$ on $K_R(v)$, 
where we used $\cos\theta\!=\!1+O(e^{-2t})$ for $k'k_v\in K_R(v)$ and $v_3=(1+O(e^{-t}))\|v\|$. Altogether, we get  
\[
\begin{aligned}
\int f(a_tkv)\diff\!k&=\tfrac{\sqrt 2}{2\pi((1+O(e^{-t}))\|v\|)}\!\!\int f(\cox',-\coy, e^{-t}\|v\|) \diff\!\coy+O\Bigl(\Sob(f) e^{-2t}\Bigr)\\
&=\tfrac{\sqrt 2}{2\pi((1+O(e^{-t}))\|v\|)}J_f(\sqf(v), e^{-t}\|v\|)+O\Bigl(\Sob(f) e^{-2t}\Bigr). 
\end{aligned}
\]
This and~\eqref{eq: xi is smooth} complete the proof of the lemma; note that the last claim in the lemma follows from the above argument. 
\end{proof}

Fix some $0<\vare<10^{-4}$ and cover $\mathbb S^2$ with disjoint half open cubes $\{\Omega_{i,\vare}'\}$ of 
size between $\vare^2$ and $\vare^2/2$. 
For each $i$, let 
\[
\Omega_{i,\vare}=(1-\vare, 1]\cdot (g_Q\Omega_{i,\vare}').
\]
Also set $\sup_{\Omega_{i,\vare}'}\|g_Qw\|=d_i$ and $d_i-C\vare^2=\inf_{\Omega_{i,\vare}'}\|g_Qw\|$, note that $d_i$ and $C$ depend on $\|g_Q\|$. 

\begin{lemma}\label{lem: calculus}
Let $c\in\R$ and $0<\vare<10^{-4}$. 
There exist $f_i^\pm\in C_c^\infty(\R^3)$ and $\xi_i\in C_c^\infty(\mathbb S^2)$ satisfying 
$0\leq f_i^-\leq f_i^+\ll 1$, $\int f^+_i=(1+O(\vare))\int f_i^-$, and $\Sob(\bullet)\ll \vare^{-L}$ for $\bullet=f_i^\pm,\xi_i$ where $L$ is absolute, so that 
\begin{multline*}
J_{f_i^-}(\sqf(v), e^{-t}\|v\|)\xi_i(\tfrac{v}{\|v\|})\leq 1_{\Omega_{i,\vare}}(e^{-t} v)1_{[c-\vare,c+\vare]}(\sqf(v))\leq\\ J_{f_i^+}(\sqf(v), e^{-t}\|v\|)\xi_i(\tfrac{v}{\|v\|})
\end{multline*}
\end{lemma}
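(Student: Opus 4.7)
The plan is to construct $f_i^\pm$ as products adapted to the coordinates $(x,y,z)$ in which $\sqf(x,y,z) = 2xz - y^2$, exploiting the product-like structure of $\Omega_{i,\vare}$. The key geometric observation is that in polar coordinates, writing $r_v = e^{-t}\|v\|$ and $\omega_v = v/\|v\|$, the condition $e^{-t}v \in \Omega_{i,\vare}$ is equivalent to $\omega_v \in S_i := \{g_Q w/\|g_Q w\| : w \in \Omega_{i,\vare}'\}$ together with $r_v/\|g_Q w\| \in (1-\vare,1]$ for the unique $w \in \Omega_{i,\vare}'$ with $\omega_v = g_Q w/\|g_Q w\|$. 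Since $\|g_Q w\|$ varies by only $O(\vare^2)$ over $\Omega_{i,\vare}'$ while the radial window has length $\sim \vare$, the radial condition is independent of $\omega_v$ up to an $O(\vare^2)$ perturbation. Thus I can sandwich $1_{\Omega_{i,\vare}}(e^{-t}v)\,1_{[c-\vare,c+\vare]}(\sqf(v))$ between two product indicators $1_{S_i^\pm}(\omega_v)\,1_{R_i^\pm}(r_v)\,1_{[c-\vare\mp O(\vare^2),\,c+\vare\pm O(\vare^2)]}(\sqf(v))$, where $S_i^\pm$ and $R_i^\pm$ are $O(\vare^2)$-shrinkings/thickenings of $S_i$ and of the common radial window $R_i \subset ((1-\vare)d_i, d_i]$, respectively.

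Next I smooth each factor separately. Pick $\xi_i \in C_c^\infty(\mathbb{S}^2)$ nonnegative, equal to $1$ on an $O(\vare^2)$-shrinking of $S_i$ and supported on an $O(\vare^2)$-thickening, with smooth transition. Pick nonnegative bumps $\chi^\pm, \psi^\pm \in C_c^\infty(\R)$ that are inner/outer smooth approximations to the indicators of $R_i$ and of $[c-\vare, c+\vare]$ respectively, with transition widths $\sim \vare^2$ chosen so that the ratios $\int\chi^+/\int\chi^-$ and $\int\psi^+/\int\psi^-$ are each $1 + O(\vare)$. Fix also a nonnegative $\phi \in C_c^\infty(\R)$ with $\int\phi\,dy = 1$ and support contained in an interval large enough (depending polynomially on $\|g_Q\|$) so that for any $(c',d)$ with $\chi^\pm(d)\psi^\pm(c') \neq 0$, the $y$-values arising in the integrand defining $J_{f_i^\pm}$ remain inside the set where $\phi$ integrates to $1$. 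Then define
$$f_i^\pm(x,y,z) = z\,\chi^\pm(z)\,\psi^\pm(2xz - y^2)\,\phi(y).$$
A direct calculation using $\sqf\bigl(\tfrac{c'+y^2}{2d}, -y, d\bigr) = c'$ gives
$$J_{f_i^\pm}(c', d) = \frac{1}{d}\int f_i^\pm\Bigl(\tfrac{c'+y^2}{2d}, -y, d\Bigr)\,dy = \chi^\pm(d)\,\psi^\pm(c')\int\phi(-y)\,dy = \chi^\pm(d)\,\psi^\pm(c').$$

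Thus $J_{f_i^\pm}(\sqf(v), r_v)\,\xi_i(\omega_v) = \chi^\pm(r_v)\,\psi^\pm(\sqf(v))\,\xi_i(\omega_v)$ is manifestly a tensor product of bumps in the three factored coordinates, and by the choice of transition widths it sandwiches the factored indicator above, hence also the original one. The ratio of $L^1$ norms is computed via the change of variables $u = 2xz - y^2$ (so $du = 2z\,dx$ at fixed $y, z$), giving $\int f_i^\pm = \tfrac12 \int \chi^\pm \cdot \int\psi^\pm \cdot \int\phi$, hence $\int f_i^+/\int f_i^- = 1 + O(\vare)$. The Sobolev bounds $\Sob(f_i^\pm), \Sob(\xi_i) \ll \vare^{-L}$ for some absolute $L$ follow from standard estimates, since the transitions are of width $\gtrsim \vare^2$ and only a bounded number of derivatives need to be controlled. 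The main technical subtlety, which I expect to be the principal obstacle in the details, is that a single $\xi_i$ must serve both the upper and lower inequalities despite being nonzero on an $O(\vare^2)$-transition zone outside $S_i$; this is handled by the standard device of shrinking $R_i^-$ and enlarging $R_i^+$ by a matching $O(\vare^2)$, so that at any $(\omega_v, r_v)$ lying in the transition zone of $\xi_i$ one either has $\chi^-(r_v) = 0$ (killing the lower-bound product) or the point already lies in $\Omega_{i,\vare}$ (rendering the upper-bound inequality automatic), with the resulting boundary discrepancy absorbed into the $(1+O(\vare))$ factor on $\int f_i^+/\int f_i^-$.
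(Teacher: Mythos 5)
Your construction of $f_i^\pm = z\,\chi^\pm(z)\,\psi^\pm(\sqf)\,\phi(y)$ so that $J_{f_i^\pm}(c',d)=\chi^\pm(d)\psi^\pm(c')$ is essentially the paper's approach (the paper smooths the box indicator $1_{\mathsf B_i^\pm}$, which is the same product structure in the $(y,z,\sqf)$ coordinates, then multiplies by $z$). You also correctly isolate the one real subtlety: a single $\xi_i$ has to serve both sides of the sandwich. But your proposed resolution of that subtlety does not work, and this is a genuine gap.

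The problem is that the failure is angular, not radial, so adjusting $\chi^\pm$ cannot repair it. For the lower bound: if $\omega_v=v/\|v\|$ lies in the outer transition annulus of your $\xi_i$ (i.e.\ $\xi_i(\omega_v)>0$ but $\omega_v\notin\Xi_i$), then $e^{-t}v\notin\Omega_{i,\vare}$ for \emph{every} value of $r_v$, because $\Omega_{i,\vare}=(1-\vare,1]\cdot g_Q\Omega'_{i,\vare}$ has angular cross-section exactly $\Xi_i$. Taking $r_v$ in the interior of the radial window and $\sqf(v)\approx c$ then gives $\chi^-(r_v)\psi^-(\sqf(v))\xi_i(\omega_v)>0$ against a right-hand side of $0$; no shrinking of $R_i^-$ reaches such $(\omega_v,r_v)$. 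Symmetrically, for the upper bound: you take $\xi_i\equiv 1$ only on an $O(\vare^2)$-shrinking of $\Xi_i$, so for $\omega_v\in\Xi_i$ just inside its boundary one has $\xi_i(\omega_v)<1$ while the indicator on the left can equal $1$; since $\chi^+\psi^+$ must stay $O(1)$ (to keep $f_i^+\ll1$), the inequality fails. The closing remark about absorbing the discrepancy into the factor $\int f_i^+/\int f_i^-=1+O(\vare)$ conflates a pointwise inequality with an integral one and fixes neither side. The paper sidesteps the upper-bound failure by requiring $1_{\Xi_i}\leq\xi_i$, i.e.\ $\xi_i\equiv 1$ on \emph{all} of $\Xi_i$ with no shrinking, and pushes the radial slack into the boxes $\mathsf B_i^\pm$ (inner window $[(1-\vare)d_i, d_i-C\vare^2]$ versus outer window $[(1-\vare)(d_i-C\vare^2), d_i]$) rather than into $\xi_i$. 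To write a correct proof you should adopt that normalization of $\xi_i$ and handle the lower-bound boundary more carefully along the paper's lines, rather than via a radial trim that cannot reach the offending set.
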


\begin{proof}
Let us set
\begin{align*}
\mathsf B^-_i&=\{v=(\cox,\coy,\coz): |\coy|\leq \tfrac12, (1-\vare)d_i\leq \coz\leq d_i-C\vare^2, |\sqf(v)-c|\leq \vare\}\\
\mathsf B^+_i&=\{v=(\cox,\coy,\coz): |\coy|\leq \tfrac12, (1-\vare)(d_i-C\vare^2)\leq \coz\leq d_i, |\sqf(v)-c|\leq \vare\}
\end{align*} 
Define $\mathsf B^{--}_i$ and $\mathsf B^{++}_i$ similarly, by replacing $\vare$ by $\vare-C'\vare^2$ and $\vare+C'\vare^2$ for a large $C'$, depending on $d_i$ and $C$,  respectively.  

Fix smooth functions $\tilde f_i^\pm$ satisfying $1_{\mathsf B_i^{--}}\leq \tilde f_i^-\leq 1_{\mathsf B_i^-}$, 
$1_{\mathsf B_i^+}\leq \tilde f_i^+\leq  1_{\mathsf B_i^{++}}$, and $\Sob(\tilde f_i^\pm)\ll \vare^{-L}$. Put $f_i^\pm=\coz\tilde f_i^\pm$. 

Also fix smooth characteristic functions $\xi_i$ so that 
\[
1_{\Xi_i}\leq \xi_i\leq 1_{\mathcal N_{C'\vare^2}(\Xi_i)}\quad\text{and}\quad \Sob(\xi_i)\ll \vare^{-L}
\] 
where $\mathcal N_{\delta}(\Xi_i)$ is the $\delta$-neighborhood of $\Xi_i=\{\frac{w}{\|w\|}: w\in g_Q\Omega_{i,\vare}'\}$. 

Suppose now that $v$ is so that $J_{f_i^-}(\sqf(v), e^{-t}\|v\|)\xi_i(\frac{v}{\|v\|})\neq 0$, then $v\in g_Q\Omega_{i,\vare}'$.
Moreover, using the definition 
\[
J_{f_i^-}(\sqf(v), e^{-t}\|v\|)=\tfrac{e^t}{\|v\|}\int f_i^- (\cox, -\coy, e^{-t}\|v\|)\diff\!\coy
\]    
where $\sqf(\cox, \coy, e^{-t}\|v\|)=\sqf(v)$, we have $f_i^- (\cox, -\coy, e^{-t}\|v\|)\neq 0$ for some $\cox,\coy$. Since $\frac1\coz f_i^-\leq 1_{B_i^-}$, we thus conclude 
\[
(1-\vare)d_i\leq e^{-t}\|v\|\leq d_i-C\vare^2\quad\text{and}\quad |\sqf(v)-c|\leq \vare
\]
These imply that $e^{-t}v\in \Omega_{i,\vare}$. Therefore,  
\[
1_{\Omega_{i,\vare}}(e^{-t} v)1_{[c-\vare,c+\vare]}(\sqf(v))=1.
\] 
Since $J_{f_i^-}(\sqf(v), e^{-t}\|v\|)\xi(v/\|v\|)\leq 1$ the lower bound follows. 

We now establish the upper bound. Let $e^{-t}v\in\Omega_{i,\vare}$ and $|\sqf(v)-c|\leq \vare$. Suppose
$|\coy|\leq 1/2$, and let $\cox$ be so that $\sqf(\cox,\coy,e^{-t}\|v\|)=\sqf(v)$. 
Then $(\cox,-\coy,e^{-t}v)\in \mathsf B_i^+$, thus $f_i^+(\cox,-\coy,e^{-t}\|v\|)=e^{-t}\|v\|$. Hence 
\[
J_{f_i^+}(\sqf(v), e^{-t}\|v\|)=\tfrac{e^t}{\|v\|}\int f_i^+ (\cox, -\coy, e^{-t}\|v\|)\diff\!\coy=1,
\]  
as we claimed.  
\end{proof}

We also record the following upper bound estimate whose proof relies on similar (but simpler) arguments.
See \cite[Thm.\ 2.3]{EMM-Upp}, also \cite[Lemma 3.9]{LMW23} and a related estimate in~\cite[Thm.~4]{KKL-Mean}.

\begin{lemma}\label{lem: gen upp bd}
 Let $0<\eta<1$. 
 Then for all $s\gg\absolute{\log\eta}$, we have 
  \[
  \#\{v\in \Z^3: \|v\|\leq e^s, |Q(v)|\leq M\}\ll e^{(1+\eta)s}
  \]
  where the implied constants depend polynomially on $M$ and $\|Q\|^{\pm1}$.  
\end{lemma}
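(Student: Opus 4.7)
The plan is to dyadically decompose the counting sum according to the size of $\|v\|$, convert each dyadic piece into an integral over the circle $K$ via Lemma~\ref{lem: EMM calculus}, and then bound the resulting integral using Schmidt's lattice-point estimate together with part~(2) of Theorem~\ref{thm: EMM-alpha bound}. The key observation is that $a_\ell\in H$ preserves the form $\sqf$, while the $K$-average of the pullback of a bounded test function detects vectors $v$ with $|\sqf(g_Qv)|=|Q(v)|\leq M$ on the annulus $\|g_Qv\|\asymp e^\ell$.

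Concretely, for each integer $1\leq \ell\leq \lceil s+\log\|g_Q\|\rceil$, set
\[
N_\ell:=\#\{v\in\Z^3:\, e^\ell/2\leq \|g_Qv\|\leq e^\ell,\ |Q(v)|\leq M\},
\]
so that the total count is $\ll \sum_\ell N_\ell$ up to factors polynomial in $\|g_Q\|^{\pm 1}$ (which absorb the ambiguity between $\|v\|$ and $\|g_Qv\|$). Fix a nonnegative $f\in C_c^\infty(\R^3)$ with $f\geq 1$ on the box $\{(x,y,z):|x|\leq M+1,\ |y|\leq 1,\ 1/2\leq z\leq 1\}$, with support and Sobolev norm depending polynomially on $M$; an elementary computation with the definition of $J_f$ in Lemma~\ref{lem: EMM calculus} gives $J_f(c,d)\geq 2$ for all $|c|\leq M$ and $d\in[1/2,1]$. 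Lemma~\ref{lem: EMM calculus} applied to $g_Qv$ (with $\xi\equiv 1$) then yields, once $\ell$ exceeds a threshold polynomial in $M$ to absorb the error $O(\Sob(f)e^{-2\ell})$,
\[
\int_K f(a_\ell k g_Qv)\,\diff\!k\gtrsim \frac{1}{\|g_Qv\|}\gtrsim e^{-\ell}
\]
for every $v$ contributing to $N_\ell$.

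Summing over such $v$ and using positivity of $f$,
\[
e^{-\ell}N_\ell\lesssim \sum_{v\in\Z^3}\int_K f(a_\ell k g_Qv)\,\diff\!k=\int_K \hat f(a_\ell k g_Q\Gamma)\,\diff\!k.
\]
Schmidt's lemma (used in the form of~\eqref{eq: Schmidt's Lemma} in the paper) bounds $\hat f\ll \alpha_1+\alpha_2$ with implicit constant polynomial in $M$, and Theorem~\ref{thm: EMM-alpha bound}(2) then gives $\int_K(\alpha_1+\alpha_2)(a_\ell k g_Q\Gamma)\,\diff\!k\ll \ell\cdot \|g_Q\|^\star$. Therefore $N_\ell\ll \ell\,e^\ell$, and summing the geometric-times-linear series over $\ell$ yields a total bound $\ll s\,e^s$, with all constants polynomial in $M$ and $\|Q\|^{\pm 1}$. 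The elementary inequality $s\,e^s\leq e^{(1+\eta)s}$, valid whenever $\log s\leq \eta s$, promotes this to the stated bound $\ll e^{(1+\eta)s}$ once $s$ is sufficiently large in terms of $\eta$ (the hypothesis $s\gg|\log\eta|$ being interpreted as such a threshold). The only step requiring care is the choice of $f$: we need $J_f$ to be uniformly bounded below across all dyadic scales $\ell$ simultaneously, while keeping $\Sob(f)$ and the diameter of $\supp f$ polynomial in $M$ so that both the Schmidt bound and the error term in Lemma~\ref{lem: EMM calculus} absorb correctly; the rest of the argument is a routine application of the tools developed earlier in the paper.
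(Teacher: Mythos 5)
The paper does not actually prove this lemma; the proof given there is a citation to \cite[Thm.~2.3]{EMM-Upp}, with the polynomial dependence traced to \cite[Lemma~3.9]{LMW23} and \cite[Thm.~4]{KKL-Mean}. You instead write out a self-contained argument, and it is essentially the argument that underlies those references: dyadic annuli in $\|g_Qv\|$, the circle-average identity of Lemma~\ref{lem: EMM calculus} applied with a test function engineered so that $J_f\geq 2$ uniformly over $\{|c|\leq M,\ d\in[1/2,1]\}$, Schmidt's lemma to dominate $\hat f$ by $\alpha_1+\alpha_2$ with constant polynomial in $M$, and Theorem~\ref{thm: EMM-alpha bound}(2) to close the loop. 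Every step checks out: once $\ell$ exceeds a threshold of order $\log M$ (to absorb the error term of Lemma~\ref{lem: EMM calculus}) you get $N_\ell\ll \ell e^\ell$, the small $\ell$ are absorbed trivially, and the sum gives $\ll se^s$, all with constants polynomial in $M$ and $\|Q\|^{\pm1}$.

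Be careful with the final reduction, though. The inequality $se^s\leq Ce^{(1+\eta)s}$ is equivalent to $\log s\leq \eta s+\log C$; for small $\eta$ this fails on an entire interval around $s=1/\eta$ (where $\log s-\eta s$ peaks at $|\log\eta|-1$) and only becomes true again once $s$ is of order $\eta^{-1}|\log\eta|$, not $|\log\eta|$. So interpreting the hypothesis ``$s\gg|\log\eta|$'' as exactly this threshold is not correct as $\eta\to 0$. This imprecision is inherited from the lemma statement rather than being a defect of your argument, and it is harmless in the paper: in both invocations (Lemma~\ref{lem: siegel quadratic form} and Lemma~\ref{lem: exceptional and very exceptional}) the parameter $\eta$ is bounded below by an absolute constant, so the two thresholds coincide up to an absolute factor. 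If you want a clean statement, either phrase the hypothesis as $s\gg\eta^{-1}|\log\eta|$, or keep the bound in the form $\ll se^s$, which is what the Eskin--Margulis--Mozes argument actually produces and which is essentially sharp in view of the $\gg T(\log T)^{1-\vare}$ example recorded after Theorem~\ref{thm: quantitative Oppenheim}.
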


\begin{proof}
As indicated above, this is proved in \cite[Thm.\ 2.3]{EMM-Upp}, the polynomial dependence stated above is implicit in loc.\ cit., and is made explicit in \cite[Lemma 3.9]{LMW23} and~\cite[Thm.~4]{KKL-Mean}.     
\end{proof}

Recall that we cover $\mathbb S^2$ with disjoint half open cubes $\{\Omega_{i,\vare}'\}$ of 
size between $\vare^2$ and $\vare^2/2$. For $t>0$ and $a\leq c\leq b$, let 
\[
\mathcal N_i(c,e^t)=\#\{v\in \Z^3: v\in ((1-\vare)e^t, e^t]\cdot\Omega_{i,\vare}', |Q(v)-c|\leq \vare\}
\]
The following lemma relates the lattice point counting to averages considered in Proposition~\ref{prop: equi of f hat}; 
its proof relies on Lemma~\ref{lem: EMM calculus} and Lemma~\ref{lem: calculus}. 

\begin{lemma}\label{lem: siegel quadratic form}
With the notation as in Lemma~\ref{lem: calculus}, let $f_i=f^+_{i}$ also let $\xi_i$ be as in that lemma. 
Then for all $t\gg |\log\vare|$, we have 
\[
\tfrac{2\pi}{\sqrt 2}e^t\int_K\hat f_{i}(a_{t}kg_Q\Gamma)\xi_i(k^{-1}\mathsf e_3)\diff\!k=(1+O(\vare))N_i(c,e^t), 
\]
where the implied constants depend polynomially on $|a|, |b|$, and $\|g_Q\|^{\pm1}$.  
\end{lemma}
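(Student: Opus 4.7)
\medskip

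\noindent\textbf{Plan.} The proof is a direct calculation: unfold the Siegel transform, apply Lemma~\ref{lem: EMM calculus} pointwise on each lattice vector, and then use Lemma~\ref{lem: calculus} to recognize the resulting sum as the lattice count $N_i(c, e^t)$. First, I would write
\[
\int_K \hat f_i(a_t k g_Q \Gamma)\xi_i(k^{-1}\mathsf e_3)\diff\!k = \sum_{v\in\Z^3} \int_K f_i(a_t k g_Q v) \xi_i(k^{-1}\mathsf e_3)\diff\!k.
\]
Since $f_i = f_i^+$ is supported in $\mathsf B_i^{++}$, only finitely many $v$ contribute, and for any such $v$ we necessarily have $\|g_Q v\|\leq (1+O(\vare^2)) d_i e^t$; in particular, a standard argument using $a_t$ shows that $k^{-1}\mathsf e_3$ must lie within $O(e^{-t})$ of $g_Q v/\|g_Q v\|$, so $g_Q v$ lives in a narrow cone near the preimage of $\Omega_{i,\vare}$.

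Next, I would apply Lemma~\ref{lem: EMM calculus} with $v$ replaced by $g_Q v$ (using $\sqf(g_Q v) = Q(v)$) to each individual term, obtaining
\[
\int_K f_i(a_t k g_Q v) \xi_i(k^{-1}\mathsf e_3)\diff\!k = \tfrac{\sqrt 2\,(1+O(e^{-t}))}{2\pi \|g_Q v\|}\,J_{f_i}\bigl(Q(v), e^{-t}\|g_Q v\|\bigr)\,\xi_i\Bigl(\tfrac{g_Q v}{\|g_Q v\|}\Bigr) + O\bigl(\vare^{-L} e^{-2t}\bigr).
\]
Multiplying by $\tfrac{2\pi}{\sqrt 2}e^t$ and summing, the LHS of the lemma becomes
\[
\sum_{v\in\Z^3} \tfrac{e^t(1+O(e^{-t}))}{\|g_Q v\|}\,J_{f_i}\bigl(Q(v), e^{-t}\|g_Q v\|\bigr)\,\xi_i\Bigl(\tfrac{g_Q v}{\|g_Q v\|}\Bigr) + \mathcal E,
\]
where the error $\mathcal E$ aggregates the per-term remainders. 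Since the number of contributing lattice points is $\ll e^t$ by Lemma~\ref{lem: gen upp bd} (using the constraints $\|v\|\ll e^t$ and $|Q(v)-c|\leq\vare$, with implicit constants polynomial in $|c|$ and $\|g_Q\|^{\pm 1}$), we get $\mathcal E = O(\vare^{-L})$, which is absorbed into $O(\vare) N_i(c, e^t)$ once $t \gg |\log\vare|$.

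Finally, I would sandwich the sum using Lemma~\ref{lem: calculus} applied to $g_Q v$:
\[
J_{f_i^-}\xi_i \;\leq\; 1_{\Omega_{i,\vare}}(e^{-t} g_Q v)\,1_{[c-\vare, c+\vare]}(Q(v)) \;\leq\; J_{f_i^+}\xi_i.
\]
For $v$ contributing, $e^{-t}\|g_Q v\| = d_i + O(\vare)$, so the weight $\tfrac{e^t}{\|g_Q v\|}$ combined with the structural form $f_i^\pm = \coz\tilde f_i^\pm$ produces exactly the factor needed so that $\tfrac{e^t}{\|g_Q v\|}J_{f_i^\pm}(Q(v), e^{-t}\|g_Q v\|)\xi_i(g_Q v/\|g_Q v\|)$ agrees with the corresponding indicator up to a multiplicative $(1+O(\vare))$. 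Summing, using $\int f_i^+ = (1+O(\vare))\int f_i^-$, and bounding the lattice points in the $O(\vare)$-neighborhood of the boundary of the counting region by another application of Lemma~\ref{lem: gen upp bd}, one concludes that both the upper sum involving $f_i^+$ and the lower sum involving $f_i^-$ equal $(1+O(\vare)) N_i(c, e^t)$.

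\medskip

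\noindent\textbf{Main obstacle.} The delicate part is the bookkeeping in the last step: one must verify that the Jacobian-like factor $\tfrac{e^t}{\|g_Q v\|}$ from Lemma~\ref{lem: EMM calculus}, combined with the normalization $f_i^\pm = \coz\tilde f_i^\pm$ built into Lemma~\ref{lem: calculus}, produces exactly the cancellation that makes $\tfrac{e^t}{\|g_Q v\|} J_{f_i^\pm} \xi_i$ equal to the indicator of the counting region up to the multiplicative $(1+O(\vare))$, and to control the contribution of the boundary layer (lattice points whose parameters lie within $O(\vare)$ of the boundary of the integration region) via Lemma~\ref{lem: gen upp bd}, keeping the dependence on $|a|, |b|, \|g_Q\|^{\pm 1}$ polynomial.
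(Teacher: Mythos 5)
Your overall strategy — unfold the Siegel transform, apply Lemma~\ref{lem: EMM calculus} to each lattice vector, aggregate the per-vector errors using Lemma~\ref{lem: gen upp bd}, and identify the main term via the sandwich in Lemma~\ref{lem: calculus} — is exactly what the paper does. The structural decomposition is correct, and you correctly identify that the error aggregation requires $t\gg|\log\vare|$.

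However, the bookkeeping you describe in the ``main obstacle'' paragraph does not close, and the claim you ask the reader to verify is in fact false as stated. You assert that the residual weight $\tfrac{e^t}{\|g_Qv\|}$ (which for contributing $v$ is $\tfrac1{d_i}(1+O(\vare))$) is cancelled by the normalization $f_i^\pm = \coz\tilde f_i^\pm$ built into Lemma~\ref{lem: calculus}, so that $\tfrac{e^t}{\|g_Qv\|}J_{f_i^\pm}\xi_i$ agrees with the indicator up to $(1+O(\vare))$. But the $\coz$ factor has already been spent: since $J_f(c,d)=\tfrac1d\int f(\cdots)\diff\coy$ and $f_i^\pm(\cdots,\coz)=\coz\tilde f_i^\pm(\cdots,\coz)$ with $\coz=d$ on the integration curve, one has $J_{f_i^\pm}(c,d)=\int\tilde f_i^\pm(\cdots)\diff\coy$ — the $\coz$ exactly cancels the $\tfrac1d$ in the definition of $J_f$, and nothing is left over to cancel the external $\tfrac{e^t}{\|g_Qv\|}=\tfrac1d$. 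Your computation therefore produces $\tfrac1{d_i}N_i(c,e^t)(1+O(\vare))$, off from the asserted equality by a factor of $d_i\asymp\|g_Q\|$, which cannot be hidden in the implied multiplicative constants since it affects the main term. The resolution is that the explicit prefactor $\tfrac{\sqrt 2}{2\pi\|v\|}$ in the statement of Lemma~\ref{lem: EMM calculus} is a slip: tracing the change of variables $\diff\!k=-\tfrac{\sqrt 2}{2\pi(1+O(e^{-t}))\|v\|}\diff\coy$ together with $\int f(\cdots)\diff\coy = (e^{-t}\|v\|)\,J_f(\sqf(v),e^{-t}\|v\|)$ gives the prefactor $\tfrac{\sqrt 2\,e^{-t}}{2\pi(1+O(e^{-t}))}$, so that after multiplying by $\tfrac{2\pi}{\sqrt2}e^t$ each term is simply $(1+O(e^{-t}))J_{f_i}\xi_i$, with no residual weight — this is what the paper's display~\eqref{eq: circle ave J func} actually uses, and with that correction your argument goes through.
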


\begin{proof}
Let $t=\log T$. By Lemma~\ref{lem: EMM calculus}, we have   
\begin{multline}\label{eq: circle ave J func}
\tfrac{2\pi}{\sqrt2}e^t\int f_i(a_tkv)\xi_i(k^{-1}\mathsf e_3)\diff\!k=\\ 
(1+O(e^{-t}))J_{f_i}(\sqf(v), e^{-t}\|v\|)\tilde\xi_i(\tfrac{v}{\|v\|})+O\Bigl(\Sob(f_i)\Sob(\xi_i) e^{-t}\Bigr) 
\end{multline}
Recall from Lemma~\ref{lem: calculus} that $\Omega_{i,\vare}=(1-\vare, 1]\cdot (g_Q\Omega'_{i,\vare})$. 
By that lemma
\be\label{eq: use calculus lemma}
J_{f}(\sqf(v), e^{-t}\|v\|)\xi_i(\tfrac{v}{\|v\|})=\\(1+O(\vare))1_{\Omega_{i,\vare}}(e^{-t} v)1_{[c-\vare,c+\vare]}(\sqf(v))
\ee

Using the last claim in Lemma~\ref{lem: EMM calculus}, both sides of~\eqref{eq: circle ave J func} 
are zero unless $\|v\|\ll e^t$ and $|\sqf(v)|\ll 1$. Therefore, using Lemma~\ref{lem: gen upp bd}, with $\eta$ small enough so that $\Sob(f_i)\Sob(\xi_i)e^{\eta t}< e^t \vare^{10}$, and summing~\eqref{eq: circle ave J func}, we conclude 
\begin{multline*}
\tfrac{2\pi}{\sqrt 2}e^t\!\int\hat f_i(a_tkg_Q\Gamma)\xi_i(k^{-1}\mathsf e_3)\diff\!k=\tfrac{2\pi}{\sqrt 2}e^t\!\int\!\! \sum_{v\in g_Q\Z^3} f_{i}(a_tkv)\xi_i(k^{-1}\mathsf e_3)\diff\!k=\\(1+O(e^{-t}))\sum J_{f_i}(\sqf(v), e^{-t}\|v\|)\tilde\xi_i(\tfrac{v}{\|v\|})+O\Bigl(\Sob(f_i)\Sob(\xi_i) e^{-t}\Bigr)
\end{multline*}
This and~\eqref{eq: use calculus lemma} imply that
\[
\tfrac{2\pi}{\sqrt 2}e^t\!\int\hat f_i(a_tkg_Q\Gamma)\xi_i(k^{-1}\mathsf e_3)\diff\!k=(1+O(\vare))N_i(c, e^t),
\]
as we claimed. 
\end{proof}


\section{Proofs of Theorem~\ref{thm: Oppenheim} and Theorem~\ref{thm: quantitative Oppenheim}}\label{sec: proof of Oppenheim}
In this section, we will use Proposition~\ref{prop: equi of f hat} and Lemma~\ref{lem: siegel quadratic form} 
to complete the proof of Theorem~\ref{thm: quantitative Oppenheim}.  
A similar, but simpler, argument based on Proposition~\ref{prop: lower bd of ave f hat} and and Lemma~\ref{lem: siegel quadratic form} will also be used to complete the proof of Theorem~\ref{thm: Oppenheim}.

\subsection{Proof of Theorem~\ref{thm: quantitative Oppenheim}}\label{sec: proof of Oppenheim-equidistribution}
Let $A,\delta$ be constants as in the statement of that theorem. 
Fix a large parameter $T$, and put $t=\log T$.  

The following is our standing assumption in this section: for all $Q'\in \Mat_3(\Z)$ with $\norm{Q'}\leq T^\delta$ and all $\lambda\in\R$, 
\be\label{eq: Diophantine cond proof}
\norm{Q-\lambda Q'}\geq \norm{Q'}^{-A}.  
\ee

\begin{lemma}\label{lem: exceptional subspaces ell}
There exists an absolute constant $M'$, so that for all $\varrho<\delta/M'$ and all $\bar E>5A+5M'\varrho$.
There are at most four lines $\mathcal L=\{L_i\}$ and at most four planes $\mathcal P=\{P_i\}$ 
so that if for some $t/5\leq \ell\leq t$ any line $L$ or plane $P$ is $(\varrho, \bar E, \ell)$-exceptional then $L\in\mathcal L$ and $P\in\mathcal P$.   
\end{lemma}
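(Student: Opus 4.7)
The plan is to contradict the Diophantine hypothesis~\eqref{eq: Diophantine cond proof} by feeding five exceptional line- (resp.\ plane-) generators into Lemma~\ref{lem: five vectors}. First I treat exceptional lines: a line is $(\varrho,\bar E,\ell)$-exceptional for some $\ell\in[t/5,t]$ precisely when its primitive integer generator $v$ satisfies $\|v\|\leq e^{\varrho t}$ and $|Q(v)|\leq e^{-\bar E\varrho t/5}$; let $\mathcal V$ denote the set of such vectors. With $\eta=e^{-\varrho t}$, this reads $\|v\|\leq\eta^{-1}$ and $|Q(v)|\leq\eta^{\bar E/5}$ for $v\in\mathcal V$, and the task reduces to showing that $\mathcal V$ spans at most four lines.

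Suppose for contradiction there exist $v_1,\dots,v_5\in\mathcal V$ spanning five distinct lines. In the generic configuration---no three of them coplanar---Lemma~\ref{lem: five vectors}\ref{item: P-mat-2}, applied with parameters $A_L=\bar E/5$ and $D$ absolute, produces a nonsingular $P\in\Mat_3(\Z)$ with
\[
\|P\|\leq e^{M'\varrho t}\qquad\text{and}\qquad \|Q-\lambda P\|\leq e^{-(\bar E/5-M')\varrho t},\qquad \lambda=(\det P)^{-1/3},
\]
for an absolute constant $M'$ (absorbing all multiplicative and additive absolute constants incurred). The assumption $\varrho<\delta/M'$ ensures $\|P\|\leq T^\delta$, so~\eqref{eq: Diophantine cond proof} yields $\|Q-\lambda P\|\geq\|P\|^{-A}\geq e^{-AM'\varrho t}$. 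After enlarging $M'$ once more if necessary, the hypothesis $\bar E>5A+5M'\varrho$ makes these two bounds incompatible for $t$ large, giving the contradiction.

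Two loose ends remain. The non-generic case---three of the $v_i$'s lying in a common rational plane $P_0$---I would handle by showing $P_0$ itself is exceptional: writing the three coplanar generators in a basis $\{w_1,w_2\}$ of $P_0\cap\Z^3$ produces a $3\times 3$ linear system for the three entries of $Q|_{P_0}$, and Cramer's rule gives each entry of size $\leq e^{-(\bar E/5-\star)\varrho t}$, so $|Q^{-1}(w_1\wedge w_2)|=|\det(Q|_{P_0})|$ enjoys the same bound. The bound on exceptional planes is then produced by running the whole argument with $Q$ replaced by $Q^{-1}$: by~\eqref{eq: action on wedge 2'} a plane is $(\varrho,\bar E,\ell)$-exceptional for $Q$ precisely when its Plücker covector is line-exceptional for $Q^{-1}$, and the Diophantine hypothesis transfers from $Q$ to $Q^{-1}$ up to an $\|g_Q\|^\star$ loss absorbed once $t$ is large (depending on $\|Q\|$). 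I would then close the argument by proceeding in two stages: first bound exceptional planes by four via the $Q^{-1}$ version, then bound exceptional lines lying outside the union of those four planes by four via the no-three-coplanar argument, since any coplanar triple must lie in an already-controlled exceptional plane. The main obstacle is keeping the constants absolute throughout the coplanar reduction and the $Q\leftrightarrow Q^{-1}$ transfer, so that the single constant $M'$ appearing in the final inequality remains independent of $A$.
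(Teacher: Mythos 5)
Your proposal takes essentially the same route as the paper's proof: reduce $(\varrho,\bar E,\ell)$-exceptionality for $\ell\in[t/5,t]$ to $(\varrho,\bar E/5,t)$-exceptionality, feed five such generators into Lemma~\ref{lem: five vectors} with $\eta^{-D}=e^{\varrho t}$, $\eta^{A}=e^{-\varrho\bar E t/5}$ to contradict~\eqref{eq: Diophantine cond proof}, and dualize via~\eqref{eq: action on wedge 2'} for planes. This is the same argument the paper gives.

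You correctly flag that Lemma~\ref{lem: five vectors} carries the hypothesis that no three of the $v_i$ are coplanar, a point the paper's two-line proof does not verify. Your attempted repair, however, does not close cleanly. The quantitative obstruction you yourself suspect is real: a Minkowski-reduced basis $\{w_1,w_2\}$ of $P_0\cap\Z^3$ from a coplanar triple satisfies only $\|w_1\wedge w_2\|\leq\|v_1\wedge v_2\|\ll e^{2\varrho t}$, and the individual $\|w_i\|$ can also be as large as $e^{2\varrho t}$; so $P_0$ need not be $(\varrho,\cdot,\ell)$-exceptional for any $\ell\in[t/5,t]$ and therefore need not be captured by the four planes your first stage produces. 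Separately, the two-stage reduction only bounds the exceptional lines lying \emph{outside} the four controlled planes; the lemma asserts that \emph{all} exceptional lines lie among four, and nothing in your argument rules out a single plane carrying many exceptional lines. Finally, the $Q^{-1}$ step for planes faces the identical coplanarity issue for the covectors $w_{i,1}\wedge w_{i,2}$, so the two stages feed into one another rather than resolving in order. None of this is dealt with in the paper's proof either, so your write-up is at about the same level of completeness as the paper's and more forthcoming about the obstacle; but a complete argument would require a cleaner dichotomy (for instance, treating separately the case where the exceptional generators span $\R^3$ versus lie in a common plane, and in the latter case producing a genuine rational approximant rather than merely an exceptional plane).
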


\begin{proof}
We prove this for lines, the proof for planes is similar. Let $t/5\leq \ell\leq t$. Then $e^{\varrho \ell}\leq e^{\varrho t}$ and $e^{-\bar E\ell}\leq e^{-\bar Et/5}$. Recall from Definition~\ref{def:exceptional line} that a line $L$ is $(\varrho, \bar E, \ell)$-exceptional if $L\cap \Z^3$ is spanned by $v$ where 
\[
\|v\|\leq e^{\varrho\ell}\quad\text{and}\quad |Q(v)|\leq e^{-\bar E\ell}
\] 
Hence any such line is $(\varrho, \bar E/5, t)$-exceptional. 

Applying Lemma~\ref{lem: five vectors} with $\eta^{-D}=e^{\varrho t}$, $\eta^A=e^{-\varrho \bar E t/5}$, and $Q$,
now implies that if there are at least five $(\varrho, \bar E/5, t)$-exceptional lines, then  
\[
\|Q-\lambda Q'\|\leq \eta^{A-M' D}=e^{(-\varrho \bar E +5M'\varrho)t/5}
\] 
for some $Q'\in \Mat_3(\Z)$ with $\|Q'\|\leq e^{M'\varrho t}$, where $M'$ is absolute. 
Assuming $\bar E$ and $\varrho$ are chosen as specified in the statement for this $M'$, we derive a contradiction to~\eqref{eq: Diophantine cond proof}.
\end{proof}

In what follows we will apply Proposition~\ref{prop: equi of f hat} with $E=\max(AM, 5A+5)$ and $\rho=\delta$. 
Then Lemma~\ref{lem: exceptional subspaces ell} implies that there are most four lines $\{L_i\}$ and at most four planes $\{P_i\}$ so that for any $t/5\leq \ell\leq t$, any $(\varrho_1/4E, E,\ell)$-exceptional line or plane belongs to $\{L_i\}$ or $\{P_i\}$, respectively. Put $\mathcal L=\cup L_i$ and $\mathcal P=\cup P_i$.

The following basic lattice point count will be used in the argument  
\be\label{eq: contribution of vectors < et}
\#\{v\in \Z^3: \|v\|\leq e^{t/4}\}\leq C_1'e^{3t/4},
\ee
where $C_1'$ is absolute. 

Let $\vare=e^{-\hat\kappa t}$ for some $\hat\kappa$ which will be optimized later. 
For all $t/5\leq \ell\leq t$ and $a\leq c\leq b$, we put
\[
\Upsilon_i(c,e^\ell)=\bigl\{v\in \Z^3: v\in ((1-\vare)e^\ell, e^\ell]\cdot\Omega_{i,\vare}', |Q(v)-c|\leq \vare\bigr\},
\]
and $\mathcal N_i(c,e^\ell)=\#\Upsilon_i(c,e^\ell)$. Similarly, let 
\[
\Upsilon(c,e^\ell)=\bigl\{v\in \Z^3: v\in ((1-\vare)e^\ell, e^\ell]\cdot\mathbb S^2, |Q(v)-c|\leq \vare\bigr\},
\]
and $\mathcal N(c,e^\ell)=\#\Upsilon(c,e^\ell)$.

By Lemma~\ref{lem: siegel quadratic form}, there exist $f_{\ell,i, c}\in C_c^\infty(\R^3)$ and $\xi_i\in C_c^\infty(\mathbb S^2)$ 
so that
\be\label{eq: N-i-c-e-ell}
e^{-\ell}\mathcal N_i(c,e^\ell)=(1+O(\vare))\tfrac{2\pi}{\sqrt2}\!\int \hat f_{\ell,i, c}(a_\ell kg_Q\Gamma)\xi_i(k^{-1}\mathsf e_3)\diff\!k.
\ee
In view of~\eqref{eq: Diophantine cond proof}, applying Proposition~\ref{prop: equi of f hat} with $E$ and $\rho$ as above, 
\begin{multline*}
\int \hat f_{\ell,i, c}(a_\ell kg_Q\Gamma)\xi_i(k^{-1}\mathsf e_3)\diff\!k=\int_K\xi_i(k^{-1}\mathsf e_3)\diff\!k\int_{\R^3} f_{\ell, i, c}\diff\!\Leb \\
+ \mathcal R'_{\ell, i,c} + O(\Sob(f_{\ell, i, c})\Sob(\xi_i)e^{-\varrho_2 \ell})
\end{multline*}
where 
\[
\mathcal R_{\ell, i,c}'=\int\hat f^{\,{\rm sp}}_{\ell, i, c}(a_\ell k g_Q\Gamma)\xi(k)\diff\!k
\]
with 
\begin{align*}
\hat f^{\,{\rm sp}}_{\ell, i, c}(a_\ell k g_Q\Gamma)=\!\!\!\sum_{w\in \Z^3 \cap (\mathcal L\cup \mathcal P)} f(a_\ell k g_Qw)
\end{align*}
and we used $\int_X\hat f_{\ell, i, c}\diff\!m_X=\int_{\R^3} f_{\ell, i, c}\diff\!\Leb$. 

We note that Proposition~\ref{prop: equi of f hat} indeed gives a more precise information where the domain of integration in 
the definition of $\mathcal R'_{\ell, i,c}$ is restricted to  
\[
\mathcal C=\left\{k\in K: \hat f^{\,{\rm sp}}_{\ell, i, c}(a_\ell k g_Q\Gamma)\geq e^{\varrho_1 \ell}\right\}.
\]
However, the integral over the complement of $\mathcal C$ is $O(\Sob(f_{\ell, i, c})\Sob(\xi_i)e^{-\varrho_2 \ell})$, see \eqref{eq: lem L2 bound SL2 used}, and it is more convenient for us here to use the above formulation.  

Using the definitions of $f_{\ell, i, c}$ and $\xi_i$, see Lemma~\ref{lem: calculus}, we have 
\[
\sum_i \int_K\xi_i(k^{-1}\mathsf e_3)\diff\!k\int_{\R^3} f_{\ell, i, c}\diff\!\Leb= C'\cdot \vare^2+ O(\vare^3), 
\]
where the implied constant is $O((1+|c|^\star))$, see also~\cite[Lemma 3.8]{EMM-Upp}. 

\smallskip

Moreover, arguing as in the proof of Lemma~\ref{lem: siegel quadratic form}, but only summing over $v=g_Qw$ for 
$w\in \Z^3 \cap (\mathcal L\cup \mathcal P)$, we conclude that  
\[
e^\ell\cdot \mathcal R'_{\ell, i,c}= (1+O(\vare))\cdot \Bigl(\#(\Upsilon_i(c,e^\ell)\cap(\mathcal L\cup \mathcal P)\Bigr). 
\]
Summing this over all $i$ and using the fact that $\Omega'_{i,\vare}$ are disjoint,  
\be\label{eq: contribution of M}
e^\ell \cdot \mathcal R'_{\ell, c}:=e^\ell \sum_i\mathcal R'_{\ell, i,c}= (1+O(\vare)) \Bigl(\#(\Upsilon(c,e^\ell)\cap(\mathcal L\cup \mathcal P)\Bigr)
\ee

We will, as we may, choose $\hat\kappa$ in the definition $\vare=e^{-\hat\kappa t}$ 
small enough compared to $\varrho_2$, then using~\eqref{eq: N-i-c-e-ell} and the above discussion, 
\[
\mathcal N(c,e^\ell)=C\cdot \vare^2\cdot e^{\ell}+\mathcal R'_{\ell, c}\cdot e^{\ell}+O(\vare^3e^{\ell});
\]
we also used $t/5\leq \ell\leq t$. 

Applying this with $c_j=a+j\vare$ for all $1\leq j\leq \lfloor \frac{b-a}\vare\rfloor$ and all $t/5\leq \ell\leq t$, 
\begin{multline}\label{eq: vare shell num}
\#\bigl\{v\in \Z^3: (1-\vare)e^{\ell}\leq \|v\|\leq e^\ell, a\leq Q(v)\leq b\}=\\ 
\tfrac{C}{2}\cdot \vare\cdot (b-a)\cdot e^\ell+\mathcal R'_\ell\cdot e^\ell+O(\vare^2e^{\ell}),
\end{multline}
where the implied constant is $O((1+|a|+|b|)^\star)$ and $\mathcal R'_\ell=\sum_j\mathcal R'_{\ell, c_j}$. 

Apply~\eqref{eq: vare shell num} with $\ell=(1-j\vare)t$ for $0\leq j\leq \lceil\frac{4}{5\vare}\rceil$. Summing the resulting estimate over   $0\leq j\leq \lceil\frac{4}{5\vare}\rceil$ and using~\eqref{eq: contribution of vectors < et} for $\ell\leq t/4$,
we obtain a bound as stated in Theorem~\ref{thm: quantitative Oppenheim}, but with $e^t\cdot \mathcal R'$ 
instead of $\mathcal R_T$ in the theorem, where $\mathcal R'= \sum_{\ell}\mathcal R'_\ell$.
Note however that in view of~\eqref{eq: contribution of M}, we have 
\[
e^t\cdot \mathcal R' = (1+O(\vare)) \cdot (\#\{v\in\Z^3\cap(\mathcal L\cup \mathcal P): \|v\|\leq e^t, a\leq Q(v)\leq b\})
\]
This establishes the theorem except for the definition of $\mathsf C_Q$, and the estimates~\eqref{eq: size of v and Qv} and~\eqref{eq: size of w and Qw}. See the remark following~\cite[Lemma 3.8]{EMM-Upp} for the definition of $\mathsf C_Q$. 
The proof of~\eqref{eq: size of v and Qv} and~\eqref{eq: size of w and Qw} is the content of the following lemma, which completes the proof. 
\qed

\begin{lemma}\label{lem: exceptional and very exceptional}
Suppose $L$ and $P$ be rational lines and planes, respectively, and let ${\rm Span}\{v\}=L\cap\Z^3$ and ${\rm Span}\{w,w'\}=P\cap\Z^3$. 
Then 
\begin{enumerate}
\item If $|Q(v)|>e^{(-2+4\theta)t}$, then
\[
\#\{u\in \Z^3\cap L: \|u\|\leq e^t, a\leq Q(u)\leq b\}\ll e^{(1-O(\theta))t}
\]
\item If $|Q^*(w\wedge w')|> e^{(-2+4\theta)t}$
\[
\#\{u\in \Z^3\cap P: \|u\|\leq e^t, a\leq Q(u)\leq b\}\ll e^{(1-O(\theta))t}
\]
\end{enumerate} 
\end{lemma}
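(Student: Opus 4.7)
\emph{Proof proposal.} Part (1) is a direct parametrization: any $u\in L\cap \Z^3$ has the form $u = nv$ for $n\in\Z$, and then the condition $a\leq n^2 Q(v)\leq b$ together with the hypothesis $|Q(v)|>e^{(-2+4\theta)t}$ gives $|n|\leq \sqrt{\max(|a|,|b|)/|Q(v)|}\ll_{a,b} e^{(1-2\theta)t}$, yielding the desired count.

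For part (2), I would choose a Minkowski-reduced $\Z$-basis $\{w,w'\}$ of the rank-$2$ lattice $P\cap\Z^3$, so that $\|w\|\leq \|w'\|$ and $\|w\wedge w'\|\asymp\|w\|\|w'\|$. Parametrizing $u = mw+nw'$, the restricted form becomes a binary quadratic form $q(m,n)=Q(u)$ whose determinant in this basis equals $Q(w)Q(w')-Q(w,w')^2 = Q^{-1}(w\wedge w')$; by hypothesis $|\det q|>e^{(-2+4\theta)t}$. The key step is to slice by $n$: for each fixed $n$, completing the square in $m$ gives $q(m,n) = Q(w)(m-m^*_n)^2 + n^2(\det q)/Q(w)$, so the condition $a\leq q(m,n)\leq b$ forces $|n|\ll\sqrt{M|Q(w)|/|\det q|}$ with $M := \max(|a|,|b|)$, and leaves at most $O(1+\sqrt{M/|Q(w)|})$ admissible integers $m$. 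Summing over the valid range of $n$ and approximating by the corresponding Riemann integral gives a total count $\ll_{a,b} M/\sqrt{|\det q|}\ll_{a,b} e^{(1-2\theta)t}$, as required.

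The main obstacle will be the degenerate cases where $Q(w)$ or $Q(w')$ vanishes or is anomalously small relative to $|\det q|$, since the completed-square argument above then breaks down. The remedy is a preliminary change of $\Z$-basis inside $P\cap\Z^3$ to ensure $|Q(w)|$ is not vanishingly small: the identity $\det q = Q(w)Q(w')-Q(w,w')^2$ together with $|\det q|>e^{(-2+4\theta)t}$ forces at least one of $Q(w), Q(w'), Q(w,w')$ to be non-negligible, and a symmetric slice-count applies after an appropriate basis swap. In the fully degenerate subcase $Q(w)=Q(w')=0$ one has $|Q(w,w')|>e^{(-1+2\theta)t}$ and $q(m,n) = 2Q(w,w')mn$, so the count reduces to a standard divisor-type estimate for $|mn|\leq M/(2|Q(w,w')|)$, which also gives $\ll e^{(1-O(\theta))t}$ (with only a harmless logarithmic loss in $t$). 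A secondary technicality is bounding the boundary contribution from lattice points with $\|u\|\asymp e^t$; this is handled by the standard perimeter estimate and is absorbed into the stated bound.
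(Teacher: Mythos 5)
Part (1) is correct and matches the paper's one-line observation ($Q(nv)=n^2Q(v)$, so the hypothesis immediately bounds $|n|$).

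For part (2) you take a genuinely different route from the paper. The paper decomposes the count into shells $\|u\|\asymp e^\ell$, converts each shell count into a circle average of a plane-restricted Siegel transform via Lemma~\ref{lem: siegel quadratic form}, bounds that transform by $\|(a_\ell k g_Q)^*\bar w\|^{-1}$ using Schmidt's Lemma, and then exploits the hypothesis on $Q^{-1}(\bar w)=\sqf(g_Q^*\bar w)$ through the integral estimate in Lemma~\ref{lem: linear algebra}, which gives a decaying bound on $\int_K\|a_\ell k\,\bar w\|^{-1-\delta}\,\diff k$ precisely because $|\sqf(\bar w)|$ is assumed not too small. Your approach is a direct lattice-point count for the binary form $q$; the determinant identity $\det q = Q^{-1}(w\wedge w')$ you invoke is indeed valid when $\det Q=1$, so the plan is not unreasonable.

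However the slicing step has a genuine gap. The bound $|n|\ll\sqrt{M|Q(w)|/|\det q|}$ you extract from $a\le q(m,n)\le b$ requires the two terms in
\[
q(m,n)=Q(w)\bigl(m-m^*_n\bigr)^2+\frac{\det q}{Q(w)}\,n^2
\]
to have the \emph{same} sign, i.e.\ $Q(w)\cdot\det q>0$, so that $|q|$ dominates each summand. When $Q(w)\cdot\det q<0$ --- the hyperbolic case for $Q|_P$, which certainly occurs for indefinite $Q$ --- the difference of the two terms can land in $[a,b]$ for every fixed $n$, and $n$ is then bounded only by $\|mw+nw'\|\le e^t$, giving $|n|\ll e^t/\|w'\|$, which is of size $e^t$ when $\|w'\|=O(1)$. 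The basis-change remedy you propose does not escape this: the real obstruction has nothing to do with $Q(w)$ being small per se, but occurs whenever $P$ contains a rational $Q$-null line spanned by a primitive vector of bounded norm. For instance with $Q=\sqf=2\cox\coz-\coy^2$, take $P=\{\coz=\coy\}$, so $P\cap\Z^3$ has basis $w=(1,0,0)$, $w'=(0,1,1)$; then $Q(w)=0$, $Q(w')=-1$, $Q(w,w')=1$, $\det q=Q^{-1}(w\wedge w')=-1$, yet the sublattice $\{(m,0,0)\}$ has $Q\equiv 0$, so when $a\le 0\le b$ the count in $P\cap\Z^3$ is already $\gg e^t$ from that line alone. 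Your ``fully degenerate'' case ($Q(w)=Q(w')=0$) does not catch this since $Q(w')\neq 0$, and the perimeter estimate cannot rescue it since the boundary of the hyperbolic strip in the $(m,n)$-box has length $\asymp e^t/\|w\|\asymp e^t$. To repair the argument one must separately isolate and handle the short near-null directions inside $P$ (which in the paper's accounting are already absorbed as exceptional lines $L_i$), or else route through the circle-average estimate of Lemma~\ref{lem: linear algebra} as the paper does.
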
 

\begin{proof}
We will use Lemma~\ref{lem: linear algebra} and an argument based on Lemma~\ref{lem: siegel quadratic form} to prove this. Though, a more hands-on proof is certainly possible. Particularly, for part~(1), we have $Q(u)=n^2Q(v)$ for all $u\in \Z^3\cap L$, which immediately implies the claim in part~(1).    

We will prove part~(2), proof of part~(1) (using the following argument) is similar. 
First note that applying Lemma~\ref{lem: gen upp bd} with $\eta=\theta/5$,  
\begin{multline}\label{eq use non-triv upper bound}
\#\{u\in \Z^3\cap P: \|u\|\leq e^{(1-\frac{\theta}4)t}, a\leq Q(u)\leq b\}\ll\\ e^{(1+\frac\theta5)(1-\frac{\theta}4) t}\ll e^{(1-O(\theta))t}
\end{multline}

In view of~\eqref{eq use non-triv upper bound}, we will consider $(1-\frac\theta2)t\leq \ell\leq t$. 
Let us write $\bar w=w\wedge w'$ and recall that $|Q^*(\bar w)|>e^{(-2+4\theta)t}$. Then  
\[
|Q^*(\bar w)|>e^{(-2+2\theta)\ell}\quad\text{for all $(1-\tfrac\theta2)t\leq \ell\leq t$}.
\]
Therefore, by Lemma~\ref{lem: linear algebra} applied with $\ell$, $\sigma=1-\theta$, and $\delta=\theta/10$, we have 
\be\label{eq: apply not supp null and linear alg} 
\int\|(a_\ell k)^*\bar w\|^{-1}\diff\!k\ll e^{(-2\theta/3)\ell}\ll e^{-\frac\theta2t}.
\ee

Let $\vare=e^{-\hat\kappa t}$ for $\hat\kappa< \theta/10$. Arguing as in the above prove to relate the term $\mathcal R'$ 
to the number of points in $\mathcal L\cup\mathcal P$, we see that 
\[
\#\{u\in \Z^3\cap P: (1-\vare)e^\ell \leq \|u\|\leq e^\ell, a\leq Q(u)\leq b\}
\]
is $\ll \sum_{c,i}\int \hat f_{\ell, i, c}^P(a_tkg_Q\Gamma)\diff\!k$, where 
\[
\hat f^{P}_{\ell, i, c}(a_\ell k g_Q\Gamma)=\!\!\!\sum_{w\in \Z^3\cap P} f(a_\ell k g_Qu).
\] 
Moreover, by a variant of Schmidt's Lemma, we have 
\[
f^{P}_{\ell, i, c}(a_\ell k g_Q\Gamma)\ll \|(a_\ell k)^*\bar w\|^{-1}.
\]
This together with~\eqref{eq: apply not supp null and linear alg}, thus imply 
\[
\#\{u\in \Z^3\cap P: (1-\vare)e^\ell \leq \|u\|\leq e^\ell, a\leq Q(u)\leq b\}\ll e^{-\frac\theta2 t}.
\] 
Summing this over all $\ell=t(1-j\vare)$ for $0\leq j\leq \lfloor \frac\theta{2\vare}\rfloor$, and using $\hat\kappa<\theta/10$ and~\eqref{eq use non-triv upper bound}, the claim in part~(2) follows. 
\end{proof}

\subsection{Proof of Theorem~\ref{thm: Oppenheim}}\label{proof of Oppenheim-lower bd}
We now use a simplified version of the above argument to prove the following 

\begin{thm}\label{thm: Oppenheim-lower bd}
Let $Q$ be an indefinite ternary quadratic form with $\det Q=1$. For all $R$ large enough, depending on $\norm{Q}$, and all $T\geq R^{\ref{a:Oppenheim L}}$ at least one of the following holds.
\begin{enumerate}
    \item Let $a<b$, then we have 
    \begin{multline*}
    \# \Bigl\{v\in\Z^3: \norm{v}\leq T, a\leq Q(v)\leq b\Bigr\}\geq \\  
    \mathsf C_Q(b-a)T + (1+\absolute{a}+\absolute{b})^NTR^{-\ref{k:Oppenheim L}}.
    \end{multline*}
    \item There exists $Q'\in\Mat_3(\R)$ with $\norm{Q'}\leq R$ so that 
    \[
    \norm{Q-\lambda Q'}\leq R^{\ref{a:Oppenheim L}}(\log T)^{\ref{a:Oppenheim L}} T^{-2}\qquad\text{where $\lambda=(\det Q')^{-1/3}$.}
    \]
\end{enumerate}
The constants $N$, $\consta\label{a:Oppenheim L}$, and $\constk\label{k:Oppenheim L}$ 
are absolute, and 
\[
\mathsf C_Q=\int_{L}\frac{\diff\!\sigma}{\norm{\nabla Q}}
\]
where $L=\{v\in\R^3: \|v\|\leq 1, Q(v)=0\}$ and $\diff\!\sigma$ is the area element on $L$.
\end{thm}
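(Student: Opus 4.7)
The plan is to imitate the argument of \S\ref{sec: proof of Oppenheim-equidistribution} but replacing Proposition~\ref{prop: equi of f hat} with the one-sided Proposition~\ref{prop: lower bd of ave f hat}, which needs only the weaker Diophantine hypothesis present in part~(2) of the theorem. Since Proposition~\ref{prop: lower bd of ave f hat} produces an inequality in the direction ``$\geq \int \xi \int \hat f + O(\cdots)$'', the special/exceptional terms $\mathcal R'_{\ell,c}$ that played a prominent role in the proof of Theorem~\ref{thm: quantitative Oppenheim} become irrelevant here: they can only add nonnegative mass on top of the expected main term, which is exactly what we want for a lower bound.

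Fix $T$ and write $t=\log T$. The plan is to suppose that alternative~(2) in the theorem fails with a sufficiently large absolute constant $\ref{a:Oppenheim L}$, chosen so that the Diophantine hypothesis \eqref{eq: Diophantine main R T} of Proposition~\ref{prop: lower bd of ave f hat} is in force for $R$ and $T$ in the stated range. Set $\vare=e^{-\hat\kappa t}$ with $\hat\kappa>0$ small compared to the constant $\kappa$ from Proposition~\ref{prop: lower bd of ave f hat}, and cover $\mathbb S^2$ by half open cubes $\Omega'_{i,\vare}$ of size $\asymp\vare^2$. For each $c\in\R$ use the \emph{lower} test functions $f_{\ell,i,c}^-$ and the bumps $\xi_i$ supplied by Lemma~\ref{lem: calculus}, so that
\[
J_{f_{\ell,i,c}^-}(\sqf(v),e^{-\ell}\|v\|)\xi_i(v/\|v\|)\leq 1_{\Omega_{i,\vare}}(e^{-\ell}v)\,1_{[c-\vare,c+\vare]}(\sqf(v)).
\]
Running the calculus of Lemma~\ref{lem: siegel quadratic form} with these lower bumps yields
\[
\tfrac{2\pi}{\sqrt 2}e^{\ell}\!\int_K \hat f_{\ell,i,c}^-(a_\ell k g_Q\Gamma)\xi_i(k^{-1}\mathsf e_3)\diff\!k\ \leq\ (1+O(\vare))\,\mathcal N_i(c,e^{\ell}),
\]
and this is the key inequality we will feed into Proposition~\ref{prop: lower bd of ave f hat}.

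Applying Proposition~\ref{prop: lower bd of ave f hat} to each $\hat f_{\ell,i,c}^-$ (which is possible because case~(2) is assumed to fail) gives
\[
\int_K \hat f_{\ell,i,c}^-(a_\ell k g_Q\Gamma)\xi_i(k^{-1}\mathsf e_3)\diff\!k \geq \int_K\xi_i(k^{-1}\mathsf e_3)\diff\!k\int_{\R^3} f_{\ell,i,c}^-\diff\!\Leb\ +\ O(\Sob(f)\Sob(\xi)T^{-\kappa}).
\]
Summing over the cubes $\Omega'_{i,\vare}$ and then over the arithmetic progression $c_j=a+j\vare$, $1\leq j\leq\lfloor(b-a)/\vare\rfloor$, the Euclidean integrals combine (up to $O(\vare)$ errors) into the geometric constant
\[
\tfrac{C}{2}\cdot\vare\cdot(b-a)\cdot e^{\ell},
\]
as in the discussion following \cite[Lemma~3.8]{EMM-Upp} whose remark also identifies this constant with $\mathsf C_Q$. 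This produces
\[
\#\{v\in\Z^3:(1-\vare)e^{\ell}\leq\|v\|\leq e^{\ell},\,a\leq Q(v)\leq b\}\geq \mathsf C_Q(b-a)\vare\,e^{\ell} -  \mathcal E_\ell,
\]
with $\mathcal E_\ell\ll (1+|a|+|b|)^N\vare^{-N}T^{-\kappa}e^{\ell}+\vare^2 e^{\ell}$.

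Finally the plan is to sum the shell estimate over $\ell=(1-j\vare)t$, $0\leq j\leq \lceil 4/(5\vare)\rceil$, and absorb the contribution of $\|v\|\leq e^{t/4}$ by the trivial bound \eqref{eq: contribution of vectors < et}. Telescoping the geometric sum $\sum_j (1-\vare)^j e^{\ell_j}\vare$ to $T+O(\vare T)$ produces the desired main term $\mathsf C_Q(b-a)T$, while the accumulated error is $\ll (1+|a|+|b|)^N \vare^{-N-1}T^{1-\kappa}$. Choosing $\hat\kappa$ a small absolute multiple of $\kappa$ and $R$ large (so that the $R^{-\ref{k:Oppenheim L}}$ factor dominates the resulting $T^{-\kappa/2}$), alternative~(1) follows with explicit absolute $N$, $\ref{a:Oppenheim L}$, $\ref{k:Oppenheim L}$.

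The only genuinely subtle point, and the one I expect to be the main obstacle, is matching the constants: the Diophantine threshold $R^{\ref{a:Oppenheim L}}(\log T)^{\ref{a:Oppenheim L}}T^{-2}$ in case~(2) here must be at least as strong as the threshold $R^A(\log T)^A T^{-2}$ demanded by Proposition~\ref{prop: lower bd of ave f hat}, while simultaneously $R^{-\ref{k:Oppenheim L}}$ must dominate the $T^{-\kappa}$ error inflated by the $\vare^{-N}$ factors from the calculus step and the summation over $c$ and $\ell$. This is a bookkeeping exercise: take $\ref{a:Oppenheim L}\geq A$ and $\ref{k:Oppenheim L}$ a small enough absolute multiple of $\kappa$, and use $T\geq R^{\ref{a:Oppenheim L}}$ to convert powers of $T$ into powers of $R$.
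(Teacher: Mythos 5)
Your proposal is correct and follows essentially the same route as the paper: negate alternative~(2) to obtain the Diophantine hypothesis of Proposition~\ref{prop: lower bd of ave f hat}, pass from lattice counts in thin shells to circular averages of Siegel transforms via Lemmas~\ref{lem: EMM calculus}--\ref{lem: siegel quadratic form}, apply the one-sided equidistribution, and sum over cubes, levels $c$, and shells $\ell$. The only cosmetic difference is that you invoke the lower bump functions $f_i^-$ from Lemma~\ref{lem: calculus} to get a one-sided inequality, whereas the paper's Lemma~\ref{lem: siegel quadratic form} works with $f_i^+$ and extracts the same lower bound from the two-sided equality $e^{-\ell}\mathcal N_i = (1+O(\vare))\tfrac{2\pi}{\sqrt 2}\int\hat f_i\,\xi_i$; since $\int f_i^+ = (1+O(\vare))\int f_i^-$, both give the same main term $\mathsf C_Q(b-a)T$ up to the error already being absorbed.
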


\begin{proof}
As noted above, the proof follows steps similar to those in the proof of Theorem~\ref{thm: quantitative Oppenheim} in~\S\ref{sec: proof of Oppenheim-equidistribution}, but replaces the application of Proposition~\ref{prop: equi of f hat} with Proposition~\ref{prop: lower bd of ave f hat}. We will use the notation used in \S\ref{sec: proof of Oppenheim-equidistribution}.    

We will, as we may, assume throughout the argument that for all $Q'\in\Mat_3(\R)$ with $\norm{Q'}\leq R$ and all $\lambda\in\R$
\be\label{eq: Diophantine cond proof 2}
 \norm{Q-\lambda Q'}\leq R^{\bar A}(\log T)^{\bar A} T^{-2}
\ee
for some $\bar A$, which will be determined later in the proof. 
Otherwise, part~(2) in the theorem holds and the proof is complete.

Recall again the following basic lattice point count 
\be\label{eq: contribution of vectors < et 2}
\#\{v\in \Z^3: \|v\|\leq e^{t/4}\}\leq C_1'e^{3t/4},
\ee
where $C_1'$ is absolute. 

Let $\vare=e^{-\hat\kappa t}$ for some $\hat\kappa$ which will be optimized later. 
For all $t/5\leq \ell\leq t$ and $a\leq c\leq b$, we put
\[
\Upsilon_i(c,e^\ell)=\bigl\{v\in \Z^3: v\in ((1-\vare)e^\ell, e^\ell]\cdot\Omega_{i,\vare}', |Q(v)-c|\leq \vare\bigr\},
\]
and $\mathcal N_i(c,e^\ell)=\#\Upsilon_i(c,e^\ell)$. Similarly, let 
\[
\Upsilon(c,e^\ell)=\bigl\{v\in \Z^3: v\in ((1-\vare)e^\ell, e^\ell]\cdot\mathbb S^2, |Q(v)-c|\leq \vare\bigr\},
\]
and $\mathcal N(c,e^\ell)=\#\Upsilon(c,e^\ell)$.

By Lemma~\ref{lem: siegel quadratic form}, there exist $f_{\ell,i, c}\in C_c^\infty(\R^3)$ and $\xi_i\in C_c^\infty(\mathbb S^2)$ 
so that
\be\label{eq: N-i-c-e-ell 2}
e^{-\ell}\mathcal N_i(c,e^\ell)=(1+O(\vare))\tfrac{2\pi}{\sqrt2}\!\int \hat f_{\ell,i, c}(a_\ell kg_Q\Gamma)\xi_i(k^{-1}\mathsf e_3)\diff\!k.
\ee

In view of~\eqref{eq: Diophantine cond proof 2}, and assuming $\bar A$ is large enough, Proposition~\ref{prop: lower bd of ave f hat} implies 
\begin{multline*}
\int \hat f_{\ell,i, c}(a_\ell kg_Q\Gamma)\xi_i(k^{-1}\mathsf e_3)\diff\!k\geq \int_K\xi_i(k^{-1}\mathsf e_3)\diff\!k\int_{\R^3} f_{\ell, i, c}\diff\!\Leb\\+ O(\Sob(f_{\ell, i, c})\Sob(\xi_i)e^{-\kappa \ell})
\end{multline*}
where we used $\int_X\hat f_{\ell, i, c}\diff\!m_X=\int_{\R^3} f_{\ell, i, c}\diff\!\Leb$. 

Using the definitions of $f_{\ell, i, c}$ and $\xi_i$, see Lemma~\ref{lem: calculus}, we have 
\[
\sum_i \int_K\xi_i(k^{-1}\mathsf e_3)\diff\!k\int_{\R^3} f_{\ell, i, c}\diff\!\Leb= C'\cdot \vare^2+ O(\vare^3), 
\]
where the implied constant is $O((1+|c|^\star))$, see also~\cite[Lemma 3.8]{EMM-Upp}. 

We will, as we may, choose $\hat\kappa$ in the definition $\vare=e^{-\hat\kappa t}$ 
small enough compared to $\kappa$, then using~\eqref{eq: N-i-c-e-ell} and the above discussion, 
\[
\mathcal N(c,e^\ell)\geq C\cdot \vare^2\cdot e^{\ell}+O(\vare^3e^{\ell});
\]
we also used $t/5\leq \ell\leq t$. 

Applying this with $c_j=a+j\vare$ for all $1\leq j\leq \lfloor \frac{b-a}\vare\rfloor$ and all $t/5\leq \ell\leq t$,
\begin{multline}\label{eq: vare shell num 2}
\#\bigl\{v\in \Z^3: (1-\vare)e^{\ell}\leq \|v\|\leq e^\ell, a\leq Q(v)\leq b\}\geq\\ 
\tfrac{C}{2}\cdot \vare\cdot (b-a)\cdot e^\ell+O(\vare^2e^{\ell}),
\end{multline}
where the implied constant is $O((1+|a|+|b|)^\star)$. 

Apply~\eqref{eq: vare shell num 2} with $\ell=(1-j\vare)t$ for $0\leq j\leq \lceil\frac{4}{5\vare}\rceil$. Summing the resulting estimate over   $0\leq j\leq \lceil\frac{4}{5\vare}\rceil$ and using~\eqref{eq: contribution of vectors < et 2} for $\ell\leq t/4$,
we obtain a bound as stated in part~(1) of the theorem. For the definition of $\mathsf C_Q$, see the remark following~\cite[Lemma 3.8]{EMM-Upp}. 
\end{proof}

\begin{proof}[Proof of Theorem~\ref{thm: Oppenheim}]
This is a direct consequence of of Theorem~\ref{thm: Oppenheim-lower bd}. Indeed, let 
$\ref{k:Oppenheim}=\ref{k:Oppenheim L}/2N$ and let $\ref{a:Oppenheim}=\ref{k:Oppenheim L}$. 
Now Theorem~\ref{thm: Oppenheim-lower bd} applied with 
$[a, b]=[c-R^{-\ref{k:Oppenheim}}, c+R^{-\ref{k:Oppenheim}}]$ for any $|c|\leq R^{\ref{k:Oppenheim}}$ implies 
Theorem~\ref{thm: Oppenheim}.    
\end{proof}

\begin{proof}[Proof of Corollary~\ref{cor: Oppenheim}]
This is a direct consequence of Theorem~\ref{thm: Oppenheim}. 
For more details, see~\cite[\S12.3]{LM-Oppenheim}.
\end{proof}

\bibliographystyle{halpha}
\bibliography{papers}

\newpage

\part*{Appendices}

\appendix

\section{Unipotent trajectories and parabolic subgroups}\label{sec: parabolic}

In this section $\tilde\G\subset \SL_N$ denotes a semisimple $\Q$-subgroup. 
We let $\tG=\tilde\G(\R)$ and $\tilde\gfrak=\Lie(\tG)$. Also let $\tilde\gfrak_\Z:=\tilde\gfrak\cap\sl_N(\Z)$, 
then $\tilde\gfrak$ has a natural $\Q$-structure, and $\tilde\gfrak_\Z$ is a $\tG\cap \SL_N(\Z)$-stable lattice in $\tilde\gfrak$.
If there is no confusion, we will simply write $gv$ for $\Ad(g)v$; similarly for the natural actions on $\wedge^\ell\tilde\gfrak$.

Fix a Euclidean norm $\|\cdot\|$ on $\Mat_N(\R)$. This induces a norm on $\mathfrak{sl}_N(\R)$ and on $\SL_N(\R)$.
We will also write $\|\cdot\|$ for the induced norms on exterior products of $\mathfrak{sl}_N(\R)$. For $g \in \SL_N(\R)$ we let
$|g| = \max\{\|g\|,\|g^{-1}\|\}$. 

Let $\tilde\Gamma\subset \tG\cap\SL_N(\Z)$ be an arithmetic lattice in $\tG$; let $\tilde X=\tG/\tilde\Gamma$, and  
\[
\tilde X_{\eta} = \{g\tilde\Gamma \in \tilde X: \min_{0 \neq v\in \tilde\gfrak(\Z)} \|gv\| \geq \eta\} \quad\text{for all $\eta>0$.}
\]
These are compact subsets of $\tilde X$, and any compact subset of $\tilde X$ is contained in $\tilde X_\eta$ for some $\eta>0$.

If ${\bf L}\subset \tilde\G$ is a connected $\Q$-subgroup, 
we let ${\bf v}_L$ be a primitive integral vector on the line 
$\wedge^{\dim L}\Lie(L)\subset\wedge^{\dim L}\tilde\gfrak$ where $L=\Lbf(\R)$.  
Recall from~\cite{LMMS} the definition of the height of ${\bf L}$ 
\be\label{eq: def height v-L'}
{\rm ht}(\Lbf)=\|{\bf v}_L\|.
\ee

We also recall the following setting from~\cite{LMMS}. {\em This notation will only be used in this section}. 
Let $U\subset \tG$ be a unipotent subgroup and let $\ufrak=\Lie(U)$.
We fix a basis $\mathcal{B}_U$ of $\ufrak$ consisting of unit vectors 
and set $B_\ufrak(0,\delta) = \{\sum_{z \in \mathcal{B}_U} a_z z: |a_{z}| \leq \delta\}$ for $\delta>0$ as well as 
$B_U(e) = \exp(B_\ufrak(0,1))$.

Let $\lambda: \ufrak \to \ufrak$ be an $\R$-diagonalizable expanding linear map (all eigenvalues have absolute value $>1$).
For any $k\in\Z$ and any $u = \exp(z)\in U$, we set $\lambda_k(u) = \exp(\lambda^k(z))$. We note that $\lambda_k\circ \lambda_\ell=\lambda_{k+\ell}$. 
We shall assume that there exists $k_0 \in \mathbb N$ such that for every integer $k>k_0$,
\be\label{eq: U lambda prop 1}
\exp\big(\lambda_{k-k_0}(B_\ufrak(0,1))\big)\exp\big(\lambda_{k-1}(B_\ufrak(0,1))\big)
\subset \exp\big(\lambda_{k}(B_\ufrak(0,1))\big).
\ee
Since the exponential map $\exp:\ufrak\to U$ pushes the Lebesgue measure on $\ufrak$ to a Haar measure, denoted by $|\cdot|$, on $U$, for any measurable $B\subset U$ 
\be\label{eq: U lambda prop 2}
    |\lambda_k(B)|=|\det(\lambda)|^k|B|\qquad\text{for all $k\in\Z$.}
\ee
To avoid cumbersome statements, we suppose throughout that any constant that is allowed to depend on $\height(\tilde\G)$ is also (implicitly) allowed to depend on $\norm{\lambda}$, $\norm{\lambda^{-1}}$, $\frac{|\lambda_1(B_U(e))|}{|B_U(e)|}=|\det(\lambda)|$, and $k_0$.

\begin{thm}\label{thm: non-div parabolic}
There exist $\consta\label{a: parabolic linear}$ depending on $N$ and $\consta\label{a: parabolic linear 2}$ depending on $N$ and polynomially on $\height(\tilde\G)$ so that for any $g\in\tilde G$, $k\geq 1$, and any $0<\vare\leq 1/2$ at least one of the following holds.
\begin{enumerate}
\item 
\[
|\{u\in B_U(e): \lambda_k(u)g\tilde\Gamma\not\in \tilde X_{\vare}\}|\leq \ref{a: parabolic linear 2} \vare^{1/\ref{a: parabolic linear}}|B_U(e)|.
\]

\item There is a $\Q$-parabolic subgroup ${\bf P}\subset \tilde\G$ with 
$\height(\mathbf P) \leq \ref{a: parabolic linear 2} |g|^{\ref{a: parabolic linear}} \vare^{1/\ref{a: parabolic linear}}$ so that all the following are satisfied 
\begin{subequations}
\begin{align}
\label{eq: parabolic almost fixed}&\|\la_k(u)g{\bf v}_{P}\|\leq \ref{a: parabolic linear 2} \vare^{1/\ref{a: parabolic linear}}\quad &\text{ for all $u\in B_U(e)$},\\
\label{eq: radical parabolic almost fixed}&\|\la_k(u)g{\bf v}_{R_u(P)}\|\leq \ref{a: parabolic linear 2} \vare^{1/\ref{a: parabolic linear}}\quad &\text{ for all $u\in B_U(e)$},\\
\label{eq: class H parabolic almost fixed}&\|\la_k(u)g{\bf v}_{Q}\|\leq \ref{a: parabolic linear 2} \vare^{1/\ref{a: parabolic linear}}\quad &\text{ for all $u\in B_U(e)$},
\end{align}
\end{subequations}
where $Q={\bf Q}(\R)$ for ${\bf Q}=[{\bf P}, {\bf P}]$.  
\end{enumerate}
\end{thm}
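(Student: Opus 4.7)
The approach is a quantitative non-divergence of Dani--Margulis / Kleinbock--Margulis type, coupled with a rational geometric invariant theory (Hilbert--Mumford / Kempf--Rousseau) extraction to produce a $\Q$-parabolic carrying the three invariants $\mathbf v_P$, $\mathbf v_{R_u(P)}$, $\mathbf v_Q$ simultaneously.

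First I would set up the polynomial framework. For every $0\neq v\in\tilde\gfrak_\Z$ the map $u\mapsto\lambda_k(u)gv$ is polynomial on $B_U(e)$ of degree at most $\dim\tilde\G$ (since $U$ is unipotent and $\Ad$ is algebraic), so its norm is $(C,\alpha)$-good with $C$ and $\alpha$ depending only on $N$. Assume that option~(1) fails, so that a set of $u\in B_U(e)$ of measure exceeding $\ref{a: parabolic linear 2}\vare^{1/\ref{a: parabolic linear}}|B_U(e)|$ admits some witness $v_u\in\tilde\gfrak_\Z\setminus\{0\}$ with $\|\lambda_k(u)gv_u\|<\vare$. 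Any such witness has $\|v_u\|\ll|g|^\star$ by Minkowski's lemma, so the relevant integral vectors lie in a ball of bounded radius; the Kleinbock--Margulis Besicovitch-covering argument then produces a single nonzero Pl\"ucker vector $\mathbf v_L\in\wedge^{\dim L}\tilde\gfrak_\Z$ (after saturating under $U$ and closing under Lie bracket, both polynomial operations of bounded degree) such that
\[
\|\mathbf v_L\|\leq \ref{a: parabolic linear 2}|g|^{\ref{a: parabolic linear}}\quad\text{and}\quad \sup_{u\in B_U(e)}\|\lambda_k(u)g\mathbf v_L\|\leq \ref{a: parabolic linear 2}\vare^{1/\ref{a: parabolic linear}},
\]
with $\mathbf L\subsetneq\tilde\G$ a proper connected $\Q$-subgroup of bounded height.

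The main obstacle is passing from an arbitrary almost-fixed $\Q$-subgroup $\mathbf L$ to a $\Q$-parabolic whose \emph{three} invariants $\mathbf v_P$, $\mathbf v_{R_u(P)}$, $\mathbf v_Q$ all satisfy the required decay. Here I would invoke a rational version of the Kempf--Rousseau instability theorem applied to the action of $\tilde\G$ on the exterior representations $\wedge^\bullet\tilde\gfrak$: because the orbit of $\mathbf v_L$ approaches $0$ in a quantitatively controlled way along the one-parameter family $u\mapsto\lambda_k(u)g$, the vector $\mathbf v_L$ is $\Q$-unstable, and it admits an optimal $\Q$-cocharacter $\chi$ whose associated parabolic $\mathbf P=\mathbf P(\chi)$ is a proper $\Q$-parabolic of $\tilde\G$ containing $\mathbf L$, with $\height(\mathbf P)$ polynomially bounded in $\height(\mathbf L)$. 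The instability-theoretic construction via the weight decomposition of $\Lie(\tilde\G)$ under $\chi$ exhibits $\mathbf v_P$, $\mathbf v_{R_u(P)}$, and $\mathbf v_{[P,P]}$ as $\tilde\G$-equivariant polynomial expressions of bounded degree in $\mathbf v_L$, so that the smallness of $\lambda_k(u)g\mathbf v_L$ along $B_U(e)$ transfers to all three vectors with only polynomial loss in the constants. Semisimplicity of $\tilde\G$ is essential here: it is what guarantees the existence of the destabilizing parabolic and the finiteness of $\Q$-conjugacy classes of parabolics (which confines the $\height$ blow-up to polynomial size in $\height(\tilde\G)$).

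Finally, collecting constants: the $(C,\alpha)$-goodness exponents depend only on $N$, the Besicovitch covering and Minkowski steps contribute factors of $|g|^\star$, the saturation/Lie-closure steps are algebraic of degree depending only on $N$, and the Kempf--Rousseau extraction introduces polynomial dependence on $\height(\tilde\G)$ via the discrete Pl\"ucker embedding of the finite set of $\Q$-conjugacy classes of parabolics. Composing and re-absorbing constants, one obtains \eqref{eq: parabolic almost fixed}--\eqref{eq: class H parabolic almost fixed} with $\ref{a: parabolic linear}$ depending only on $N$ and $\ref{a: parabolic linear 2}$ depending polynomially on $\height(\tilde\G)$, as claimed.
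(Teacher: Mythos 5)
Your high-level scheme is recognizable — the paper's Lemma~\ref{lem: parabolic 1} (iterated niladicals of normalizers) is indeed a hands-on realization of a Hilbert--Mumford--type instability principle — but as written the proposal has gaps at the two places where the real work lies.

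First, invoking Kempf--Rousseau requires that the Pl\"ucker vector $\mathbf v_L$ be \emph{GIT-unstable}, i.e.\ that $0$ lies in the Zariski closure of the full $\tilde\G$-orbit of $\mathbf v_L$. The fact that $\lambda_k(u)g\mathbf v_L$ has small norm for $u\in B_U(e)$ does not give this: if $\mathbf L$ were reductive, $\mathbf v_L$ would be semistable and no destabilizing parabolic would exist, even though the norm can be small in a given position. The paper circumvents this by showing the very short lattice vectors generate a \emph{nilpotent} subalgebra (a Zassenhaus-type fact used in part~(1) of Theorem~\ref{thm: generating parabolic}), and a nilpotent subalgebra of a semisimple Lie algebra is automatically an unstable point of the Grassmannian. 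Your proposal does not establish unipotence of $\mathbf L$; "saturating under $U$ and closing under Lie bracket" does not in itself produce a nilpotent (or even proper) subalgebra, and it is not clear the result is rational or of bounded height.

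Second, and more seriously, the assertion that the Kempf--Rousseau parabolic's invariants $\mathbf v_P$, $\mathbf v_{R_u(P)}$, $\mathbf v_Q$ are "$\tilde\G$-equivariant polynomial expressions of bounded degree in $\mathbf v_L$" is false: the optimal destabilizing cocharacter is produced by a minimization over the rational cocharacter cone, which is not a polynomial equivariant map and whose quantitative dependence on the input vector is delicate. The paper sidesteps this entirely: Theorem~\ref{thm: generating parabolic} constructs $\mathbf P$ by a concrete iteration (nilradical of normalizer, repeated) and proves directly in part~(2) that $\mathfrak v^\perp\cap\Lambda$ is spanned by vectors of size $\leq C\delta^{-1/C}$, which after a careful balancing against the extremely small nilpotent vector $z_u$ of~\eqref{eq: the element zu is small} gives the required smallness of $\lambda_k(u)g\mathbf v_P$. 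This balance — a controlled \emph{loss} of $\rho^{-\star}$ against a \emph{gain} of $\hat\vare^{1/\dim\tilde\gfrak}$ — depends on the concavity of $-\log\eta$ along the flag provided by \cite[Thm.~5.3]{LMMS} and the gap argument at the rank $s=s_0+m$; your proposal replaces that entire flag-and-concavity step with a Besicovitch covering, which would at best give one almost-fixed rational subspace and provides no analogue of the $\Xi_\tau$/exceptional-set control needed to bound the complementary vectors. Finally, the bounds for $\mathbf v_{R_u(P)}$ and $\mathbf v_Q$ are not addressed; they need a separate argument (in the paper, the argument of \cite[Lemma~4.2]{LMMS}) showing those subalgebras are spanned by controlled lattice vectors.
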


This result strengthens~\cite[Thm.~6.3]{LMMS}. While the original theorem only established the existence of a unipotent subgroup, we now demonstrate that this subgroup may, in fact, be taken to be the unipotent radical of a $\Q$-parabolic subgroup. 
Indeed we will prove Theorem~\ref{thm: non-div parabolic}, using ~\cite[Thm.~5.3]{LMMS}, which was also used in the proof of~\cite[Thm.~6.3]{LMMS}, and the following theorem, which is of independent interest.

\smallskip

For every subspace $V\subset\tilde\gfrak$, let $V^\perp$ denote the orthogonal complement of $V$ with respect to the Killing form $\mathsf k_{\tilde\gfrak}$ on $\tilde\gfrak$. 

\begin{thm}\label{thm: generating parabolic}
There are positive constants $\delta, C$, depending on $\tilde\G$, such that if $g \in \tG$ 
is such that there exists a vector $v \in \tilde{\mathfrak{g}}_\mathbb{Z}$ with $\|g v\| \leq \delta$, then 
\begin{enumerate}
\item $\{w\in\tilde\gfrak_\Z: \|gw\|\leq \delta^{1/C}\}$ generates a nilpotent subalgebra, $\mathfrak v$ say.
\item $\mathfrak v^\perp\cap \Ad(g)\tilde\gfrak_Z$ is spanned by vectors of size $\leq C\delta^{-1/C}$.
\item  $\mathfrak v^\perp$ generates a proper parabolic subalgebra of $\tilde\gfrak$. 
\end{enumerate}
\end{thm}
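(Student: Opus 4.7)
The proof will have three parts, and I would address each in turn; part (3) will be the conceptual heart and the primary obstacle. Throughout, set $\Lambda := \Ad(g)\tilde\gfrak_\Z$ and $S := \{w \in \tilde\gfrak_\Z : \|\Ad(g)w\| \leq \delta^{1/C}\}$; assume (up to rescaling $\tilde\gfrak_\Z$ by a bounded index) that $\tilde\gfrak_\Z$ is closed under brackets, so that $\Lambda$ is a lattice in $\tilde\gfrak$ which is also closed under brackets.

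For part (1), my plan is a Zassenhaus-type estimate at the Lie algebra level. For $w_1, w_2 \in S$ one has $\|\Ad(g)[w_1,w_2]\| = \|[\Ad(g)w_1, \Ad(g)w_2]\| \leq K_0 \delta^{2/C}$, where $K_0$ bounds the Lie bracket structure constants on $\tilde\gfrak$ in our fixed basis. Choosing $C$ large enough that $K_0\delta^{2/C} \leq \delta^{1/C}$ forces $S$ to be closed under brackets, so the $\R$-span $\mathfrak{v}$ of $S$ is a Lie subalgebra. Iterated $k$-fold brackets of elements of $S$ will lie in $\Lambda$ with $\Ad(g)$-norm $\leq K_0^{k-1}\delta^{k/C}$, tending to $0$ geometrically. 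Since $\Lambda$ has covolume $\asymp 1$ (as $\Ad(g)$ preserves the Killing form and hence the natural volume) and therefore a Minkowski-type positive lower bound on its shortest vector inside any fixed $\Q$-rational subspace, these iterated brackets must eventually vanish, so the lower central series of $\mathfrak{v}$ terminates and $\mathfrak{v}$ is nilpotent.

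For part (2), I would apply Minkowski's second theorem to $\Lambda$. Since $\Lambda$ has bounded covolume, its successive minima $\mu_1 \leq \cdots \leq \mu_n$ satisfy $\prod_i \mu_i \asymp 1$. The first $k := \dim \mathfrak{v}$ of these are realized by vectors in $\mathfrak{v}\cap \Lambda$, and are $\leq \delta^{1/C}$; the remaining $n-k$ minima are therefore bounded by $\delta^{-O(1/C)}$. Taking a Minkowski-reduced basis of $\Lambda$ whose first $k$ vectors span $\mathfrak{v}\cap \Lambda$, and projecting the remaining basis vectors along $\mathfrak{v}$ onto $\mathfrak{v}^\perp$, I would extract a set of vectors of norm $\leq C\delta^{-1/C}$ whose $\Z$-span is commensurable with $\mathfrak{v}^\perp \cap \Lambda$; shrinking the constant if needed then yields the required bounded-norm generators for $\mathfrak{v}^\perp \cap \Lambda$ itself.

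Part (3) is the main obstacle. The Cartan trace criterion applied to the nilpotent subalgebra $\mathfrak{v} \subset \tilde\gfrak$ gives that the Killing form vanishes on $\mathfrak{v}$, so $\mathfrak{v} \subseteq \mathfrak{v}^\perp$. My strategy for the parabolic statement is to identify $\mathfrak{v}$ with the nilradical of a specific proper parabolic $\mathfrak{p} \subset \tilde\gfrak$; once this identification is made, the standard identity $\mathfrak{p} = \mathrm{Nil}(\mathfrak{p})^\perp$ for parabolics yields $\mathfrak{v}^\perp = \mathfrak{p}$, and the generated subalgebra is $\mathfrak{p}$ itself, properness coming from $\mathfrak{v} \neq 0$ (guaranteed by the hypothesis via the original vector $v$, since $\delta < \delta^{1/C}$). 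To produce $\mathfrak{p}$, I would first observe that each $w \in S$ is $\mathrm{ad}$-nilpotent on $\tilde\gfrak$: the characteristic polynomial of $\mathrm{ad}(w)$ has integer coefficients of bounded degree, while its eigenvalues coincide with those of the conjugate operator $\mathrm{ad}(\Ad(g)w)$, which has operator norm $\leq K_0\delta^{1/C}$, so by a Kronecker argument (the product of nonzero roots of a monic integer polynomial has absolute value $\geq 1$) all these eigenvalues must vanish once $\delta$ is small enough. I would then invoke a Morozov--Borel--Tits type theorem: a nilpotent Lie subalgebra of ad-nilpotents in a semisimple Lie algebra is contained in the nilradical of some parabolic, and if it equals that nilradical, then its normalizer is the parabolic and coincides with its Killing-orthogonal. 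The core difficulty I anticipate, and the principal technical obstacle, is verifying that with $C$ chosen sufficiently large relative to the root structure of $\tilde\G$, the subalgebra $\mathfrak{v}$ coincides exactly with $\mathrm{Nil}(\mathfrak{p})$: the inclusion $\mathfrak{v} \subseteq \mathrm{Nil}(\mathfrak{p})$ will be automatic, but the reverse inclusion requires showing that every integral vector of $\mathrm{Nil}(\mathfrak{p})$ is contracted by $\Ad(g)$ to norm $\leq \delta^{1/C}$, reflecting the fact that $g$ genuinely approaches the cusp associated to $\mathfrak{p}$ in $\tG/\tilde\Gamma$.
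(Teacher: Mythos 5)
Your parts (1) and (2) are essentially on the right track. Part (1) (Zassenhaus/Minkowski closure-under-bracket argument) is the paper's argument verbatim. For part (2), there is a small but real subtlety in your plan: orthogonal projections of lattice vectors onto $\mathfrak v^\perp$ land in the quotient lattice $\pi(\Lambda)$, not in $\Lambda\cap\mathfrak v^\perp$, and the index of the latter in the former is not obviously bounded a priori. The paper's route avoids this entirely: take $v_1,\dots,v_n$ realizing the successive minima of $\Lambda$, form the dual basis $v_1^*,\dots,v_n^*$ \emph{with respect to the Killing form}, and observe that $\|v_i^*\|\asymp\|v_i\|^{-1}$ and, crucially, that the Killing-dual lattice is commensurable with $\Lambda$ (because the Killing form is rational with bounded denominator on $\tilde\gfrak_\Z$). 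Then $Mv_{m+1}^*,\dots,Mv_n^*$ for bounded $M$ are honest lattice vectors in $\Lambda\cap\mathfrak v^\perp$ of size $\ll\delta^{-1/C}$ spanning $\mathfrak v^\perp$. Your approach would need a separate argument to control that index; the Killing-form duality is exactly the tool that supplies it, so you may as well use it from the start.

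Part (3) is where there is a genuine gap, and you have correctly diagnosed it yourself: you cannot in general arrange for $\mathfrak v$ to coincide \emph{exactly} with the nilradical of a parabolic, and the theorem does not require it (nor, for that matter, does it assert that $\mathfrak v^\perp$ \emph{is} a parabolic -- only that the subalgebra it \emph{generates} is one). The escape is to use the Morozov/Borel--Tits embedding only as an inclusion and supply properness by a separate, much softer argument. Concretely: since $\mathfrak v$ is a nilpotent subalgebra consisting of $\mathrm{ad}$-nilpotent elements (your Kronecker observation is the right reason), there is a parabolic $\mathfrak p$ with
\[
\mathfrak v \subset \mathrm{Nil}(\mathfrak p) \subset \mathfrak p \subset \mathfrak v^\perp,
\]
where the last inclusion follows because the Killing form pairs $\mathrm{Nil}(\mathfrak p)$ with $\tilde\gfrak/\mathfrak p$ nondegenerately -- no equality anywhere is needed. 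Therefore $\langle\mathfrak v^\perp\rangle\supset\mathfrak p$ contains a parabolic. On the other hand, by part (2), $\mathfrak v^\perp$ is spanned by lattice vectors of $\Ad(g)$-norm $\leq C\delta^{-1/C}$, while (since $\mathfrak v\subset\mathfrak v^\perp$, as $\mathfrak v$ is Killing-isotropic) $\mathfrak v^\perp$ also contains the original lattice vector $\Ad(g)v$ of norm $\leq\delta$. Consequently the primitive integral wedge vector ${\bf v}_L$ of the Lie algebra $\mathfrak l=\langle\mathfrak v^\perp\rangle$ satisfies $\|\Ad(g){\bf v}_L\|\ll\delta^{-\star/C}\cdot\delta$, which is $<1$ once $C$ is large. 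If $\mathfrak l=\tilde\gfrak$ then ${\bf v}_L={\bf v}_{\tilde\gfrak}$ is a nonzero integral vector fixed by $\Ad(g)$ (up to sign), of norm bounded below -- contradiction. Hence $\mathfrak l$ is proper, and a proper Lie subalgebra of a semisimple Lie algebra containing a parabolic is itself a (proper) parabolic. This completes part (3) without ever needing $\mathfrak v=\mathrm{Nil}(\mathfrak p)$; the obstacle you identified is real, but it is to be sidestepped rather than overcome.
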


Let us first assume Theorem~\ref{thm: generating parabolic} and complete the proof of Theorem~\ref{thm: non-div parabolic}.

\begin{proof}[Proof of Theorem~\ref{thm: non-div parabolic}]
Let $E$ and $D$ be constants as in~\cite[Thm.~5.3]{LMMS}. In particular, $D$ depends only on $N$, and $E$ on $N$ and $\height (\tilde \G)$. We will show that 
Theorem~\ref{thm: non-div parabolic} holds with appropriately chosen $\ref{a: parabolic linear 2}\geq E$ 
and $\ref{a: parabolic linear}\geq D$.  
Assume
\[
|\{u\in B_U(e): \lambda_k(u)g\tilde\Gamma\not\in \tilde X_{\vare}\}|>E \vare^{1/D}|B_U(e)|. 
\]
Then~\cite[Thm.~5.2]{LMMS} implies that there is a rational subspace ${\bf W}\subset \tilde\gfrak$, say of dimension $\ell$, 
so that
\be\label{eq: ur orbit of vW'}
\sup_{u\in B_U(e)}\|\lambda_k(u)g{\bf v}_{W}\|=\hat\vare\ll\vare^\star, 
\ee
where ${\bf v}_{W}$ denotes the primitive vector corresponding to ${\bf W}$. 

For later use, let us record that Minkowski's theorem and the above imply that for every $u\in B_U(e)$, there exists a nilpotent vector $z_u\in\tilde\gfrak_\Z$ with 
\be\label{eq: the element zu is small}
\|\lambda_k(u)gz_u\|\leq \hat\vare^{1/\dim\tilde\gfrak}.
\ee

Let the partial flag $\Delta_{\ell_1}\subset\cdots \subset\Delta_{\ell_d}$ and the function $\eta$ be given as in~\cite[Thm.~5.3]{LMMS}. 
In particular, $\rank(\Delta_{\ell_i})=\ell_i$ and  
$\eta(\ell_0),\dots,\eta(\ell_{d+1}) \in (0,1]$, with $\ell_0=0$ and $\ell_{d+1}=\dim\tilde\gfrak$, is defined by
\begin{align*}
\eta(\ell_0)&=\eta(\ell_{d+1})=1 \\
\eta(\ell_i) &= \max_ {u \in\mathsf B_U(e)} \|\la_k(u) \Delta_{\ell_i}\| \qquad\text{for $1\leq i \leq d$};
\end{align*}
 as shown in \cite{LMMS}, the function $\eta(\bullet)$ extends to a function $\eta: [0,\dim\tilde\gfrak]\to(0,1]$ so that $-\log \eta: [0,\dim\tilde\gfrak]\to \R^+$ is concave and linear on each interval $[\ell_0, \ell_{1}]$, \dots, $[\ell_{d},\ell_{d+1}]$.

For $0<\tau<1$, put 
\[
{\rm Exc}_\tau=\{u\in B_U(e): \alpha_i(\lambda_k(u)g)^{-1}<\tau^{i}\eta(i)\},
\]
where $\alpha_i$'s are also as in loc.\ cit. Then by~\cite[Thm.~5.3]{LMMS}, we have 
$|{\rm Exc}_\tau|\leq E\tau^{1/D}|B_U(e)|$, where $D$ depends only on $\dim\tilde\gfrak$. 

Moreover, by the last sentence in ~\cite[Thm.~5.3]{LMMS}, we can assume that 
 $\Delta _ {\ell_1} \subset \dots \subset \Delta _ {\ell_d}$ is so that 
\[
\eta(\rank(\Delta)) \leq \max_ {u \in\mathsf B_U(e)} \|(\la_k(u) \Delta)\|,
\]
where $\Delta={\bf W}\cap \tilde\gfrak_\Z$.

We now argue as in the proof of~\cite[Lemma 6.2]{LMMS}. 
Since $-\log\eta$ is concave, we have $\eta(1)\leq \hat\vare^{1/\ell}$. 
Let $\kappa=(4\ell\dim\tilde\gfrak)^{-1}$, 
and let $\hat\vare^{1/\ell}<\rho<1$ be a parameter which will be optimized later.
Let $1\leq s_0<d$ be the largest integer so that $\frac{\eta(s_0)}{\eta(s_0-1)}\leq \rho$. 
The choice of $\kappa$ and $\eta(\dim\tilde\gfrak)=1$ imply that there is some $m\geq 0$ so that
$\frac{\eta(s_0+d)}{\eta(s_0+d-1)}\leq \rho^{1-d\kappa}$ for all $0\leq d\leq m$ and  
\be\label{eq: condition on s0+m+1}
\tfrac{\eta(s_0+m+1)}{\eta(s_0+m)}\geq \rho^{1-(m+1)\kappa}.
\ee 
Put $s=s_0+m$, and note that $s=\ell_j$ for some $j$. Let $\bf V$ be the subspace spanned by $\Delta_{s}$, then 
\[
\|\lambda_k(u)g{\bf v}_{V}\|\leq \eta(s)\leq \rho^{1-m\kappa}\qquad\text{for all $u\in B_U(e)$} 
\] 
Moreover, ${\bf V}$ is a nilpotent subalgebra of $\tilde\gfrak$, 
and we can choose a basis $\{v_1,\ldots, v_s\}$ for ${\bf V}\cap\tilde\gfrak_\Z$ so that 
\be\label{eq: short basis for V}
\|\lambda_k(u)g v_i\|\leq \hat A \frac{\eta(i)}{\eta(i-1)}\leq \hat A \rho^{1-m\kappa},
\ee 
where $\hat A$ depends only on $\dim\tilde\gfrak$, see the proof of~\cite[Lemma 6.2]{LMMS} for all these statements.  

Put $\tau=\rho^{\kappa/(2\dim\tilde\gfrak)}$. 
Then for all $u\in B_U(e)\setminus {\rm Exc}_\tau$ and any $w\in\tilde\gfrak_\Z\setminus{\bf V}$, 
\begin{align*}
\tau^{s+1}\eta(s+1)&\leq \|\lambda_k(u)g(\Delta_s+\Z w)\|\\
&\leq \|\lambda_k(u)g w\|\|\lambda_k(u)g\Delta_{s}\|\leq \|\lambda_k(u)g w\|\eta(s).
\end{align*}
This and~\eqref{eq: condition on s0+m+1} imply that 
\be\label{eq: vectors outside are large}
\begin{aligned}
   \|\lambda_k(u)g w\|&\geq \tau^{s+1}\rho^{1-(m+1)\kappa}\\
   &>\rho^{1-(m+\frac12)\kappa} > \hat A \rho^{1-m\kappa}.  
\end{aligned}
\ee
In view of~\eqref{eq: short basis for V} and~\eqref{eq: vectors outside are large}, for every $u\in B_U(e)\setminus {\rm Exc}_\tau$,
the space 
\[
\Ad(\lambda_k(u)g)(\tilde\gfrak_\Z\cap {\bf V})
\]
is spanned by $\{w\in\tilde\gfrak_\Z: \|\lambda_k(u)gw\|\leq \hat A\rho^{\theta}\}$. 

Apply Theorem~\ref{thm: generating parabolic} with $\hat A \rho^{1-m\kappa}$ and $\lambda_k(u)g{\bf V}$. Then part~(2) of that theorem implies that $\Bigl(\Ad(\lambda_k(u)g)(\tilde\gfrak_\Z\cap {\bf V})\Bigr)^\perp$ generates a proper parabolic subalgebra, $\Ad(\lambda_k(u)g)\Lie(P)$ say. Moreover, if we choose $\rho$ to be a small enough power of $\vare$, then~\eqref{eq: the element zu is small} and part~(2) in Theorem~\ref{thm: generating parabolic} imply that   
\[
\|\lambda_k(u)g{\bf v}_P\|\leq \rho^{-\star}\hat\vare^{1/\dim\tilde\gfrak}\ll \vare^\star\qquad\text{for all $u\in B_U(e)\setminus {\rm Exc}_\tau$}. 
\]
Since $u\mapsto\lambda_k(u)g{\bf v}_P$ is a polynomial map and $|{\rm Exc}_\tau|\ll \tau^{1/D}|B_U(e)|$, 
we conclude that 
\[
\|\lambda_k(u)g{\bf v}_{P}\|\ll\vare^\star\qquad\text{for all $u\in B_U(e)$}.
\] 
This finishes the proof of~\eqref{eq: parabolic almost fixed}. 

To establish the claims in~\eqref{eq: radical parabolic almost fixed} and~\eqref{eq: class H parabolic almost fixed}, we use~\eqref{eq: parabolic almost fixed} and follow the argument in the proof of~\cite[Lemma 4.2]{LMMS}. Specifically, we show that for all $u\in B_U(e)\setminus{\rm Exc}_\tau$,  $\Ad(\lambda_k(u)g)\Lie(Q)$ and $\Ad(\lambda_k(u)g)\Lie(R_u(P))$ are spanned by vectors of size $\ll \rho^{-\star}$. Combined with~\eqref{eq: the element zu is small}, this completes the proof, provided that $\rho$ is chosen to be a sufficiently small power of $\vare$.
\end{proof}

\subsection{Proof of Theorem~\ref{thm: generating parabolic}}
We continue to use the above notation. The proof of Theorem~\ref{thm: generating parabolic}, relies on the following two lemmas. 

\begin{lemma}\label{lem: parabolic 1}
Let $\mathfrak{v} \subset \tilde{\mathfrak{g}}$ be a nilpotent Lie subalgebra. 
Then there exists a parabolic subgroup $P$ such that
\[
\mathfrak{v} \subset \Lie(R_u(P)) \subset \Lie(P) \subset \mathfrak{v}^\perp.
\]
\end{lemma}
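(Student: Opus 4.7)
The proof I have in mind has two clean steps, both standard once the right tools are named. The first step invokes the Borel--Tits theorem for unipotent subgroups of reductive groups: if $U$ is a unipotent subgroup of the connected semisimple $\mathbb{R}$-group $\tilde{\mathbf G}$, then there exists a parabolic subgroup $\mathbf{P} \subset \tilde{\mathbf G}$ with $U \subset R_u(\mathbf{P})$ (and in fact $N_{\tilde{\mathbf G}}(U) \subset \mathbf{P}$). Since we work in characteristic zero and $\mathfrak{v}$ is a nilpotent Lie subalgebra of $\tilde{\mathfrak{g}}$, the image $U := \exp(\mathfrak{v})$ is a unipotent subgroup of $\tilde{G}$, and Borel--Tits supplies a parabolic $P$ with $\mathfrak{v} \subset \Lie(R_u(P))$. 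This yields the first two inclusions of the chain.

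For the third inclusion I would invoke the general fact that for any parabolic subgroup $P$ of a connected semisimple group with nondegenerate Killing form $\mathsf{k}_{\tilde{\mathfrak{g}}}$, one has $\Lie(R_u(P))^\perp = \Lie(P)$. This is a one-line check via a root space decomposition: fix a maximal torus $\mathfrak{h}$ in a Levi of $P$ and decompose $\tilde{\mathfrak{g}} = \mathfrak{h} \oplus \bigoplus_{\alpha \in \Phi} \tilde{\mathfrak{g}}_\alpha$, so that $\Lie(R_u(P))$ is the sum of $\tilde{\mathfrak{g}}_\alpha$ for $\alpha$ in some set $\Phi_n^+$ of roots, while $\Lie(P)$ is $\mathfrak{h}$ together with $\tilde{\mathfrak{g}}_\alpha$ for $\alpha \in \Phi_L \cup \Phi_n^+$ (where $\Phi_L$ are the Levi roots). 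Since $\mathsf{k}_{\tilde{\mathfrak{g}}}(\tilde{\mathfrak{g}}_\alpha, \tilde{\mathfrak{g}}_\beta) = 0$ for $\alpha + \beta \neq 0$ and the pairing $\tilde{\mathfrak{g}}_\alpha \times \tilde{\mathfrak{g}}_{-\alpha} \to \mathbb{R}$ is nondegenerate, the space $\Lie(R_u(P))^\perp$ is precisely the complement of $\bigoplus_{\alpha \in -\Phi_n^+} \tilde{\mathfrak{g}}_\alpha$ in $\tilde{\mathfrak{g}}$, which equals $\Lie(P)$.

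Combining the two steps: since $\mathfrak{v} \subset \Lie(R_u(P))$, taking Killing-form orthogonal complements and using the identity above gives
\[
\Lie(P) \;=\; \Lie(R_u(P))^\perp \;\subset\; \mathfrak{v}^\perp,
\]
which completes the required chain $\mathfrak{v} \subset \Lie(R_u(P)) \subset \Lie(P) \subset \mathfrak{v}^\perp$.

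There is no serious obstacle here; the only point requiring a little care is the choice of field of definition, since in the proof of Theorem~\ref{thm: generating parabolic} the subalgebra $\mathfrak{v}$ arises as the $\mathbb{R}$-span of certain integral vectors. However, both Borel--Tits and the Killing-form computation are valid over the appropriate field, so the parabolic $P$ produced has the correct field of definition automatically. The remaining statements of Theorem~\ref{thm: generating parabolic} — the generation and quantitative bounds — will then follow from this structural lemma combined with the quantitative input already built into the proof strategy.
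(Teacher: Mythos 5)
Your proof is correct and follows essentially the same two-step strategy as the paper. The only difference is presentational: the paper constructs the parabolic explicitly by iterating the operation \emph{nilradical of the normalizer} starting from $\mathfrak{v}$ (which is precisely the internal mechanism behind the Borel--Tits theorem you cite as a black box), and then deduces $\Lie(P)\subset\mathfrak{v}^\perp$ from the Killing-form orthogonality between $\Lie(P)$ and $\Lie(R_u(P))$, exactly as you do via the root-space decomposition.
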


\begin{proof}
Let $\mathfrak{v}_0=\mathfrak{v}$, and for $i\geq 1$, define $\mathfrak v_i$ to be the niladical of $N_{\tilde\gfrak}(\mathfrak v_{i-1})$. 
Then there exists some $m$ so that $\mathfrak v_m=\Lie(R_u(P))$ for a parabolic subgroup $P$ of $\tilde G$.    
Since $\mathfrak{v}_i\subset \mathfrak v_{i+1}$, we have
\[
\mathfrak{v}\subset \Lie(R_u(P)).
\]    
This and the definition of the Killing form, $\mathsf k_{\tilde\gfrak}(v,w)={\rm tr}({\rm ad} v\circ{\rm ad} w)$, now imply that $\Lie(P) \subset \mathfrak{v}^\perp$. The proof is complete.   
\end{proof}

\begin{lemma}\label{lem: parabolic 2}
There is a positive constant $C'$, depending on $\dim\tilde\gfrak$, so that for every $A \geq 2$ and $\delta\leq A^{-C'}$ the following holds. 
If there is a vector $v \in \tilde\gfrak_\Z$ with $\|gv\| \leq \delta$, then $\{w\in\tilde\gfrak_\Z: \|gw\|\leq A\}$ 
generates a proper Lie subalgebra of $\tilde{\mathfrak{g}}$. 
\end{lemma}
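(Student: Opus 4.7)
The plan is to argue by contradiction, exploiting two features that distinguish semisimple Lie algebras: the Killing form $\mathsf k_{\tilde\gfrak}$ is $\Ad$-invariant and nondegenerate. Suppose $S_A:=\{w\in\tilde\gfrak_\Z:\|gw\|\leq A\}$ generates $\tilde\gfrak$ as a Lie algebra, while a nonzero vector $v\in\tilde\gfrak_\Z$ satisfies $\|gv\|\leq \delta$. I will show that the hypothesis of very small $\|gv\|$ is incompatible with the Killing pairing having a bounded-denominator integrality lower bound on $\tilde\gfrak_\Z$.

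First I would produce a $\Q$-basis $B_1,\dots,B_n$ of $\tilde\gfrak$ (where $n=\dim\tilde\gfrak$) consisting of iterated brackets of elements of $S_A$ of length at most $n-1$. This is the standard ascending filtration $V_0=\mathrm{span}(S_A)\subset V_{k+1}=V_k+[S_A,V_k]$: dimensions strictly increase at each step until stabilization, so stabilization at the full $\tilde\gfrak$ occurs within $n-1$ steps. Because $\Ad(g)$ is a Lie algebra automorphism, $gB_i$ is the same iterated bracket of the vectors $gw_j$ with $\|gw_j\|\leq A$; using a constant $C_1$ such that $\|[x,y]\|\leq C_1\|x\|\|y\|$ in the fixed norm on $\tilde\gfrak$, this gives
\[
\|gB_i\|\leq C_1^{n-2}A^{n-1}.
\]
In parallel, fixing a $\Z$-basis of $\tilde\gfrak_\Z$ and letting $D$ be a common denominator of the $\Q$-structure constants of $\tilde\gfrak$, iterated brackets of length $\leq n-1$ of elements of $\tilde\gfrak_\Z$ lie in $D^{-(n-2)}\tilde\gfrak_\Z$, so $\mathsf k_{\tilde\gfrak}(v,B_i)\in D^{-N_0}\Z$ for an explicit $N_0=N_0(\tilde\gfrak)$.

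Next, because $\tilde\G$ is semisimple the Killing form $\mathsf k_{\tilde\gfrak}$ is nondegenerate on $\tilde\gfrak$. Since $B_1,\dots,B_n$ span and $v\neq 0$, there exists $i$ with $\mathsf k_{\tilde\gfrak}(v,B_i)\neq 0$, and by the integrality above
\[
|\mathsf k_{\tilde\gfrak}(v,B_i)|\geq c_2:=D^{-N_0}>0.
\]
Applying $\Ad$-invariance and the continuity bound $|\mathsf k_{\tilde\gfrak}(x,y)|\leq C_K\|x\|\|y\|$ yields
\[
c_2\leq |\mathsf k_{\tilde\gfrak}(gv,gB_i)|\leq C_K\|gv\|\|gB_i\|\leq C_KC_1^{n-2}\,\delta\, A^{n-1}.
\]
Choosing $C'$ large enough (in terms of $n$, $C_1$, $C_K$ and $D^{N_0}$) so that $\delta\leq A^{-C'}$ violates this inequality when $A\geq 2$ gives the contradiction, completing the proof.

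\textbf{Main obstacle.} The only non-routine point is the careful bookkeeping of the denominators introduced by iterated brackets and by the Killing pairing on $\tilde\gfrak_\Z$; all of this depends only on $\tilde\gfrak$, so it affects $C'$ but not the structure of the argument. Everything else ($\Ad$-invariance, nondegeneracy via semisimplicity, the short-bracket generation filtration, the submultiplicativity of the norm under brackets) is standard and feeds straightforwardly into the inequality above.
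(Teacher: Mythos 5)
Your proof is correct, but it takes a genuinely different route from the paper's. The paper considers the Lie subalgebra $\mathfrak l$ generated by $S_A=\{w\in\tilde\gfrak_\Z:\|gw\|\leq A\}$; it observes that $v\in\mathfrak l$ while $\Ad(g)\mathfrak l$ is spanned by iterated brackets of norm $\ll A^\star$, so the primitive integral vector ${\bf v}_L$ on $\wedge^{\dim\mathfrak l}\mathfrak l$ satisfies $\|g{\bf v}_L\|\ll A^\star\delta$, which is \eqref{eq: A and delta parabolic}; since $\Ad(g)$ fixes the top exterior power of $\tilde\gfrak$, one would have $\|g{\bf v}_L\|\geq 1$ if $\mathfrak l=\tilde\gfrak$, a contradiction once $\delta$ is small. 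You instead pair $v$ against a spanning family of iterated brackets $B_i$ via the Killing form, playing off the integrality and nonvanishing of some $\mathsf k_{\tilde\gfrak}(v,B_i)$ (using nondegeneracy) against the upper bound $|\mathsf k_{\tilde\gfrak}(gv,gB_i)|\leq C_K\|gv\|\,\|gB_i\|$ coming from $\Ad$-invariance. Both arguments share the short-bracket-generation step and the submultiplicativity of the norm under brackets, so the real divergence is whether the nondegeneracy of the ambient structure is exploited through the determinant (wedge powers) or through the Killing pairing. The paper's formulation has the practical advantage that the intermediate estimate $\|g{\bf v}_L\|\ll A^\star\delta$ is precisely what is reused in the proof of Theorem~\ref{thm: generating parabolic} and then in Theorem~\ref{thm: non-div parabolic}; your version proves the lemma cleanly but does not directly furnish that quantitative byproduct. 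On the other hand your integrality step is simpler than you make it: $\tilde\gfrak_\Z=\tilde\gfrak\cap\sl_N(\Z)$ is closed under brackets and $\mathsf k_{\tilde\gfrak}$ is $\Z$-valued on it, so one can take $D=1$. A small bookkeeping slip: the filtration $V_{k+1}=V_k+[S_A,V_k]$ can need up to $n-1$ steps, producing iterated brackets of up to $n$ elements of $S_A$, so the bound should read $\|gB_i\|\leq C_1^{n-1}A^n$ rather than $C_1^{n-2}A^{n-1}$; this only changes the value of $C'$.
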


\begin{proof}
Let $\mathfrak l$ denote the Lie algebra spanned by $\{w\in\tilde\gfrak_\Z: \|gw\|\leq A\}$, and let $g{\bf L}g^{-1}$ be the corresponding subgroup. Then $\mathfrak l$ is spanned by vectors of size $\ll A^\star$, where the implied constants depend only on $\dim\tilde\gfrak$. Moreover $gv\in\mathfrak l$. Thus if $\delta$ is small enough, we have 
\be\label{eq: A and delta parabolic}
\|g{\bf v}_L\|\ll A^\star\delta \leq \delta^{1/2},
\ee
which in particular implies that $\mathfrak l\neq \tilde\gfrak$. 
\end{proof}

\begin{proof}[Proof of Theorem~\ref{thm: generating parabolic}]
Let $\delta_1 =\delta^\star$ be a small power of $\delta$ which will be explicated later. 
Let us also put $\Lambda := \Ad(g)\tilde\gfrak_\Z$. Then so long as $\delta$ is small enough, 
$\{w\in\Lambda: \|w\|\leq \delta_1\}$ generate over $\mathbb{Z}$ 
a lattice in a nilpotent Lie subalgebra $\mathfrak{v}\subset\tilde\gfrak$. This establishes part~(1) in the theorem.  

We now prove part~(2). That is: $\mathfrak{v}^\perp \cap \Lambda$ is spanned by elements of size $\ll \delta_1^{-1}$. 
Consider the vectors $v_1, v_2, \ldots, v_n$ that attain the $i$th successive minima of $\Lambda$, respectively. 
Let $m = \dim\mathfrak{v}$. Then $v_1,\ldots, v_m \in \mathfrak{v}$, and furthermore, $\|v_i\| > \delta_1$ holds for all $i > m$. 

Let $v_1^\ast,\ldots, v_n^\ast$ be the dual basis with respect to the Killing form $\mathsf k_{\tilde\gfrak}$. 
Then $\langle v_1^*,\ldots,v_n^* \rangle \subset \Lambda/M$ where $M \ll 1$. Since $\mathsf k_{\tilde\gfrak}(v_i, v_j^\ast)= \delta_{ij}$, 
we deduce that $\|v_i^\ast\| \gg \|v_i\|^{-1}$. Moreover, $\prod \|v_i\| \asymp \prod \|v_i^\ast\|^{-1}$, therefore, $\|v_i^\ast\| \asymp \|v_i\|^{-1}$.

The orthogonal complement $\mathfrak{v}^\perp$ is spanned by the vectors $v^*_{m+1},\ldots, v^*_n$, and therefore, increasing $M$ if necessary, it is spanned by $\{w\in \Lambda: \|w\| \leq M\|v_{m+1}\|^{-1}\}$. In other words, it is spanned by vectors in $\Lambda$ of size $\ll \delta_1^{-1}$.

We now prove part~(3) in the theorem. 
By applying Lemma~\ref{lem: parabolic 2}, see in particular~\eqref{eq: A and delta parabolic}, 
if $\delta_1$ is a small enough power of $\delta$, the Lie algebra generated by $\mathfrak{v}^\perp$ cannot be the entire algebra. Furthermore, by Lemma~\ref{lem: parabolic 1}, we know it must contain a parabolic subalgebra which contains $\mathfrak{v}$. Therefore, $\langle \mathfrak{v}^\perp \rangle$ is indeed a nontrivial parabolic subalgebra containing $\mathfrak{v}$.
\end{proof}

\subsection{An $S$-arithmetic version of Theorem~\ref{thm: non-div parabolic}}

We conclude this appendix with noting that Theorem~\ref{thm: non-div parabolic} can easily be adapted to the $\places$-arithmetic setting as in \cite{LMMS}, the only difference being that now $\ref{a: parabolic linear 2}$ can depend also on $\#\places$ and polynomially on the primes in $\places$. For the record we state this explicitly; the straightforward modification of the proofs above to this more general context is left to the reader.

 In addition to the semisimple $\Q$-subgroup. $\tilde\G\subset \SL_N$, we now also choose a finite set of places $\places$ of $\Q$ containing the infinite place.
Let $\tG=\tilde\G(\Q_\places)$ and $\tilde\gfrak=\Lie(\tG)$. Also let $\tilde\gfrak_{\Z_\places}:=\tilde\gfrak\cap\sl_N(\Z_\places)$. 
Then $\tilde\gfrak$ has a natural $\Q$-structure, and $\tilde\gfrak_{\Z_\places}$ is a $\tG\cap \SL_N(\Z_\places)$-stable lattice in $\tilde\gfrak$.
If there is no confusion, we will simply write $gv$ for $\Ad(g)v$; similarly for the natural actions on $\wedge^\ell\tilde\gfrak$.

Let $\|\cdot\|_\infty$ denote the Euclidean norm on $\Mat_N(\R)$, and for every finite place $v\in\places$, let $\|\cdot\|_v$ be a maximum norm with respect to the standard basis for $\Mat_N(\Q_v)$ induced by the standard absolute value on $\Q_v$. Let $\|\cdot\|_\places$ or simply $\|\cdot\|$ denote $\max_{v\in\places}\|\cdot\|_v$ This induces a norm on $\mathfrak{sl}_N(\Q_\places)$ and on $\SL_N(\Q_\places)$.
We will also write $\|\cdot\|$ for the induced norms on exterior products of $\mathfrak{sl}_N(\R)$. For $g \in \SL_N(\Q_\places)$ we let
$|g| = \max\{\|g\|,\|g^{-1}\|\}$. 

Let $\tilde\Gamma\subset \tG\cap\SL_N(\Z_\places)$ be an arithmetic lattice in $\tG$; let $\tilde X=\tG/\tilde\Gamma$, and  
\[
\tilde X_{\eta} = \{g\tilde\Gamma \in \tilde X: \min_{0 \neq v\in \tilde\gfrak(\Z)} \|gv\| \geq \eta\} \quad\text{for all $\eta>0$.}
\]
These are compact subsets of $\tilde X$, and any compact subset of $\tilde X$ is contained in $\tilde X_\eta$ for some $\eta>0$.

As it was done in~\cite{LMMS}, 
let $U=\prod_{v\in\places} U_v\subset \tG$ be a unipotent subgroup and let $\ufrak=\Lie(U)$.
We fix a basis $\mathcal{B}_U$ of $\ufrak$ consisting of unit vectors 
and set $B_\ufrak(0,\delta) = \{\sum_{z \in \mathcal{B}_U} a_z z: |a_{z}|_\places \leq \delta\}$ for $\delta>0$ as well as $B_U(e) = \exp(B_\ufrak(0,1))$.

Let $\lambda: \ufrak \to \ufrak$ be a $\Q_\places$-diagonalizable expanding linear map satisfying~\eqref{eq: U lambda prop 1} and~\eqref{eq: U lambda prop 2}.

The proof of Theorem~\ref{thm: non-div parabolic} generalizes {\em mutatis mutandis} to yield the following result.

\begin{thm}\label{thm: non-div parabolic S-arith}
There exist $\consta\label{a: parabolic linear S-arith}$ depending on $N$, and $\consta\label{a: parabolic linear 2 S-arith}$ depending on $N$, $\#\places$ and polynomialy on $\height(\tilde\G)$ and the primes in $\places$ so that for any $g\in\tilde G$, $k\geq 1$, and any $0<\vare\leq 1/2$ at least one of the following holds.
\begin{enumerate}
\item 
\[
|\{u\in B_U(e): \lambda_k(u)g\tilde\Gamma\not\in \tilde X_{\vare}\}|\leq \ref{a: parabolic linear 2 S-arith} \vare^{1/\ref{a: parabolic linear S-arith}}.
\]

\item There is a $\Q$-parabolic subgroup ${\bf P}\subset \tilde\G$ with 
$\height(\mathbf P) \leq \ref{a: parabolic linear 2 S-arith} |g|^{\ref{a: parabolic linear S-arith}} \vare^{1/\ref{a: parabolic linear S-arith}}$ so that all the following are satisfied 
\begin{subequations}
\begin{align}
\label{eq: parabolic almost fixed S-arith}&\|\la_k(u)g{\bf v}_{P}\|\leq \ref{a: parabolic linear 2 S-arith} \vare^{1/\ref{a: parabolic linear S-arith}}\quad &\text{ for all $u\in B_U(e)$},\\
\label{eq: radical parabolic almost fixed S-arith}&\|\la_k(u)g{\bf v}_{R_u(P)}\|\leq \ref{a: parabolic linear 2 S-arith} \vare^{1/\ref{a: parabolic linear S-arith}}\quad &\text{ for all $u\in B_U(e)$},\\
\label{eq: class H parabolic almost fixed S-arith}&\|\la_k(u)g{\bf v}_{Q}\|\leq \ref{a: parabolic linear 2 S-arith} \vare^{1/\ref{a: parabolic linear S-arith}}\quad &\text{ for all $u\in B_U(e)$},
\end{align}
\end{subequations}
where $Q={\bf Q}(\Q_\places)$ for ${\bf Q}=[{\bf P}, {\bf P}]$.  
\end{enumerate}
\end{thm}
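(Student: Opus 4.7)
The plan is to mirror the two-step structure used for Theorem~\ref{thm: non-div parabolic}, with each ingredient replaced by its $\places$-arithmetic counterpart. The input from quantitative non-divergence is already available in the $\places$-arithmetic setting: \cite[Thm.~5.3]{LMMS} is stated and proved there directly over $\Q_\places$, and yields a partial flag $\Delta_{\ell_1}\subset\cdots\subset\Delta_{\ell_d}$ of $\Z_\places$-submodules of $\tilde\gfrak_{\Z_\places}$, a concave function $\eta:[0,\dim\tilde\gfrak]\to (0,1]$ linear between consecutive $\ell_i$'s, and an exceptional set $\mathrm{Exc}_\tau\subset B_U(e)$ of measure $\leq E\tau^{1/D}$, where the constant $D$ depends only on $\dim\tilde\gfrak$ and $E$ depends on $\dim\tilde\gfrak$, $\#\places$, and polynomially on $\height(\tilde\G)$ and the primes in $\places$. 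The concavity argument used in the proof of Theorem~\ref{thm: non-div parabolic} to locate an index $s=\ell_j$ and a nilpotent rational subalgebra $\mathbf V$ with $\|\lambda_k(u)g\mathbf v_V\|\leq \rho^{1-m\kappa}$ on all of $B_U(e)$, and with a short basis $\{v_1,\ldots,v_s\}$ of $\mathbf V\cap\tilde\gfrak_{\Z_\places}$ satisfying $\|\lambda_k(u)gv_i\|\ll \rho^{1-m\kappa}$ for $u\in B_U(e)\setminus\mathrm{Exc}_\tau$, transfers word for word to the $\places$-arithmetic norm $\|\cdot\|_\places=\max_{v\in\places}\|\cdot\|_v$: the key inputs are Minkowski-type estimates on the $\places$-arithmetic lattice $\tilde\gfrak_{\Z_\places}$ in $\tilde\gfrak_{\Q_\places}$, which are standard.

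Next I would establish the $\places$-arithmetic analogue of Theorem~\ref{thm: generating parabolic}: for sufficiently small $\delta>0$ (depending on $\dim\tilde\gfrak$, $\#\places$, and the primes in $\places$), if $g\in\tG$ and $\|gv\|_\places\leq\delta$ for some $0\neq v\in\tilde\gfrak_{\Z_\places}$, then $\{w\in\tilde\gfrak_{\Z_\places}:\|gw\|_\places\leq\delta^{1/C}\}$ spans a nilpotent Lie subalgebra $\mathfrak v\subset\tilde\gfrak$, the dual subspace $\mathfrak v^\perp$ (with respect to the Killing form, computed over $\Q$) is spanned by elements of $\Ad(g)\tilde\gfrak_{\Z_\places}$ of $\places$-norm $\leq C\delta^{-1/C}$, and $\mathfrak v^\perp$ generates a proper $\Q$-parabolic subalgebra. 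Lemma~\ref{lem: parabolic 1} is purely algebraic and transfers unchanged. For Lemma~\ref{lem: parabolic 2}, the only nontrivial point is that the set $\{w\in\tilde\gfrak_{\Z_\places}:\|gw\|_\places\leq A\}$ spans (over $\Q_\places$) a proper Lie subalgebra; this is where one needs successive minima of $\Ad(g)\tilde\gfrak_{\Z_\places}$ with respect to the product norm $\|\cdot\|_\places$ — these exist with the expected product formula up to multiplicative constants depending on $\#\places$ and primes in $\places$, so the duality argument via the Killing form goes through with the same bookkeeping. The main nuance is that Killing-form duality is defined rationally, and one must verify that the dual basis lies in $\frac{1}{M}\Ad(g)\tilde\gfrak_{\Z_\places}$ for $M$ polynomial in the primes of $\places$; this follows from clearing denominators in the Gram matrix.

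With these two ingredients in hand, the proof of Theorem~\ref{thm: non-div parabolic} applies verbatim, one only needs to substitute $\vare^\star$-type exponents with universal constants depending on $\dim\tilde\gfrak$ and $\#\places$. One sets $\rho=\vare^\star$ with $\star$ sufficiently small, applies the $\places$-arithmetic analogue of Theorem~\ref{thm: generating parabolic} to the space $\Ad(\lambda_k(u)g)(\tilde\gfrak_{\Z_\places}\cap\mathbf V)$ for $u\in B_U(e)\setminus\mathrm{Exc}_\tau$, and obtains a $\Q$-parabolic subgroup $\mathbf P\subset\tilde\G$ with $\|\lambda_k(u)g\mathbf v_P\|_\places\ll\vare^\star$ on that large subset. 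Polynomiality of $u\mapsto \lambda_k(u)g\mathbf v_P$ (in each coordinate of $U_v$, $v\in\places$) combined with the measure bound $|\mathrm{Exc}_\tau|\ll\tau^{1/D}$ then upgrades the bound to all of $B_U(e)$, giving~\eqref{eq: parabolic almost fixed S-arith}. The bounds~\eqref{eq: radical parabolic almost fixed S-arith} and~\eqref{eq: class H parabolic almost fixed S-arith} follow by the same argument as in the proof of \cite[Lemma~4.2]{LMMS}: the subspaces $\Ad(\lambda_k(u)g)\Lie(Q)$ and $\Ad(\lambda_k(u)g)\Lie(R_u(P))$ are spanned (for $u$ outside $\mathrm{Exc}_\tau$) by vectors of $\places$-norm $\ll\rho^{-\star}$, and combining with the Minkowski-derived bound $\|\lambda_k(u)gz_u\|_\places\leq \hat\vare^{1/\dim\tilde\gfrak}$ for a nilpotent integral vector $z_u$ yields the claim after choosing $\rho$ to be a small enough power of $\vare$.

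The main obstacle I anticipate is the careful bookkeeping of constants in the $\places$-arithmetic analogue of Theorem~\ref{thm: generating parabolic}: one must check that Killing-form duality with respect to $\tilde\gfrak_{\Z_\places}$ produces a dual $\Z_\places$-basis whose denominators are polynomially controlled by the primes in $\places$, and that successive minima at different places combine via a product formula with only mild loss. Once this is set up, all measure-theoretic and polynomial arguments from the real case proceed unchanged.
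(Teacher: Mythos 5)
Your proposal is correct and follows the same route the paper takes, which is simply to observe that the proof of Theorem~\ref{thm: non-div parabolic} carries over \emph{mutatis mutandis} to the $\places$-arithmetic setting. You correctly isolate the only delicate points — the $\places$-arithmetic Minkowski estimates for successive minima of $\Z_\places$-lattices and the control of denominators in the Killing-form dual basis — and these are precisely where the resulting constant acquires its dependence on $\#\places$ and on the primes in $\places$, as recorded in the statement.
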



\section{Avoidance principles: The proofs}\label{sec: app avoidance}

This section contains proofs of results in \S\ref{sec: avoidance}. 
The proofs are by now standard, and we include them primarily for the convenience of the reader.

\subsection{Proof of Proposition~\ref{prop: linearization translates}}\label{sec: proof linearization}
In this section, we prove Proposition~\ref{prop: linearization translates}. The proof, which is essentially that of~\cite[Prop.\ 4.6]{LMW22} mutatis mutandis, is based on the study of a certain Margulis function, see~\eqref{eq: define f_Y app A}. 
We recall the details to explicate the necessary changes. 

For every $d>0$, define the probability measure $\sigma_d$ on $H$ by
\[
\int \varphi(h)\diff\!\sigma_d(h)=\frac{1}{3}\int_{-1}^2\varphi(a_du_r)\diff\!r.
\]
Let us first remark on our choice of the interval $[-1,2]$: 
We will define a function $f_Y$ in~\eqref{eq: define f_Y app A} below. In Lemmas~\ref{lem: linear algebra}--\ref{lem: Margulis func periodic}, certain estimates for 
\[
\int f_Y(h\,\bigcdot)\diff(\sigma_{d_1}\!\conv\!\cdots\!\conv\!\sigma_{d_n})(h)
\]
will be obtained, then in Lemma~\ref{lem: average over [0,1]}, we will convert these estimates to similar estimates for 
\[
\int_0^1 f_Y(a_{d_1+\cdots+d_n}u_r\,\bigcdot)\diff\!r.
\]
The argument in Lemma~\ref{lem: average over [0,1]} is based on commutation relations between $a_d$ and $u_r$; similar arguments have been used several times throughout the paper. Since the function $f_Y$ can have a rather large Lipschitz constant, we will not appeal to continuity properties of $f_Y$ in Lemma~\ref{lem: average over [0,1]}. 
Instead, we will use the fact that $[0,1]\subset[-1,2]+r$ for any $|r|\leq 1/2$. 

Recall that $\dim\rfrak=2\dm+1$, and that $\rfrak$ is $\Ad(H)$-irreducible. 
Moreover, recall that for every highest weight vector $w\in\rfrak$, we have 
\be\label{eq: highest weight App}
\Ad(a_t)w=e^{\dm t}w. 
\ee

\medskip

We begin with the following linear algebra lemma.

\begin{lemma}[cf.~Lemma A.1,~\cite{LMW22}]
\label{lem: linear algebra App}
Let $0<\alpha\leq 1/(2\dm +1)$. For all $0\neq w\in\rfrak$, we have 
\[
\int\|\Ad(h)w\|^{-\alpha}\diff\!\sigma_d(h)\leq C e^{-\alpha\dm d}\|w\|^{-\alpha }
\]
where $C$ is an absolute constant.  
\end{lemma}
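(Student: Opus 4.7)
Since $\sigma_d$ is by definition the push-forward of uniform measure on $[-1,2]$ under $r\mapsto a_du_r$, the statement reduces to
\[
\int_{-1}^{2}\|\Ad(a_du_r)w\|^{-\alpha}\,dr\;\leq\;3C\,e^{-\alpha\dm d}\|w\|^{-\alpha},
\]
so the argument takes place inside the single irreducible $(2\dm+1)$-dimensional representation $\rfrak$ of $H$. The idea is standard: extract a highest-weight coefficient that is a polynomial in $r$ of controlled degree, apply a pointwise lower bound using the $a_t$-weight, and then invoke the $(C,\alpha)$-good property of polynomials. The only role of the hypothesis $\alpha\leq 1/(2\dm+1)$ is to guarantee $\alpha\cdot 2\dm<1$, which is exactly what the polynomial non-concentration needs.

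First, fix a basis $\{e_{-\dm},\dots,e_\dm\}$ of $\rfrak$ consisting of pure weight vectors for $a_t$, so that $\Ad(a_t)e_k=e^{kt}e_k$ and $\|e_k\|=1$. Because $\rfrak$ is irreducible and $u_r=\exp(r\mathsf e)$ for the raising element $\mathsf e$, the matrix of $\Ad(u_r)$ in this basis is strictly upper triangular with unit diagonal:
\[
\Ad(u_r)e_j=\sum_{k\geq j}Q_{j,k}(r)\,e_k,
\]
where each $Q_{j,k}$ is a polynomial of degree exactly $k-j$ (the leading coefficient is $(k-j)!^{-1}$ times the $(k-j)$-fold action of $\mathrm{ad}(\mathsf e)$). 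Writing $w=\sum_j c_je_j$ this gives $\Ad(u_r)w=\sum_k p_k(r)\,e_k$ with $p_k(r)=\sum_{j\leq k}c_jQ_{j,k}(r)$, a polynomial in $r$ of degree at most $k+\dm$. In particular $p_\dm$ has degree at most $2\dm$, and the pointwise bound
\[
\|\Ad(a_du_r)w\|\;\geq\;e^{\dm d}\,|p_\dm(r)|
\]
holds for every $r$, because $\|e_\dm\|=1$.

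Next, observe that the polynomials $\{Q_{j,\dm}(r):-\dm\leq j\leq \dm\}$ have pairwise distinct degrees $\dm-j$ and thus form a basis of $\R[r]_{\leq 2\dm}$. Consequently the linear map $\rfrak\to\R[r]_{\leq 2\dm}$ sending $w\mapsto p_\dm$ is an isomorphism, so there is a constant $c_0=c_0(\dm)>0$ with $\sup_{r\in[-1,2]}|p_\dm(r)|\geq c_0\|w\|$. Combined with the classical $(C,\alpha)$-good property—for every polynomial $p$ of degree $\leq 2\dm$ one has $|\{r\in[-1,2]:|p(r)|<\varepsilon\sup_{[-1,2]}|p|\}|\ll\varepsilon^{1/(2\dm)}$, which upon integration by layers yields $\int_{-1}^2|p(r)|^{-\alpha}dr\ll_\alpha(\sup_{[-1,2]}|p|)^{-\alpha}$ whenever $\alpha\cdot 2\dm<1$—this gives
\[
\int_{-1}^2|p_\dm(r)|^{-\alpha}\,dr\;\leq\;C'\|w\|^{-\alpha}.
\]
The hypothesis $\alpha\leq 1/(2\dm+1)$ secures $\alpha\cdot 2\dm<1$, so everything assembles to
\[
\int\|\Ad(h)w\|^{-\alpha}\,d\sigma_d(h)=\tfrac13\int_{-1}^2\|\Ad(a_du_r)w\|^{-\alpha}dr\leq\tfrac{C'}{3}e^{-\alpha\dm d}\|w\|^{-\alpha},
\]
which is the required bound.

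There is no real obstacle here: the proof is essentially a direct transcription of \cite[Lemma A.1]{LMW22} to the present context, the mild additional point being simply the verification that in any principal $\SL_2$ embedding considered in the paper, the raising operator acts with the expected non-vanishing leading coefficients on the $(2\dm+1)$-dimensional irreducible complement $\rfrak$, so that $w\mapsto p_\dm$ is indeed an isomorphism.
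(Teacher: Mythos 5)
Your proof is correct and follows essentially the same route as the paper: isolate the highest-weight coordinate $p_\dm(r)=\sum_j c_j Q_{j,\dm}(r)$ of $\Ad(u_r)w$, which is a polynomial of degree $\leq 2\dm$ whose sup over $[-1,2]$ is comparable to $\|w\|$, then bound $\|\Ad(a_du_r)w\|\geq e^{\dm d}|p_\dm(r)|$ and invoke the $(C,\alpha)$-good estimate. The only point to tighten is your $\ll_\alpha$: the lemma asserts an absolute constant, and this follows because the dyadic geometric series has ratio $2^{\alpha-1/(2\dm)}\leq 2^{1/(2\dm+1)-1/(2\dm)}<1$, uniformly over $\alpha\leq 1/(2\dm+1)$, exactly as the paper notes.
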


\begin{proof}[Sketch of the proof]
Standard representation theory of $\SL_2$ implies that for every $\|w\|=1$, we have 
\[
\Ad(a_tu_r)w=(e^{\dm t}p_{w}(r),\ldots)
\]
where $p_w(r)=\sum c_{i}r^i$ is a polynomial of degree $2\dm$ with $\max\{|c_i|\}\gg 1$. Thus if we put 
\[
I(\vare)=\{r\in [-1,2]: \vare/2\leq p_w(r)\leq \vare\},
\]
then $|I(\vare)|\ll\vare^{1/(2\dm)}$ where the implied constant is absolute, see~e.g.~\cite[Prop.~3.2]{KM-Nondiv}.

Moreover, $\|\Ad(a_tu_r)w\|\geq e^{\dm t}{\vare}/2$, for every $r\in I(\vare)$. 
The lemma follows if we dyadically decompose the domain and summing the resulting geometric series, which converges independently of $\alpha$ since $\alpha\leq \frac{1}{2\dm+1}<\frac1{2\dm}$.   
\end{proof}

We will also use the following non-divergence result \'a la Eskin, Margulis, and Mozes~\cite{EMM-Upp}.

\begin{propos}[cf.~\cite{EM-RW}]\label{prop: average of inj}
There exist $\consta\label{a:alpha},\constk\label{k:alpha}$ depending only on the dimension, and 
a function $\omega:X\to [2,\infty)$, so that both of the following hold for all $x\in X$ and all $d\geq B_\omega$
\begin{subequations}
    \begin{align}
        \label{eq: alpha and inj}&\inj(x)\geq \omega(x)^{-\ref{a:alpha}} \\
        \label{eq: average alpha} &\int \omega(hx)\diff\!\sigma_d^{(\ell)}(h)\leq e^{-\ref{k:alpha}\ell d}\omega(x)+\bar B e^{\ref{a:alpha} d}
    \end{align}
\end{subequations}
where $\sigma_d^{(\ell)}$ denotes the $\ell$-fold convolution and $\bar B\geq 1$ depends only of $X$.
\end{propos}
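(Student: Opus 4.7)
The plan is to follow the Eskin--Margulis framework for random walks on homogeneous spaces, using the avoidance principle of Theorem~\ref{thm: non-div parabolic} as the engine. The function $\omega$ will be a Margulis-type height on $X$ that blows up polynomially as one penetrates the cusp, so that the first inequality quantifies how small $\inj$ can be in terms of $\omega$ and the second records the expected contraction of $\omega$ under the unipotent random walk.

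Concretely, I would set $\omega(x)=2+\max\|g\mathbf v\|^{-s_0}$, where $g$ is a representative of $x$ (in the simply connected cover $\tilde G$ if needed, as in Appendix~\ref{sec: parabolic}), the maximum ranges over primitive integral vectors $\mathbf v$ in exterior powers of $\tilde\gfrak_\bbz$ stabilized by the unipotent radical of some proper $\Q$-parabolic of $\tilde\G$, and $s_0>0$ is chosen small in terms of $\dim\G$. The pointwise inequality $\inj(x)\geq \omega(x)^{-\ref{a:alpha}}$ then follows from Minkowski's theorem combined with Theorem~\ref{thm: generating parabolic}: any lattice vector with small $\|g\cdot\|$ forces one of the parabolic witnesses $\mathbf v$ above to be short, and hence $\omega(x)$ to be large. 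This yields $\ref{a:alpha}=D/s_0$ for $D=D(\dim G)$.

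The substantive step is the average contraction~\eqref{eq: average alpha}. I would first establish the case $\ell=1$. Fix a candidate $\mathbf v$ realizing (up to a constant) the maximum defining $\omega(x)$. Theorem~\ref{thm: non-div parabolic}, applied with $U=\{u_r\}$ and the one-step expander $\Ad(a_d)|_{\Lie(U)}$, provides a dichotomy: either $r\mapsto \|a_du_rg\mathbf v\|$ is not uniformly small, so the polynomial nature of this map and a standard $(C,\alpha)$-good estimate (in the spirit of Lemma~\ref{lem: linear algebra 1}) produce
\[
\int \|hg\mathbf v\|^{-s_0}\,\diff\sigma_d(h)\;\leq\; C_0\,e^{-s_0\dm d}\,\|g\mathbf v\|^{-s_0}
\]
for $s_0<1/(2\dm+1)$; or else each $a_du_rg$ stays uniformly close to the exceptional parabolic fixed vector, in which case the contribution to $\omega(a_du_rx)$ from competing $\mathbf v'$ is bounded by $\bar B\,e^{\ref{a:alpha}d}$. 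Reducing the $\max$ in the definition of $\omega$ to a finite sum over parabolic classes capable of contributing at a given height, and summing the resulting per-vector estimates, gives the one-step inequality. Iterating via $\sigma_d^{(\ell)}=\sigma_d\conv\sigma_d^{(\ell-1)}$ and summing the geometric series in $e^{-\star d}$ produces~\eqref{eq: average alpha}, after possibly shrinking $\ref{k:alpha}$.

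The hard part will be the transition in the one-step estimate: when the dominant parabolic witness at $x$ differs from the one dominating at $a_du_rx$, the $\max$ defining $\omega$ could in principle jump upward. The resolution, going back to~\cite{EM-RW} and Kleinbock--Margulis, is that only boundedly many parabolic witnesses can simultaneously be short at any given point---a fact which itself rests on Theorem~\ref{thm: generating parabolic}---so the max reduces to a controlled sum to which the coordinatewise polynomial decay applies; this also forces $s_0$ to be small and explains the additive term $\bar B\,e^{\ref{a:alpha}d}$.
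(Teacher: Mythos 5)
The paper does not prove this proposition: it cites Eskin--Margulis \cite{EM-RW} and S\'anchez--Seong \cite{SanchezSeong} and stops there. So there is no in-paper argument to compare yours against, and what you have written is genuinely an attempt to reconstruct the cited proof. Your high-level plan — a Margulis-type height built from parabolic witnesses, contracted via polynomial ($(C,\alpha)$-good) averages, with Appendix~\ref{sec: parabolic} supplying the structure of short lattice vectors — is the right framework. But several of the load-bearing steps are thinner than they look.

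The main gap is in the treatment of the max. You acknowledge that the dominant parabolic witness can change along the $u_r$-orbit, and propose fixing this by bounding the max by a sum over the boundedly-many simultaneously short witnesses. That is one ingredient, but it does not by itself close the argument: a witness $\mathbf v'$ that is not short at $x$ can still become the dominant one at $a_du_rx$ (its $a_du_r$-image can be exponentially small even though its $g$-image was $\gg 1$), and controlling that contribution requires an a priori lower bound of the form $\|a_du_rg\mathbf v'\|\geq e^{-Cd}\|g\mathbf v'\|$ to absorb it into $\bar B e^{\ref{a:alpha}d}$ — a step you do not spell out. More fundamentally, the function used in \cite{EM-RW} and \cite{SanchezSeong} is not a bare max over parabolic witnesses but a rank-weighted combination of the $\alpha_i$ functions, with exponents calibrated so that the Eskin--Margulis--Mozes systems-of-integral-inequalities argument closes when the type of the short flag changes. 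A heterogeneous max is not known to contract under a single $a_d$-averaged push, and your proposed reduction to a bounded sum does not replace the rank-interpolation device.

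Two further issues. Quoting Lemma~\ref{lem: linear algebra 1} for the decay rate $e^{-s_0\dm d}$ is not correct: that lemma concerns vectors in the specific $(2\dm+1)$-dimensional irreducible $H$-module $\rfrak$, while the witnesses $\mathbf v_{R_u(\bf P)}$ live in exterior powers of $\tilde\gfrak$ and decompose into a different collection of $H$-irreducibles, so the admissible exponent $s_0$ and the decay rate depend on that weight structure and must be derived separately (the general $(C,\alpha)$-good estimate does give something, but not that exponent). And the bounded-multiplicity assertion is not a consequence of Theorem~\ref{thm: generating parabolic}: that theorem produces one canonical parabolic absorbing the very short vectors, i.e.\ a uniqueness statement at a single scale, whereas the finite multiplicity of $\delta$-short witnesses across all $\Q$-parabolic types is a reduction-theoretic fact (finiteness of cusps plus product-of-minima bounds) which needs its own argument. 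None of this means the plan is unworkable, but filling these holes essentially amounts to reproducing the content of the two cited references.
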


\begin{proof}
A function with these properties is constructed in~\cite{EM-RW}. More explicitly, see~\cite[Prop.~26]{SanchezSeong} 
for~\eqref{eq: alpha and inj} and~\cite[Thm.~16]{SanchezSeong} for~\eqref{eq: average alpha}. 
\end{proof}

Let $Y=Hy$ be a periodic orbit. 
For every $x\in X\setminus Y$, define 
\[
I_Y(x)=\{w\in \rfrak: 0<\|w\|< \ref{a:alpha}^{-1}\omega(x)^{-\ref{a:alpha}}, \exp(w)x\in Y\}.
\]  
Increasing $\ref{a:alpha}$ if necessary, we have 
\be\label{eq: number of sheets}
\#I_Y(x)\leq E\vol(Y)
\ee
for a constant $E$ depending only on $X$, see~\cite[Prop.~25]{SanchezSeong} also~\cite[\S9]{LM-PolyDensity}. 

Again increasing $\ref{a:alpha}$ if necessary, for every $h=a_du_r$ with $d\geq 0$, 
all $r\in[-1,2]$, and for all $w\in\mathfrak g$ and all $x\in X$, we have  
\begin{subequations}
    \begin{align}
        \label{eq: cont Ad app B}&\|\Ad(h^{\pm1})w\|\leq \ref{a:alpha} e^{\ref{a:alpha}d/2}\|w\|,\\
        \label{eq: cont inj app B}
&\ref{a:alpha}^{-1}e^{-\ref{a:alpha}d/2}\omega(x)\leq \omega(h^{\pm1}x)\leq \ref{a:alpha}e^{\ref{a:alpha}d/2}\omega(x). 
    \end{align}
\end{subequations}

Let $\alpha=\min\{1/(2\dm+1), 1/\ref{a:alpha}\}$, furthermore, we will replace $\ref{k:alpha}$ with a smaller constant if necessary and assume that $\ref{k:alpha}\leq \alpha\dm/2$. 

Define 
\be\label{eq: define f_Y app A}
f_{Y}(x)=\begin{cases}\sum_{w\in I_Y(x)}\|w\|^{-\alpha}& I_Y(x)\neq \emptyset\\
\omega(x)&\text{otherwise}\end{cases}.
\ee

\begin{lemma}\label{lem: Margulis func periodic 1}
Let $C$ be as in Proposition~\ref{prop: average of inj}, and let  
$d\geq B_\omega$. Then 
\[
\int f(hx)\diff\!\sigma_d(h)\leq
Ce^{-\alpha\dm d}f_Y(x)+\ref{a:alpha}e^{d}E\vol(Y)\cdot (e^{-\ref{k:alpha}d}\omega(x)+\bar B e^{\ref{a:alpha}d})
\]
where $\bar B$ is as in Proposition~\ref{prop: average of inj}.  
\end{lemma}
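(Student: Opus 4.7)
The plan is to bound $f_Y(hx)$ pointwise and then integrate, using the decomposition of $I_Y(hx)$ according to where its elements come from. Concretely, since $f_Y(z) = \omega(z)$ when $I_Y(z)$ is empty, one has uniformly
\[
f_Y(hx) \leq \sum_{w' \in I_Y(hx)} \|w'\|^{-\alpha} + \omega(hx),
\]
with the convention that an empty sum is zero. The $\omega(hx)$ term is immediately handled by \eqref{eq: average alpha} upon integrating against $\sigma_d$, so the core of the argument is to bound the sum.

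The sheets $w' \in I_Y(hx)$ are split into two classes. Call $w'$ \emph{old} if $\Ad(h^{-1})w' \in I_Y(x)$, and \emph{new} otherwise. The crucial observation is that $Y = Hy$ is $H$-invariant, so
\[
\exp(\Ad(h^{-1})w')\,x = h^{-1}\exp(w')hx \in h^{-1}Y = Y,
\]
which means the old/new dichotomy is controlled only by the size condition in the definition of $I_Y(x)$: a new $w'$ must satisfy $\|\Ad(h^{-1})w'\| \geq \ref{a:alpha}^{-1}\omega(x)^{-\ref{a:alpha}}$. For the old contribution, bound it (harmlessly over-counting those $w\in I_Y(x)$ for which $\Ad(h)w$ is too large to lie in $I_Y(hx)$) by $\sum_{w \in I_Y(x)} \|\Ad(h)w\|^{-\alpha}$; integrating termwise and invoking Lemma~\ref{lem: linear algebra App} produces $Ce^{-\alpha\dm d} f_Y(x)$ when $I_Y(x) \neq \emptyset$, and the contribution is trivially zero otherwise.

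For new sheets, combine $\|\Ad(h^{-1})w'\| \geq \ref{a:alpha}^{-1}\omega(x)^{-\ref{a:alpha}}$ with \eqref{eq: cont Ad app B} to obtain
\[
\|w'\| \geq \ref{a:alpha}^{-2}e^{-\ref{a:alpha}d/2}\omega(x)^{-\ref{a:alpha}}, \quad \text{hence} \quad \|w'\|^{-\alpha} \leq \ref{a:alpha}^{2\alpha} e^{\alpha\ref{a:alpha}d/2}\,\omega(x)^{\alpha\ref{a:alpha}} \leq \ref{a:alpha}\,e^{d/2}\,\omega(x),
\]
where we used $\alpha \leq 1/\ref{a:alpha}$ (so $\alpha\ref{a:alpha} \leq 1$ and $\omega(x)^{\alpha\ref{a:alpha}} \leq \omega(x)$). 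The number of new sheets is at most $\#I_Y(hx) \leq E\vol(Y)$ by \eqref{eq: number of sheets}, and since $\ref{k:alpha} \leq \alpha\dm/2 \leq 1/2$ the bound $e^{d/2} \leq e^{(1-\ref{k:alpha})d}$ reshapes the new contribution into $\ref{a:alpha} E\vol(Y)\,e^d \cdot e^{-\ref{k:alpha}d}\omega(x)$. Integrating the $\omega(hx)$ term via \eqref{eq: average alpha} yields $e^{-\ref{k:alpha}d}\omega(x) + \bar B e^{\ref{a:alpha}d}$, which is absorbed into the claimed right-hand side using $\ref{a:alpha} E\vol(Y) e^d \geq 1$.

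The principal obstacle is the careful splitting of old versus new sheets; in particular, extracting the lower bound on $\|w'\|$ for new elements requires that $H$-invariance of $Y$, the size-defining constraint in $I_Y$, and the commutation estimates from \eqref{eq: cont Ad app B}--\eqref{eq: cont inj app B} are orchestrated correctly, and that the constants $\alpha$ and $\ref{k:alpha}$ have been chosen (as already arranged preceding the lemma) so that $\alpha\ref{a:alpha} \leq 1$ and $\ref{k:alpha} \leq \alpha\dm/2 \leq 1/2$, which control the exponent arithmetic when unifying all the terms.
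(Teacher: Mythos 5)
Your approach follows essentially the same structure as the paper's proof: split the sheets in $I_Y(hx)$ into those whose pullback $\Ad(h^{-1})w'$ lands in $I_Y(x)$ and those that do not, integrate the former via Lemma~\ref{lem: linear algebra App}, and control the latter by the count $\#I_Y(hx)\leq E\vol(Y)$ together with a pointwise lower bound on $\|w'\|$. Your ``old vs.\ new'' dichotomy is a cleaner formulation than the paper's explicit size threshold $\Upsilon$, but it is the same mechanism: since $Y$ is $H$-invariant, failure of $\Ad(h^{-1})w'\in I_Y(x)$ can only happen through the size constraint, which is exactly your lower bound $\|\Ad(h^{-1})w'\|\geq\ref{a:alpha}^{-1}\omega(x)^{-\ref{a:alpha}}$.

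The one place that does not close as written is the final constant accounting. Beyond the $Ce^{-\alpha\dm d}f_Y(x)$ term you produce two contributions: $\ref{a:alpha}E\vol(Y)e^{d}e^{-\ref{k:alpha}d}\omega(x)$ (new sheets, reshaped with $e^{d/2}\leq e^{(1-\ref{k:alpha})d}$) and $e^{-\ref{k:alpha}d}\omega(x)+\bar Be^{\ref{a:alpha}d}$ (the $\int\omega(hx)\,\diff\sigma_d$ term). The first of these already exhausts the entire $\ref{a:alpha}E\vol(Y)e^{d}e^{-\ref{k:alpha}d}\omega(x)$ budget in the claimed bound, so the leftover $e^{-\ref{k:alpha}d}\omega(x)$ has nowhere to go; the justification ``$\ref{a:alpha}E\vol(Y)e^{d}\geq 1$'' would bound the $\omega(hx)$-integral by the \emph{whole} RHS bracket, not by the slack remaining after the new-sheets term, and $e^{-\ref{k:alpha}d}\omega(x)$ is not dominated by $\bar Be^{\ref{a:alpha}d}$ uniformly in $x$. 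The fix is small: since $\ref{k:alpha}\leq\alpha\dm/2<1/4$ you actually have $e^{d/2}\leq e^{(1-2\ref{k:alpha})d}$, giving the new-sheets contribution the stronger bound $\ref{a:alpha}E\vol(Y)e^{d}e^{-2\ref{k:alpha}d}\omega(x)$; the residual $\ref{a:alpha}E\vol(Y)e^{d}(e^{-\ref{k:alpha}d}-e^{-2\ref{k:alpha}d})\omega(x)$ then absorbs the extra $e^{-\ref{k:alpha}d}\omega(x)$ once $d\geq B_\omega$ is large enough. The paper instead avoids the issue entirely by bounding the large-sheet sum in terms of $\omega(hx)$ rather than $\omega(x)$, so that it and the empty-$I_Y(hx)$ case sit under a single $\omega(hx)$-average with no leftover term.
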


\begin{proof}
Since $Y$ is fixed throughout the argument, 
we drop it from the index in the notation, e.g., we will denote $f_{Y}$ by $f$ etc.

Let $d\geq 0$ and let $h=a_du_r$ for some $r\in[-1,2]$. 
Let $x\in X$. First, let us assume that there exists some $w\in I(hx)$ with 
\[
\|w\|<\ref{a:alpha}^{-2}e^{-\ref{a:alpha}d}\omega(hx)=:\Upsilon.
\]
This in particular implies that both $I(hx)$ and $I(x)$ are non-empty. 
Hence, 
\be\label{eq: MF-periodic-1}
\begin{aligned}
f(hx)&=\sum_{w\in \margI(hx)}\|w\|^{-\alpha}=\sum_{\|w\|< \Upsilon}\|w\|^{-\alpha}+\sum_{\|w\|\geq \Upsilon}\|w\|^{-\alpha}\\
&\leq \sum_{w\in I(x)}\|\Ad(h)w\|^{-\alpha}+\ref{a:alpha}^{2\alpha}e^{\ref{a:alpha}\alpha d}\Bigl(1+\# \margI(hx)\Bigr)\cdot\omega(hx)^\alpha\\
&\leq \sum_{w\in I(x)}\|\Ad(h)w\|^{-\alpha}+\ref{a:alpha}e^{d}\Bigl(1+\# \margI(hx)\Bigr)\cdot\omega(hx),
\end{aligned}
\ee
in the last inequality, we used $0<\alpha\leq\min\{1/2,1/\ref{a:alpha}\}$ and $\omega(\cdot)\geq 1$.

Note also that if $\|w\|\geq \Upsilon$ for all $w\in \margI(hx)$ 
(which in view of the choice of $\ref{a:alpha}$ includes the case $I(x)=\emptyset$) 
or if $\margI(hx)=\emptyset$, then
\be\label{eq: MF-periodic-2}
f(hx)\leq \ref{a:alpha}e^{d}\Bigl(1+\# \margI(hx)\Bigr)\cdot\omega(hx).   
\ee

Averaging~\eqref{eq: MF-periodic-1} and~\eqref{eq: MF-periodic-2} over $[-1,2]$ and using~\eqref{eq: number of sheets}, we conclude that 
\begin{multline*}
\int f(hx)\diff\!\sigma_d(h)\leq 
\sum_{w\in I(x)}\int \|h w\|^{-\alpha}\diff\!\sigma_d(h)\quad+\\ \ref{a:alpha}e^{d}E\vol(Y)\cdot\int \omega(hx)\diff\!\sigma_d(h);
\end{multline*}
we replace the summation on the right by $0$ if $I(x)=\emptyset$. 

Thus by Lemma~\ref{lem: linear algebra App} and Proposition~\ref{prop: average of inj}, we conclude that  
\begin{multline*}
\int f(hx)\diff\!\sigma_d(h)\leq Ce^{-\dm\alpha d}\cdot \sum_{w\in I(x)}\|w\|^{-\alpha}\quad+\\
\ref{a:alpha}e^{d}E\vol(Y)\cdot (e^{-\ref{k:alpha}d}\omega(x)+\bar Be^{\ref{a:alpha}d})
\end{multline*}
again, the summation on the right by $0$ if $I(x)=\emptyset$. Thus  
\[
\int f(hx)\diff\!\sigma_d(h)\leq 
Ce^{-\dm \alpha d}f(x)+\ref{a:alpha}e^dE\vol(Y)\cdot (e^{-\ref{k:alpha}d}\omega(x)+\bar Be^{\ref{a:alpha}d}).
\]

The proof is complete.
\end{proof}

\begin{lemma}\label{lem: Margulis func periodic}
Let $C$, $\ref{a:alpha}$, and $\ref{k:alpha}$ be as in Proposition~\ref{prop: average of inj}, and let 
\[
\ell= \lceil 10(\ref{a:alpha}+1)/{\ref{k:alpha}}\rceil.
\]
There is an absolute constant $T_0$ so that the following holds. 
Let $T\geq T_0$ and define 
\[
d_i=\frac{\log T}{2^i\ell}
\]
for all $i=1,\ldots, k$ where $k$ is the largest integer so that $d_k\geq \max\Bigl\{B_\omega,\frac{\log(4C)}{\ref{k:alpha}}\Bigr\}$ --- note that 
$\frac{1}{2}\log\log T\leq k\leq 2\log \log T$. Then 
\begin{multline*}
\int f_{Y}(hx)\diff\!\sigma_{d_1}^{(\ell)}\conv\cdots\conv\sigma_{d_k}^{(\ell)}(h)\leq \\
(\log T)^{D'_0}T^{-\dm \alpha} f(x)+B'\vol(Y)\Bigl(T^{-\ref{k:alpha1}}\omega(x)+1\Bigr)
\end{multline*}
where $D'_0$ and $\constk\label{k:alpha1}$ depend on dimension and $B'$ depends on $X$.  
\end{lemma}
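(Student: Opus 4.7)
The strategy is to iterate the single-step drift inequality of Lemma~\ref{lem: Margulis func periodic 1} (for $f$) together with the non-divergence estimate of Proposition~\ref{prop: average of inj} (for $\omega$), first $\ell$ times at each fixed scale $d_i$, and then telescope across $i=1,\ldots,k$. Denoting $P_d\phi(x) = \int\phi(hx)\,\diff\!\sigma_d(h)$, Lemma~\ref{lem: Margulis func periodic 1} reads $P_d f \leq \rho_d f + \beta_d\omega + \gamma_d$ with $\rho_d = Ce^{-\alpha\dm d}$, $\beta_d = \ref{a:alpha}E\vol(Y)e^{(1-\ref{k:alpha})d}$, $\gamma_d = \bar B\ref{a:alpha}E\vol(Y)e^{(1+\ref{a:alpha})d}$, while Proposition~\ref{prop: average of inj} gives $P_d\omega \leq \mu_d\omega + \bar Be^{\ref{a:alpha}d}$ with $\mu_d = e^{-\ref{k:alpha}d}$. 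The lower bound $d_k \geq \log(4C)/\ref{k:alpha}$ and the standing inequality $\ref{k:alpha}\leq\alpha\dm/2$ together ensure that $\rho_{d_i},\mu_{d_i}\leq 1/2$ at every scale, so each is a genuine contraction.

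First I would analyze a single-scale iterate. Expanding $P_d^\ell f$ using linearity and monotonicity of $P_d$, together with the bound $P_d^j\omega \leq \mu_d^j\omega + 2\bar Be^{\ref{a:alpha}d}$, yields
\[
P_d^\ell f \;\leq\; \rho_d^\ell f \;+\; C_\ell\,\beta_d\,\max(\rho_d,\mu_d)^{\ell-1}\,\omega \;+\; C_\ell\,\vol(Y)\,e^{(1+\ref{a:alpha})d},
\]
for an absolute constant $C_\ell$ depending only on $\ell$. The specific choice $\ell = \lceil 10(\ref{a:alpha}+1)/\ref{k:alpha}\rceil$ forces $\ell\ref{k:alpha} \geq 10(\ref{a:alpha}+1)$, making the $\omega$-coefficient bounded by $C_\ell\,\ref{a:alpha}E\vol(Y)\,e^{-(10\ref{a:alpha}+9)d}$, which is a sharp contraction at every admissible scale $d\geq d_k$.

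Next I would telescope across scales. Setting $\Psi_i = P_{d_i}^\ell\cdots P_{d_1}^\ell$, an induction on $i$ maintains an estimate of the form $\Psi_i f \leq a_i f + b_i\vol(Y)\omega + c_i\vol(Y)$. The $f$-coefficient is $a_i = \prod_{j\leq i}\rho_{d_j}^\ell$; since $\sum_{j=1}^k d_j = (\log T/\ell)(1-2^{-k})$, $k\leq 2\log\log T$, and $d_k = O(1)$, we obtain $a_k \leq C^{k\ell}e^{-\alpha\dm\log T}e^{\alpha\dm\ell d_k} \leq (\log T)^{D_0'}T^{-\alpha\dm}$. The $\omega$-coefficient obeys the recursion $b_i \leq \mu_{d_i}^\ell b_{i-1} + C_\ell\ref{a:alpha}E\,a_{i-1}\,e^{-(10\ref{a:alpha}+9)d_i}$, and because each new contribution at level $i$ is further contracted by the product $\prod_{j>i}\mu_{d_j}^\ell$ and, crucially, by the final contraction $\mu_{d_k}^\ell\leq e^{-\ell\ref{k:alpha}d_k}$ at the bounded scale $d_k$, a straightforward geometric-sum argument produces $b_k \leq T^{-\ref{k:alpha1}}$ for some absolute $\ref{k:alpha1}>0$.

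The main obstacle, and the step requiring the most care, is controlling the constant $c_k$: at each level the new per-step contribution is of order $\vol(Y)\,e^{(1+\ref{a:alpha})d_i}$, which is polynomially large in $T$ at the top scales (for instance $\gamma_{d_1}$ can be as large as $\vol(Y)\,T^{(1+\ref{a:alpha})/(2\ell)}$). However, since the $d_j$ halve at every step, the cumulative contraction $\prod_{j>i}\mu_{d_j}^\ell$ beats the growth $e^{(1+\ref{a:alpha})d_i}$ with exponential room: the relevant exponent is $-\ell\ref{k:alpha}\sum_{j>i}d_j + (1+\ref{a:alpha})d_i$, and the choice $\ell\ref{k:alpha}\geq 10(\ref{a:alpha}+1)$ keeps it comfortably negative even as $i\to k$. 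Summing the resulting bounded geometric series yields $c_k = O(\vol(Y))$, which combined with the two previous estimates produces the bound claimed in the lemma, with $B'$ absorbing the absolute constants that arise from $C$, $\ref{a:alpha}$, $E$, $\bar B$, $C_\ell$, and $d_k = O(1)$.
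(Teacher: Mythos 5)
Your proposal is correct and follows essentially the same approach as the paper: iterate the single-scale drift inequality $\ell$ times at each dyadic scale $d_i$, telescope across scales, and use the choice of $\ell$ together with the halving of the $d_i$'s to ensure the cumulative contraction from the tail scales dominates the polynomially large constant terms generated at the top scales. One small imprecision: the exponent $-\ell\ref{k:alpha}\sum_{j>i}d_j+(1+\ref{a:alpha})d_i$ is not negative when $i=k$ (the tail sum vanishes), but as you note this is harmless since $d_k=O(1)$, and your recursion $\Psi_i f\leq a_i f+b_i\vol(Y)\omega+c_i\vol(Y)$ is just a cleaner bookkeeping of the paper's hands-on expansion of $\Xi_k$ and the tail sums $\Theta_i$.
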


\begin{proof}
Again since $Y$ is fixed throughout the argument, 
we drop it from the index in the notation, e.g., we will denote $f_{Y}$ by $f$ etc. 

Let us make some observations before starting the proof. 
Since $d_i\geq \frac{\log(4C)}{\ref{k:alpha}}$ for all $i$, and $\ref{k:alpha}\leq \alpha\dm/2$, the following holds  
\be\label{eq: C-e-alpha-m vs. e k:alpha}
Ce^{-\alpha \dm d_i}\leq e^{-\ref{k:alpha}d_i}\leq 1/4
\ee
Moreover, we have the following two estimates: 
\be\label{eq: Di and di}
5\sum_{j=i+1}^{k} d_{j}\geq 5\ell^{-1}\times 2^{-i-1}\log T\geq (2^i\ell)^{-1}\cdot \log T=d_{i}
\ee

By Lemma~\ref{lem: Margulis func periodic 1}, for all $d\geq B_\omega$, we have 
\begin{multline}\label{eq: main MF periodic app}
\int f(hx)\diff\!\sigma_d(h)\leq \\
Ce^{-\alpha\dm d}f(x)+\ref{a:alpha}Ee^d\vol(Y)\cdot \Bigl(e^{-\ref{k:alpha}d}\omega(x)+\bar Be^{\ref{a:alpha} d}\Bigr).
\end{multline}

Let $\lambda=\ref{a:alpha} E\bar B$. Iterating~\eqref{eq: main MF periodic app}, $\ell$-times, we conclude that
\begin{multline*}
\int f(h_k\cdots h_1x)\diff\!\sigma_{d_1}^{(\ell)}(h_1)\cdots\diff\!\sigma_{d_k}^{(\ell)}(h_k)\leq \\
C^\ell e^{-\alpha\dm \ell d_k}\int\! f(h_{k-1}\cdots h_1x)\diff\!\sigma_{d_1}^{(\ell)}(h_1)\cdots\diff\!\sigma_{d_{k-1}}^{(\ell)}(h_{k-1})\quad +\\
\ref{a:alpha} Ee^{d_k}\vol(Y)(\Xi_k+2\bar Be^{\ref{a:alpha} d_k})
\end{multline*}
we used $Ce^{-\alpha \dm d_k}\leq e^{-\ref{k:alpha}d_k}\leq \frac14$, see~\eqref{eq: C-e-alpha-m vs. e k:alpha}, 
to bound the $\ell$-terms geometric sum by $2\bar Be^{\ref{a:alpha} d_k}$, and 
\[
\Xi_k=\sum_{j=0}^{\ell-1} e^{-\ref{k:alpha} d_k(\ell-j)}\!\!\int\omega(h_kh_{k-1}\cdot\cdot h_1x)\diff\!\sigma_{d_1}^{(\ell)}(h_1)\cdot\cdot\diff\!\sigma_{d_{k-1}}^{(\ell)}(h_{k-1})\diff\!\sigma_{d_k}^{(j)}(h_k),
\]
again we used $Ce^{-\alpha \dm d_k}\leq e^{-\ref{k:alpha}d_k}$, in the definition of $\Xi_k$. 

Note also that in view of the definition of $\lambda$, we have 
\[
\ref{a:alpha} Ee^{d_k}\vol(Y)(\Xi_k+2\bar Be^{\ref{a:alpha} d_k})\leq \lambda \vol(Y) e^{(1+\ref{a:alpha})d_k}(\Xi_k+2),
\]
therefore, we conclude 
\begin{multline}\label{eq: main estimate MF periodic diff di}
\int f(h_k\cdots h_1x)\diff\!\sigma_{d_1}^{(\ell)}(h_1)\cdots\diff\!\sigma_{d_k}^{(\ell)}(h_k)\leq \\
C^\ell e^{-\alpha\dm \ell d_k}\int\! f(h_{k-1}\cdots h_1x)\diff\!\sigma_{d_1}^{(\ell)}(h_1)\cdots\diff\!\sigma_{d_{k-1}}^{(\ell)}(h_{k-1})\quad +\\
\lambda \vol(Y) e^{(1+\ref{a:alpha})d_k}(\Xi_k+2).
\end{multline}

We will apply Proposition~\ref{prop: average of inj}, to bound $\Xi_k$ from above.  
Let us begin by applying Proposition~\ref{prop: average of inj}, $\ell$-times with $d_k$, then 
\[
\Xi_k\leq e^{-\ref{k:alpha} \ell d_k}\int\omega(h_{k-1}\cdot\cdot h_1x)\diff\!\sigma_{d_1}^{(\ell)}(h_1)\cdot\cdot\diff\!\sigma_{d_{k-1}}^{(\ell)}(h_{k-1})+\lambda e^{\ref{a:alpha} d_k}
\]
where we used $e^{-\ref{k:alpha} d_k}\leq 1/4$  and $\lambda=\ref{a:alpha} E\bar B \geq 2\bar B$ to estimate the $\ell$-terms geometric sum. 

The goal now is to inductively apply Proposition~\ref{prop: average of inj}, $\ell$ times with $d_i$ for all $1\leq i\leq k-1$, in order to simplify the above estimate. 
Applying Proposition~\ref{prop: average of inj}, $\ell$-times with $d_{k-1}$, we obtain from the above that
\begin{multline*}
\Xi_k\leq e^{-\ref{k:alpha} \ell(d_k+d_{k-1})}\int\omega(h_{k-2}\cdot\cdot h_1x)\diff\!\sigma_{d_1}^{(\ell)}(h_1)\cdot\cdot\diff\!\sigma_{d_{k-2}}^{(\ell)}(h_{k-2})\quad +\\ 
e^{-\ref{k:alpha}\ell d_k}\cdot (\lambda e^{\ref{a:alpha} d_{k-1}})+\lambda e^{\ref{a:alpha} d_k}.
\end{multline*}
Put $\Theta_k=0$, and for every $0\leq i<k$, let $\Theta_{i}=\sum_{j=i+1}^{k}d_{j}$. Continuing the above inequalities inductively, we conclude 
\begin{align*}
\Xi_k&\leq e^{-\ref{k:alpha} \ell\Theta_0}\omega(x)+\lambda(e^{\ref{a:alpha} d_k}+\sum_{i=1}^{k-1} e^{-\ref{k:alpha} \ell \Theta_{i}}e^{\ref{a:alpha} d_{i}})\\
&\leq e^{-\ref{k:alpha} \ell\Theta_0}\omega(x)+\lambda(e^{\ref{a:alpha} d_k}+\sum_{i=1}^{k-1} e^{-d_i})
\leq e^{-\ref{k:alpha} \ell\Theta_0}\omega(x)+\lambda(e^{\ref{a:alpha} d_k}+M)
\end{align*}
where we used $\ref{k:alpha} \ell \Theta_i\geq \frac{\ref{k:alpha} \ell}5\cdot (5\Theta_i)\geq (\ref{a:alpha}+2)d_i$, see~\eqref{eq: Di and di} and the definition of $\ell$, in the second to last inequality and 
wrote $\sum e^{-d_i}\leq M$ in the last inequality.  

Iterating~\eqref{eq: main estimate MF periodic diff di} and using the above analysis, we conclude 
\begin{multline*}
\int f(h_k\cdots h_1x)\diff\!\sigma_{d_1}^{(\ell)}(h_1)\cdots\diff\!\sigma_{d_k}^{(\ell)}(h_k)\leq\\
C^{\ell k}e^{-\alpha\dm \ell\Theta_0} f(x)+\lambda\vol(Y)\sum_{i=1}^k e^{-\ref{k:alpha} \ell \Theta_i}e^{(\ref{a:alpha}+1)d_i}\Bigl(\Xi_i+ 2\Bigr)
\end{multline*}
where for every $1\leq i\leq k$, we have 
\[
\Xi_i=\sum_{j=0}^{\ell-1} e^{-\ref{k:alpha} d_i(\ell-j)}\!\!\int\omega(h_ih_{i-1}\cdot\cdot h_1x)\diff\!\sigma_{d_1}^{(\ell)}(h_1)\cdot\cdot\diff\!\sigma_{d_{i-1}}^{(\ell)}(h_{i-1})\diff\!\sigma_{d_i}^{(j)}(h_i).
\]
Arguing as above, we have 
\[
\Xi_i\leq e^{-\ref{k:alpha} \ell(\sum_{j=1}^i d_j)}\omega(x)+\lambda(e^{\ref{a:alpha} d_i}+M).
\] 
Recall that $\Theta_i=\sum_{j=i+1}^k d_j$; therefore, we conclude that 
\begin{multline*}
\int f(h_k\cdots h_1x)\diff\!\sigma_{d_1}^{(\ell)}(h_1)\cdots\diff\!\sigma_{d_k}^{(\ell)}(h_k)\leq\\
C^{\ell k}e^{-\alpha\dm \ell\Theta_0} f(x)+ e^{-\ref{k:alpha} \ell\Theta_0}\lambda\vol(Y)\omega(x)\textstyle\sum_{i=1}^k e^{(\ref{a:alpha}+1)d_i} + \\
(M+2)\lambda^2\vol(Y)\sum_{i=1}^k e^{-\ref{k:alpha} \ell \Theta_i}e^{(2\ref{a:alpha}+1)d_i}
\end{multline*}
Recall again from~\eqref{eq: Di and di} and the definition of $\ell$ that $\ref{k:alpha} \ell \Theta_i\geq (2\ref{a:alpha}+2)d_i$. Hence, the last term above is $\leq B'\vol(Y)$ for an absolute constant $B'\geq \lambda$. Similarly, sine $\ell\sum d_i=\log T - O(1)$ 
second to last term is $\leq B'\vol(Y)T^{-\ref{k:alpha1}}$.  

Moreover, $\ell\sum d_i=\log T - O(1)$ where the implied constant is absolute, and $k\leq 2\log\log T$. Hence,
\[
C^{\ell k}e^{-\dm\alpha \ell(\sum_{i=1}^k d_i)}\leq (\log T)^{1+\star\log C}T^{-\dm \alpha }
\]  
so long as $T$ is large enough. The proof of the lemma is complete.  
\end{proof}

\begin{lemma}\label{lem: average over [0,1]}
Let the notation be as in Lemma~\ref{lem: Margulis func periodic}, in particular for every $T\geq T_0$ define 
$d_1,\ldots, d_k$ as in that lemma. Put $d(T)=\ell\sum d_i$, then 
\[
\int_0^1 f_{Y}(a_{d(T)}u_rx)\diff\!r\leq 
3(\log T)^{D'_0} T^{-\alpha\dm}f_{Y}(x) + B\vol(Y)\Bigl(T^{-\ref{k:alpha1}}\omega(x)+1\Bigr)
\]
where $B\geq 1$ depends on $X$.
\end{lemma}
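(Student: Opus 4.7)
The plan is to use the commutation relation $u_r a_d = a_d u_{e^{-d}r}$ to collapse the iterated random walk $h_k \cdots h_1$ appearing in Lemma~\ref{lem: Margulis func periodic} into a single expression of the form $a_{d(T)} u_R$, and then to isolate one distinguished coordinate of $R$ so as to produce the desired integral over $[0,1]$. Expanding each $h_i$ as an ordered product $g_{i,\ell} \cdots g_{i,1}$ with $g_{i,j} = a_{d_i} u_{r_{i,j}}$ and $r_{i,j} \in [-1,2]$, an induction using $(a_s u_t)(a_{s'} u_{t'}) = a_{s+s'} u_{e^{-s'}t + t'}$ produces
\begin{equation*}
h_k \cdots h_1 = a_{d(T)}\, u_R,
\end{equation*}
where $R$ is an affine combination of the $r_{i,j}$'s. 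Isolating the contribution of the rightmost variable, $r_* := r_{1,1}$, one has $R = r_* + c$ where $c$ is a sum whose coefficients are at most $e^{-(j-1)d_1}$ for the remaining $r_{1,j}$'s and $e^{-\ell(d_1+\cdots+d_{i-1})-(j-1)d_i}$ for the variables in the blocks $h_i$ with $i \geq 2$. Summing these geometric series gives $|c| \leq \tfrac{2e^{-d_1}}{1-e^{-d_1}} + \sum_{i \geq 2} \tfrac{2e^{-\ell(d_1+\cdots+d_{i-1})}}{1-e^{-d_i}}$, which is bounded by $1/2$ provided $T_0$ is chosen large enough so that $d_1 = (\log T)/(2\ell)$ is sufficiently large.

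Next, apply Fubini's theorem to the integral on the left-hand side of Lemma~\ref{lem: Margulis func periodic}, carrying out the $r_*$-integration innermost. For any fixed values of the remaining variables, $c$ is a constant with $|c| \leq 1/2$, and the substitution $s = r_* + c$ identifies $r_* \in [-1,2]$ with $s \in [-1+c, 2+c]$; since $|c| \leq 1/2$, this interval contains $[0,1]$. Because $f_Y \geq 0$ (being a sum of positive terms $\|w\|^{-\alpha}$, or else $\omega \geq 2$), we obtain the pointwise inequality
\begin{equation*}
\tfrac13 \int_{-1}^2 f_Y(a_{d(T)} u_{r_* + c} x)\, dr_* \;=\; \tfrac13 \int_{-1+c}^{2+c} f_Y(a_{d(T)} u_s x)\, ds \;\geq\; \tfrac13 \int_0^1 f_Y(a_{d(T)} u_s x)\, ds,
\end{equation*}
without any appeal to continuity or to Lipschitz regularity of $f_Y$ (the large Lipschitz constant of $f_Y$ is precisely what the remark preceding the lemma warns us against using).

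Integrating the previous display against the product probability measure on the remaining coordinates, which has total mass $1$, and then invoking Lemma~\ref{lem: Margulis func periodic} to bound the resulting integral, yields
\begin{equation*}
\tfrac13 \int_0^1 f_Y(a_{d(T)} u_s x)\, ds \;\leq\; (\log T)^{D'_0} T^{-\dm\alpha} f_Y(x) + B' \vol(Y)\bigl(T^{-\ref{k:alpha1}} \omega(x) + 1\bigr),
\end{equation*}
and multiplying through by $3$ gives the stated bound with $B = 3B'$. The only point requiring care is the bookkeeping behind the estimate $|c| \leq 1/2$, but since each correction term is geometrically damped in $e^{-d_1}$ and the contributions from blocks $i \geq 2$ are additionally damped by $e^{-\ell(d_1+\cdots+d_{i-1})}$, this is a routine computation once $T_0$ is fixed large enough; no essential obstacle arises.
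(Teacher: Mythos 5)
Your proof is correct and follows essentially the same route as the paper's: collapse the iterated product $h_k\cdots h_1$ via the commutation relation $a_s u_t a_{s'} u_{t'} = a_{s+s'}u_{t'+e^{-s'}t}$ into $a_{d(T)}u_{r_{1,1}+c(\hat r)}$, bound the correction $c(\hat r)$ by a small absolute constant, and use $f_Y\geq 0$ together with $[0,1]\subset[-1,2]+c(\hat r)$ to pass from the $[-1,2]$-average to the $[0,1]$-average. The only difference is cosmetic: the paper extracts one good $\hat r$ by pigeonhole after invoking Lemma~\ref{lem: Margulis func periodic}, whereas you observe that the lower bound $\tfrac13\int_0^1 f_Y$ is $\hat r$-independent and simply integrate it out — the two formulations are logically identical.
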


\begin{proof}
Again, since $Y$ is fixed throughout the argument, 
we drop it from the index in the notation, e.g., we will denote $f_{Y}$ by $f$ etc. 

By Lemma~\ref{lem: Margulis func periodic}, we have 
\begin{multline}\label{eq: use Margulis func periodic [0,1]}
\frac{1}{3^{\ell k}}\int_{-1}^2\cdots\int_{-1}^2 f(a_{d_k}u_{r_{k,\ell}}\cdots a_{d_k}u_{r_{k,1}}\cdots a_{d_1}u_{r_{1,1}}x)\diff\!r_{1,1}\cdots\diff\!r_{k,\ell} \leq\\
(\log T)^{D'_0}T^{-\alpha \dm} f(x)+B'\vol(Y)\Bigl(T^{-\ref{k:alpha1}}\omega(x)+1\Bigr)
\end{multline}

Now, for every $(r_{k,\ell},\ldots, r_{1,2}, r_{1,1})\in [-1,2]^{\ell k}$, we have 
\[
a_{d_k}u_{r_{k,\ell}}\cdots a_{d_k}u_{r_{k,1}}\cdots a_{d_1}u_{r_{1,1}}=a_{d(T)}u_{\varphi(\hat r)+r_{1,1}}
\] 
where $\hat r=(r_{k,\ell},\ldots, r_{1,2})$ and $|\varphi(\hat r)|\leq 0.2$. 

In view of~\eqref{eq: use Margulis func periodic [0,1]}, there is $\hat r= (r_{k,\ell},\ldots, r_{1,2})\in [-1,2]^{\ell k-1}$ so that 
\begin{multline}\label{eq: hat r and r1}
\frac{1}{3}\int_{-1+\varphi(\hat r)}^{2+\varphi(\hat r)} f(a_{d(T)}u_{r}x)\diff\!r\leq \\
(\log T)^{D'_0}T^{-\alpha\dm} f(x)+B'\vol(Y)\Bigl(T^{-\ref{k:alpha1}}\omega(x)+1\Bigr).
\end{multline}
Since $|\varphi(\hat r)|\leq 0.2$, we have $[0,1]\subset [-1,2]+\varphi(\hat r)$. 
Therefore,~\eqref{eq: hat r and r1} and the fact that $f\geq 0$ imply that
\[
\frac{1}{3}\int_{0}^1 f(a_{d(T)}u_{r}x)\diff\!x\leq 
(\log T)^{D'_0}T^{-\alpha\dm} f(x)+B'\vol(Y)\Bigl(T^{-\ref{k:alpha1}}\omega(x)+1\Bigr).
\]
The lemma follows with $B=3B'$.
\end{proof}

\begin{proof}[Proof of Proposition~\ref{prop: linearization translates}]
Let $R\geq 1$ be a parameter and assume that $\vol(Y)\leq R$.
Recall that for a periodic orbit $Y$, we put
\[
f_{Y}(x)=\begin{cases}\sum_{w\in I_Y(x)}\|w\|^{-\alpha}& I_Y(x)\neq \emptyset\\
\omega(x)&\text{otherwise}\end{cases}.
\]
Let $\psi(x_0)=\max\{\dist(x_0,Y)^{-\alpha},\omega(x_0)\}$. Then 
\be\label{eq: fYd and dist}
\psi(x_0)\ll f_{Y,d}(x_0)\ll \vol(Y)\psi(x_0),
\ee
where the implied constant depends only on $X$, see~\eqref{eq: number of sheets}. 

With the notation of Lemma~\ref{lem: Margulis func periodic}, let $T\geq T_0$ 
and $d_i=\frac{\log T}{2^i\ell}$ for $1\leq i\leq k$. Then
\be\label{eq: d(T) is almost T}
\log T-\bar b\leq d(T)\leq \log T
\ee
where $\bar b$ is absolute. 

Let $T_1\geq T_0$ be so that $(\log T)^{D'_0}T^{-\alpha\dm}$ is decreasing on $[T_1,\infty)$.
Let
\be\label{eq: choose T_2 periodic}
T_2=\inf\{ T\geq \max(T_1,\omega(x_0)^{{1}/{\ref{k:alpha1}}}): (\log T)^{D'_0}T^{-\alpha\dm}\leq d(x_0, Y)^{\alpha}\}.
\ee 
In other words, for all $T\geq T_2$, we have $T^{-\ref{k:alpha1}}\omega(x_0)\leq 1$ and
\[
(\log T)^{D'_0}T^{-\alpha\dm} d(x_0, Y)^{-\alpha}\leq 1.
\]
Furthermore, in view of~\eqref{eq: fYd and dist} and since $\vol(Y)\leq R$, for all $T\geq T_2$, 
\[
(\log T)^{D'_0}T^{-\alpha\dm } f_Y(x_0)\ll R(\log T)^{D'_0}T^{-\alpha\dm}\psi(x_0)\\
\]
In particular, using~\eqref{eq: fYd and dist} again, we have $(\log T)^{D'_0}T^{-1/3} f_Y(x_0)\ll R$. 

Altogether, we conclude that for all $T\geq T_2$, we have 
\be\label{eq: length of the flow in linearization}
3\log(T)^{D'_0}T^{-\alpha\dm}f_Y(x_0)+B\vol(Y)T^{-\ref{k:alpha1}}(\omega(x_0)+1)\leq B_2R
\ee
where $B_2$ is absolute. 

Let $T\geq T_2$, and let $d(T)=\ell\sum d_i$ where $d_i$'s are as above.
Using~\eqref{eq: length of the flow in linearization} and Lemma~\ref{lem: average over [0,1]}, 
\be\label{lem: use average over [0,1] lemma}
\int_0^1 f_{Y}(a_{d(T)}u_rx)\diff\!r\leq B_2 R.
\ee

Let $D\geq 10$. Then by~\eqref{lem: use average over [0,1] lemma} we have 
\[
|\{r\in[0,1]: f_{Y}(a_{d(T)}u_rx_0)>B_2R^{D}\}|\leq B_2R/B_2R^{D}\leq R^{-D+1}.
\]
In view of~\eqref{eq: fYd and dist}, there some $B_1$ (depending on $X$) so that 
$\dist_X(a_{s}u_rx_0, Y)\leq B_1^{-1}R^{-D/\alpha}$ implies $f_{Y}(a_{s}u_rx_0)>B_2R^{D}$ for all $s\geq 0$ and $r\in[0,1]$.
Therefore, we conclude from the above that 
\be\label{eq: control dist x0 and Y app B}
\Bigl|\Bigl\{r\in [0,1]: \dist_X(a_{d(T)}u_rx_0, Y)\leq (B_1R^{D/\alpha})^{-1}\Bigr\}\Bigr|\leq R^{-D+1}.
\ee

Let now $s\geq \log T_2$, then by~\eqref{eq: d(T) is almost T} there exists some $T\geq T_2$ so that 
\[
d(T)-2\bar b\leq s\leq d(T)+2\bar b
\] 
For every $s\geq \log T_2$, let $T_s$ be the minimum such $T$. 
Then~\eqref{eq: cont Ad app B} implies that is $\hat B\geq 1$ (absolute)
so that if $s\geq \log T_2$ and $r\in[0,1]$ are so that
\[
\dist_X(a_{s}u_rx_0, Y)\leq (\hat BR^{D/\alpha})^{-1},
\] 
then $\dist_X(a_{d(T_s)}u_rx_0, Y)\leq (B_1R^{D/\alpha})^{-1}$.
This and~\eqref{eq: control dist x0 and Y app B}, imply that
\be\label{eq: control dist x0 and Y app B s}
\Bigl|\Bigl\{r\in [0,1]: \dist_X(a_{s}u_rx_0, Y)\leq (\hat BR^{D/\alpha})^{-1}\Bigr\}\Bigr|\leq R^{-D+1}
\ee

Let $\ref{E:non-div-main}$ be as in Proposition~\ref{prop:Non-div-main}, increasing $T_1$ if necessary, we  
will assume $\log T_2\geq \dm_0|\log (\inj(x_0))|+\ref{E:non-div-main}$. 
Using Proposition~\ref{prop:Non-div-main}, thus,  
\be\label{eq: non div app B}
\Bigl|\Bigl\{r\in [0,1]:\inj(a_s\uvk x)< \eta\Bigr\}\Bigr|<\ref{E:non-div-main}\eta^{1/\dm_0}
\ee
for any $\eta>0$ and all $s\geq \log T_2$.

Altogether, from~\eqref{eq: control dist x0 and Y app B s} and~\eqref{eq: non div app B} it follows that 
for any $s\geq \log T_2$, 
\be\label{eq: fYd and inj app B}
\biggl|\biggl\{r\in[0,1]: \begin{array}{c}\inj(a_s\uvk x)< \eta\quad\text{ or }\\
\dist_X(a_{s}u_rx_0, Y)\leq (\hat BR^{D/\alpha})^{-1}\end{array}\biggr\}\biggr|\leq \ref{E:non-div-main}\eta^{1/\dm_0}+R^{-D+1}.
\ee

In view of~\cite[Thm.~5]{SanchezSeong}, the number of periodic $H$-orbits with volume $\leq R$ 
in $X$ is $\leq \hat E R^{\hat D}$ where $\hat D$ depends only on dimension and $\hat E$ depends on $X$. 
Let $D=\hat D+10$ and $\ref{c: linear trans}=\max\{\hat E, \hat B, \ref{E:non-div-main}\}$. Then~\eqref{eq: fYd and inj app B} implies 
\begin{multline}\label{eq: fYd and inj app B final}
\biggl|\biggl\{r\in[0,1]: \!\!\begin{array}{c}\inj(a_s\uvk x)< \eta\text{ or there exists $x$ with}\\
\vol(Hx)\leq R\text{ s.t. }\dist_X(a_{s}u_rx_0, x)\leq (\ref{c: linear trans}R^{D/\alpha})^{-1}\end{array}\!\!\biggr\}\biggr|\\ 
\leq \ref{c: linear trans}(\eta^{1/\dm_0}+R^{-1}).
\end{multline} 

We now show that ~\eqref{eq: fYd and inj app B final} implies the proposition. Suppose 
\[
\dist_X(x_0,x)\geq S^{-\dm}(\log S)^{D'_0}
\]
for every $x$ with $\vol(Hx)\leq R$. Then by~\eqref{eq: choose T_2 periodic}, we have 
\[
T_2\leq \max\{S, \omega(x_0)^{1/\ref{k:alpha1}}, T_1\}.
\]
Therefore, if we let $D_0=(\max\{D'_0, \hat D+10\})/\alpha$ 
and set $s_0=\ref{E:non-div-main}+\log T_1$, then Proposition~\ref{prop: linearization translates} 
follows from~\eqref{eq: fYd and inj app B final}. 
\end{proof}

\subsection{Proof of Proposition~\ref{prop: closing lemma}}\label{sec: proof of closing}
In what follows all the implied multiplicative constants depend only on~$X$.

We begin by recalling some statements and lemmas which will be used in the proof.  
Let $v_H$ be a unit vector on the line $\wedge^3\hfrak\subset \wedge^3\gfrak$.

\begin{lemma}[Cf.~\cite{LM-PolyDensity}, Lemma 6.3]\label{lem:almost-inv}
There exist $\consta\label{a:Eq-proj}$, $\consta\label{a:Eq-proj-2}$, and $\constE\label{E:Eq-proj-mul}$
so that the following holds. Let $\gamma_1,\ldots, \gamma_n\in\Gamma$, and let 
\[
\delta\leq  \ref{E:Eq-proj-mul}^{-1}\Big(\max\{\|\gamma_i^{\pm1}\|: 1\leq i\leq n\}\Big)^{-\ref{a:Eq-proj}}.
\]
Suppose there exists some $g\in G$ so that $\gamma_i g^{-1}v_H=\epsilon_ig^{-1}v_H$ for $i=1,2$ 
where $\|\epsilon_i-I\|\leq \delta$. 
Then, there is some $g'\in G$ such that 
\[
\|g'-g^{-1}\|\leq \ref{E:Eq-proj-mul}  \|g\|^{\ref{a:Eq-proj-2}}\delta \Big(\max\{\|\gamma_i^{\pm1}\|: 1\leq i\leq n\}\Big)^{\ref{a:Eq-proj-2}}
\] 
and $\gamma_ig'v_H=g'v_H$ for $i=1,\ldots, n$.
\end{lemma}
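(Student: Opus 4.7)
Write $h:=g^{-1}$ and set $\tilde\gamma_i := h^{-1}\gamma_i h$ for $i=1,\ldots,n$. The approximate-stabiliser hypothesis for $i=1,2$ reads $\tilde\gamma_i v_H = h^{-1}\epsilon_i h\,v_H$, so
\[
\|\tilde\gamma_i v_H - v_H\|\ \leq\ C\|g\|^{O(1)}\delta,\qquad i=1,2.
\]
After the substitution $g' = h\exp(X)$ with $X\in\gfrak$ small, the sought conclusion $\gamma_i g'v_H = g'v_H$ is equivalent to $\exp(-X)\tilde\gamma_i\exp(X)\in N_G(H)=\operatorname{Stab}_G(v_H)$, i.e.\ to $\tilde\gamma_i\exp(X)v_H = \exp(X)v_H$.

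The core step is an effective implicit function theorem for $i=1,2$. Pick a complement $\mathfrak{n}$ of $\mathfrak{m}:=\operatorname{Lie}N_G(H)$ in $\gfrak$; the map $X\mapsto\exp(X)v_H$ is a local diffeomorphism $(\mathfrak{n},0)\to(Gv_H,v_H)$ with uniform estimates, so the smallness of $\tilde\gamma_i v_H - v_H$ for $i=1,2$ yields a unique decomposition
\[
\tilde\gamma_i\ =\ \exp(\xi_i)\,m_i,\qquad \xi_i\in\mathfrak{n},\ \|\xi_i\|\ll\|g\|^{O(1)}\delta,\ m_i\in N_G(H),\ \|m_i\|\ll\|g\|^{O(1)}\|\gamma_i\|.
\]
The target condition then reads $\Phi(X)=0$, where $\Phi:\mathfrak{n}\to\mathfrak{n}^2$ extracts the $\mathfrak{n}$-component of $\log\bigl(\exp(-X)\tilde\gamma_i\exp(X)\bigr)$ modulo $\mathfrak{m}$. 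A BCH expansion gives $\Phi(0)=(\xi_1,\xi_2)$ and Fr\'echet derivative at $0$
\[
L\,X\ =\ \bigl(\pi_\mathfrak{n}((\operatorname{Ad}(m_i)-I)X)\bigr)_{i=1,2}:\ \mathfrak{n}\longrightarrow\mathfrak{n}^2,
\]
with quadratically small higher-order remainder. Provided $L$ admits a right inverse bounded polynomially in $\max_i\|m_i\|\leq \|g\|^{O(1)}\max_i\|\gamma_i\|^{O(1)}$, a quantitative Newton iteration starting at $X=0$ produces $X^{*}$ with $\Phi(X^{*})=0$ and $\|X^{*}\|\ll \|g\|^{O(1)}\max_i\|\gamma_i\|^{O(1)}\delta$. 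Set $g':= h\exp(X^{*})$; then $\gamma_1,\gamma_2\in g'N_G(H)(g')^{-1}$ and $\|g'-g^{-1}\|$ satisfies the stated bound.

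To promote this to $\gamma_i g'v_H = g'v_H$ for all $i=1,\ldots,n$, I use that $\Gamma\cap g'N_G(H)(g')^{-1}$ is a discrete subgroup of $G$. In the applications of Lemma~\ref{lem:almost-inv} (in particular, in the setting of Proposition~\ref{prop: closing lemma}), each $\gamma_i$ is known a priori to lie in the $\Gamma$-stabiliser of a periodic $H$-orbit near $g\Gamma$, so that each $\gamma_i g^{-1}v_H$ differs from $g^{-1}v_H$ by at most a controlled amount. The polynomial bound imposed on $\delta$ in $\max_j\|\gamma_j^{\pm 1}\|$ then forces the distance from $\gamma_i g'v_H$ to $g'v_H$ to lie strictly below the smallest non-zero motion of $g'v_H$ under $\Gamma\cap g'N_G(H)(g')^{-1}$, and discreteness upgrades every $\gamma_i$ to an exact stabiliser.

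The hard part will be the quantitative right-invertibility of $L$: one needs both $(\xi_1,\xi_2)\in\operatorname{Im}(L)$ and a polynomial-in-$\max_i\|m_i\|$ lower bound on the smallest positive singular value of $L$. Geometrically, this is a quantitative transversality statement at $v_H$ between the orbit $Gv_H\subset V$ and the intersection of fixed subspaces $V^{\tilde\gamma_1}\cap V^{\tilde\gamma_2}$. Since the entries of $L$ are polynomial functions of the matrix entries of $m_1,m_2$, an effective Lojasiewicz-type inequality (or an explicit determinantal estimate on a well-chosen minor of $L$ in bases adapted to the weight decomposition of $\mathfrak{n}$ under a maximal torus of $N_G(H)$) yields $\sigma_{\min}(L)\gg \max_i\|m_i\|^{-O(1)}$; this is exactly where the polynomial exponents $\ref{a:Eq-proj},\ref{a:Eq-proj-2}$ and the multiplicative constant $\ref{E:Eq-proj-mul}$ come from.
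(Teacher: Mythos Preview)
Your Newton-iteration scheme has a genuine gap. The linearization $L:\mathfrak n\to\mathfrak n^2$ is over-determined; a right inverse of $L$ would require $L$ to be surjective, which is impossible by dimension. What you actually need is that $\Phi(0)=(\xi_1,\xi_2)\in\operatorname{Im}(L)$, together with a bounded one-sided inverse on that image. You name the first requirement but prove nothing about it, and the claimed bound $\sigma_{\min}(L)\gg\max_i\|m_i\|^{-O(1)}$ is false in general: nothing rules out $m_1=m_2$ (then $\operatorname{Im}(L)$ lies on the diagonal of $\mathfrak n^2$, which need not contain $(\xi_1,\xi_2)$) or both $m_i$ near the identity (then $L\approx0$). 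An effective {\L}ojasiewicz inequality bounds nonzero singular values of a polynomial map away from zero---it does not place a given target vector in the image. Since establishing $(\xi_1,\xi_2)\in\operatorname{Im}(L)$ is essentially the existence statement you set out to prove, the iteration is circular.

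Your promotion from $i=1,2$ to all $i$ appeals to application-specific information (``each $\gamma_i$ is known a priori to lie in the $\Gamma$-stabiliser of a periodic $H$-orbit''), which is again circular: producing that periodic orbit is exactly what the lemma is for. Note also that ``for $i=1,2$'' in the hypothesis is almost certainly a typo for ``for $i=1,\dots,n$''; in the application in \S\ref{sec: proof of closing} the approximate-fixing condition is supplied for every $\gamma_i$, and without it the conclusion for $i>2$ is plainly false (take $\gamma_3$ arbitrary).

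The paper itself gives no proof, deferring to \cite[Lemma~6.3]{LM-PolyDensity} with the remark that the argument there goes through unchanged once one drops the assumption $|g|\ll1$ (at the cost of the factor $\|g\|^{\ref{a:Eq-proj-2}}$ in the conclusion). The point is to work directly in $V=\wedge^3\gfrak$ rather than through an over-determined Lie-algebra system: the orbit $Gv_H$ is Zariski closed since $N_G(H)$ is reductive, the common fixed subspace $W=\bigcap_i\ker(\gamma_i-I)$ is linear with complexity controlled by $\max_i\|\gamma_i^{\pm1}\|$, and a nearby point of $Gv_H\cap W$ is produced by effective estimates for real algebraic sets of bounded complexity, the smallness hypothesis on $\delta$ being calibrated precisely to force this intersection to be nonempty near $g^{-1}v_H$.
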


\begin{lemma}[Cf.~\cite{LM-PolyDensity}, Lemma 6.2]\label{lem:non-elementary}
There exist $\constE\label{E:non-el-1}$ and $\consta\label{a:non-el-2}$ depending on $\Gamma$, and $\consta\label{a:non-el-2'}$ (depending on the dimension)
so that the following holds.  
Let $\gamma_1,\gamma_2\in\Gamma$ be two non-commuting elements. 
If $g\in G$ is so that $\gamma_i g^{-1}v_H=g^{-1}v_H$ for $i=1,2$, then $Hg\Gamma$ is a closed orbit with 
\[
\vol(Hg\Gamma)\leq\ref{E:non-el-1}  \|g\|^{\ref{a:non-el-2'}}\Bigl(\max\{\|\gamma_1^{\pm1}\|,\|\gamma_2^{\pm1}\|\}\Bigr)^{\ref{a:non-el-2}}.
\] 
\end{lemma}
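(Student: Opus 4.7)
\subsection*{Proof proposal for Lemma~\ref{lem:non-elementary}}

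The plan is to follow the strategy of \cite[Lemma 6.2]{LM-PolyDensity}, modified for the broader class of groups $G$ considered here. The hypothesis $\gamma_i g^{-1} v_H = g^{-1} v_H$ translates, via $\Ad$, into $g\gamma_i g^{-1}$ fixing $v_H$, i.e.\ $g\gamma_i g^{-1}\in N_G(H)$, since $N_G(H)$ is precisely the stabilizer of the line $\R v_H$ in $G$ and the unit vector $v_H$ is fixed (not just up to sign) because $H$ is connected. As $H$ has finite index in $N_G(H)$ with index bounded in terms of $G$ only, there exist integers $1\le n_1,n_2\le [N_G(H):H]$ such that $g\gamma_i^{n_i}g^{-1}\in H$ for $i=1,2$; set $h_i=g\gamma_i^{n_i}g^{-1}\in H$.

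The first substantive point is to show that the subgroup $\Lambda=g^{-1}Hg\cap \Gamma$ is a lattice in $g^{-1}Hg$, which is equivalent to $Hg\Gamma$ being periodic. For this one shows the Zariski closure $\bar\Lambda\subset g^{-1}Hg$ is all of $g^{-1}Hg$. Since $\gamma_1$ and $\gamma_2$ do not commute and $N_G(H)/H$ is a finite group of order $O_G(1)$, after replacing them by bounded powers we may assume $h_1,h_2$ themselves do not commute in $H\simeq \PSL_2(\R)$ (up to isogeny); this uses that if $h_1^m$ and $h_2^m$ commute in $\PSL_2(\R)$ for some bounded $m$, then $h_1,h_2$ lie in a common one-parameter subgroup and hence commute. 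Then $\gamma_1^{n_1},\gamma_2^{n_2}\in\Lambda$ are non-commuting, and since every proper algebraic subgroup of $\PSL_2$ is (up to finite index) contained in a Borel, hence solvable with abelian unipotent radical, no proper algebraic subgroup of $g^{-1}Hg$ contains two non-commuting elements of infinite order. Thus $\bar\Lambda=g^{-1}Hg$. Arithmeticity of $\Gamma$ then implies, via the fact that $\bar\Lambda$ is defined over a number field and $\Lambda$ is commensurable with its arithmetic points, that $\Lambda$ is a lattice in $g^{-1}Hg$, i.e.\ $Hg\Gamma$ is a periodic $H$-orbit.

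For the quantitative bound on $\vol(Hg\Gamma)$, the idea is that the covolume is controlled by the shortest nontrivial element of the lattice $\Lambda_g:= H\cap g\Gamma g^{-1}$ in $H$, together with the presence of a non-commuting pair. Conjugating by $g$, the elements $h_i=g\gamma_i^{n_i}g^{-1}\in \Lambda_g$ satisfy
\[
\|h_i^{\pm1}\|\leq \|g\|^{2}\cdot \max_{j}\|\gamma_j^{\pm1}\|^{n_i}\ll \|g\|^{2}\bigl(\max_j\|\gamma_j^{\pm1}\|\bigr)^{O_G(1)}.
\]
Because $\Lambda_g$ is a lattice in $H\simeq\PSL_2(\R)$ containing two non-commuting elements of the above norm bound, the Margulis/Zassenhaus lemma (applied in $H$) produces a lower bound on the injectivity radius of $\Lambda_g$ that is polynomial in $\|g\|\cdot\max_j\|\gamma_j^{\pm1}\|$, from which a polynomial upper bound on $\vol(H/\Lambda_g)=\vol(Hg\Gamma)$ follows by standard estimates for hyperbolic surfaces (or, equivalently, by reduction theory for $\SL_2(\R)$). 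Combining these gives the claimed bound with $\ref{a:non-el-2'}$ depending only on the dimension and $\ref{a:non-el-2},\ref{E:non-el-1}$ absorbing the arithmetic data of $\Gamma$ (in particular $[N_G(H):H]$ and the Zassenhaus constant).

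The main obstacle is the passage from ``$\Lambda$ Zariski dense'' to ``$\Lambda$ a lattice with controlled covolume.'' Qualitatively this is immediate from arithmeticity, but obtaining the polynomial dependence in $\|g\|$ and $\max_i\|\gamma_i^{\pm1}\|$ requires the quantitative Zassenhaus/Margulis step above, and checking that the bounded powers $n_i$ used to land inside $H$ do not degrade the estimates. This is exactly the technical content of \cite[Lemma 6.2]{LM-PolyDensity} in the $\SL_2(\C)$ setting, and carries over with only notational changes because $H$ is always isogenous to $\SL_2(\R)$.
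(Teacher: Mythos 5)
Your overall strategy---interpret the hypothesis as $g\gamma_ig^{-1}\in N_G(H)$, show the Zariski closure of $\langle\gamma_1,\gamma_2\rangle$ forces $g^{-1}Hg$ to be defined over $\Q$ and hence $\Gamma\cap g^{-1}Hg$ to be a lattice, then bound the covolume polynomially---is in the right spirit of \cite[Lemma 6.2]{LM-PolyDensity}, which is all the paper itself does (it simply cites that lemma and notes the argument is unchanged when $|g|$ is not assumed bounded). However, two of your specific intermediate claims are wrong and leave genuine gaps.

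First, the assertion that ``no proper algebraic subgroup of $g^{-1}Hg$ contains two non-commuting elements of infinite order'' is false: a Borel subgroup $B=T\ltimes U$ of $\PSL_2(\R)$ is a proper algebraic subgroup, and any non-central element of $T$ together with any non-trivial element of $U$ is a non-commuting pair of infinite order. So you have not ruled out that the Zariski closure of $\langle\gamma_1,\gamma_2\rangle$ has identity component equal to a Borel of $g^{-1}Hg$. Excluding this case requires an arithmetic, not a purely group-theoretic, argument: the Zariski closure ${\bf L}$ of $\langle\gamma_1,\gamma_2\rangle\subset\Gamma$ is a $\Q$-group, and if ${\bf L}^\circ$ were a Borel $B$ of $g^{-1}Hg$ it would be a $\Q$-Borel with a $\Q$-split torus $T$; since $\Q$-characters take values in $\{\pm1\}$ on integral points, $\Gamma\cap B(\R)$ is contained in $B(\R)^{(1)}$, which is virtually abelian, so the identity component of $\overline{\langle\gamma_1,\gamma_2\rangle}$ would be abelian---a contradiction. (Your subsidiary claim that ``if $h_1^m,h_2^m$ commute in $\PSL_2(\R)$ for bounded $m$ then $h_1,h_2$ commute'' is also false---take $h_1$ unipotent and $h_2$ an element of order $2$ in $\PSL_2(\R)$ acting by inversion on $U$---but this step can be sidestepped by working directly with $g\gamma_ig^{-1}\in N_G(H)$ rather than first passing to powers landing in $H$.)

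Second, the volume bound via Margulis/Zassenhaus does not work: knowing that $\Lambda_g$ contains two non-commuting elements of norm at most $M$ does not give a lower bound on the injectivity radius of $H/\Lambda_g$ (other elements of the lattice $\Lambda_g$ may be much closer to the identity than $h_1,h_2$), and in any case a lower bound on the systole does not give an upper bound on the area of a hyperbolic surface or orbifold. The mechanism that actually works, both in \cite{LM-PolyDensity} and as recorded in this paper's Lemma~\ref{lem: volume and height}, is to bound the height $\height(\tilde\H_g)$ of the underlying $\Q$-group polynomially in $\|g\|$ and $\max_i\|\gamma_i^{\pm1}\|$, and then use the polynomial two-sided comparison between $\height$ and $\vol$ for periodic $H$-orbits.
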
 

The statements in~\cite[Lemma 6.2, and Lemma 6.3]{LM-PolyDensity} assumed 
$|g_2|\ll 1$. However, the arguments work without any changes and yield Lemmas~\ref{lem:almost-inv} and~\ref{lem:non-elementary}.

We also recall the following lemma which is a consequence of reduction theory. 
In this form, the lemma is a spacial case of~\cite[Lemma 2.8]{LMMS}. 

\begin{lemma}\label{lem: reduction theory}
There exist $D_2$ (depending on $\dm$) and $\constE\label{c: red th}$ (depending on $X$) 
so that the following holds for all $0<\eta<1$. 
Let $g\in G$ be so that $g\Gamma\in X_\eta$. Then there is some $\gamma\in\Gamma$ so that 
\be\label{eq: red theory pf closing}
\|g\gamma\|\leq \ref{c: red th}\eta^{-D_2}.
\ee
\end{lemma}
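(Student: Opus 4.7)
The statement is a quantitative version of the fact that the thick part $X_\eta$ is compact, with polynomial dependence on $\eta$. The plan is to go through the height function $\omega$ from Proposition~\ref{prop: average of inj} and reduction theory.

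First, I would establish the converse inequality to~\eqref{eq: alpha and inj}: namely, that there exist constants $D_2'$ (depending on $\dm$) and $C'$ (depending on $X$) with
\[
\omega(x) \leq C'\inj(x)^{-D_2'}\qquad\text{for all }x \in X.
\]
This is standard for the height functions constructed in~\cite{EM-RW}, since $\omega(x)$ is controlled by the reciprocal of the shortest nonzero vector in a certain $\Gamma$-invariant lattice (e.g.\ $\tilde\gfrak_\Z$ embedded via the adjoint action), and injectivity of $h \mapsto hx$ on $\boxG_{10N^2\inj(x)}$ directly translates—via the exponential map on the Lie algebra—into a lower bound of order $\inj(x)^{\star}$ on the length of such vectors. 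Applied to $x=g\Gamma \in X_\eta$, this yields $\omega(g\Gamma) \leq C'\eta^{-D_2'}$.

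Second, I would invoke reduction theory for the arithmetic lattice $\Gamma \subset G$: there is a finite union $\mathfrak{S}$ of Siegel sets and a finite set $F \subset G$ such that $G = F \cdot \mathfrak{S} \cdot \Gamma$. Choose $\gamma \in \Gamma$ and $\sfh \in F$ so that $g\gamma = \sfh s$ with $s \in \mathfrak{S}$. For any Siegel set, the norm of its elements is controlled polynomially by the value of the height function $\omega$ at the corresponding point of $X$—this is the content of, e.g., \cite[Lemma 2.8]{LMMS}, and boils down to the fact that on a Siegel set the norm is bounded by a fixed power of the $A$-component (in a Langlands decomposition $NAK$), while the $A$-component is in turn bounded by a fixed power of $\omega(s\Gamma) = \omega(g\Gamma)$. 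Since $\|\sfh\|$ is bounded by a constant depending only on $F$, this gives
\[
\|g\gamma\| \ll \omega(g\Gamma)^{D_2''} \leq (C')^{D_2''}\eta^{-D_2'D_2''},
\]
for some $D_2''$ depending on $\dm$. Setting $D_2 = D_2'D_2''$ and absorbing constants into $\ref{c: red th}$ yields the claimed bound.

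The main (minor) obstacle is pinning down the converse inequality $\omega(x) \leq C'\inj(x)^{-D_2'}$ with explicit constants depending only on $\dm$ (and not on the specific arithmetic lattice). One safely absolves oneself of this tracking by allowing $\ref{c: red th}$ to depend on $X$ (which the statement permits); the exponent $D_2$ is then governed purely by the reduction-theoretic polynomial bound on norms in a Siegel set and the comparison between $\omega$ and $\inj$, both of which are dimensional quantities.
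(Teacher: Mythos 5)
Your proof sketch is correct and follows exactly the route the paper intends: the paper itself gives no argument, only the remark that the lemma is a special case of~\cite[Lemma 2.8]{LMMS}, and that lemma is a reduction-theoretic statement of precisely the kind you unpack (Siegel sets, polynomial comparability of $\omega$, $\inj$, and the norm of a reduced representative). The only simplification worth noting is that the detour through the height function $\omega$ is not strictly necessary --- one can pass directly from $\inj(g\Gamma)\geq\eta$ to a lower bound of order $\eta^\star$ on $\min_{0\neq v\in\tilde\gfrak_\Z}\|\Ad(g)v\|$ and then control the $A$-component of the Siegel-set representative --- but going through $\omega$ is entirely equivalent here and equally valid.
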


Finally, we recall Proposition~\ref{prop: Non-div main}: for all positive $\vare$, every interval $J\subset[0,1]$, 
and every $x\in X$, we have 
\be\label{eq:cpct-return}
\Bigl|\Bigl\{r\in J:\inj(a_du_r x)< \vare^{\dm_0}\Bigr\}\Bigr|<\ref{E:non-div-main}\vare|J|, 
\ee
so long as $d\geq \dm_0|\log(|J|\inj(x))|+\ref{E:non-div-main}$.

For the rest of the argument, let  
\be\label{eq: t D2 and eta clsoing}
\rws\geq 100 D_2\dm_0|\log(\eta\,\inj(x_1))|+\ref{E:non-div-main}
\ee
Let $r_1\in[0,1]$ be so that $x_2=a_tu_{r_1}x_1\in X_\eta$. 
Write $x_2=g_2\Gamma$ where $|g_2|\ll \eta^{-D_2}$, see~\eqref{eq: red theory pf closing}.  

Also let $\dn$ be a constant which will be explicated later and will depend only on $\dm$. 

In the course of the proof, we will use $\dn_\cdot$ to denote constants which depend on $\dn$. 
The notation $\hdm_\cdot$ (and the previously used $\dm_\cdot$) will be used for constants 
which depend on $\dm$ but not on the choice of $\dn$ above.

Let $\hdm_0$ be a constant which depends on $\dm$ so that 
\be\label{eq: def dn0}
\|a_s\|\leq e^{\hdm_0s}\quad\text{ for all $s\geq 1$}.
\ee 

We will show that unless part~(2) in the proposition holds, we have the following: for every such $x_2$, there exists $J(x_2)\subset [0,1]$ with $|[0,1]\setminus J(x_2)|\leq 200\ref{E:non-div-main}\eta^{1/(2\dm_0)}$ so that for all $r\in J(x_2)$, we have: 
\begin{itemize}
\item[(a)] $a_{\dn t}u_rx_2\in X_\eta$,  
\item[(b)] the map $\sfh\mapsto \sfh a_{\dn t}u_rx_2$ is injective over $\coneH_{\rws}$, and 
\item[(c)] for all $z\in \coneH_\rws.a_{\dn t}u_rx_2$, we have $f_{\rws,\alpha}(z)\leq\nuni^{D\rws}$.
\end{itemize}
This will imply that part~(1) in the proposition holds as 
\[
a_{\dn t}u_ra_tu_{r'}x_1=a_{\dn+1 t}u_{r'+e^{-t}r}x_1.
\]

Assume contrary to the above claim that for some $x_2$ as above, there exists a subset $I'_{\rm bad}\subset [0,1]$ with  
$|I'_{\rm bad}|>200\ref{E:non-div-main}\eta^{1/(2\dm_0)}$ so that one of (a), (b), or (c) above fails. 
Then in view of~\eqref{eq:cpct-return} applied with $x_2$ and $\dn t$, there is a subset 
$I_{\rm bad}\subset [0,1]$ with $|I_{\rm bad}|\geq 100\ref{E:non-div-main}\eta^{1/(2\dm_0)}$ 
so that for all $r\in I_{\rm bad}$ we have $a_{\dn t}u_rx_2\in X_{\eta}$, but  
\begin{itemize}
\item either the map $\sfh\mapsto \sfh a_{\dn t}u_rx_2$ is not injective on $\coneH_{\rws}$,
\item or there exists $z\in \coneH_\rws.a_{\dn t}u_rx_2$ so that $f_{\rws,\alpha}(z)>\nuni^{D\rws}$.
\end{itemize}
We will show that this implies part~(2) in the proposition holds.

Let us first recall that $f_{\tau}:\coneH_\tau.y\to [1,\infty)$ is defined as follows 
\[
f_{\tau}(z)=\begin{cases} \sum_{0\neq w\in I_\tau(z)}\|w\|^{-\alpha} & \text{if $\margI_\tau(z)\neq\{0\}$}\\
\inj(z)^{-\alpha}&\text{otherwise}
\end{cases}.
\]
where $0<\alpha\leq 1$.

\subsection*{Finding lattice elements $\gamma_r$}
We introduce the shorthand notation 
\[
h_r:=a_{\dn t}u_r,\quad \text{for any $r\in[0,1]$}. 
\]
Let us first investigate the latter situation. That is: for $r\in I_{\rm bad}$ (recall that $h_rx_2\in X_{\eta}$) 
there exists some $z=\sfh_1h_rx_2\in \coneH_\rws.h_rx_2$, so that $f_{\rws,\alpha}(z)>\nuni^{D\rws}$. 
Since $h_rx_2\in X_{\eta}$, we have  
\be\label{eq:Ct-cusp}
\inj(\sfh h_rx_2)\gg \eta\nuni^{-\dm\rws}, \quad\text{for all $\sfh\in\coneH_\rws$}.
\ee
Using the definition of $f_{\rws,\alpha}$, thus, we conclude that 
if $\margI_\rws(z)=\{0\}$, then $f_{t,\alpha}(z)\ll \eta^{-1}\nuni^{\dm\rws}$. Since $t\geq 100D_2\dm_0|\log\eta|$, assuming $\dm_0\geq \dm+1$, $D\geq \dm+1$ and $t$ is large enough, we conclude that $\margI_\rws(z)\neq\{0\}$. 
Moreover, using~\eqref{eq:Ct-cusp}, we have $\#\margI_\rws(z)\ll \eta^{-\hdm_1}\nuni^{\hdm_1\rws}$, see~\cite[Lemma 6.4]{LM-PolyDensity} or the similar estimate~\cite[Lemma 8.1] {LMW22}.

Altogether, if $D\geq\dm+1+2\hdm_1$ and $t$ is large enough, there exists some $w\in I_\rws(z)$ with 
\[
0<\|w\|\leq \nuni^{(-D+\hdm_1+1)\rws}=:\nuni^{-D'\rws}.
\]

The above implies that for some $w\in \rfrak$ with $\|w\|\leq \nuni^{-D'\rws}$ 
and $\sfh_1\neq \sfh_2\in\coneH_\rws$, we have $\exp(w)\sfh_1h_rx_2=\sfh_2h_rx_2$.
Thus 
\be\label{eq:wh-sh}
\exp(w_r)h_r^{-1}\sfs_rh_rx_2=x_2
\ee 
where $\sfs_r=\sfh_2^{-1}\sfh_1$, $w_r=\Ad(h_r^{-1}\sfh_2^{-1})w$. 
In particular, $\|w_r\|\ll \nuni^{(-D'+\hdm_0\dn)\rws}$ where the implied constant depends only on $\dm$.
Assuming $t$ is large enough compared to the implied multiplicative constant,
\be\label{eq:wh-est}
0<\|w_r\|\leq \nuni^{(-D'+\hdm_0\dn+1)\rws}.
\ee
Recall that $x_2=g_2\Gamma$ where $|g_2|\ll \eta^{-D_2}$, thus,~\eqref{eq:wh-sh} implies 
\be\label{eq:gamma-h}
\exp(w_r)h_r^{-1}\sfs_rh_r=g_2\gamma_rg_2^{-1}
\ee
where $1\neq\sfs_r\in H$ with $\|\sfs_r\|\ll \nuni^{2\hdm_0\rws}$ and $e\neq\gamma_r\in \Gamma$.

Similarly, if for some $r\in I_{\rm bad}$, $\sfh\mapsto \sfh h_rx_2$ is not injective, then
\[
h_r^{-1}\sfs_r h_r=g_2\gamma_rg_2^{-1}\neq e.
\]
In this case we actually have $e\neq \gamma_r\in g_2^{-1}Hg_2$ --- we will not use this extra information in what follows.  

\subsection*{Some properties of the elements $\gamma_r$}
Recall that $\|g_2\|\ll \eta^{-D_2}$ and that $t\geq 100D_2\dm_0|\log\eta|$. Therefore, if we put $\dn_1=2\hdm_0(\dn+1)+1$, then 
\be\label{eq:size-gammah}
\|\gamma_r^{\pm1}\|\leq \nuni^{\dn_1\rws}
\ee
where we assumed $t$ is large compared to $\eta$ and used~\eqref{eq:gamma-h}.

We identify $\begin{pmatrix} a_1 & a_2 \\ a_3 & a_4 \end{pmatrix}\in\SL_2(\R)$ with its image in $H$, however, when we write $\|\;\|$ the norm is in $G$ but $|a_i|$ denotes the usual absolute value.  
With this notation, e.g., $h_r^{-1}\sfs_r h_r$ is represented by 
\[
u_{-r}\begin{pmatrix} a_1 & e^{-\dn t}a_2 \\ e^{\dn t}a_3 & a_4 \end{pmatrix}u_r
\]
where $|a_i|\leq 10\nuni^{t}$.

Let $\xi>0$ be so that $\|g\gamma g^{-1}-I\|\geq \xi\eta^{2D_2}$ for all $\gamma\in\Gamma\setminus \{1\}$ 
and $\|g\|\leq \ref{c: red th}\eta^{-D_2}$, see~\eqref{eq: red theory pf closing}.   
Then by~\eqref{eq:gamma-h}, we have 
\[
\biggl\|u_{-r}\begin{pmatrix} a_1 & e^{-\dn t}a_2 \\ e^{\dn t}a_3 & a_4 \end{pmatrix}u_r-I\biggr\|\geq \eta^{D_2'}
\]
for some $D'_2$ depending only on $\dm$ and $D_2$. This implies 
\be\label{eq: closing not unipotent}
\max\{e^{\dn t}|a_3|, |a_1-1|, |a_4-1|\}\geq \eta^{D_2'}.
\ee
Note also that if $e^{\dn t}|a_3|<\eta^{D_2'}$, then 
\[
|a_2a_3|\leq 10\eta^{D_2'} e^{(-\dn+1)t},
\] 
thus $|a_1a_4-1|\ll \eta^{\star}e^{(-\dn+1)t}$, and~\eqref{eq: closing not unipotent} implies $|a_1-a_4|\gg \eta^{D'_2}$. Altogether, 
\be\label{eq: a1-a4}
\max\{e^{\dn t}|a_3|, |a_1-a_4|\}\gg \eta^{D'_2}.
\ee
  
Since $|I_{\rm bad}|\geq 100\ref{E:non-div-main}\eta^{1/(2\dm_0)}$, there are two intervals 
$J,J'\subset [0,1]$ with $\dist(J,J')\geq \eta^{1/(2\dm_0)}$, $|J|, |J'|\geq \eta^{1/(2\dm_0)}$, and
\be\label{eq: Ji cap I bad has many points}
|J\cap I_{\rm bad}|\geq \eta^{1/\dm_0}\quad\text{and}\quad |J'\cap I_{\rm bad}|\geq \eta^{1/\dm_0}.
\ee 
Put $J_\eta=J\cap I_{\rm bad}$.

\subsection*{Claim:} There are $\gg e^{(\dn-2)t/2}$ distinct elements in $\{\gamma_r: r\in J_{\eta}\}$.

Fix $r\in J_{\eta}$ as above, and consider the set of $r' \in J_{\eta}$ so that and $\gamma_r=\gamma_{r'}$.
Then for each such $r'$, 
\begin{align*}
h_r^{-1}\sfs_r h_r&=\exp(-w_r)g_2\gamma_rg_2^{-1}=\exp(-w_r)\exp(w_{r'})h_{r'}^{-1}\sfs_{r'} h_{r'}\\
&=\exp(w_{rr'})h_{r'}^{-1}\sfs_{r'} h_{r'}
\end{align*}
where $w_{rr'}\in \gfrak$ and $\|w_{rr'}\|\ll \nuni^{(-D'+\hdm_0\dn)\rws}$. 

Set $\tau=e^{\dn t}(r'-r)$. Assuming $D'=\frac{D}{\dm+1}-\hdm_1-1$ is large enough, we conclude that
\begin{equation}\label{eq: u_tau equation}
  u_{\tau}\sfs_r u_{-\tau}=h_{r'}h_r^{-1}\,\sfs_r\, h_rh_{r'}^{-1}=\exp(\hat w_{rr'})\sfs_{r'},
\end{equation}
where $\|\hat w_{rr'}\|=\|\Ad(h_{r'})w_{rr'}\|\ll \nuni^{(-D'+\hdm_0\dn+\dm\dn)t}$. Finally, we compute 
\[
u_{\tau}\sfs_r u_{-\tau}=\begin{pmatrix} a_1+a_3\tau& a_2+(a_4-a_1)\tau-a_3\tau^2  \\  a_3& a_4-a_3\tau\end{pmatrix}.
\]

In view of~\eqref{eq: a1-a4}, for every $r\in J_{\eta}$ the set of $r'\in J_{\eta}$ so that 
\begin{equation}\label{eq: define J_r}
  |a_2e^{-\dn t}+(a_4-a_1)(r'-r)-a_3e^{\dn t}(r'-r)^2|\leq 10^{4} e^{(-\dn+1)t}  
\end{equation}
has measure $\ll \eta^{-D_2'/2}e^{(-\dn+1)t/2}$ since at least one of the coefficients of this quadratic polynomial is of size $\gg\eta^{D_2'}$. 
Let $J_{\eta,r}$ be the set of $r'\in J_{\eta}$ for which \eqref{eq: define J_r} holds.

If $r'\in J_{\eta} \setminus J_{\eta,r}$,
then $|a_2+(a_4-a_1)\tau-a_3\tau^2|>10^{4}e^{t}$ (recall that $\tau=e^{\dn t}(r'-r)$), thus for all 
$r'\in J_{\eta} \setminus J_{\eta,r}$, we have 
\[
\|u_{\tau}\sfs_r u_{-\tau}\|> 10^4e^{t}> \|\exp(\hat w_{rr'})\sfs_{r'}\|,
\]
in contradiction to \eqref{eq: u_tau equation}.

In other words, for each $\gamma \in \Gamma$ the set of $r \in J_{\eta}$ for which $\gamma_r = \gamma$ has measure 
$\ll \eta^{-D_2'/2}e^{(-\dn+1)t/2}$ and so the set $\{\gamma_r : r \in J_{\eta}\}$ has at least 
$\gg \eta^{(D_2'+1)/2}e^{(\dn-1)t/2}\gg e^{(\dn-2)t/2}$ distinct elements so long as 
$t\geq 100\dm_0\max(D_2',D_2)|\log\eta|$, see~\eqref{eq: t D2 and eta clsoing}. This establishes the claim.

\subsection*{Zariski closure of the group generated by $\{\gamma_r : r \in I_{\rm bad}\}$} Let ${\bf L}$ denote the Zariski closure of $\langle \gamma_{r}: r\in I_{\rm bad}\rangle$. 

First note that by~\cite[Lemma 2.4]{LMMSW}, we have $[{\bf L}:{\bf L}^\circ]\ll 1$, where the implied constant depends only on the dimension. This and 
\[
\#\{\gamma_r : r \in I_{\rm bad}\}\gg e^{(\dn-2)t/2}
\] 
imply that ${{\bf L}^\circ}\neq \{e\}$. Moreover, from $[{\bf L}:{\bf L}^\circ]\ll 1$ we also conclude that there exists $\gamma_1,\ldots,\gamma_n\subset \{\gamma_r: r \in I_{\rm bad}\}$, where $n$ depends only the dimension, so that $\bf L$ equals the Zariski closure of $\langle \gamma_i: 1\leq i\leq n\rangle$. 

Recall now from~\eqref{eq:gamma-h} that $\exp(w_r)h_r^{-1}\sfs_rh_r=g_2\gamma_rg_2^{-1}$, thus 
\[
\gamma_r .g_2^{-1}v_H=\exp(\Ad(g_2^{-1})w_r).g_2^{-1}v_H.
\]
Moreover, since $\|w_r\|\leq  \nuni^{(-D'+\hdm_0\dn+1)\rws}$,  
\[
\|\Ad(g_2^{-1})w_r\|\ll \eta^{-2\dm D_1} \nuni^{(-D'+\hdm_0\dn+1)\rws}\leq e^{(-D'+\hdm_0\dn+2)t}
\]
for all $r\in I_{\rm bad}$. Recall that $\|\gamma_r^{\pm1}\|\leq \nuni^{\dn_1\rws}$.
If $D'=\frac{D}{\dm+1}-\hdm_1-1$ is large enough, we may apply Lemma~\ref{lem:almost-inv}, with $\{\gamma_1,\ldots,\gamma_n\}$, and conclude that there exists some $g_3\in G$ with 
\be\label{eq: g2 and g3 are close}
\begin{aligned}
\|g_2-g_3\|&\leq \ref{E:Eq-proj-mul}\eta^{-2\dm D_1\ref{a:Eq-proj-2}}\nuni^{(-D'+\hdm_0\dn+2+\ref{a:Eq-proj-2}\dn_1)\rws}\\
&\leq \nuni^{(-D'+\ref{a:Eq-proj-2}\dn_2)\rws},
\end{aligned} 
\ee
so that $\gamma_i .g_3^{-1}v_H=g_3^{-1}v_H$ for all $i$, where $\dn_3$ depends only on $\dm$ and we assumed $t$ is large.  

In view of the choice of $\{\gamma_i: 1\leq i\leq n\}$, this implies $g .g_3^{-1}v_H=g_3^{-1}v_H$ for all $g\in {\bf L}(\R)$. Hence,  
\be\label{eq: L is inside H}
{\bf L}(R)\subset g_3^{-1}N_G(H)g_3
\ee
We also note that $[N_G(H):H]\ll 1$ since $H\subset G$ is a maximal connected subgroup. 

\medskip

We now consider two possibilities for the elements $\{\gamma_r  : r \in I_{\rm bad}\}$. 

\subsection*{Case 1} $\bf L$ is commutative. 
Then ${\bf L}^\circ(\R)\subset g_3^{-1}Hg_3$ is commutative and 
\[
\#\{\gamma\in {\bf L}^\circ(\R): \|\gamma\|\leq e^{\dn_1t}\}\gg e^{(\dn-2) t/2}. 
\]
Since for every torus $T\subset G$, we have $\#(B_T(e,R)\cap \Gamma)\ll (\log R)^2$,
where the implied constant is absolute, ${\bf L}^\circ$ is unipotent and ${\bf L}\subset {\bf L}^\circ\cdot C_G$.   

We also note that since ${\bf L}^\circ$ is a one dimensional unipotent subgroup, 
\[
\#\{\gamma\in {\bf L}: \|\gamma\|\leq 100e^{(\dn-2) t/3}\}\ll e^{(\dn-2) t/3}.
\] 
Furthermore, there are $\gg e^{(\dn-2) t/2}$ distinct elements $\gamma_r$ with $r\in J_{\eta}$.
Thus  
\[
\#\{\gamma_r : \|\gamma_r\|>100e^{(\dn-2) t/3} \text{ and } r\in J_{\eta}\}\gg e^{(\dn-2) t/2}.
\]

For every $r\in I_{\rm bad}$, let
\[
\begin{pmatrix} a_{1,r} & a_{2,r} \\ a_{3,r} & a_{4,r} \end{pmatrix} \quad\text{where $|a_{j,r}|\leq 10\nuni^{t}$}
\] 
denote the element in $\SL_2(\R)$ corresponding to $\mathsf s_r\in H$.  

We will obtain an improvement of~\eqref{eq: closing not unipotent}.  
Let $\xi\eta^{2D_2}\leq \Upsilon\leq e^{(\dn-2) t/3}$ and assume that $\|g_2\gamma_rg_2^{-1}-I\|\geq \Upsilon$ --- by the definition of 
$\xi$, this holds with $\Upsilon=\xi\eta^{2D_2}$ for all $r\in I_{\rm bad}$ and as we have just seen this also holds for with 
$\Upsilon=e^{(\dn-2) t/3}$ for many choices of $r\in J_{\rm bad}$.
Then by~\eqref{eq:gamma-h}, we have 
\be\label{eq: gamma r to Sl2 with r}
\biggl\|u_{-r}\begin{pmatrix} a_{1,r} & e^{-\dn t}a_{2,r} \\ e^{\dn t}a_{3,r} & a_{4,r} \end{pmatrix}u_r-I\biggr\|\geq 10\Upsilon'=O(\Upsilon^{1/\hdm_0})
\ee
where we increase $\hdm_0$ in~\eqref{eq: def dn0} if necessary so that above holds. 

We claim 
\begin{equation}\label{eq: lower bound a(3,r)}
    |a_{3,r}|\geq \Upsilon' e^{-\dn t}.
\end{equation}

To see this, first note that by~\eqref{eq: gamma r to Sl2 with r} $\max\{e^{\dn t}|a_{3,r}|, |a_{1,r}-1|, |a_{4,r}-1|\}\geq \Upsilon'$. 
Assume contrary to our claim that $|a_{3,r}|<\Upsilon' e^{-\dn t}$. 
Then 
\begin{equation}\label{eq:max1,4}
    \max\{|a_{1,r}-1|, |a_{4,r}-1|\}\geq \Upsilon';
\end{equation}
furthermore, we get $|a_{2,r}a_{3,r}|\ll \Upsilon' e^{(-\dn+1)t}$. Thus, 
\begin{equation}\label{eq:max1*4}
    |a_{1,r}a_{4,r}-1|\ll \Upsilon' e^{(-\dn+1)t}\ll e^{-\dn t/2}.
\end{equation}
Moreover, since $h_r^{-1}\sfs_r h_r$ is very nearly $g_2\gamma_{r}g_2^{-1}$, and the latter is either a unipotent element or its minus, we conclude that 
\begin{equation}\label{eq:trace min}
    \min(|a_{1,r}+a_{4,r}-2|, |a_{1,r}+a_{4,r}+2|)\ll e^{(-D'+\star)t}.
\end{equation}
Equations~\eqref{eq:max1*4} and~\eqref{eq:trace min} contradict~\eqref{eq:max1,4} 
if $t$ is large enough. Altogether,~\eqref{eq: lower bound a(3,r)} holds.

We now show that Case 1 cannot occur. 
Since ${\bf L}^\circ$ is unipotent and ${\bf L}\subset {\bf L}^\circ\cdot C_{\bf G}$, we conclude from~\eqref{eq: L is inside H} combined with~\eqref{eq:gamma-h} and~\eqref{eq: g2 and g3 are close} that 
\be\label{eq: us sr and N}
u_{-r}\begin{pmatrix} a_{1,r} & e^{-\dn t}a_{2,r} \\ e^{\dn t}a_{3,r} & a_{4,r} \end{pmatrix}u_r\in \exp(-w_r) (hUh^{-1})\cdot C_{\bf G}
\ee
for all $r\in I_{\rm bad}$, where $h\in H$ and $\|h\|\ll 1$.  
We show that this leads to a contradiction. 

Recall the intervals $J$ and $J'$ from~\eqref{eq: Ji cap I bad has many points}, and let $r_0\in J'\cap I_{\rm bad}$.
then $|r_0-r|\geq \eta^{1/(2\dm_0)}$ for all $r\in J_\eta$. 
Then,~\eqref{eq: us sr and N}, yields that
\be\label{eq: us sr and N'}
u_{-r+r_0}\begin{pmatrix} a_{1,r} & e^{-\dn t}a_{2,r} \\ e^{\dn t}a_{3,r} & a_{4,r} \end{pmatrix}u_{r-r_0}\in \exp(-w_r') (u_{r_0}hUh^{-1}u_{-r_0})\cdot C_{\bf G}
\ee
for all $r\in I_{\rm bad}$.

Let us write $u_{r_0}h=\begin{pmatrix} a & b \\ c & d \end{pmatrix}$, 
then for all $z\in\R$ we have  
\[
u_{r_0}h\begin{pmatrix} 1 & z \\ 0 & 1 \end{pmatrix}h^{-1}u_{-r_0}= \begin{pmatrix}1-acz& a^2z\\ -c^2z & 1+acz\end{pmatrix}.
\]
Let $z_0\in\R$ be so that 
\[
\begin{pmatrix} a_{1,r_0} & e^{-\dn t}a_{2,r_0} \\ e^{\dn t}a_{3,r_0} & a_{4,r_0}\end{pmatrix}=\pm\exp(-w_{r_0})\begin{pmatrix}1-acz_0& a^2z_0\\ -c^2z_0 & 1+acz_0\end{pmatrix}.
\]
By \eqref{eq: lower bound a(3,r)} applied with $\Upsilon'$ corresponding to $\Upsilon=\xi\eta^{2D_2}$, 
we have $|{a_{3,r_0}}|\geq \eta^{\star}e^{-\dn t}$. Since 
\[
|a|, |b|, |c|, |d|\ll 1,
\] 
comparing the bottom left entries of the matrices, we get $|z_0|\gg \eta^{D_3}$. 
Now, since $|a_{2,r_0}|\leq 10e^{t}$, comparing the top right entries we conclude that 
\[
|a|^2\ll \eta^{-D_3}e^{(-\dn+1)t}\leq e^{-(\dn+2) t}.
\]
Since $\det(u_{r_0}h)=1$, it follows that  $|c|$ is also $\gg 1$.  

Let now $r\in J_{\eta}$ be so that $\|\gamma_r\|\geq 100e^{(\dn-2) t/3}$. We write $r_1=r-r_0$, $a'_{2,r}=e^{-\dn t}a_{2,r}$ and $a'_{3,r}=e^{\dn t}a_{3,r}$. 
By~\eqref{eq: lower bound a(3,r)}, applied this time with $\Upsilon'$ corresponding to $\Upsilon=e^{(\dn-2) t/3}$, 
we have $|a'_{3,r}|\geq \Upsilon' \geq e^{\frac{\dn-2}{4\hdm_0}t}$; note also that $|a'_{2,r}|\ll e^{(-\dn+1)t}$. 
In view of~\eqref{eq: us sr and N'}, there exists $z_r\in\R$ so that 
\begin{align*}
u_{-r_1}\begin{pmatrix} a_{1,r} & a'_{2,r} \\ a'_{3,r} & a_{4,r} \end{pmatrix}u_{r_1}&=\begin{pmatrix} a_{1,r}-r_1a'_{3,r}  & a'_{2,r}+(a_{4,r}-a_{1,r})r_1-a'_{3,r}r_1^{2} \\ a'_{3,r} & a_{4,r}+r_1a'_{3,r} \end{pmatrix}\\
&=\pm\exp(-w_r')\begin{pmatrix}1-acz_r& a^2z_r\\ -c^2z_r & 1+acz_r\end{pmatrix}.
\end{align*}
Recall that $|a'_{3,r}|\geq e^{\frac{\dn-2}{4\hdm_0}t}$, $|a_{1,r}|,|a_{4,r}|\ll e^{t}$, and $|a'_{2,r}|\ll e^{(-\dn+1)t}$;
moreover $\eta^{1/(2\dm_0)}\leq |r_1|\leq 1 $ and by~\eqref{eq: t D2 and eta clsoing} $e^{t/10}\geq\eta^{-1}$. 
Thus, so long as $\dn-2>5\hdm_0$, 
\[
0.1|a'_{3,r}|\eta^{1/\dm_0}\leq |a'_{2,r}+(a_{4,r}-a_{1,r})r_1-a'_{3,r}r_1^{2}|\leq 2|a'_{3,r}|.
\]
Hence, since $w_r'$ is small, $|c^2z_r|\eta\ll |a^2z_r|\ll |c^2z_r|$. 
On the other hand, using $r=r_0$, we already established $|a|^2 \leq e^{(-\dn+2)t}$ and $|c|\gg1$, thus $|a^2z_r|\ll e^{(-\dn+2)t}|c^2z_r|$, which is a contradiction, see~\eqref{eq: t D2 and eta clsoing} again.

Altogether, we conclude that Case~1 cannot occur.

\subsection*{Case 2} $\bf L$ is not commutative. In other words, there are $r,r'\in I_{\rm bad}$ so that $\gamma_r$ and $\gamma_{r'}$ do not commute. 

Recall from ~\eqref{eq: g2 and g3 are close} that 
\[
\gamma_r .g_3^{-1}v_H=g_3^{-1}v_H \quad\text{and}\quad 
\gamma_{r'} .g_3^{-1}v_H=g_3^{-1}v_H.
\]
where $\|g_2-g_3\|\leq \nuni^{(-D'+\ref{a:Eq-proj-2}\dn_2)\rws}$.

In view of Lemma~\ref{lem:non-elementary}, thus, we have $Hg_3\Gamma$ is periodic and 
\[
\vol(Hg_3\Gamma)\leq \ref{E:non-el-1}\eta^{-D_2\ref{a:non-el-2'}}\Bigl(\max\{\|\gamma_r^{\pm1}\|,\|\gamma_{r'}^{\pm1}\|\}\Bigr)^{\ref{a:non-el-2}}\leq \ref{E:non-el-1}\nuni^{(1+\ref{a:non-el-2}\dn_1)\rws}.
\]

Then for $t$ large enough,  
\[
\vol(Hg_3\Gamma)\leq\nuni^{(2+\ref{a:non-el-2})\dn_1\rws}\quad\text{ and} \quad d_X(g_2\Gamma,g_3\Gamma)\leq e^{(-D'+\ref{a:Eq-proj-2}\dn_2)t}, 
\]
where $D'=\frac{D}{\dm+1}-\hdm_1-1$.

Since $g_2\Gamma=x_2=a_tu_{r_1}x_1$, part~(2) in the proposition holds with $x'=(a_tu_{r_1})^{-1}g_3\Gamma$ and 
$D_1=\max\{2+\ref{a:non-el-2}\dn_1,1+\hdm_1+\ref{a:Eq-proj-2}\dn_2\}$ so long as $t$ is large enough. 
\qed


\section{Projection theorems}\label{sec: app projection}

We begin by recalling a theorem of  Gan, Guo, and Wang~\cite{GGW}. Let 
\[
\xi(r)=(\tfrac{r}{1!}, \tfrac{r^2}{2!},\ldots, \tfrac{r^{n}}{n!})\subset \R^n.
\]
For all $r\in[0,1]$ and all $1\leq d\leq n$, let $\pi_{r,d}:\R^n\to\R^d$ denote the orthogonal projection into
\[
{\rm Span}\{\xi'(r),\xi^{(2)}(r),\ldots, \xi^{(d)}(r)\}. 
\]

The following theorem follows from~\cite[Thm.2.1]{GGW}, combined with a finitary adaptation of the argument presented in~\cite[\S2]{GGW}.

\begin{thm}
\label{thm: proj app 0}
Let $1\leq d\leq n$ and let $0<\alpha\leq d$.
Let $\rho$ be the uniform measure on a finite set $\Theta\subset B_{\R^n}(0,1)$ satisfying 
\[
\rho(B_\rfrak(w, \rhsc)\cap \Theta)\leq \egbd \rhsc^\alpha\qquad\text{for all $w$ and all $\rhsc\geq \rhsc_1$}
\]
where $\egbd\geq 1$ and $0<\rhsc_1\leq1$.

Let $0<\pvare<10^{-4}\alpha$.
For every $\rhsc\geq \rhsc_1$, there exists a subset 
$J_{\rhsc}\subset [0,1]$ with $|[0,1]\setminus J_{\rhsc}|\leq C_\pvare\rhsc^{\star\pvare^2}$ so that the following holds. 
Let $r\in J_{\rhsc}$, then there exists a subset $\Theta_{\rhsc,r}\subset \Theta$ with 
\[
\rho(\Theta\setminus \Theta_{\rhsc,r})\leq C_\pvare \rhsc^{\star\pvare^2}
\]
such that for all $w\in\Theta_{\rhsc, r}$, we have 
\[
\rho\Bigl(\{w'\in\Theta: \|\pi_{r,d}(w)-\pi_{r,d}(w')\|\leq \rhsc\}\Bigr)\leq C_\pvare\egbd \rhsc^{\alpha-\pvare}
\] 
where the implied constants are absolute.
\end{thm}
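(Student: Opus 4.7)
The plan is to derive this result from the $L^p$-form of the restricted projection theorem in~\cite[Thm.~2.1]{GGW}, with a smoothing/Chebyshev argument used to convert the almost-everywhere statement into a quantitative, scale-localized one. The starting point is the observation that the Gan--Guo--Wang argument in \cite[\S2]{GGW} actually yields, for any Frostman measure $\mu$ on $B_{\R^n}(0,1)$ of exponent $\alpha$, an estimate of the shape
\[
\int_0^1 \int \mu\bigl(\pi_{r,d}^{-1}B(\pi_{r,d}(w),\rhsc)\bigr)^{p-1}\,d\mu(w)\,dr \ \leq\ C_{\pvare,p}\,(\egbd\,\rhsc^{\alpha-\pvare/2})^{p-1},
\]
where $p>1$ is chosen depending on $\pvare$ (and the dimensions). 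The GGW paper states the outcome qualitatively, but the proof proceeds through wave-packet/high-low decompositions whose constants depend only on the Frostman exponent, the Frostman constant, and $p$; so the quantitative form above is extractable with minor bookkeeping.

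First I would reduce to a smoothed measure. Convolving $\rho$ with a radial bump of scale $\rhsc$ (or more carefully, with a bump at scale $\rhsc_1 \le \rhsc$), I obtain a probability measure $\tilde\rho$ with bounded density on the $\rhsc$-neighborhood of $\Theta$. The Frostman hypothesis at scales $\geq \rhsc_1$ persists for $\tilde\rho$ at all scales $\geq \rhsc_1$ up to absolute constants, and any upper bound for $\pi_{r,d*}\tilde\rho$ on $\rhsc$-balls dominates $\rho(B(w,\rhsc)\cap \Theta)/\#\Theta$ up to a harmless factor. Thus the problem is reduced to producing the projection bound for $\tilde\rho$ at the single scale $\rhsc$.

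Next I would apply the $L^p$ estimate above to $\tilde\rho$ at scale $\rhsc$, and then pigeonhole in two stages. A first application of Chebyshev's inequality in the $r$-variable produces a set $J_\rhsc \subset [0,1]$ outside of which the inner $\tilde\rho$-integral already exceeds $(C_{\pvare,p}\egbd \rhsc^{\alpha-\pvare/2})^{p-1}\rhsc^{-\eta(p-1)}$ for some small $\eta>0$; choosing $\eta \asymp \pvare$ makes $|[0,1]\setminus J_\rhsc| \leq C_\pvare \rhsc^{\eta(p-1)}$. A second Chebyshev argument in $w$ then trims away a further $\tilde\rho$-null set $\Theta\setminus\Theta_{\rhsc,r}$ of the same order. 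Because the admissible $p$ tends to $1$ as $\pvare \to 0$, the product $\eta(p-1)$ is forced to be of order $\pvare^2$, which accounts for the exponent $\star\pvare^2$ in the statement. For $w \in \Theta_{\rhsc,r}$ and $r \in J_\rhsc$ one obtains
\[
\tilde\rho\bigl(\pi_{r,d}^{-1}B(\pi_{r,d}(w),\rhsc)\bigr) \leq C_\pvare \egbd \rhsc^{\alpha-\pvare},
\]
which, after transferring back to $\rho$, is the desired conclusion.

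The main obstacle is the first step: one must verify that the proof of~\cite[Thm.~2.1]{GGW} indeed yields the $L^p$ estimate above with constants depending only on $\alpha$, $p$, and the Frostman constant. The GGW argument is a delicate high-low decomposition involving curvature of the moment curve and decoupling, so care is required to track the dependence of each pigeonhole and inductive step on these parameters. Once this $L^p$ estimate is in hand, the smoothing, the two Chebyshev extractions, and the discrete-to-smooth transfer are standard and largely mechanical.
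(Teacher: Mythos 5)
Your approach is genuinely different from the paper's, but it has a real gap at the crucial step. The paper proves Theorem~\ref{thm: proj app 0} by contradiction: assuming the conclusion fails for an exceptional set of $r$'s of measure $\gtrsim \rhsc^{\pvare^2/2}$, it discretizes $[0,1]$ at scale $\rhsc$, builds for each $r$ a family $\mathcal T_r$ of slabs covering the bad part of $\Theta$, applies Fubini to find a positive-measure set $\Theta'$ of $w$'s hit by many families, and then invokes \cite[Thm.~2.1]{GGW} \emph{as stated} --- a tube-counting theorem --- to obtain a lower bound on $\sum_r \#\mathcal T_r$ that contradicts the trivial upper bound~\eqref{eq: number of Tr}. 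No re-derivation of the GGW argument is needed; the $\pvare^2$ exponent emerges from the two-level pigeonholing together with the $\pvare^2$ chosen in the GGW application.

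Your plan instead posits a quantitative $L^p$-energy bound
\[
\int_0^1 \int \mu\bigl(\pi_{r,d}^{-1}B(\pi_{r,d}(w),\rhsc)\bigr)^{p-1}\,d\mu(w)\,dr \ \leq\ C_{\pvare,p}\,(\egbd\,\rhsc^{\alpha-\pvare/2})^{p-1}
\]
and then runs smoothing plus two Chebyshev arguments. If such an estimate with explicit $p$- and Frostman-constant dependence were available, the rest of your argument is sound --- and your observation that $p-1\asymp\pvare$ forces the exponent $\star\pvare^2$ is exactly right. But the theorem you cite is a statement about the cardinality of tube families, not an $L^p$ bound, so what you need is not a ``minor bookkeeping'' extraction but a re-derivation of the GGW high-low/decoupling argument in $L^p$ form with constants tracked through every inductive and pigeonhole step. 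You flag this yourself as ``the main obstacle,'' and indeed the proof you propose is incomplete precisely there. The paper's contradiction argument avoids this entirely by treating \cite[Thm.~2.1]{GGW} as a black box, at the cost of a slightly less transparent (but self-contained) Fubini/pigeonhole setup.
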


\begin{proof}
Since $1\leq d\leq n$ is fixed throughout the argument, we will simplify the notation by writing $\pi_r$ instead of $\pi_{r,d}$. 

For every $r\in[\frac12, 1]$ and all $w\in \Theta$, define 
\[
m^\rhsc(\pi_r(w))=\rho(\{w': \|\pi_r(w)-\pi_r(w')\|\leq \rhsc\}).
\]

Let $E_1, E_2, \ldots$ be large constants to be specified later. Put $\vare=\pvare/E_1$. For all $r\in[\frac12, 1]$, set 
\[
\Theta_{\rm Exc}(r)=\{w: m^\rhsc(\pi_r(w))\geq \egbd\rhsc^{\alpha-E_1\vare}\}.
\]

Suppose, contrary to the claim in Theorem~\ref{thm: proj app 0}
that there exists a subset $I_{\rm Exc}\subset [\frac12,1]$ with 
$|I_{\rm Exc}|\geq E_2\rhsc^{\vare^2/2}$ so that for all $r\in I_{\rm Exc}$, we have 
\be\label{eq: measure of Theta Exc}
\rho(\Theta_{\rm Exc}(r))\geq E_2\rhsc^{\vare^2/2}.
\ee 
We will get a contradiction with~\cite[Thm.~2.1]{GGW}, provided that $E_1$ and $E_2$ are large enough. 

First note that, for every $r\in I_{\rm Exc}$, the number of $\rhsc$-boxes $\{\mathsf B_{i,r}\}$ 
required to cover $\pi_r(\Theta_{\rm Exc}(r))$ is $\leq E_3\egbd^{-1}\rhsc^{-\alpha+E_1\vare}$. 
Let $\mathbb T_{i,r}=\pi_r^{-1}(\mathsf B_{i,r})\cap B_{\R^n}(0,1)$, and put $\mathcal T_r=\{\mathbb T_{i,r}\}$. 
Note that 
\be\label{eq: number of Tr}
\#\mathcal T_r\leq E_3 \egbd^{-1}\rhsc^{-\alpha+E_1\vare}.
\ee
 
Let $\Lambda_\rhsc\subset I_{\rm Exc}$ be a maximal $\rhsc$-separated subset, and extend this to a maximal 
$\rhsc$-separated subset $\hat\Lambda_\rhsc$ of $[\frac12,1]$. 
Equip $\hat\Lambda_\rhsc\times \Theta$ with the product measure $\rho\times \sigma$,
where $\sigma$ denotes the uniform measure on $\hat\Lambda_\rhsc$. Let 
\begin{align*}
F&=\{(r,w)\in \Lambda_\rhsc\times \Theta: m^\rhsc(\pi_r(w))\geq \egbd\rhsc^{\alpha-E_1\vare}\}\\
&=\{(r,w)\in \Lambda_\rhsc\times \Theta: w\in \Theta_{\rm Exc}(r)\}.
\end{align*}
In view of~\eqref{eq: measure of Theta Exc} and $|I_{\rm Exc}|\geq E_2\rhsc^{\vare^2/2}$, we have 
$\sigma\times \rho(F)\geq E_2^2\rhsc^{\vare^2}$.

For every $w\in \Theta$, let $F_w=\{r\in\Lambda_\rhsc: (r,w)\in F\}$, and   
set 
\[
\Theta'=\{w\in \Theta: \sigma(F_w)\geq E_2 \rhsc^{\vare^2}\}.
\] 
Then, using Fubini's theorem, we conclude that $\rho(\Theta')\geq \tfrac12E_2^2\rhsc^{\vare^2}$.

The above definitions thus imply
\[
\sum_{r\in\Lambda_\rhsc}1_{\mathbb T_r}(w)\geq E_3\rhsc^{\vare^2-1}\qquad\text{for all $w\in \Theta'$}
\]
where $E_3=O(E_2)$, for an absolute implied constant. 

Let $\rho'=\rho|_{\Theta'}$. Applying~\cite[Thm.\ 2.1]{GGW} with $\vare^2$, $\rho'$
and $\{\mathcal T_r: r\in\Lambda_\rhsc\}$,  
\[
\begin{aligned}
\sum_{r\in\Lambda_\rhsc} \#\mathcal T_r&\geq E_4(n, \pvare, \alpha) \egbd^{-1}\rho'(\R^n)\rhsc^{-1-\alpha+E\vare}\\
&\geq \tfrac12E_2^2 E_4(n, \pvare, \alpha) \egbd^{-1} \rhsc^{\vare^2}\rhsc^{-1-\alpha+E\vare}
\end{aligned}
\]
where $E=10^{10n}$ and in the second line we used $\rho'(\R^n)=\rho(\Theta')\geq \tfrac12E_2^2\rhsc^{\vare^2}$.

Thus there exists some $r\in\Lambda_\delta$ so that 
\be\label{eq: GGW and pigeonhole}
\#\mathcal T_r \geq \tfrac12\egbd^{-1}E_4(n, \pvare, \alpha) E_2^2 \rhsc^{-\alpha+(D+1)\vare}.
\ee
Now comparing~\eqref{eq: GGW and pigeonhole} and~\eqref{eq: number of Tr} we get a contradiction so long 
as $E_1$ is large enough and $\rhsc$ is small enough. The proof is complete. 
\end{proof}

Recall that the $n$-dimensional ($n\geq 2$) irreducible representation of $\SL_2(\R)$ can be normalized so that for all $w\in\R^n$, 
\[
u_rw=\Bigl(w\cdot\xi'(r), w\cdot\xi^{(2)}(r),\cdots, w\cdot \xi^{(n)}(r)\Big)
\]
Recall also that $\rfrak\simeq \R^{2\dm+1}$ is an irreducible representation of $\Ad(H)$ where 
\begin{quote}
$\bullet \;\; \dm=1$ if $\G$ is isogeneous to $\SO(3,1)$ or $\SL_2\times\SL_2$\\
     $\bullet \;\; \dm=2$ if $\G$ is isogeneous to $\SL_3$ or ${\rm SU}(2,1)$\\
     $\bullet \;\; \dm=3$ if $\G$ is isogeneous to ${\rm Sp}_4$\\
     $\bullet \;\; \dm=5$ if $\G$ is isogeneous to ${\G}_2$
\end{quote}  
Theorem~\ref{thm: proj app 0} thus applies to the adjoint action of $u_r$ on these spaces. 
For every $1\leq d\leq 2\dm+1$, let $\rfrak_d$ denote the space spanned by vectors with weight $\dm,\ldots, \dm-d+1$. 
Let $\pi_d:\rfrak\to\rfrak_d$ denote the orthogonal projection.

\begin{thm}
\label{thm: proj app 1}
Let $0<\alpha, d\leq 2\dm+1$, $d\in\Z$, and $0<\rhsc_1\leq1$. 
Let $\rho$ be the uniform measure on a finite set $\Theta\subset B_\rfrak(0,1)$ with 
\[
\rho(B_\rfrak(w, \rhsc)\cap \Theta)\leq \egbd \rhsc^\alpha\qquad\text{for all $w$ and all $\rhsc\geq \rhsc_1$}
\]
where $\egbd\geq 1$.

Let $0<\pvare<10^{-4}\alpha$.
For every $\rhsc\geq \rhsc_1$, there exists a subset 
$J_{\rhsc}\subset [0,1]$ with $|[0,1]\setminus J_{\rhsc}|\leq C_\pvare'\rhsc^{\star\pvare^2}$ so that the following holds. 
Let $r\in J_{\rhsc}$, then there exists a subset $\Theta_{\rhsc,r}\subset \Theta$ with 
\[
\rho(\Theta\setminus \Theta_{\rhsc,r})\leq C_\pvare \rhsc^{\star\pvare^2}
\]
such that for all $w\in\Theta_{\rhsc, r}$ we have 
\[
\rho\Bigl(\{w'\in\Theta: \|\pi_d(\Ad(u_r)w)-\pi_d(\Ad(u_r)w')\|\leq \rhsc\}\Bigr)\leq C_\pvare\egbd \rhsc^{\min(\alpha,d)-\pvare}.
\] 
\end{thm}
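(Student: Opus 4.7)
The plan is to deduce Theorem~\ref{thm: proj app 1} from Theorem~\ref{thm: proj app 0} via a linear-algebraic reduction. First, using the explicit formula $(\Ad(u_r)w)_i = w\cdot\xi^{(i)}(r)$ recalled just above the statement, together with the identification of the top $d$ weight spaces $\rfrak_d$ with the span of the first $d$ standard basis vectors (the vector $e_i$ has weight $\dm - i + 1$ under $a_t$), I would write
\[
\pi_d(\Ad(u_r)w) = M(r)w,
\]
where $M(r)$ is the $d \times (2\dm+1)$ matrix whose $i$-th row is $\xi^{(i)}(r)$.

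Next, I would compare $\|M(r)v\|$ with $\|\pi_{r,d}(v)\|$. Writing the Gram matrix as $\Sigma(r) = M(r)M(r)^T$, the orthogonal projection is $\pi_{r,d} = M(r)^T \Sigma(r)^{-1} M(r)$, so
\[
\|\pi_{r,d}(v)\|^2 = (M(r)v)^T \Sigma(r)^{-1} (M(r)v).
\]
For $r \in [0,1]$ the entries of $M(r)$ are polynomials in $r$ bounded in absolute value, while the first $d$ columns of $M(r)$ form a lower-triangular matrix with unit diagonal. Hence $M(r)$ has full rank $d$ for every $r \in [0,1]$, and $\Sigma(r)$ is a smooth, uniformly positive-definite matrix-valued function on this interval. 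A standard compactness argument then produces constants $c_1, c_2 > 0$ depending only on $\dm$ such that
\[
c_1 \|\pi_d(\Ad(u_r)w)\| \leq \|\pi_{r,d}(w)\| \leq c_2 \|\pi_d(\Ad(u_r)w)\|
\]
for all $r \in [0,1]$ and $w \in \rfrak$.

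With this comparison in hand, one can reduce to the case $\alpha \leq d$: if $\alpha > d$, then the hypothesis $\rho(B_\rfrak(w,\rhsc)\cap\Theta) \leq \egbd \rhsc^\alpha$ implies the same inequality with $\alpha$ replaced by $d$ for all $\rhsc \leq 1$, and holds trivially for $\rhsc \geq 1$ since $\egbd \geq 1$ and $\rho$ is a probability measure. Theorem~\ref{thm: proj app 0} then applies with exponent $\alpha' = \min(\alpha, d)$, yielding subsets $J_\rhsc \subset [0,1]$ and $\Theta_{\rhsc,r} \subset \Theta$ of the required size such that
\[
\rho\bigl(\{w' \in \Theta : \|\pi_{r,d}(w) - \pi_{r,d}(w')\| \leq c_2 \rhsc\}\bigr) \leq C_\pvare \egbd (c_2 \rhsc)^{\alpha' - \pvare}
\]
for every $r \in J_\rhsc$ and $w \in \Theta_{\rhsc,r}$. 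Translating through the comparison of the previous paragraph, the set on the left contains $\{w' : \|\pi_d(\Ad(u_r)w) - \pi_d(\Ad(u_r)w')\| \leq \rhsc\}$, and the constant $c_2^{\alpha'-\pvare}$ can be absorbed into $C_\pvare'$; this gives the desired conclusion of Theorem~\ref{thm: proj app 1}.

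There is no serious obstacle here beyond the initial linear-algebraic comparison, which is immediate from $\Sigma(0) = I_d$ together with the smooth dependence of $\Sigma(r)$ on $r$ and the non-vanishing of $\det\Sigma(r)$ on $[0,1]$. The whole argument amounts to observing that the orthogonal projection $\pi_{r,d}$ and the ``coordinate'' projection $\pi_d \circ \Ad(u_r)$ differ only by an invertible linear map with uniformly bounded operator norm on $[0,1]$, so any Frostman-type bound for preimages of $\rhsc$-balls under one yields the same bound, up to constants, for the other.
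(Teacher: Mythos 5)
Your argument is correct and follows the route the paper intends: the setup immediately preceding Theorem~\ref{thm: proj app 1} identifies $\Ad(u_r)$ on $\rfrak$ with the polynomial-curve action $w\mapsto (w\cdot\xi^{(i)}(r))_i$, so that $\pi_d\circ\Ad(u_r)$ and the Gram--Schmidt projection $\pi_{r,d}$ from Theorem~\ref{thm: proj app 0} differ by the invertible comparison you work out via $\Sigma(r)=M(r)M(r)^T$, and the reduction of the exponent to $\min(\alpha,d)$ for $\alpha>d$ is the standard observation you make. (One cosmetic slip: the leading $d\times d$ block of $M(r)$, with $(i,j)$-entry $r^{j-i}/(j-i)!$ for $j\geq i$ and $0$ otherwise, is \emph{upper} unitriangular rather than lower; either way it is invertible, which is all you use.)
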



\section{Measures and partitions of unity}\label{sec: app Folner}
In this appendix we collect some of the results from~\cite[\S6--8]{LMW22} which were used in this paper.

\subsection*{Regular tree decomposition}\label{sec: regular tree}
Let us recall the following discussion from \cite[\S6]{LMW22} see also~\cite[Lemma 5.2]{BFLM}. 
Let $t, {\mathsf D}_0\geq 1$ and $0<\vare<1$ be three parameters: $t$ is large and arbitrary, ${\mathsf D}_0$ is moderate and fixed, and $\vare$ is small and fixed; in particular, our estimates are allowed to depend on ${\mathsf D}_0$ and $\vare$, but not on $t$. 
Let $\beta=e^{-\kappa t}$ for some $\kappa$ satisfying $0<\kappa ({\mathsf D}_0+1)\leq 10^{-6}\vare$. 

Let $F\subset B_\rfrak(0,1)$ satisfy that  
\[
e^{t/2}\leq \#F\leq e^{{\mathsf D}_0 t},
\]
and assume that 
\be\label{eq: tree dec eng bd}
\eng_{F,\trct}^{(\alpha)}(w)\leq \egbd \qquad\text{for all $w\in F$}
\ee
where $\egbd>0$ satisfies the following   
\be\label{eq: a priori bound on egbd}
\egbd\leq e^{({\mathsf D}_0+1)t}.
\ee

Fix ${\mathsf L}\in\bbn$, large enough, so that both of the following hold  
\be\label{eq: condition on M}
\text{$2^{-{\mathsf L}}({\mathsf D}_0+1)< \kappa/100\quad$ and $\quad (4\dm+2){\mathsf L}<2^{\kappa {\mathsf L}/100}$}.
\ee
Define $k_0:=\lfloor (-\log_2\beta)/{{\mathsf L}}\rfloor$ and 
$k_1:=\lceil(1+\alpha^{-1}\log_2\egbd) /{\mathsf L} \rceil+1$; note that    
\be\label{eq: F cap the smallest cube k1}
2^{({\mathsf L}k_1-1)\alpha}>\Upsilon.
\ee

For every $k_0\leq k\leq k_1$, let $\mathcal C_{{\mathsf L}k}$ 
denote the collection of $2^{-{\mathsf L}k}$-cubes in $\rfrak$.

\begin{lemma}\label{lem: regular tree decomposition}\label{lem: trimming cone 2}
For all large enough $t$,
we can write $F=F'\bigcup (\bigcup_{i=1}^{N}F_i)$ (a disjoint union) with 
\[ 
\text{$\#F'<\beta^{1/4}\cdot(\#F)\quad$ and $\quad\#F_i\geq \beta^2\cdot(\#F)$}
\] 
so that the following hold. 
\begin{enumerate}
\item For every $i$ and every $k_0-10\leq k\leq k_1$, there exists some $\tau_{ik}$ so that 
for every cube $\mathsf C \in \mathcal C_{{\mathsf L}k}$ we have 
\be\label{eq: regular tree}
2^{{\mathsf L}(\tau_{ik}-2)}\leq \#F_i\cap \mathsf C\leq 2^{{\mathsf L}\tau_{ik}}\quad\text{or}\quad F_i\cap \mathsf C=\emptyset.
\ee
\item For every $i$, we have 
\[
\eng_{F_i,\trct}^{(\alpha)}(w)\leq \beta^{-2\dm-2}\egbd \qquad\text{for all $w\in F$}
\]
\end{enumerate}
\end{lemma}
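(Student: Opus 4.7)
The strategy is a dyadic regular-tree decomposition of $F$ based on the $F$-population in cubes at all scales $k_0-10\le k\le k_1$. For every cube $\mathsf C\in\mathcal C_{\mathsf Lk}$ meeting $F$, define the \emph{type} $\tau_k(\mathsf C)$ to be the unique nonnegative integer with
\[
\#(F\cap\mathsf C)\in\bigl(2^{\mathsf L(\tau_k(\mathsf C)-1)},\,2^{\mathsf L\tau_k(\mathsf C)}\bigr],
\]
and assign to each $w\in F$ the \emph{signature} $\sigma(w)=(\tau_k(\mathsf C_k(w)))_{k_0-10\le k\le k_1}$, where $\mathsf C_k(w)$ is the level-$k$ cube containing $w$.

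The first key point is a counting bound: the entries of $\sigma(w)$ are weakly decreasing in $k$ (refinement only decreases $F$-counts) and take values in $[0,\lceil\log_{2^{\mathsf L}}\#F\rceil]\subset[0,O(t/\mathsf L)]$. Hence the number $N$ of distinct signatures is bounded by the number of weakly decreasing integer sequences of length $K=k_1-k_0+11$ on this range, giving $N\le\binom{K+O(t/\mathsf L)}{K}\le 2^{O(t/\mathsf L)}$. The hypothesis $(4\dm+2)\mathsf L<2^{\kappa\mathsf L/100}$ in~\eqref{eq: condition on M} forces $\mathsf L$ large enough (relative to $1/\kappa$) that $N\le\beta^{-7/4}$. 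Partitioning $F$ into the signature classes $F_\sigma=\{w:\sigma(w)=\sigma\}$ and placing into $F'$ every class with $\#F_\sigma<\beta^2\#F$ contributes at most $N\beta^2\#F<\beta^{1/4}\#F$ to $F'$, so each surviving class $F_i$ automatically satisfies $\#F_i\ge\beta^2\#F$. Setting $\tau_{ik}$ equal to the shared level-$k$ type of the signature of $F_i$, the upper bound $\#(F_i\cap\mathsf C)\le\#(F\cap\mathsf C)\le 2^{\mathsf L\tau_{ik}}$ is immediate from $F_i\subseteq F$.

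The main obstacle is the lower bound $\#(F_i\cap\mathsf C)\ge 2^{\mathsf L(\tau_{ik}-2)}$ in every nonempty cube, since two points of $F\cap\mathsf C$ with the same level-$k$ type can carry different signatures at other levels, making it a priori possible for $F_\sigma\cap\mathsf C$ to be a tiny fraction of $F\cap\mathsf C$. I resolve this by an additional top-down refinement of each $F_\sigma$: processing levels $k_0-10,k_0-9,\dots,k_1$ in order, at each stage I further subdivide $F_\sigma$ by recording, for every point $w$, the dyadic class $\lfloor\log_{2^{\mathsf L}}\#(F_\sigma^{(k)}\cap\mathsf C_k(w))\rfloor$, where $F_\sigma^{(k)}$ is the current set. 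Monotonicity again keeps the total number of refined classes at $2^{O(t/\mathsf L)}$, and the $2^{2\mathsf L}$-factor slack in the statement is precisely what is needed so that the ``bad'' under-populated pieces removed at each stage sum to $\ll\beta^{1/4}\#F$ and can be absorbed into $F'$ without exceeding the $\beta^{1/4}$ budget.

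Finally, the energy bound is a triviality: since each $F_i\subseteq F$ we have
\[
\eng_{F_i,\trct}^{(\alpha)}(w)\le\eng_{F,\trct}^{(\alpha)}(w)\le\egbd\le\beta^{-2\dm-2}\egbd
\]
with considerable room to spare. The hard part of the proof is therefore the bookkeeping in the refinement step, where the combinatorial cost of tracking sub-signatures must be balanced against the size and cube-regularity budgets afforded by Condition~\eqref{eq: condition on M}.
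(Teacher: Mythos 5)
The paper does not reprove this lemma; it refers to \cite[Lemmas 6.4 and 6.5]{LMW22}, which in turn rest on a decomposition of BFLM type. Your overall template (partition $F$ by dyadic ``types'' across scales, use monotonicity of types in $k$ to count signatures, discard small pieces, and note that the energy bound in part~(2) is trivial since $F_i\subset F$) is indeed the right template, and your observations about part~(2) and about the $\beta^{1/4}$ budget are correct.

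There is, however, a genuine gap in the refinement step, which is precisely the step you correctly flag as ``the hard part.'' You propose to further refine each $F_\sigma$ by processing scales from coarse to fine, at stage $k$ grouping points by $\lfloor\log_{2^{\mathsf L}}\#(F_\sigma^{(k)}\cap\mathsf C_k(w))\rfloor$. The subdivision at scale $k$ is constant on cubes of $\mathcal C_{\mathsf Lk}$ and hence on all finer cubes, but it \emph{can} split coarser cubes, i.e.\ cubes of $\mathcal C_{\mathsf Lk'}$ with $k'<k$. Concretely, a cube $\mathsf C$ at scale $k-1$ whose $F_\sigma^{(k-1)}$-count satisfied the required two-sided bound gets partitioned at stage $k$ into its level-$k$ subcubes, and only those with a given dyadic level-$k$ fill are retained in a given child piece; the points of $\mathsf C$ that survive in a given child can be an arbitrarily small fraction of $\#(F_\sigma^{(k-1)}\cap\mathsf C)$. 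So the lower bound in \eqref{eq: regular tree} at coarser scales is destroyed again and again as you proceed top-down, and the ``bad under-populated pieces removed at each stage'' are \emph{coarse-scale} defects created by \emph{later} stages; they cannot be absorbed once and for all into $F'$ by the $2^{2\mathsf L}$ slack, and you give no mechanism to bound their cumulative mass.

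The fix is to process the scales in the opposite order, fine to coarse: start at $k_1$ and decrease $k$. At stage $k$, subdivide the current piece $H^{(k)}$ by $\ell_k(w)=\lfloor\log_{2^{\mathsf L}}\#(H^{(k)}\cap\mathsf C_k(w))\rfloor$. Since $\ell_k$ is constant on cubes of $\mathcal C_{\mathsf Lk}$ (and hence on cubes of $\mathcal C_{\mathsf Lk'}$ for all $k'\ge k$), subdividing at a coarser scale never splits a cube at a finer scale, so the regularity already established at scales $k,\dots,k_1$ is preserved exactly as you pass to coarser scales. One checks that $\ell_k(w)$ is still weakly decreasing in $k$ (using $\mathsf C_{k+1}(w)\subset\mathsf C_k(w)$ and the fact that the subdivision at scale $k+1$ does not change $H\cap\mathsf C_{k+1}(w)$), so your signature count $N\le 2^{O(t/\mathsf L)}$ and the $\beta^{1/4}$-budget argument go through unchanged. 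With that modification the proof is correct; once the refinement is done in the right direction, the preliminary signature partition based on $F$-counts (which by itself gives no regularity of $F_\sigma$) becomes unnecessary and can be dropped.
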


\begin{proof}
Part~(1) is proved in \cite[Lemma 6.4]{LMW22}. 

For part~(2) see \cite[Lemma 6.5]{LMW22}. 
\end{proof}

\subsection*{Covering lemmas}
Fix $0<\injr\leq0.01$ and let $\beta=\eta^2$. 
For $m\geq0$, we introduce the shorthand notation $\umt^H_m$ for 
\be\label{eq: def BH}
\umt_{\eta,\beta^2,m}^H=\Bigl\{u^-_s: |s|\leq \beta^2 \nuni^{-m}\Bigr\}\cdot\{a_\tau: |\tau|\leq \beta^2\}\cdot U_\eta,
\ee
where for every $\delta>0$, let $U_\delta=\{u_r: |r|\leq \delta\}$, see~\eqref{eq: def B ell beta}.

Define $\umt^G_{m}\subset G$ by thickening $\umt^H_m$ 
in the transversal direction as follows: 
\be\label{eq: def O ell C}
\umt^G_{m}:=\umt^H_m\cdot \exp(B_\rfrak(0,2\beta^2)).
\ee  

\begin{lemma}\label{lem: E good h0}
For every $m\geq 0$, there exists a covering 
\[
\Big\{\umt^G_{m}.y_{j}: j\in \mathcal J_m, y_j\in X_{3\eta/2}\Big\}
\] 
of $X_{2\injr}$ with multiplicity $K$, depending only on $X$.
In particular, $\#\mathcal J_m\ll\eta^{-1}\beta^{-4\dm-6}\nuni^{m}$.
\end{lemma}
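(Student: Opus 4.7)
The plan is a Vitali-type covering argument adapted to the anisotropic product structure of $\umt^G_m$. First I would introduce a shrunken version
\[
\umt^G_{m,\star}:=\{u^-_s: |s|\leq \beta^2\nuni^{-m}/C_0\}\cdot\{a_\tau:|\tau|\leq \beta^2/C_0\}\cdot\{u_r:|r|\leq \eta/C_0\}\cdot\exp(B_\rfrak(0,\beta^2/C_0))
\]
for a large absolute constant $C_0$ (to be chosen depending only on $\dm$). Combining the reordering rules of Lemma~\ref{lem:commutation-rel} for the $H$-factor with the group-law estimate of Lemma~\ref{lem: BCH} for moving across the $H$ and $\exp(\rfrak)$ factors, one checks $(\umt^G_{m,\star})^{-1}\cdot\umt^G_{m,\star}\subset \umt^G_m$ and $\mathrm{diam}_G(\umt^G_{m,\star})\leq \eta/10$. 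By Fubini, $\vol(\umt^G_{m,\star})\asymp \vol(\umt^G_m)\asymp \eta\beta^{4\dm+6}\nuni^{-m}$.

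Next, pick a maximal family $\{y_j:j\in\mathcal J_m\}\subset X_{3\eta/2}$ with the property that the subsets $\umt^G_{m,\star}.y_j\subset X$ are pairwise disjoint; injectivity of $g\mapsto gy_j$ on $\umt^G_{m,\star}$ follows from $\mathrm{diam}_G(\umt^G_{m,\star})<\inj(y_j)$. To see that $\{\umt^G_m.y_j\}$ covers $X_{2\injr}$ (for $\injr\asymp\eta$), fix $x\in X_{2\injr}$; since $\mathrm{diam}_G(\umt^G_{m,\star})\leq\eta/10$, every point of $\umt^G_{m,\star}.x$ still has injectivity radius at least $3\eta/2$. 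Maximality then forces $\umt^G_{m,\star}.x\cap\umt^G_{m,\star}.y_j\neq\emptyset$ for some $j$; writing $g_1x=g_2y_j$ with $g_i\in\umt^G_{m,\star}$ yields $x=g_1^{-1}g_2.y_j\in\umt^G_m.y_j$ by the containment above.

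The multiplicity bound is then a volume count: if $z\in\bigcap_{j\in J}\umt^G_m.y_j$ then each $\umt^G_{m,\star}.y_j$ lies in $\umt^G_{m,\star}\cdot(\umt^G_m)^{-1}.z$, a set of Haar volume $\ll \vol(\umt^G_m)$. By pairwise disjointness, $\#J\ll \vol(\umt^G_m)/\vol(\umt^G_{m,\star})\ll 1$, giving the constant $K$. For the cardinality estimate, disjointness together with $m_X(X)\ll 1$ gives $\#\mathcal J_m\cdot \vol(\umt^G_{m,\star})\ll 1$, hence $\#\mathcal J_m\ll \eta^{-1}\beta^{-4\dm-6}\nuni^{m}$.

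The main obstacle is verifying $(\umt^G_{m,\star})^{-1}\cdot\umt^G_{m,\star}\subset\umt^G_m$: because the $u^-$-scale $\beta^2\nuni^{-m}$ is much smaller than the $u$-scale $\eta$, rearranging a product requires that conjugation errors from $H$-pieces are absorbed by the $\exp(\rfrak)$-factor (via Lemma~\ref{lem: BCH}) and vice-versa, and the choice of $C_0$ must be large enough to dominate all the multiplicative constants coming from these commutation estimates. The remaining steps are routine.
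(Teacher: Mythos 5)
Your proof is correct. You construct the covering in one direct Vitali step: a shrunken box $\umt^G_{m,\star}$ with the key properties that $(\umt^G_{m,\star})^{-1}\umt^G_{m,\star}\subset\umt^G_m$ (verified via the reordering rules and Lemma~\ref{lem: BCH}, which lets you absorb the conjugation errors of the $\exp(\rfrak)$-factor through small $H$-elements into the $2\beta^2$-slack in the definition of $\umt^G_m$), together with a maximal pairwise-disjoint family of the shrunken translates anchored at centers in $X_{3\eta/2}$. Maximality gives coverage and the measure of the shrunken box $\asymp \eta\beta^{4\dm+6}e^{-m}$ (up to constants depending on $C_0$ and hence on $\dm$) gives both the multiplicity $K$ and the cardinality bound via disjointness.

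The paper proceeds somewhat differently: it first produces a covering of $X_{2\injr}$ by the $m$-\emph{independent} boxes $\bar{\mathsf B}^G_{\eta,\beta^2} = \boxHs_{\beta^2}\cdot\boxU_\eta\cdot\exp(B_\rfrak(0,\beta^2))$ with multiplicity $O(1)$, derives $\#\mathcal K\ll\eta^{-1}\beta^{-4\dm-6}$ from $m_G(\bar{\mathsf B}^G_{\eta,\beta^2})\asymp\eta\beta^{4\dm+6}$, and then defers to \cite[Lemma 7.1]{LMW22} which effectively subdivides each $\bar{\mathsf B}^G_{\eta,\beta^2}$ along the $u^-$-direction into $\asymp e^m$ pieces of the shape $\umt^G_m$, yielding the extra factor $e^m$. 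Your one-step route is a bit more self-contained, at the modest cost that the near-group property $(\umt^G_{m,\star})^{-1}\umt^G_{m,\star}\subset\umt^G_m$ must be verified for the anisotropic $m$-dependent box rather than for the $m$-independent $\bar{\mathsf B}^G_{\eta,\beta^2}$; as you observe this is the only delicate point, and it goes through because the strongly expanding $u^-$-direction has the \emph{smallest} scale, so conjugation by the $u_r$-factor of size $\eta$ only shrinks it further. One small imprecision: you write ``for $\injr\asymp\eta$''; in fact in this paper $\injr$ is a small constant depending only on $X$ while $\eta$ is a tunable parameter, typically with $\eta\ll\injr$, so $X_{2\injr}\subset X_{3\eta/2}$ and the maximality argument applies because any $x\in X_{2\injr}$ is itself an admissible center. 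Your argument is correct once this is noted.
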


\begin{proof}
This is proved in \cite[Lemma 7.1]{LMW22}, we recall the set up to explicate the bound claimed here.
There exists a covering 
\[
\Big\{\Bigl(\boxHs_{\beta^2}\cdot\boxU_{\eta}\cdot\exp\bigl(B_\rfrak(0,\beta^2)\bigr)\Bigr).\hat y_k: k\in \mathcal K, \hat y_k\in X_{2\eta}\Big\}
\] 
of $X_{2\injr}$ with multiplicity $O(1)$ depending only on $X$.

Let us write $\bar{\mathsf B}^G_{\eta,\beta^2}=\boxHs_{\beta^2}\cdot \boxU_{\eta}\cdot\exp(B_\rfrak(0,\beta^2))$. Then 
\be\label{eq: doubling propert Bbar}
\Bigl(\bar{\mathsf B}^G_{0.1\eta, 0.1\beta^2}\Bigr)^{-1}\cdot\Bigl(\bar{\mathsf B}^G_{0.1\eta,0.1\beta^2}\Bigr)\subset 
\Bigl(\bar{\mathsf B}^G_{c\eta,c\beta^2}\Bigr),
\ee
where $c$ depends only on $\dm$, see Lemma~\ref{lem: BCH}. 

Let $\{\hat y_k\in X_{2\eta}: k\in\mathcal K\}$ be maximal with the following property
\[
\bar{\mathsf B}^G_{0.01\eta,0.01\beta^2}.\hat y_i\cap \bar{\mathsf B}^G_{0.01\eta,0.01\beta^2}.\hat y_j=\emptyset\quad\text{ for all $i\neq j$.}
\]
In view of~\eqref{eq: doubling propert Bbar} thus $\{\bar{\mathsf B}^G_{\eta,\beta^2}.\hat y_k:k\in\mathcal K\}$ covers $X_{2\eta}$ with multiplicity $O(1)$. Since $m_G(\bar{\mathsf B}^G_{\eta,\beta^2})\asymp \eta\beta^{4\dm+6}$, we that $\mathcal K\ll \eta^{-1}\beta^{-4\dm-6}$.

The rest of the proof goes through as in \cite[Lemma 7.1]{LMW22}. 
\end{proof}

\subsection*{Boxes and complexity}\label{sec: box complexity} 
Let $\mathsf{prd}:\mathbb R^3\to H$ be the map
\[
\mathsf{prd}(s,\tau,r)= u^-_sa_\tau u_r.
\]
A subset $\mathsf D\subset H$ will be called a {\em box} if there exist intervals 
$I^{\bigcdot}\subset\bbr$ (for $\bigcdot=\pm, 0$) so that 
\[
\mathsf D=\mathsf{prd}(I^-\times I^0\times I^+).
\] 

We say $\Xi\subset H$ has complexity bounded by $L$ (or at most $L$) if 
$\Xi=\bigcup_{1}^L \Xi_i$ where each $\Xi_i$ is a box.

For every interval $I\subset\bbr$, let $\partial I=\partial_{100\eta|I|}I$ (recall that $\eta=\beta^{1/2}$), and put $\mathring I=I\setminus\partial I$.
Given a box $\mathsf D=\mathsf{prd}(I^-\times I^0\times I^+)$, we let 
\begin{subequations}
\begin{align}
\label{eq: def mathring D}&\mathring{\mathsf D}=\mathsf{mul}\Bigl(\mathring{I^-}\times\mathring{I^0}\times\mathring{I^+}\Bigr) \quad\text{and}\\ 
\label{eq: def partial D}&\partial\mathsf D=\mathsf D\setminus \mathring{\mathsf D}.
\end{align}
\end{subequations}

More generally, if $\mathsf D=\mathsf{prd}(I^-\times I^0\times I^+)$ is a box, and 
$\Xi\subset \mathsf D$ has complexity bounded by $L$, we define 
$\partial\Xi:=\bigcup\partial\Xi_i$ and  
\be\label{eq: def mathring Xi}
\mathring\Xi_{\mathsf D}:=\bigcup\mathring\Xi_i
\ee
where the union is taken over those $i$ so that $\Xi_i=\mathsf{prd}(I_i^-\times I_i^0\times I_i^+)$ with 
$|I_i^{\bigcdot}|\geq 100\eta|I^{\bigcdot}|$ for $\bigcdot=\pm,0$.   

\subsection{Admissible measures}\label{sec: cone and mu cone}
Let $\adm>0$. Let $\cone=\coneH.\{\exp(w)y: w\in F\}$. 
A probability measure $\mu_\cone$ on $\cone$ is said to be $\adm$-{\em admissible} if 
\[
\mu_\cone=\frac1{\sum_{w\in F}\mu_w(X)}\sum_{w\in F}\mu_w
\]
where for every $w\in F$, $\mu_w$ is a measure on $\coneH.\exp(w)y$ satisfying that if $\sfh\exp(w)y$ is in the support of $\mu_w$
\[
\diff\!\mu_w(\sfh\exp(w)y)=\adl\ddensity_w(\sfh)\diff\!m_H(\sfh)\quad\text{where $1/\adm\leq \ddensity_w(\bigcdot)\leq \adm$,}
\] 
for some $\adl>0$ independent of $w\in F$. 
Moreover, there is a subset $\coneH_w=\bigcup_{i=1}^{\adm}\coneH_{w,i}\subset \coneH$
so that 
\begin{enumerate}
\item $\mu_w\Bigl((\coneH\setminus \coneH_w).\exp(w)y\Bigr)\leq \adm\beta \mu_w(\coneH.\exp(w)y)$,
\item The complexity of $\coneH_{w,i}$ is bounded by $\adm$ for all $i$, and 
\item $\Lip(\ddensity_w|_{\coneH_{w,i}})\leq \adm$ for all $i$.
\end{enumerate}

\subsection*{Convex combination of measures}
The following lemma was used several times in the paper, in particular, in the proof of Lemma~\ref{lem: main ind lemma}. 

\begin{lemma}\label{lem: convex comb}
Let $\ell, \mathsf d>0$. 
Assume that $\nuni^{-\ell/2}\leq\beta$ and that $h\mapsto hx$ is injective on $\coneH\cdot a_t\cdot \{u_r: r\in[0,1]\}$.
Let 
\[
\cone=\coneH.\{\exp(w)y: w\in F\}\subset X_\eta
\] 
be equipped with an admissible measure $\mu_\cone$. For every $r\in [0,1]$, 
\be\label{eq: convex comb app}
\int\!\varphi(a_{\mathsf d}u_sa_\ell u_{r}z)\diff\!\mu_{\cone}(z)\!=\!\sum_i\! c_{i}\int\!\varphi(a_{\mathsf d} u_sz)\diff\!\mu_{\cone_{i}}(z)+  O(\beta^\star\Lip(\varphi)),
\ee
where $\cone_{i}=\coneH.\{\exp(w)y_{i}: w\in F_{i}\}\subset X_\eta$ and $F_{i}\subset B_\rfrak(0,\beta)$ satisfies  
\begin{enumerate}
\item $\beta^{4\dm+5}\cdot (\#F)\leq F_{i}\leq \beta^{4\dm+4}\cdot (\#F)$, and 
\item $\umt^H_\ell.\exp(w)y_{i}\subset a_\ell u_r\cone$ for all $w\in F_{i}$, where 
\[
\umt_{\ell}^H=\Bigl\{u^-_s: |s|\leq \beta^2 \nuni^{-\ell}\Bigr\}\cdot\{a_\tau: |\tau|\leq \beta^2\}\cdot \{u_r: |r|\leq \eta\}.
\] 
\end{enumerate}
The implied constant depends on $X$. 
\end{lemma}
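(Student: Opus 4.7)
The plan is to decompose the pushforward $(a_\ell u_r)_*\mu_\cone$ as a convex combination of admissible measures carried by $\coneH$-shaped pieces, in the spirit of \cite[Lemma 8.4]{LMW22}. The obstruction is that $a_\ell u_r$ stretches the $u$-direction of $\coneH$ by a factor of $e^\ell$, so the image is an elongated tube that must be re-tiled by translates of $\coneH$ before the target structure is restored. The thickening $\umt^H_\ell$ is tuned precisely to this rescaling: by the commutation relation $a_\ell\cdot\mathsf B_\beta^{U^-}\cdot a_{-\ell}\subset \mathsf B_{\beta e^{-\ell}}^{U^-}$, it is exactly the $H$-shape swept out by $a_\ell u_r$ acting on a $\beta$-size $H$-neighborhood, which explains its asymmetric aspect ratio.

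Concretely, I would first invoke Lemma~\ref{lem: E good h0} with $m=\ell$ to obtain a covering $\{\umt^G_\ell.y_j:j\in\mathcal J\}$ of $X_{2\eta}$ with basepoints $y_j\in X_{3\eta/2}$ and multiplicity $O_X(1)$. For each $h\exp(w)y\in\cone$ away from a $5\beta^2$-neighborhood of $\partial\coneH$, I would show that there is a unique way (using Lemma~\ref{lem:dist-sheet} and the injectivity of $\sfh\mapsto\sfh x$ on $\coneH\cdot a_t\cdot\{u_r:r\in[0,1]\}$) to write
\[
a_\ell u_r \cdot h\exp(w)y = \sfh_0\exp(v)y_j
\]
with $\sfh_0\in\coneH$ and $v\in B_\rfrak(0,2\beta)$, for some $j$ determined up to the multiplicity of the covering. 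Collecting the induced displacements $v$ as $w$ varies over the preimage yields candidate sets $F_j\subset B_\rfrak(0,\beta)$ and cones $\cone_j=\coneH.\{\exp(v)y_j:v\in F_j\}$; the weights $c_j$ are taken proportional to the $\mu_\cone$-mass of the preimage.

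Admissibility of $\mu_{\cone_j}$ is inherited from that of $\mu_\cone$: $a_\ell u_r$ acts on $\coneH$ by a bi-Lipschitz affine map of bounded complexity, so the density transfers with bounded distortion and the sets of bounded-complexity on which the density is Lipschitz pull back to analogous sets for $\mu_{\cone_j}$. The upper bound $\#F_j\leq\beta^{4\dm+4}(\#F)$ comes from the fact that each $\umt^G_\ell.y_j$ has $G$-volume $\asymp\eta\beta^{4\dm+6}e^{-\ell}$; pieces exceeding this would be further split. The lower bound $\#F_j\geq\beta^{4\dm+5}(\#F)$ is enforced by discarding the atypically small pieces; using the bound $\#\mathcal J\ll\eta^{-1}\beta^{-4\dm-6}e^\ell$ from Lemma~\ref{lem: E good h0}, a pigeonhole argument shows that the total mass discarded is $O(\beta^\star)$, absorbed into the error term. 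Property~(2), namely $\umt^H_\ell.\exp(w)y_j\subset a_\ell u_r\cone$, then follows by construction since the $\umt^H_\ell$-thickening is precisely what $a_\ell u_r$ produces from the $\mathsf B_\beta^H$-thickening built into $\coneH$; formally this is Lemma~\ref{lem:commutation-rel}.

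The main obstacle will be the careful accounting of boundary effects. Points of $\cone$ lying in $\partial_{5\beta^2}\coneH$, or whose $a_\ell u_r$-images lie near the boundary of some $\umt^G_\ell.y_j$, lead to ambiguous piece-assignments. Their total contribution is controlled by the Folner property $m_H(\partial_{5\beta^2}\coneH)\ll\beta\cdot m_H(\coneH)$, together with the fact that $a_\ell u_r$ moves points in $X$ by a controlled amount when restricted to a single $\umt^G_\ell$-piece, so reassignments displace points by at most $O(\beta^\star)$ in $X$. Pairing with $\varphi$ this costs $O(\beta^\star\Lip(\varphi))$ on the integral, matching the stated error and requiring only Lipschitz regularity of $\varphi$ rather than any higher Sobolev norm.
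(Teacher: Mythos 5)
Your proposal is correct and takes essentially the same route as the paper's proof, which simply defers to \cite[Lemma 8.9]{LMW22} and only supplements it with the cardinality bookkeeping needed to nail down the bounds $\beta^{4\dm+5}(\#F)\leq \#F_i\leq\beta^{4\dm+4}(\#F)$. Your covering by $\umt^G_\ell$-pieces via Lemma~\ref{lem: E good h0}, the identification of new basepoints and displacements using Lemma~\ref{lem:dist-sheet}, the discarding of atypically small pieces, and the Folner control of boundary effects all match the argument in loc.\ cit.; the one place you are slightly loose is the upper bound on $\#F_i$, which in the paper is obtained not by a volume count alone but by explicitly subdividing pieces that are too large, and the error should be charged through the small $\mu_\cone$-measure of the boundary set rather than via a ``displacement'' estimate, though the resulting $O(\beta^\star\Lip(\varphi))$ is the same.
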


\begin{proof}
This is proved in~\cite[Lemma 8.9]{LMW22}. 
We provide road map to the argument to elucidate the claims made above regarding $F_{i,r}$. 

To see part~(1), and with the notation as in~\cite[Lemma 8.7]{LMW22}, we have 
$c_{i,r}^j\asymp N_{i,r}^j\Bigl(e^{-\ell}\beta^4\eta\Bigr)\cdot (\#F)^{-1}$.
Therefore, if $c_{i,r}^j\geq \beta^{4\dm+8}e^{-\ell}$, then  
\[
N_{i,r}^j\geq \beta^{4\dm+4}\cdot(\#F).
\]
Moreover, by Lemma~\ref{lem: E good h0},
$\#\mathcal J_\ell\ll \eta^{-1}\beta^{-4\dm-6}e^{\ell}\leq \beta^{-4\dm-7}e^{\ell}$. Thus, 
\[
\sum_{\;\; c_{i,r}^j< \beta^{4\dm+8}e^{-\ell}}c^j_{i,r}\leq \beta. 
\]

One now defines $F_{i}:=F_{i,r}^{j,m}$ where again we used the notation 
which is used after the conclusion of the proof of~\cite[Lemma 8.7]{LMW22}. Then
\[
\beta^{4\dm+5}\cdot(\#F)\leq \#F_{i}\leq \beta^{4\dm+4}\cdot(\#F),
\]
see~\cite[eq.~(8.18)]{LMW22}.

As it is argued in the proof of~\cite[Lemma 8.9]{LMW22}, 
the claim in~\eqref{eq: convex comb app} holds with $\cone_{i}=\coneH.\{\exp(w)y_{i}: w\in F_{i}\}$, see~\cite[eq.~(8.21)]{LMW22}.  

Part~(2) above is~\cite[eq.~(8.14)]{LMW22}. 
\end{proof}

\end{document}